\documentclass[10pt]{amsart}
\usepackage{amsmath}
\usepackage{amsthm}
\usepackage{amssymb}
\usepackage{mathtools}
\usepackage{tikz-cd}
\usepackage{tikz}
\usetikzlibrary{3d, decorations.markings, positioning}
\usepackage{enumitem}
\usepackage{stix2}
\usepackage{stackengine}
\usepackage{bbold}
\usepackage{graphics}
\usepackage{float}
\usepackage{parskip}
\usepackage[
left = 3cm,
right = 3cm
]{geometry}
\usepackage[
colorlinks=true,
citecolor=blue,
linkcolor=red,
linktoc=page
]{hyperref}
\usepackage[
  style=alphabetic,
  maxcitenames=10,
  maxbibnames=10,
  giveninits=true,
  backend=biber,
]{biblatex}
\usepackage{cleveref}
\bibliography{bib_twisted_flow.bib}

\numberwithin{equation}{section}

\newtheorem{prop}{Proposition}[subsection]
\newtheorem{theorem}[prop]{Theorem}
\newtheorem{lem}[prop]{Lemma}
\newtheorem{cor}[prop]{Corollary}
\theoremstyle{definition}
\newtheorem{definition}[prop]{Definition}
\newtheorem{ex}[prop]{Example}
\newtheorem{rem}[prop]{Remark}

\Crefname{equation}{}{}
\Crefname{lem}{Lemma}{Lemmas}
\Crefname{theorem}{Theorem}{Theorems}
\Crefname{prop}{Proposition}{Propositions}
\Crefname{cor}{Corollary}{Corollaries}
\Crefname{definition}{Definition}{Definitions}
\Crefname{subsection}{Section}{Sections}
\Crefname{section}{Section}{Sections}

\DeclareMathOperator{\Hom}{Hom}
\DeclareMathOperator{\Map}{Map}
\DeclareMathOperator{\map}{map}
\DeclareMathOperator{\Fun}{Fun}
\DeclareMathOperator{\interior}{int}
\DeclareMathOperator{\Sing}{Sing}

\DeclareMathOperator{\im}{im}

\DeclareMathOperator{\codim}{codim}
\DeclareMathOperator{\rk}{rk}
\DeclareMathOperator{\Pic}{Pic}

\DeclareMathOperator{\Th}{Th}
\DeclareMathOperator{\Mod}{Mod}

\DeclareMathOperator{\ob}{ob}
\DeclareMathOperator{\Set}{Set}

\DeclareMathOperator{\op}{op}
\DeclareMathOperator{\Monoid}{Mon}

\DeclareMathOperator{\can}{can}
\DeclareMathOperator{\Lax}{Lax}
\DeclareMathOperator{\Ar}{Ar}
\DeclareMathOperator{\Span}{Span}
\DeclareMathOperator{\St}{St}
\DeclareMathOperator{\Unst}{Unst}
\DeclareMathOperator{\Fin}{Fin}
\DeclareMathOperator{\con}{con}
\DeclareMathOperator{\CoCart}{CoCart}
\DeclareMathOperator{\Cart}{Cart}
\DeclareMathOperator{\RFib}{RFib}
\DeclareMathOperator{\LFib}{LFib}
\DeclareMathOperator{\Ind}{Ind}
\DeclareMathOperator{\Comp}{Comp}
\DeclareMathOperator{\Pos}{Pos}
\DeclareMathOperator{\Stab}{Stab}
\DeclareMathOperator{\Top}{Top}
\DeclareMathOperator{\Vect}{Vect}
\DeclareMathOperator{\Loc}{Loc}
\DeclareMathOperator{\thom}{th}
\DeclareMathOperator{\Psh}{Psh}

\DeclareMathOperator{\fib}{fib}

\DeclareMathOperator{\PT}{PT}
\DeclareMathOperator{\bVect}{\mathbb{V}ect}
\DeclareMathOperator{\CW}{CW}

\DeclareMathOperator*{\colim}{colim}
\DeclareMathOperator*{\laxcolim}{laxcolim}
\DeclareMathOperator*{\laxlim}{laxlim}
\DeclareMathOperator*{\oplaxcolim}{oplaxcolim}
\DeclareMathOperator*{\oplaxlim}{oplaxlim}

\renewcommand{\Pr}{\mathrm{Pr}}
\newcommand{\bfPr}{\mathbf{Pr}}

\newcommand{\rL}{\mathrm{L}}
\newcommand{\rR}{\mathrm{R}}

\newcommand{\PrL}{\Pr^{\rL}}

\newcommand{\PrLSt}{\Pr^{\rL}_{\St}}

\newcommand{\fr}{\mathrm{fr}}

\newcommand{\Flow}{\cF\mathrm{low}\vphantom{w}}
\newcommand{\bfFlow}{\mathbf{F}\mathrm{low}\vphantom{w}}
\newcommand{\PreFlow}{\mathrm{Flow}\vphantom{w}}
\newcommand{\bfPreFlow}{\mathbf{F}\mathrm{low}\vphantom{w}}
\newcommand{\Bord}{\mathcal{B}\mathrm{ord}\vphantom{d}}
\newcommand{\PreBord}{\mathrm{Bord}\vphantom{d}}
\newcommand{\TwShv}{\mathrm{TwPsh}\vphantom{h}}
\newcommand{\TwSp}{\mathrm{TwSp}\vphantom{h}}
\newcommand{\Cat}{\mathrm{Cat}\vphantom{t}}
\newcommand{\bfCat}{\mathbf{Cat}\vphantom{t}}
\newcommand{\CatHat}{\widehat{\mathrm{Cat}}\vphantom{t}}
\newcommand{\Mfd}{\mathrm{Mfd}\vphantom{d}}
\newcommand{\PreMfd}{\mathrm{PreMfd}\vphantom{d}}
\newcommand{\Alg}{\mathrm{Alg}\vphantom{z}}

\newcommand{\bB}{\mathbb{B}}
\newcommand{\bC}{\mathbb{C}}
\newcommand{\bE}{\mathbb{E}}
\newcommand{\bF}{\mathbb{F}}
\newcommand{\bH}{\mathbb{H}}

\newcommand{\bN}{\mathbb{N}}
\newcommand{\bM}{\mathbb{M}}

\newcommand{\bS}{\mathbb{S}}
\newcommand{\bZ}{\mathbb{Z}}
\newcommand{\bR}{\mathbb{R}}
\newcommand{\bX}{\mathbb{X}}
\newcommand{\bY}{\mathbb{Y}}
\newcommand{\bU}{\mathbb{U}}
\newcommand{\bV}{\mathbb{V}}
\newcommand{\bW}{\mathbb{W}}
\newcommand{\bOne}{\mathbb{1}}
\newcommand{\bZero}{\mathbb{0}}

\newcommand{\Simp}{\mathbb{\Delta}}
\newcommand{\Horn}{\Lambda}

\newcommand{\cA}{\mathcal{A}}
\newcommand{\cB}{\mathcal{B}}
\newcommand{\cC}{\mathcal{C}}
\newcommand{\cE}{\mathcal{E}}
\newcommand{\cI}{\mathcal{I}}
\newcommand{\calD}{\mathcal{D}}

\newcommand{\cK}{\mathcal{K}}
\newcommand{\cJ}{\mathcal{J}}
\newcommand{\cN}{\mathcal{N}}
\newcommand{\cM}{\mathcal{M}}
\newcommand{\cP}{\mathcal{P}}
\newcommand{\cS}{\mathcal{S}}
\newcommand{\cV}{\mathcal{V}}
\newcommand{\cW}{\mathcal{W}}
\newcommand{\cL}{\mathcal{L}}
\newcommand{\cR}{\mathcal{R}}
\newcommand{\cF}{\mathcal{F}}
\newcommand{\cT}{\mathcal{T}}

\newcommand{\cU}{\mathcal{U}}

\newcommand{\IP}{\mathrm{IP}}

\newcommand{\bfA}{\mathbf{A}}

\newcommand{\bfC}{\mathbf{C}}
\newcommand{\bfD}{\mathbf{D}}
\newcommand{\bfM}{\mathbf{M}}
\newcommand{\bfN}{\mathbf{N}}

\newcommand{\sfA}{\mathsf{A}}
\newcommand{\sfB}{\mathsf{B}}
\newcommand{\sfC}{\mathsf{C}}
\newcommand{\sfD}{\mathsf{D}}
\newcommand{\sfE}{\mathsf{E}}
\newcommand{\sfS}{\mathsf{S}}
\newcommand{\sfU}{\mathsf{U}}

\newcommand{\sfI}{\mathsf{I}}

\newcommand{\CSS}{\mathcal{CSS}}

\newcommand{\IK}{\mathcal{IK}}
\newcommand{\Opd}{\mathrm{Opd}^{\mathrm{gen,ns}}_{\infty}}

\newcommand{\Dbl}{\mathrm{Dbl}}
\newcommand{\DblLax}{ \mathrm{Dbl}_{\infty}^{\Lax}  }
\newcommand{\Mon}{\mathrm{Mon}_{\infty}}
\newcommand{\MonLax}{\mathrm{Mon}_{\infty}^{\Lax}}
\newcommand{\Seg}{\mathrm{Seg}_{\infty}}
\newcommand{\Sp}{\mathrm{Sp}\vphantom{p}}

\newcommand{\del}{\partial}
\newcommand{\join}{\star}

\DeclareFontFamily{U}{min}{}
\DeclareFontShape{U}{min}{m}{n}{<-> udmj30}{}
\newcommand{\yo}{\!\text{\usefont{U}{min}{m}{n}\symbol{'207}}\!}

\pgfdeclarearrow{
	name = pxto,
	setup code = {
		\pgfarrowssettipend{1.5\pgflinewidth}
		\pgfarrowssetbackend{-2.5508\pgflinewidth}
		\pgfarrowssetlineend{-.25\pgflinewidth}
		\pgfarrowssetvisualbackend{-0.021\pgflinewidth}
		\pgfarrowsupperhullpoint{1.5\pgflinewidth}{0\pgflinewidth}
		\pgfarrowsupperhullpoint{-2.0085\pgflinewidth}{3.6525\pgflinewidth}
		\pgfarrowsupperhullpoint{-2.5508\pgflinewidth}{3.0763\pgflinewidth}
	},
	drawing code = {
		\pgfsetdash{}{0pt}%
		\pgfpathmoveto{\pgfpoint{1.5\pgflinewidth}{0.0254\pgflinewidth}}%
		\pgfpathlineto{\pgfpoint{-2.0085\pgflinewidth}{3.6525\pgflinewidth}}%
		\pgfpathlineto{\pgfpoint{-2.5508\pgflinewidth}{3.0763\pgflinewidth}}%
		\pgfpathlineto{\pgfpoint{-0.4322\pgflinewidth}{0.5\pgflinewidth}}%
		\pgfpathlineto{\pgfpoint{-0.4322\pgflinewidth}{-0.5\pgflinewidth}}%
		\pgfpathlineto{\pgfpoint{-2.5508\pgflinewidth}{-3.0763\pgflinewidth}}%
		\pgfpathlineto{\pgfpoint{-2.0085\pgflinewidth}{-3.6525\pgflinewidth}}%
		\pgfpathclose%
		\pgfusepathqfill
	}
}
\tikzset{>=pxto}
\tikzcdset{arrow style=tikz}
\renewcommand{\to}{\mathchoice{\longrightarrow}{\rightarrow}{}{}}

\title{Structured Flow Categories and Twisted Presheaves}
\author{Alice Hedenlund}
\author{Trygve Poppe Oldervoll}

\begin{document}

\begin{abstract}
	An orientation theory for flow categories without bubbling is determined by a functor of $\infty$-categories $\mu \colon \cC \to U/O$.
  For any such functor, we construct a stable $\infty$-category $\Flow^{\mu}$ of $\mu$-structured flow categories and bimodules.
  We also construct the expected functors between such $\infty$-categories, giving a tractable framework for manipulating orientations, local systems, and filtrations in exact Floer homotopy theory.
  Classifying spaces for certain bordism theories determined by $\mu$ appear as mapping spaces in $\Flow^{\mu}$, and we use a Pontrjagin--Thom construction to naturally identify $\Flow^{\mu}$ with the $\infty$-category of $\mu$-twisted presheaves on $\cC$.
\end{abstract}

\maketitle

\setcounter{tocdepth}{2}
\tableofcontents

\section*{Introduction}\label{sec:introduction}

\subsection*{Background and Aim}\label{subsec:background_and_aim}
This paper studies the homotopy theory of \emph{flow categories} equipped with extra structure which encodes orientation, filtration and local system data. We start by recalling the relevance of such structures in geometry.

In 1994, Cohen--Jones--Segal proposed a program to understand the homotopy theory underlying Floer theory.
They proposed that a Floer problem (the infinite-dimensional manifold and its Floer--Morse function) should give rise to a ``Floer homotopy type'', refining the associated Floer homology. Moreover, they discussed how such a Floer homotopy type might be constructed via the use of \emph{flow categories}.
A flow category $\mathbb{X}$ has a set of objects and morphism spaces which are smooth manifolds with corners.
The composition maps
\[
\mathbb{X}(x,y) \times \mathbb{X}(y,z) \to \mathbb{X}(x,z)
\]
are diffeomorphisms onto a boundary face of the target, and the boundary of $\bX(x,z)$ is precisely the image of all possible composition maps.
Such a structure can be obtained in finite-dimensional Morse theory by defining a flow category $\bM_{f}$ whose objects are the critical points of the Morse function $f$, and whose morphism spaces $\bM_{f}(p,q)$ are moduli spaces of broken gradient trajectories of $f$.
A Morse flow category can be equipped with extra structure by giving each morphism space a stable framing compatible with composition.
The resulting structure is called a \emph{framed flow category}.

Cohen--Jones--Segal sketch how to obtain a spectrum (à la stable homotopy theory) from a framed flow category.
More recently, Abouzaid--Blumberg show that framed flow categories can be arranged into a stable $\infty$-category, and show that this is a model for the $\infty$-category of spectra~\cite[Proposition 1.10]{AB}.

Far from all flow categories associated with Floer data are frameable, though.
Take, for instance, the case of Lagrangian Floer theory for two exact Lagrangian submanifolds~$L$ and~$K$ in an exact symplectic manifold $X$.
It was shown by \cite{large2021spectral} that this data, together with a Hamiltonian perturbation $H$ and an almost complex structure~$J$, determines a flow category $\bF_{LK}$ whose objects are Hamiltonian chords, and whose morphism spaces are spaces of broken Floer strips with boundary on $L$ and $K$.
In this situation, the tangential structure of $\bF_{LK}$ is controlled via Fredholm index theory by the \emph{Lagrangian difference map}
\begin{align*}
	P_{X}(L, K) \to U/O
\end{align*}
on the space of paths from $L$ to $K$ in $X$.
The non-triviality of this map presents potential obstructions to framing $\bF_{LK}$.
The key fact that relates the framed situation to spectra is that the sphere spectrum $\bS$ is the classifying spectrum for \emph{framed bordism}.
It might, however, be possible to equip all the manifolds $\bX(p,q)$ with $G$-orientations for some $G\to O$.
Such data is then expected to represent a module over the Thom spectrum~$MG$, which classifies $G$-oriented bordism.

An important tool in applications is Floer homology with coefficients in some (possibly derived) local system.
To illustrate how this may be generalized to Floer homotopy, consider the Morse flow category $\bM_{f}$.
Select reference paths connecting each critical point $p$ to a common basepoint $x$ on the relevant manifold $X$.
Any gradient flow line in $\bM_{f}(p,q)$ represents a continuous path in $X$, and by concatenating with the reference paths, a based loop.
This means that each isolated gradient flow line represents an element of $\pi_{1}(X,x)$, and using this, one can define Morse homology with coefficients in any $\bZ[\pi_{1}(X,x)]$-module.
In Floer homotopy theory, we can similarly keep track of the full maps $\bM_{f}(p,q) \to P_{X}(p,q)$ to the space of paths in $X$.
By making a choice of reference paths from the critical points to $x$, we could concatenate and obtain maps to the based loop space $\Omega_{x} X$.
The spectrum classifying framed bordism over $\Omega X$ is the spherical group ring $\bS[\Omega X]$, so by analogy with the above, we expect $\bM_{f}$ to define a module over $\bS[\Omega X]$.
The $\infty$-category of such modules is equivalent to the $\infty$-category $\Sp^{X}$ of \emph{spectral local systems} on $X$.

In quantitative applications, it is often desirable to construct a \emph{filtered} version of Floer homology.
This is done by assigning each generator of the Floer chain complex to the value of the action functional, and considering chain complexes determined by the generators with action in a certain range.
We can capture this type of data in Floer homotopy theory by assigning each object of a flow category $\bX$ to some object of a poset $A$, with the property that all morphisms in $\bX$ must go up in the order of $A$.
This amounts to prescribing a functor $\bX\to A$.
From such data, one can recover an $A^{\op}$-indexed diagram of flow categories $\bX_{a}$, each consisting of the objects with ``action'' greater than $a$. Assuming that $\bX$ can be framed, this data then corresponds to an object of $\Fun(A^{\op},\Sp)$.

We have seen that various flavors of Floer homotopy types are expected to define objects in the stable $\infty$-categories $\Mod_{MG}$, $\Sp^{X}$, and $\Fun(A^{\op},\Sp)$ respectively.
These admit a common generalization: they are all  $\infty$-categories of \emph{twisted presheaves}.
Twisted presheaves are more general than these examples, because the tangential structure, local system data, and filtration data can interact in non-trivial ways.

That some version of twisted stable homotopy theory is needed to deal with non-framed Floer homotopy theory was perhaps first pointed out by Furuta in the unpublished document~\cite{furuta2002prespectrum}.
Furuta was specifically motivated by Seiberg--Witten Floer theory.~\footnote{The construction of a stable homotopy type from Seiberg--Witten Flow theory does not go via flow categories, but uses alternative methods such as finite-dimensional approximation and Conley index theory~\cite{manolescu2003seiberg}.
A direct relationship between these methods and flow categories has not yet been written up.}
A framework for twisted stable homotopy theory was provided by Douglas in his PhD thesis~\cite{Dou}.
Recent work by Moulinos in joint work with the first author recasts and extends Douglas' work in the $\infty$-categorical setting.

A twisted presheaf over an $\infty$-category $\cC$ is a section of a bundle of $\infty$-categories on $\cC^{\op}$ whose fibers are equivalent to the $\infty$-category of spectra, and whose parallel transport is given by tensoring with an invertible spectrum.
The classifying data for such a bundle is a functor $\cC \to B\Pic(\bS)$, where $\Pic(\bS)$ is the grouplike space of tensor-invertible spectra.
Any functor to $U/O$ determines such a bundle by composing with a delooping of the $J$-homomorphism
\begin{align*}
	U/O \simeq B(\bZ\times BO) \to B\Pic(\bS) \,.
\end{align*}
Let us now sketch how we can allow the orientation structure and the local system information to interact when defining a flow category.
Start with some base $\infty$-category~$\cC$ and a functor $\mu \colon \cC \to U/O$.
This gives rise to maps
\begin{align*}
	\Map_{\cC}(c,d) \to P_{\mu(c) \mu(d)} U/O \simeq \Omega U/O \simeq \bZ\times BO \,.
\end{align*}
Note that the first equivalence here is non-canonical, it depends on a choice of paths from $\mu(c)$ and $\mu(d)$ to the chosen basepoint.
Let $\bX$ be a flow category.
A \emph{$\mu$-structure} on~$\bX$ is (roughly) given by the following data:
\begin{enumerate}
	\item For each $p\in \ob(\bX)$ an object $c_{p}$ in $\cC$, and a reference $\gamma_{p}$ path from $\mu(c_{p})$ to the basepoint in $U/O$.
	\item For each pair of objects, a factorization of the stable classifying map of the virtual vector bundle $-T\bX(p,q) \ominus \bR$ through the map $\Map_{\cC}(c_{p},c_{q}) \to \bZ\times BO$ induced by $\mu$ and concatenation with $\gamma_{p}$ and $\gamma_{q}$.
\end{enumerate}
This generalization is perfectly suited to deal with exact Lagrangian Floer theory, because as mentioned earlier, the tangential structure of each moduli space $\bX(p,q)$ is determined via index theory by a map $P_{X}(L,K) \to U/O$ through the maps
\begin{align*}
	\bX(p,q) \to P_{pq} P_{X}(L,K)
\end{align*}
which includes holomorphic strips into all strips.
The preceding examples of Morse theory with local system coefficients, $G$-orientations, and $A$-filtrations also fit naturally in this framework.

Given a diagram of $\infty$-categories
\begin{equation}\label{eq:C->D_overU/O}
	\begin{tikzcd}
		\cC \arrow[rr, "f"] \arrow[dr, "\mu"']    &  &   \calD  \arrow[dl, "\nu"] \\
		&  U/O\,, &
	\end{tikzcd}
\end{equation}
we obtain maps $f\colon \Map_{\cC}(c,d) \to \Map_{\calD}(f(c),f(d))$ compatible with maps to $\bZ\times BO$.
A $\mu$-structure on a flow category $\bX$ can then be pushed forward along $f$ to obtain a $\nu$-structure on $\bX$.
This gives a natural language for manipulating various tangential structures in Floer theory.
Consider for example, the notion of \emph{tangential pairs} as in \cite{porcelli2024spectral}.
A tangential pair is a diagram of spaces
\begin{equation}\label{eq:tangential_pair}
	\begin{tikzcd}
		\Theta \arrow[r] \arrow[d]      &   \Phi  \arrow[d] \\
		BO  \arrow[r]               &  BU \,.
	\end{tikzcd}
\end{equation}
If we can factor the classifying maps for the stable tangent bundles of $L$, $K$ and $X$ through these maps, we get a factorization of the Lagrangian difference map through the homotopy pullback $P_{X}(L,K)\to \Theta\times_{\Phi} \Theta \to U/O$.
We can therefore push forward the natural $P_{X}(L,K)$-structure on the Floer flow category to obtain a $\Theta\times_{\Phi}\Theta$-structure.
In this formalism, it becomes clear how the Floer homotopy types constructed from different orientation data on the same Lagrangians are related.

\subsection*{Main Results}\label{subsec:main_results}

 The main construction of the paper is a family of Abouzaid--Blumberg style $\infty$-categories $\Flow^{\mu}$ of $\mu$-structured flow categories, indexed by $\mu\in \Cat_{\infty/(U/O)}$.
 We moreover construct the expected functoriality between such $\infty$-categories coming from pushforward of tangential structures. We prove that $\Flow^{\mu}$ is compactly generated, stable and presentable.
 We then identify $\Flow^{\mu}$ with the $\infty$-category $\TwShv^{\mu}$ of $\mu$-twisted presheaves on $\cC$, which as we show is the $\infty$-category one would expect by considering the bordism theory captured by $\Flow^{\mu}$.
 Write $\Pr^{\rL}_{\St,\omega}$ for the $\infty$-category of compactly generated stable presentable $\infty$-categories and left adjoint functors which preserve compact objects.
 The main result of this paper may be stated as follows.
\begin{theorem}\label{thm:Flow=TwPsh}
	There is a natural equivalence of functors $\Cat_{\infty /(U/O)} \to \Pr^{\rL}_{\St,\omega}$
	\begin{align*}
		\Flow^{(-)} \simeq \TwShv^{(-)}\,.
	\end{align*}
\end{theorem}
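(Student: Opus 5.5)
Both sides are compactly generated stable presentable $\infty$-categories, and a left adjoint between such that preserves compact objects is an equivalence as soon as it restricts to an equivalence on generators. So the plan is to compare generators and their mapping spectra, naturally in $\mu\in\Cat_{\infty/(U/O)}$.

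On the twisted presheaf side, $\TwShv^{\mu}$ is generated by the twisted representables $\yo_\mu(c)$ for $c\in\cC$. Unwinding the section description, the mapping spectrum from $\yo_\mu(c)$ to $\yo_\mu(d)$ is the Thom spectrum of the composite
\[
\Map_{\cC}(c,d)\to \Omega U/O\simeq \bZ\times BO\to \Pic(\bS).
\]
On the flow side, $\Flow^{\mu}$ is generated by the one-object flow categories $\bX_c$. Each has object $c\in\cC$, a reference path $\gamma$, and only the empty morphism space. By the identification of mapping spaces with bordism classifying spaces stated in the abstract, $\Map_{\Flow^{\mu}}(\bX_c,\bX_d)$ is the infinite loop space of the bordism spectrum of manifolds with a $\mu$-structure over $\Map_\cC(c,d)$. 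Such a structure is a factorization of the stable normal bundle through $\Map_\cC(c,d)\to\bZ\times BO$. The Pontrjagin--Thom construction identifies this bordism spectrum with the same Thom spectrum. So the main step is a Pontrjagin--Thom equivalence of mapping spectra that is compatible with composition.

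I would carry this out in the following order.

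\begin{enumerate}
\item Build a single functor $\Phi_\mu\colon \Flow^{\mu}\to\TwShv^{\mu}$ rather than working object by object. Send a $\mu$-structured flow category $\bX$ to the twisted presheaf
\[
c\mapsto \Map_{\Flow^\mu}(\bX_c,\bX),
\]
that is, a restricted Yoneda embedding along $c\mapsto\bX_c$. This requires upgrading $c\mapsto \bX_c$ to a functor $\cC\to\Flow^\mu$ that is twisted by $\mu$, meaning a section of the associated bundle.
\item Organize this as a $\mu$-twisted spectral enrichment. Let $\cC^\mu$ be the spectrally enriched $\infty$-category with objects those of $\cC$ and mapping spectra the Thom spectra above; then $\TwShv^\mu\simeq\Fun^{\Sp}((\cC^\mu)^{\op},\Sp)$. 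Show that the full subcategory of $\Flow^\mu$ on the objects $\bX_c$, with its spectral enrichment, is equivalent to $\cC^\mu$. The cleanest route is a Pontrjagin--Thom functor on bordism categories of $\mu$-manifolds over mapping spaces, compatible with products under concatenation.
\item Deduce that $\Phi_\mu$ is a compact-preserving left adjoint sending generators to generators. By Schwede--Shipley type recognition it is then an equivalence.
\item Prove naturality in $\mu$. For $f$ over $U/O$ as in \eqref{eq:C->D_overU/O}, pushforward of structures sends $\bX_c$ to $\bX_{f(c)}$. On mapping spectra it induces the map of Thom spectra coming from $\Map_\cC(c,d)\to\Map_{\calD}(fc,fd)$. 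On the presheaf side, left Kan extension $f_!$ does the same on representables. So the two functors agree on generators. To make this coherent, build the whole comparison functorially as a map of functors $\Cat_{\infty/(U/O)}\to\Pr^{\rL}_{\St,\omega}$. One way is to define both sides as left Kan extensions of the functor $\mu\mapsto\cC^\mu$ on enriched categories.
\end{enumerate}

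I expect the main obstacle to be step 2: making the Pontrjagin--Thom identification multiplicative and coherent. Composition in $\Flow^\mu$ is given by gluing bimodules along corners, and matching this with the $E_1$- and categorical composition of Thom spectra requires a point-set or semisimplicial model. I would first attempt it via the Abouzaid--Blumberg approach: show that the morphism spectra in $\Flow^\mu$ are computed by bordism groups of $\mu$-manifolds, in the style of \cite[Proposition 1.10]{AB}. I would then check compatibility with composition only on homotopy, and use that the equivalence on generators is detected there.
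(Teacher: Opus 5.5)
The gap is in step~2 and in your fallback for it. Checking that the Pontrjagin--Thom map is compatible with composition ``only on homotopy'' does not give you anything to which Schwede--Shipley can be applied. Detecting an equivalence on homotopy presupposes that a coherent $\Sp$-enriched functor between $\cC^{\mu}=\Th(\mu)$ and the full subcategory on the generators $\bOne_b$ already exists. Homotopy-level multiplicativity constructs neither that functor nor the twisted restricted Yoneda functor $\Phi_\mu$ of step~1, which needs the $\bOne_b$ (indexed by $\ob(\mu)$, not by $\cC$) assembled into a coherent twisted diagram.

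The same issue appears on the twisted side. Computing the mapping spectra between twisted representables objectwise does not identify $\TwShv^{\mu}$ with enriched presheaves on $\Th(\mu)$. The paper needs a separate argument for this, \Cref{cor:TwSh_is_enriched_presheaves}: $\cP_{\Sp}\circ\Th$ is a $2$-functor preserving oplax colimits, $\mu$ is the oplax colimit of $\cC\to U/O\to\Cat_{\infty/(U/O)}$, lax additivity applies, and one checks agreement with $\tau$ over $B\Pic(\bS)$.

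The idea you are missing is that no multiplicative comparison of mapping spectra is needed. The paper builds a coherent comparison only of \emph{plain} $\infty$-categories. It uses the embedded model $\Mfd_e$, whose bordism semi-simplicial space maps to $\PreBord^{\mu}$ by a trivial fibration. There the collapse and record maps assemble into a double functor $\Mfd^{\mu}_e\to\Sp_{\bS//\Th(\mu)}\to\Omega^{\infty}\Th(\mu)$. This gives a semi-simplicial map to the nerve and, after applying $\cT$, a functor $\PT^{\mu}\colon\Bord^{\mu}\to\Omega^{\infty}\Th(\mu)$. Full faithfulness is then checked on $\pi_n$ of mapping spaces, where the map is the classical isomorphism $\Omega_n^{\mu(b,c)}\cong\pi_n\Omega^{\infty}\Th(\mu)(b,c)$.

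The spectral structure then comes for free, because both subcategories are closed under shifts ($\Sigma^n\bOne_b\simeq\bOne_{\Sigma^n b}$). After adjoining the zero object, the equivalence extends uniquely by \Cref{lem:zero_is_unique}. \Cref{prop:generation_rigidity} then recovers each ambient stable category as $\Psh^{\Sigma}$ of its shift-closed, compact, generating subcategory; the pushout squares $X\to 0\to\Sigma X$ encode the spectrum structure. This rigidity replaces your steps~2--3, and it is what makes the coherence problem you correctly identified tractable.
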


Let us discuss some of the techniques involved in proving this statement. To deal with the coherence involved when defining a $\mu$-structure on a flow category, we will work in the context of \emph{enriched $\infty$-categories} as defined by \cite{GH}. We give an introduction to this theory in \Cref{sec:categorical_preliminaries}. For a monoidal $\infty$-category $\cV$, a $\cV$-category $\sfC$ has a space of objects $\ob(\sfC)$, morphism objects $\sfC(c,d)$ in $\cV$, and composition maps $\sfC(c,d)\otimes \sfC(d,e) \to \sfC(c,e)$ which are associative up to specified homotopies.

In \Cref{subsec:monoids_and_delooping}, we show that a functor $\mu\colon \cC \to U/O$ corresponds to a unique $\cS_{/ (\bZ\times BO)}$-category which by abuse of notation we also call $\mu$.
The objects of $\mu$ are those of $\cC$, together with a reference path from $\mu(c)$ to the basepoint in $U/O$, and the morphism objects are precisely the maps of spaces $\Map_{\cC}(c,d) \to \bZ\times BO$ induced by $\mu$ and concatenation with reference paths.
In \Cref{subsec:double_categories_of_structured_manifolds} we construct a double $\infty$-category $\Mfd^{\mu}_{\diamond}$, whose objects are those of $\mu$, and whose morphism objects are categories $\Mfd^{\mu}_{\diamond}(c,d)$ of \emph{stratified manifolds with corners} with a $\mu(c,d)$-structure. $\Mfd_{\diamond}^{\mu}$ has a forgetful functor to the monoidal $\infty$-category $\Mod_{\diamond}$ of stratifying categories.
For a set $P$, we will define a $\Mod_{\diamond}$-enriched category $\sfA_{P}$ with objects $P$, which captures the boundary structure we expect of a flow category with objects $P$.
Actually, we will assume that $P$ comes equipped with a poset structure, and we will force $\bX(p,q)$ to be empty unless $p\leq q$. This is not restrictive, since such a structure always exists, and the equivalence class of $\bX$ will not depend on the choice. We will further assume that $P$ is \emph{locally finite}, meaning that each principal filter $P\uparrow p = \left\{q\in P, p\leq q\right\}$ is finite. This ensures that $\coprod_{q\in P} \bX(p,q)$ is compact.

\begin{definition}\label{def:intro_flowcategory}
	Let $P$ be a locally finite poset. A \emph{$\mu$-structured flow category with objects $P$} is a lift of $\sfA_{P}$ to a $\Mfd^{\mu}_{\diamond}$-category $\bX$.
\end{definition}
A $\mu$-structure on $\bX$ is essentially homotopic in nature, as we will show, the space of $\mu$-structures on a flow category $\bX$ is equivalent to the space of diagrams
\begin{equation*}
	\begin{tikzcd}
		\Pi\bX \arrow[r] \arrow[dr, "-I\bX"']      &   \cC  \arrow[d,"\mu"] \\
		            &   U/O
	\end{tikzcd}
\end{equation*}
where $\Pi\bX$ is the $\infty$-category presented by the topological category $\bX$, and $I\bX$ is induced by the index bundle which on each morphism space $\bX(p,q)$ is $T\bX(p,q)\oplus \bR$.

 Following \cite{AB}, we will define a semi-simplicial space $\PreFlow^{\mu}$ whose $0$-simplices are precisely $\mu$-structured flow categories up to isomorphism.
 Writing down functors between $\infty$-categories by hand is often not feasible, so we will construct $\PreFlow^{\mu}$ by instead producing a \emph{right fibration} over $\Simp_{s}$, and appeal to straightening/unstraightening in the sense of \cite{HTT}.
 We construct a functor $\cP \to \Simp_{s}$ to the injective simplex category, whose fiber over $[n]$ is the category of order-preserving maps $P\to [n]$ where $P$ is a locally finite poset.
 In particular, the fiber $P_{i}$ over each $i\in [n]$ is a locally finite poset.
 Following \cite{AB}, we then define an enriched category $\sfA_{P}$ in $\Mod_{\diamond}$ which captures the boundary structure of a flow $n$-simplex whose $i$-th vertex is a flow category with poset of objects $P_{i}$.
This construction determines a functor $\cA\colon \cP \to \Alg(\Mod_{\diamond})$.
Pushforward along $\Mfd^{\mu}_{\diamond}\to \Mod_{\diamond}^{\otimes}$ determines a functor $\Alg(\Mfd_{\diamond}^{\mu}) \to \Alg(\Mod_{\diamond})$. In \Cref{subsec:double_categories_of_structured_manifolds} we show that the pullback
\begin{align}\label{eq:intro_algebra_fibration}
	\cA^{*}\Alg(\Mfd_{\diamond}^{\mu}) \to \cP \to \Simp_{s}
\end{align}
is a Cartesian fibration.

\begin{definition}\label{def:intro_Flow^mu}
	Let $\PreFlow^{\mu}\colon \Simp_{s}^{\op} \to \cS$ be the semi-simplicial space corresponding to the underlying right fibration of \eqref{eq:intro_algebra_fibration} under straightening.
\end{definition}
The 1-simplices of $\PreFlow^{\mu}$ are \emph{flow bimodules}.
A bimodule $\bB\colon \bX \to \bY$ consists of a $\mu(c_{p},c_{q})$-structured manifold with corners $\bB(p,q)$ for each $p\in \ob(\bX),q\in \ob(\bY)$.
The boundary strata of $\bB(p,q)$ are images of action maps of the form
\begin{align*}
	\bX(p,r)\times \bB(r,q) \to \bX(p,q) \leftarrow \bB(p,s) \times \bY(s,q) \,,
\end{align*}
which are compatible with the $\mu$-structures of $\bX$ and $\bY$.
The higher simplices of $\PreFlow^{\mu}$ capture composition of bimodules up to bordism.

The construction of $\PreFlow^{\mu}$ is made to be functorial in $\mu \in \Cat_{\infty / (U/O)}$.
A map in this slice is a commutative triangle of the form \eqref{eq:C->D_overU/O}.
This gives rise to a map $f_{!}\colon \PreFlow^{\mu} \to \PreFlow^{\nu}$ which pushes forward $\mu(c,d)$-structures along some $f\colon \mu(c,d) \to \nu(f(c),f(d)  )$.

We say that a semi-simplicial space $X$ is a \emph{quasi-unital inner Kan space} if $X$ admits inner horn fillers, and each vertex $x\in X_{0}$ admits an idempotent equivalence.
The theory of such objects was developed in \cite{oldervoll2026quasi}, where it is shown that $X$ represents an $\infty$-category $\cT(X)$.
Various $\infty$-categorical constructions in $\cT(X)$, such as mapping spaces and slices, can be understood in terms of $X$.
By generalizing the arguments of \cite{AB}, we show that $\PreFlow^{\mu}$ admits inner horn fillers.
To show that $\PreFlow^{\mu}$ is quasi-unital, we show that there exist maps $s_{0},s_{n}\colon \PreFlow^{\mu}_{n} \to \PreFlow^{\mu}_{n+1}$ which behave as degeneracy maps.
The following is then a consequence of \cite[Theorem 1.2]{oldervoll2026quasi}.
\begin{theorem}\label{thm:Flow_is_quik}
	The semi-simplicial space $\PreFlow^{\mu}$ is a quasi-unital inner Kan space.
\end{theorem}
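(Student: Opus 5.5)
The proof splits into the two conditions in the definition of a quasi-unital inner Kan space: inner horn fillers, and an idempotent equivalence at each vertex. Once both hold, \cite[Theorem 1.2]{oldervoll2026quasi} applies.

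\textbf{Inner horn fillers.} Since $\PreFlow^{\mu}$ is the straightening of the underlying right fibration of \eqref{eq:intro_algebra_fibration}, a map $\Horn^n_k \to \PreFlow^{\mu}$ with $0<k<n$ is a compatible family of $\mu$-structured $\Mfd^{\mu}_{\diamond}$-algebras lifting $\sfA_{P_\sigma}$, one for each face $\sigma$ of the horn. I would fill it in three stages, following \cite{AB}.

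\begin{enumerate}
\item Take the union of the object posets of the faces to get a locally finite poset $P \to [n]$.
\item Construct the missing morphism manifolds, namely those indexed by pairs in $P_0 \times P_n$ (and the interior face opposite $k$), by induction on the poset. At each step the boundary prescribed by $\sfA_P$ is a closed $\langle k\rangle$-manifold assembled from lower pieces, and the filler is obtained as a collar-type construction of the form $\partial \times [0,1]$, glued along the $k$-th vertex.
\item Check that the prescribed boundary is a product-type gluing. This uses that $k$ is inner, so the faces through $k$ cover the relevant strata.
\end{enumerate}

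The $\mu$-structures also have to be extended. Here I would use the homotopical description of $\mu$-structures as lifts $\Pi\bX \to \cC$ over $U/O$. Extending a structure across a collar is then a homotopy-extension problem, and it is solvable because the collar deformation retracts onto its boundary face. Because the fibration \eqref{eq:intro_algebra_fibration} is Cartesian, these fillers can be chosen coherently in families over the horn space, rather than merely pointwise. This gives that $\PreFlow^{\mu}(\Delta^n) \to \PreFlow^{\mu}(\Horn^n_k)$ is surjective on $\pi_0$ over every component. I expect that to be the right condition from \cite{oldervoll2026quasi}; if it instead requires an equivalence, I would argue via contractibility of the space of collar choices.

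\textbf{Quasi-units.} I would define degeneracy-like maps $s_0, s_n \colon \PreFlow^{\mu}_n \to \PreFlow^{\mu}_{n+1}$ by duplicating the first or last vertex. Concretely, $P \to [n]$ is replaced by $P' \to [n+1]$, where the duplicated fiber is a second copy of $P_0$. The identity bimodule $\bX \to \bX$ is $\bX(p,q)\times[0,1]$ with the evident boundary identifications: for $p=q$ this is a point, and in general it is the collar, the standard unit from \cite{AB}. Its $\mu$-structure is pulled back along the projection to $\bX(p,q)$, and the duplicated vertices are sent to the same objects of $\cC$ with the same reference paths.

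With these maps in hand, I would verify the following.
\begin{enumerate}
\item The faces satisfy $d_0 s_0 \simeq \mathrm{id}$ and $d_1 s_0 \simeq \mathrm{id}$ (and similarly for $s_n$). These hold up to canonical diffeomorphism of collars, compatibly with structures.
\item The 1-simplex $s_0(\bX)$ is idempotent: $s_0 s_0 \bX$ is a 2-simplex exhibiting $\mathrm{id}\circ\mathrm{id}\simeq\mathrm{id}$.
\item The 1-simplex $s_0(\bX)$ is an equivalence. The 2-simplices $s_0$ and $s_1$ applied to any edge show that composing with it acts invertibly on edges.
\end{enumerate}

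\textbf{Main obstacle.} The hard part is making these collar and degeneracy constructions strictly functorial, or at least functorial at the level of spaces of simplices, as maps of the straightened semi-simplicial space rather than merely on objects. My first approach would be to realize $s_0$ and $s_n$ as functors of the Cartesian fibration \eqref{eq:intro_algebra_fibration} itself, covering a map of posets-over-simplices $\cP \to \cP$. Producing such a functor of fibrations would give the required maps of spaces by straightening.
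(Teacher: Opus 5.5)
Your outline (fillers as in \cite{AB}, outer degeneracies, then the criterion of \cite{oldervoll2026quasi}) matches the paper's. However, two steps do not work as written.

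\textbf{The identity bimodule.} $\bX(p,q)\times[0,1]$ is not a flow bimodule $\bX\to\bX$, and it is not the unit of \cite{AB}. For $p<r<q$, the model $\sfA_{s_0^{*}P}((p,0),(q,1))$ has two distinct codimension-one objects, with internal edge $(r,0)$ resp.\ $(r,1)$. Their strata must be $\bX(p,r)\times s\bX((r,0),(q,1))$ and $s\bX((p,0),(r,1))\times\bX(r,q)$, and these meet in a codimension-two corner $\bX(p,r)\times\bX(r,q)$. The product instead has a single smooth face $\bX(p,r)\times\bX(r,q)\times[0,1]$ there.

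A small example shows this concretely. Let $\bX(p,q)$ be an interval whose two ends break at incomparable $r_1,r_2$. The bimodule must then be a hexagon, with codimension-one objects $(r_1,0),(r_2,0),(q,0),(p,1),(r_1,1),(r_2,1)$, whereas $\bX(p,q)\times[0,1]$ is a square, so no admissible stratification exists. This is why the paper uses the conic degenerations $D(\bX(p,q),P_0(p,q))$ of \Cref{lem:conic_degeneracies}. Their fibre over a point with $m$ vanishing $P_0$-coordinates is a chain of $m+1$ intervals.

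With that replacement, transporting the $\mu$-structure is no longer just pulling back along the projection. One needs the stratum-compatible isomorphism $TD(\bX,P)\oplus\bR^{\{q\}}\simeq\pi^{*}T\bX\oplus\bR^{\{q\}}\oplus\bR^{\{1\}}$ of \Cref{lem:D_is_htpy_eq}. It is built from a fixed collared local model, which is why the paper passes to $\PreFlow^{\mu}_{c}$ and a trivial fibration. The paper then packages $s_0,s_\omega$ as maps of free slices so that \Cref{prop:outer_degeneracies} applies. That last part is essentially your proposed functor of Cartesian fibrations covering $P\mapsto s_0^{*}P$.

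\textbf{Extending $\mu$-structures over horn fillers.} A $\mu$-structure is an enriched functor $-I\bX\to\mu$ with all composition coherences, not a family of maps on individual morphism spaces. Moreover, the boundary of a new filler $\widetilde{Y}(p,q)$ contains strata such as $\bY(p,r)\times\widetilde{Y}(r,q)$ built from \emph{other} new fillers. So homotopy extension along a deformation retraction, one morphism space at a time, does not produce a coherent extension.

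The paper's key input is \Cref{lem:horn_filler_as_hocolim}: after applying $-I$, the \cite{AB} filler is a colimit cone in $\Alg_{\Cat}(\cS_{/\bZ\times BO})$ over the horn diagram indexed by $F(\Horn^{n}_{i,s})$. The proof first shows that the diagram of face subcategories is projectively cofibrant in topological categories, so their union is a homotopy colimit. Only then does it use contractibility of the L-blocks, which is your deformation-retraction step, to compare that union with the filler. The structure then extends, uniquely, by the universal property of the colimit, since maps into $\mu$ are classified by the representable right fibration $\Alg(\cS_{/\bZ\times BO})_{/\mu}$. Your appeal to Cartesianness for choosing fillers coherently in families is not needed for $\pi_0$-surjectivity, and it does not supply this coherence.
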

We let $\Flow^{\mu} \coloneq \cT(\PreFlow^{\mu})$ denote the $\infty$-category presented by this quasi-unital inner Kan space. Every object in $\Flow^{\mu}$ can be represented up to equivalence by a $\mu$-structured flow category, and every morphism by a bimodule $\bB\colon \bX\to \bY$. We show that $\Flow^{\mu}$ has the following desirable structural properties.
\begin{theorem}\label{thm:Flow_is_stable_pres}
	$\Flow^{\mu}$ is a compactly generated stable presentable $\infty$-category. The semi-simplicial map $f$ gives rise to a left adjoint functor $f_{!}\colon \Flow^{\mu}\to \Flow^{\nu}$ which moreover preserves compact objects.
\end{theorem}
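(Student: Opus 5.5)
We prove \Cref{thm:Flow_is_stable_pres} by following the strategy of \cite{AB}, using the description of mapping spaces, slices and (co)limits in $\cT(\PreFlow^{\mu})$ from \cite{oldervoll2026quasi}. The steps are: construct a zero object and suspension/loop operations, build cofiber sequences, prove stability, produce compact generators, get presentability by adding colimits, and finally treat $f_{!}$.

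\textbf{Zero object.} The empty flow category (with empty poset of objects) is the zero object. For any $\bX$, the space of bimodules $\emptyset\to\bX$ and $\bX\to\emptyset$ is contractible, and so is every space of higher simplices involving it. By the mapping-space description of \cite{oldervoll2026quasi}, $\emptyset$ is therefore both initial and terminal.

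\textbf{Cofiber sequences.} Let $\bB\colon\bX\to\bY$ be a bimodule. Form the cone $\mathrm{Cone}(\bB)$ on the poset $P_{\bX}\sqcup P_{\bY}$, with $\bX$-objects placed below the $\bY$-objects. Its morphism manifolds are $\bX(p,p')\times\bR$, $\bY(q,q')$ and $\bB(p,q)$. The $\mu$-structure is inherited because of the degree shift by $\bR$ built into $-I\bX$: an object $(c_p,\gamma_p)$ of $\mu$ is sent to $(c_p,\gamma_p\ast\text{shift})$, using the $\bZ$-action on $\mu$ coming from $\bZ\subset\bZ\times BO$. The canonical bimodules $\bY\to\mathrm{Cone}(\bB)\to\Sigma\bX$ are defined in the same way. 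To see that this is a cofiber sequence, I would identify $\Map(\mathrm{Cone}(\bB),\bZ)$ with the fiber of $\Map(\bY,\bZ)\to\Map(\bX,\bZ)$. This is done with a filtration-by-poset argument: a bimodule out of the cone restricts to bimodules on the two pieces together with a nullbordism of the composite, and the extension spaces are computed using the inner horn fillers of \Cref{thm:Flow_is_quik}.

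\textbf{Stability.} The shift $\Sigma$, which tensors with the reference-path shift by $1\in\bZ$, is invertible, with inverse the shift by $-1$. We must also check that $\Sigma$ agrees with the categorical suspension, i.e.\ that $\mathrm{Cone}(\bX\to 0)\simeq\Sigma\bX$. Stability then follows from the standard criterion: a pointed $\infty$-category with cofibers in which $\Sigma$ is an equivalence is stable.

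\textbf{Compact generation and presentability.} Here the main obstacle is that flow categories are only locally finite, so their posets may be infinite. I would first show that every locally finite flow category is the colimit of the sequence of truncations to finite upward-closed (or downward-closed) subposets. Computing the mapping spaces, this amounts to a $\lim^1$/Milnor argument: bimodules into a finite flow category only see finitely many objects. Finite flow categories are iterated cones of one-object flow categories. Such a category is determined by an object $(c,\gamma)$ of $\mu$, and its endomorphisms are computed by $\mu(c,c)$-bordism. Hence the objects $\bOne_{c,\gamma}$ together with their shifts generate under finite colimits and sequential colimits. I expect presentability to be the delicate point: filtered colimits of flow categories do not obviously exist in $\PreFlow^{\mu}$. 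The plan is to embed $\Flow^{\mu}$ fully faithfully into $\Ind(\Flow^{\mu}_{\mathrm{fin}})$ via the sequential colimits above, and then show essential surjectivity by realizing any object of the Ind-category as a locally finite flow category. That realization would use a countable telescope built from the infinite-poset construction, with size bounds coming from $\ob$ being essentially small. If full essential surjectivity is out of reach, I would instead show that arbitrary coproducts exist via disjoint unions, which are still locally finite, and then deduce cocompleteness from stability plus coproducts. The one-object generators are compact because maps out of a finite flow category commute with the coproducts just constructed.

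\textbf{The functor $f_{!}$.} The semi-simplicial map $f_{!}$ preserves the empty category, cones and disjoint unions, since it acts on the $\mu$-structure alone and leaves the underlying manifolds unchanged. It is therefore exact and preserves coproducts, hence all colimits, and so is a left adjoint by the adjoint functor theorem. It sends the generator $\bOne_{c,\gamma}$ to $\bOne_{f(c),\gamma}$, which is compact. Since compact objects are retracts of finite colimits of these generators, $f_{!}$ preserves compact objects.
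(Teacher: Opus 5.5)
Your outline has the same architecture as the paper's proof: the empty flow category as zero object, a mapping cone for cofibers, the shift as suspension, a filtration by filters down to one-object flow categories, compactness of $\bOne_b$ from local finiteness, cocompleteness from coproducts plus stability, and $f_!$ preserving colimits and generators. However, three load-bearing steps either fail as written or lack the mechanism that makes them work.

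\emph{The cone.} As you define it, $\mathrm{Cone}(\bB)$ is not a flow category. The manifold $\bX(p,p')\times\bR$ is non-compact. Moreover, the composition maps into $\bX(p,p'')\times\bR$ and the action maps $\bX(p,p')\times\bR\times\bB(p',q)\to\bB(p,q)$ have the same dimension as their targets, so they cannot be codimension-one strata. The morphism manifolds must stay $\bX(p,p')$, $\bY(q,q')$, $\bB(p,q)$, and the shift lives entirely in the tangential data. Concretely, you relabel $\bB$ along $\sfA_P\simeq\sfA_{s_*P}$ and use the $\sfE\Vect$-action on $\Mfd^{\mu}_{\diamond}$ to absorb the extra summand $\bR^{\{1\}}$ in $\sfU^+_P(p,q)$ for $p\in P_0$, $q\in P_1$.

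\emph{The cofiber universal property.} Your verification does not go through. Inner horn fillers give existence of composites, not contractibility of spaces of extensions, so ``restrictions plus a nullbordism of the composite, glued by horn fillers'' cannot produce an equivalence of mapping spaces. After relabeling, a bimodule $\mathrm{Cone}(\bB)\to\bW$ is a single flow $2$-simplex on $\bX,\bY,\bW$ with edge $\bB$ and empty $d_1$-face, and all higher simplices must be matched in the same way. That is what \Cref{lem:cofibers_in_flow} does. Relabeling along the open embeddings $Z_R\colon\sfA_{s_*R}\to\sfA_R$, together with degree shifts, gives a fully faithful map of right fibrations. Its image is exactly the simplices under $\bB$ whose $d_1$-face is empty out of the $\bX$-vertex. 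This identifies $\PreFlow^{\mu}_{c(\bB)/}$ with a pullback of semi-simplicial slices, which $\cT$ takes to $\Flow^{\mu}_{S\bB/}$ because one leg is an inner fibration.

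\emph{The truncations.} For an uncountable locally finite poset there is no cofinal sequence of finite filters, so you must index over the filtered poset $\cJ_P$ of all finite filters. Downward-closed truncations do not work at all: nonempty finite cofilters need not exist, and they receive maps from $\bX$ rather than mapping to it. Since $\cJ_P$ is not free, a family of restriction bimodules $\bX_I\to\bX$ does not yet give a diagram $\cJ_P^{\triangleright}\to\Flow^{\mu}$. This is exactly what the generalized companion functor $\cU$ of \Cref{def:flow_companion} supplies. It also identifies precomposition with $\cU(i_!)$ with the restriction map $\PreFlow^{\mu}_{\bX/}\to\PreFlow^{\mu}_{\bX_I/}$.

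The colimit statement concerns maps \emph{out of} $\bX$ into arbitrary targets. The semi-simplicial space $\PreFlow^{\mu}_{\bX/}$ is the levelwise limit of the $\PreFlow^{\mu}_{\bX_I/}$. The functor $\cT$ preserves this limit because the diagram is Reedy fibrant, its matching maps being left fibrations by \Cref{lem:restriction_is_fibration}; see \Cref{lem:filtered_colim_lifting_condition}. Your observation that bimodules out of a finite flow category meet only finitely many objects is the compactness argument for $\bOne_b$, not this one. The same companion machinery is what proves that disjoint union is the coproduct (\Cref{lem:coproducts_in_flow}), which you only assert.

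\emph{Smaller points.} The stability criterion you quote needs finite colimits, so coproducts must be established before stability. An objectwise identification of $\Sigma$ with the shift gives essential surjectivity but not full faithfulness. For the latter, the paper builds fibers by the dual relabeling and shows the suspension square is also a pullback (\Cref{cor:flow_is_stable}). Your fallback for presentability (coproducts and cofibers give cocompleteness, then a small set of compact generators) is the paper's route, so the $\Ind$-embedding plan is unnecessary. Your treatment of $f_!$ matches the paper.
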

Showing this amounts to computing certain (co)limits in $\Flow^{\mu}$.
We will use two main techniques for computing such. First, we will use the full structure of the Cartesian fibration \eqref{eq:intro_algebra_fibration}.
Under unstraightening, this corresponds to a semi-simplicial $\infty$-category $\bfFlow^{\mu}\colon \Simp_{s}^{\op} \to \Cat_{\infty}$.
Morphisms in the zeroth level $\bfFlow^{\mu}_{0}$ are given by inclusions of subcategories, and are therefore quite easy to understand.
We will show that certain morphisms in this $\infty$-category give rise to morphisms in $\Flow^{\mu}$ satisfying a universal property generalizing the notion of \emph{companions} in a double $\infty$-category.
Using companion morphisms, we show that the disjoint union of flow categories gives rise to coproducts in $\Flow^{\mu}$, and that any flow category can be written as a filtered colimit over its finite subcategories.
The other technique we will use is that of mapping cones.
We can think of a flow category $\bX$ as a chain complex, and a bimodule $\bB\colon \bX \to \bY$ as a chain map.
In a construction which is completely analogous to the construction of the mapping cone of a chain map, we will produce an explicit representative $c(\bB)$ of the cofiber of a bimodule $\bB$.

An important type of object in $\Flow^{\mu}$ is the one-object flow category $\bOne_{b}$ determined by an object $b$ of $\mu$.
This has a single object which is mapped to $b\in \ob(\mu)$, and no nontrivial morphism spaces.
We let $\Bord^{\mu}\subset \Flow^{\mu}$ denote the full subcategory spanned by such objects. As part of the proof of \Cref{thm:Flow_is_stable_pres}, we show that these are compact generators for $\Flow^{\mu}$.
A morphism $\bOne_{b} \to \bOne_{c}$ is determined by a closed $\mu(c,d)$-structured manifold, and a homotopy between such corresponds to a $\mu(c,d)$-structured bordism.
More generally, we can think of the mapping space $\Map(\bOne_{b}, \bOne_{c})$ as a classifying space for $\mu(c,d)$-structured bordism.
By the classical Pontrjagin--Thom equivalence, we would expect this classifying space to be equivalent to the infinite loop space $\Omega^{\infty}\Th(\mu(c,d))$.
By pushing forward the enrichment of $\mu$ along lax monoidal functors
\begin{align*}
	\cS_{/\bZ\times BO} \xrightarrow{\Th} \Sp \xrightarrow{\Omega^{\infty}} \cS \,,
\end{align*}
the spaces $\Omega^{\infty}\Th(\mu(c,d))$ are the mapping spaces of an $\infty$-category $\Omega^{\infty}\Th(\mu)$.
We will construct a suitably functorial version of the Pontrjagin--Thom collapse, and use this to show the following.

\begin{theorem}\label{thm:PT_equivalence}
	The Pontrjagin--Thom collapse determines an equivalence of $\infty$-categories $\PT^{\mu} \colon \Bord^{\mu} \to \Omega^{\infty}\Th(\mu)$.
\end{theorem}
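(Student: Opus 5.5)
The plan is to construct $\PT^{\mu}$ as a functor of enriched categories and then check it is essentially surjective and fully faithful. The main steps, in order, are as follows.

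\textbf{Step 1: realize $\Bord^{\mu}$ as an enriched category.} Both sides should have the same space of objects, namely $\ob(\mu)$. On $\Bord^{\mu}$ this is achieved by $b\mapsto \bOne_b$. I would first identify $\Bord^{\mu}$ with the $\cS$-category obtained by restricting $\PreFlow^{\mu}$ to the sub-semi-simplicial space of simplices all of whose vertices are one-object flow categories.

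In that sub-semi-simplicial space, an $n$-simplex with vertices $\bOne_{b_0},\dots,\bOne_{b_n}$ is a compatible family of $\mu(b_i,b_j)$-structured manifolds with corners. The boundary faces of these manifolds are products over intermediate vertices. So this restricted semi-simplicial space is the ``bordism semi-simplicial space'' associated with the $\Mfd^{\mu}_{\diamond}$-enriched structure. Its mapping spaces are realizations of semi-simplicial spaces of closed $\mu(b,c)$-manifolds and bordisms between them.

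\textbf{Step 2: define the collapse map functorially.} I would define a map of semi-simplicial spaces from this bordism model to the nerve of $\Omega^{\infty}\Th(\mu)$. The target is the $\infty$-category obtained by pushing the $\cS_{/\bZ\times BO}$-enrichment forward along the lax monoidal functor $\Omega^\infty\Th$.

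Concretely, a $\mu(b,c)$-structured manifold $M$ is an embedded manifold together with a factorization of its stable normal bundle classifying map through $\mu(b,c)\to \bZ\times BO$. The Pontrjagin--Thom collapse gives a point of $\Omega^{\infty}\Th(\mu(b,c))$.

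To make this coherent with composition, I would use the boundary structure. The collapse of a manifold with corners, embedded in a corner-compatible way (in $\bR^{N}\times \bR_{\ge 0}^k$), restricts on each boundary face to the product of the collapses of the factors. This is exactly the lax monoidality of $\Th$ applied to $\mu(b,c)\times\mu(c,d)\to\mu(b,d)$.

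To organize this without hand-writing coherences, I would construct the collapse at the level of double categories. The plan is to give a lax monoidal functor from (a variant of) $\Mfd^{\mu}_{\diamond}$, with the space of compatible embeddings contractible, to a double category whose horizontal morphism objects are the $\Omega^\infty\Th(\mu(b,c))$. I would then apply $\Alg(-)$ and straightening as in the construction of $\PreFlow^{\mu}$. The embedding data is added as an auxiliary contractible choice, and we then forget it.

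\textbf{Step 3: check essential surjectivity and full faithfulness.} Essential surjectivity is tautological, since objects match. Full faithfulness reduces to showing that, for each pair $b,c$, the induced map
\[\Map_{\Flow^{\mu}}(\bOne_b,\bOne_c)\to \Omega^{\infty}\Th(\mu(b,c))\]
is an equivalence.

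By the description of mapping spaces in a quasi-unital inner Kan space from \cite{oldervoll2026quasi}, the left side is the realization of the semi-simplicial space of $\mu(b,c)$-structured closed manifolds and higher bordisms between them, i.e.\ simplices with boundary on the fixed endpoints. This is a classifying space for $\mu(b,c)$-structured bordism. Its identification with $\Omega^{\infty}\Th(\mu(b,c))$ is the classical Pontrjagin--Thom theorem for the structure $\mu(b,c)\to\bZ\times BO$.

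I would establish this identification space-level rather than on homotopy groups. The natural route is to compare with a model of the Thom spectrum via spaces of embedded submanifolds, in the style of Galatius--Madsen--Tillmann--Weiss, using the degree shift by $\bR$ in the index convention $-T\bX\ominus\bR$. On $\pi_*$ the identification is just Thom's theorem. The remaining work is that both sides are infinite loop spaces, or at least that the map is compatible with the grouplike structure given by disjoint union; Whitehead's theorem then applies on each component.

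\textbf{Main obstacle.} I expect the hardest part to be Step 2: making the collapse coherent with composition so that it is a genuine functor of $\infty$-categories, not merely a family of maps on mapping spaces. Collapse maps depend on embeddings and tubular neighborhoods, so the strategy is to include these as extra data in the double category and prove the forgetful map is a trivial fibration levelwise.
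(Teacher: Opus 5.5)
Your proposal follows the paper's proof. The paper likewise takes $\PreBord^{\mu}$ to be the simplices of $\PreFlow^{\mu}$ over $\mathrm{id}\colon[n]\to[n]$. It adds embedding data into $V\times I^{k}$, stabilized by taking the colimit over the $\Vect$-action, so that the forgetful map $\PreBord^{\mu}_{e}\to\PreBord^{\mu}$ is a trivial fibration. It then combines the monoidal collapse into $\Sp_{\bS/}$ with the record map into $\Sp_{/\Th(\mu)}$ through the double slice $\Sp_{\bS//\Th(\mu)}\to\Omega^{\infty}\Th(\mu)$. Finally, it proves full faithfulness by identifying $\pi_{n}$ of the right mapping spaces with $\Omega^{\mu(b,c)}_{n}$ and invoking the classical Pontrjagin--Thom isomorphism.

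Two small remarks. First, you leave implicit that the semi-simplicial collapse map must be quasi-unital before $\cT$ applies; the paper checks this by noting that the identity bimodule $\ast\to\mu(b,b)$ collapses to the unit. Second, your handling of non-basepoint components via the disjoint-union $H$-structure is a genuine refinement: the paper only computes $\pi_{n}$ at the empty bimodule and leaves the other components unaddressed.
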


Because $\Bord^{\mu}$ is closed under suspension in $\Flow^{\mu}$, we actually expect it to contain all the information of the corresponding mapping spectra.
In other words, we would expect to upgrade $\PT^{\mu}$ to an equivalence of categories enriched in $\Sp$.
From the Schwede--Shipley theorem \cite{schwede2003stable}, we then expect to identify $\Flow^{\mu}$ with the $\infty$-category of \emph{spectral presheaves} $\cP_{\Sp}(\Th(\mu))$ on $\Th(\mu)$.
Rather than explicitly identifying mapping spectra of $\Th(\mu)$ and $\Bord^{\mu}$, we show directly that the strong compact generation implies that the composite
\begin{align*}
	\Bord^{\mu} \simeq \Omega^{\infty}\Th(\mu) \xrightarrow{\yo} \cP_{\Sp}(\Th(\mu))
\end{align*}
Kan extends to an equivalence $\Flow^{\mu} \simeq \cP_{\Sp}(\Th(\mu))$.

To compare $\cP_{\Sp}(\Th(\mu))$ to the $\infty$-category of twisted presheaves, we apply some recently developed results about the functor $\cP_{\Sp}$.
Crucially, $\cP_{\Sp}$ lifts to a symmetric monoidal left adjoint, and we use this to show that $\cP_{\Sp}$ also lifts to a 2-functor which preserves oplax colimits.
The $\infty$-category $\TwShv^{\mu}$ can be defined as a certain oplax limit in $\PrLSt$, which by lax additivity \cite{CDW} is also an oplax colimit.
Every object $\mu\colon \cC\to U/O$ of $\Cat_{\infty / (U/O)}$ can be written canonically as the oplax colimit of the diagram
\begin{align*}
	\cC \xrightarrow{\mu}  U/O \to \Cat_{\infty / (U/O)} \,,
\end{align*}
so since the construction $\cP_{\Sp}(\mu)$ preserves oplax colimits, this gives a natural equivalence $\TwShv^{\mu}\simeq \cP_{\Sp}(\Th(\mu))$ which finishes the proof of \Cref{thm:Flow=TwPsh}.

\subsection*{Examples}
Let us illustrate the theory by returning to some examples from geometry.

\begin{ex}\label{ex:morse_homotopy_type}
	Let $f\colon X\to \bR$ be a Morse function on a closed manifold.
  In \cite{cohen2020floer}, the moduli spaces of negative gradient flow lines of $\bM_{f}$ assemble to a framed flow category $\bM_{f}$.
  As we explain in \Cref{ex:morse_flow_cat}, this admits a natural structure by the zero map $X\to U/O$, given roughly by the forgetful map $\bM_{f}(p,q) \to P_{X}(p,q)$ from gradient flow lines to continuous paths.

	We write $\Flow^{X}$ for the $\infty$-category of flow categories structured by the zero map $X\to U/O$.
  The corresponding $\infty$-category of twisted presheaves is the $\infty$-category $\Sp^{X}$ of spectral local systems on $X$.
  The flow category $\bM_{f}$ then corresponds to a finitely generated local system whose total $\bZ$-homology agrees with the homology of the base.
  By a spectral sequence argument, this can only happen for the constant local system $\underline{\bS}$.
  The total homology functor $\Sp^{X}\to \Sp$ corresponds to the functor $\Flow^{X}\to \Flow^{\fr}$ which forgets the $X$-structure, so the above gives a new proof of the fact that $\bM_{f}$ recovers the stable homotopy type $\Sigma^{\infty}_{+}X$.
  This was shown with other methods in \cite{abouzaid2021arnold, blakey2024floerhomotopytheorydegenerate,côté2024equivariantfloerhomotopymorsebott}.
\end{ex}
\begin{ex}
	An unstructured flow category $\bX$ has an index bundle which determines a functor $I\bX\colon \Pi\bX \to U/O$ from the $\infty$-category $\Pi\bX$ presented by the topological category $\bX$.
  The flow category $\bX$ admits a tautological $I\bX$-structure.
  In the case of Morse theory, the index bundle $I\bM_{f}$ is trivial, and by \cite{fourel2026}, the $\infty$-category $\Pi\bM_{f}$ agrees with the \emph{exit path category} $\mathrm{Exit}(M,f)$ associated to the stratification of $M$ by unstable manifolds of $M$.
  Functors $\mathrm{Exit}(M,f)^{\op} \to \Sp$ correspond to \emph{constructible cosheaves} on the stratified space.
  As in \Cref{ex:morse_homotopy_type}, the flow category $\bM_{f}$ with this structure corresponds to the constant cosheaf with value $\bS$.
\end{ex}

\begin{ex}\label{ex:lagrangian_floer_htpy_type}
	In \cite{large2021spectral}, the author constructs flow categories $\bF_{LK}$ lifting the Floer homology of a pair of exact Lagrangians $L,K$ in a Liouville domain $M$.
  In \Cref{sec:appendix}, we explain how the methods of \cite{porcelli2024spectral} can be used to structure $\bF_{LK}$ by the index bundle
	\begin{align*}
		-\mu\colon L\times_{X}K \xrightarrow{TL\times_{TM} TK} BO\times_{BU} BO \xrightarrow{d} U/O \xrightarrow{B\Ind} B(\bZ\times BO) \,.
	\end{align*}
	The map $d$ is the Lagrangian difference map, and the index map comes from modeling~$\Omega (U/O)$ as a space of Cauchy--Riemann operators on a complex vector bundles $E\to D^{2}$ with boundary conditions in totally real subbundles $F\subset E\vert_{S^{1}}$.
  Any such operator has a Fredholm index which is a virtual vector space.
  This construction gives a homotopy equivalence $\Omega(U/O) \simeq \bZ\times BO$, but the precise relationship between this and other models for real Bott periodicity remains unclear.

	We can relate the $\infty$-category $\TwSp^{-\mu}$ to modules over Thom spectra as follows.
  Assume that $X$ is connected, and fix a basepoint.
  Write $F_{L},F_{K}$ for the homotopy fibers of the inclusions $L,K\to X$ over $x$.
  We then have a fiber sequence
	\begin{align*}
		F_{K}\times F_{L} \to L\times_{X} K \to X \,.
	\end{align*}
	Any choice of basepoints in $F_{K}$ and $F_{L}$ gives a basepoint in $L\times_{X}K$, and looping over these basepoints exhibits a relative tensor product
	\begin{align*}
		\Omega(L\times_{X} K) \simeq \Omega F_{K} \otimes_{\Omega^{2}X} \Omega F_{L}
	\end{align*}
	of $\bE_{1}$-monoids in $\cS$.
  By \cite[Proposition 2.18]{bonciocat2025floer}, we have a commutative diagram
	\begin{equation*}
		\begin{tikzcd}
			\Omega^{2}X \arrow[r, "\Omega^{2}TX"] \arrow[d]      &   \Omega^{2}BU  \arrow[d]  \arrow[r, "\text{Bott}", "\simeq"'] & \bZ\times BU \arrow[d] \\
			\Omega F_{L}  \arrow[r]               &   \Omega U/O  \arrow[r, "\Ind", "\simeq"']            & \bZ\times BO \,,
		\end{tikzcd}
	\end{equation*}
	and a similar diagram for $F_{K}$.
  Write $\xi_{L}$ for the index bundle
	\begin{align*}
		\Omega F_{L} \to \Omega U/O \xrightarrow{\Ind} \bZ\times BO \, ,
	\end{align*}
	and $\xi_{K}$ for the corresponding bundle on $K$.
  If we write $\zeta_{X}$ for the Bott bundle
	\begin{align*}
		\zeta_{X} \colon \Omega^{2} X \xrightarrow{\Omega^{2}TX} \Omega^{2}BU \simeq \bZ\times BU \,,
	\end{align*}
	we get a description of $\Omega\mu$ as the following relative tensor product of $\bE_{1}$-algebras in $\cS_{/\bZ\times BO}$
	\begin{align*}
		\Omega \mu \simeq \xi_{L} \otimes_{\zeta_{X}} \left(-\xi_{K} \right)  \,.
	\end{align*}
	Since $\Th$ is a monoidal left adjoint, it preserves relative tensor products.
  By \Cref{ex:modules_over_th}, we can then identify a summand of $\TwSp^{-\mu}$ with the $\infty$-category of modules over $\Th(\xi_{L})^{\op }\otimes_{\Th(\zeta_{X})} \Th(\xi_{K})$.
\end{ex}

\begin{ex}\label{ex:tangential_pair}
	Appropriately modifying \Cref{ex:lagrangian_floer_htpy_type} for the case where we replace $L,K \to X$ by a tangential pair $\Theta \to \Phi$ as in \eqref{eq:tangential_pair}, we instead get modules over the ring spectrum $\Th(\xi)^{\op}\otimes_{\Th(\zeta)} \Th(\xi)$, where $\xi$ is the index bundle on $\Omega\fib(\Theta\to \Phi)$, and $\zeta$ is the Bott bundle on $\Omega^{2}\Phi$.
  Reducing to the case where $\Phi$ is contractible and $\Theta$ connected, we then have the equivalence
	\begin{align}\label{eq:Flow^Theta}
		\Flow^{\Theta} \simeq \Mod_{\Th(\xi)^{\op} \otimes \Th(\xi)} \,.
	\end{align}
	Up to relating the specifics of definitions of orientations for flow categories, this answers \cite[Conjecture 5.10]{porcelli2024spectral}, giving an equivalence between the $\infty$-category $\PreFlow^{\Theta}$ constructed \emph{op.cit} and the homotopy category of \emph{finite} $\Th(\xi)^{\op}\otimes\Th(\xi)$-modules.
  The adjective finite shows up because \cite{porcelli2024spectral} considers flow categories with a finite set of objects.
  Under the equivalence \eqref{eq:Flow^Theta}, the one-object flow category $\bOne$ corresponds to the generator $\Th(\xi)^{\op} \otimes \Th(\xi)$.
  By \Cref{cor:flow_fin_generation}, any finite $\Theta$-structured flow category therefore corresponds to a finite $\Th(\xi)^{\op} \otimes \Th(\xi)$-module.
\end{ex}
\begin{ex}
	Let $A$ be a poset and consider the $\infty$-category $\Flow^{A}$ of flow categories structured by the zero map $A\to U/O$.
  An $A$-structured flow category is a framed flow category $\bX$ with an order-preserving map $f\colon P\to A$ from the poset of objects of $\bX$.
  For each $a\in A$, let $\bOne_{a}$ denote the one-object flow category corresponding to the object $a$ and the virtual vector space $0$.
  The equivalence in \Cref{thm:Flow=TwPsh} is then given by the formula
	\begin{align}\label{eq:Flow^A}
		\Flow^{A} &\xrightarrow{\simeq} \Fun(A^{\op}, \Sp) \\
		\bX &\mapsto  (a\mapsto \map(\bOne_{a}, \bX) ) \,. \nonumber
	\end{align}
	Each morphism object of a bimodule $\bB\colon \bOne_{a} \to \bX$ must map to some relation $a\leq b$ in $A$, so $\bB(\ast, p)$ must be empty unless $p\in P_{a\leq} = f^{-1}(A_{a\leq})$.
	This means that $\bB$ must actually factor through the inclusion of the full subcategory $\bX_{a}\to \bX$ on the objects~$P_{a\leq}.$

	For a pair $a\leq b$, we have an $A$-structured bimodule $\bOne_{a}\to \bOne_{b}$ determined by the one-point manifold.
  The restriction map
	\begin{align*}
		\map(\bOne_{b},\bX) \to \map(\bOne_{a}, \bX)
	\end{align*}
	then corresponds to the inclusion of bimodules which also factor through the inclusion $\bX_{b} \to \bX$.
  By using the theory of companions, we can show that the flow categories $\bX_{a}$ assemble to a functor
	\begin{align*}
		\bX_{(-)} \colon A^{\op} \to \Flow^{A}\to \Flow^{\fr} \,,
	\end{align*}
	and that we have a natural equivalence $\map(\bOne_{(-)}, \bX ) \simeq \map(\bOne, \bX_{(-)})$ of functors $A^{\op}\to \Sp$.
  This can be refined to show that the equivalence \eqref{eq:Flow^A} factors through
	\begin{align*}
		\Flow^{A} &\xrightarrow{\simeq} \Fun(A^{\op}, \Flow^{\fr})\, ,  \\
		\bX &\mapsto (a\mapsto \bX_{a}) \,.
	\end{align*}
\end{ex}

\subsection*{Terminology and Notation}\label{subsec:terminology_and_notation}
We use the following notation in the paper:
\begin{itemize}
	\item We fix strongly inaccessible cardinalities $\kappa_{0} \ll \kappa_{1} \ll \kappa_{2}$.
  We refer to these universes as \emph{small, large} and \emph{very large}, respectively.
  We use a $\widehat{-}$ to indicate the very large $(\infty)$-category of large things.
  For example, we let $\Cat_{\infty}$ be the large $\infty$-category of small $\infty$-categories, and $\CatHat_{\infty}$ the very large $\infty$-category of large $\infty$-categories.
	\item We have fully faithful inclusions
	\begin{equation*}
		\begin{tikzcd}
			\Set \arrow[r] \arrow[d]      &   \cS  \arrow[d] \\
			\Cat  \arrow[r]               &   \Cat_{\infty}
		\end{tikzcd}
	\end{equation*}
	which we will omit from notation, i.e. if $\cC$ is a category, we will also write $\cC$ for the corresponding $\infty$-category with discrete mapping spaces.
	\item $\Simp$ denotes the category of linearly ordered finite sets $[n] = \{0, \dots , n\}$ and order-preserving functions between these. $\Simp_{s}\subset \Simp$ denotes the wide subcategory spanned by injective maps.
	\item If $\cC$ is an $\infty$-category, we use the notation $\Map_{\cC}$ for mapping spaces, $\map_{\cC}$ for mapping spectra (assuming that $\cC$ is stable), and $\overline{\map}$ for internal function objects (assuming that $\cC$ has a closed monoidal structure).
  We drop the subscript~$\cC$ if it is clear from context which $\infty$-category we are working in.
	\item $\Fun$ denotes the ($\infty$)-category of functors between two ($\infty$)-categories.
	\item $\Psh(\cC)= \Fun(\cC^{\op},\cS)$ denotes the $\infty$-category of presheaves.
  We write $s\cS=\Psh(\Simp)$ and $ss\cS = \Psh(\Simp^{\op}_{s})$ for the $\infty$-categories of (semi-)simplicial spaces.
	\item $\ast$ or $[0]$ denotes the contractible space.
	\item $\cC^{\triangleleft}= [0] \join \cC$ and $\cC^{\triangleright}= \cC \join [0]$ denote the cone and cocone on $\cC$, respectively.
	\item $\cC_{c/}$ and $\cC_{c/}$ denote the slices under and over an object $C$ of $\cC$, respectively.
	\item $\Top$ denotes a convenient category of topological spaces, such as compactly generated spaces.
  We will write $\Pi\colon \Top\to\cS$ for the localization which takes a topological space to its fundamental $\infty$-groupoid.
	\item As a rule of thumb, we will use the typeset \texttt{mathcal} for $\infty$-categories ($\mathcal{A} \mathcal{B} \mathcal{C}$), the typeset \texttt{mathbb} for flow categories and flow simplices ($\mathbb{X} \mathbb{Y} \mathbb{Z}$), the typeset \texttt{mathbf} for double $\infty$-categories ($\mathbf{A} \mathbf{B} \mathbf{C}$), and the typeset \texttt{mathsf} for categorical algebras ($\mathsf{A} \mathsf{B} \mathsf{C}$).
\end{itemize}

	\subsection*{Acknowledgments}
	We would like to thank Rune Haugseng for many helpful conversations regarding (enriched) $\infty$-categories, and for reading a draft of this paper.
  We thank Mohammed Abouzaid for helpful comments and suggestions.
  The second author would like to thank Amanda Hirschi and Francesco Morabito for organizing the \emph{Floer Homotopy Spring School} which led to the conception of this paper.

	\section{Categorical Preliminaries}\label{sec:categorical_preliminaries}

	In this section, we introduce results about $\infty$-categories that we shall need in our later constructions.
  We begin by recalling the theory of fibrations of $\infty$-categories.
  We then recall how fibrations can be used to define $\infty$-operads, and recall the setup for enriched $\infty$-categories constructed in \cite{GH}. In \Cref{subsec:slice_double_categories} we study the construction of slice-double $\infty$-categories.
  We use this theory further in \Cref{subsec:monoids_and_delooping} to study categories enriched in the slice monoidal $\infty$-category $\cS_{/X}$.
  In \Cref{subsec:inner_kan_spaces_and_companions} we recall the main results of \cite{oldervoll2026quasi} about presenting $\infty$-categories as semi-simplicial spaces, and set up a theory for companions in a semi-simplicial $\infty$-category.

	\subsection{$\infty$-categories and their models}\label{subsec:infty-categories_and_their_models}

	Before we start with the actual mathematics, we want to say some words about the philosophy of higher category theory that we take in this paper.
	The reader who is comfortable with $\infty$-categories, their different models, and model dependence vs. model independence can safely skip this section.

	When we speak of an $n$-category, we are loosely talking about a structure which has objects, morphisms between objects, 2-morphisms between morphisms, 3-morphisms between 2-morphisms, and so on, up to $n$-morphisms between $(n-1)$-morphisms.
	In particular, a $0$-category is just a set, and a $1$-category is an ordinary category.
	The standard example of a $2$-category is the structure we get by considering categories, functors between categories, and natural transformations between functors.

	In particular, this language allows us to be much looser with what we mean by two objects being ``the same''.
	For example, the right notion of ``sameness'' between categories is two categories being equivalent; to phrase what we mean by two categories being equivalent, we need to use all three levels (categories, functors, and natural transformations).
	Compare this to the notion of ``sameness'' as being ``isomorphic'' which just involves two levels (objects and morphisms).

	An $(n,k)$-category is an $n$-category where all $i$-morphisms are invertible for $i > k$.
	We will abbreviate $(\infty,1)$-category as just $\infty$-category; this means that we should have $i$-morphisms for all $i > 0$ and that these $i$-morphisms should be invertible whenever $i > 1$.
	A simple example to keep in mind is an $\infty$-category of spaces, where objects are spaces, 1-morphisms are maps between spaces, 2-morphisms are homotopies between maps of spaces, 3-morphisms are homotopies between homotopies between maps of spaces, and so on.
	Since homotopies are evidently invertible, this really forms an $(\infty,1)$-category.

	The description of an $\infty$-category above is very loose and not suitable for doing rigorous mathematics.
	When working with them, one often picks a specific model that allows one to keep track of the various hierarchies of morphisms.
	Then one makes sure that the basic concepts from category theory (such as (co)limits, adjunctions, Kan extensions, etc.) that one wants to generalize are encodable in this model by explicitly constructing them.
	There are several models of $\infty$-categories in the literature, but the most common one is arguably quasi-categories, i.e. simplicial sets that satisfy an inner horn condition.
	This model is so ubiquitous currently that it is common to just let ``$\infty$-category'' and ``quasi-category'' be synonymous words.
	However, quasi-categories are not the only model for $\infty$-categories, and it is not always the most suitable one to work with.
	For example, it is typically quite hard to write down non-trivial examples of quasi-categories, i.e., examples that are not just the $\infty$-groupoid of a space or the nerve of an ordinary category.
	It can also be quite hard to check by hand that a given simplicial set is a quasi-category.
	These types of issues are typically solved in some other model of $\infty$-categories, and the results are then imported to quasi-categories.
	Models for $\infty$-categories that are useful for these purposes are, for example:
	\begin{itemize}
		\item Using enriched category theory: topological and simplicial categories.
		\item Complete Segal spaces.
		\item Segal categories.
		\item Relative categories
		\item 1-complicial sets
	\end{itemize}
	These have all been shown to give equivalent models for $\infty$-categories, each one with its own pros and cons.

	Rigorously setting up the theory of $\infty$-categories in a specific model takes a lot of work,but once the work is done, it can in practice be forgotten about.
	Meaning: if one wants to make use of $\infty$-categories as a framework to solve a specific problem in mathematics, one can often treat the theory as a black box.
	This is the philosophy implicitly used in most papers in modern homotopy theory.
	If pressed about what model is used, many authors might say ``quasi-categories'', but in the vast majority of cases, the specific model chosen truly does not matter, and the results would be valid in any model, similar to how most mathematical results are not dependent on whether one uses ZFC set theory or some other version of set theory.
	As extensively as possible, we will work model-agnostically with $\infty$-categories. We might cite results that are proven in a fixed model (notably \cite{HTT}, \cite{HA} and \cite{GH}); these results carry over to give analogous results in any other equivalent model.

	\subsection{Fibrations and straightening-unstraightening}\label{subsec:fibrations_and_straightening}

	In ordinary category theory, writing down a functor $F : \cC \to \mathcal{D}$ is relatively straightforward.
	We write down what it does on objects, what it does on morphisms, and check by hand that this respects identities and compositions.
	If we are instead looking at a functor between $\infty$-categories, this strategy is typically not feasible, since we are dealing with an infinite amount of coherence data.
	The trick here is to use the theory of fibrations.
	Let us remind the reader loosely how to think about these concepts with some examples from ordinary category theory.

	Suppose that we are given a functor $F : \mathcal{B} \to \Set$ from some category $\mathcal{B}$ into the category of sets.
	From this we can construct a new category $\mathcal{E}_{F}$, the category of elements of $F$, as follows:
	\begin{itemize}
		\item Objects are pairs $(b,x)$ where $b$ is an object of $B$ and $x \in F(b)$.
		\item A morphism $(b,x) \to (b',x')$ is a morphism $f : b \to b'$ in $\mathcal{B}$ such that $F(f)(x) = x'$.
	\end{itemize}
	Note that the category of elements comes naturally equipped with a functor $\pi : \mathcal{E}_{F} \to \mathcal{B}$.
	Moreover, the original functor $F$ can be recovered (at least up to isomorphism) from $\pi$.
	However, not all functors $\mathcal{E} \to \mathcal{B}$ arise in this fashion from a functor $\mathcal{B} \to \Set$.
	Indeed, only functors $\mathcal{E} \to \mathcal{B}$ with a specific lifting property arise in this way; we need that for each morphism $f : b \to b'$ and any lift $e \in \mathcal{E}$ of $b$ that there is a unique morphism $\phi : e \to e'$ that lifts $f$.
	Such functors are sometimes called \emph{discrete opfibrations}.

	There is an $\infty$-categorical version of this story in which functors $\mathcal{B} \to \cS$ of $\infty$-categories correspond to functors $\pi : \mathcal{E} \to \mathcal{B}$ with a certain lifting property.

	\begin{definition}\label{def:left/right-fib}
		Given a functor $\pi : \cE \to \cB$ of $\infty$-categories, we say that $p$ is a \emph{left fibration} if
		\[
		\begin{tikzcd}
			\mathrm{Ar}(\mathcal{E}) \arrow[r] \arrow[d] & \mathrm{Ar}(\mathcal{B}) \arrow[d] \\
			\mathcal{E} \arrow[r] & \mathcal{B}
		\end{tikzcd}
		\]
		is a pullback of $\infty$-categories, where the vertical map picks out the source of the arrows.
		We say that $\pi$ is a \emph{right fibration} if the corresponding diagram, with the vertical maps picking out the targets of the arrows, is a pullback.
		The full subcategories of $(\Cat_{\infty})_{/\cB}$ consisting of left and right fibrations over $\cB$ will be denoted $\LFib(\cB)$ and $\RFib(\cB)$, respectively.
	\end{definition}

	\begin{theorem}[{\cite{HTT}}]\label{prop:straightening_LFib}
		There are equivalences of $\infty$-categories.
		\[
		\mathrm{LFib}(\mathcal{B}) \simeq \Fun(\mathcal{B},\cS) \qquad \mathrm{RFib}(\mathcal{B}) \simeq \Fun(\mathcal{B}^{\op},\cS)
		\]
	\end{theorem}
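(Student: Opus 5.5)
This is the straightening--unstraightening theorem of \cite{HTT}; the paper takes it as a black box. Still, here is how I would organize a proof. It suffices to treat left fibrations: the statement for right fibrations follows by passing to opposites. Indeed, $\pi\colon\cE\to\cB$ is a right fibration exactly when $\pi^{\op}\colon\cE^{\op}\to\cB^{\op}$ is a left fibration, since the defining pullback square of \Cref{def:left/right-fib} is carried to the corresponding square under $\Ar(\cE)^{\op}\simeq\Ar(\cE^{\op})$, which exchanges sources and targets. So $\RFib(\cB)\simeq\LFib(\cB^{\op})\simeq\Fun(\cB^{\op},\cS)$.

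For left fibrations I would build the two functors explicitly.

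\textbf{Unstraightening.} Take the universal left fibration $\cS_{*}\to\cS$, the forgetful functor from pointed spaces. Send $F\colon\cB\to\cS$ to its pullback $\cE_F:=\cB\times_{\cS}\cS_{*}$. Left fibrations are stable under pullback, because the defining square is compatible with base change, so $\cE_F\to\cB$ is a left fibration. This gives a functor $\mathrm{Un}\colon\Fun(\cB,\cS)\to\LFib(\cB)$.

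\textbf{Straightening.} Send a left fibration $\pi\colon\cE\to\cB$ to the functor $b\mapsto\cE_b$ on fibers. The pullback condition says that for any arrow $f\colon b\to b'$ and any lift $e$ of $b$, the space of lifts of $f$ starting at $e$ is contractible. Two consequences follow. First, the fibers are $\infty$-groupoids, since every morphism in a fiber lifts an identity and is therefore invertible. Second, one gets transport maps $f_!\colon\cE_b\to\cE_{b'}$, well defined up to contractible choice.

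I would prove that $\mathrm{Un}$ is an equivalence in two steps.

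\textbf{Essential surjectivity.} I would show that $\cS_{*}\to\cS$ classifies left fibrations, i.e.\ that every left fibration is a pullback of it. This amounts to producing the transport functor coherently.

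\textbf{Full faithfulness.} I would check this fiberwise. A map of left fibrations over $\cB$ is an equivalence iff it is an equivalence on every fiber, which follows from the contractible-lift property. Combining this with the description of natural transformations as an end $\int_{b}\Map(F(b),G(b))$ reduces the comparison of mapping spaces to the fiber computation.

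\textbf{Main obstacle.} The hard step is essential surjectivity, namely assembling the transport maps into a \emph{coherent} functor rather than one defined only up to homotopy. In a quasi-categorical model I would do this as Lurie does. Define the straightening functor $\mathrm{St}\colon s\mathrm{Set}_{/\cB}\to\Fun(\mathfrak{C}[\cB],s\mathrm{Set})$ as a left adjoint, given by a cone construction $\mathfrak{C}[\cE^{\triangleright}\sqcup_{\cE}\cB]$. Then verify that $\mathrm{St}\dashv\mathrm{Un}$ is a Quillen equivalence between the covariant model structure and the projective model structure. That verification reduces to checking it on simplices $\Delta^n\to\cB$, using that both sides preserve colimits and that $\mathrm{St}$ sends left anodyne maps to weak equivalences. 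The case $\cB=\Delta^0$, where both sides are just spaces, anchors the induction.
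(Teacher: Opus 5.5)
The paper gives no argument of its own and simply cites \cite{HTT}, and your final paragraph is exactly Lurie's proof. That proof is the Quillen equivalence between the covariant model structure on $s\mathrm{Set}_{/\cB}$ and the projective model structure on simplicial functors out of the rigidification of $\cB$, followed by identifying the underlying $\infty$-categories with $\LFib(\cB)$ and $\Fun(\cB,\cS)$. So your approach coincides with the paper's. Note that this Quillen equivalence already gives full faithfulness and essential surjectivity together. Your separate fiberwise sketch of full faithfulness therefore isn't needed, and on its own it only yields conservativity.

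One small slip: the right cone $\cE^{\triangleright}\sqcup_{\cE}\cB$, with mapping spaces into the cone point, produces functors on the opposite of the rigidification. That is the right-fibration version. For left fibrations you should use the left cone $\cE^{\triangleleft}\sqcup_{\cE}\cB$ and mapping spaces out of the cone point.
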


	The functor that takes a fibration and produces a functor into $\cS$ is typically called \emph{straightening}, while the functor going the other way around is called \emph{unstraightening}.
	These will be denoted $\St$ and $\Unst$. In analogy with fiber bundles and classifying spaces, we will sometimes say that a fibration $\cE\to \cC$ is \emph{classified} by its straightening.
	\begin{ex}\label{ex:slice_fibrations}
		The prototypical left fibration is the \emph{slice} $\cC_{c/} \to \cC$ for some object $c$ in $\cC$.
    This is the fibration classified by the representable copresheaf $\Map_{\cC}(c,-)\colon \cC \to \cS$.
    Dually, we have a right fibration $\cC_{/c} \to \cC$ classified by the representable presheaf $\Map_{\cC}(-,c)$.
    The representable left (resp. right) fibrations $\cE \to \cC$ can be uniquely characterized as the ones having an initial (resp. terminal) object.
    If the image of this terminal object in $\cC$ is $c$, there is a uniquely determined equivalence $\cE \simeq \cC_{c/}$ of left fibrations over $\cC$ (resp. $\cE \simeq \cC_{/c}$ of right fibrations over $\cC)$.
	\end{ex}

	We have a similar story to tell about functors into the $\infty$-category $\Cat_{\infty}$.
	We recall this story in the 1-categorical setting.
	Suppose that we are given a functor $F : \mathcal{B} \to \Cat$ from some category $\mathcal{B}$ into the category of categories.
	From this we construct a new category $\mathcal{E}_{F}$, the Grothendieck construction, is the following way:
	\begin{itemize}
		\item Objects are pairs $(b,x)$ where $b$ is an object of $\mathcal{B}$ and $x$ is an object of the category $F(b)$.
		\item A morphism $(b,x) \to (x',b')$ is pair $(f,\alpha)$ where $f : b \to b'$ is a morphism in $\mathcal{B}$ and $\alpha : F(f)(x) \to x'$ is a morphism in the category $F(b')$.
	\end{itemize}
	Again, this category is naturally equipped with a functor $\pi : \mathcal{E}_{\chi} \to \mathcal{B}$ and the functor~$F$ can be recovered up to isomorphism from $\pi$.
	Functors $\pi : \mathcal{E} \to \mathcal{B}$ constructed in this way have a particular property characterizing them; they are coCartesian fibrations.

	\begin{definition}\label{def:cocart_edge}
		Given a functor $\pi : \cE \to \cB$ of $\infty$-categories, we say that a morphism $f : e \to e'$ is \emph{$\pi$-coCartesian} if for every object $e'' \in \cE$ the commutative square
		\[
		\begin{tikzcd}
			\Map_{\cE}(e',e'') \arrow[r,"f^*"] \arrow[d] & \Map_{\cE}(e,e'') \arrow[d] \\
			\Map_{\cB}(\pi e' , \pi e'') \arrow[r,"\pi(f)_*"] & \Map_{\cE}(\pi e'' , \pi e)
		\end{tikzcd}
		\]
		is a pullback.
		Informally, this means that given a morphism $g : e \to e''$ and a factorization
		\[
		\begin{tikzcd}
			\pi e \arrow[rr,"\pi g"] \arrow [dr,"\pi f"'] & & \pi e'' \arrow[dl,"\phi"] \\
			& \pi e' &
		\end{tikzcd}
		\]
		in the $\infty$-category $\cB$, there is a unique lift to a factorization
		\[
		\begin{tikzcd}
			e \arrow[rr,"g"] \arrow [dr,"f"'] & & e'' \arrow[dl] \\
			& e' &
		\end{tikzcd}
		\]
		in the $\infty$-category $\cE$.
		Dually, we say that $f$ is \emph{$\pi$-Cartesian} if the corresponding commutative squares covariantly induced by $f_*$ and $\pi(f)_*$ are pullbacks.
	\end{definition}
	The collection of (co)Cartesian edges is closed under composition in the source. By applying the pasting law to squares like those in \Cref{def:cocart_edge}, one gets the following 2-out-of-3 property for (co)Cartesian edges.
  Given functors
  \[
  \cE\xrightarrow{p} \cF \xrightarrow{q} \cB \,,
  \] an edge~$f$ in~$\cE$ such that $p(f)$ is $q$-(co)Cartesian is $qp$-(co)Cartesian if and only if $f$ is $q$-coCartesian.
	A (co)Cartesian fibrations is a functor which admits a sufficiently many (co)Cartesian lifts.

	\begin{definition}\label{def:cocart_lift}
		Given a morphism $f : b \to b'$ in $\cB$ and an object $e \in \cE$ such that $\pi(e) \simeq b$, a \emph{$\pi$-coCartesian lift of $f$ to $e$} is a $\pi$-coCartesian morphisms $\tilde{f} : e \to e'$ such that $\pi(\tilde{f}) \simeq f$.
		The notion of \emph{$\pi$-Cartesian lifts} is defined dually.
	\end{definition}

	\begin{definition}\label{def:cocart_fib}
		We say that $\pi$ is a \emph{(co)Cartesian fibration} if $\cE$ has $\pi$-(co)Cartesian lifts of all morphisms in $\cB$.
		A morphism of (co)Cartesian fibrations over $\cB$ is a commutative diagram
		\[
		\begin{tikzcd}
			\cE \arrow[rr,"F"] \arrow[dr,"\pi"'] & & \cE' \arrow[dl,"\pi'"] \\
			& \cB
		\end{tikzcd}
		\]
		where $\pi$ and $\pi'$ are (co)Cartesian fibrations and $F$ takes $\pi$-(co)Cartesian morphisms to $\pi'$-(co)Cartesian morphisms.
		The $\infty$-category of coCartesian fibrations over $\cB$ will be denoted $\CoCart(\cB)$ while the $\infty$-category of Cartesian fibrations will be denoted $\Cart(\cB)$.
	\end{definition}

	\begin{theorem}[{\cite{HTT} }]\label{prop:straightening_cocart}
		Let $\cB$ be a small $\infty$-category. There are equivalences
		\[
		\CoCart(\mathcal{B}) \simeq \Fun(\mathcal{B},\Cat_{\infty}) \qquad \text{and} \qquad \Cart(\mathcal{B}) \simeq \Fun(\mathcal{B}^{\op},\Cat_{\infty})
		\]
		of $\infty$-categories, natural in $\cB$.
	\end{theorem}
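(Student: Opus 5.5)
The statement is the straightening--unstraightening equivalence for (co)Cartesian fibrations, cited from \cite{HTT}. We prove only the coCartesian case; the Cartesian case follows by passing to opposites. Indeed, $\pi\colon \cE\to\cB$ is Cartesian exactly when $\pi^{\op}\colon\cE^{\op}\to\cB^{\op}$ is coCartesian. Composing with the involution $(-)^{\op}$ of $\Cat_{\infty}$ then identifies $\Cart(\cB)\simeq\CoCart(\cB^{\op})\simeq\Fun(\cB^{\op},\Cat_{\infty})$.

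We construct the two functors as follows.
\begin{itemize}
  \item \emph{Unstraightening} $\Unst\colon\Fun(\cB,\Cat_{\infty})\to\CoCart(\cB)$. It is obtained by pulling back a universal coCartesian fibration $\Cat_{\infty,\ast}\to\Cat_{\infty}$. Here $\Cat_{\infty,\ast}$ has objects pairs $(\cC,x)$ with $x\in\cC$, and morphisms $(F,\alpha\colon F(x)\to x')$, mirroring the $1$-categorical Grothendieck construction recalled above.
  \item \emph{Straightening} $\St$. It sends $\pi\colon\cE\to\cB$ to the functor $b\mapsto\cE_{b}$ whose functoriality is transport along coCartesian lifts. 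Such lifts exist and are essentially unique because, by \Cref{def:cocart_edge}, the space of coCartesian lifts of a fixed edge from a fixed source is contractible.
\end{itemize}
To make this coherent, we follow the model-dependent route of \cite{HTT}. We work with marked simplicial sets over a simplicial set $S$ modelling $\cB$, together with the coCartesian model structure, and realise $\St$ as a left Quillen functor into $\Fun(\mathfrak{C}[S],\mathrm{sSet}^{+})$ with the projective model structure. The right adjoint is then $\Unst$.

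We then show the Quillen adjunction is an equivalence. Both sides are compatible with pullback along maps $S'\to S$, and both preserve homotopy colimits in $S$. This reduces the problem to the case where $S$ is a simplex $\Delta^{n}$. For a simplex we argue directly: a coCartesian fibration over $\Delta^{n}$ is determined up to equivalence by its fibres and the composable chain of transition functors. This is exactly a functor $[n]\to\Cat_{\infty}$, because $\mathfrak{C}[\Delta^{n}]$ is equivalent to $[n]$. Naturality in $\cB$ holds because straightening commutes with pullback up to coherent equivalence. The same argument gives \Cref{prop:straightening_LFib} as the special case of fibrations with groupoid fibres.

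The main obstacle is the reduction to simplices. One must check that the derived unit and counit are equivalences fibrewise. The fibre of $\Unst(\St(\cE))$ over $b$ is computed as a homotopy colimit over a mapping-space diagram, and we must show it agrees with $\cE_{b}$. The tool here is a cofinality argument of the kind used in the proof of \cite{HTT}. Since the paper cites this as a black box, in context we would simply invoke \cite{HTT}.
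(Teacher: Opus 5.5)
Your proposal agrees with the paper, which gives no argument of its own and simply cites \cite{HTT}. The reduction of the Cartesian case to the coCartesian one via $(-)^{\op}$ is correct, and your outline (marked simplicial sets, the Quillen adjunction, the mapping-simplex description over $\Delta^{n}$, and the fibrewise comparison of $\St(\cE)(b)$ with $\cE_{b}$) is a fair summary of HTT's proof, with two small corrections:
\begin{enumerate}
\item In HTT, $\Unst$ is defined as the right adjoint of $\St$. The universal fibration is only constructed afterwards, as a consequence of straightening, so it cannot serve as the definition.
\item The reduction to simplices should run through fibrewise detection of equivalences, as in your last paragraph. It should not rest on a descent property of $\CoCart(-)$ in the base, since in HTT's development that descent is itself obtained from straightening.
\end{enumerate}
The one part of the statement that HTT does not really supply is the \emph{coherent} naturality in $\cB$, which is exactly what your one-line justification assumes. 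HTT gives compatibility of unstraightening with pullback along each individual map, but assembling these into a natural equivalence of functors of $\cB$ needs a separate argument (see e.g.\ \cite{GHN}); the paper's bare citation of \cite{HTT} has the same omission.
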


	Again, the functor that takes a fibration and produces a functor into $\Cat_{\infty}$ will be called straightening and denoted $\St$.
	The functor going the other way will be called unstraightening and denoted $\Unst$.
  As for left/right-fibrations, we say that a (co)Cartesian fibration $\cE\to \cC$ is classified by its straightening.

	By changing universe, straightening also works for fibrations between large $\infty$-categories.
  These give equivalences
	\begin{align*}
		\widehat{\CoCart}(\cB) \simeq \Fun(\cB,\CatHat_{\infty}) \qquad \text{and} \qquad \widehat{\Cart}(\cB) \simeq \Fun(\mathcal{B}^{\op},\widehat{\Cat}_{\infty})
	\end{align*}
	for any large $\infty$-category $\cB$.
  The obvious analogues hold for large left and right fibrations.

	\begin{ex}\label{ex:ev_0_is_cocart}
		The prototypical coCartesian fibration is the target map $ev_{1}\colon \Ar(\cC) \to \cC$.
    A morphism in $\Ar(\cC)$ is $ev_{1}$-coCartesian if and only if it is of the form
		\begin{equation*}
			\begin{tikzcd}
				a \arrow[r, "\simeq"] \arrow[d]      &   a'  \arrow[d] \\
				b  \arrow[r, "f"]               &   c \, .
			\end{tikzcd}
		\end{equation*}
		The fiber of $ev_{1}$ at some $b\in \cC$ is the slice $\infty$-category $\cC_{/b}$, and the functor $\cC_{/b} \to \cC_{/c}$ induced by taking coCartesian lifts of some $f\colon b\to c$ is given by composing with~$f$.
    The straightening of $ev_{1}$ therefore gives us the expected functoriality of slices in $\cC$.
    If~$\cC$ admits pullbacks, each coCartesian transport functor $f_{!}\colon \cC_{/b}\to \cC_{/c}$ admits a right adjoint $f^{*}$ determined by base-change along $f$.
    This implies that $ev_{1}$ is also a Cartesian fibration, whose straightening captures the functoriality of base-change.
	\end{ex}

	Note that any right fibration is also a Cartesian fibration, and any left fibration a coCartesian fibration.
  In each case, these can be characterized as the (co)Cartesian fibrations $\pi\colon \cE\to \cC$ for which each $\cE_{b}$ is a space, or equivalently, those for which every edge in $\cE$ is $\pi$-(co)Cartesian.
  In particular, any morphism of right (left) fibrations must preserve (co)Cartesian lifts, so we have full subcategories
	\begin{align}\label{eq:Lfib->CoCart}
		\LFib(\cC) \to \CoCart(\cC) \quad \text{and} \quad \RFib(\cC) \to \Cart(\cC) \,.
	\end{align}
	Under straightening, these correspond precisely to post-composing with the inclusion $\cS\to \Cat_{\infty}$.
  The inclusions \eqref{eq:Lfib->CoCart} therefore admit right adjoints
	\begin{align*}
		\cR \colon \CoCart(\cC)\to \LFib(\cC) \quad \text{and} \quad  \cL\colon \Cart(\cC) \to \RFib(\cC) \,.
	\end{align*}
	which on the straight side correspond to post-composing with $(-)^{\simeq}\colon \Cat_{\infty}\to \cS$.
  We call $\cR(\cE)\to \cC$ the \emph{underlying right fibration} of $\cE\to \cC$.
	\begin{prop}\label{thm:iterated_fibration}
		Given a morphism of Cartesian fibrations over $\cB$
		\begin{equation*}
			\begin{tikzcd}
				\cE \arrow[rr, "r"] \arrow[dr, "p"'] & & \cF \arrow[dl, "q"] \\
				& \cB &
			\end{tikzcd}
		\end{equation*}
		such that for each edge $b\in \cB$, the map on fibers $r_{b}\colon \cE_{b} \to \cF_{b}$ is a Cartesian fibration, and such that for each morphism $f\colon b\to c$ in $\cB$, the square
		\begin{equation*}
			\begin{tikzcd}
				\cE_{c} \arrow[r, "f^{*}_{\cE}"] \arrow[d, "r_{c}"]      &   \cE_{b}  \arrow[d, "r_{b}"] \\
				\cF_{c}  \arrow[r, "f^{*}_{\cE}"]               &   \cF_{b}
			\end{tikzcd}
		\end{equation*}
		is a morphism of Cartesian fibrations, then $r$ is a Cartesian fibration.
    If each $\cE_{b}\to \cF_{b}$ is a right fibration, the last condition is automatic, and $r$ is also a right fibration.
	\end{prop}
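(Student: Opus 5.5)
The plan is to construct Cartesian lifts for $r$ explicitly and then check them with the 2-out-of-3 property for Cartesian edges recalled before the statement. Fix an object $e\in\cE$ over $c=p(e)\in\cB$ and a morphism $g\colon y\to r(e)$ in $\cF$. Write $f=q(g)\colon b\to c$. We factor $g$ in $\cF$ as
\[
y\xrightarrow{g'} f^{*}_{\cF}r(e)\xrightarrow{\bar g} r(e),
\]
where $\bar g$ is a $q$-Cartesian lift of $f$ and $g'$ lies in the fiber $\cF_{b}$. The lift of $g$ is built in two stages, one for each factor.

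First, since $r$ is a morphism of Cartesian fibrations, we may take a $p$-Cartesian lift $\bar e\colon f^{*}_{\cE}e\to e$ of $f$, and then $r(\bar e)$ is $q$-Cartesian. Hence $r(\bar e)\simeq\bar g$, and we identify $r(f^{*}_{\cE}e)\simeq f^{*}_{\cF}r(e)$; this is exactly the compatibility encoded in the commuting square of the statement. Second, $r_{b}\colon\cE_{b}\to\cF_{b}$ is a Cartesian fibration, so $g'$ has an $r_{b}$-Cartesian lift $\tilde g'\colon x\to f^{*}_{\cE}e$ in $\cE_{b}$. The candidate lift of $g$ is the composite $\tilde g=\bar e\circ\tilde g'$.

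The main step is to show that $\tilde g$ is $r$-Cartesian, and it reduces to two claims.
\begin{enumerate}
\item[(a)] \emph{The edge $\bar e$ is $r$-Cartesian.} This follows from the 2-out-of-3 property: $\bar e$ is $p=qr$-Cartesian and $r(\bar e)$ is $q$-Cartesian.
\item[(b)] \emph{A fiberwise $r_{b}$-Cartesian edge is $r$-Cartesian.} Since Cartesian edges are closed under composition, (a) and (b) together give that $\tilde g$ is $r$-Cartesian.
\end{enumerate}

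Claim (b) is where I expect the main obstacle, and it is where the hypothesis on the squares is used. Let $h\colon x\to f^{*}_{\cE}e$ be $r_b$-Cartesian and let $z\in\cE$ lie over $d\in\cB$. We must show that the square of mapping spaces
\[
\Map_{\cE}(z,x)\to\Map_{\cE}(z,f^{*}_{\cE}e)\times_{\Map_{\cF}(rz,rf^{*}_{\cE}e)}\Map_{\cF}(rz,rx)
\]
is an equivalence. I would check this fiberwise over $\Map_{\cB}(d,b)$. Over a point $u\colon d\to b$, the $p$-Cartesian property identifies the fiber of $\Map_{\cE}(z,x)$ with $\Map_{\cE_{d}}(z,u^{*}x)$, and similarly for $\cF$ using $q$-Cartesianness. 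The map in question thereby becomes the corresponding map for the edge $u^{*}h$ in $\cE_{d}$ relative to $r_{d}$. The squares are morphisms of Cartesian fibrations, so $u^{*}_{\cE}$ preserves Cartesian edges; hence $u^{*}h$ is $r_{d}$-Cartesian and this fiber map is an equivalence. Since the map is an equivalence on all fibers over $\Map_{\cB}(d,b)$, it is an equivalence, which proves (b). The delicate bookkeeping is to identify the fibers compatibly, using that $r$ commutes with the transport functors.

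For the right fibration case, suppose each $r_{b}$ is a right fibration. Every edge of $\cE_{b}$ is then $r_{b}$-Cartesian, and any functor between right fibrations over $\cF_{b}$ or $\cF_{c}$ preserves Cartesian edges, so the square condition is automatic. It remains to show that $r$ is a right fibration, and we verify this through its fibers. For $y\in\cF$ over $b$, the fiber $r^{-1}(y)$ agrees with the fiber of $r_{b}$ over $y$, because $r$ is a map over $\cB$. That fiber is a space, so $r$ is a Cartesian fibration with space fibers, i.e. a right fibration.
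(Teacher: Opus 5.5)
Your proposal is correct and follows the same route as the paper. You factor a morphism of $\cF$ into a fiberwise morphism followed by a $q$-Cartesian one, lift the latter by a $p$-Cartesian edge (which is $r$-Cartesian by 2-out-of-3), lift the former by an $r_b$-Cartesian edge, and compose. Your fiberwise check over $\Map_{\cB}(d,b)$ is a valid way of filling in the step the paper leaves as ``one can check''. The only deviation is in the right-fibration part: you show the fibers $r^{-1}(y)\simeq r_b^{-1}(y)$ are spaces, whereas the paper shows that every edge of $\cE$ is $r$-Cartesian. Both characterizations of right fibrations are recalled in the paper, so either suffices.
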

	\begin{proof}
		For a $q$-Cartesian edge $f\colon x\to y$ in $\cF$ and a lift $\tilde{y}$ of $y$ to $\cE$, consider the $p$-Cartesian lift $\tilde{f}$ of $q(f)$ at $\tilde{y}$.
    Because $r$ preserves Cartesian lifts, we must have $r(\tilde{f}) = f$.
    By the 2-out-of-3 property for Cartesian edges, $\tilde{f}$ must be $r$-Cartesian.

		For an edge $f\colon x\to y$ in a fiber $\cF_{b}$ and a lift $\tilde{y}$ of $y$ to $\cE$, we can also factor $\tilde{y}$ through the fiber $\cE_{b}$. We can then find an $r_{b}$-Cartesian lift $\tilde{f}$ of $f$ at $\tilde{y}$. Because fiber transport $\cE_{c} \to \cE_{b}$ takes $r_{c}$-Cartesian edges to $r_{b}$-Cartesian edges, one can check that $\tilde{f}$ is actually $r$-Cartesian.

		Since every morphism in $\cF$ factors by a morphism in a fiber followed by a $q$-coCartesian morphism, and since the collection of $r$-Cartesian edges in $\cE$ is closed under composition, the above implies that $r$ is a Cartesian fibration.

		If each $\cE_{b}\to \cF_{b}$ is a right fibration, all edges in $\cE_{b}$ are $r_{b}$-Cartesian.
    Therefore, the condition that $\cE_{c}\to \cE_{b}$ preserves Cartesian lifts is automatic.
    Any edge in $\cE$ factors as an edge in a fiber $\cE_{b}$ followed by a $p$-Cartesian edge.
    By the first part of the proof, $r_{b}$-Cartesian edges are also $r$-Cartesian, and since $\cE_{b}\to \cF_{b}$ is a right fibration, every edge in $\cE_{b}$ is $r$-Cartesian.
    Every $p$-Cartesian edge is mapped by $r$ to a $q$-Cartesian edge, so by the 2-out-of-3 property for Cartesian edges, every $p$-Cartesian edge is also $r$-Cartesian.
    This shows that every edge in $\cE$ is $r$-Cartesian, so $r$ is a right fibration.
	\end{proof}

	\subsection{Monoidal and double $\infty$-categories}\label{subsec:monoidal_categories}

	With the language of fibrations in place, we can talk about certain fibrations representing operadic structures.
  We begin with defining the broadest class of operads and algebras, before specializing to the cases that will be of interest for the rest of the paper, namely monoidal $\infty$-categories and double $\infty$-categories.

	\begin{definition}[{\cite*[Definition 2.2.3]{GH}}] \label{def:inert}
		A morphism in $\Simp^{\op}$ is \emph{inert} if it is opposite to an order-preserving map $\phi\colon [n] \to [m]$ which is the inclusion of a subinterval, i.e. $\phi(i+1) = \phi(i)+1$.
	\end{definition}

	\begin{ex}\label{ex:rho_i}
		For $0\leq i \leq n-1$, we have an inert morphism $\rho_{i}$ opposite to
		\begin{align*}
			[1] &\to [n]\\
			0 &\mapsto i \\
			1 & \mapsto i+1 \,.
		\end{align*}
	\end{ex}
	Let $\cM\to \Simp^{\op}$ be a functor of $\infty$-categories.
	We denote the fiber over $[n]$ by $\cM_{n}$.
	\begin{definition}[{\cite*[Definition 2.4.1]{GH} }] \label{def:gen_nonsym_operad}
		A functor $\cM\to \Simp^{\op}$ is a \emph{generalized non-symmetric $\infty$-operad} if:
		\begin{enumerate}
			\item For every inert morphism $\phi\colon [n]\to [m] \in \Simp^{\op}$ and every $X\in \cM_{n}$, there exists a coCartesian lift $X\to \phi_!X$ of $\phi$ in $\cM$.
			\item For any $[n]\in \Simp^{\op}$, the transport along the inert morphisms $\rho_{i}$ induce an equivalence
			\begin{align*}
				\cM_{n} \overset{\simeq}\to \cM_{1}\times_{\cM_{0}} \dots \times_{\cM_{0}} \cM_{0} \,,
			\end{align*}
			where the iterated fiber product is taken $n$ times.
			\item For any morphism $\phi \colon [n] \to [m] \in \Simp^{\op}$, and any pair of objects $X\in \cM_{n}$ and $Y\in \cM_{m}$, composition with coCartesian lifts over inert morphisms induce an equivalence
			\begin{align*}
				\Map_{\cM}^{\phi}(X,Y) \overset{\simeq}\to \lim_{\alpha} \Map_{\cM}^{\alpha\circ \phi}(X, \alpha_! Y) \,.
			\end{align*}
			Here, we are taking the limit over the full subcategory of $(\Simp^{\op})_{[m]/}$ spanned by inert morphisms with target $[0]$ or $[1]$, and the superscripts indicate the subspace of morphisms in $\cM$ that lift the indicated morphism in $\Simp^{\op}$.
		\end{enumerate}
	\end{definition}

	\begin{rem}\label{rem:inner_fib}
		In~\cite*[Definition 2.4.1]{GH} there is an extra assumption of $\cM \to \Simp^{\op}$ being an inner fibration.
		This condition is only relevant in the situation where we are working with the quasi-categorical model of $\infty$-categories.
		In particular, it guarantees that the fiber of $\cM\to \Simp^{\op}$ at some $[n]$ taken in simplicial sets really is the categorical fiber that we are expecting it to be.
		Working model-agnostically, we do not have to worry about this.
		Indeed, since we are already assuming access to a model-independent $\infty$-category $\Cat_{\infty}$ in which, in particular, limits can be made sense of internally to $\Cat_{\infty}$, so we simply take $\cM_{n}$ to be the pullback
		\[
		\begin{tikzcd}
			\cM_{n} \arrow[r] \arrow[d] & \cM \arrow[d] \\
			\{[n]\} \arrow[r] & \Simp^{\op}
		\end{tikzcd}
		\]
		in $\Cat_{\infty}$.
	\end{rem}

	\begin{definition}\label{def:algebra}
		For generalized non-symmetric $\infty$-operads $\cM$ and $\cN$, an \emph{$\cM$-algebra in $\cN$} is a diagram
		\begin{equation*}
			\begin{tikzcd}
				\cM \arrow[rr, "A"] \arrow[dr] && \cN \arrow[dl] \\
				&\Simp^{\op} &
			\end{tikzcd}
		\end{equation*}
		in $\Cat_{\infty}$ such that $A$ preserves coCartesian lifts of inert morphisms. Write $\Alg_{\cM}(\cN)$ for the full subcategory of $\Fun_{/\Simp^{\op}}(\cM, \cN)$ spanned by the algebras.
	\end{definition}

	We let $\Opd$ denote the subcategory of $(\Cat_{\infty})_{/\Simp^{\op}}$ spanned by generalized non-symmetric $\infty$-operads and algebras between them.
	Because we can compose algebras, the construction $\Alg_{(-)}(\cN)$ for a fixed $\cN$ determines a functor $(\Opd)^{\op} \to \Cat_{\infty}$.

	\begin{definition}[{\cite*[Definition 3.6.1]{GH} }] \label{def:algebra_fibration}
		For $\cN$ a generalized non-symmetric $\infty$-operad, let $\Alg(\cN) \to \Opd$ be the Cartesian fibration classified by $\cM \mapsto \Alg_{\cM}(\cN)$.
	\end{definition}

	\begin{definition}[{\cite*[Definition 2.2.13]{GH}} ] \label{def:monoidal_infty_cat}
		A \emph{monoidal $\infty$-category} is a generalized non-symmetric $\infty$-operad $\cV^{\otimes} \to \Simp^{\op}$ that is also a coCartesian fibration and such that $\cV_{[0]} \simeq [0]$.
	\end{definition}

	\begin{rem} \label{rem:unwinding_monoidal_cat}
		Let us unpack the definition to convince ourselves that this definition makes sense.
		If $\cV^{\otimes} \to \Simp^{\op}$ is a monoidal $\infty$-category, then we think of $\cV^{\otimes}$ as having an underlying $\infty$-category $\cV\coloneq \cV^{\otimes}_{1}$.
		Sometimes, we will abusively refer to $\cV$ as a monoidal $\infty$-category, remembering that it is obtained in this way.
		As noted in~\cite*[Remark 2.2.15]{GH}, condition (3) of \Cref{def:gen_nonsym_operad} follows from condition (2) when $\cM\to \Simp^{\op}$ is a coCartesian fibration.
		With the assumption of $\cV_{0}$ being trivial, condition (2) becomes that the inert morphism $\rho_{i}$ induces equivalences
		\begin{align*}
			\cV_{n}^{\otimes} \simeq \bigl(\cV_{1}^{\otimes}\bigr)^{n} = \cV^{n} \,.
		\end{align*}
		We also have coCartesian lifts of the morphisms opposite to
		\begin{align*}
			\alpha_n \colon [1] &\to [n]\\
			0 &\mapsto 0\\
			1 &\mapsto n \,.
		\end{align*}
		These specify a functor
		\begin{align*}
			\cV^{n} \simeq \cV_{n}^{\otimes} \to \cV
		\end{align*}
		which tells us how to tensor together a string of $n$ objects of $\cV$.
		The coCartesian lift of the morphism opposite to $[1] \to [0]$ specifies a unit object $I_{\cV} \in \cV$.
	\end{rem}

	\begin{definition} \label{def:Mon_and_MonLax}
		Denote the full subcategory of $\CoCart(\Simp^{\op})$ spanned by the monoidal $\infty$-categories by $\Mon$, and the full subcategory of $\Opd$ spanned by the monoidal $\infty$-categories by $\MonLax$.
		We call the morphisms in these $\infty$-categories \emph{monoidal} and \emph{lax monoidal functors}, respectively.
	\end{definition}

	\begin{rem}\label{rem:unwinding_monoidal_functor}
		Let us see why the names monoidal and lax monoidal functor are justified.
		Consider a diagram
		\begin{equation*}
			\begin{tikzcd}
				\cV^{\otimes} \arrow[rr, "F"] \arrow[dr] && \cW^{\otimes} \arrow[dl] \\
				& \Simp^{\op}&
			\end{tikzcd}
		\end{equation*}
		where $\cV$ and $\cW$ are monoidal $\infty$-categories.
		For a pair of objects $V_0, V_1 \in \cV$, consider the coCartesian lifts
		\begin{align*}
			(V_0,V_1) & \xrightarrow{(\rho_{i})_{!}} V_i, \quad i=0,1 \\
			(V_0,V_1) &\xrightarrow{(\alpha_2)_!} V_0\otimes V_1 \,.
		\end{align*}
		If~$F$ is a monoidal functor, it preserves both of these coCartesian lifts.
		Because $F$ preserves the lifts of $\rho_{i}$, we must have $F( (V_0, V_1)) = (F(V_0), F(V_1))$.
		The coCartesian lifts of $\alpha_2$ in $\cV$ and $\cW$ compute the tensor product in $\cV$ and $\cW$, respectively.
		Hence, since~$F$ preserves this lift, we must have
		\begin{align*}
			F(V_0 \otimes V_1) \simeq F(V_0) \otimes F(V_1) \,.
		\end{align*}
		Now consider the case where $F$ is just lax monoidal.
		The second part of the argument above falls through, but we still know  that the morphism $F\left( (\alpha_2)_! \right)$ factors uniquely through the coCartesian lift of $\alpha_2$ in $\cW$, so we get a diagram
		\begin{equation*}
			\begin{tikzcd}
				F( (V_0, V_{1}) ) = ( F(V_0), F(V_1)) \arrow[dr, "F( (\alpha_2)_! )"']\arrow[rr, "(\alpha_2)_{!}"] & & F(V_0) \otimes F(V_1)  \arrow[dl, "\exists !"] \\
				& F(V_0 \otimes V_1)\,. &
			\end{tikzcd}
		\end{equation*}
	\end{rem}

	\begin{definition} \label{def:double_cat}
		A generalized non-symmetric $\infty$-operad $\bfC\to \Simp^{\op}$ that is also a coCartesian fibration is called a \emph{double $\infty$-category}.
		As mentioned in the introduction, we will typically denote double $\infty$-categories using \texttt{mathbf} font.
	\end{definition}

	\begin{rem}\label{rem:double_is_segal_cat}
		Because double $\infty$-categories are coCartesian fibrations, condition (3) of \Cref{def:gen_nonsym_operad} actually follows automatically from (1) and (2).
		This means that under straightening, a double $\infty$-category is precisely a Segal object $\Simp^{\op} \to \Cat_{\infty}$.
		Simplicial objects in any $\infty$-category with finite limits satisfying the Segal condition can be thought of as an $\infty$-category internal to that $\infty$-category.
		We therefore think of a double $\infty$-category $\bfC$ as having an $\infty$-category $\bfC_{0}$ of objects and an $\infty$-category $\bfC_{1}$ of morphisms. In particular, for any pair of objects $c,d\in \bfC_{0}$, we can think of the pullback
		\begin{equation*}
			\begin{tikzcd}
				\bfC(c,d) \arrow[r] \arrow[d]      &   \bfC_{1}  \arrow[d, "{(d_{1},d_{0})}"] \\
				\ast  \arrow[r, "{(c,d)}"]               &   \bfC_{0}\times \bfC_{0}
			\end{tikzcd}
		\end{equation*}
		as being an $\infty$-category of morphisms between $c$ and $d$. Condition (2) allows us to specify functors
		\[
		\bfC_{n} \simeq \bfC_{1} \times_{\bfC_{0}} \cdots \times_{\bfC_{0}} \bfC_{1} \, ,
		\]
		which tell us how to compose strings of composable morphisms.
		When we compare this definition to \Cref{def:monoidal_infty_cat}, we see that a monoidal $\infty$-category is a double $\infty$-category with a single object.
	\end{rem}

	\begin{definition}\label{def:Dbl_and_DblLax}
		We write $\Dbl_{\infty}$ for the full subcategory of $\CoCart(\Simp^{\op})$ spanned by the double $\infty$-categories, and $\DblLax$ for the full subcategory of $\Opd$ spanned by the double $\infty$-categories.
		We call the morphisms in these $\infty$-categories \emph{double functors} and \emph{lax functors}, respectively.
	\end{definition}
	\begin{definition}\label{def:horizontal_morphisms}
		Let $\bfC\to \Simp^{\op}$ be a double $\infty$-category. We write $h\bfC\to \Simp^{\op}$ for the underlying right fibration of $\bfC$, which is the \emph{Segal space of horizontal morphisms} in $\bfC$.
	\end{definition}
	\begin{rem}\label{rem:horizontal_morph_adjunction}
		By unstraightening, the full subcategory $\Seg \subset s\cS$ of Segal spaces embeds fully faithfully into $\Dbl$. This gives an adjunction
		\begin{align*}
			\iota : \Seg \rightleftarrows \Dbl : h
		\end{align*}
		restricting the underlying right fibration adjunction \eqref{eq:Lfib->CoCart}.
    We may sometimes omit $\iota$ from notation, and thereby think of any Segal space, and in particular, any $\infty$-category~$\cC$ as also being a double $\infty$-category $\cC\to \Simp^{\op}$.
	\end{rem}

	\begin{rem}\label{rem:double_cat_with_space_of_objects}
		Double $\infty$-categories such that $\bfC_{0}$ is a space and such that $h\bfC$ is a \emph{complete} Segal space are usually called $(\infty,2)$-categories.
    While all the double $\infty$-categories that show up in this paper have $\bfC_{0}$ a space, not all of them will have $h\bfC$ complete, a notable example being monoidal $\infty$-categories.
    We will therefore work in the $\infty$-category $\Dbl$, but keep in mind that our objects are more akin to $(\infty,2)$-categories.
	\end{rem}

	\begin{rem}\label{rem:lax_vs_double_functors}
		Note that the difference between double functors and lax functors is that while double functors have to preserve \emph{all} coCartesian lifts, lax double functors only have to preserve coCartesian lifts of the inert morphisms.
		This generalizes the situation of monoidal and lax monoidal functors.
	\end{rem}

	\begin{rem}\label{rem:Mon->Dbl}
		We have a diagram of $\infty$-categories
		\begin{equation*}
			\begin{tikzcd}
				\Mon \arrow[r] \arrow[d] & \MonLax \arrow[d] \\
				\Dbl_{\infty} \arrow[r] & \DblLax \, ,
			\end{tikzcd}
		\end{equation*}
		where the vertical maps are inclusions of full subcategories, the horizontal maps are inclusions of wide subcategories.
	\end{rem}

	\begin{lem}\label{lem:Mon->Dbl->Cat}
		The forgetful functors $\Mon \to \Dbl_{\infty} \to \Cat_{\infty}$ preserve and detect limits over weakly contractible $\infty$-categories.
	\end{lem}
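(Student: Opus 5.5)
The plan is to factor both forgetful functors through the inclusions into $\CoCart(\Simp^{\op})$ and $(\Cat_{\infty})_{/\Simp^{\op}}$, and use that straightening identifies $\Dbl_{\infty}$ with the full subcategory of $\Fun(\Simp^{\op},\Cat_{\infty})$ on Segal objects. Here the functor $\Dbl_{\infty}\to \Cat_{\infty}$ is the one taking total spaces, $\bfC\mapsto \bfC$, and $\Mon$ is the full subcategory on those Segal objects with $\bfC_{0}\simeq \ast$.

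First I treat limits inside $\Dbl_{\infty}$ and $\Mon$. Limits in $\Fun(\Simp^{\op},\Cat_{\infty})$ are computed pointwise. The Segal condition asks that the maps $\bfC_{n}\to \bfC_{1}\times_{\bfC_{0}}\cdots\times_{\bfC_{0}}\bfC_{1}$ be equivalences, and the target is a finite limit, so it commutes with arbitrary limits. Hence Segal objects are closed under all limits, and $\Dbl_{\infty}\subset \Fun(\Simp^{\op},\Cat_{\infty})\simeq \CoCart(\Simp^{\op})$ is closed under limits.

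For $\Mon$ the extra condition $\bfC_{0}\simeq\ast$ is preserved by a limit over $K$ exactly when $\lim_{K}\ast\simeq\ast$. This holds for every $K$, since $\ast$ is terminal. So the restriction to weakly contractible $K$ is not needed at this stage. Consequently $\Mon\to\Dbl_{\infty}$ preserves all limits, and detects them because it is fully faithful.

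The real content is the comparison with $\Cat_{\infty}$, via $\CoCart(\Simp^{\op})\to(\Cat_{\infty})_{/\Simp^{\op}}\to\Cat_{\infty}$.
\begin{itemize}
\item The second functor, forgetting the map to $\Simp^{\op}$, preserves and detects limits over weakly contractible $K$. This is the standard fact that $\cC_{/x}\to\cC$ creates limits indexed by weakly contractible diagrams: the cone point over $x$ is determined because $\lim_{K}$ of the constant diagram at $x$ is $x$ when $|K|\simeq\ast$.
\item For the first functor, I use that $\CoCart(\Simp^{\op})\subset(\Cat_{\infty})_{/\Simp^{\op}}$ is closed under limits. A limit of coCartesian fibrations along functors preserving coCartesian edges is again a coCartesian fibration, with coCartesian edges detected componentwise. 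Moreover the limit in $\CoCart$, which is the unstraightening of the pointwise limit, agrees with the limit in the slice.
\end{itemize}
This is the step I expect to be the main obstacle to write cleanly. My approach is to cite \cite{HTT} (Proposition 3.3.3.1 and Corollary 3.3.3.2): unstraightening a limit of straightened functors gives the limit of total spaces over the base, and coCartesian fibrations together with coCartesian-edge-preserving maps form a subcategory of the slice closed under limits.

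Finally, detection. Suppose a cone in $\Dbl_{\infty}$ maps to a limit cone in $\Cat_{\infty}$, and let $K$ be weakly contractible. The preceding two steps show that the comparison map to the genuine limit in $\Dbl_{\infty}$ becomes an equivalence in $\Cat_{\infty}$. Since $\Dbl_{\infty}\to\Cat_{\infty}$ is conservative, the original cone was already a limit. Conservativity holds because $\CoCart(\Simp^{\op})\to(\Cat_{\infty})_{/\Simp^{\op}}$ is a subcategory inclusion containing all equivalences, and the slice projection is conservative.

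Composing these results gives the claim for $\Mon\to\Cat_{\infty}$ as well.
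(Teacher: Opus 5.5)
Your proposal is correct and follows the paper's route. Both factor through $\CoCart(\Simp^{\op})$ and the slice $\Cat_{\infty/\Simp^{\op}}$, use that the Segal condition (and, for $\Mon$, the condition $\bfC_{0}\simeq\ast$) is stable under limits, and use that the slice projection preserves and detects limits over weakly contractible diagrams. There are two differences. First, the paper gets limit-preservation of $\CoCart(\Simp^{\op})\to\Cat_{\infty/\Simp^{\op}}$ from the existence of a left adjoint \cite{mazel2015all}, not from closure of coCartesian fibrations under limits; that is the cleaner reference, since the results of \cite{HTT} you cite compute limits of $\infty$-categories as (co)Cartesian sections rather than stating this closure. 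Second, you spell out the detection half via conservativity, which the paper leaves implicit.
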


	\begin{proof}
		For the inclusion of monoidal $\infty$-categories into double $\infty$-categories, consider the fact that taking fibers over $[0]$ commutes with limits, and that the limit in $\Cat_{\infty}$ of a constant diagram at $[0]$ is $[0]$.
		For the other statements, consider the composition
		\begin{align*}
			\Dbl_{\infty} \to \CoCart(\Simp^{\op}) \to \Cat_{\infty /\Simp^{\op}} \to \Cat_{\infty} \,.
		\end{align*}
		Since the Segal condition is preserved under limits, the first functor preserves limits.
    The second functor admits a left adjoint by \cite*[\S 1.1]{mazel2015all} and therefore preserves all limits. The forgetful functor from the slice preserves limits over weakly contractible $\infty$-categories.
	\end{proof}

	\subsection{Enriched categories}\label{subsec:categorical_algebras}

	A category enriched in a double $\infty$-category $\bfM$ should have a space of objects $X$. Each object $x\in X$ should be assigned to some object $c_{x}$ of $\bfM$, and for every pair of objects, we should have a morphism object $\sfC(x,y) \in \bfM(c_{x},c_{y})$.
  Moreover, we should have composition morphisms
	\begin{align*}
		\sfC(x,y) \odot \sfC(y,z) \to \sfC(x,z)
	\end{align*}
	in $\bfM(c_{x},c_{z})$, where $\odot$ denotes the horizontal composition of $\bfC$.
  This should be unital and associative up to prescribed homotopy. In this subsection, we will see how this type of structure can be described as an algebra in $\bfM$ over a particularly simple operad.

	\begin{definition}[{\cite*[Definition 4.1.1]{GH}}] \label{def:indiscrete_category}
		For an  $\infty$-category $\cC$, let $\Simp^{\op}_{\cC}\to \Simp^{\op}$ denote the coCartesian fibration classified by
		\begin{align*}
			\Simp^{\op} \to \Cat_{\infty} \,, \quad [n] \mapsto \Fun\left( \left\{ 0,\dots, n\right\}, \cC \right) \simeq \cC^{n+1} \,.
		\end{align*}
		This evidently satisfies the Segal conditions, and therefore defines a double $\infty$-category.
	\end{definition}

	\begin{rem}\label{rem:C->Delta_C_0}
		If $\bfC$ is a double $\infty$-category, the inert morphisms $\rho_{i}$ from \Cref{ex:rho_i} determine a double functor
		\begin{align*}
			\bfC \to \Simp^{\op}_{\bfC_{[0]}}
		\end{align*}
		whose fiber at some $(c,d)$ is precisely the morphism category $\bfC(c,d)$.
	\end{rem}

	By precomposing with the inclusion $\cS\subset \Cat_{\infty}$, the construction $X\mapsto \Simp^{\op}_{X}$ determines a functor $\cS \to \Opd$.
  Since $(\Simp^{\op}_{X})_{1} \simeq X^{2}$, all the morphism $\infty$-categories of $\Simp^{\op}_{X}$ are contractible.
	One should therefore think of $\Simp^{\op}_{X}$ as an indiscrete or chaotic category on the space of objects $X$.
	That is, a category with the same objects as $X$ and with a unique morphism between any two objects.

	\begin{definition}\label{def:Alg_Cat}
		For a generalized non-symmetric $\infty$-operad $\cM$, let $\Alg_{\Cat}(\cM)$ be defined by the pullback
		\begin{equation*}
			\begin{tikzcd}
				\Alg_{\Cat}(\cM) \arrow[r] \arrow[d] & \Alg(\cM) \arrow[d] \\
				\cS \arrow[r, "\Simp^{\op}_{(-)}"] & \Opd \,.
			\end{tikzcd}
		\end{equation*}
	\end{definition}

	We call objects of $\Alg_{\Cat}(\cM)$ \emph{$\cM$-categories}, and morphisms \emph{enriched functors}.
  The construction $\Alg_{\Cat}(-)$ is covariantly functorial in $\Opd$ in an obvious way.
	As mentioned in the introduction, we will typically denote $\cM$-categories using the \texttt{mathsf} typeset.
	If $\mathsf{C}$ is an $\cM$-category, we write $\ob(\mathsf{C})$ for its space of objects.
	That is, the image of $\mathsf{C}$ under the projection $\Alg_{\Cat}(\cM) \to \cS$.

	\begin{rem}\label{rem:unwinding_enriched_cat}
		Let $\bfM$ be a double $\infty$-category, and $\sfC$ an $\bfM$-category.
    Let us unpack the definition to see that the term $\bfM$-category is justified.
		The composite
		\begin{align*}
			\Simp^{\op}_{\ob(\sfC)}\to \bfM \to \Simp^{\op}_{\bfM_{[0]}}
		\end{align*}
		preserves coCartesian lifts of inerts, and is therefore fully determined by its restriction to the fiber over $[0]$, where it gives a functor $c_{(-)}\colon \ob(\sfC) \to \bfM_{0}$.
		Evaluating $\sfC$ at a tuple $(x,y)$ gives an object $\sfC(x,y)$ in the fiber of $\mathbf{M} \to \Simp^{\op}_{\mathbf{M}_{[0]}}$ over $(c_{x}, c_{y})$, which is precisely $\bfM(c_{x}, c_{y})$.

		For a triple $(x,y,z)\in X^{3}$, there are inert morphisms $(x,y,z)\to (x,y)$ and $(x,y,z) \to (y,z)$.
		Because $\sfC$ preserves lifts of these, we must have
		\begin{align*}
			\bfM\left( c_{x}, c_{y}, c_{z} \right) &\xrightarrow{\simeq} \bfM(c_{x}, c_{y}) \times  \mathbf{M}(c_{y},c_{z}) \\
			\sfC(x,y,z) &\mapsto (\sfC(x,y), \sfC(y,z)) \,.
		\end{align*}
		The image of the active morphism $(x,y,z) \to (x,z)$ factors uniquely as the coCartesian lift, which computes horizontal composition in $\mathbf{C}$, followed by a map
		\begin{align*}
			\sfC(x,y) \odot \sfC(y,z) \to \sfC(x,z) \in \bfM(c_{x}, c_{z}) \,.
		\end{align*}
		The unit for the monoidal structure on $\Cat_{\infty}$ is the terminal category $[0]$.
		Therefore, the unital structure of $\bfM$ selects an object $I_m \in \bfM(m,m)$ for each $m\in \bfM_{0}$.
		Evaluating~$\sfC$ at the morphism $(x)\to (x,x)$ in $\Simp^{\op}_{\ob(\sfC)}$ gives a morphism which factors through the coCartesian lift of $(c_{x}) \to (c_{x}, c_{x})$.
		This coCartesian lift selects the unit object at $c_x$, so we get a unit morphism
		\begin{align*}
			I_{c_{x}} \to \sfC(x,x) \in \bfM(c_{x}, c_{x}) \,.
		\end{align*}
		Evaluating $\sfC$ at higher simplices of $\Simp^{\op}_{\ob(\sfC)}$ gives homotopies witnessing associativity and unitality of composition.
		For instance, the 2-cell witnessing that the composite $d_{1}\circ s_{0} = id$ is sent to a 2-cell witnessing
		\begin{equation*}
			\sfC(x,y) \simeq I_{c_{x}} \odot \sfC(x,y) \to \sfC(x,x) \odot \sfC(x,y) \to \sfC(x,y)
		\end{equation*}
		as the identity.
	\end{rem}

	\begin{rem}\label{rem:unwiding_enriched_functor}
		Let $\bfM$ be a double $\infty$-category.
    An enriched functor $F\colon \sfC\to \sfD$ between $\bfM$-categories consists of a map of spaces $f\colon \ob(\sfC) \to \ob(\sfD)$ and a natural transformation
		\begin{equation*}
			\begin{tikzcd}[row sep=large, column sep=large]
				\Simp^{\op}_{\ob(\sfC)} \arrow[r,"f"] \arrow[dr, "\sfC"' ,""{name=T}]      &   \Simp^{\op}_{\ob(\sfD)}  \arrow[d, "\sfD"] \\
		        &   \bfM \, .
				\arrow[from=T, to=1-2, Rightarrow]
			\end{tikzcd}
		\end{equation*}
		In other words, we have morphisms
		\begin{align*}
			F_{xy}\colon \sfC(x,y) \to \sfD( f(x), f(y))
		\end{align*}
		in $\bfC(c_{x}^{\sfC}, c_{y}^{\sfC}) \simeq \bfC (c_{f(x)}^{\sfD}, c_{f(y)}^{\sfD} )$, and homotopies making these compatible with composition.
	\end{rem}

	\begin{rem}\label{rem:enriched_in_monoidal}
		If we specialize to the case where $\cV$ is a monoidal $\infty$-category, the structure of a $\cV$-category is less complicated: all the morphism objects $\sfC(x,y)$ belong to $\cV$, and composition maps are of the form $\sfC(x,y)\otimes \sfC(y,z) \to \sfC(x,z)$.
    An enriched functor $F\colon \sfC\to \sfD$ is determined by a map of spaces $f\colon \ob(\sfC) \to \ob(\sfD)$, and maps
		\begin{align*}
			F_{xy}\colon \sfC(x,y) \to \sfD(f(x),f(y))
		\end{align*}
		 in $\cV$ that are compatible with composition and units up to prescribed homotopy.
	\end{rem}
	\begin{ex}\label{ex:enriched_from_closed}
		Let $\cV^{\otimes}$ be a monoidal $\infty$-category. We say that $\cV$ is \emph{left closed} if for every object $X$ the functor $X \otimes (-)\colon \cV\to\cV$ admits a right adjoint.
    We can think of such a right adjoint as giving internal mapping objects $\overline{\map}(X,-)$ in $\cV$.
    By \cite[Example 7.2.11, Corollary 7.4.10]{GH}, there exists a $\cV$-category $\overline{\cV}\colon\Simp^{\op}_{\cV^{\simeq}} \to \cV^{\otimes}$ with $\overline{\cV}(X,Y) = \overline{\map}(X,Y)$.
    This construction also works if we consider a module $\cC$ over $\cV$ such that the functors $X\otimes (-)\colon \cV \to \cC$ admit right adjoints.
    In particular, any stable presentable $\infty$-category $\cC$ is tensored over $\Sp$, and therefore gives rise to a large $\Sp$-category $\overline{\cC}$ whose morphism objects are mapping spectra in $\cC$.
    Structures of this kind can be generalized to define an equivalent framework for $\cV$-enriched categories using \emph{weak $\cV$-modules} \cite{hinich2018yoneda,heine2025equivalence}.
	\end{ex}

	\begin{rem}\label{rem:pushforward_of_CatAlg}
		Let $F\colon \bfM\to \bfN$ be a lax functor between double $\infty$-categories, and $\sfC$ an $\bfM$-category.
		The algebra $F_*\sfC$ is determined by the composite algebra
		\begin{align*}
			\Simp^{\op}_{\ob(\sfC)} \xrightarrow{\sfC} \bfM \xrightarrow{F} \bfN \,.
		\end{align*}
		The algebra $F_{*}\sfC$ therefore has the same space of objects as $\sfC$, and mapping objects
		\begin{align*}
			F_*\sfC(x,y) = F\left( \sfC(x,y) \right) \,.
		\end{align*}
	\end{rem}

	\begin{rem}\label{rem:monoidal_structure_on_Alg_Cat}
		Having an $\bE_{2}$-monoidal structure on an $\infty$-category means, among other things, that the functor
		\begin{align*}
			\cV \times \cV \to \cV
		\end{align*}
		given by forming the monoidal product itself extends to a monoidal functor
		\begin{align*}
			\cV^{\otimes} \times_{\Simp^{\op}} \cV^{\otimes} \to \cV^{\otimes}.
		\end{align*}
		By \cite*[Corollary 4.3.13]{GH}, an $\bE_{2}$-monoidal structure on $\cV$ induces a monoidal structure on $\Alg_{\Cat}(\cV)$ which is given by taking cartesian product of spaces of objects, and tensor product in $\cV$ of mapping objects, i.e.,
		\begin{align*}
		    \sfC \otimes \sfD \colon \Simp^{\op}_{\ob(\sfC) \times \ob(\sfD)} \simeq \Simp^{\op}_{\ob(\sfC)} \times_{\Simp^{\op}} \Simp^{\op}_{\ob(\sfD)} \xrightarrow{\sfC \times \sfD} \cV^{\otimes} \times_{\Simp^{\op}} \cV^{\otimes} \xrightarrow{ \otimes} \cV^{\otimes} \,.
		\end{align*}
		If $\cV$ has a symmetric monoidal structure, the above extends to a symmetric monoidal structure on $\Alg_{\Cat}(\cV)$.
	\end{rem}

    If $\cV$ is an $\infty$-category with finite limits, the Cartesian product determines a symmetric monoidal structure $\cV^{\times} \to \Simp^{\op}$.
	Describing algebras in such $\infty$-categories requires less data: because the product satisfies a universal property in $\cV$, it is enough to work with \textit{monoid objects}.
	Let $(\Simp_{X}^{\op})_{\interior} \subset \Simp^{\op}_{X}$ be the subcategory of morphisms covering inert morphisms in $\Simp^{\op}$.

	\begin{definition}[{\cite*[Definition 3.5.1]{GH}}]  \label{def:X-monoid}
		For a space $X$ and an $\infty$-category $\cV$ with finite limits, a \emph{$\Simp_{X}^{\op}$-monoid in $\cV$} is a functor
		\begin{align*}
			\sfC \colon \Simp_X^{\op} \to \cV
		\end{align*}
		such that the restriction of $\sfC$ to $(\Simp^{\op}_{X})_{\interior}$ is right Kan extended from the restriction of $\sfC$ to $(\Simp^{\op}_{X})_{1}$.
    We write $\Monoid_{X}(\cV) \subset \Fun(\Simp^{\op}_{X},\cV)$ for the full subcategory spanned by the monoids.
	\end{definition}

	\begin{rem}\label{rem:unwinding_monoid}
		Pointwise, right Kan extensions are computed by limits, so in particular, evaluating a $\Simp_X^{\op}$-monoid $\mathsf{C}$ at some sequence $(x_0,...,x_n)$ gives
		\begin{align*}
			\mathsf{C}(x_0,...,x_n) \simeq \underset{\phi\colon [n] \rightarrow [1] \in \Simp^{\op}_{\interior}}{\lim} \mathsf{C}(\phi_!(x_0,...,x_1)) \simeq \mathsf{C}(x_0,x_1)\times \cC(x_1,x_2) \times ... \times \mathsf{C}(x_{n-1},x_n) \,.
		\end{align*}
	\end{rem}

	\begin{prop}[{\cite*[Propositions 3.5.3 and 3.5.4]{GH}}]
		\label{prop:Mon=Alg}
		If $\cV$ is an $\infty$-category with finite limits and $X$ a space, there exists a functor $\pi \colon \cV^{\times} \to \cV$ such that post-composition with $\pi$ induces an equivalence
		\begin{align*}
			\Alg_{\Simp^{\op}_{X}}(\cV^{\times}) \overset{\simeq}\to \Monoid_{\Simp^{\op}_{X}}(\cV) \,.
		\end{align*}
	\end{prop}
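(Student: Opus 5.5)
The plan is to follow the strategy of Lurie's treatment of Cartesian symmetric monoidal structures (HA, \S 2.4.1), transported to the non-symmetric setting over $\Simp^{\op}$. The key idea is to realise $\cV^{\times}$ as a \emph{Cartesian structure}. This means building a functor $\pi\colon \cV^{\times}\to\cV$ such that, for an object $(V_{1},\dots,V_{n})$ in the fiber $\cV^{\times}_{n}\simeq\cV^{n}$, the images under $\pi$ of the coCartesian lifts of the inert maps $\rho_{i}$ exhibit $\pi(V_{1},\dots,V_{n})$ as the product $V_{1}\times\dots\times V_{n}$. On the fiber over $[0]$ this product is the terminal object. Once this is in place, the statement becomes a purely formal consequence of a universal property of $\pi$. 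I would prove that property for an arbitrary generalized non-symmetric $\infty$-operad $\cM$ and then specialise to $\cM=\Simp^{\op}_{X}$.

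\emph{Step 1 (construction).} I would define $\cV^{\times}$ by the non-symmetric analogue of Lurie's construction. Let $\Simp^{\op}_{\times}$ be the category of pairs $([n],I)$, where $I\subseteq[n]$ is a subinterval, so that an inert map out of $[n]$ is recorded by $I$. Set $\cV^{\times}$ to be the full subcategory of the relative functor category $\Fun_{\Simp^{\op}}(\Simp^{\op}_{\times},\cV)$ on those functors that are right Kan extended from the length-one subintervals $\{i,i+1\}$. Concretely, the value on $([n],[n])$ is $\prod_{i}V_{i}$, and $\pi$ is evaluation at $([n],[n])$. This gives the expected mapping spaces over a map $\phi\colon[m]\to[n]$ in $\Simp$:
\[
\Map^{\phi}\bigl((V_{i}),(W_{j})\bigr)\simeq \prod_{j}\Map_{\cV}\Bigl(\prod_{i\in\phi(j)}V_{i},\,W_{j}\Bigr).
\]
Here $\phi(j)$ denotes the interval $\phi(j-1)<i\leq\phi(j)$. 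From this formula one checks directly that $\cV^{\times}\to\Simp^{\op}$ is a coCartesian fibration satisfying the Segal condition with $\cV^{\times}_{0}\simeq\ast$. Hence it is a monoidal $\infty$-category in the sense of \Cref{def:monoidal_infty_cat}, and by construction $\pi$ is a Cartesian structure.

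\emph{Step 2 (universal property).} For a generalized non-symmetric $\infty$-operad $\cM$, call a functor $F\colon\cM\to\cV$ an \emph{$\cM$-monoid} if its restriction to the inert subcategory $\cM_{\interior}$ is right Kan extended from $\cM_{1}$. For $\cM=\Simp^{\op}_{X}$ this is exactly \Cref{def:X-monoid}. I would show that composition with $\pi$ restricts to a functor $\Alg_{\cM}(\cV^{\times})\to\Monoid_{\cM}(\cV)$. This follows because algebras preserve inert coCartesian lifts, and $\pi$ converts these lifts into product projections, which yields the Kan extension condition pointwise. The formula in \Cref{rem:unwinding_monoid} is then the special case $\cM=\Simp^{\op}_{X}$. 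To show this functor is an equivalence, I would identify $\Alg_{\cM}(\cV^{\times})$ with a category of functors $\cM\times_{\Simp^{\op}}\Simp^{\op}_{\times}\to\cV$ satisfying two right Kan extension conditions, using the exponential adjunction for the relative functor category from Step 1. I would then observe that restriction along the section $\cM\to\cM\times_{\Simp^{\op}}\Simp^{\op}_{\times}$, $x\mapsto(x,[n])$, is inverse to right Kan extension. The point is that the value at $(x,I)$ is forced to be the value of $F$ at the inert restriction $I_{!}x$.

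\emph{Main obstacle.} I expect the only real difficulty to be this last identification. One must check that the relevant right Kan extensions exist and are computed pointwise by finite products, so that finite limits in $\cV$ suffice. One must also check that the two Kan extension conditions exactly match the requirements that $A$ preserves inert coCartesian lifts (condition (1) of \Cref{def:gen_nonsym_operad}) and that $F$ is an $\cM$-monoid. I would first try to verify the comparison of Kan extension conditions fiberwise over $\Simp^{\op}$, using the Segal decomposition $\cM_{n}\simeq\cM_{1}\times_{\cM_{0}}\dots\times_{\cM_{0}}\cM_{1}$. A map of this kind that is an equivalence on each fiber over $\Simp^{\op}$ is an equivalence, which completes the argument.
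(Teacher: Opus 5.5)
Your proposal is correct and follows essentially the same route as the source. The paper gives no proof of its own and cites Gepner--Haugseng, who obtain the Cartesian structure $\pi\colon\cV^{\times}\to\cV$ and the equivalence $\Alg_{\cM}(\cV^{\times})\simeq\Monoid_{\cM}(\cV)$ by transporting Lurie's construction and right Kan extension argument (HA, \S2.4.1--2.4.2) to the non-symmetric setting, exactly as you outline: the value at $(x,I)$ is forced to be $\pi$ of the inert restriction $I_{!}x$, so restriction along the section, i.e.\ post-composition with $\pi$, is an equivalence.
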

	\begin{ex}\label{ex:double_cat_from_algebra}
		Let $\mathsf{C}$ be a $\Cat_{\infty}$-category
		By \Cref{prop:Mon=Alg}, we can equivalently view $\mathsf{C}$ as a monoid $\Simp^{\op}_{\ob(\sfC)} \to \Cat_{\infty}$.
		By unstraightening, this gives a coCartesian fibration
		\begin{align*}
			\Unst(\mathsf{C}) \to \Simp^{\op}_{\ob(\sfC)} \to \Simp^{\op} \,.
		\end{align*}
		Using the monoid condition for $\sfC$, one can check that this fibration satisfies the Segal condition and therefore determines a double $\infty$-category. In fact, unstraightening determines a fully faithful functor $\Alg_{\Cat}(\Cat_{\infty}^{\times}) \to \Dbl_{\infty}$ whose image is spanned by the double $\infty$-categories $\bfC$ where $\bfC_{0}$ is a space.
	\end{ex}

	\subsection{Completeness for enriched categories}\label{subsec:enriched_categories}
	While we can think of the objects of $\Alg_{\Cat}(\cV)$ as $\cV$-enriched categories, we must perform a localization to obtain the homotopically correct $\infty$-category of $\cV$-categories.
	This is analogous to the localization from Segal spaces to complete Segal spaces, or equivalently, the localization at Dwyer--Kan equivalences. This type of localization makes sense when we work in a \emph{presentably monoidal} $\infty$-category $\cV$.
	\begin{definition}\label{def:presentably_monoidal}
		We say that a large monoidal $\infty$-category $\cV^{\otimes}$ is \emph{presentably monoidal} if the underlying $\infty$-category $\cV$ is presentable, and the monoidal product preserves colimits in each variable separately.
	\end{definition}
	Throughout this subsection, we fix a presentably monoidal $\infty$-category $\cV$.
	\begin{definition}\label{def:morphism_in_enriched_cat}
		Let $\sfC$ be a $\cV$-category, and $c,d\in \ob(\sfC)$ objects of $\sfC$. A \emph{morphism} from $c$ to $d$ is a map $f\colon I_{\cV} \to \sfC(c,d)$ in $\cV$.
	\end{definition}
	We can compose a pair of morphisms in $\sfC$ by forming the following composite in $\cV$
	\begin{align*}
		f\circ g \colon I_{\cV} \simeq I_{\cV} \otimes I_{\cV} \xrightarrow{g\otimes f} \sfC(c,d) \otimes \sfC(d,e) \xrightarrow{\circ} \sfC(c,e) \,.
	\end{align*}
	By definition, each object $c\in \ob(\sfC)$ comes equipped with a morphism $id_{c} \colon I_{\cV} \to \sfC(c,c)$ which we may refer to as the identity on that object.

	\begin{definition}\label{def:equivalence_in_enriched_cat}
		Let $\sfC$ be a $\cV$-category, and $f$ a morphism between objects $c$ and $d$.
		We say that $f$ is an \emph{equivalence} if there exists a morphism $g$ from $d$ to $c$ such that
		\begin{align*}
			[f\circ g] &= [id_{d}] \in \pi_0\Map_{\cV}(I_{\cV}, \sfC(d,d)) \\
			[g\circ f] &= [id_{c}] \in \pi_{0}\Map_{\cV}(I_{\cV}, \sfC(c,c)) \,.
		\end{align*}
	\end{definition}

	\begin{definition}\label{def:ess_surj_enriched_functor}
		An enriched functor $F\colon \sfC \to \sfD$  is \emph{essentially surjective} if for every $d\in \ob(D)$, there exists $c\in \ob(C)$ and an equivalence from $F(c)$ to $d$.
		We say that $F$ is \emph{fully faithful} if for every pair of objects $c,d \in \ob{\sfC}$, the morphism
		\begin{align*}
			F_{cd} \colon \sfC(c,d) \to \sfD(F(c), F(d))
		\end{align*}
		is an equivalence in $\cV$.
	\end{definition}

	Because $\cV$ is presentable, the inclusion of $I_{\cV}$ extends uniquely to an adjunction
	\begin{align*}
		F : \cS \rightleftarrows \cV : U \,.
	\end{align*}
	Because the tensor product in $\cV$ preserves colimits in each variable, $F$ lifts to a monoidal functor, giving a natural lax monoidal structure on $U$.
  By \cite*[Example 4.3.20]{GH}, this gives rise to an adjunction
	\begin{align*}
		F_* : \Alg_{\Cat}(\cS) \rightleftarrows \Alg_{\Cat}(\cV) : U_{*} \,.
	\end{align*}
	Under the equivalence in \Cref{prop:Mon=Alg}, an object $\sfC\in \Alg_{\Cat}(\cS)$ gives rise to a simplicial space.
  By \cite*[Theorem 4.4.7]{GH}, this construction determines an equivalence
	\begin{align}\label{eq:alg(S)-straightening}
		\Alg_{\Cat}(\cS) \simeq \Seg \subset s\cS \,.
	\end{align}
	\begin{definition} \label{def:complete_algebra}
		We say that a $\cV$-category $\sfC$ is \emph{complete} if the Segal space corresponding to $U_{*}\sfC$ is complete.
		Write
		\begin{align}\label{eq:iota_V}
			\iota_{\cV}\colon \Cat^{\cV}_{\infty} \hookrightarrow \Alg_{\Cat}(\cV)
		\end{align}
		for the inclusion of the subcategory spanned by complete $\cV$-categories.
	\end{definition}
	\begin{rem}\label{rem:completeness}
		By \cite*[Proposition 5.1.11]{GH}, the above definition of completeness is equivalent to \cite*[Definition 5.2.2]{GH}.
	\end{rem}
	\begin{rem}\label{rem:S-categories_are_infty_categories}
		Under the unstraightening functor \eqref{eq:alg(S)-straightening}, the subcategory of complete $\cS$-categories is equivalent to the $\infty$-category $\CSS\subset s\cS$ of complete Segal spaces.
    This in turn is equivalent to $\Cat_{\infty}$ via the nerve functor $N\colon \Cat_{\infty}\to s\cS$ defined by $N(\cC)_{n}= \Map([n],\cC)$.
    This means that $\Cat_{\infty}^{\cS}\simeq \Cat_{\infty}$, and that every complete $\cV$-category $\sfC$ has an underlying $\infty$-category $U_{*}\sfC$.
		The mapping spaces in $U_{*}\sfC$ are precisely the spaces
		\begin{align*}
			\sfU_{*}\sfC(c,d) \simeq \Map_{\cV}(I_{\cV}, \sfC(c,d))
		\end{align*}
		of morphisms in $\sfC$.
	\end{rem}
	\begin{rem}\label{rem:2-cats_as_enriched}
		Under the unstraightening functor of \Cref{ex:double_cat_from_algebra}, the functor $U_{*}$ corresponds to the horizontal Segal space functor $h$ of \Cref{def:horizontal_morphisms}. The $\infty$-category of complete $\Cat_{\infty}$-categories is therefore equivalent to the subcategory $\Cat_{(\infty,2)}\subset \Dbl_{\infty}$ of $(\infty,2)$-categories.
	\end{rem}
	\begin{prop}[{\cite*[Corollary 5.5.6]{GH}}] \label{prop:CatV_localization}
		The inclusion \eqref{eq:iota_V} admits a left adjoint
		\begin{align*}
			\cL_{\cV}\colon \Alg_{\Cat}(\cV) \to \Cat^{\cV}_{\infty} \,,
		\end{align*}
		which is also the localization at fully faithful and essentially surjective functors.
	\end{prop}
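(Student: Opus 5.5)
The plan is to realize $\Cat^{\cV}_{\infty}$ as an accessible localization of $\Alg_{\Cat}(\cV)$ at a single morphism, which gives the left adjoint $\cL_{\cV}$ for formal reasons. Afterwards I will identify the local equivalences with the fully faithful and essentially surjective functors.

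First, I would use that $\Alg_{\Cat}(\cV)$ is presentable whenever $\cV$ is presentably monoidal, as in \cite{GH}. Let $E \in \Seg$ be the walking equivalence, i.e.\ the nerve of the contractible groupoid on two objects, viewed as an $\cS$-category via \eqref{eq:alg(S)-straightening}. Consider the map $F_*E \to F_*[0]$ in $\Alg_{\Cat}(\cV)$. By the adjunction $F_* \dashv U_*$ we have
\[
\Map(F_*E,\sfC)\simeq \Map_{\Seg}(E,U_*\sfC).
\]
Completeness of the Segal space $U_*\sfC$ says exactly that restriction along $E \to [0]$ induces an equivalence on these mapping spaces. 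So by \Cref{def:complete_algebra}, a $\cV$-category is complete if and only if it is local with respect to this one map. The inclusion \eqref{eq:iota_V} is therefore the inclusion of the local objects for a small set of maps in a presentable $\infty$-category. The existence of the left adjoint $\cL_{\cV}$ then follows from \cite[Proposition 5.5.4.15]{HTT}.

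Second, I would compare local equivalences with fully faithful and essentially surjective functors. There are three ingredients.
\begin{enumerate}
\item A fully faithful and essentially surjective functor between complete $\cV$-categories is an equivalence. Full faithfulness gives equivalences on the morphism objects in $\cV$. Completeness identifies $\ob(\sfC)$ with the space of objects and equivalences of $U_*\sfC$, which by full faithfulness and essential surjectivity maps by an equivalence onto that of $U_*\sfD$.
\item Fully faithful and essentially surjective functors satisfy 2-out-of-3. This is immediate from \Cref{def:ess_surj_enriched_functor} and the fact that equivalences compose.
\item The unit $\sfC \to \cL_{\cV}\sfC$ is fully faithful and essentially surjective.
\end{enumerate}

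Granting these, the argument concludes as follows. If $f\colon \sfC\to\sfD$ is fully faithful and essentially surjective, then in the square formed by $f$, $\cL_{\cV}f$ and the two units, 2-out-of-3 shows that $\cL_{\cV}f$ is fully faithful and essentially surjective. By the first ingredient, $\cL_{\cV}f$ is then an equivalence. Conversely, if $\cL_{\cV}f$ is an equivalence, then 2-out-of-3 shows that $f$ is fully faithful and essentially surjective.

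The main obstacle is the third ingredient, since the localization from the first step is not explicit. I would first try to construct the completion by hand. Take as new object space the classifying space of the groupoid of equivalences in $U_*\sfC$, that is, the completion of the Segal space $U_*\sfC$. Then transport the $\cV$-valued morphism objects along this space, using that $\Simp^{\op}_{(-)}$ and $\Alg_{\Cat}(\cV)\to\cS$ form a Cartesian fibration in the object space (from \Cref{def:algebra_fibration} and \Cref{def:Alg_Cat}). One must then check three things: the result is complete, the map from $\sfC$ is fully faithful and essentially surjective by construction, and it is a local equivalence. The last point should follow by exhibiting it as a colimit of pushouts of $F_*E\to F_*[0]$, one for each equivalence class being glued.
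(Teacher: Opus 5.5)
Your formal setup is sound. $\Alg_{\Cat}(\cV)$ is presentable, completeness is locality for the single map $F_*E\to F_*[0]$, so $\cL_{\cV}$ exists by \cite[Proposition 5.5.4.15]{HTT}, and the 2-out-of-3 bookkeeping correctly reduces both directions to your ingredient (3). Two small points:
\begin{itemize}
\item The case of 2-out-of-3 where $F$ and $G\circ F$ are fully faithful and essentially surjective uses that morphism objects are invariant under enriched equivalences. That is a lemma, not something immediate from \Cref{def:ess_surj_enriched_functor}.
\item Ingredient (1) also needs that a functor which is an equivalence on object spaces and on all morphism objects is an equivalence in $\Alg_{\Cat}(\cV)$.
\end{itemize}
The genuine gap is (3). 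That the unit $\sfC\to\cL_{\cV}\sfC$ is fully faithful is essentially the whole content of the proposition; the paper gives no argument for it and simply cites \cite{GH}. Your sketch does not establish it.

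\textbf{The construction of the completion.} Cartesian transport for $\Alg_{\Cat}(\cV)\to\cS$ along $f\colon X\to Y$ restricts $\cV$-categories with objects $Y$ to $\cV$-categories with objects $X$. You need the opposite direction: a $\cV$-category with object space the classifying space $Z$ of equivalences of $U_*\sfC$, whose restriction along $\ob(\sfC)\to Z$ is $\sfC$. That is a descent statement along an effective epimorphism, coherently encoding how morphism objects transform under equivalences. Nothing you invoke supplies it.

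\textbf{The cell decomposition.} This claim is false as stated. Taking object spaces commutes with colimits in $\Alg_{\Cat}(\cV)$, since it has a right adjoint sending $X$ to the $\cV$-category with objects $X$ and terminal morphism objects. Hence any iterated pushout of copies of $F_*E\to F_*[0]$ only attaches $1$-cells to $\ob(\sfC)$. Starting from one object, this yields a graph, whose fundamental group is free. Take $\cV=\cS$ and $\sfC$ the one-object category with automorphism group $\bZ/2$. Its completion has object space $B(\bZ/2)$, which is not a graph, so it cannot arise this way.

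The small object argument that actually produces $\cL_{\cV}$ must also glue along the codiagonals $F_*[0]\amalg_{F_*E}F_*[0]\to F_*[0]$ and their iterates. The real missing idea is that you would then have to know that pushouts along all these maps leave morphism objects unchanged. Colimits in $\Alg_{\Cat}(\cV)$ are not computed on morphism objects, so this is not formal. It is precisely the full faithfulness of the unit you set out to prove, and it is where the substantive work in \cite{GH} lies.
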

	\begin{prop}[{\cite*[Proposition 5.7.14]{GH} and \cite{haugseng2023tensor}}] \label{prop:CatV_monoidal_structure}
		If $\cV$ admits a symmetric monoidal structure, there exists a presentable monoidal structure on $\Cat^{\cV}_{\infty}$ such that the localization $\cL_{\cV}$ is monoidal, and the inclusion $i_{\cV}$ is lax monoidal.
	\end{prop}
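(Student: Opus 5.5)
The plan is to get the monoidal structure on $\Cat^{\cV}_{\infty}$ by restricting the monoidal structure on $\Alg_{\Cat}(\cV)$ along the localization $\cL_{\cV}$ of \Cref{prop:CatV_localization}, using Lurie's criterion for compatible localizations \cite[Proposition 2.2.1.9]{HA}. First, a symmetric monoidal structure on $\cV$ is in particular $\bE_{2}$. So \Cref{rem:monoidal_structure_on_Alg_Cat} gives a (symmetric) monoidal structure on $\Alg_{\Cat}(\cV)$. On objects it is the product of object spaces, and on morphism objects it is $(\sfC\otimes\sfD)((c,d),(c',d')) \simeq \sfC(c,c')\otimes \sfD(d,d')$. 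Lurie's criterion then says the following. If for every $\cL_{\cV}$-equivalence $F\colon \sfC\to\sfC'$ and every $\sfD$ the map $F\otimes \sfD$ is again an $\cL_{\cV}$-equivalence, then $\Cat^{\cV}_{\infty}$ inherits a monoidal structure $\sfC\boxtimes\sfD = \cL_{\cV}(\iota_{\cV}\sfC\otimes\iota_{\cV}\sfD)$ for which $\cL_{\cV}$ is monoidal and $\iota_{\cV}$ is lax monoidal.

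Rather than checking this for the full strongly saturated class of $\cL_{\cV}$-equivalences, which would require controlling colimits in $\Alg_{\Cat}(\cV)$, I will use the equivalent formulation of the criterion. It suffices that for all $\sfC,\sfD$ the map $\sfC\otimes\sfD\to \cL_{\cV}\sfC\otimes \cL_{\cV}\sfD$, induced by the units, is inverted by $\cL_{\cV}$. The key input is that the unit $\sfC\to\cL_{\cV}\sfC$ is itself fully faithful and essentially surjective; this is how the completion is constructed in \cite{GH}. It then suffices to show that the class of fully faithful and essentially surjective functors (\Cref{def:ess_surj_enriched_functor}) is closed under $-\otimes\sfD$, and hence under tensor products of two such functors.

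Full faithfulness is immediate from the formula for morphism objects: $F_{cc'}\otimes \mathrm{id}$ is an equivalence in $\cV$. For essential surjectivity, take an object $(c',d)$ of $\sfC'\otimes\sfD$ and an equivalence $f\colon I_{\cV}\to\sfC'(F(c),c')$ with inverse $g$. Consider the morphism $f\otimes \mathrm{id}_{d}\colon I_{\cV}\simeq I_{\cV}\otimes I_{\cV}\to \sfC'(F(c),c')\otimes\sfD(d,d)$. I will check that $g\otimes\mathrm{id}_d$ is an inverse for it. Composition in $\sfC'\otimes\sfD$ is the tensor product of the compositions, interchanged via the symmetry. Hence $[(g\otimes \mathrm{id})\circ(f\otimes\mathrm{id})] = [(g\circ f)\otimes(\mathrm{id}\circ\mathrm{id})] = [\mathrm{id}_{(F(c),d)}]$ on $\pi_{0}$, using that the identity of $(c,d)$ is the tensor of identities. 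This is routine bookkeeping with the $\bE_{2}$-structure.

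It remains to show the monoidal structure is \emph{presentable}, i.e.\ that $\boxtimes$ preserves colimits in each variable; I expect this to be the main obstacle. Presentability of $\Cat^{\cV}_{\infty}$ itself is in \cite{GH}. Since $\cL_{\cV}$ is a left adjoint, it suffices that $-\otimes\sfD$ on $\Alg_{\Cat}(\cV)$ preserves colimits, or at least those colimits that $\iota_{\cV}$ and $\cL_{\cV}$ relate. My first attempt is to exhibit a right adjoint instead: the enriched functor category $\Fun^{\cV}(\sfD,\sfE)$, which should be complete when $\sfE$ is complete. Following \cite{haugseng2023tensor}, I would construct it from the closed structure of $\cV$ (\Cref{ex:enriched_from_closed}), with morphism objects given as ends of $\overline{\map}(\sfD(d,d'),\sfE(-,-))$. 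Adjointness would then follow from the description of enriched functors in \Cref{rem:unwiding_enriched_functor}. If this is too hard to set up, the fallback is to check colimit preservation directly on a set of generators of $\Alg_{\Cat}(\cV)$, namely free $\cV$-categories on a single morphism object. There the tensor product is computable because $\otimes$ in $\cV$ preserves colimits in each variable.
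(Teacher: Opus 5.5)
Your treatment of the monoidal structure itself is fine. You apply Lurie's criterion \cite[Proposition 2.2.1.9]{HA} to the structure of \Cref{rem:monoidal_structure_on_Alg_Cat}, reduce it to $\cL_{\cV}$ inverting $\sfC\otimes\sfD\to\cL_{\cV}\sfC\otimes\cL_{\cV}\sfD$, and check that fully faithful and essentially surjective functors are stable under $-\otimes\sfD$. That is essentially the standard argument behind the citation of \cite[Proposition 5.7.14]{GH}. The one input you should pin down with a precise reference is that the completion unit $\sfC\to\cL_{\cV}\sfC$ is itself fully faithful and essentially surjective. \Cref{prop:CatV_localization} as stated only says $\cL_{\cV}$ is the localization at that class, which does not by itself put the unit in the class.

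The genuine gap is the word ``presentable'', i.e.\ that $\boxtimes$ preserves colimits in each variable. This is the substantive content the paper imports rather than proves, in particular via \cite{haugseng2023tensor}. Your reduction to colimit preservation of $-\otimes\sfD$ on $\Alg_{\Cat}(\cV)$ is correct, but neither of your routes establishes it.

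\emph{The right adjoint is never constructed.} Writing morphism objects as ends $\int\overline{\map}(\sfD(d,d'),\sfE(-,-))$ does not yet give a $\cV$-category in the sense used here. Composition, units and all higher coherences must be produced as an algebra $\Simp^{\op}_{X}\to\cV^{\otimes}$. Also, \Cref{rem:unwiding_enriched_functor} is an informal unpacking, not a universal property from which $\Map(\sfC\otimes\sfD,\sfE)\simeq\Map(\sfC,\Fun^{\cV}(\sfD,\sfE))$ could be deduced. Falling back on the adjoint functor theorem would be circular, since it presupposes exactly the colimit preservation you want.

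\emph{The fallback is not a valid argument.} Knowing $-\otimes\sfD$ on a set of generators says nothing about whether it commutes with the colimits presenting a general $\sfC$ from them; that commutation is the statement to be proved. Nor does colimit preservation of $\otimes$ in $\cV$ transfer formally. Colimits in $\Alg_{\Cat}(\cV)$ are formed in three steps: take the colimit of object spaces, push the algebras forward by operadic left Kan extension along the maps of object spaces, then take a colimit of algebras in a fixed fibre. Tensoring with $\sfD$ commutes neither with the pushforward step nor with fibrewise colimits.
\begin{itemize}
\item For $\cV=\Set$, take $\sfC=[1]$, let $\sfB$ be discrete on two objects, and let $f$ collapse them. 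Then $\sfC\otimes f_{!}\sfB\simeq[1]$, whereas $(\mathrm{id}\times f)_{!}(\sfC\otimes\sfB)$ has two parallel arrows.
\item $-\otimes\sfD$ does not even preserve the initial algebra over a fixed nonempty space of objects.
\end{itemize}
This step needs a real input, for instance a genuinely coherent construction of enriched functor $\infty$-categories together with their universal property.
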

	\begin{prop}[{\cite*[Proposition 5.7.17]{GH}}] \label{prop:CatV_adjunctions}
		If $F\colon \cV^{\otimes} \to \cW^{\otimes}$ is a lax monoidal functor, the pushforward $F_{*}\colon \Alg_{\Cat}(\cV) \to \Alg_{\Cat}(\cW)$ preserves fully faithful and essentially surjective functors, and therefore descends to a functor $F_* \colon \Cat^{\cV}_{\infty} \to \Cat^{\cW}_{\infty}$.
    If~$F$ is strictly monoidal and the underlying functor admits a right adjoint, then $F$ has a lax monoidal right adjoint $G$, and $F_{*}$ is left adjoint to $G_{*}$.
	\end{prop}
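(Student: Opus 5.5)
The plan is to treat the three assertions in turn: first that $F_{*}$ preserves fully faithful and essentially surjective functors, then that it descends through the localization $\cL_{\cV}$ of \Cref{prop:CatV_localization}, and finally the adjunction statement. Throughout, $F\colon\cV^{\otimes}\to\cW^{\otimes}$ is a lax monoidal functor, viewed as a morphism in $\MonLax$, and $F_{*}$ is postcomposition with $F$ as in \Cref{rem:pushforward_of_CatAlg}. Everything reduces to unwinding the definitions in \Cref{subsec:enriched_categories}, except the lifting of adjunctions to algebras, which I expect to be the main obstacle.

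\textbf{Preservation of fully faithful and essentially surjective functors.} Let $H\colon\sfC\to\sfD$ be an enriched functor between $\cV$-categories.
\begin{enumerate}
\item Full faithfulness is immediate. By \Cref{rem:pushforward_of_CatAlg}, $(F_{*}H)_{cd}$ is $F(H_{cd})$, and $F$ sends equivalences in $\cV$ to equivalences in $\cW$.
\item For essential surjectivity, the lax structure provides a unit map $u\colon I_{\cW}\to F(I_{\cV})$ and multiplication maps $F(X)\otimes F(Y)\to F(X\otimes Y)$.
\item A morphism $f\colon I_{\cV}\to\sfD(c,d)$ in the sense of \Cref{def:morphism_in_enriched_cat} then yields a morphism $F(f)\circ u\colon I_{\cW}\to F\sfD(c,d)$ of $F_{*}\sfD$. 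This assignment is the map on mapping spaces of a functor of Segal spaces $U_{*}\sfD\to U_{*}F_{*}\sfD$.
\item That functor can be built functorially by composing the lax monoidal functors $\cS\to\cV\to\cW$ and comparing the two resulting lax functors $\cS\to\cW$ via the unit of $F$.
\item It is compatible with composition and identities up to homotopy, because the lax structure maps are coherent. Hence it sends equivalences of \Cref{def:equivalence_in_enriched_cat} to equivalences, and essential surjectivity of $H$ transfers to $F_{*}H$.
\end{enumerate}

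\textbf{Descent.} Set $\overline{F}_{*}=\cL_{\cW}\circ F_{*}\circ\iota_{\cV}$. Since $\cL_{\cW}$ inverts fully faithful and essentially surjective functors and $F_{*}$ preserves them, the composite $\cL_{\cW}F_{*}$ factors uniquely through the localization $\cL_{\cV}$. This gives the descended functor.

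\textbf{Adjunction, strictly monoidal case.} Suppose now that $F$ is strictly monoidal with a right adjoint $G$ on underlying $\infty$-categories.
\begin{enumerate}
\item The operadic analogue of \cite[Corollary 7.3.2.7]{HA}, adapted to generalized non-symmetric $\infty$-operads over $\Simp^{\op}$, makes $G$ lax monoidal, with $F\dashv G$ a relative adjunction over $\Simp^{\op}$.
\item Postcomposition with a relative adjunction gives an adjunction on $\Fun_{/\Simp^{\op}}(\Simp^{\op}_{X},-)$ for each space $X$. Both functors preserve inert coCartesian lifts, so this restricts to algebras.
\item Naturality in $X$ assembles these into an adjunction $F_{*}\dashv G_{*}$ on $\Alg_{\Cat}$. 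This is the step I expect to be hardest: one must check that the unit and counit are fiberwise over $\cS$ and that no change of objects intervenes. I would first try to deduce it from the general statement \cite[Example 4.3.20]{GH}, which covers the case $\cS\to\cV$.
\item $G_{*}$ preserves completeness. The space of objects is unchanged, and
\[
\Map_{\cV}(I_{\cV},G\sfC(c,d))\simeq\Map_{\cW}(F I_{\cV},\sfC(c,d))\simeq\Map_{\cW}(I_{\cW},\sfC(c,d)),
\]
using $F I_{\cV}\simeq I_{\cW}$. Naturally in composition, this gives $U_{*}G_{*}\sfC\simeq U_{*}\sfC$.
\item Finally, for $\sfC$ complete over $\cV$ and $\sfD$ complete over $\cW$, the defining property of the localization and the adjunction give
\[
\Map(\cL_{\cW}F_{*}\sfC,\sfD)\simeq\Map(F_{*}\sfC,\sfD)\simeq\Map(\sfC,G_{*}\sfD).
\]
Hence $\overline{F}_{*}$ is left adjoint to the restriction of $G_{*}$ to complete $\cW$-categories.
\end{enumerate}
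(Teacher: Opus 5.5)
Your argument is correct: the paper gives no proof of its own and only cites \cite[Proposition 5.7.17]{GH}, and your reconstruction follows the standard route. That route uses the comparison transformation $\Map_{\cV}(I_{\cV},-)\Rightarrow\Map_{\cW}(I_{\cW},F(-))$ for essential surjectivity, the universal property of the localization $\cL_{\cV}$ for descent, and $\Map_{\cV}(I_{\cV},G(-))\simeq\Map_{\cW}(I_{\cW},-)$ to see that $G_{*}$ preserves complete objects. The step you flag as hardest also goes through. Restriction along $\Simp^{\op}_{f}$ commutes with postcomposition, so $G_{*}$ preserves the Cartesian edges of $\Alg_{\Cat}(\cW)\to\cS$, and the fiberwise adjunctions therefore assemble into a global adjunction over $\cS$ whose left adjoint is $F_{*}$.
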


	\subsection{Slice double categories}\label{subsec:slice_double_categories}

	The goal of this subsection is to construct the double $\infty$-category $\cV_{/\sfC}$ for $\cV$ a monoidal $\infty$-category and $\sfC$ a $\cV$-category. The double $\infty$-category $\cV_{/\sfC}$ should have the same space of objects as $\sfC$, and morphism $\infty$-categories
	\begin{align*}
		\cV_{/\sfC}(c,d) \simeq \cV_{/\sfC(c,d)} \,.
	\end{align*}
	We will prove some properties of this construction, and show that for a certain class of algebras, we can also get a double $\infty$-category $\cV_{\sfS/}$ whose morphism $\infty$-categories are the slices $\cV_{\sfS(c,d)/}$.
	\begin{definition}\label{def:exponential_double_cat}
		Let $\bfC$ be a double $\infty$-category and $\cJ$ an $\infty$-category.
		The \emph{exponential double $\infty$-category} $\bfC^{\cJ}$ is defined by the pullback
		\begin{equation*}
			\begin{tikzcd}
				\bfC^{\cJ} \arrow[r] \arrow[d] & \Fun(\cJ, \bfC)\arrow[d] \\
				\Simp^{\op} \arrow[r] & \Fun(\cJ, \Simp^{\op})
			\end{tikzcd}
		\end{equation*}
		in  $\Cat_{\infty}$.
	\end{definition}

	\begin{rem}\label{rem:C^J_is_double_cat}
		Because coCartesian fibrations are stable under forming functor $\infty$-categories and pullbacks, $\bfC^{\cJ}\to \Simp^{\op}$ is a coCartesian fibration.
		Furthermore, by commuting limits, it satisfies the Segal condition.
	\end{rem}

	\begin{rem}\label{rem:C*J_is_functorial}
		The construction of $\bfC^{\cJ}$ is clearly covariantly functorial in $\bfC$ and contravariantly functorial in $\cJ$.
	\end{rem}

	\begin{rem}\label{V^J_is_pointwise_monoidal}
		For a monoidal $\infty$-category $\cV$, the exponential $(\cV^{\otimes})^{\cJ}$ models the \emph{pointwise monoidal structure} on $\Fun(\cJ,\cV)$.
	\end{rem}

	\begin{lem}\label{lem:alg_in_exponential}
		Let $\bfC$ be a double $\infty$-category, $\cM$ a generalized non-symmetric $\infty$-operad, and $\cJ$ a small $\infty$-category.
    Then we have a natural equivalence of $\infty$-categories
		\begin{align*}
			\Alg_{\cM}\left(\bfC^{\cJ} \right) \simeq   \Fun(\cJ, \Alg_{\cM}(\bfC)) \,.
		\end{align*}
	\end{lem}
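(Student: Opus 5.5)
The plan is to unwind both sides as full subcategories of functor $\infty$-categories over $\Simp^{\op}$ and then use the currying adjunction in $\Cat_{\infty/\Simp^{\op}}$. By \Cref{def:algebra}, $\Alg_{\cM}(\bfC^{\cJ})$ is the full subcategory of $\Fun_{/\Simp^{\op}}(\cM,\bfC^{\cJ})$ spanned by functors preserving coCartesian lifts of inert morphisms. By \Cref{def:exponential_double_cat}, a functor $\cM\to \bfC^{\cJ}$ over $\Simp^{\op}$ is the same as a functor $\cM\to \Fun(\cJ,\bfC)$ whose composite to $\Fun(\cJ,\Simp^{\op})$ factors through the constant diagrams. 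Currying, this is the same as a functor $\cM\times \cJ\to \bfC$ over the projection $\cM\times\cJ\to\cM\to\Simp^{\op}$. I would therefore first establish a natural equivalence
\[
\Fun_{/\Simp^{\op}}(\cM,\bfC^{\cJ})\simeq \Fun_{/\Simp^{\op}}(\cM\times\cJ,\bfC)\simeq \Fun(\cJ,\Fun_{/\Simp^{\op}}(\cM,\bfC)).
\]
Both steps are formal: $\Fun_{/\Simp^{\op}}(A,B)$ is the fiber of $\Fun(A,B)\to\Fun(A,\Simp^{\op})$, and the pullback square defining $\bfC^{\cJ}$ is compatible with the exponential adjunction $\Fun(\cM,\Fun(\cJ,-))\simeq\Fun(\cJ,\Fun(\cM,-))$. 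Taking fibers of limits commutes, so naturality in $\cM$, $\bfC$ and $\cJ$ follows immediately.

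Next I need to check that these equivalences restrict to the full subcategories of algebras. Under the equivalence, $F\colon\cM\to\bfC^{\cJ}$ corresponds to a family $j\mapsto F_j\colon \cM\to\bfC$, where $F_j$ is the composite with evaluation $\bfC^{\cJ}\to\bfC$ at $j$. The key lemma is the following: an edge of $\bfC^{\cJ}$ lying over a morphism of $\Simp^{\op}$ is coCartesian if and only if its image under each evaluation $\mathrm{ev}_j\colon\bfC^{\cJ}\to\bfC$ is coCartesian.

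To prove it, I would use the standard fact that for a coCartesian fibration $p\colon\bfC\to\Simp^{\op}$, the induced $\Fun(\cJ,\bfC)\to\Fun(\cJ,\Simp^{\op})$ is a coCartesian fibration whose coCartesian edges are exactly the pointwise coCartesian natural transformations. Pulling back along the constant-diagram functor $\Simp^{\op}\to\Fun(\cJ,\Simp^{\op})$ then preserves coCartesian edges, as in \Cref{rem:C^J_is_double_cat}. Given the lemma, $F$ preserves inert coCartesian lifts exactly when every $F_j$ does. So $\Alg_{\cM}(\bfC^{\cJ})$ corresponds precisely to those functors $\cJ\to\Fun_{/\Simp^{\op}}(\cM,\bfC)$ that land in $\Alg_{\cM}(\bfC)$, that is, to $\Fun(\cJ,\Alg_{\cM}(\bfC))$.

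The main obstacle I expect is the converse ("if") direction of the pointwise characterization of coCartesian edges. It requires knowing that pointwise coCartesian transformations are coCartesian in $\Fun(\cJ,\bfC)$. That is exactly where the hypothesis that $\bfC$ is a coCartesian fibration, and not merely an operad, is used: it guarantees that coCartesian lifts exist and can be chosen functorially in $\cJ$. I would cite the relevant statement from \cite{HTT}, namely that $\Fun(K,-)$ preserves coCartesian fibrations with pointwise coCartesian edges, rather than reprove it. The smallness of $\cJ$ is only needed so that $\Fun(\cJ,-)$ stays within the chosen universe.
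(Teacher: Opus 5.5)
Your proposal is correct and follows essentially the same route as the paper. You identify $\Fun_{/\Simp^{\op}}(\cM,\bfC^{\cJ})$ with $\Fun(\cJ,\Fun_{/\Simp^{\op}}(\cM,\bfC))$ by combining the pullback squares defining $\Fun_{/\Simp^{\op}}$ and $\bfC^{\cJ}$ with the exponential adjunction, and you then restrict to algebras using that the coCartesian edges of $\Fun(\cJ,\bfC)\to\Fun(\cJ,\Simp^{\op})$, and hence of $\bfC^{\cJ}\to\Simp^{\op}$, are exactly the pointwise coCartesian transformations; your detour through $\Fun_{/\Simp^{\op}}(\cM\times\cJ,\bfC)$ is just an explicit form of the same currying.
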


	\begin{proof}
		For a generalized non-symmetric $\infty$-operad $\cM$, consider the diagram
		\begin{equation*}
			\begin{tikzcd}
				\Fun_{/\Simp^{\op}}(\cM, \bfC^{\cJ}) \arrow[r] \arrow[d]      &    \Fun\left(\cM, \bfC^{\cJ} \right) \arrow[d] \arrow[r] & \Fun\left(\cM, \Fun\left(\cJ, \bfC \right) \right) \arrow[d] \\
				\ast \arrow[r]                & \Fun(\cM, \Simp^{\op} ) \arrow[r] & \Fun(\cM, \Fun(\cJ, \Simp^{\op})) \,.
			\end{tikzcd}
		\end{equation*}
		The left-hand square is a pullback by definition of $\Fun_{/\Simp^{\op}}$, and the right-hand square by definition of $\bfC^{\cJ}$.
		Using the tensor-hom adjunction, the outer square is then equivalent to the following pullback:
		\begin{equation*}
			\begin{tikzcd}
				\Fun\left(\cJ, \Fun_{/\Simp^{\op}}(\cM, \bfC) \right) \arrow[r] \arrow[d]      &  \Fun(\cJ, \Fun(\cM, \bfC))   \arrow[d] \\
				\ast \arrow[r]                & \Fun(\cJ, \Fun(\cM, \Simp^{\op})) \,.
			\end{tikzcd}
		\end{equation*}
		The coCartesian edges of $\Fun(\cJ, \bfC) \to \Fun(\cJ, \Simp^{\op})$ are precisely the natural transformations whose components at each object $J \in \cJ$ is coCartesian for $\bfC\to \Simp^{\op}$.
		Hence
		\begin{align*}
			\Fun\left(\cJ, \Fun_{/\Simp^{\op}}(\cM, \bfC) \right) \simeq \Fun_{/\Simp^{\op}}(\cM, \bfC^{\cJ} )
		\end{align*}
		restricts to an equivalence on the full subcategories of algebras.
	\end{proof}

	\begin{definition}\label{def:slice_double_cat}
		For a double $\infty$-category $\bfC$ and a $\bfC$-category $\sfA$, define $\bfC_{/ \sfA}$ by the following pullback in $\Cat_{\infty}$.
		\begin{equation*}\label{eq:slice_double_cat}
			\begin{tikzcd}
				\bfC_{/ \sfA} \arrow[r] \arrow[d]      &   \bfC^{[1]}  \arrow[d, "ev_{1}"] \\
				\Simp^{\op}_{\ob(\sfA)}  \arrow[r, "\sfC"]               &  \bfC \,.
			\end{tikzcd}
		\end{equation*}
	\end{definition}
	\begin{lem}\label{prop:overcategory}
		The composite $\bfC_{/\sfA} \to \Simp^{\op}_{\ob(\sfA)} \to \Simp^{\op}$ is a double $\infty$-category.
	\end{lem}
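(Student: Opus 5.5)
The plan is to verify the two defining properties of a double $\infty$-category for the composite $\bfC_{/\sfA}\to\Simp^{\op}$: that it is a coCartesian fibration, and that it satisfies the Segal condition. Both follow from closure properties of coCartesian fibrations and limits, applied to the pullback defining $\bfC_{/\sfA}$ in \Cref{def:slice_double_cat}. (Here the bottom map is $\sfA$.)

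\textbf{Step 1: coCartesian fibration.} I will show that $\bfC_{/\sfA}\to\Simp^{\op}_{\ob(\sfA)}$ is a coCartesian fibration. It is a pullback of $ev_1\colon\bfC^{[1]}\to\bfC$, so it suffices that $ev_1$ is a coCartesian fibration.

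The map $ev_1$ lies over $\Simp^{\op}$. Fiberwise over $[n]$ it is the target map $\Ar(\bfC_n)\to\bfC_n$, which is coCartesian by \Cref{ex:ev_0_is_cocart}. Its coCartesian edges are squares whose source component is an equivalence.

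To handle edges over nontrivial morphisms of $\Simp^{\op}$, I would use the following characterization. A morphism of $\bfC^{[1]}$ lying over $\phi$ is an arrow $(a\to b)\to(a'\to b')$, with both components lying over $\phi$. Declare it to be $ev_1$-coCartesian when its source component $a\to a'$ is $\bfC$-coCartesian. For a given edge $b\to b'$ in $\bfC$ and an object $(a\to b)$, a lift is constructed as follows:
\begin{itemize}
\item take the coCartesian lift $a\to\phi_!a$;
\item factor the composite $a\to b\to b'$ through $a\to\phi_!a$, which is possible by the coCartesian property, giving $\phi_!a\to b'$.
\end{itemize}
Checking that these edges are $ev_1$-coCartesian is a mapping-space pullback computation using \Cref{def:cocart_edge}. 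Alternatively, $ev_1$ is a map of coCartesian fibrations over $\Simp^{\op}$ which is fiberwise coCartesian, with transport functors preserving fiberwise coCartesian edges (the transports $\phi_!$ preserve equivalences). The coCartesian analogue of \Cref{thm:iterated_fibration} then gives the claim directly; I would state and use that dual.

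Composing with the coCartesian fibration $\Simp^{\op}_{\ob(\sfA)}\to\Simp^{\op}$ gives a coCartesian fibration, since coCartesian fibrations are closed under composition.

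\textbf{Step 2: Segal condition.} Taking fibers over $[n]$, the pullback gives
\[
(\bfC_{/\sfA})_n\simeq \ob(\sfA)^{n+1}\times_{\bfC_n}\Ar(\bfC_n).
\]
The relevant transports are the inert transports $\rho_i$. Because $\sfA$ is an algebra, it preserves coCartesian lifts of inerts, so these transports are compatible with the pullback. The Segal maps for $\bfC_{/\sfA}$ are therefore pullbacks of the Segal maps for:
\begin{itemize}
\item $\Simp^{\op}_{\ob(\sfA)}$, which satisfies the Segal condition by \Cref{def:indiscrete_category};
\item $\bfC^{[1]}$, by \Cref{rem:C^J_is_double_cat};
\item $\bfC$.
\end{itemize}
Since limits commute with limits, the iterated fiber product $(\bfC_{/\sfA})_1\times_{(\bfC_{/\sfA})_0}\cdots$ is computed as the corresponding pullback of iterated fiber products. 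The Segal maps are thus equivalences. Condition (3) of \Cref{def:gen_nonsym_operad} is automatic by \Cref{rem:double_is_segal_cat}.

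\textbf{Main obstacle.} The main obstacle is Step 1, specifically that $ev_1\colon\bfC^{[1]}\to\bfC$ is coCartesian over non-fiberwise edges, together with the precise description of its coCartesian edges. The naive pullback-stability argument requires this input. If the dual of \Cref{thm:iterated_fibration} is used, one must check the hypothesis that each transport $\phi_!\colon\Ar(\bfC_n)\to\Ar(\bfC_m)$ sends squares with invertible source to squares with invertible source, which is immediate.
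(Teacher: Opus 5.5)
Your proposal is correct and follows essentially the paper's argument: you show $ev_1\colon\bfC^{[1]}\to\bfC$ is a coCartesian fibration via the coCartesian dual of \Cref{thm:iterated_fibration} (fiberwise target maps, with transports preserving squares whose source component is invertible), and then finish with pullback stability, composition with $\Simp^{\op}_{\ob(\sfA)}\to\Simp^{\op}$, and stability of the Segal condition under pullback. The differences are cosmetic. You take fibers over $\Simp^{\op}$, getting $\Ar(\bfC_n)\to\bfC_n$, rather than over $\Simp^{\op}_{\bfC_0}$, where the paper gets products of $\Ar(\bfC(c_i,c_{i+1}))\to\bfC(c_i,c_{i+1})$; your version does not need $\bfC_0$ to be a space. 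You also make explicit that the Segal condition passes to the pullback because $\sfA$ preserves coCartesian lifts of inert morphisms.
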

	\begin{proof}
		In the fiber over some $(c_{0},\dots c_{n}) \in \Simp^{\op}_{\bfC_{0}}$, the functor $ev_1$ is equivalent to the product
		\begin{align*}
			\prod ev_1 \colon \prod_{i} \Ar(\bfC(c_{i},c_{i+1})) \to  \prod_{i} \bfC(c_{i},c_{i+1})\,.
		\end{align*}
		Since each factor of this is a coCartesian fibration, so is the product. As in \Cref{ex:ev_0_is_cocart}, a morphism $f$ is $\prod ev_1$-coCartesian if and only if $\prod ev_{0}f$ is an equivalence. CoCartesian transport along the lift of a morphism $\phi \colon [n]\to [m]$ is given by a combination of projection and horizontal composition. Because these preserve equivalences in the source, $\phi_!$ is a morphism of coCartesian fibrations.
    \Cref{thm:iterated_fibration} then implies that the right vertical of \eqref{eq:slice_double_cat} is a coCartesian fibration, so by pullback stability, so is the left vertical.
    The Segal condition is stable under pullback, so $\bfC_{/\sfA}$ is a double $\infty$-category.
	\end{proof}

	\begin{rem}\label{rem:slice_is_functorial}
		Again it is clear that $\sfC_{/\sfA}$ is covariantly functorial both in $\bfC$ and $\sfA$.
	\end{rem}

	\begin{rem}\label{rem:unwinding_slice}
		By restricting the pullback square \eqref{eq:slice_double_cat} to the fiber over $[0]$, we see that the objects of $\cV_{/ \sfC}$ are precisely $\ob(\sfC)$.
    By pulling back to the fiber over a tuple $(c,d) \in \Simp^{\op}_{\ob(\sfC)}$, we see that
		\begin{align*}
			\bfC_{/\sfA}(x,y) \simeq \bfC(c_{x},c_{y})_{/\sfA(c,d)} \,.
		\end{align*}
		By chasing coCartesian lifts in \eqref{eq:slice_double_cat}, one can check that the horizontal composition of objects $a\to \sfA(x,y), b\to \sfA(y,z)$ is given by
		\begin{align*}
			a\odot b \to \sfA(x,y) \odot \sfA(y,z) \xrightarrow{\odot} \sfA(y,z) \,.
		\end{align*}
	\end{rem}

	\begin{rem}\label{rem:monoidal_slice}
		We shall mostly be concerned with the slice when $\cV^{\otimes}$ is a monoidal $\infty$-category.
    Then $\cV_{\sfC/}$ is a double $\infty$-category with a space of objects $\ob(\sfC)$, and morphism $\infty$-categories $\cV_{/ \sfC(c,d)}$.
	\end{rem}

	\begin{ex}
		In particular, since $\Cat_\infty \simeq \Cat_\infty^{\cS}$, any $\infty$-category $\cC$ determines a double $\infty$-category $\cS_{/\cC}$, whose objects are $\cC^{\simeq}$ and whose mapping $\infty$-categories are $\cS_{/\Map_{\cC}(x,y)}$. Similarly, any $\Sp$-category $\sfC$ determines a double $\infty$-category $\Sp_{/\sfC}$.
	\end{ex}

	\begin{ex}\label{ex:2cat_from_monoid}
		Let $\sfB X \colon \Simp^{\op} \to \cS$ be a monoid in spaces with underlying space $\sfB X_{1} = X$. The corresponding (non-complete) $\cS$-category has contractible space of objects, so the slice double $\infty$-category $\cS_{/\sfB X}$ is a monoidal $\infty$-category whose underlying $\infty$-category is the slice $\cS_{/X}$. We also write $\cS_{/X}^{\otimes}$ for this monoidal $\infty$-category. We can think of the product in $\cS_{/X}$ as given by the formula
		\begin{align*}
			(S \to X) \otimes (S'\to X) = (S\times S' \to X\times X \to X) \,.
		\end{align*}
		Note that the construction of exponentials and slices for monoidal $\infty$-categories also makes sense when we replace $\Simp^{\op}$ with the $\bE_{n}$ operad, so if $X$ is an $\bE_{n}$-monoid in spaces, this construction gives rise to an $\bE_{n}$-monoidal structure on $\cS_{/X}$.
	\end{ex}

	We will now study the $\infty$-category of algebras in $\cV_{/\sfC}$. For this, it is useful to introduce the notion of right fibrations of double $\infty$-categories. Recall that slice projections $\cC_{/c} \to \cC$ are the prototypical right fibrations, so we would expect that $\cV_{/\sfC} \to \cV^{\otimes}$ is a right fibration of double $\infty$-categories.

	\begin{definition}\label{def:double_right_fib}
		We say that a double functor $F\colon \bfC \to \bfD$ is a \emph{left (resp. right) fibration of double $\infty$-categories} if the following square is a pullback.
		\begin{equation}\label{eq:double_right_fib_pullback}
			\begin{tikzcd}
				\bfC^{[1]} \arrow[r] \arrow[d,"ev_i"]      &  \bfD^{[1]}   \arrow[d,"ev_{i}"] \\
				\bfC \arrow[r]                & \bfD
			\end{tikzcd}
		\end{equation}
		for $i=0$ (resp. $i=1$).
	\end{definition}

	\begin{lem}\label{lem:double_right_fib_condition}
		Let $F\colon \bfC\to \bfD$ be a double functor such that $\bfC_{0}$ and $\bfD_{0}$ are spaces, and for each pair of objects $x,y\in \bfC_{0}$, the functor
		\begin{align*}
			\bfC(x,y) \to \bfD(Fx,Fy)
		\end{align*}
		is a left (resp. right) fibration, then $F$ is a left (resp. right) fibration of double $\infty$-categories.
	\end{lem}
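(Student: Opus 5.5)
We have to show that the square \eqref{eq:double_right_fib_pullback} is a pullback in $\Cat_{\infty}$; we treat the right fibration case ($i=1$), the left case being dual. All four corners lie over $\Simp^{\op}$: the exponentials $\bfC^{[1]},\bfD^{[1]}$ lie over it by \Cref{def:exponential_double_cat}, and all maps in the square are maps over $\Simp^{\op}$. A square of $\infty$-categories over a base $\cB$ is a pullback in $\Cat_{\infty}$ if and only if it is a pullback in $\Cat_{\infty/\cB}$, since the forgetful functor from the slice preserves pullbacks. So we may test the square in $\Cat_{\infty/\Simp^{\op}}$.

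The first step is to show that the comparison map
\[
\bfC^{[1]} \to \bfC\times_{\bfD}\bfD^{[1]}
\]
is a map of coCartesian fibrations over $\Simp^{\op}$ between coCartesian fibrations. The source is a double $\infty$-category by \Cref{rem:C^J_is_double_cat}. For the target, the maps $\bfD^{[1]}\to \bfD$ and $\bfC\to\bfD$ preserve coCartesian edges, and a pullback of coCartesian fibrations along maps of coCartesian fibrations is again a coCartesian fibration, with coCartesian edges detected componentwise. The comparison map preserves coCartesian edges because its components do. With this in place, a map of coCartesian fibrations over $\Simp^{\op}$ is an equivalence as soon as it is an equivalence on each fiber over $[n]$. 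By straightening (\Cref{prop:straightening_cocart}), taking fibers commutes with the pullback, so it suffices to show that for every $[n]$ the square of fibers
\[
(\bfC^{[1]})_{n}\simeq \Fun([1],\bfC_{n}) \to \Fun([1],\bfD_{n}), \qquad \bfC_{n}\to\bfD_{n}
\]
is a pullback. The identification of the fiber of $\bfC^{[1]}$ with $\Fun([1],\bfC_{n})$ follows directly from the defining pullback in \Cref{def:exponential_double_cat}.

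Next we use the Segal condition to reduce to $n=0,1$. Both $\Fun([1],-)$ and the Segal decomposition $\bfC_{n}\simeq \bfC_{1}\times_{\bfC_{0}}\cdots\times_{\bfC_{0}}\bfC_{1}$ commute with limits, and pullbacks commute with these iterated fiber products. Hence the square at level $n$ is an iterated fiber product of the squares at levels $1$ and $0$, and it suffices to treat those two cases.

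At level $0$, $\bfC_{0}$ and $\bfD_{0}$ are spaces, so $\Fun([1],\bfC_{0})\simeq\bfC_{0}$ and likewise for $\bfD_{0}$. The square then has two parallel equivalences and is trivially a pullback.

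The main step is level $1$. Consider the map $\bfC_{1}\to \bfC_{0}\times\bfC_{0}$. Since $\bfC_{0}$ is a space, we have $\Fun([1],\bfC_{0}\times\bfC_{0})\simeq\bfC_{0}\times\bfC_{0}$, so every arrow in $\bfC_{1}$ lies in a single fiber $\bfC(x,y)$. This gives
\[
\Fun([1],\bfC_{1})\simeq \Fun([1],\bfC_{1})\times_{\Fun([1],\bfC_{0}^{2})}\bfC_{0}^{2}.
\]
Passing to fibers over $(x,y)\in\bfC_{0}^{2}$, and using that $\bfC_{0}^{2}\to\bfD_{0}^{2}$ is a map of spaces so that pullbacks can be checked fiberwise over $\bfC_{0}^{2}$, the level-$1$ square becomes, over each $(x,y)$,
\begin{align*}
\Ar(\bfC(x,y)) \to \Ar(\bfD(Fx,Fy)), \qquad \bfC(x,y)\to\bfD(Fx,Fy),
\end{align*}
with vertical maps $ev_{1}$. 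This square is a pullback precisely because $\bfC(x,y)\to\bfD(Fx,Fy)$ is a right fibration (\Cref{def:left/right-fib}).

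The delicate point is this fiberwise reduction over the space $\bfC_{0}^{2}$. Justifying it carefully uses that a map of $\infty$-categories over a space is an equivalence if and only if it is an equivalence on fibers, together with descent in $\Cat_{\infty}$ over spaces. This completes the proof.
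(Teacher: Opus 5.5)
Your proof is correct. It shares the paper's overall strategy of reducing to the levelwise statement that each $F_n\colon \bfC_n\to\bfD_n$ is a right fibration, but you prove that levelwise statement by a different decomposition.

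The paper works at every level $n$ at once. It factors $F_n$ as $\bfC_n\to\bfD_n\times_{\bfD_0^{n+1}}\bfC_0^{n+1}\to\bfD_n$ and observes that the second map is a base change of a map of spaces, hence a right fibration. It then shows the first map is a right fibration via \Cref{thm:iterated_fibration}, using two facts: any functor to a space is a Cartesian fibration whose Cartesian edges are the equivalences, and the fibers over $(x_0,\dots,x_n)$ are products of the hom-fibrations $\bfC(x_i,x_{i+1})\to\bfD(Fx_i,Fx_{i+1})$. Finally it composes the two right fibrations.

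You instead use the Segal condition to reduce to $n\le 1$. You dispose of $n=0$ because $\Fun([1],-)$ is trivial on spaces. You treat $n=1$ by straightening over the space $\bfC_0^2$, i.e.\ $\Cat_{\infty/S}\simeq\Fun(S,\Cat_{\infty})$ for a space $S$. This identifies the fiber of the comparison map over $(x,y)$ with $\Ar(\bfC(x,y))\to\Ar(\bfD(Fx,Fy))\times_{\bfD(Fx,Fy)}\bfC(x,y)$, which is an equivalence by hypothesis.

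What each route buys:
\begin{itemize}
\item Your route avoids the iterated-fibration proposition and the composition of right fibrations. It needs only the Segal decomposition and straightening, so it is more self-contained.
\item The paper's route is uniform in $n$ and packages the descent over the object space into a reusable proposition, which it uses again elsewhere.
\end{itemize}

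You also supply a justification the paper leaves implicit. The paper simply asserts that the pullback condition can be checked fiberwise over $\Simp^{\op}$. You justify this by showing that $\bfC^{[1]}\to\bfC\times_{\bfD}\bfD^{[1]}$ is a morphism of coCartesian fibrations, since $ev_i$ and $F$ preserve coCartesian edges.
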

	\begin{proof}
		We treat the case of right fibrations, as the case for left fibrations is dual.
		The diagram
		\begin{equation*}
			\begin{tikzcd}
				\bfC_{n} \arrow[r, "F_{n}"] \arrow[d] & \bfD_{n} \arrow[d] \\
				(\bfC_{0})^{n+1} \arrow[r] & (\bfD_{0})^{n+1}
			\end{tikzcd}
		\end{equation*}
		induces a factorization of $F_{n}$ as
		\begin{align*}
			\bfC_{0} \xrightarrow{F'_{n}} \bfD_{n} \times_{(\bfD_{0})^{n+1}} (\bfC_{0})^{n+1} \xrightarrow{F''_{n}} \bfD_{n} \,.
		\end{align*}
		The functor $F''_{n}$ is a pullback of a map of spaces, and therefore a right fibration.
		Any functor whose target is a space is a Cartesian fibration whose Cartesian edges are the equivalences.
		Therefore $F'_{n}$ is a morphism of Cartesian fibrations over $(\bfC_{0})^{n+1}$.
		The induced morphism on fibers over any $(C_{0},\dots, C_n)$ is equivalent to the product of the right fibrations $\bfC(C_{i}, C_{i+1}) \to \bfD(FC_{i}, FC_{i+1})$, so by \Cref{thm:iterated_fibration}, $F'_{n}$ is a right fibration.
		Now the condition that \eqref{eq:double_right_fib_pullback} is a pullback can be checked fiberwise over $\Simp^{\op}$, where it reduces to the condition that each $F_{n}$ is a right fibration.
	\end{proof}

	\begin{prop}\label{prop:algebra_right_fib}
		If $F\colon \bfC \to \bfD$ is a right fibration of double $\infty$-categories, composition with $F$ determines a right fibration
		\begin{align*}
			F_{*}\colon \Alg(\bfC) \to \Alg(\bfD) \,.
		\end{align*}
	\end{prop}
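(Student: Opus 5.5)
The plan is to apply \Cref{thm:iterated_fibration} to the triangle
\begin{equation*}
	\begin{tikzcd}
		\Alg(\bfC) \arrow[rr, "F_{*}"] \arrow[dr] & & \Alg(\bfD) \arrow[dl] \\
		& \Opd &
	\end{tikzcd}
\end{equation*}
of Cartesian fibrations from \Cref{def:algebra_fibration}. This requires two inputs. First, $F_{*}$ must be a morphism of Cartesian fibrations. Second, for each generalized non-symmetric $\infty$-operad $\cM$, the functor on fibers $F_{*}\colon \Alg_{\cM}(\bfC) \to \Alg_{\cM}(\bfD)$ must be a right fibration. The last clause of \Cref{thm:iterated_fibration} then says that the compatibility of fiber transports is automatic, and that $F_{*}$ is a right fibration.

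For the first input, observe how Cartesian transport works in $\Alg(\bfC)\to\Opd$. Transport along an algebra $G\colon \cM'\to\cM$ is precomposition $G^{*}\colon \Alg_{\cM}(\bfC)\to\Alg_{\cM'}(\bfC)$, while $F_{*}$ is postcomposition with $F$. Since pre- and postcomposition commute, the two straightened functors $\Opd^{\op}\to\Cat_{\infty}$ given by $\cM\mapsto\Alg_{\cM}(\bfC)$ and $\cM\mapsto\Alg_{\cM}(\bfD)$ are related by a natural transformation whose components are $F_{*}$. Unstraightening this natural transformation gives $F_{*}$ as a morphism of Cartesian fibrations. I would phrase this as: postcomposition with $F$ is natural in the contravariant variable of $\Alg_{(-)}(-)$, so it lives in $\Fun(\Opd^{\op},\Cat_{\infty})$.

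For the fiberwise statement, I must show that the following square is a pullback:
\begin{equation*}
	\begin{tikzcd}
		\Fun([1],\Alg_{\cM}(\bfC)) \arrow[r] \arrow[d, "ev_{1}"] & \Fun([1],\Alg_{\cM}(\bfD)) \arrow[d, "ev_{1}"] \\
		\Alg_{\cM}(\bfC) \arrow[r] & \Alg_{\cM}(\bfD)
	\end{tikzcd}
\end{equation*}
By \Cref{lem:alg_in_exponential}, which is natural in $\cJ$ (use $\{1\}\to[1]$) and in $\bfC$, this square is identified with $\Alg_{\cM}(-)$ applied to the pullback square \eqref{eq:double_right_fib_pullback} for $i=1$. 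It therefore suffices to show that $\Alg_{\cM}(-)$ carries pullbacks of double $\infty$-categories along double functors to pullbacks.

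That $\Fun_{/\Simp^{\op}}(\cM,-)$ preserves pullbacks in $\Cat_{\infty/\Simp^{\op}}$ is formal. By \Cref{lem:Mon->Dbl->Cat}, the pullback $\bfC^{[1]}\simeq \bfC\times_{\bfD}\bfD^{[1]}$ is computed in $\Cat_{\infty}$, and hence also over $\Simp^{\op}$. The remaining point, which I expect to be the main obstacle, is to check that the full subcategories of algebras match. Concretely, a functor $\cM\to\bfC\times_{\bfD}\bfD^{[1]}$ over $\Simp^{\op}$ must preserve coCartesian lifts of inerts exactly when both of its projections do. To establish this, I will show that in a pullback of coCartesian fibrations along functors preserving coCartesian edges, an edge is coCartesian if and only if its images in the factors are coCartesian. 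The approach is to write the defining square of \Cref{def:cocart_edge} for the pullback as a pullback of the corresponding squares for the factors, and then use that limits commute with limits. Here $ev_{1}$ and $F$ are double functors, so they preserve all coCartesian edges. Given this, a functor into the pullback is an algebra if and only if its components are algebras, so $\Alg_{\cM}(\bfC^{[1]})\simeq \Alg_{\cM}(\bfC)\times_{\Alg_{\cM}(\bfD)}\Alg_{\cM}(\bfD^{[1]})$, which completes the argument.
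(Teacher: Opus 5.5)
Your proposal is correct and follows the paper's proof. You reduce via \Cref{thm:iterated_fibration} to the fibers over each operad $\cM$, identify the arrow-category square with $\Alg_{\cM}$ applied to the defining pullback of a double right fibration using \Cref{lem:alg_in_exponential}, and conclude because $\Alg_{\cM}$ preserves that pullback. Your additional check, that a functor into $\bfC\times_{\bfD}\bfD^{[1]}$ is an algebra exactly when both projections are (via coCartesian edges in a pullback of coCartesian fibrations along coCartesian-edge-preserving functors), fills in the step the paper states only as ``$\Alg_{\cM}$ preserves limits''.
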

	\begin{proof}
		$F_{*}$ is a morphism of cartesian fibrations over $\Opd$, so by \Cref{thm:iterated_fibration} it suffices to show that $F_{*}$ induces a right fibration in the fiber over some generalized non-symmetric $\infty$-operad, i.e., to show that the following square is a pullback
		\begin{equation*}
			\begin{tikzcd}
				\Ar(\Alg_{\cM}(\bfC)) \arrow[r] \arrow[d]      &  \Ar(\Alg_{\cM}(\bfD))    \arrow[d] \\
				\Alg_{\cM}(\bfC) \arrow[r]                &\Alg_{\cM}(\bfD) \, ,
			\end{tikzcd}
		\end{equation*}
		By \Cref{lem:alg_in_exponential}, this square is equivalent to
		\begin{equation*}
			\begin{tikzcd}
				\Alg_{\cM}(\bfC^{[1]})  \arrow[r] \arrow[d]      &    \Alg_{\cM}(\bfD^{[1]})  \arrow[d] \\
				\Alg_{\cM}(\bfC)  \arrow[r]                & \Alg_{\cM}(\bfD) \,.
			\end{tikzcd}
		\end{equation*}
		which is a pullback since $F$ is a right fibraiton of double $\infty$-categories and because $\Alg_{\cM}$ preserves limits.
	\end{proof}

	\begin{cor}\label{cor:alg_in_slice}
		Let $\bfC$ be a double $\infty$-category such that $\bfC_{0}$ is a space, and $\sfA$ a $\bfC$-category. The projection $\bfC_{/ \sfA} \to \bfC$ is a right fibration of double $\infty$-categories, and we have a natural equivalence
		\begin{align*}
			\Alg(\bfC_{/\sfA}) \simeq \Alg(\bfC)_{/\sfA}
		\end{align*}
    of right fibrations	over $\Alg(\bfC)$.
	\end{cor}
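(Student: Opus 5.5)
The first claim, that $\bfC_{/\sfA}\to\bfC$ is a right fibration of double $\infty$-categories, I will deduce from \Cref{lem:double_right_fib_condition}. Both $\bfC_{0}$ and $(\bfC_{/\sfA})_{0}\simeq\ob(\sfA)$ are spaces (\Cref{rem:unwinding_slice}). The composite $\bfC_{/\sfA}\to\Simp^{\op}_{\ob(\sfA)}\xrightarrow{\sfA}\bfC$ is a double functor: the map $\bfC^{[1]}\xrightarrow{ev_{0}}\bfC$ preserves coCartesian edges, since these are computed pointwise in $\Fun([1],\bfC)$ over $\Fun([1],\Simp^{\op})$. On morphism categories it is the slice projection
\[
\bfC(c_{x},c_{y})_{/\sfA(x,y)}\to\bfC(c_{x},c_{y}).
\]
This is a right fibration by \Cref{ex:slice_fibrations}, so the lemma applies.

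For the equivalence, \Cref{prop:algebra_right_fib} makes $\Alg(\bfC_{/\sfA})\to\Alg(\bfC)$ a right fibration. By \Cref{ex:slice_fibrations}, it then suffices to show that this right fibration is representable, i.e.\ that $\Alg(\bfC_{/\sfA})$ has a terminal object lying over $\sfA$. The natural candidate is the tautological section
\[
\tilde{\sfA}\colon\Simp^{\op}_{\ob(\sfA)}\to\bfC_{/\sfA},
\]
induced by the pullback square \eqref{eq:slice_double_cat} from the identity of $\Simp^{\op}_{\ob(\sfA)}$ and the map $\Simp^{\op}_{\ob(\sfA)}\to\bfC^{[1]}$ sending a point to $\mathrm{id}_{\sfA(-)}$ (the constant-diagonal map $\bfC\to\bfC^{[1]}$ composed with $\sfA$). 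This is an algebra, since $\sfA$ and the diagonal preserve inert coCartesian lifts, and it maps to $\sfA$ under the projection.

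To show $\tilde{\sfA}$ is terminal, I will argue fiberwise over $\Opd$. The right fibration is a morphism of Cartesian fibrations over $\Opd$. For an algebra $B\colon\cM\to\bfC$, the fiber of $\Alg(\bfC_{/\sfA})\to\Alg(\bfC)$ over $B$ is the space of lifts of $B$ through $\bfC_{/\sfA}\to\bfC$. Since $\bfC_{/\sfA}\simeq\Simp^{\op}_{\ob(\sfA)}\times_{\bfC}\bfC^{[1]}$, such a lift is:
\begin{itemize}
\item a map $g\colon\cM\to\Simp^{\op}_{\ob(\sfA)}$ over $\Simp^{\op}$, together with
\item a natural transformation $B\Rightarrow\sfA\circ g$ in $\Fun_{/\Simp^{\op}}(\cM,\bfC)$ whose components are all fiberwise morphisms,
\end{itemize}
with the inert-preservation condition automatic because of the right fibration property. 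Via \Cref{lem:alg_in_exponential}, such pairs are exactly morphisms $B\to\sfA$ in $\Alg(\bfC)$, where a morphism over $\Opd$ consists of $g$ plus a transformation in the fiber $\Alg_{\cM}(\bfC)$ to $g^{*}\sfA$. This identifies the fiber over $B$ with $\Map_{\Alg(\bfC)}(B,\sfA)$, naturally in $B$. Hence the straightening of the right fibration is the representable presheaf $\Map(-,\sfA)$, and \Cref{ex:slice_fibrations} yields the equivalence $\Alg(\bfC_{/\sfA})\simeq\Alg(\bfC)_{/\sfA}$ over $\Alg(\bfC)$.

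The main obstacle is this last identification. It requires carefully matching the Cartesian morphisms of $\Alg(\bfC)\to\Opd$ (precomposition along $g$) with the component $\cM\to\Simp^{\op}_{\ob(\sfA)}$ of the lift, and checking that "natural transformation with fiberwise components" in $\bfC^{[1]}$ is exactly a morphism in $\Alg_{\cM}(\bfC)$ after restriction. If this direct identification proves messy, I will instead verify terminality directly: a map $B\to\sfA$ in $\Alg(\bfC)$ factors as a Cartesian arrow followed by a fiberwise arrow in $\Alg_{\cM}(\bfC)$, and in the fiber the claim reduces via \Cref{lem:alg_in_exponential} to the fact that $\mathrm{id}$ is terminal in the slice $\Alg_{\cM}(\bfC)_{/g^{*}\sfA}$.
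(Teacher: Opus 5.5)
Your proof follows the paper's: \Cref{lem:double_right_fib_condition} for the first claim, \Cref{prop:algebra_right_fib} for the right fibration on algebras, and representability via the tautological section over $\mathrm{id}_{\sfA}$ being terminal. The paper only asserts this terminality, and your fiberwise identification over $\Opd$ of the fiber over $B$ with $\Map_{\Alg(\bfC)}(B,\sfA)$ (via \Cref{lem:alg_in_exponential}) is a reasonable way to justify it.

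There is one slip in the first part. The projection $\bfC_{/\sfA}\to\bfC$ is $ev_{0}$ of the $\bfC^{[1]}$-component, not the composite through $\sfA$, which is only lax. Its being a double functor also needs a different reason. The relevant edges are not coCartesian in $\bfC^{[1]}$, since their $ev_{1}$-component is $\sfA$ applied to an active map. So the fact that $ev_{0}\colon\bfC^{[1]}\to\bfC$ preserves coCartesian edges does not apply. What you need is that $ev_{1}$-coCartesian edges of $\bfC^{[1]}$ have $\Simp^{\op}$-coCartesian $ev_{0}$-component.
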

	\begin{proof}
		The double functor $\bfC_{/ \sfA} \to \bfC$ satisfies the condition of \Cref{lem:double_right_fib_condition} and is therefore a right fibration of double $\infty$-categories.
    By \Cref{prop:algebra_right_fib}, the pushforward $\Alg(\bfC_{/\sfA})\to \Alg(\bfC)$ is a right fibration.
    Now note that the source of this fibration has a terminal object, namely the $\Simp^{\op}_{\ob(\sfA)}$-algebra corresponding to the identity on $\sfA$.
    A right fibration with a terminal object is canonically equivalent to the representable right fibration at the image of the terminal object.
	\end{proof}

	\begin{definition}\label{def:invertible_alg}
		Let $\bfC$ be a double $\infty$-category.
    We say that a $\bfC$-category $\sfS \colon \Simp^{\op}_{\ob(\sfS)} \to \bfC$ is \emph{invertible} if it is also a double functor.
	\end{definition}

	\begin{rem}\label{rem:unwinding_invertible_alg}
		Let $\sfS$ be an invertible $\bfC$-category.
		If we consider coCartesian lifts of an active morphism $\alpha_2 \colon (x,y,z) \to (x,z)$ in $\Simp^{\op}_{\ob(\sfS)}$, the invertibility condition tells us that
		\begin{align*}
			\left(\sfS(x,y), \sfS(y,z) \right) \xrightarrow{\sfS(\alpha_2)} \sfS(x,z)
		\end{align*}
		is a coCartesian lift of $\alpha_{2}\colon [2] \to [1]$; in other words, it computes the tensor product.
		This happens if and only if the composition map
		\begin{align*}
			\sfS(x,y)\odot \sfS(y,z) \to \sfS(x,z)
		\end{align*}
		is an equivalence.
		Similarly, all the unit maps $I_{\cV}\to \sfS(x,x)$ must be equivalences.
		This shows that an algebra is invertible if and only if it factors through the underlying left fibration $h\bfC \to \bfC$.
		Moreover, we have
		\begin{align*}
			\sfS(x,y) \odot \sfS(y,x) \xrightarrow{\sim} \sfS(x,x) \simeq I_{\cV} \\
			\sfS(y,x) \odot \sfS(x,y) \xrightarrow{\sim} \sfS(y,y) \simeq I_{\cV} \, ,
		\end{align*}
		so every morphism object of $\sfS$ must represent an invertible morphism in $h\bfC$.
    In fact, $\sfS$ is uniquely determined by a map of spaces $\ob(\sfS) \to h\bfC^{\simeq}$ into the classifying space of equivalences in $h\bfC$.
    If we apply this at $\bfC= \cV^{\otimes}$, we get that an invertible algebra in $\cV$ necessarily factors through the inclusion $\Pic(\cV)^{\otimes}\to \cV^{\otimes}$ of the \emph{Picard space} of $\cV$.
	\end{rem}

	\begin{definition}\label{def:under_slice_double_cat}
		Let $\sfS$ be an invertible categorical algebra in $\bfC$.
    Then define $\bfC_{\sfS/}$ by the following pullback in $\Dbl$:
		\begin{equation*}
			\begin{tikzcd}
				\bfC_{\sfS/} \arrow[r] \arrow[d] &  \bfC^{[1]} \arrow[d, "d_{1}"]  \\
				\Simp_{\ob(\sfS)}^{\op} \arrow[r, "\sfS"] & \bfC \,.
			\end{tikzcd}
		\end{equation*}

	\end{definition}
	\begin{rem}\label{rem:unwinding_under_slice}
		By \Cref{lem:Mon->Dbl->Cat}, the pullback defining $\bfC_{\sfS/}$ can be computed in $\Cat_{\infty}$. Restricting to the fiber over $[0]$, we can therefore see that the objects of $\bfC_{\sfS/}$ are precisely $\ob(\sfS)$.
    By pulling back to the fiber over $(x,y)\in \Simp^{\op}_{\ob(\sfS)}$, we can identify the mapping $\infty$-category $\bfC_{\sfS/}(x,y)$ with the pullback
		\begin{equation*}
			\begin{tikzcd}
				\bfC(c_{x},c_{y})_{\sfS(x,y)/} \arrow[r] \arrow[d]      &   \Ar( \bfC(c_{x},c_{y}) )  \arrow[d] \\
				{[0]}  \arrow[r, "{\sfS(x,y)}"]               &   \bfC(c_{x},x_{y}) \, .
			\end{tikzcd}
		\end{equation*}
	\end{rem}
	\begin{ex}\label{ex:unit_algebra}
		The unit $I_{\cV}$ in a monoidal $\infty$-category $\cV^{\otimes}$ always defines an invertible algebra.
		In fact, this is the only possible invertible algebra with a single object.
		Hence, we can always form the monoidal $\infty$-category $\cV_{I_{\cV}/}^{\otimes}$ of objects under the unit.
		This is also the natural monoidal structure on $\Alg_{\bE_{0}}(\cV)$.
	\end{ex}

	\begin{definition}\label{def:double_slices}
		Let $\sfC$ and $\sfS$ be $\cV$-categories, and assume that $\sfS$ is invertible.
		Let $\cV_{\sfS/ /\sfC}$ and  $\Map_{\cV}\left(\sfS, \sfC \right)$ be defined by the pullbacks
		\begin{equation}\label{eq:double_slices}
			\begin{tikzcd}
				\cV_{\sfS/ /\sfC} \arrow[r] \arrow[d] & \left(\cV^{\otimes} \right)^{[2]} \arrow[d, "(ev_0 {, } ev_{2})" ]\\
				\Simp^{\op}_{\ob(\sfS)\times \ob(\sfC)} \arrow[r, "\sfS\times \sfC"] & \cV^{\otimes} \times \cV^{\otimes}
			\end{tikzcd}
			\quad \text{and} \quad
			\begin{tikzcd}
				\Map_{\cV}\left(\sfS,\sfC \right) \arrow[r] \arrow[d] & \left(\cV^{\otimes} \right)^{[1]} \arrow[d, "(ev_0 {, } ev_1)"] \\
				\Simp^{\op}_{\ob(\sfS)\times \ob(\sfC)} \arrow[r, "\sfS\times\sfC"] & \cV^{\otimes}\times\cV^{\otimes}
			\end{tikzcd}
		\end{equation}
		in $\Cat_\infty$, respectively.
	\end{definition}

	\begin{lem}\label{lem:double_slices}
		The functors $\cV_{\sfS / / \sfC}\to \Simp^{\op}$ and $\Map_{\cV}(\sfS,\sfC) \to \Simp^{\op}$ exhibit double $\infty$-categories.
		The latter is also a left fibration, i.e. the unstraightening of a Segal space.
		The functor
		\begin{align*}
			\Comp\colon \cV_{\sfS / / \sfC} \to \Map_{\cV}(\sfS, \sfC)
		\end{align*}
		induced by $d_1\colon \left(\cV^{\otimes} \right)^{[2]} \to \left(\cV^{\otimes} \right)^{[1]}$ is a double functor.
	\end{lem}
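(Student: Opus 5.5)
The plan is to copy the argument of \Cref{prop:overcategory}, now for the two-step evaluation functors $(ev_0,ev_2)\colon (\cV^{\otimes})^{[2]}\to \cV^{\otimes}\times\cV^{\otimes}$ and $(ev_0,ev_1)\colon(\cV^{\otimes})^{[1]}\to\cV^{\otimes}\times\cV^{\otimes}$. Here $\cV^{\otimes}\times\cV^{\otimes}$ means the fibre product over $\Simp^{\op}$; this is consistent with \Cref{rem:monoidal_structure_on_Alg_Cat}, since $\Simp^{\op}_{\ob(\sfS)\times\ob(\sfC)}\simeq \Simp^{\op}_{\ob(\sfS)}\times_{\Simp^{\op}}\Simp^{\op}_{\ob(\sfC)}$. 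I would treat all objects as living over $\Simp^{\op}$ and check two things separately: the coCartesian fibration property over $\Simp^{\op}$, and the Segal condition.

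\textbf{Segal condition.} The fibre of $(\cV^{\otimes})^{[k]}$ over $[n]$ is $\Fun([k],\cV^{n})\simeq\Fun([k],\cV)^n$, and $\cV^{\otimes}\times_{\Simp^{\op}}\cV^{\otimes}$ has fibre $\cV^n\times\cV^n$. The base $\Simp^{\op}_{\ob(\sfS)\times\ob(\sfC)}$ has fibre $(\ob\sfS\times\ob\sfC)^{n+1}$. Over a tuple $((s_i,c_i))_i$, the fibre of $\cV_{\sfS//\sfC}$ is therefore
\[
\prod_i \cV_{\sfS(s_i,s_{i+1})//\sfC(c_i,c_{i+1})},
\]
and the fibre of $\Map_{\cV}(\sfS,\sfC)$ is $\prod_i\Map_{\cV}(\sfS(s_i,s_{i+1}),\sfC(c_i,c_{i+1}))$. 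The Segal condition follows because pullbacks commute with the Segal products.

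\textbf{CoCartesian property.} Over $(\cV^{\otimes}\times\cV^{\otimes})_n$, the fibrewise map $(ev_0,ev_2)$ is the product of the maps $\Fun([2],\cV)\to\cV\times\cV$. Each of these is a bifibration: the coCartesian lifts along the $ev_0$-direction are precomposition, and the Cartesian lifts along the $ev_2$-direction are postcomposition. For the pullback I need only the following. Over an active or inert $\phi$ in $\Simp^{\op}_{\ob\sfS\times\ob\sfC}$, apply the coCartesian transport of $(\cV^{\otimes})^{[2]}$, which is pointwise tensoring. This sends a chain $\sfS(\dots)\to a\to \sfC(\dots)$ to
\[
\bigotimes\sfS\to\bigotimes a\to\bigotimes\sfC,
\]
and this chain must then be corrected at both endpoints. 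At the $\sfC$-end, postcompose with the composition map of $\sfC$; this is legitimate because $ev_2$-transport is postcomposition. At the $\sfS$-end, I must precompose with an inverse of the composition map of $\sfS$, which exists because $\sfS$ is invertible, so that $\sfS(\alpha)$ is itself coCartesian. Precisely, the invertibility of $\sfS$ means $\sfS$ sends coCartesian edges to coCartesian edges. Hence the edge $(\sfS(\phi),\sfC(\phi))$ in the base can be factored as a $\cV^{\otimes}\times\cV^{\otimes}$-coCartesian edge followed by a fibrewise edge that is an equivalence in the first factor and the composition map in the second. I then lift in two steps. First I take the coCartesian lift in $(\cV^{\otimes})^{[2]}$. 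Then I take the coCartesian lift for the fibrewise functor $\Fun([2],\cV)\to\cV\times\cV$ over an edge of the form (equivalence, arbitrary); such lifts exist because $ev_2$ is a coCartesian fibration with postcomposition transport, and equivalences lift trivially. I verify coCartesianness via \Cref{thm:iterated_fibration} (dual form) together with the 2-out-of-3 property, exactly as in \Cref{prop:overcategory}. The same argument applies to $\Map_{\cV}(\sfS,\sfC)$ with $[1]$ in place of $[2]$. There each fibre $\Map_{\cV}(\sfS(s,s'),\sfC(c,c'))$ is a space, so the fibration is a left fibration. Equivalently, it is the unstraightening of a Segal space, by \Cref{rem:horizontal_morph_adjunction}.

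\textbf{$\Comp$ is a double functor.} The functor $d_1\colon(\cV^{\otimes})^{[2]}\to(\cV^{\otimes})^{[1]}$ commutes with $(ev_0,ev_2)$ and $(ev_0,ev_1)$ over the base, so it induces $\Comp$ on the pullbacks. It preserves the coCartesian edges constructed above, since composing the corrected chain $\bigotimes\sfS\to\bigotimes a\to\bigotimes\sfC\to\sfC$ gives the corrected chain for the composite. Equivalently, fibrewise composition $\Fun([2],\cV)\to\Fun([1],\cV)$ preserves the relevant pointwise transports. Since every edge in the target is coCartesian, it suffices to check this on coCartesian lifts.

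The main obstacle is the $\sfS$-end of the transport. Invertibility of $\sfS$ is exactly what allows the lift to be taken as a coCartesian lift rather than one over an arbitrary morphism, since precomposition is contravariant. I would handle it by first reducing to the case where $\sfS(\phi)$ is coCartesian, using \Cref{def:invertible_alg}.
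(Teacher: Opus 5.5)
Your argument is correct in substance and rests on the same two inputs as the paper's: invertibility of $\sfS$ makes each $\sfS(\phi)$ coCartesian, and transport in the $ev_2$-variable is postcomposition. You organize these inputs differently, though.

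The paper never examines the fibres of $(ev_0,ev_2)$. It treats $ev_2\colon(\cV^{\otimes})^{[2]}\to\cV^{\otimes}$ and the second projection $\cV^{\otimes}\times_{\Simp^{\op}}\cV^{\otimes}\to\cV^{\otimes}$ as coCartesian fibrations over $\cV^{\otimes}$. It observes that $(ev_0,ev_2)$ and, by invertibility, $\sfS\times id$ are morphisms in $\CoCart(\cV^{\otimes})$, and forms the pullback there. It then pulls back along $\sfC$ and composes with $\Simp^{\op}_{\ob(\sfC)}\to\Simp^{\op}$. The $\Comp$ claim follows because $d_1$ is also a morphism in $\CoCart(\cV^{\otimes})$.

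Your explicit lifts over $\Simp^{\op}_{\ob(\sfS)\times\ob(\sfC)}$ have the merit of exhibiting the horizontal composition of $\cV_{\sfS//\sfC}$. However, the final verification cannot be ``exactly as in \Cref{prop:overcategory}''. \Cref{thm:iterated_fibration} requires the fibrewise functors to be coCartesian fibrations, and $(ev_0,ev_2)\colon\Fun([2],\cV)\to\cV\times\cV$ is not one. It is Cartesian in the first variable via precomposition and coCartesian in the second via postcomposition; your bifibration sentence swaps these two labels.

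What you need instead is the edgewise content of that proof:
\begin{itemize}
\item Your first step is $(ev_0,ev_2)$-coCartesian by the 2-out-of-3 property.
\item Your second step is an edge $(e,v,g)$ with $e$ and $v$ invertible. It is coCartesian globally, not just in its fibre, because every transport of $(\cV^{\otimes})^{[2]}\to\Simp^{\op}$ is built from tensor products and projections and so preserves this class of edges.
\end{itemize}
With that substitution your argument closes.

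Finally, $\Comp$ needs no check beyond compatibility with the projections to $\Simp^{\op}$. Once $\Map_{\cV}(\sfS,\sfC)\to\Simp^{\op}$ is a left fibration, every edge of the target is coCartesian, so any functor over $\Simp^{\op}$ into it is a double functor.
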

	\begin{proof}
		The Segal condition is preserved when we take pullbacks, so it suffices to show that the left vertical maps in the squares \eqref{eq:double_slices} are coCartesian fibrations.
		Consider the following diagram, where all squares are pullbacks:
		\begin{equation}\label{eq:V_S//C-diagram}
			\begin{tikzcd}
				\cV_{\sfS/ /\sfC} \arrow[r] \arrow[d] &
				\cW \arrow[r] \arrow[d] &
				\left(\cV^{\otimes} \right)^{[2]} \arrow[d, "(ev_0 {, } ev_{2})" ]\\
				\Simp^{\op}_{\ob(\sfS)\times \ob(\sfC)}
				\arrow[r, "id\times \sfC"] \arrow[d] &
				\Simp^{\op}_{\ob(\sfS)} \times_{\Simp^{\op}} \cV^{\otimes}
				\arrow[r, "\sfS \times id"] \arrow[d] &
				\cV^{\otimes} \times_{\Simp^{\op}} \cV^{\otimes}\\
				\Simp^{\op}_{\ob(\sfC)} \arrow[r, "\sfC"] &
				\cV^{\otimes} \,. &
			\end{tikzcd}
		\end{equation}
		Because $\sfS$ is invertible, it determines a morphism in $\CoCart(\Simp^{\op})$. The product $\sfS\times id$ is therefore a morphism in $\CoCart(\cV^{\otimes})$. Because $\mathrm{ev}_2$ is a coCartesian fibration, the rightmost pullback can be computed in $\CoCart(\cV^{\otimes})$, which is closed under limits in $\Cat_{\infty /\cV^{\otimes}}$.
		Hence, the map $\cW \to \cV^{\otimes}$ is a coCartesian fibration, and therefore so is its pullback $\cV_{\sfS //\sfC} \to \Simp^{\op}_{\ob(\sfC)}$.
		A similar argument works for $\Map_{\cV}(\sfS, \sfC)$ replacing $\left(\cV^{\otimes} \right)^{[2]}$ with $\left(\cV^{\otimes} \right)^{[1]}$. Combining these arguments, we can moreover see that $\Comp$ is a double functor because $d_1$ lifts to a morphism in $\CoCart(\cV^{\otimes})$.
	\end{proof}

	\begin{rem}\label{rem:unwinding_double_slices}
		By restricting to the fiber over $[0]$, we see that the double $\infty$-categories $\cV_{\sfS/ /\sfC}$ and $\Map_{\cV}(\sfS, \sfC)$ both have objects $\ob(\sfS) \times \ob(\sfC)$.
		By restricting to the fiber over some $\left( (s_0,c_0), (s_1,c_1) \right)$ we see that the mapping $\infty$-categories are given as
		\begin{align*}
			\cV_{\sfS/ /\sfC}\left( (s_0,c_0), (s_1, c_1) \right) &= \cV_{\sfS(s_0,s_1)/ / \sfC(c_0,c_1)} \\
			\Map_{\cV}(\sfS,\sfC)\left( (s_0,c_0), (s_1, c_1) \right) &= \Map_{\cV}\left(\sfS(s_0,s_1), \sfC(c_0,c_1) \right) \,,
		\end{align*}
		respectively.
		On these mapping $\infty$-categories, the pullback along $d_1$ gives the composition functor
		\begin{align*}
			\left( \sfS(s_0, s_1) \xrightarrow{f} V \xrightarrow{g} \sfC(c_0,c_1) \right) \mapsto \left(  \sfS(s_0,s_1) \xrightarrow{gf} \sfC(c_0,c_1) \right)
		\end{align*}
		which justifies the notation $\Comp$.
	\end{rem}

	\begin{ex}\label{ex:underlying_from_slice}
		Let $\sfC$ be a categorical algebra in $\cV$, and write $I_{\cV}$ for the invertible algebra determined by the unit. The Segal space $\Map_{\cV}(I_{\cV}, \sfC)$ agrees with the underlying Segal space $U_{*}\sfC$.
	\end{ex}

	\begin{lem}\label{lem:double_slice_pullback}
		Restriction along $d_0$ and $d_2$ determines a pullback
		\begin{equation*}
			\begin{tikzcd}
				\cV_{\sfS / /\sfC} \arrow[r] \arrow[d] & \cV_{\sfS/} \arrow[d] \\
				\cV_{/ \sfC}  \arrow[r]&  \cV
			\end{tikzcd}
		\end{equation*}
     of double $\infty$-categories.
	\end{lem}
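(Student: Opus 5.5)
The pullback will be established at the level of the defining pullback squares in $\Cat_{\infty}$, and only afterwards read as a statement about double $\infty$-categories. By \Cref{lem:Mon->Dbl->Cat}, limits of double $\infty$-categories over weakly contractible diagrams (a cospan is one) are detected in $\Cat_{\infty}$. It therefore suffices to show that the square of underlying $\infty$-categories is a pullback and that all four maps are double functors. The double-functor claim is immediate: the horizontal and vertical maps are induced by $d_0,d_2\colon (\cV^{\otimes})^{[2]}\to (\cV^{\otimes})^{[1]}$. These are coCartesian-preserving over $\Simp^{\op}$, because coCartesian edges in exponentials are computed pointwise (\Cref{lem:alg_in_exponential} and its proof).

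\textbf{Step 1: the pullback in $\Cat_{\infty}$.} First I would use that $[2]\simeq [1]\sqcup_{[0]}[1]$ is a pushout in $\Cat_{\infty}$ (the spine inclusion is a categorical equivalence). The functor $\Fun(-,\cV^{\otimes})$ sends this to a pullback
\begin{align*}
  \Fun([2],\cV^{\otimes}) \simeq \Fun([1],\cV^{\otimes})\times_{\cV^{\otimes}}\Fun([1],\cV^{\otimes}) ,
\end{align*}
and similarly for $\Fun([2],\Simp^{\op})$. Pulling back along the constant maps from $\Simp^{\op}$ gives
\begin{align*}
  (\cV^{\otimes})^{[2]} \simeq (\cV^{\otimes})^{[1]}\times_{\cV^{\otimes}} (\cV^{\otimes})^{[1]}
\end{align*}
over $\Simp^{\op}$. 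Here the two maps to $\cV^{\otimes}$ are $ev_1$ on the first factor and $ev_0$ on the second. Under this identification $d_2$ and $d_0$ are the two projections, and $(ev_0,ev_2)$ becomes $(ev_0\circ pr_1, ev_1\circ pr_2)$.

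\textbf{Step 2: pasting.} Next, $\cV_{\sfS//\sfC}$ is the pullback of this along
\begin{align*}
  \sfS\times\sfC\colon \Simp^{\op}_{\ob(\sfS)\times\ob(\sfC)}\simeq \Simp^{\op}_{\ob(\sfS)}\times_{\Simp^{\op}}\Simp^{\op}_{\ob(\sfC)}\to \cV^{\otimes}\times_{\Simp^{\op}}\cV^{\otimes} ,
\end{align*}
where the product in \eqref{eq:double_slices} is read fiberwise over $\Simp^{\op}$. Since limits commute with limits, this fiber product splits into two pieces glued over the middle $\cV^{\otimes}$:
\begin{itemize}
  \item $\bigl(\Simp^{\op}_{\ob(\sfS)}\times_{\cV^{\otimes}, ev_0}(\cV^{\otimes})^{[1]}\bigr)$, which is $\cV_{\sfS/}$ by \Cref{def:under_slice_double_cat};
  \item $\bigl((\cV^{\otimes})^{[1]}\times_{ev_1,\cV^{\otimes}}\Simp^{\op}_{\ob(\sfC)}\bigr)$, which is $\cV_{/\sfC}$ by \Cref{def:slice_double_cat}.
\end{itemize}
The gluing maps are $ev_1$ on the first piece and $ev_0$ on the second. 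These are exactly the maps $\cV_{\sfS/}\to\cV^{\otimes}$ and $\cV_{/\sfC}\to\cV^{\otimes}$ in the square, and the induced maps from $\cV_{\sfS//\sfC}$ are restriction along $d_2$ and $d_0$.

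\textbf{Main obstacle: bookkeeping.} I expect the main care to be needed in consistently interpreting the product $\Simp^{\op}_{\ob(\sfS)\times\ob(\sfC)}$ as the fiber product over $\Simp^{\op}$, not the absolute product. The same care is needed in checking that the exponentials are taken relative to $\Simp^{\op}$, so that the spine decomposition commutes with the pullback along $\Simp^{\op}\to\Fun([2],\Simp^{\op})$. This works because $\Fun(-,\Simp^{\op})$ also sends the spine pushout to a pullback, and the diagonal is compatible with it. With that settled, the argument is a formal cube of pullbacks. The $\Simp^{\op}$-indexing and the objects $\ob(\sfS)\times\ob(\sfC)$ match on both sides by \Cref{rem:unwinding_double_slices} and \Cref{rem:unwinding_under_slice}.
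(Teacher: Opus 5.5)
Your proposal is correct and follows the paper's own proof: it uses the Segal/spine decomposition $(\cV^{\otimes})^{[2]}\simeq(\cV^{\otimes})^{[1]}\times_{\cV^{\otimes}}(\cV^{\otimes})^{[1]}$ and then commutes pullbacks to split off $\cV_{\sfS/}$ and $\cV_{/\sfC}$. The paper phrases this step as identifying the auxiliary $\cW$ from the proof of \Cref{lem:double_slices} with $\cV_{\sfS/}\times_{\cV^{\otimes}}(\cV^{\otimes})^{[1]}$, and it leaves implicit the upgrade from a pullback in $\Cat_{\infty}$ to one in $\Dbl_{\infty}$, which you make explicit via \Cref{lem:Mon->Dbl->Cat}.
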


	\begin{proof}
		Use the Segal relation $[2] = [1]\coprod_{[0]}[1]$ to obtain
		\begin{align*}
			\left(\cV^{\otimes} \right)^{[2]} \simeq \left(\cV^{\otimes} \right)^{[1]} \times_{\cV^{\otimes}} \left(\cV^{\otimes} \right)^{[1]}.
		\end{align*}
		Under this identification, we see that the $\cW$ of \eqref{eq:V_S//C-diagram} is actually equivalent to $\cV_{\sfS/} \times_{\cV^{\otimes}} \left(\cV^{\otimes}\right)^{[1]}$, and so commuting pullbacks in the top left square of \eqref{eq:V_S//C-diagram} we get the result.
	\end{proof}

	\subsection{Monoids and delooping}\label{subsec:monoids_and_delooping}
	For this subsection, we fix a grouplike $\bE_{\infty}$-monoid in spaces $X$.
  The goal is to nail down the precise relationship between the $\infty$-categories
  \[\Cat_{\infty}^{\cS_{/X}} \quad \text{and} \quad \Cat_{\infty /BX}
  \]
  where we write $BX$ for the realization of the monoid $\sfB X\colon \Simp^{\op} \to \cS$ presenting the $\bE_{1}$-structure of $X$.
  Starting with a functor $f\colon \cC \to BX$, every morphism space $\Map_{\cC}(c,d)$ has a map to some path space in $BX$.
  Each such path space is non-canonically equivalent to $\Omega BX \simeq X$, so up to some indeterminacy, we expect to be able to equip each $\Map_{\cC}(c,d)$ with a map to $X$.
  To get rid of the indeterminacy, we have to add the data of a path $\gamma_{c}$ from $F(c)$ to the basepoint in $BX$.
  This amounts to replacing the space of objects $\cC^{\simeq}$ with $E=\fib(\cC^{\simeq}\to BX)$.
  We now expect to construct a $\cS_{/X}$-category $\sfC$ with space of objects $E$, and mapping objects
	\begin{align*}
		\sfC(c,d) = \cC(\pi c, \pi d) \to \Omega BX \simeq X \,.
	\end{align*}
    Because every morphism in $\sfB X$ is an equivalence, the completion of the Segal space $\sfB X$ is the constant simplicial space $\underline{BX}$.
    We will show that $\sfC$ can be obtained by base-changing $\cC \to BX$ along the completion map $\sfB X \to \underline{BX}$.
    Moreover, we will show that this construction determines a fully faithful functor
	\begin{align}\label{eq:Cat_/BX->Cat^S_/X}
		\Cat_{\infty / BX} \to \Cat_{\infty}^{\cS_{/X}}.
	\end{align}
	Note that we do not expect \eqref{eq:Cat_/BX->Cat^S_/X} to be essentially surjective, because the space of objects~$E^{\simeq}$ has an $X$-action given by concatenating with loops in $BX$.
  Moreover, we expect this action to satisfy
	\begin{align}\label{eq:EX-action}
		\sfC(c,d\odot x) \simeq \sfC(c,d) \otimes (-x) \,.
	\end{align}
	In fact, we will identify the image of \eqref{eq:Cat_/BX->Cat^S_/X} with the full subcategory of $\cS_{/X}$-categories with such action.

	As in \Cref{ex:2cat_from_monoid}, we get a symmetric monoidal $\infty$-category $\cS_{/X}^{\otimes}$ whose underlying $\infty$-category is the slice $\cS_{/X}$.
  By \Cref{cor:alg_in_slice}, we have an equivalence
	\begin{align*}
		\Alg(\cS_{/X}) \simeq \Alg(\cS)_{/\sfB X}\,.
	\end{align*}
	Pulling this back along $\Simp^{\op}_{(-)}\colon \cS \to \Opd$ gives an equivalence
	\begin{align*}
		\Alg_{\Cat}(\cS_{/X}) \simeq \Alg_{\Cat}(\cS)_{/\sfB X}.
	\end{align*}
	The monoidal left adjoint determined by the unit gives rise to an adjunction
	\begin{align*}
		F_{*} :  \Alg_{\Cat}(\cS) \rightleftarrows \Alg_{\Cat}(\cS_{/X}) \simeq \Alg_{\Cat}(\cS)_{/\sfB X} : U_{*} \,.
	\end{align*}
	The functor $F_{*}$ is a morphism between representable right fibrations over $\Alg_{\Cat}(\cS)$, which takes the terminal object $\Simp^{\op}$ to the morphism $e\colon \Simp^{\op} \to \sfB X$ determined by the unit in $X$.
  By the Yoneda lemma, $F_{*}$ is therefore equivalent to the pushforward functor~$e_{*}$ on slices. The right adjoint $U_{*}$ is therefore equivalent to the base change $e^{*}$.
	In particular, $e^{*}\sfB X = \Simp^{\op}$, which is a complete $\cS$-category.
  The identity on $\sfB X$ therefore determines a complete $\cS_{/X}$-category. Note that $\sfB X$ is far from being complete in $\cS$.
	Let $\pi\colon \cS_{/X} \to \cS$ denote the forgetful functor. This lifts to a symmetric monoidal functor, and arguing as above using the Yoneda lemma, the induced left adjoint
	\begin{align*}
		\pi_{*} \colon \Alg(\cS_{/X}) \simeq \Alg(\cS)_{/\sfB X} \to \Alg(\cS)
	\end{align*}
	agrees with the slice projection.
  By the above, the identity on $\sfB X$ determines a terminal object in $\Cat_{\infty}^{\cS_{/X}}$, and its image under the pushforward functor
	\begin{align*}
		\pi_{*}\colon \Cat^{\cS_{/X}}_{\infty} \to \Cat_{\infty}
	\end{align*}
	is the space $BX$. Therefore, $\pi_{*}$ factors through a functor
	\begin{align*}
		\cL \colon \Cat^{\cS_{X/}}_{\infty} \to \Cat_{\infty /BX} \,.
	\end{align*}
	Because the slice projection $\Cat_{\infty /BX} \to \Cat_{\infty}$ preserves and detects colimits, $\cL$ must preserve colimits.

	Recall that we write $N\colon \Cat_{\infty} \to \Seg \simeq \Alg_{\Cat}(\cS)$ for the nerve functor. For the space~$BX$, the nerve $N(BX)$ agrees with the constant simplicial space $\underline{BX}$.

	\begin{lem}\label{lem:monoid_right_adjoint_1}
		The right adjoint $\cR$ of $\cL$ is fully faithful, and for every $\cC \to BX$ in $\Cat_{\infty /BX}$, there is a pullback in $\Alg_{\Cat}(\cS)$ of the form
		\begin{equation}\label{eq:monoid_right_adjoint_1}
			\begin{tikzcd}
				\cR(\cC) \arrow[r] \arrow[d]      &    N(\cC)  \arrow[d] \\
				\sfB X  \arrow[r]               &  \underline{BX} \, .
			\end{tikzcd}
		\end{equation}
	\end{lem}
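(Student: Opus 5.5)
We will define a candidate right adjoint by the pullback \eqref{eq:monoid_right_adjoint_1} and check the adjunction directly. For $\cC\to BX$ in $\Cat_{\infty/BX}$, the nerve $N(\cC)\to N(BX)=\underline{BX}$ is a map of Segal spaces. Let $\cR'(\cC)$ be its base change along the completion map $\sfB X\to \underline{BX}$. Segal spaces are closed under limits in $s\cS$, so $\cR'(\cC)$ is a Segal space over $\sfB X$. Via $\Alg_{\Cat}(\cS_{/X})\simeq \Alg_{\Cat}(\cS)_{/\sfB X}$ it is therefore an $\cS_{/X}$-category. Its space of objects is $\cC^{\simeq}\times_{BX}\ast$, and its mapping objects are the maps $\Map_{\cC}(c,d)\to \Omega BX\simeq X$, as described in the text.

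Next we check completeness. By the discussion above, $U_*$ is base change $e^*$ along $e\colon \Simp^{\op}\to \sfB X$. Hence $U_*\cR'(\cC)\simeq N(\cC)\times_{\underline{BX}}\Simp^{\op}$, the nerve of the fiber of $\cC\to BX$ over the basepoint. A pullback of complete Segal spaces is complete, since $\Simp^{\op}$, $N(\cC)$ and $\underline{BX}$ are all complete. So $\cR'(\cC)$ lies in $\Cat^{\cS_{/X}}_\infty$.

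For the adjunction, take a complete $\cS_{/X}$-category $\sfD$, that is, a complete Segal space over $\sfB X$ in the sense of $U_*$-completeness. Maps $\sfD\to\cR'(\cC)$ over $\sfB X$ are the same as maps $\sfD\to N(\cC)$ over $\underline{BX}$, where $\sfD$ maps to $\underline{BX}$ through $\sfB X$. Since $N(\cC)$ and $\underline{BX}$ are complete, such maps factor uniquely through the completion $\cL_{\cS}\pi_*\sfD$ of the underlying Segal space. They therefore correspond to maps $\cL(\sfD)\to\cC$ in $\Cat_{\infty/BX}$. This uses that $\cL$ is $\pi_*$ followed by completion, with its structure map to $BX$ induced from $\sfD\to\sfB X$ by completing. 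This gives $\cR\simeq\cR'$ and hence the pullback square.

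Full faithfulness of $\cR$ amounts to the counit $\cL\cR(\cC)\to\cC$ being an equivalence, i.e.\ the map from the completion of $N(\cC)\times_{\underline{BX}}\sfB X$ to $N(\cC)$ is an equivalence. This is the main obstacle. Our plan is to show that $\cR(\cC)\to N(\cC)$ is fully faithful and essentially surjective as a map of Segal spaces, i.e.\ a Dwyer--Kan equivalence. Fully faithfulness holds because $\sfB X\to\underline{BX}$ is fully faithful: its mapping spaces are $X\simeq\Omega BX$, which are the path spaces of $\underline{BX}$. This property is preserved under base change, since mapping spaces of pullbacks of Segal spaces are pullbacks of mapping spaces. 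For essential surjectivity, we use that $BX$ is connected, as it is the delooping of a monoid. Thus every object of $\cC$ has image connected by a path to the basepoint, so on $\pi_0$ of objects $\ast\times_{BX}\cC^{\simeq}\to\cC^{\simeq}$ is surjective. Finally, Dwyer--Kan equivalences into complete Segal spaces exhibit completions, so the counit is an equivalence.
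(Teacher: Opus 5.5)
Your proposal is correct and follows essentially the same route as the paper. Like the paper, you define the candidate by the pullback and check completeness via $U_*\simeq e^*$. You get the adjunction from completeness of $N(\cC)$ and $\underline{BX}$. You then show the counit is an equivalence by proving $\cR(\cC)\to N(\cC)$ is fully faithful (as a base change of $\sfB X\to\underline{BX}$) and essentially surjective (as a base change of the effective epimorphism $\ast\to BX$). Your justification that $\sfB X\to\underline{BX}$ is fully faithful, via $X\simeq\Omega BX$ for grouplike $X$, is if anything more precise than the paper's appeal to it being a localization.
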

	\begin{proof}
		Let $\cC\to BX$ be a functor, and consider the following diagram in $\Alg_{\Cat}(\cS)$ where both squares are pullbacks:
		\begin{equation*}
			\begin{tikzcd}
				e^{*}\sfC \arrow[r] \arrow[d]      &    \sfC \arrow [r] \arrow[d] & N(\cC)  \arrow[d] \\
				\Simp^{\op} \arrow[r, "e"]   &      \sfB X \arrow[r]       &  \underline{BX} \,.
			\end{tikzcd}
		\end{equation*}
	    Because $N(\cC)$, $\underline{BX}$ and $\Simp^{\op}$ are complete and because $\Cat_{\infty}^{\cS}$ is closed under limits in $\Alg_{\Cat}(\cS)$, $\sfC\to \sfB X$ determines a complete $\cS_{/X}$-category. Now note that for any other $f\colon \sfD \to \sfB X$, we have a diagram of mapping spaces
		\begin{equation*}
			\begin{tikzcd}
				\Map_{/\sfB X}(\sfD, \sfC) \arrow[r] \arrow[d]      &   \Map(\sfD, \sfC)  \arrow[d]  \arrow[r] & \Map(\sfD, \sfC) \arrow[d] \arrow[r, "\simeq"] & \Map(\cL_{\cS}(\sfC), N(\cC) ) \arrow[d] \\
				\ast   \arrow[r, "\left\{ f\right\}"]               &   \Map(\sfD, \sfB X)  \arrow[r]            & \Map(\sfD, \underline{BX}) \arrow[r, "\simeq"] & \Map(\cL_{\cS}(\sfD),\underline{BX})
			\end{tikzcd}
		\end{equation*}
		where each square is a pullback: the left by definition of $\Map_{/\sfB X}$, the middle because $\sfC$ is a pullback, and the right because $N(\cC)$ and $\underline{BX}$ are complete, so both the horizontal maps are equivalences. This gives rise to equivalences
		\begin{align*}
			\Map_{/\sfB X}(\sfD, \sfC) \simeq \Map_{/\underline{BX}}(\cL_{\cS}(\sfD), N(\cC))
		\end{align*}
		which are clearly natural in $\calD$, giving $\sfC \simeq \cR(\cC)$.

		The counit of the adjunction is given on an object $\cC \to BX$ by applying $\cL_{\cS}$ to the pullback square \eqref{eq:monoid_right_adjoint_1}.
    It therefore suffices to show that the map $\cR(\cC) \to N(\cC)$ is fully faithful and essentially surjective.
    The mapping objects of a Segal space $\sfC$ are computed by the fibers of $(ev_{0},ev_{1})\colon X_{1}\to X_{0}\times X_{0}$, so by commuting limits, the base change of a fully faithful map is fully faithful.
    The map $\sfB X \to \underline{BX}$ is a localization, and therefore fully faithful, and therefore so is $\cR(\cC) \to N(\cC)$.
    Evaluating at $[0]$, the map $\sfB X \to \underline{BX}$ gives $\ast \to BX$, which is an effective epimorphism.
    Effective epimorphisms of spaces are stable under base-change, so $\cR(\cC)_{0} \to N(\cC)_{0}$ is also an effective epimorphism, which implies that $\cR(\cC) \to N(\cC)$ is essentially surjective.
	\end{proof}
	\begin{lem}\label{lem:monoid_right_adjoint_2}
		The right adjoint $\cR$ preserves colimits.
	\end{lem}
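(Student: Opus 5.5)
We need to show that $\cR\colon \Cat_{\infty/BX}\to \Cat^{\cS_{/X}}_{\infty}$ preserves colimits. By \Cref{lem:monoid_right_adjoint_1}, $\cR(\cC)$ is computed as the pullback $\sfB X\times_{\underline{BX}} N(\cC)$ in $\Alg_{\Cat}(\cS)\simeq \Seg$, regarded as an object over $\sfB X$. The plan is to factor $\cR$ as the composite
\begin{align*}
	\Cat_{\infty/BX} \xrightarrow{N} \CSS_{/\underline{BX}} \hookrightarrow s\cS_{/\underline{BX}} \xrightarrow{(\sfB X\to \underline{BX})^{*}} s\cS_{/\sfB X} \,,
\end{align*}
and to check which of these steps preserve colimits, correcting for the fact that $N$ and the inclusions into $s\cS$ do not.

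\textbf{Step 1.} Base change in the $\infty$-topos $s\cS$ preserves colimits. For a colimit of $\cC_{i}\to BX$, the pullback functor $s\cS_{/\underline{BX}}\to s\cS_{/\sfB X}$ commutes with colimits by universality of colimits in $s\cS$. The colimit in $\Cat_{\infty/BX}$ is computed in $\Cat_{\infty}$, and hence as the localization $\cL_{\cS}$ (Segal plus completeness) of the colimit $\colim N(\cC_{i})$ in $s\cS$. Likewise, the colimit in $\Cat^{\cS_{/X}}_{\infty}\simeq (\Cat^{\cS}\text{-over-}\sfB X\text{, complete part})$ is the localization of the colimit of the $\cR(\cC_{i})$ in $s\cS_{/\sfB X}$. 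The claim therefore reduces to the following: base change along $p\colon \sfB X\to \underline{BX}$ carries the localization map $\colim N(\cC_{i})\to N(\colim \cC_{i})$ to a map that becomes an equivalence after the appropriate localization to complete $\cS_{/X}$-categories.

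\textbf{Step 2 (main obstacle).} We must show that $p^{*}\colon s\cS_{/\underline{BX}}\to s\cS_{/\sfB X}$ sends local equivalences (for Segal completion over $\underline{BX}$) to local equivalences (for completion in $\cS_{/X}$-categories). It suffices to check this on generators of the localization, namely spine inclusions $\Simp^{n}_{\mathrm{sp}}\to \Simp^{n}$ and the completeness map $J\to \ast$, each mapping to $\underline{BX}$. A map $\Simp^{n}\to\underline{BX}$ factors through a point up to homotopy, since $\underline{BX}$ is constant and $\Simp^{n}$ has contractible realization. Hence the pullback along $p$ is $\Simp^{n}\times F$, where $F=\fib(\ast\to BX)$-levelwise data, that is, $\Simp^{n}\times(\sfB X\times_{\underline{BX}}\ast)$. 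This is a coproduct of copies indexed by the fiber $X$-torsor. Products with a fixed simplicial space preserve spine inclusions up to Segal equivalence, so these pulled-back generators are again local equivalences in $\Alg_{\Cat}(\cS)_{/\sfB X}$. For completeness we instead use the characterization of $\Cat^{\cS_{/X}}_{\infty}$ via fully faithful and essentially surjective functors from \Cref{prop:CatV_localization}. As in the proof of \Cref{lem:monoid_right_adjoint_1}, base change along $p$ preserves full faithfulness (by commuting limits on mapping fibers) and essential surjectivity (since effective epimorphisms on $[0]$-levels are stable under pullback). Thus $p^{*}$ carries DK-equivalences to DK-equivalences.

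\textbf{Step 3.} Combining Steps 1 and 2, $\cL\, p^{*}(\colim N\cC_{i})\simeq \cL\, p^{*} N(\colim\cC_{i})=\cR(\colim\cC_{i})$. The left-hand side equals $\colim \cR(\cC_{i})$ computed in $\Cat^{\cS_{/X}}_{\infty}$, because $p^{*}$ commutes with colimits in $s\cS$ and the localization $\cL$ is a left adjoint. The hardest part is making Step 2 rigorous: in particular, checking that the colimit in $\Alg_{\Cat}(\cS_{/X})\simeq \Alg_{\Cat}(\cS)_{/\sfB X}$ agrees with that in $s\cS_{/\sfB X}$ up to local equivalence. I would handle this by working throughout with the presentation of $\Alg_{\Cat}(\cS)$ as a localization of $s\cS$ and using that the slice over $\sfB X$ is preserved by these localizations.
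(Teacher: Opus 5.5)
Your argument is correct in outline, but it takes a genuinely different route from the paper.

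\textbf{The paper's route.} The paper stays entirely formal. From $\cL\circ e_{*}\simeq x_{*}$, the factorization of $\pi_{*}$ through $\cL$, and full faithfulness of $\cR$, it deduces $e^{*}\cR\simeq x^{*}$ and identifies $\pi_{*}\cR$ with the slice projection $\Cat_{\infty/BX}\to\Cat_{\infty}$. Both preserve colimits because $BX$ is a space. Since $e^{*}$ detects essential surjectivity and $\pi_{*}$ detects full faithfulness, the comparison map $\colim(\cR\circ F)\to\cR(\colim F)$ is fully faithful and essentially surjective, hence an equivalence.

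\textbf{Your route.} You use the pullback description $\cR\simeq p^{*}\circ N$ from \Cref{lem:monoid_right_adjoint_1}, universality of colimits in the topos $s\cS$, and reflectivity of $\Cat^{\cS_{/X}}_{\infty}$ inside $s\cS_{/\sfB X}$. This reduces the lemma to the claim that $p^{*}$ carries complete-Segal local equivalences over $\underline{BX}$ to local equivalences for $\cS_{/X}$-categories. It gives a more concrete statement: $\cR$ is the restriction of a colimit-preserving base change between presheaf categories, compatible with the localizations. The cost is two inputs the paper avoids:
\begin{itemize}
\item cartesianness of the Segal localization, so that a spine inclusion times $\sfE X$ is still a Segal equivalence;
\item Rezk's identification of complete-Segal equivalences between Segal spaces with Dwyer--Kan equivalences.
\end{itemize}

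\textbf{Two statements to correct.}
\begin{itemize}
\item $\sfB X\times_{\underline{BX}}\ast$ is the chaotic Segal space $\sfE X$, with $(\sfE X)_{n}\simeq X^{n+1}$. It is not a coproduct of copies indexed by an $X$-torsor. Only your product statement is actually used, so this is harmless once removed.
\item Essential surjectivity of an $\cS_{/X}$-enriched functor refers to equivalences lying over the unit of $X$. Pullback-stability of effective epimorphisms only shows that $p^{*}$ preserves it for functors that are $\pi_{0}$-surjective on objects. That covers what you actually need: $J\to\ast$, or the Rezk completion map into the complete target $N(\colim\cC_{i})$. It does not cover arbitrary Dwyer--Kan equivalences such as $\ast\to J$, as you assert. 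For those you would also have to use the $X$-torsor of lifts of an object, to move an equivalence lying over some $x\in X$ to one lying over the unit.

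\end{itemize}
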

	\begin{proof}
		The composite
		\begin{align*}
			\cS \xrightarrow{e_{*}} \cS_{/X} \xrightarrow{\pi_{*}} \cS
		\end{align*}
		is equivalent to the identity, so the same holds for
		\begin{align*}
			\Cat_{\infty}\xrightarrow{e_{*}} \Cat_{\infty}^{\cS_{/X}} \xrightarrow{\pi_{*}} \Cat_{\infty} \,.
		\end{align*}
		Therefore, the composite
		\begin{align*}
			\Cat_{\infty} \xrightarrow{e_{*}} \Cat_{\infty}^{\cS_{/X}} \xrightarrow{\cL} \Cat_{\infty /BX}
		\end{align*}
		is a morphism of right fibrations over $\Cat_{\infty}$, which by Yoneda is uniquely determined by the value on the terminal object.
    This value corresponds to the inclusion of a point $x\in BX$.
    In summary, we have the following diagram of left adjoints:
		\begin{equation*}
			\begin{tikzcd}
				\Cat_{\infty} \arrow[r, "x_{*}"] \arrow[d, "e_{*}"']      &   \Cat_{\infty /BX}  \arrow[d] \\
				\Cat_{\infty}^{\cS_{/X}}  \arrow[r, "\pi_{*}"']    \arrow[ur, "\cL"]           &  \Cat_{\infty} \,.
			\end{tikzcd}
		\end{equation*}
		Because $\cR$ is fully faithful, we get the following commutative diagram by passing to right adjoints in the top left triangle:
		\begin{equation*}
			\begin{tikzcd}
				\Cat_{\infty} & \Cat_{\infty /BX} \arrow[l, "x^{*}"'] \arrow[dl, "\cR"] \arrow[d] \\
				\Cat^{\cS_{/X}}_{\infty} \arrow[u, "e^{*}"] \arrow[r, "\pi_{*}"'] & \Cat_{\infty} \,.
			\end{tikzcd}
		\end{equation*}
		Now we note that since $BX$ is a space, both the slice projection and the pullback $x^{*}$ preserve colimits.
    Since $e^{*}$ reflects essentially surjective functors and $\pi_{*}$ reflects fully faithful functors, any comparison map
		\begin{align*}
			\colim(\cR \circ F) \to \cR( \colim(F))
		\end{align*}
		must be both fully faithful and essentially surjective, and therefore an equivalence in $\Cat^{\cS_{/X}}_{\infty}$.
	\end{proof}

	\begin{lem}\label{lem:monoid_projection_formula}
		The left adjoint $\cL$ lifts to a symmetric monoidal functor such that the adjunction $\cL \dashv \cR$ satisfies the projection formula.
	\end{lem}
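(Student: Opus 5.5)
We will treat the two parts of the statement separately: first the symmetric monoidal structure on $\cL$, then the projection formula.

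\emph{Monoidality.} Recall from \Cref{prop:CatV_monoidal_structure} that $\Cat_{\infty}^{\cS_{/X}}$ carries a presentable monoidal structure, symmetric because $\cS_{/X}$ is symmetric monoidal when $X$ is an $\bE_{\infty}$-monoid. This structure is computed by localizing the product on $\Alg_{\Cat}(\cS_{/X})$ described in \Cref{rem:monoidal_structure_on_Alg_Cat}: objects multiply as spaces and mapping objects by the tensor product of $\cS_{/X}$. On $\Cat_{\infty/BX}$ we use the product
\[
(\cC\to BX)\otimes(\cD\to BX) = (\cC\times\cD\to BX\times BX\xrightarrow{m} BX),
\]
where $m$ is the multiplication of the $\bE_{\infty}$-space $BX$. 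We build $\cL$ as the composite of two symmetric monoidal functors. The first is $\pi_{*}$, which is symmetric monoidal because $\pi\colon\cS_{/X}\to\cS$ is strong symmetric monoidal and pushforward along a monoidal functor descends compatibly with $\cL_{\cV}$. The second promotes $\pi_{*}$ to the slice over the image $BX$ of the terminal object $\sfB X$. Here we use that $\sfB X$ is a commutative algebra in $\Cat_{\infty}^{\cS_{/X}}$ (the unit being the terminal object), so that slicing over it is symmetric monoidal by the standard slice construction, and that $\pi_{*}(\sfB X)\simeq BX$ as commutative algebras. The step to check is the last equivalence: $\cL_{\cS}(\sfB X)=BX$ must carry the delooped $\bE_{\infty}$-structure. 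We expect this from naturality of the completion functor $\cL_{\cS}$, which is symmetric monoidal.

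\emph{Projection formula.} We must show that the natural map
\[
\cL(\sfD)\otimes\cC\to\cL\bigl(\sfD\otimes\cR(\cC)\bigr)
\]
is an equivalence for $\sfD\in\Cat_{\infty}^{\cS_{/X}}$ and $\cC\in\Cat_{\infty/BX}$; we use the dual form $\cR(\cL\sfD\otimes \cC)\simeq \sfD\otimes\cR\cC$ if that is easier. Both sides preserve colimits in each variable: $\cL$ and $\otimes$ do because the monoidal structures are presentable, and $\cR$ does by \Cref{lem:monoid_right_adjoint_2}. Since $\Cat_{\infty}^{\cS_{/X}}$ is generated under colimits by free objects, and these are $\cS_{/X}$-enriched cells built from $e_{*}$ of cells, we reduce to checking generators. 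For the check we use the explicit pullback description of $\cR$ from \Cref{lem:monoid_right_adjoint_1}: $\cR(\cC)=\sfB X\times_{\underline{BX}}N(\cC)$. With this description the claim becomes a statement about commuting the product $\sfD\otimes-$ past that pullback and then applying completion.

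We expect the main obstacle to be this comparison, because completion $\cL_{\cS}$ does not commute with pullbacks in general. To get around this we would use the fiberwise criterion from the proof of \Cref{lem:monoid_right_adjoint_2}, checking full faithfulness and essential surjectivity separately. Essential surjectivity is checked after applying $e^{*}$: on objects we have effective epimorphisms $\ast\to BX$, and these are stable under base change and under product with $\ob(\sfD)$. Full faithfulness is checked after applying $\pi_{*}$, where mapping objects compute as
\[
\sfD(a,b)\times\Map_{\cC}(c,d)\to X\times X\to X
\]
on both sides. This is exactly the identity \eqref{eq:EX-action}, obtained by transporting along the multiplication.
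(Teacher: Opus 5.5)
Your monoidality argument is the paper's. $\sfB X$ is terminal in $\Cat_{\infty}^{\cS_{/X}}$, hence uniquely a commutative algebra; note it is the terminal object, not the monoidal unit, which is the one-object category with endomorphisms $\{e\}\to X$. Then $\pi_{*}$ is symmetric monoidal, and slicing gives the lift. You need not compare the induced $\bE_{\infty}$-structure on $BX$ with a pre-chosen delooping, since the lemma only asks for some lift.

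Your plan for the projection formula is also the paper's: check full faithfulness after $\pi_{*}$ and essential surjectivity on object spaces, using $\cR(\cC)=\sfB X\times_{\underline{BX}}N(\cC)$. However, the first map you propose to invert is not the projection formula. The map $\cL(\sfD\otimes\cR(\cC))\simeq\cL(\sfD)\otimes\cL\cR(\cC)\to\cL(\sfD)\otimes\cC$ is automatically an equivalence, because $\cL$ is monoidal and the counit is invertible ($\cR$ is fully faithful). So it is not interchangeable with the $\cR$-form. Only $\cR(\cC)\otimes\sfD\to\cR(\cC\otimes\cL(\sfD))$ has content: it says the essential image of $\cR$ is a $\otimes$-ideal, and it is what \Cref{cor:monoid_equivalence} needs. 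Your reduction to generators is never used, and the generators you name are wrong. Colimits of objects $e_{*}(-)$ have all morphism objects over the component of $e$ in $X$, so they cannot produce, e.g., $\sfB X$ for $X=\bZ\times BO$.

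The genuine gap is essential surjectivity. Both object spaces, of $\cR(\cC)\otimes\sfD$ and of $\cR(\cC\otimes\cL(\sfD))$, are base changes of $\ast\to BX$. That fact alone says nothing about whether the comparison map between them is surjective on $\pi_{0}$. What you need is that the square
\[
\begin{tikzcd}
\ob(\cR(\cC))\times\ob(\sfD) \arrow[r] \arrow[d] & \cC^{\simeq}\times\ob(\sfD) \arrow[d] \\
\ob(\cR(\cC\otimes\cL(\sfD))) \arrow[r] & \cC^{\simeq}\times\cL(\sfD)^{\simeq}
\end{tikzcd}
\]
is a pullback. This follows by pasting: after appending the maps to $\ast\to BX$, both the outer rectangle and the lower square are pullbacks, by the pullbacks defining $\cR(\cC)$ and $\cR(\cC\otimes\cL(\sfD))$ respectively. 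The comparison on objects is then a base change of $\ob(\sfD)\to\cL(\sfD)^{\simeq}$. That map is an effective epimorphism because the completion $\pi_{*}\sfD\to\cL(\sfD)$ is essentially surjective. This effective epimorphism is what does the work; it reduces to $\ast\to BX$ only for $\sfD=\sfB X$, since $\pi_{*}\sfD$ need not be complete.

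Similarly, full faithfulness has nothing to do with \eqref{eq:EX-action}. The composite $\cR(\cC)\otimes\sfD\to N(\cC)\otimes\sfD\to N(\cC)\otimes\cL(\sfD)$ is fully faithful because $\cR(\cC)\to N(\cC)$ and $\pi_{*}\sfD\to\cL(\sfD)$ are. The map $\cR(\cC\otimes\cL(\sfD))\to N(\cC)\otimes\cL(\sfD)$ is fully faithful as a base change of $\sfB X\to\underline{BX}$, and 2-out-of-3 finishes.
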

	\begin{proof}
		The projection $\pi_{*} \colon \cS_{/X} \to \cS$ lifts to a symmetric monoidal functor, and therefore so does $\pi_{*} \colon \Cat_{\infty}^{\cS_{/X}} \to \Cat_{\infty}$ by \Cref{prop:CatV_adjunctions}.
    The terminal object $\sfB X$ then gets a unique $\bE_{\infty}$-algebra structure, which induces an $\bE_{\infty}$-algebra structure on $\pi_{*}(\sfB X) = BX$.
    By naturality of the slice monoidal structure, $\cL$ then lifts to a symmetric monoidal functor.
    For the projection formula, we must show that the natural map
		\begin{align*}
			\cR(\cC) \otimes \sfD \to \cR(\cC \otimes \cL(\sfD))
		\end{align*}
		is an equivalence in $\Cat_{\infty}^{\cS_{/X}}$.
    By \Cref{prop:CatV_monoidal_structure}, the monoidal structure on $\Cat^{\cS_{/X}}_{\infty}$ is localized from the one on $\Alg_{\Cat}(\cS_{/X})$, so it suffices to show that the corresponding map in $\Alg_{\Cat}(\cS_{/X})$ is fully faithful and essentially surjective.
    To that end, consider the diagram
		\begin{equation*}
			\begin{tikzcd}
				\cR(\cC)\otimes \sfD \arrow[d] \arrow[r] & N(\cC) \otimes \sfD  \arrow[d] \\
				\cR(\cC \otimes \cL(\sfD)) \arrow[r] \arrow[d]      &   N(\cC) \otimes \cL(\sfD)  \arrow[d] \\
				\sfB X  \arrow[r]               &   \underline{BX}
			\end{tikzcd}
		\end{equation*}
		in $\Alg_{\Cat}(\cS)$ where the bottom square is the pullback defining $\cR$.
    The top horizontal as well as the top right vertical are products of fully faithful maps, and therefore fully faithful.
    By pullback stability, the middle horizontal is also fully faithful, and by 2-out-of-3, so is the top left vertical.
    Passing to spaces of objects in the above diagram gives
		\begin{equation*}
			\begin{tikzcd}
				\ob(\cR(\cC) )\times \ob(\calD) \arrow[r] \arrow[d]      &   \cC^{\simeq}\times \ob(\calD)  \arrow[d] \\
				\ob(\cR(\cC)\otimes \cL(\calD))  \arrow[r]  \arrow[d]             &   \cC^{\simeq}\times \ob(\cL(\calD))\arrow[d] \\
				\ast \arrow[r]     &  BX \,.
			\end{tikzcd}
		\end{equation*}
		By the pullback defining $\cR(\cC)$ and pasting, the outer rectangle and bottom squares are pullbacks. By pasting, the top square is a pullback.
    Now the upper right vertical is a product of effective epimorphisms, so by pullback stability, the upper left vertical is also an effective epimorphism.
	\end{proof}
	The above lemma shows that the adjunction $\cL \dashv \cR$ exhibits $\Cat_{\infty/BX}$ as equivalent to the $\infty$-category of modules over the algebra obtained by applying $\cR$ to the unit in $\Cat_{\infty /BX}$. This unit is the map $\ast \to BX$ corresponding to the basepoint, and by \Cref{lem:monoid_right_adjoint_1} we can compute $\cR(\ast)$ by the pullback
	\begin{equation*}
		\begin{tikzcd}
			\sfE X \arrow[r] \arrow[d]      &   \Simp^{\op}  \arrow[d] \\
			\sfB X  \arrow[r]               &   \underline{BX} \,.
		\end{tikzcd}
	\end{equation*}
	By applying $\ob$ to this square, we see that $\ob(\sfE X) \simeq \Omega BX \simeq X$.
  Because $\sfE X \to \Simp^{\op}$ is fully faithful, the morphism spaces $\sfE X(x,y)$ are contractible.
  If we let $e\in X$ denote the unit, the unital structure of $\sfE X$ as an algebra enforces $\sfE(e,e) \to X$ to select the unit in $X$.
  One can then show by functoriality of $\sfE X \to \sfB X$ that $\sfE(x,y) \to X$ corresponds to the point $y-x$ in $X$.

	\begin{cor}\label{cor:monoid_equivalence}
		The adjunction $\cL \dashv \cR$ factors as
		\begin{align*}
			\Cat^{\cS_{X/}}_{\infty} \rightleftarrows \Mod_{\sfE X} (\Cat^{\cS_{X/}}_{\infty}) \simeq \Cat_{\infty /BX} \,.
		\end{align*}
		The equivalence lifts to an equivalence of symmetric monoidal $\infty$-categories.
	\end{cor}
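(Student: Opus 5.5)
The plan is to apply a monadicity / projection-formula argument to the adjunction $\cL \dashv \cR$. By \Cref{lem:monoid_right_adjoint_1}, $\cR$ is fully faithful. By \Cref{lem:monoid_right_adjoint_2}, $\cR$ preserves colimits. By \Cref{lem:monoid_projection_formula}, $\cL$ is symmetric monoidal and the projection formula $\cR(\cC)\otimes \sfD \simeq \cR(\cC\otimes \cL(\sfD))$ holds. The standard consequence is that the monad $\cR\cL$ on $\Cat^{\cS_{/X}}_{\infty}$ is equivalent to $\cR(\ast)\otimes(-) = \sfE X\otimes(-)$. Here $\ast \to BX$ is the monoidal unit of $\Cat_{\infty/BX}$, and $\cR(\ast)=\sfE X$ is an $\bE_{\infty}$-algebra, because $\cR$ is lax symmetric monoidal as the right adjoint of a symmetric monoidal functor. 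So the first step is to apply the projection formula with $\cC=\ast$, giving $\cR\cL(\sfD)\simeq \sfE X\otimes \sfD$ naturally in $\sfD$. I would then check that this is an equivalence of monads, which follows since the multiplication on $\cR\cL$ is induced by the lax structure of $\cR$.

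Next I would show that $\cR$ is monadic, using the Barr--Beck--Lurie theorem. Conservativity is immediate because $\cR$ is fully faithful. Preservation of $\cR$-split geometric realizations follows from \Cref{lem:monoid_right_adjoint_2}, since $\cR$ preserves all colimits. Hence $\Cat_{\infty/BX}\simeq \mathrm{Alg}_{\cR\cL}(\Cat^{\cS_{/X}}_{\infty})\simeq \Mod_{\sfE X}(\Cat^{\cS_{/X}}_{\infty})$, and under this equivalence $\cL$ becomes the free module functor $\sfE X\otimes(-)$ and $\cR$ the forgetful functor. This gives the claimed factorization of the adjunction.

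For the symmetric monoidal upgrade, I would use the general statement (e.g. \cite[\S 4.8]{HA} in the form of the projection formula for symmetric monoidal adjunctions with colimit-preserving right adjoints) that in this situation the comparison functor $\Cat_{\infty/BX}\to \Mod_{\sfE X}$ is symmetric monoidal, with $\Mod_{\sfE X}$ carrying the relative tensor product $\otimes_{\sfE X}$. Concretely, $\cR(\cC\otimes\cC')\simeq \cR(\cC)\otimes_{\sfE X}\cR(\cC')$. To see this, write $\cC'$ as the geometric realization of the bar construction $\cL(\sfE X^{\otimes \bullet}\otimes \cR\cC')$. Then apply the projection formula levelwise, and use that $\cR$ and $\otimes$ commute with geometric realizations. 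For the latter, the monoidal structure on $\Cat^{\cS_{/X}}_{\infty}$ is presentable by \Cref{prop:CatV_monoidal_structure}, and $\cR$ preserves colimits by \Cref{lem:monoid_right_adjoint_2}.

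I expect the main obstacle to be this last step. One has to make sure the relative tensor product of $\sfE X$-modules is well defined, which requires the monoidal structure on $\Cat^{\cS_{/X}}_{\infty}$ to be presentable so that $\otimes$ preserves colimits in each variable; this is exactly \Cref{prop:CatV_monoidal_structure}. One also has to check that the lax structure maps of $\cR$ descend to equivalences after passing to $\otimes_{\sfE X}$. I would first try to reduce this to free modules $\sfE X\otimes \sfD = \cR\cL\sfD$, where it is exactly the projection formula together with monoidality of $\cL$. I would then extend to all modules via bar resolutions, since everything commutes with geometric realizations.
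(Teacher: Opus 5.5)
Your proposal is correct and follows the paper's route. The paper simply cites \cite[Proposition 5.29]{Mathew_2017}, whose hypotheses are exactly what you extract from \Cref{lem:monoid_right_adjoint_1,lem:monoid_right_adjoint_2,lem:monoid_projection_formula} together with $\cR(\ast)\simeq \sfE X$, and whose proof is the Barr--Beck--Lurie and bar-construction argument you sketch (and since $\cR$ is fully faithful, $\sfE X$ is idempotent, so $\otimes_{\sfE X}$ agrees with $\otimes$ on $\sfE X$-modules, which would shorten your final step).
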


	\begin{proof}
		The adjunction satisfies the assumptions of \cite*[Proposition 5.29]{Mathew_2017}, so the result follows from the identification of $\cR(\ast) = \sfE X$ above.
	\end{proof}

	Note that because the right adjoint $\cR$ is fully faithful, the $\cS_{/X}$-category $\sfE X$ is necessarily an idempotent algebra, meaning that the multiplication map $\sfE X\otimes \sfE X \to \sfE X$ is an equivalence.
  This means that the $\infty$-category of modules over $\sfE X$ is the full subcategory spanned by objects for which the unit map gives an equivalence $\sfC \xrightarrow{\simeq} \sfC \otimes \sfE X$.
  Unwinding the definitions, an $\sfE X$-module structure on $\sfC$ is precisely an action of $X$ on the objects of $\sfC$, satisfying \eqref{eq:EX-action}.

	\subsection{Inner Kan spaces and companions}\label{subsec:inner_kan_spaces_and_companions}
	In this subsection, we recall the main results of \cite{oldervoll2026quasi} about how semi-simplicial spaces satisfying certain lifting conditions can be used to model $\infty$-categories.
  Moreover, we will show how the notion of \emph{companions} in a double $\infty$-category (in the sense of~\cite{ruit2025companion}) generalizes to this setting.

	Recall that we write $ss\cS=\Fun(\Simp_{s}^{\op},\cS)$ for the $\infty$-category of semi-simplicial spaces.
  We write $\Delta^{n}_{s}$ for the representable presheaf at $[n]$ in $ss\cS$.
  The colimit diagrams defining the simplicial sets $\Horn^{n}_{i}$ and $\del \Delta^{n}$ lift to diagrams of semi-simplicial spaces, and we denote the colimits of these as $\Horn^{n}_{i,s}$ and $\del \Delta^{n}_{s}$ respectively.
  These are mapped to their simplicial counterparts by the left Kan extension functor $\cL_{s}\colon ss\widehat{\cS} \to s\widehat{\cS}$.
	We denote the terminal semi-simplicial space by $T$; this is contractible in every degree.

	\begin{definition}\label{def:inner_fibration}
		A map of semi-simplicial spaces $F\colon X\to Y$ is an \emph{inner fibration} if for any $0<i<n$, given the solid part of the following diagram, a dashed lift exists making the diagram
		\begin{equation*}
			\begin{tikzcd}
				\Horn^{n}_{i,s} \arrow[r] \arrow[d]      &   X  \arrow[d, "F"] \\
				\Delta^{n}  \arrow[r] \arrow[ur, dashed]              &   Y
			\end{tikzcd}
		\end{equation*}
    commute.
		We say that a semi-simplicial space $X$ is an \emph{inner Kan space} if the unique map $X\to T$ is an inner fibration. We say that $F$ is a \emph{trivial fibration} if for any solid part of the diagram
		\begin{equation*}
			\begin{tikzcd}
				\del \Delta^{n}_{s} \arrow[r] \arrow[d]      &   X  \arrow[d, "F"] \\
				\Delta^{n}  \arrow[r] \arrow[ur, dashed]              &   Y
			\end{tikzcd}
		\end{equation*}
		there exists a dashed lift.
	\end{definition}
	Inner Kan spaces can be thought of as non-unital versions of quasi-categories. It has been observed in several different contexts that the structure of units in an $\infty$-category is actually a property of some underlying non-unital structure. This phenomenon is usually called \emph{quasi-unitality}, and has been studied in the quasi-category model by \cite{henry2018weak,steimle2018degeneracies,tanaka2018functors}, and in the Segal space model by \cite{harpaz2015quasi,haugseng2021segal}. In this paper, we will use quasi-unitality for inner Kan spaces following \cite{oldervoll2026quasi}.

	\begin{definition}\label{def:eq_in_simplicial}
		An edge $f$ in a semi-simplicial space $X$ is called an \emph{equivalence} if we can solve the lifting problems
		\begin{equation*}
			\begin{tikzcd}
				\Horn^{n}_{0,s} \arrow[r, "F"] \arrow[d, hookrightarrow] & X \\
				\Delta^{n}_{s} \arrow[ur, dotted]
			\end{tikzcd}
			\quad \text{and} \quad
			\begin{tikzcd}
				\Horn^{n}_{n,s} \arrow[r, "G"] \arrow[d, hookrightarrow]& X \\
				\Delta^{n}_{s} \arrow[ur, dotted]
			\end{tikzcd}
		\end{equation*}
		up to homotopy for $2\leq n$, whenever the first edge of $F$ (resp. the last edge of $G$) is $f$.
	\end{definition}

	\begin{definition}\label{def:idempotent}
		We say that an edge $e$ in a semi-simplicial space $X$ is \emph{idempotent} if there exists a 2-simplex $H$ with $d_i H =e$ for $i=0,1,2$.
	\end{definition}
	\begin{definition}\label{def:quasi-unital}
		We say that an inner Kan space $X$ is \emph{quasi-unital} if every vertex $x$ in $X$ is the source of an idempotent equivalence. We say that a morphism $X\to Y$ is \emph{quasi-unital} if it preserves idempotent equivalences. Let $\IK_{q.u.} \subset ss\cS$ denote the subcategory spanned by quasi-unital inner Kan spaces and quasi-unital morphisms.
	\end{definition}

	\begin{prop}[{\cite*[Theorem 1.3, Corollary 5.20]{oldervoll2026quasi}  }]\label{prop:semi-simplicial_results}
		There exists a functor
		\begin{align*}
			\cT\colon \IK_{qu} \to  \Cat_{\infty}
		\end{align*}
		with the following properties:
		\begin{enumerate}
			\item The semi-simplicial nerve functor $N\colon \Cat_{\infty} \to ss\cS$ factors through $\IK_{qu}$, and this determines a fully faithful right adjoint to $\cT$.
      The unit $X \to N\cT(X)$ is a trivial fibration.
			\item For any semi-simplicial set $J$ and quasi-unital inner Kan space $X$, a map $p\colon J \to X$ determines a diagram in $\cT(X)$.
      The slice $\infty$-category under this diagram is naturally equivalent to $\cT(X_{p/})$.
			\item The functor $\cT$ preserves pullbacks where at least one leg is an inner fibration, as well as limits of diagrams $F\colon \cI \to \IK_{qu}$ where $\cI$ is an inverse category, and each matching map
			\begin{align*}
				F(i) \to \lim_{j<i}(F(j))
			\end{align*}
			is an inner fibration.
			\item For an inner Kan space $X$ and two zero simplices $x,y\in X_{0}$, the mapping space $\Map_{\cT(X)}(x,y)$ can be computed as the realization of the Kan semi-simplicial space
			\begin{align*}
				\Hom^{\rR}(x,y)_{n} = \left\{ s_{n}x\right\} \times_{X_{n}} X_{n+1} \times_{X_{o}} \left\{ y\right\} \, ,
			\end{align*}
			where $\left\{ s_{n}x\right\}$ is any extension of $x$ to a map $T\to X$.
		\end{enumerate}
	\end{prop}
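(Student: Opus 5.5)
This statement collects results from \cite{oldervoll2026quasi}, so in the paper we may simply cite it. Here is how I would prove it from scratch. I would define $N\colon \Cat_{\infty}\to ss\cS$ by $N(\cC)_{n}=\Map([n],\cC)$, i.e.\ restrict the complete Segal nerve along $\Simp_{s}\subset\Simp$. I would then construct $\cT$ as a left adjoint in two stages. First, left Kan extend along $\Simp_{s}^{\op}\to\Simp^{\op}$. Second, localize $s\cS$ onto complete Segal spaces, inverting the images of the inner horn inclusions $\Horn^{n}_{i,s}\to\Delta^{n}_{s}$ and the maps that collapse idempotent equivalences to degenerate edges.

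The first step is to check that $N(\cC)$ lies in $\IK_{qu}$. Inner horn fillers follow from the Segal condition: the restriction $N(\cC)_{n}\to\Map(\Horn^{n}_{i,s},N(\cC))$ is an equivalence, hence surjective on $\pi_{0}$. Identities $s_{0}x$ give idempotent equivalences, because the degenerate $2$-simplex $s_{0}s_{0}x$ witnesses idempotence and degenerate simplices solve the outer horn problems. Functors between $\infty$-categories preserve identities, so $N$ lands in quasi-unital morphisms.

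The heart of (1) is that the unit $X\to N\cT(X)$ is a trivial fibration for a quasi-unital inner Kan space $X$; full faithfulness of $N$ then follows formally. I would proceed in three steps.
\begin{enumerate}
\item Show that a quasi-unital inner Kan space admits degeneracy operators up to coherent homotopy. Use the idempotent equivalences as $s_{0}$ on vertices, and extend to higher $s_{i}$ by inductively filling horns, in the style of Steimle's degeneracy argument \cite{steimle2018degeneracies}.
\item Show that the space of such degeneracy data is contractible. This is the main obstacle. I would attack it by proving that idempotent equivalences at a vertex form a contractible space, using the outer horn fillers for equivalences, and then propagating contractibility up the skeleta.
\item Conclude that $X$ underlies a simplicial space, essentially unique. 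It satisfies the Segal condition up to the horn-filling statement, which identifies $\cT(X)$ with its completion and makes the unit a levelwise boundary-filling map.
\end{enumerate}

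The remaining items would follow from (1).
\begin{itemize}
\item[(4)] The semi-simplicial space $\Hom^{\rR}(x,y)$ is Kan by inner horn filling. Its realization compares with $\Map_{N\cT(X)}(x,y)$ through the trivial fibration of (1), because trivial fibrations induce equivalences on these right-hom constructions.
\item[(2)] Slices $X_{p/}$ are again quasi-unital inner Kan spaces, by the standard join/horn argument for semi-simplicial sets. A trivial fibration $X\to N\cT(X)$ induces a trivial fibration $X_{p/}\to N\cT(X)_{p/}\simeq N(\cT(X)_{p/})$, so applying $\cT$ gives the claim.
\item[(3)] Since $N$ reflects the relevant limits, it suffices to show that limits in $\IK_{qu}$ of inner-fibration-shaped diagrams are computed levelwise and are preserved by the unit up to trivial fibration. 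Trivial fibrations are stable under such Reedy-type limits of inner fibrations, by a matching-map induction over the inverse category.
\end{itemize}
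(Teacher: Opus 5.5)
Your first sentence matches the paper, which gives no argument beyond citing \cite[Theorem 1.3, Corollary 5.20]{oldervoll2026quasi}. Read as a proof, though, your from-scratch sketch has genuine gaps.

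\textbf{The construction of $\cT$.} Composing $\cL_{s}$ with the complete Segal localization gives the left adjoint of $N\colon \Cat_{\infty}\to ss\cS$, and that left adjoint is free unitalization. For $X=N\cC$ it returns $\cC$ with a new identity freely adjoined at each object. The old identities become non-invertible idempotents. Inverting ``more maps'' does not repair this: a localization of $s\cS$ is determined by its local objects, so if those are to be exactly the complete Segal spaces, the extra maps change nothing. What you need instead is an $X$-dependent localization, inverting the idempotent equivalences of $X$ in the complete Segal localization of $\cL_{s}X$. This is why morphisms in $\IK_{qu}$ must be quasi-unital. With that definition, $\cT N\cC\simeq\cC$ is what full faithfulness of $N$ amounts to, and it is precisely the quasi-unitality theorem of \cite{harpaz2015quasi,haugseng2021segal,steimle2018degeneracies}, not a formality.

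\textbf{The core of (1) is asserted rather than proved.} Step 3 treats an inner Kan space with degeneracies as if it were Segal. Inner horn fillers only exist: $\Map(\Delta^{n}_{s},X)\to\Map(\Horn^{n}_{i,s},X)$ is merely $\pi_{0}$-surjective, so the Segal condition fails. A quasi-category viewed as a levelwise discrete simplicial space is an example. There is therefore no ``completion'' to identify $\cT(X)$ with. The trivial-fibration property of the unit says that every $\del\Delta^{n}_{s}\to X$ with a compatible filler in $N\cT(X)$ already has one in $X$. That is a Joyal-type comparison theorem and needs an actual argument, for instance through a model for mapping spaces like $\Hom^{\rR}$. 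It cannot be read off from horn filling.

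\textbf{Step 2.} Your proposed lemma, that idempotent equivalences at a vertex form a contractible space, is false for the literal subspace of $X_{1}\times X_{2}$. For the semi-simplicial set underlying $\Sing(\bR)$, they form a large discrete set at each vertex. Contractibility can only hold for a suitably constructed Kan object of choices, as in Steimle's argument, and establishing that is exactly the crux you deferred.

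\textbf{The reductions of (2)--(4) to (1).} These also rely on facts you do not address. You need that $\cT$ inverts trivial fibrations. For (3), you also need that the levelwise limit is quasi-unital and is the limit in the non-full subcategory $\IK_{qu}$.
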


	To verify quasi-unitality for flow categories, we will use the following criterion inspired by \cite{AB}. The idea is that instead of constructing all the degeneracy maps $s_{i}\colon X_{n} \to X_{n+1}$, it suffices to construct $s_{0}$ and $s_{n}$, satisfying the expected simplicial identities.
  To get precise control over what this means when we work with semi-simplicial spaces rather than semi-simplicial sets, we will use the notion of free slices.

	\begin{definition}\label{def:augmented_semi-simplicial}
		Let $\Simp_{s,a}$ denote the category of potentially empty totally ordered sets, and order-preserving injections between them. We write $ss\cS_{a} = \Fun(\Simp_{s,a},\cS)$ for the $\infty$-category of \emph{augmented semi-simplicical spaces}.
	\end{definition}
	We denote the objects of $\Simp_{s,a}$ as $[n]$ for $n\geq -1$, with the convention $[-1] = \emptyset$. We have a full subcategory inclusion $ss\cS \subset ss\cS_{a}$ onto the presheaves $X$ such that $X_{-1}=\ast$.

	The category $\Simp_{a}$ carries a monoidal structure given by the join of posets.
  By Day convolution, this extends along the Yoneda embedding to a presentable monoidal structure on $ss\cS_{a}$.
  Explicitly, this is given by the formula
	\begin{align*}
		(X\join Y)_{n} = \coprod_{i+j = n-1} X_{i}\times X_{j} \,,
	\end{align*}
	so the full subcategory $ss\cS$ is closed under product, and contains the unit $\Delta^{0}_{s}$.
  For augmented semi-simplicial spaces $X$ and $J$, we can form the \emph{free slices} $X_{J/}$ and $X_{/J}$ uniquely determined by natural equivalences
	\begin{align*}
		\Map(K, X_{J/}) \simeq \Map(J\join K ,X) \simeq \Map(J, X_{/K}) \,.
	\end{align*}

	\begin{definition}\label{def:outer_degeneracies}
		We say that a semi-simplicial space $X$ \emph{admits initial and terminal degeneracies} if there exists a trivial fibration $\pi \colon \widetilde{X} \to X$ and commutative diagrams
		\begin{equation*}
			\begin{tikzcd}
				\widetilde{X}_{\Delta_{s}^{0}/} \arrow[r, "s_0"] \arrow[dr, "\pi"']      &   \widetilde{X}_{\Delta_{s}^{1}/}  \arrow[d, "d_i"] \\
				&   X_{\Delta_{s}^{0}/}
			\end{tikzcd}
			\qquad
			\begin{tikzcd}
				\widetilde{X}_{/\Delta_{s}^{0}} \arrow[r, "s_{\omega}"] \arrow[dr, "\pi"']      &   \widetilde{X}_{/\Delta_{s}^{1}}  \arrow[d, "d_i"] \\
				&   \widetilde{X}_{/\Delta_{s}^{0}},
			\end{tikzcd}
			\quad i=0,1,
		\end{equation*}
		in $ss\cS_{a}$.
	\end{definition}

	\begin{rem}\label{rem:unpacking_outer_degeneracies}
		After evaluating at $[n-1]\in \Simp_{s,a}$, the map $s_{0}$ corresponds (up to a trivial fibration) to $s_{0}\colon X_{n}\to X_{n+1}$.
    Being a map of augmented semi-simplicial spaces means that $s_{0}$ behaves as we would expect the zeroth degeneracy to behave with respect to $d_{i}, i\geq 2$, namely $d_{i}\circ s_{0} = s_{0}\circ d_{i-1}$.
    The diagram encodes the expected behavior of $s_{0}$ with respect to $d_{0}$ and $d_{1}$. The diagram for $s_{\omega}$ is symmetric, it encodes maps $s_{n}\colon X_{n}\to X_{n+1}$ which behave as the terminal degeneracies.
	\end{rem}

	\begin{prop}\label{prop:outer_degeneracies}
		If $X$ is an inner Kan space which admits initial and terminal degeneracies $s_0$ and $s_{\omega}$, then for every vertex $x\in X$, the edges $s_0x$ and  $s_{\omega} x$ are equivalences and~$X$ is quasi-unital.
	\end{prop}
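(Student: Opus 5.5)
Since equivalences and idempotents are invariant under trivial fibrations (both are lifting-up-to-homotopy conditions, and $\pi$ admits lifts against all boundary inclusions), I will work in $\widetilde{X}$. I will first show that for $x\in X_{0}$ with lift $\tilde x$, the image $e=\pi(s_0\tilde x)$ is an idempotent equivalence. Unwinding \Cref{def:outer_degeneracies} as in \Cref{rem:unpacking_outer_degeneracies}, the map $s_0\colon \widetilde{X}_{\Delta^0_s/}\to \widetilde{X}_{\Delta^1_s/}$ evaluated in degree $[n-1]$ sends an $n$-simplex $\sigma$ to an $(n+1)$-simplex $s_0\sigma$. This assignment satisfies $d_0 s_0\sigma\simeq d_1 s_0\sigma\simeq \pi\sigma$ and $d_i s_0 = s_0 d_{i-1}$ for $i\geq 2$. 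Applying this to the edge $s_0\tilde x$ gives a $2$-simplex $H=s_0(s_0\tilde x)$ with $d_0H\simeq d_1H\simeq e$. Since $d_2 s_0 = s_0 d_1$, we also get $d_2H\simeq \pi s_0(d_1 s_0\tilde x)$, and $d_1 s_0\tilde x\simeq\tilde x$ up to the trivial fibration. So all three faces are $e$, and $e$ is idempotent in the sense of \Cref{def:idempotent}. Symmetrically, $s_\omega$ produces a $2$-simplex witnessing that $\pi(s_\omega\tilde x)$ is idempotent.

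Next I show that $s_0x$ is an equivalence. I treat the right-horn condition first, since it is the one $s_0$ naturally solves. Take $G\colon \Horn^n_{n,s}\to X$ whose last edge is $e$. The horn $\Horn^n_{n,s}$ is $\Delta^{n-2}_s\join\Horn^1_{1}$-shaped: it omits the face opposite $n$ and involves the join with the last vertex. I would rather use $s_\omega$ here, because terminal degeneracies fill outer horns at the last vertex. The face $d_nG$ is missing, but $d_{n-1}G$ is an $(n-1)$-simplex $\tau$ ending at $x$. Applying $s_\omega$ to $\tau$ yields an $n$-simplex whose last two faces are $\tau$ and whose other faces are $s_\omega d_i\tau$. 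This matches $G$ only if the last edge is $s_\omega x$ rather than $s_0x$.

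I will therefore first show that $s_0x\simeq s_\omega x$ as edges, so that $e$ is simultaneously an initial and a terminal degeneracy. Apply $s_0$ to the edge $s_\omega\tilde x$ and $s_\omega$ to $s_0\tilde x$. This produces $2$-simplices with boundaries $(s_\omega x, s_\omega x, s_0 x)$ and $(s_\omega x, s_0x, s_0x)$. Inner horn filling in $\Horn^3_{1,s}$ or $\Horn^3_{2,s}$ then lets me compose these to a $2$-simplex exhibiting a homotopy $s_0x\simeq s_\omega x$ in the homotopy category. Composing with the idempotent $2$-simplices shows the two edges are homotopic.

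Granting this, the general lifting problems are handled as follows. A right horn $\Horn^n_{n,s}\to X$ with last edge $s_\omega x$ is filled, up to homotopy, by $s_\omega(d_{n-1}G)$ glued to $G$ by a further inner horn filling in dimension $n+1$. The idea is to build the $(n+1)$-simplex $s_\omega(d_{n-1}G)$ together with $G$ as an inner horn $\Horn^{n+1}_{n,s}$ and then take the missing face. The boundary compatibilities needed are exactly the identities $d_i s_\omega = s_\omega d_i$ for $i<n$ and $d_n s_\omega\simeq d_{n+1}s_\omega\simeq\mathrm{id}$ encoded in the diagram. Left horns with first edge $s_0x$ are handled dually with $s_0$. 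Since $s_0x\simeq s_\omega x$ and the equivalence condition is homotopy invariant, both edges are equivalences, and every vertex is the source of the idempotent equivalence $e$, so $X$ is quasi-unital by \Cref{def:quasi-unital}.

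The main obstacle is the horn-gluing step. It requires checking that the boundary of $G$ together with $s_\omega(d_{n-1}G)$ assembles into a genuine inner horn $\Horn^{n+1}_{n,s}\to X$ in semi-simplicial spaces, with coherent homotopies rather than strict equalities. I expect to handle this by formulating the construction at the level of free slices: a map $\Horn^{n-1}\join\Delta^1_s\to \widetilde X$ is produced from $s_\omega$ applied to $\widetilde X_{/\Delta^0_s}$, so that the coherence is supplied by the commutative diagrams of \Cref{def:outer_degeneracies} rather than checked by hand.
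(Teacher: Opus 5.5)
The central step is missing. You never actually show that a right horn $G\colon\Horn^n_{n,s}\to X$ with last edge $s_\omega x$ can be filled, nor the dual statement for left horns with first edge $s_0x$. The gluing you sketch does not work.

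First, $d_{n-1}G$ is an $(n-1)$-simplex, so $s_\omega(d_{n-1}G)$ is an $n$-simplex, not an $(n+1)$-simplex. Second, set up an $(n+1)$-simplex $\Sigma$ on vertices $0,\dots,n-2,a,b,c$, with $a,b,c$ over $x$, whose missing inner face (the one omitting $b$, as in your $\Horn^{n+1}_{n,s}$) is to be the filler. The faces omitting $a$ and omitting $c$ can be taken to be copies of $s_\omega(d_{n-1}G)$. But for $i\le n-2$ each face $d_i\Sigma$ is an $n$-simplex whose face omitting $b$ must be the arbitrary simplex $d_iG$, and whose faces omitting $a$ and $c$ are then forced to be $s_\omega(d_{n-2}d_iG)$. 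These simplices are not supplied by your data. Producing them is a filling problem of the same nature as the one you started with, so a single inner horn filling in dimension $n+1$ cannot do the job.

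Passing to free slices does not help by itself either. We have $\Horn^n_{n,s}=\partial\Delta^{n-1}_s\join\Delta^0_s$, and applying $s_\omega$ to (a lift of) $G$, viewed as a map $\partial\Delta^{n-1}_s\to X_{/x}$, gives a map $\partial\Delta^{n-1}_s\join\Delta^1_s\to X$. This is $\Delta^{n+1}_s$ with \emph{two} faces removed, not an inner horn. What is needed is a genuinely inductive, prism-type construction, or an argument that $s_\omega x$ is terminal in $\cT(X_{/x})$. That is exactly the substance of \cite[Theorem 1.2]{oldervoll2026quasi}. The paper does not reprove it; it cites it and remarks that the proof adapts to the presence of the trivial fibration $\pi$.

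There are two further gaps, the first substantive and the second minor.

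\begin{enumerate}
\item \Cref{def:eq_in_simplicial} requires each of $s_0x$ and $s_\omega x$ to fill both left and right horns. Your transfer via ``$s_0x\simeq s_\omega x$ plus homotopy invariance'' is not justified. In a non-unital semi-simplicial space there is no a priori homotopy relation between parallel edges. Your two $2$-simplices, with faces $(s_\omega x,s_\omega x,s_0x)$ and $(s_\omega x,s_0x,s_0x)$, only share $d_0$ and $d_2$. Comparing their $d_1$-faces, and transporting horn-filling along that comparison, already requires using one of these edges as a unit for horn filling, which is essentially what you are proving. A separate Joyal-type lemma would be needed.
\item In the idempotence step, $d_2s_0(s_0\tilde x)=s_0(d_1s_0\tilde x)$ is $s_0$ applied to a possibly different lift of $x$, so its image need not be $e$. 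This is easily repaired. Apply $s_0$ instead to a lift $\tilde f$ of $e$ with $d_1\tilde f=\tilde x$, which exists because $\pi$ is a trivial fibration. Then all three faces of $\pi(s_0\tilde f)$ are $e$.
\end{enumerate}
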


	\begin{proof}
		This is slightly stronger than \cite*[Theorem 1.2]{oldervoll2026quasi}, which gives the result for $\pi=id$. With minor modifications, the proof \emph{loc.cit.} also goes through in the presence of a trivial fibration $\pi$.
	\end{proof}

	We now turn to semi-simplicial $\infty$-categories, i.e. functors $\Simp^{\op}_{s} \to \Cat_{\infty}$. Such objects satisfying the Segal condition can be thought of as a non-unital analogue of double $\infty$-categories, and these have their own theory of quasi-unitality as explained in \cite{haugseng2021segal}.
  Rather than requiring the Segal condition, we will consider objects $\bfC$ whose underlying semi-simplicial space $h\bfC$ is a quasi-unital inner Kan space. Generalizing the notion of \emph{companions} in a double $\infty$-category \cite*[Definition 4.1]{ruit2025companion}, we will give conditions for when a morphism in $\bfC_{0}$ admits a companion morphism in $\cT(h\bfC)$.

	\begin{definition}\label{def:admits_companions}
		Let $\bfC$ be a semi-simplicial $\infty$-category. We say that $\bfC$ \emph{admits companions} if $h\bfC$ is a quasi-unital inner Kan space, and if for each injective map $\phi \colon [n]\to [m]$, the functors
		\begin{align*}
			(v_{0}^{*}, d_{0}^{*}) &\colon \bfC_{n+1} \to \bfC_{0} \times \bfC_{n} \\
			(id\join \phi)^{*}&\colon \bfC_{m+1} \to \bfC_{n+1}
		\end{align*}
		are morphisms of Cartesian fibrations over $\bfC_{0}$.
	\end{definition}
	\begin{lem}\label{lem:companion_functor}
		Let $\bfC\colon \Simp_{s} \to \Cat_{\infty}$ be a semi-simplicial $\infty$-category which admits companions.
    Then there exists a functor $\cU_{\bfC}\colon \bfC_{0} \to \cT(h\bfC) $ whose restriction to cores agrees with the map induced by the counit of $\cT$, and such that for any $f\colon x \to y \in v\bfC$ and any idempotent equivalence $sy\in \bfC_{1}$ at $y$ there exists a representative $u\in \bfC_{1}$ of $\cU_{\bfC}(f)$ and morphism $\alpha \colon u \to sy$ in $\bfC_{1}$ which is a Cartesian lift of $(f,id_{y})\colon (x,y) \to (y,y)$ along $\bfC_{1}\to \bfC_{0}\times \bfC_{0}$.
	\end{lem}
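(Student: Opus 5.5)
The plan is to build $\cU_{\bfC}$ through a right fibration over $\bfC_{0}$ that corepresents, for each $x\in\bfC_{0}$, the functor $\Map_{\cT(h\bfC)}(x,-)$. The companion condition lets us replace the slice under $x$ in $\cT(h\bfC)$ with a Cartesian-fibration model, so that morphisms in $\bfC_{0}$ act on it by Cartesian transport.

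Consider the augmented semi-simplicial $\infty$-category $\bfC_{\Delta^{0}_{s}/}$ with $n$-th level $\bfC_{n+1}$, together with the map $v_{0}^{*}$ to the constant object $\bfC_{0}$. By the hypothesis of \Cref{def:admits_companions}, every face map $(id\join\phi)^{*}$ is a morphism of Cartesian fibrations over $\bfC_{0}$. Passing to underlying right fibrations (the functor $\cL$ of \S1.2) fibrewise, we obtain a functor $\bfC_{0}^{\op}\to ss\cS$. Its value at $x$ is the free slice $(h\bfC)_{x/}$ of the underlying semi-simplicial space. Concretely, the fibre of $\cL(\bfC_{n+1}\to\bfC_{0})$ over $x$ is the core of the fibre of $\bfC_{n+1}$, which is $\{x\}\times_{(h\bfC)_{0}}(h\bfC)_{n+1}$.

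Since $h\bfC$ is a quasi-unital inner Kan space, \Cref{prop:semi-simplicial_results}(2) gives $\cT((h\bfC)_{x/})\simeq\cT(h\bfC)_{x/}$. Composing, we get a functor $\bfC_{0}^{\op}\to\Cat_{\infty}$ with $x\mapsto\cT(h\bfC)_{x/}$.

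The next task is to identify this functor with the restriction along some $\bfC_{0}\to\cT(h\bfC)$ of the corepresentable functor $\cT(h\bfC)^{\op}\to\Cat_{\infty}$, $z\mapsto\cT(h\bfC)_{z/}$. Equivalently, we want to identify the unstraightened coCartesian fibration over $\bfC_{0}^{\op}$ with a pullback of the source map $\Ar(\cT(h\bfC))\to\cT(h\bfC)$. To do this, we use the unit of $\cT$: it is the trivial fibration $h\bfC\to N\cT(h\bfC)$ of \Cref{prop:semi-simplicial_results}(1). By Yoneda on slices, each slice $\cT(h\bfC)_{x/}$ has an initial object, namely an idempotent equivalence $sx$, which is an equivalence by \Cref{prop:outer_degeneracies}. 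The functor $f^{*}\colon\cT(h\bfC)_{y/}\to\cT(h\bfC)_{x/}$ preserves colimits and slices, so it is determined by the image of the initial object $sy$. That image is an object $u\colon x\to y$, and we define $\cU_{\bfC}(f)=u$.

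Making this coherent is the step I expect to be the main obstacle. It requires showing that a natural family of functors between corepresentable slice categories is induced by a functor into $\cT(h\bfC)$. I would handle it by restricting to cores. The underlying left fibration of the unstraightening is a family of slices with initial objects, and the full subcategory of objects that are initial in their fibre maps by an equivalence to $\bfC_{0}$. Choosing this section and composing with the target map then gives $\cU_{\bfC}$. On cores this is the counit map, because for an equivalence $f$ the transport of $sy$ is again an idempotent equivalence.

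Finally, unwind the claimed property. We have $f^{*}(sy)=u\in\bfC_{1}$, and the Cartesian edge $u\to sy$ in $\bfC_{1}$ covers $(f,id_{y})$ under $(v_{0}^{*},d_{0}^{*})$, exactly as in the case $n=0$ of \Cref{def:admits_companions}. Independence of the choice of idempotent equivalence $sy$ follows because any two such are equivalent in the fibre, both being initial there.
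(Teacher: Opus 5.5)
Your overall strategy matches the paper's. You model the slice under $x$ by the fibres of $v_0^*\colon\bfC_{\Delta^0_s/}\to\underline{\bfC_0}$, let morphisms of $\bfC_0$ act by Cartesian transport, identify $\cT$ of the fibres with $\cT(h\bfC)_{x/}$, and read off $\cU_{\bfC}(f)$ as the transport of the initial object $sy$.

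\textbf{The gap.} You record only the augmented object over $\underline{\bfC_0}$. Your family $x\mapsto (h\bfC)_{x/}$ therefore forgets the projections $d_0^*\colon (h\bfC)_{x/}\to h\bfC$. You use the hypothesis that $(v_0^*,d_0^*)\colon\bfC_{n+1}\to\bfC_0\times\bfC_n$ is a morphism of Cartesian fibrations only at the end, for $n=0$. It is needed in the construction itself. It makes Cartesian transport commute with $d_0^*$. Together with the fact that $\bfC_n\times\bfC_0\to\bfC_0$ straightens to a constant functor, this lifts the family to $\bfC_0^{\op}\to\Fun(\Simp_{s,a}^{\op},\Cat_{\infty})_{/\bfC}$, so that the transition maps $(h\bfC)_{y/}\to(h\bfC)_{x/}$ lie over $h\bfC$. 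Without this, two of your steps fail.
\begin{itemize}
\item You cannot apply $\cT$ functorially, since you never show the transition maps lie in $\IK_{qu}$. The paper gets quasi-unitality from this structure: slice projections are conservative left fibrations, so any map between them over $h\bfC$ is conservative and hence preserves equivalences.
\item The claim that $f^*\colon\cT(h\bfC)_{y/}\to\cT(h\bfC)_{x/}$ ``is determined by the image of the initial object'' holds only for functors over $\cT(h\bfC)$, i.e.\ maps of left fibrations. A family of $\infty$-categories each abstractly equivalent to a slice does not determine a functor into $\cT(h\bfC)$.
\end{itemize}

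\textbf{Your coherence fix does not work as stated.} The fibres of the underlying left (or right) fibration are the cores of the slices, not the slices. There, only Cartesian edges survive, so fibrewise initial objects do not form a section: transport sends $sy$ to $u=\cU_{\bfC}(f)$, which is initial only when $\cU_{\bfC}(f)$ is an equivalence. Moreover, a ``target map'' on the total space exists only once the transition functors commute with the projections to $\cT(h\bfC)$.

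\textbf{A correct version of your idea.}
\begin{itemize}
\item Lift the family to $\bfC_0^{\op}\to\Cat_{\infty/\cT(h\bfC)}$ as above.
\item Unstraighten to a Cartesian fibration $\cE\to\bfC_0$ with a map of Cartesian fibrations $\cE\to\bfC_0\times\cT(h\bfC)$.
\item In the full fibration, not the core, the full subcategory of fibrewise-initial objects maps by an equivalence to $\bfC_0$, since $\Map_{\cE}((x,s_x),(y,s_y))\simeq\Map_{\bfC_0}(x,y)$.
\item Compose this section with the projection to $\cT(h\bfC)$ to get $\cU_{\bfC}$.
\end{itemize}
This is equivalent to the paper's route. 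The paper factors $\cU_2\colon\bfC_0^{\op}\to\Cat_{\infty/\cT(h\bfC)}$ through the Yoneda embedding $\cT(h\bfC)^{\op}\to\LFib(\cT(h\bfC))$, using that each $\cT(h\bfC_{x/})\to\cT(h\bfC)$ is a representable left fibration.
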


	\begin{proof}
		Consider the natural transformations $v_{0}$ and $d_{0}$ between functors $\Simp_{s,a} \to \Simp_{s,a}$ with components
		\begin{align*}
			[0] \xrightarrow{id\join \emptyset } [0]\join [n] \xleftarrow{\emptyset \join id} [n] \,.
		\end{align*}
		These induce morphisms
		\begin{align*}
			\underline{\bfC_{0}} \xleftarrow{v_{0}^{*}} \bfC_{\Delta^{0}_{s}/} \xrightarrow{d_{0}^{*}} \bfC
		\end{align*}
		of augmented semi-simplicial $\infty$-categories, where $\underline{\bfC_{0}}$ is the constant functor with value $\bfC_{0}$.
    We therefore get a diagram
		\begin{equation*}
			\begin{tikzcd}
				\bfC_{\Delta^{0}_{s}/} \arrow[r, "{(v_{0}^{*},d_{0}^{*})}"] \arrow[dr, "v_{0}^{*}"']      &   \underline{\bfC_{0}}\times \bfC \arrow[d, "\pi"] \\
				&   \underline{\bfC_{0}}
			\end{tikzcd}
		\end{equation*}
		of augmented semi-simplicial $\infty$-categories.
    This corresponds under adjunction to a natural transformation between functors $\Simp_{s,a}^{\op} \to \Cat_{\infty / \bfC_{0}}$.
    By the assumptions, this natural transformation has components in the subcategory $\Cart(\bfC_{0}) \subset \Cat_{\infty /\bfC_{0}}$, so we may lift to a functor
		\begin{align*}
			\cU_{0}\colon [1] \times \Simp_{s,a}^{\op} \to \Cart(\bfC_{0}) \simeq \Fun(\bfC_{0}^{\op}, \Cat_{\infty})
		\end{align*}
		which under adjunction corresponds to a functor
		\begin{align*}
			\cU'_{0}\colon \bfC_{0}^{\op} \to \Ar(\Fun(\Simp_{s,a}^{\op},\Cat_{\infty}  ) ) \,.
		\end{align*}
		Because the straightening of the Cartesian fibration $\bfC_{n}\times \bfC_{0} \to \bfC_{0}$ is the constant functor $\bfC_{0} \to \Cat_{\infty}$ with value $\bfC_{n}$, we can lift $\cU_{0}'$ to a functor
		\begin{align*}
			\cU_{1}\colon \bfC_{0}^{\op} \to \Fun(\Simp_{s,a}, \Cat_{\infty})_{/\bfC}.
		\end{align*}
		Unwinding the construction, the value of $\cU_{1}$ at an object $x\in \bfC_{0}$ is equivalent to the left vertical map of the pullback
		\begin{equation*}
			\begin{tikzcd}
				\bfC_{x/} \arrow[r] \arrow[d]      &   \bfC_{\Delta^{0}_{s}/}  \arrow[d, "{(v_{0}^{*}, d_{0}^{*})}"] \\
				\bfC  \arrow[r, "\left\{ x\right\}\times id"]               &   \underline{\bfC_{0}} \times \bfC
			\end{tikzcd}
		\end{equation*}
		in $\Fun(\Simp^{\op}_{s,a},\Cat_{\infty})$.
    When we pass to underlying semi-simplicial spaces we get precisely the slice projection $h\bfC_{x/} \to h\bfC$.
    Since $h\bfC$ is a quasi-unital inner Kan space, so is each slice by \Cref{prop:semi-simplicial_results}.
    Moreover, each slice projection $h\bfC_{x/} \to h\bfC$ is a left fibration of semi-simplicial spaces, which is conservative.
     By 2-out-of-3 for conservative maps, any map $h\bfC_{x/} \to h\bfC_{y/}$ over $h\bfC$ is also conservative, and in particular, quasi-unital.
     We can therefore factor through the subcategory $\IK_{qu /h\bfC}$ and compose with $\cT$ to obtain a functor
		\begin{align*}
			\cU_{2}\colon \bfC_{0}^{\op} \to \Cat_{\infty / \cT(h\bfC) } \,.
		\end{align*}
		By \Cref{prop:semi-simplicial_results}, each $\cT(h\bfC_{x/}) \to \cT(h\bfC)$ is identified with the categorical slice projection at the object $x$ of $\cC$.
    Each such is a representable left fibration, so by the Yoneda lemma for copresheaves, we can factor $\cU_{2}$ uniquely through the fully faithful functor
		\begin{align*}
			\cT(h\bfC) \xrightarrow{y} \Fun(\cT(h\bfC), \cS) \simeq \LFib(\cT(h\bfC)) \subset \Cat_{\infty /\cT(h\bfC)}
		\end{align*}
		by (the opposite of) a functor $\cU_{\bfC} \colon \bfC_{0} \to \cT(h\bfC)$.
    Now consider a morphism $f\colon x\to y$ in $\bfC_{0}$.
    By the Yoneda lemma, we can find the morphism $\cU_{\bfC}(f)$ by evaluating
		\begin{align*}
			\cU_{2}(f)\colon \cT(h\bfC)_{y/} \to \cT(h\bfC)_{x/}
		\end{align*}
		at the initial object.
    This initial object can be represented in $\bfC_{1}$ by an idempotent equivalence $sy$ at $y$.
    Hence we can represent $\cU_{\bfC}(f)$ by the image of $sy$ under the functor
		\begin{align*}
			\cU_{1}(f)_{0} \colon (\bfC_{y/})_{0} \to (\bfC_{x/})_{0} \,.
		\end{align*}
		By construction, this functor is the Cartesian transport over $f$ between fibers of the Cartesian fibration $d_{1}^{*}\bfC_{1} \to \bfC_{0}$.
    By assumption, the Cartesian lift $u \to sy$ is mapped to a Cartesian lift of $f$ in $\bfC_{0}\times \bfC_{0}$, and the unique Cartesian lift of $f$ to the product is $(f,id_{y})$.
    By the 2-out-of-3 property for Cartesian lifts, $u\to sy$ is a Cartesian lift of $(f,id_{y})$.
	\end{proof}

	\begin{definition}\label{def:companion_functor}
		If $\bfC$ admits companions, we call $\cU_{\bfC} \colon \bfC_{0} \to \cT(h\bfC)$ the \emph{companion functor}.
	\end{definition}

	\begin{rem}\label{rem:companions_in_double}
		Let $\bfC$ be a double $\infty$-category.
    Then the underlying semi-simplicial $\infty$-category admits companions if and only if
		\begin{equation*}
			\begin{tikzcd}
				\bfC_{1} \arrow[r, "{(d_{1}^{*},d_{0}^{*})}"] \arrow[dr, "d_{1}"']      &   \bfC_{0}\times \bfC_{0}  \arrow[d, "\pi"] \\
				&   \bfC_{0}
			\end{tikzcd}
		\end{equation*}
		is a morphism of Cartesian fibrations over $\bfC_{0}$.
    The Cartesian lift $\alpha$ of some $f\colon x\to y \in \bfC_{0}$ at the horizontal identity $s_{0}y\in \bfC_{1}$ is mapped to a Cartesian lift of $f$ in $\bfC_{0}\times \bfC_{0}$ at $(y,y)$, which is therefore necessarily $(f, id_{y})$. In other words, $\alpha$ is a Cartesian 2-cell
		\begin{equation*}
			\begin{tikzcd}
				X \arrow[r, "F"] \arrow[d, "f"]      &   Y  \arrow[d, "id_{y}"] \\
				Y  \arrow[r, "s_{0}y"]               &   Y.
			\end{tikzcd}
		\end{equation*}
		By \cite*[Corollary 5.16]{ruit2025companion}, such a cell is Cartesian if and only if it exhibits $F$ as the \emph{horizontal companion} of the vertical arrow $f$. So the Cartesian fibration assumption on $\bfC$ implies that each vertical morphism in $\bfC$ admits a horizontal companion. The construction of companions is natural, in the sense that it determines a functor
		\begin{align*}
			\cU \colon \bfC_{0} \to h\bfC
		\end{align*}
		from the vertical $\infty$-category of $\bfC$ to the horizontal $\infty$-category of $\bfC$, which covers the identity on the underlying space of objects $\bfC_{0}^{\simeq}$. The construction of $\cU_{\bfC}$ can be seen as a generalization of the construction of a companion functor.
	\end{rem}
	\begin{rem}\label{rem:conjoints}
		The following dual of \Cref{lem:companion_functor} also holds. If the functors
		\begin{align*}
			(d_{n+1}^{*}, v_{n+1}^{*})\colon \bfC_{n+1} &\to \bfC_{n} \times \bfC_{0} \\
			(\phi\join id)\colon \bfC_{n+1} &\to \bfC_{m+1}
		\end{align*}
		are morphisms of Cartesian fibrations over $\bfC_{0}$, then there exists a functor $\cU'_{\bfC} \colon \bfC_{0}^{\op} \to \cT(h\bfC)$ characterized by $\bfC_{0}^{\simeq} \to \cT(h\bfC)$ agreeing with the counit of $\cT$, and for any $f\colon x\to y$, the morphism $\cU'_{\bfC}(f)$ admitting a representative $u'$ which is the Cartesian transport of $sy$ along $(id_{y}, f)$.
    This construction generalizes the notion of \emph{conjoints} in double $\infty$-categories.
	\end{rem}
	\begin{lem}\label{lem:limts_as_lifting}
		Let $\pi \colon \cE \to \cB$ be a Cartesian fibration, with underlying right fibration $\cR(\pi) \colon \cR(\cE) \to \cB$. Let $F\colon \cI^{\triangleright} \to \cB$ be a diagram in $\cB$.
    Then the composite
		\begin{align*}
			(\cI^{\op})^{\triangleleft} \simeq (\cI^{\triangleright})^{\op} \xrightarrow{F^{\op}} \cB^{\op} \xrightarrow{\St(\cR(\pi))} \cS
		\end{align*}
		is a limit diagram if and only if given the solid part of a diagram
		\begin{equation}\label{eq:cartesian_lifting_problem}
			\begin{tikzcd}
				\cI \arrow[r, "G'"] \arrow[d, "\iota"]      &   \cE  \arrow[d, "\pi"] \\
				\cI^{\triangleright}  \arrow[r, "F"]    \arrow[ur,"G", dashed]          &   \cB
			\end{tikzcd}
		\end{equation}
		where $G'$ is a Cartesian lift of $F\circ \iota$, there exists a unique extension $G$ which is a Cartesian lift of $F$.
	\end{lem}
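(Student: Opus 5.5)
The plan is to rephrase both sides of the equivalence as statements about spaces of sections of a right fibration. Then the lemma reduces to the standard fact that, for a right fibration over $\cK$, the space of sections is the limit over $\cK^{\op}$ of its straightening.

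First, recall that the underlying right fibration $\cR(\cE)\subset \cE$ is the wide subcategory on the $\pi$-Cartesian edges. Hence a functor $\cI^{\triangleright}\to \cE$ over $F$ which sends every edge to a $\pi$-Cartesian edge is the same thing as a functor $\cI^{\triangleright}\to\cR(\cE)$ over $F$. I read ``$G$ is a Cartesian lift of $F$'' in exactly this sense, and likewise for $G'$.

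Form the pullback $F^{*}\cR(\cE)\to \cI^{\triangleright}$. This is a right fibration, and it is classified by $\St(\cR(\pi))\circ F^{\op}$ because straightening is natural in the base (\Cref{prop:straightening_LFib}). Cartesian lifts of $F$ are then exactly sections of $F^{*}\cR(\cE)\to\cI^{\triangleright}$. Similarly, Cartesian lifts of $F\circ\iota$ are sections of the restriction of this right fibration to $\cI$.

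Next I use that for a right fibration $p\colon\cE'\to\cK$ with straightening $\Phi\colon\cK^{\op}\to\cS$, there is a natural equivalence
\[
\Map_{/\cK}(\cK,\cE') \;\simeq\; \Map_{\Fun(\cK^{\op},\cS)}(\ast,\Phi)\;\simeq\;\lim_{\cK^{\op}}\Phi .
\]
The first equivalence holds because the identity $\cK\to\cK$ is the terminal right fibration and unstraightens $\ast$; sections are maps of right fibrations from it. This equivalence is natural in $\cK$, i.e. compatible with restriction along $\iota\colon\cI\to\cI^{\triangleright}$. Applying it to $F^{*}\cR(\cE)$ identifies the restriction map
\[
\{\text{Cartesian lifts of } F\}\longrightarrow\{\text{Cartesian lifts of } F\circ\iota\}
\]
with the map $\lim_{(\cI^{\op})^{\triangleleft}}\Phi\to\lim_{\cI^{\op}}\Phi$, where $\Phi=\St(\cR(\pi))\circ F^{\op}$. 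Since the cone point is initial in $(\cI^{\op})^{\triangleleft}$, the source limit is $\Phi(\ast)$. The map is then the canonical comparison $\Phi(\ast)\to\lim_{\cI^{\op}}\Phi|$ induced by the cone.

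Finally, the lifting condition in \eqref{eq:cartesian_lifting_problem} says that every fiber of this restriction map is contractible. Here ``exists a unique $G$'' is read homotopically, as contractibility of the space of extensions. Contractible fibers is equivalent to the comparison map being an equivalence of spaces, i.e. to $\Phi$ being a limit diagram. This gives both directions at once.

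The only step needing care is the naturality of the section–limit identification with respect to restriction along $\iota$, and the fact that it is compatible with the pullback $F^{*}$. I would handle this by phrasing everything as mapping spaces in $\RFib(-)$ and using the naturality of straightening in the base stated in \Cref{prop:straightening_LFib}. Restriction of sections then corresponds to the map on mapping spaces $\Map(\ast,\Phi)\to\Map(\ast,\iota^{*}\Phi)$. That map is the canonical map from the limit over $(\cI^{\op})^{\triangleleft}$ to the limit over $\cI^{\op}$, as required.
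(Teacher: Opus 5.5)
Your proposal is correct and follows the paper's proof. In both, Cartesian lifts of $F$ (resp.\ $F\circ\iota$) are identified with sections of the pulled-back underlying right fibration, and spaces of sections are identified with limits of the straightening. Restriction along $\iota$ then becomes the comparison map $\lim_{(\cI^{\op})^{\triangleleft}}\Phi\to\lim_{\cI^{\op}}\Phi$, which is an equivalence exactly when its fibers, the spaces of extensions $G$ of a given $G'$, are contractible. The only cosmetic difference is that the paper passes from $\cE$ to $\cR(\cE)$ via the adjunction between $\RFib$ and $\Cart$, whereas you describe $\cR(\cE)$ as the wide subcategory on $\pi$-Cartesian edges; these amount to the same thing.
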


	\begin{proof}
		The space of Cartesian lifts $G'$ is equivalent to the fiber at $G'$ of the restriction map
		\begin{align*}
			\Map_{/\cI^{\triangleright}}^{\Cart}(\cI^{\triangleright}, F^{*}\cE) \to \Map_{/\cI}^{\Cart}(\cI, \iota^{*}F^{*}\cE) \,.
		\end{align*}
		By adjunction, we could equivalently replace $\cE$ with its underlying right fibration in the above.
    Limits in spaces are computed by sections of the corresponding right fibrations, so this map is precisely the comparison map
		\begin{align*}
			\lim( \St(\cR(\pi))\circ F^{\op}  ) \to \lim( \St(\cR(\pi))\circ F^{\op} \circ \iota )\,,
		\end{align*}
		which is an equivalence if and only if all its fibers are contractible.
	\end{proof}

	\begin{rem}
		We call a diagram of the form \eqref{eq:cartesian_lifting_problem}, where $\cJ \to \cJ^{\triangleright}$ may more generally replaced with an arbitrary functor, a \emph{cartesian lifting problem}.
    A solution to the Cartesian lifting problem is the dashed lift $G$ which takes any morphism $f$ in $\cJ^{\triangleright}$ to a $\pi$-Cartesian lift of $F(f)$.
	\end{rem}

	\begin{lem}\label{lem:companion_colim}
		Let $\bfC$ be a semi-simplicial $\infty$-category which admits companions. Let $\cI$ be an $\infty$-category, and $F\colon \cI^{\triangleright} \to \bfC_{0}$ a diagram such that for any $x\in \bfC_{n}$, any Cartesian lifting problem of the form
		\begin{equation*}
			\begin{tikzcd}
				\cI \arrow[r] \arrow[d]      &   \bfC_{n+1}  \arrow[d] \\
				\cI^{\triangleright}  \arrow[r, "{(F,\left\{ x\right\})}"']     \arrow[ur,dashed]       &   \bfC_{0} \times \bfC_{n}
			\end{tikzcd}
		\end{equation*}
		admits a unique solution.
    Then the diagram $h\circ \cU_{1}\circ F$ is a limit diagram in $ss\cS_{/h\bfC}$.
    Moreover, if either
		\begin{enumerate}
			\item $\cI$ is a weakly contractible direct category, and the pushforward of $h\circ\cU_{1} \circ F$ to $ss\cS$ is Reedy fibrant or,
			\item $\cI= S^{\triangleleft}$ for a finite set $S$, and for every $s\in S$ the morphism $-\infty \to s$ in $\cI$ is taken by $h\circ \cU_{1}\circ F$ to an inner fibration,
		\end{enumerate}
		then $\cU_{\bfC} \circ F$ becomes a colimit diagram in $\cT(h\bfC)$.
	\end{lem}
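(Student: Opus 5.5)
The plan is to work levelwise in the augmented semi-simplicial direction, and then pass from the limit in $ss\cS_{/h\bfC}$ to a colimit in $\cT(h\bfC)$ through the slice description of Proposition~\ref{prop:semi-simplicial_results}.

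Recall from the proof of Lemma~\ref{lem:companion_functor} that $\cU_{1}(x)$ is the pullback $\bfC_{x/} = \bfC_{\Delta^{0}_{s}/}\times_{\underline{\bfC_{0}}\times \bfC}(\{x\}\times \bfC)$. In level $n$ this is the fiber over $x$ of the Cartesian fibration $\bfC_{n+1}\to \bfC_{0}\times \bfC_{n}$. Passing to $h$ takes the underlying right fibration, so $h\bfC_{x/}$ in level $n$ is the space of objects of that fiber.

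Limits in $ss\cS_{/h\bfC}$ are computed levelwise, and after fixing $x\in h\bfC_{n}$ they are computed fiberwise. So showing that $h\circ \cU_{1}\circ F$ is a limit diagram amounts to the following. For each $n$ and each $x\in\bfC_{n}$, the functor $(\cI^{\triangleright})^{\op}\to \cS$ obtained by straightening the underlying right fibration of $\bfC_{n+1}\to\bfC_{0}\times\bfC_{n}$ and restricting along $(F,\{x\})$ must be a limit diagram. Lemma~\ref{lem:limts_as_lifting} says this is exactly unique solvability of the Cartesian lifting problems in the hypothesis. Naturality in $n$ comes from the assumption that $(id\join\phi)^{*}$ are morphisms of Cartesian fibrations, so the levelwise identifications assemble into a map of semi-simplicial spaces. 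This gives the first claim.

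For the colimit claim, I would use Proposition~\ref{prop:semi-simplicial_results}(2), which identifies $\cT(h\bfC_{x/})\to\cT(h\bfC)$ with the slice $\cT(h\bfC)_{x/}$. By the Yoneda construction of $\cU_{\bfC}$, the diagram $\cU_{\bfC}\circ F$ is a colimit diagram exactly when $\Map(\cU_{\bfC}F(-),z)$ sends the cone point to the limit of the $\Map(\cU_{\bfC}F(i),z)$. Equivalently, $i\mapsto \cT(h\bfC)_{\cU F(i)/}$ must form a limit in $\LFib(\cT(h\bfC))$, which can be computed in $\Cat_{\infty/\cT(h\bfC)}$. It therefore suffices to know that $\cT$ carries the limit diagram $h\circ\cU_{1}\circ F$ in $ss\cS_{/h\bfC}$ to a limit diagram, and this is the role of the two extra hypotheses.

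In case (1), Reedy fibrancy over the weakly contractible direct category $\cI$ lets me apply Proposition~\ref{prop:semi-simplicial_results}(3) to the inverse category $\cI^{\op}$. Weak contractibility ensures that the limit over $\cI$ in the slice agrees with the limit in $ss\cS$. In case (2), $\cI=S^{\triangleleft}$ is finite and the legs out of $-\infty$ are inner fibrations. The limit is then an iterated pullback in which one leg is always an inner fibration, so the pullback clause of (3) applies by induction on $|S|$. It also helps that inner fibrations are stable under pullback.

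I expect the main obstacle to be this last transfer. Two points need care. First, the limits in $ss\cS_{/h\bfC}$ must match the limits in $ss\cS$ that Proposition~\ref{prop:semi-simplicial_results}(3) controls; this needs weak contractibility of the indexing category, and in case (2) it needs the cone shape. Second, the resulting limit of slices must be identified with the representable left fibration at the cone point. For this I would use that a limit of left fibrations is a left fibration, and that $\cT$ of the cone-point slice has an initial object given by the idempotent equivalence.
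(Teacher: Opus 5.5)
Your proposal is correct and follows essentially the same route as the paper: the limit claim is checked levelwise (and, since you are in the slice, fiberwise over points of $h\bfC_{n}$) by identifying $h\circ\cU_{1}\circ F$ with straightened underlying right fibrations and invoking Lemma~\ref{lem:limts_as_lifting}; the colimit claim then follows from detecting colimits in $\cT(h\bfC)$ through the Yoneda embedding into $\Cat_{\infty/\cT(h\bfC)}$, with Proposition~\ref{prop:semi-simplicial_results}(3) showing that $\cT$ preserves the limit. The only real difference is in case (2): you build the wide pullback iteratively along the inner fibrations $h\bfC_{F(s)/}\to h\bfC_{F(-\infty)/}$, whereas the paper writes it as a single pullback of $\prod_{s\in S} h\bfC_{F(s)/}\to\prod_{s\in S} h\bfC_{F(-\infty)/}$ along the diagonal, so your version uses only the pullback clause of Proposition~\ref{prop:semi-simplicial_results}(3) and does not need $\cT$ to preserve products.
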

	\begin{proof}
		The limit condition in $ss\cS_{/\bfC^{\simeq}}$ can be checked degree-wise over $\Simp_{s}$.
    Here it becomes a question of whether the straightening of the underlying right fibration of $\bfC_{n+1} \to \bfC_{0}$ takes $F$ to a limit diagram.
    Because a Cartesian lift of $F$ to $\bfC_{n+1}$ is necessarily a Cartesian lift of some $(F, \left\{ x\right\})$, the space of such lifts is contractible, so the result follows from \Cref{lem:limts_as_lifting}.

		The composite $\cT(h\bfC)^{\op} \to \LFib(\cT(h\bfC)) \to \Cat_{\infty/\cT(h\bfC)}$ detects limits in $\cT(h\bfC)^{\op}$, which are equivalently colimits in $\cT(h\bfC)$.
    To show that $\cU_{\bfC}\circ F$ is a colimit diagram, it therefore suffices to show under the extra assumptions that the limit diagram $h\circ \cU_{1}\circ F$ is preserved by the localization $\cT$.
    Limits over weakly contractible $\infty$-categories in slices are detected underlyingly, so in both cases it suffices to check that the pushforward of $h\circ \cU_{1}\circ F$ to $ss\cS$ is a ``homotopy limit''.
    In case i) this follows immediately from the Reedy fibrancy condition and \Cref{prop:semi-simplicial_results}.
    In case ii), we can compute the limit as the pullback
		\begin{equation*}
			\begin{tikzcd}
				h\bfC_{F(\infty)/} \arrow[r] \arrow[d]      &   \prod_{s\in S} h\bfC_{F(s)/}  \arrow[d] \\
				h\bfC_{F(-\infty)/}  \arrow[r]               &   \prod_{s\in S} h\bfC_{F(-\infty)/} \,.
			\end{tikzcd}
		\end{equation*}
		The map on the right is a product of fibrations, and therefore itself a fibration.
    By \Cref{prop:semi-simplicial_results}, products, as well as pullbacks where one leg is a fibration, are preserved by $\cT$.
	\end{proof}

\section{Structured Flow Categories}\label{sec:structured_flow_categories}

The goal of this section is to define the notion of a $\mu$-structured flow category for some functor $\mu\colon \cC \to U/O$.

The main construction is a right fibration of double $\infty$-categories $\Mfd^{\mu}_{\diamond} \to \Mod_{\diamond}^{\otimes}$.
The $\infty$-category $\Mod_{\diamond}$ is an $\infty$-category of stratifying categories.
Given a poset $P$, we will define a $\Mod_{\diamond}$-category which encodes the boundary structure we expect of a flow category with objects $P$.

By \Cref{cor:monoid_equivalence}, the functor $\mu$ corresponds to a $\cS_{/\bZ\times BO}$-category which we will also denote $\mu$.
The double $\infty$-category $\Mfd^{\mu}_{\diamond}$ has the same space of objects as $\mu$, and morphism $\infty$-categories which are $\infty$-categories of $\mu(c,d)$-structured stratified manifolds with corners.
A $\mu$-structured flow category with objects $P$ will then be a lift of $\sfA_{P}$ to a $\Mfd^{\mu}_{\diamond}$-category.

\subsection{Virtual vector bundles}\label{subsec:virtual_vector_bundles}
Classically, by a virtual vector bundle on a space $X$, we mean a formal difference of vector bundles represented by a pair $E=(E^+, E^-)$.
The correct notion of equivalence for virtual vector bundles should capture the idea that we can ``cancel'' a summand that appears in both $E^+$ and $E^-$.

\begin{definition}\label{def:stable_equivalence}
	A \emph{stable equivalence} of virtual vector bundles $E\to F$ on a space $X$ is determined by a vector bundle $V$, and an isomorphism of vector bundles
	\begin{align*}
		E^+ \oplus V \oplus F^-  \overset{\cong}\to E^- \oplus V \oplus F^+ \,.
	\end{align*}
\end{definition}

Stable equivalence determines an equivalence relation on the set of virtual vector bundles on $X$, because if we have stable equivalences $E \to F$ and $F \to G$, determined by
\begin{align*}
	f\colon E^+ \oplus V \oplus F^-  &\xrightarrow{\cong} E^- \oplus V \oplus F^+ \\
	g\colon F^+ \oplus W \oplus G^- &\xrightarrow{\cong} F^- \oplus W \oplus G^+,
\end{align*}
we can form a stable equivalence $E\to G$ determined by the composition
\begin{align*}
	E^+ \oplus V \oplus F^- \oplus W \oplus G^- &\xrightarrow[f\oplus id]{\cong} E^- \oplus V \oplus F^+ \oplus W \oplus G^- \\
	&\xrightarrow[id \oplus g]{\cong} E^- \oplus V \oplus F^- \oplus W \oplus G^+ .
\end{align*}
The usual operations on vector bundles extend to ones on virtual vector bundles. For vector bundles $E\to X$ and $F\to Y$ we write $E\boxplus F \to X\times Y$ for the exterior sum, which as a map of spaces is just the product $E\times F \to X\times Y$. Because addition has been inverted when dealing with virtual vector bundles, we have access to two new operations.
\begin{definition}\label{def:boxplus_boxminus}
	For $E,F$, virtual vector bundles on $X$, $G$ a virtual vector bundle on $Y$, we define
	\begin{itemize}
		\item  $E\ominus F = (E^+ \oplus F^-, E^-\oplus E^+)$
		\item  $E\boxminus G = (E^+ \boxplus G^-, E^- \boxplus G^+)$
	\end{itemize}
	where the first one is a virtual vector bundle over $X$ and the second one is a virtual vector bundle over  $X \times Y$.
\end{definition}
It is non-trivial to construct an ordinary monoidal category of virtual vector bundles up to stable equivalence. For this, we pass to $\infty$-categories by first defining a certain category enriched in topological spaces.
\begin{definition}\label{def:Top}
	Let $\Top^{\times}$ denote the Cartesian monoidal category of compactly generated spaces. This is closed, so we obtain a $\Top$-enriched category $\overline{\Top}\vphantom{p}^{\times}$ with morphism spaces $\overline{\Hom}(X,Y)$.
\end{definition}
Let $\overline{CW}\subset \overline{\Top}$ be the full enriched subcategory spanned by CW-complexes. $\overline{\CW}$ is the enriched category of fibrant-cofibrant objects in the classical model structure on topological spaces. When we push forward the enrichment along the localization $\Pi \colon \Top\to \cS$, we therefore obtain the $\infty$-category of spaces.
\begin{definition}\label{def:Vect(Top)}
	Let $\Vect(\CW)$ denote the category whose objects are vector bundles $E\to X$ on CW-complexes, and whose morphisms are fiberwise equivalences. We give this the exterior product monoidal structure $\boxplus$. We lift this to an algebra object $\Vect(\overline{\CW})^{\boxplus}$ in $\Cat_{\Top}$ by topologizing the set of morphisms $\Hom(E,F)$ as the subspace
	\begin{align*}
		\overline{\Hom}_{\Vect}(E,F) \subset \overline{\Hom}(E,F)\times_{\overline{\Hom}(E,Y) } \overline{\Hom}(X,Y)
	\end{align*}
	consisting of maps $E\to F$ which fiberwise are linear isomorphisms.
\end{definition}
We have a continuous monoidal functor $\Vect(\overline{\CW})^{\boxplus} \to \overline{\CW}\vphantom{p}^{\times}$ given by remembering only the base. By a classical extension of sections result (see for example \cite[Theorem 5.7]{karoubi2009k}), the forgetful functor induces maps
\begin{align*}
	\overline{\Hom}_{\Vect}(E,F) \to \overline{\Hom}(X,Y)
\end{align*}
which are Serre-fibrations.
For any fiberwise equivalence of vector bundles $\phi\colon F\to G$ covering $f\colon Y\to Z$ and any bundle $E\to X$ we have a pullback of topological spaces
\begin{equation*}
	\begin{tikzcd}
		\overline{\Hom}_{\Vect}(E,F)  \arrow[r, "\phi_{*}"] \arrow[d]      &   \overline{\Hom}_{\Vect}(E,G)  \arrow[d] \\
		\overline{\Hom}(X,Y)  \arrow[r, "f_{*}"]               &  \overline{\Hom}(X,Z)
	\end{tikzcd}
\end{equation*}
given by using the equivalence $F \simeq f^{*}G$ induced by $\phi$. Since the right vertical map is a Serre fibration, this square remains a pullback when we push forward the enrichment along $\Pi$, so the resulting symmetric monoidal functor $\Vect(\cS)^{\boxplus} \to \cS^{\times}$ is a symmetric monoidal right fibration.
\begin{lem}\label{lem:Vect(S)_terminal}
	The universal bundle $\gamma \to \coprod_{k}BO(k)$ is a terminal object in $\Vect(\cS)$.
\end{lem}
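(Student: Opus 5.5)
We prove that $\gamma\to\coprod_k BO(k)$ is terminal by computing, for an arbitrary bundle $E\to X$ on a CW-complex, the mapping space $\Map_{\Vect(\cS)}(E,\gamma)$. By construction this is $\Pi\overline{\Hom}_{\Vect}(E,\gamma)$, so it suffices to show that the topological space $\overline{\Hom}_{\Vect}(E,\gamma)$ of fiberwise linear isomorphisms into $\gamma$ is weakly contractible. The underlying map to $\overline{\Hom}(X,\coprod_k BO(k))$ is a Serre fibration by the extension-of-sections result cited above. Its fiber over a map $f\colon X\to \coprod_k BO(k)$ is the space of bundle isomorphisms $E\cong f^{*}\gamma$.

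The first step is to reduce to connected $X$ of constant rank $k$. Both the mapping space and the decomposition of $X$ into components are compatible with coproducts: a map from a coproduct of bases is a product of maps, and $\overline{\Hom}_{\Vect}$ out of a disjoint union of bundles is a product. On a component of rank $k$, any fiberwise isomorphism must land in the $BO(k)$ summand, so we may replace the target by $\gamma_k\to BO(k)$.

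The second step is to identify $\overline{\Hom}_{\Vect}(E,\gamma_k)$ with a space of sections. We model $BO(k)=\mathrm{Gr}_k(\bR^\infty)$ with tautological bundle $\gamma_k$. A fiberwise linear isomorphism $E\to\gamma_k$ covering some map of bases is then the same as a fiberwise linear injection $E\to \bR^\infty$: the base map is recovered as the image of each fiber. This gives a homeomorphism, or at least a weak equivalence, between $\overline{\Hom}_{\Vect}(E,\gamma_k)$ and the space of sections of the bundle over $X$ whose fiber over $x$ is the Stiefel space of injective linear maps $E_x\hookrightarrow\bR^\infty$.

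The final step is to show this section space is weakly contractible. The fiber is the infinite Stiefel space $V_k(\bR^\infty)=\colim_N V_k(\bR^N)$, which is contractible because $V_k(\bR^N)$ is $(N-k-1)$-connected. The bundle is locally trivial, since it is associated to the frame bundle of $E$. A section space of a locally trivial bundle over a CW-complex with contractible fiber is weakly contractible, by obstruction theory or by inducting over cells using the homotopy extension property. Hence $\Map_{\Vect(\cS)}(E,\gamma)\simeq\ast$ for every $E$, and $\gamma$ is terminal.

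The main obstacle is point-set care in the second step. One must check that the topology on $\overline{\Hom}_{\Vect}$ from \Cref{def:Vect(Top)} agrees, at least up to weak equivalence, with the section-space topology; the issue is compactly generated colimit topologies on $\bR^\infty$ and on $\mathrm{Gr}_k(\bR^\infty)$. If the direct identification is awkward, I would argue instead with the fibration sequence. Its base $\overline{\Hom}(X,BO(k))$ has path components indexed by isomorphism classes of rank-$k$ bundles, while its fiber over $f$ is the space $\mathrm{Iso}(E,f^*\gamma)$, a torsor under the gauge group $\mathrm{Aut}(E)$. I would then compare long exact sequences to see that the total space is contractible, but this route is messier.
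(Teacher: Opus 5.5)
Your proof is correct, but your main route differs from the paper's. The paper argues with the Serre fibration $\overline{\Hom}_{\Vect}(E,\gamma)\to\overline{\Hom}(X,\coprod_k BO(k))$. All fibers are empty except over the homotopy class of a classifying map $f_E$. After reducing to connected $X$ of rank $n$, the fiber over $f_E$ is a space of bundle automorphisms, which the paper writes as $\overline{\Hom}(X,O(n))$. Comparing this with $\Omega\overline{\Hom}(X,BO(n))$ gives contractibility of the total space; this is exactly the fallback you sketch in your last paragraph.

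You instead use the Grassmannian model concretely. A bundle map into $\gamma_k$ is the same as a fiberwise linear injection $E\to\bR^\infty$, hence a section of a bundle with fiber $V_k(\bR^\infty)$. Weak contractibility then follows from that of the infinite Stiefel space.

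The paper's route uses only the classification of bundles and the fibration property, not a particular model of $BO(k)$. To be airtight, however, it needs more than an abstract equivalence between fiber and loop space. The fiber is really the gauge group $\mathrm{Aut}(E)$, which equals $\overline{\Hom}(X,O(n))$ only for trivial $E$. One must also know that the connecting map $\Omega_{f_E}\overline{\Hom}(X,BO(n))\to\mathrm{Aut}(E)$ of the fibration is an equivalence, and that fact is usually proved by precisely your Stiefel-space argument.

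Your route handles nontrivial $E$ directly and is self-contained. Its only cost is the point-set identification of $\overline{\Hom}_{\Vect}(E,\gamma_k)$ with the section space, which you rightly flag. That identification goes through in compactly generated spaces via the exponential law and $(\bR^\infty)^k\cong\colim_N(\bR^N)^k$.
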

\begin{proof}
	For a vector bundle $E\to X$ on a CW-complex, there exists a classifying map
	\begin{align*}
		f_{E}\colon X\to \coprod_{k} BO(k)
	\end{align*}
	exhibiting $E$ as the pullback $E\simeq f_{E}^{*} \gamma$ of the universal bundle $\gamma \to \coprod_{k} BO(k)$. This is moreover unique up to homotopy. We then have a homotopy pullback square
	\begin{equation*}
		\begin{tikzcd}
			\overline{\Hom}_{\Vect}(E,E) \arrow[r] \arrow[d]      &   \overline{\Hom}_{\Vect}(E, \gamma)  \arrow[d] \\
			\ast  \arrow[r, "f_{E}"]               &    \overline{\Hom}(X,\coprod_{k} BO(k)) \, .
		\end{tikzcd}
	\end{equation*}
	The fiber over any other homotopy class is necessarily empty. Reducing to the case where $X$ is connected, we may assume $\rk(E)=n$.
  Then the above square reduces to the homotopy pullback
	\begin{equation*}
		\begin{tikzcd}
			\overline{\Hom}(X, O(n)) \arrow[r] \arrow[d]      &   \overline{\Hom}_{\Vect}(E, \gamma)  \arrow[d] \\
			\ast  \arrow[r, "f_{E}"]               &    \overline{\Hom}(X,BO(n)) \,.
		\end{tikzcd}
	\end{equation*}
	Since $\overline{\Hom}(X, O(n)) \simeq \Omega\overline{\Hom}(X, BO(n))$, this implies that $\overline{\Hom}_{\Vect}(E,\gamma)$ is contractible.
\end{proof}
Now the terminal object $\gamma$ gets a unique $\bE_{\infty}$-algebra structure in $\Vect(\cS)$, which induces an algebra structure on $\coprod_{k} BO(k)$ in $\cS$, and an equivalence
\begin{align*}
	\Vect(\cS)^{\boxplus} \simeq \cS_{/\coprod_{n} BO(n)}^{\otimes}
\end{align*}
of symmetric monoidal right fibrations over $\cS$.
As usual, one can show that the group completion on $\coprod_{n} BO(n)$ is the space $\bZ \times BO$. Consider the continuous monoidal functor $\Vect(\CW)^{\boxplus} \to \Vect(\overline{\CW})^{\boxplus}$ where we view the source as enriched in discrete topological spaces. When pushing forward the enrichment along $\Pi$, this gives rise to symmetric monoidal functors
\begin{align*}
	\Vect(\CW)^{\boxplus} \to \cS_{/\coprod_{k} BO(k)}^{\otimes} \to \cS_{/\bZ\times BO}^{\otimes} \,.
\end{align*}
Now by using the inversion operation on $\bZ\times BO$, a virtual vector bundle $(E^{+}, E^{-})$ on a CW-complex $X$ gives rise to a classifying map
\begin{align*}
	X \xrightarrow{\Delta} X\times X \xrightarrow{E^{+}\times E^{-}} (\bZ\times BO) \times (\bZ\times BO) \xrightarrow{ \ominus} (\bZ\times BO) \,.
\end{align*}
Using the definition of tensor inverses, it is similarly clear that a stable equivalence gives rise to a homotopy of classifying maps.
Indeed, one can show that the abelian group of virtual vector bundles on $X$ up to stable equivalence is isomorphic to the group $[X,\bZ\times BO]$ of homotopy classes of maps.
Working in the $\infty$-category $\cS_{/\bZ\times BO}$ now has the upshot that we can access the whole space of virtual vector bundles on $X$ up to equivalence, and that we have a well-defined symmetric monoidal structure.
\begin{rem}\label{rem:bVect}
	In many instances it will be convenient to have smaller model for virtual vector bundles, i.e. some monoidal category with a functor to $\cS_{/ (\bZ\times BO)}^{\otimes}$. Start with the diagonal functor
	\begin{align*}
		\Delta \colon \Vect \to \Vect(\CW)\times \Vect \,.
	\end{align*}
	This lifts to a monoidal functor, giving an action of the monoid $\Vect$ on $\Vect(\CW)\times \Vect$. Let $\sfB\Vect$ denote the delooping $(2,1)$-category of $\Vect$, i.e. the category wiht a single object whose endomorphism object is the 1-groupoid $\Vect$. Because $\Vect$ has a symmetric monoidal structure, the delooping category $\sfB\Vect$ can also be equipped with a symmetric monoidal structure. The diagonal action is now classified by a lax monoidal functor
	\begin{align*}
		\sfB \Vect \to \Cat^{\times}\,,
	\end{align*}
	which by \cite{ramzi2026monoidal} unstraightens to a symmetric monoidal coCartesian fibration
  \[\bVect(\CW)^{\boxplus} \to \sfB (\Vect)^{\oplus} \,.
  \]
  An object of $\bVect(\CW)$ is a pair $(E,V)$ where $E\to X$ is a vector bundle, and $V$ is a vector space. We should think of this as representing the formal difference $E\ominus V$.
  A morphism $(E,V) \to (E,W)$ covering a morphism $U$ in the base $\sfB \Vect$ factors as a cocartesian lift followed by a morphism in the fiber $\Vect(\CW)\times \Vect$, in other words as
	\begin{align*}
		(E,V) \xrightarrow{U_{!}} (E\oplus U, V\oplus U) \to (F, W) \,.
	\end{align*}
	Now consider the composite of monoidal functors
	\begin{align*}
		\Vect \xrightarrow{\Delta} \Vect(\CW) \times \Vect \to \cS_{/ (\bZ\times BO)} \times (\bZ\times BO) \xrightarrow{ \ominus} \cS_{/\bZ\times BO} \,.
	\end{align*}
	By construction, this factors through
	\begin{equation*}
		\begin{tikzcd}
			\bZ\times BO \arrow[r, "\Delta"] \arrow[d]      &   \bZ\times BO \times \bZ\times BO  \arrow[d,"\ominus"] \\
			\ast \arrow[r, "0"]               &   \bZ\times BO \, ,
		\end{tikzcd}
	\end{equation*}
	so the difference functor $\Vect(\CW) \times \Vect \to \cS_{/\bZ\times BO}$ is invariant under the diagonal action by $\Vect$. In other words, the functor $\sfB \Vect \to \Cat_{\infty}$ classifying the action is constant, so by naturality of unstraightening we get functors
	\begin{align*}
		\bVect(\CW) \to \cS_{/ \bZ\times BO} \times \sfB \Vect \to \cS_{/\bZ\times BO} \,,
	\end{align*}
	where the first is a morphism of cocartesian fibrations. The above construction can easily be generalized to take monoidal structures into account, giving a monoidal functor $\bVect(\CW)^{\boxplus}\to \cS_{/ (\bZ\times BO)}^{\otimes}$. The upshot is that $\bVect(\CW)^{\boxplus}$ is a monoidal $(2,1)$-category, so functors into it can reasonably we written out by hand.
\end{rem}

A virtual vector bundle $E$ on $X$ classifies a cobordism theory of manifolds as follows.

\begin{definition}\label{def:stable_normal_orientation}
	Given a manifold $M$, and a virtual vector bundle $E$ on a space $X$, an \emph{$E$-structure on $M$} consists of a map of spaces $f\colon M \to X$, and a stable equivalence $-TM \simeq f^{*}E$.
\end{definition}

This definition makes sense both with the classical and current notion of virtual vector bundle.
In fact, in the new definition, it is just a map $-TM \to E$ in the $\infty$-category $\cS_{/\bZ\times BO}$.
Hence the mapping space $\Map_{\cS_{/\bZ\times BO}}(-TM, E)$ should be thought of as a \emph{space} of $E$-structures on $M$.

If we let $\Mfd_{n}$ denote the category of smooth $n$-manifolds and embeddings, we get a functor $T\colon \Mfd_{n}\to \Vect(\CW) \to \cS_{/\bZ\times BO}$ by sending a manifold to the classifying map of its tangent bundle.
Then the pullback
\begin{equation*}
	\begin{tikzcd}
		\Mfd_{n/E} \arrow[r] \arrow[d]      &   (\cS_{/\bZ\times BO})_{/E}  \arrow[d] \\
		\Mfd_{n}  \arrow[r, "-T"]               &   \cS_{/\bZ\times BO}
	\end{tikzcd}
\end{equation*}
should be seen as an $\infty$-category of $n$-manifolds with $E$-structures and embeddings between them.
This $\infty$-category is not large enough to deal with cobordism theory, because we have no way of talking about the inclusion of a boundary component.
This can be handled by noting that there is a contractible choice of inward pointing vector field on the boundary $\del X\subset X$.
This amounts to an equivalence of vector bundles
\begin{align*}
	T\del X \oplus \bR \simeq TX\vert_{\del X} \,.
\end{align*}
The correct notion of $E$-structure on $\del X$ should therefore be a map $T\del X \oplus \bR \to E$ in $\cS_{/\bZ\times BO}$.
In what follows, we will generalize this idea to also work for manifolds with corners, and we will do so in a way where we keep track of the monoidal structures involved.

\subsection{Models and stratified manifolds with corners}\label{subsec:models_and_stratified}

By a \emph{smooth manifold with corners}, we mean a paracompact Hausdorff space $X$, with an atlas exhibiting $X$ as locally diffeomorphic to $\bR_{\geq 0}^{n}$.
We say that a point $x$ has \emph{codimension $k$} if there is a diffeomorphism $\psi\colon U_x \to \bR_{\leq 0}^{k} \times \bR^{n-k}$ such that $U_x$ is an open neighborhood of $x$, and $\psi(x)=0$.
The corner structure determines a canonical stratification of $X$.

\begin{definition}\label{def:P^Can}
	For a manifold with corners $X$, let $\cP_X^{\can}$ denote the category given as follows.
	The objects are
	\begin{align*}
		\mathrm{Ob}(\cP_X^{\can}) = \bigcup_k \pi_0\{x\in X \mid \codim(x) =k \} \,.
	\end{align*}
	For an object $\sigma$, write $X_\sigma$ for the closure of the corresponding connected component.
	Then the set of morphisms between $\sigma$ and $\tau$ is defined as
	\begin{align*}
		\Hom_{\cP_X^{\can}}(\sigma, \tau) = \pi_0\left(N(X_\tau) \cap \interior(X_\sigma)\right)
	\end{align*}
	where $N(X_{\tau})$ is any sufficiently small tubular neighborhood of $X_{\tau}$ in $X$.
	The composition
	\[
	\Hom_{\cP_X^{\can}}(\sigma, \tau) \times \Hom_{\cP_X^{\can}}(\tau, \rho) \to \Hom_{\cP_X^{\can}}(\sigma, \rho)
	\]
	is the induced map on $\pi_0$ associated with
	\begin{align*}
		\left(N(X_\tau)\cap \interior( X_{\sigma}) \right) \times_{X_\sigma} \left(N(X_\sigma ) \cap \interior(X_\rho) \right) \to N(X_\tau)\cap \interior(X_\rho) \,.
	\end{align*}
\end{definition}
\begin{rem}
	The category $\cP_{X}^{\can}$ captures the corner strata of $X$ and their adjacency relations.
  As an example, consider $X$ the two-dimensional teardrop, i.e., the connected two-dimensional manifold with corners with a single point of codimension 2.
  This has canonical stratifying category
	\begin{equation*}
		\begin{tikzcd}
			2 \arrow[r,bend left] \arrow[r, bend right] & 1 \arrow[r] & 0 \,.
		\end{tikzcd}
	\end{equation*}
\end{rem}

Each object in $\cP_X^{\can}$ has a well-defined codimension.
It is evident that morphisms increase codimension, giving us a functor
\[
\codim \colon \cP^{\can}_{X} \to \bZ \,.
\]
Following \cite{AB}, we introduce a certain class of combinatorially well-behaved categories.
We will work only with manifolds stratified by such categories.
This structure is required to ensure, for instance, that each stratum itself is a manifold with corners.
In particular, all manifolds with corners arising from Morse theory will be of this form.

\begin{definition}\label{def:manifold_with_corners}
	A category $\cP$ is a  \emph{model for manifolds with corners} (or just \emph{model} for short) if there exists a functor $\codim \colon \cP \to \bZ$ such that for each object $\sigma\in \cP$, the overcategory $\cP_{/\sigma}$ is isomorphic to the poset of subsets of $\{1,...,\codim(\sigma)\}$.
\end{definition}
\begin{definition}\label{def:Q_P(sigma)}
	For a model $\cP$ and an object $\sigma$, we write $Q_{\cP}(\sigma)$ for the set of codimension 1 objects over $\sigma$.
\end{definition}
\begin{rem}\label{rem:codim_is_unique}
	Note that if $\cP$ is a model for manifolds with corners, the functor codim is uniquely determined because it can be reconstructed from the cardinality of $\cP_{/\sigma}$.
\end{rem}

Let $\cP$ be a model for manifolds with corners.
Note that an object $\sigma$ has codimension 0 if and only if $\sigma$ is a minimal object in $\cP$, as the slice $\cP_{/\sigma}$ is then forced to contain only the identity morphisms on $\sigma$.
Moreover, any object $\tau$ of codimension 1 is adjacent to a single codimension 0 object $\sigma$.
Indeed, the overcategory of $\tau$ has precisely two objects, the identity morphism on $\tau$ and the unique morphism $\sigma \to \tau$.
These two properties always hold for the categories $\cP_X^{\can}$.
However, $\cP_{X}^{\can}$ might not be a model for manifolds with corners, see for instance the teardrop.
If $\cP$ is a model for manifolds with corners, the isomorphisms of overcategories are determined by bijections
\begin{align*}
	Q_{\cP}(\sigma) \coloneq \left\{ \tau \to \sigma \in \cP_{/\sigma} \mid \codim(\tau)=1 \right\} \xrightarrow{\cong} \left\{ 1,..., \codim(p) \right\} \,.
\end{align*}
The slogan is that a stratum of codimension $k$ is uniquely determined as an intersection of $k$ codimension 1 strata.

\begin{definition}\label{def:pi_0_of_model}
	For a model $\cP$, write $\pi_{0}\cP$ for the set of minimal objects in $\cP$.
	For any object $\sigma \in \cP$, write $[\sigma]\in \pi_{0}(\cP)$ for the unique minimal object with a morphism to $\sigma$.
\end{definition}

\begin{definition}\label{def:Q_f(sigma)}
	For a functor $f\colon \cP\to \cP'$ between models for manifolds with corners, and some $\sigma \in \pi_{0}\cP$, define
	\begin{align*}
		Q_{f}(\sigma) = Q_{\cP'}(f(\sigma)) \,.
	\end{align*}
\end{definition}

\begin{lem}\label{lem:Q_f}
	For composable functors $f$ and $g$ between models, we have a canonical bijection
	\begin{align*}
		Q_{g\circ f}(\sigma) = Q_{f}(\sigma) \amalg Q_{g}([f(\sigma)]) \,.
	\end{align*}
	For a pair of functors $f$ and $g$ between models, we have a canonical bijection
	\begin{align*}
		Q_{f\times g}( (\sigma, \gamma) ) = Q_{f}(\sigma) \amalg Q_{g}(\gamma) \,.
	\end{align*}
\end{lem}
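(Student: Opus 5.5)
The plan is to reduce both bijections to the combinatorics of the overcategories. In a model $\cP$, the category $\cP_{/\sigma}$ is isomorphic to the power set of $Q_{\cP}(\sigma)$. Under this isomorphism the minimal object $[\sigma]\to\sigma$ corresponds to $\emptyset$, and the codimension-$1$ objects correspond to singletons. Since these overcategories are posets, every morphism $\alpha\colon\tau\to\tau'$ in a model induces, by postcomposition, a map of posets $\alpha_{!}\colon\cP_{/\tau}\to\cP_{/\tau'}$. This map is injective on objects and sends a subset $A$ to $\alpha\cup A$, where $\alpha$ is viewed as a subset via the object $\alpha\in\cP_{/\tau'}$. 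In particular $\alpha_{!}$ gives an injection $Q_{\cP}(\tau)\hookrightarrow Q_{\cP}(\tau')$ onto the codimension-$1$ elements over $\tau'$ that do not lie below $\alpha$.

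\emph{Product case.} I would first check that $\cP\times\cP'$ is again a model with $\codim(\sigma,\gamma)=\codim\sigma+\codim\gamma$. Indeed, $(\cP\times\cP')_{/(\sigma,\gamma)}=\cP_{/\sigma}\times\cP'_{/\gamma}$, and a product of power sets is the power set of the disjoint union. The codimension-$1$ objects over $(f\sigma,g\gamma)$ are then exactly the pairs (codim $1$, codim $0$) or (codim $0$, codim $1$). Since the codim-$0$ component is forced to be $[f\sigma]$ resp.\ $[g\gamma]$, this gives $Q_{f\times g}((\sigma,\gamma))=Q_{\cP'}(f\sigma)\amalg Q_{\cP''}(g\gamma)$ directly from the definitions.

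\emph{Composition case.} Set $\tau=f(\sigma)$ and write $\alpha\colon[\tau]\to\tau$. Applying $g$ gives $g(\alpha)\colon g([\tau])\to g(\tau)$ in $\cP''$, and hence an injection
\[
Q_{g}([\tau])=Q_{\cP''}(g[\tau])\hookrightarrow Q_{\cP''}(g\tau)=Q_{g\circ f}(\sigma).
\]
It remains to identify the complement with $Q_f(\sigma)=Q_{\cP'}(\tau)$. For this I would use the functor $g\colon\cP'_{/\tau}\to\cP''_{/g\tau}$, which is a map between power sets $2^{Q_{\cP'}(\tau)}\to 2^{Q_{\cP''}(g\tau)}$. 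The claim is that it has the form $A\mapsto Q_g([\tau])\amalg h(A)$ for an injection $h$ whose image is the complement. Granting this, the bijection is $Q_f(\sigma)\amalg Q_g([f\sigma])\cong Q_{g f}(\sigma)$, and canonicity is automatic because all identifications are determined by the slice posets.

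The main obstacle is exactly this claim about $g$ on slices. It needs $g$ to preserve joins (unions) in the slice posets and to satisfy
\[
\codim(g\tau)=\codim(g[\tau])+\codim(\tau).
\]
I would first try to derive both properties from the standing conventions on functors between models. If these are not automatic, I would impose them as part of what is meant by a functor of models, in the spirit of \cite{AB}. I expect this to be harmless because every functor used later, such as the strata inclusions and the products arising from composition in flow categories, manifestly has this property. The codimension identity together with injectivity of the union map then forces $h$ to be a bijection onto the complement by counting.
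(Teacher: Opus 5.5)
The paper states this lemma without proof, so this review assesses your argument on its own. Your product case is complete and needs no hypotheses on $f$ and $g$. The composition case has two concrete problems.

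\emph{Postcomposition is described backwards.} For $\alpha\colon\tau\to\tau'$, postcomposition identifies $\cP_{/\tau}$ with $(\cP_{/\tau'})_{/\alpha}$. That is the \emph{down}-set $2^{S_\alpha}\subseteq 2^{Q_{\cP}(\tau')}$, where $S_\alpha$ is the subset corresponding to $\alpha$.
\begin{itemize}
\item So $\alpha_!$ sends $A$ to a subset of $S_\alpha$, not to $S_\alpha\cup A$ (which would not even send atoms to atoms).
\item On atoms it is a bijection $Q_{\cP}(\tau)\cong S_\alpha$ onto the codimension-$1$ elements that \emph{do} lie below $\alpha$.
\item The map $B\mapsto S_\alpha\cup B$ is something else: it identifies $2^{Q_{\cP}(\tau')\setminus S_\alpha}$ with the up-set of $\alpha$, i.e.\ with the overcategory of $\alpha$ in $\del^{\tau}\cP=\cP_{\tau/}$. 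The two maps together give the splitting $Q_{\cP}(\tau')\cong Q_{\cP}(\tau)\amalg Q_{\del^{\tau}\cP}(\alpha)$ used in \Cref{def:consistent_normal_framing}.
\end{itemize}
Your composition step should therefore read as follows. Postcomposition with $g(\alpha)$ identifies $Q_{g}([\tau])$ with the set $S$ of atoms below $g(\alpha)$. Since $\alpha$ is the bottom element of $\cP'_{/\tau}$, functoriality alone puts $g(\cP'_{/\tau})$ inside the up-set of $g(\alpha)$, so $g(A)=S\amalg h(A)$ is automatic. What genuinely needs an argument is that $h$ induces a bijection $Q_{\cP'}(\tau)\cong Q_{\cP''}(g\tau)\setminus S$.

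\emph{That step cannot be derived from the definitions, because the statement is false for arbitrary functors.} Let $\cP'=\{0<1\}$ (the model of $[0,\infty)$) and let $\ast$ be the one-object model. Take $f\colon\ast\to\cP'$ picking $1$, and $g\colon\cP'\to\ast$. Then $Q_f(\ast)=Q_{\cP'}(1)$ has one element and $Q_g([f(\ast)])=Q_g(0)=\emptyset$, but $Q_{g\circ f}(\ast)=Q_{\ast}(\ast)=\emptyset$.
\begin{itemize}
\item Your extra conditions are therefore part of the statement, not a fallback.
\item The codimension identity must hold for all objects over $\tau$, not only for $\tau=f(\sigma)$; otherwise atoms need not go to atoms.
\item The paper's definition of $\Mod$ allows arbitrary functors, so its composition law tacitly needs the same restriction.
\end{itemize}
For the functors actually used, the step is easy.
\begin{itemize}
\item If $g$ is a closed embedding, it restricts to an equivalence $\cP'_{[\tau]/}\simeq\cP''_{g[\tau]/}$ (\Cref{rem:unwinding_closed_embedding}). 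Hence it gives an equivalence between the categories of factorizations of $\alpha$ and of $g(\alpha)$. The first is all of $\cP'_{/\tau}\cong 2^{Q_{\cP'}(\tau)}$, since $\alpha$ is initial there. The second is the up-set of $S$, i.e.\ $2^{Q_{\cP''}(g\tau)\setminus S}$. Comparing atoms gives the bijection.
\item If $g$ is an open embedding, it induces equivalences on overcategories and preserves minimal objects. So $S=\emptyset$ and the claim is immediate.
\end{itemize}
With these corrections your argument proves the lemma for all morphisms of $\Mod_{\diamond}$ and for the relabelings.
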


\begin{definition}
	Let $\Mod^{\otimes}$ denote the monoidal category whose objects are $(\cP,U)$ where $\cP$ is a model for manifolds with corners, and $U=(U^{+},U^{-})$ is a pair of vector spaces.
  A morphism $(\cP_{0},U_{0}) \to (\cP_{1},U_{1})$ in $\Mod$ is given by
	\begin{enumerate}
	    \item 	A functor $f\colon \cP_{0} \to \cP_{1}$.
		\item   For each minimal object $\sigma \in \cP_{0}$, a decomposition $Q_{f}(\sigma) = Q^{+}_{f}(\sigma) \amalg Q^{-}_{f}(\sigma)$, and linear isomorphisms
		\begin{align*}
			U^{+}_{0} \oplus \bR^{Q^{+}_{f}(\sigma)} &\simeq U_{1}^{+} \\
			U^{-}_{0} & \simeq U_{1}^{-}\oplus \bR^{Q^{-}_{f}(\sigma)} \,.
		\end{align*}
	\end{enumerate}
	To compose morphisms $(\cP_{0},U_{0}) \xrightarrow{f} (\cP_{1},U_{1}) \xrightarrow{g} (\cP_{2},U_{2})$ , we set $Q^{\pm}_{g\circ f} = Q^{\pm}_{f} \amalg Q^{\pm}_{g}$ with respect to the decomposition in \Cref{lem:Q_f}, and use the equivalences
	\begin{align*}
		U_{0}^{+} \oplus \bR^{Q_{f}^{+}} \oplus \bR^{Q_{g}^{+}} \simeq U_{1}^{+} \oplus \bR^{Q_{g}^{+}} \simeq U_{2}
	\end{align*}
	and dual for $U_{i}^{-}$.
	We define the monoidal structure by cartesian product of models, and direct sum of $U^{\pm}$.
  For a product of morphisms $f\colon (\cP_{0},U_{0}) \to (\cP_{1},V_{1})$ and $g\colon (\cR_{0},V_{0}) \to (\cP_{1},V_{1})$ we use the decomposition  $Q^{\pm}_{f\times g} = Q^{\pm}_{f} \amalg Q^{\pm}_{g}$ with respect to the bijection in \Cref{lem:Q_f}, the linear isomorphism
	\begin{align*}
		U_{0}^{+} \oplus V_{0}^{+} \oplus \bR^{Q^{+}_{f\times g}} \simeq (U_{0}^{+} \oplus \bR^{Q_{f}^{+}} ) \oplus (V_{0}^{+}\oplus \bR^{Q_{g}^{+}}) \simeq U_{1}^{+} \oplus V_{1}^{+}.
	\end{align*}
\end{definition}

\begin{definition}\label{def:stratified_manifold_with_corners}
	A \emph{stratified manifold with corners} is a triple $\bX = (X,\cP_X, s)$, where $X$ is a manifold with corners, $\cP_X \to \bZ$ is a model for manifolds with corners, and $s\colon \cP_{X}^{\can} \to \cP_X$ is a functor that preserves codimension and induces equivalence on all overcategories.
\end{definition}

\begin{rem}\label{rem:stratified_vs_faces}
	In particular, this definition ensures that $\cP^{\can}_X$ itself is a model for manifolds with corners.
	This ensures that the strata of $X$ are combinatorially well-behaved.
\end{rem}

We will often abuse notation and drop $\cP_X$ and $s$ from the notation.
The functor $\cP_X^{\can}\to \cP_X$ should be seen as allowing a label $\sigma \in \cP_X$ to represent a disjoint union of some connected strata in $\cP_X^{\can}$.
In particular, we denote
\begin{align*}
	\del^{\sigma}X = \bigcup_{\tau \in s^{-1}(\sigma)} X_{\tau} \subset X \,,
\end{align*}
where the $X_\tau$ in the union are the closures of the connected components corresponding to $\tau$, as in \Cref{def:P^Can}.
Now, $\del^{\sigma}X$ is a manifold with corners in its own right, and it inherits a natural stratification by the undercategory
\begin{align*}
	\del^{\sigma}\cP_{X} \coloneq (\cP_{X})_{\sigma/} \,.
\end{align*}
We write $\del^{\sigma} \bX = (\del^{\sigma}X, \del^{\sigma} \cP_{X})$ for this stratified manifold with corners.
Recall the following definition from \cite[Definition 2.25]{AB}.

\begin{definition}\label{def:consistent_normal_framing}
	A \emph{consistent normal framing} of a stratified manifold with corners~$\bX$ is a system of isomorphisms of vector bundles
	\begin{align*}
		\psi_{\sigma}\colon T\del^{\sigma} X \oplus \bR^{Q_{\cP}(\sigma)} \xrightarrow{\simeq} TX\vert_{\del^{\sigma}X} \,,
	\end{align*}
	such that the basis $Q_{\cP}(\sigma)$ of $\bR^{Q_{\cP}(\sigma)}$ is mapped to inward pointing vectors.
	We require that for any $\tau \to \sigma$ in $\cP$, the square
	\begin{equation*}
		\begin{tikzcd}
			T\del^{\tau} X\oplus \bR^{Q_{\cP}(\sigma)} \arrow[r] \arrow[d]      &   T\del^{\tau}X \oplus \bR^{Q_{\cP}(\tau)}  \arrow[d, "\psi_{\tau}"] \\
			T \del^{\sigma}X\vert_{\del^{\tau}X} \oplus \bR^{Q_{\cP}(\sigma)}  \arrow[r, "\psi_{\sigma}"]               &  TX\vert_{\del^{\tau}X}
		\end{tikzcd}
	\end{equation*}
	where the left vertical map is given by the derivative of the inclusion $\del^{\tau}X\to \del^{\sigma}X$, and the top horizontal by the inclusion $Q_{\cP}(\sigma) \subset Q_{\del^{\sigma}\cP}(\tau)\amalg  Q_{\cP}(\sigma) \simeq Q_{\cP}(\tau)$.
\end{definition}

\begin{rem}
	The compatibility requirement says that a consistent normal framing~$\psi$ on~$\bX$ restricts to a normal framing $\del^{\sigma} \psi$ on $\del^{\sigma}\bX$ where $(\del^{\sigma}\psi)_{\tau}$ is the restriction of~$\psi_{\tau}$ to the subbundle where the coordinates $Q_{\cP}(\sigma)$ vanish.
\end{rem}

We will take our flow categories to come equipped with consistent normal framings.
By \cite[Lemma 2.26]{AB}, the choice of such extends over any inclusion of submanifolds, so this choice will become contractible in the $\infty$-category $\Flow^{\mu}$.
By using the consistent normal framing, we can pull back $E$-structures along stratum inclusions, as long as we compensate by adding the trivial factor $\bR^{Q_{\cP}(\sigma)}$.
We will now construct an $\infty$-category that incorporates this notion.
We start with a preliminary version.

\subsection{Structured manifolds with corners}\label{subsec:structured_manifolds}
In this section, we show how a tangential structure can be transported along a stratum inclusion $\del^{\sigma}\bX \to \bX$ between stratified manifolds with corners, as long as one compensates for the normal directions $Q_{\cP}(\sigma)$.

\begin{definition}\label{def:PreMfd}
	Let $\Mfd$ denote the category whose objects are tuples $\bX = (X,\psi,\cP,U)$ where $(\cP,U)$ is an object of $\Mod$, and $\bX$ is a manifold with corners with a stratification by $\cP$, together with a choice of consistent normal framing $\psi$ of $\bX$.
  A morphism $F\colon \bX_{0} \to \bX_{1}$ is determined by:
	\begin{enumerate}
		\item A morphism $f\colon (\cP_{0},U_{0}) \to (\cP_{1},U_{1})$ in $\Mod$.
		\item A diffeomorphism $F_{\sigma \colon} \del^{\sigma}X \to \del^{f(\sigma)}X$ for each minimal element $\sigma \in \cP$.
	\end{enumerate}
	We require that the diagram
	\begin{equation*}
		\begin{tikzcd}
			\cP^{\can}_{X} \arrow[r, "\coprod_{\sigma} F_{\sigma}"] \arrow[d]      &    \cP^{\can}_{X'} \arrow[d] \\
			\cP \arrow[r,"f"]                &  \cP'
		\end{tikzcd}
	\end{equation*}
	commutes, and that for any minimal $\sigma$, the derivative $dF_{\sigma}$ intertwines the consistent normal framing $\del^{\sigma}\psi$ on $\del^{\sigma}X$ with the normal framing $\del^{f(\sigma)}\psi'$ on $\del^{f(\sigma)}X'$.
	We equip $\Mfd$ with the monoidal structure lifting the one on $\Mod$ by taking Cartesian product of manifolds, stratifications and normal framings.
\end{definition}

We now construct the \emph{index bundle} as a monoidal functor $I\colon \Mfd^{\otimes} \to \bVect(\CW)^{\boxplus}$.
We start by defining this on the full subcategory $\Mfd_{\con}$ of objects whose model $\cP$ has a unique minimal object.
We assign an object $\bX = (X,\cP,U,\psi)$ to the pair $(TX \oplus U^{-}, U^{+})$.
We assign a morphism $f\colon \bX_{0} \to \bX_{1}$ to the map covering $\bR^{Q_{f}^{+}}$ which uses the equivalence $U_{0}^{+}\oplus \bR^{Q_{f}^{+}} \simeq U_{1}^{+}$ which is part of the underlying morphism in $\Mod$, and the map of vector bundles
\begin{align*}
	If\colon TX_{0} \oplus U_{0}^{-} \oplus \bR^{Q_{f}^{+}} \simeq TX_{0} \oplus U_{1}^{-} \oplus \bR^{Q_{f}} \xrightarrow{\psi_{1} \oplus id} TX_{1} \oplus U_{1}^{-} \,.
\end{align*}
Because we require morphisms to intertwine normal framings, we have commutative squares
\begin{equation*}
	\begin{tikzcd}
		T\bX \oplus U^{-} \oplus  \bR^{Q_{f}^{+}} \oplus \bR^{Q_{g}^{+}} \arrow[r, "If \oplus id"] \arrow[d]      &   T\bY \oplus V^{-} \oplus \bR^{Q_{g}^{+}}  \arrow[d, "Ig"] \\
		T\bX \oplus U^{-} \bR^{Q_{g\circ f}}  \arrow[r, "I(g\circ f)"']               &   T\bW \oplus W^{-} \, ,
	\end{tikzcd}
\end{equation*}
so $I$ is compatible with composition up to the 2-cell $\bR^{Q_{g\circ f}^{+}} \simeq \bR^{Q_{g}^{+}} \oplus \bR^{Q_{f}^{+}}$. To get the monoidal structure, we use the equivalence $T(X\times Y) \simeq TX\boxplus TY$ and the equivalences of vector spaces that define product in $\Mod$.
Morphisms and products are compatible up to the 2-cell $\bR^{Q_{f\times g}^{+}} \simeq \bR^{Q_{f}^{+}} \oplus \bR^{Q_{g}^{+}}$ because we equip $\bX\otimes \bY$ with the product normal framing.

We will abuse notation and also write $I$ for the composite monoidal functor
\begin{equation*}
    I \colon \Mfd^{\otimes}_{\con} \to \cS_{/\bZ\times BO}^{\otimes} \,.
\end{equation*}
Because the monoidal structure in both source and target of $I$ are compatible with coproducts, and any object in $\Mfd_{\con}$ can be written as the coproduct over its connected components, the monoidal functor $I$ admits an operadic Kan extension, which itself is a strongly monoidal functor
\begin{align*}
	I\colon \Mfd^{\otimes} \to \cS_{/\bZ\times BO} \, ,
\end{align*}
and whose underlying functor preserves coproducts.
Because we shall only need objects of $\Mod_{\con}$ for the present paper, we do not spell out the details of this argument.
Note that the composite $\Mfd^{\otimes} \to \cS_{/\bZ\times BO}^{\otimes} \to \cS^{\times}$ factors through $\Pi\colon \Top^{\times} \to \cS^{\times}$ by the obvious forgetful functor $\Mfd^{\otimes} \to \Top^{\times}$.
As a virtual vector bundle, we can think of $I\bX$ as the difference $TX \ominus U$, where $U= U^{+}\ominus U^{-}$. We will sometimes use this notation as a reminder when we reason informally.

\begin{ex}\label{ex:Mfd_/E}
	Let $E$ be an object in $\cS_{/\bZ\times BO}$, and consider the pullback of $\infty$-categories
	\begin{equation*}
		\begin{tikzcd}
			\Mfd_{/E} \arrow[r] \arrow[d]      &   \left(\cS_{/\bZ\times BO} \right)_{/E}  \arrow[d] \\
			\Mfd   \arrow[r, "-I"]               &   \cS_{/\bZ\times BO} \,.
		\end{tikzcd}
	\end{equation*}
	The fiber over some $\bX$ is the space of shifted $E$-structures $U\ominus T\bX \to E$.
  However, we now have a way to pull back $E$-structures along maps that are not codimension-zero embeddings.
	Namely, we can transport along the inclusion $\del^{\sigma}\bX\to \bX$ of a stratum to obtain
	\begin{align*}
		U \ominus \bR^{Q(\sigma)} \ominus T\del^{\sigma} \bX  \to U\ominus TX \to E \,.
	\end{align*}
\end{ex}

\subsection{Double categories of structured manifolds with corners}\label{subsec:double_categories_of_structured_manifolds}

For the remainder of this section, we fix an $\infty$-category $\cC$ and a functor $\mu \colon \cC\to U/O \simeq B(\bZ \times BO)$.
According to \Cref{cor:monoid_equivalence}, this corresponds uniquely to an $\sfE(\bZ\times BO)$-module in $\Cat_{\infty}^{\cS_{/\bZ\times BO}}$, which we by abuse of notation will also denote by $\mu$.
We can construct a slice double $\infty$-category $(\cS_{/\bZ\times BO})_{/\mu}$ as in \Cref{def:slice_double_cat}.

\begin{definition} \label{def:Mfd_mu}
	Define $\Mfd^{\mu}$ by the pullback
	\begin{equation*}
		\begin{tikzcd}
			\Mfd^{\mu} \arrow[r] \arrow[d] & (\cS_{/\bZ\times BO})_{/\mu} \arrow[d] \\
			\Mfd^{\otimes} \arrow[r,"-I"] & \cS^{\otimes}_{/\bZ \times BO}
		\end{tikzcd}
	\end{equation*}
	in $\Dbl_{\infty}$.
\end{definition}

\begin{rem}\label{rem:unwinding_Mfd_mu}
	Let us try to unpack this definition and understand the structure of the double $\infty$-category $\Mfd^{\mu}$.
	By \Cref{lem:Mon->Dbl->Cat}, we can compute the pullback fiberwise over $\Simp^{\op}$, so in particular the space of objects of $\Mfd^{\mu}$ is the same as the space of objects in the upper right hand corner of the diagram, i.e. $\ob(\mu)$ which is a principal $(\bZ\times BO)$-bundle over $\cC^{\simeq}$.
	Likewise, we can compute mapping $\infty$-categories in $\Mfd^{\mu}$ as pullbacks of mapping $\infty$-categories,
	For a pair of objects $c$ and $d$, we then get by \Cref{rem:unwinding_slice} that the mapping $\infty$-category $\Mfd^{\mu}(c,d)$ is precisely the $\infty$-category $\Mfd_{/\mu(c,d)}$ of \Cref{ex:Mfd_/E}.

	\begin{enumerate}
		\item An object in $\Mfd^{\mu}(c,d)$ is specified by a stratified manifold $\bX$, a virtual vector space $U$, and a shifted $\mu(c,d)$-structure $U\ominus T\bX \to \mu(c,d)$.

		\item  A morphism $(\bX_0,U_0, \phi_{0}) \to (\bX_1,U_1, \phi_{1})$ in $\Mfd^{\mu}(c,d)$ is given by:
		\begin{enumerate}
			\item A morphism $f\colon \bX_0 \to \bX_1$ of stratified manifolds with corners.
			\item A stable equivalence $g\colon U_0\oplus \bR^{Q_f(\sigma)} \to U_1$.
			\item A homotopy between the $\mu(c,d)$-structures $\phi_{0}$ and $f^{*}\phi_{1} \circ g$ on $X_{0}$.
		\end{enumerate}
		\item The higher cells encode homotopy-coherent composition of this data.
	\end{enumerate}
	The horizontal composition
	\begin{align*}
		\odot\colon \Mfd^{\mu}(c,d)\times \Mfd^{\mu}(d,e) \to \Mfd^{\mu}(c,e)
	\end{align*}
	of objects $(X_0,U_0,\phi_0)$ and $(X_1,U_1,\phi_{1})$ is given by the data
	\begin{align*}
		X_{01} &= X_0\times X_1 \\
		U_{01} &= U_0 \oplus U_1 \\
	\end{align*}
	and the shifted $\mu(c,e)$-structure
	\begin{align*}
		U_{01}\ominus TX_{01} \xrightarrow{\phi_{0}\boxplus \phi_{1}} \mu(c,d)\boxplus\mu(d,e) \xrightarrow{\circ} \mu(c,e) \,.
	\end{align*}
\end{rem}

We will distinguish two kinds of morphisms between stratified manifolds, based on their behavior on stratifying categories.

\begin{definition}\label{def:closed_embedding}
	A functor $f\colon \cP\to \cR$ between models for manifolds with corners is called a \emph{closed embedding} if
	\begin{enumerate}
		\item $f$ is fully faithful, and
		\item the image of $f$ is a right ideal, i.e. if $\sigma \in \cP$ and there exists a morphism $f(\sigma) \to \gamma$ in $\cR$, then $\gamma$ is in the image of $f$.
	\end{enumerate}
\end{definition}

\begin{rem}\label{rem:unwinding_closed_embedding}
	Let $f\colon \cP \to \cR$ be a closed embedding.
	Since $f$ is fully faithful, so is the induced functor on slices $f\colon \cP_{\sigma/} \to \cR_{f(\sigma)/}$.
	The right ideal condition implies that each such is also essentially surjective, and hence an equivalence. Applying this at the minimal objects of $\cP$, we can factor $f$ as
	\begin{align*}
		\cP \simeq \coprod_{\sigma \in \pi_{0}\cP} \del^{f(\sigma)} \cR \hookrightarrow \cR \,.
	\end{align*}
	This means that a morphism $F\colon (\bX,U) \to (\bY,V)$ in $\Mfd$ which induces the functor $f$ on stratifying categories factors as
	\begin{align*}
		\bX \simeq \coprod_{\sigma \in \pi_{0}\cP} \del^{f(\sigma)} \bY \hookrightarrow \bY \,.
	\end{align*}
	Furthermore, the strata $f(\sigma)$ must all have the same codimension, namely $\dim(V-U)$.
\end{rem}

\begin{definition}\label{def:open_embedding}
	A functor $e\colon  \cP\to \cR$ between models of manifolds with corners is called an \emph{open embedding} if
	\begin{enumerate}
		\item $e$ is fully faithful,
		\item every minimal object of $\cR$ is in the image of $e$, and
		\item the image of $e$ is a left ideal, i.e. if $\sigma\in \cP$ and there exists a morphism $\gamma \to e(\sigma)$ in $\cR$, then $\gamma$ is in the image of $e$.
	\end{enumerate}
\end{definition}

\begin{rem} \label{rem:open_embedding}
	Dual to the case of closed embeddings, open embeddings induce equivalences on all overcategories. This means that open embeddings preserve codimension, so minimal objects of $\cP$ are mapped to minimal objects of $\cR$.
	The surjectivity condition then implies that a map $(\bX,U) \to (\bY, V)$ in $\Mfd$ covering an open embedding $e$ must have $\dim(U)=\dim(V)$, and must induce a diffeomorphism of manifolds with corners $X\simeq Y$.
	We think of such a map as a relabeling of $\bX$, where the strata corresponding to $\gamma\notin \im(e)$ are set to the empty manifold.
\end{rem}

\begin{definition}\label{def:Mfd_subcategories}
	Let $\Mod_{\diamond}$ be the wide subcategory of $\Mod$ spanned by closed embeddings.
\end{definition}

Because subcategories are stable under pullback, $\Mod_{\diamond}$ pulls back to a wide monoidal subcategory $\Mfd^{\otimes}_{\diamond} \subset \Mfd^{\otimes}$, and a double functor $\Mfd_{\diamond}^{\mu}\to \Mfd^{\mu}$, which on morphism $\infty$-categories is given by wide subcategory inclusions.

\begin{lem}\label{lem:fiberwise_right_fib}
	For each pair of objects $c,d \in \ob(\mu)$, the functor
	\begin{align*}
		\Mfd_{\diamond}^{\mu}(c,d) \to \Mod_{\diamond}
	\end{align*}
	is a right fibration.
\end{lem}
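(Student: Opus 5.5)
We factor the functor $\Mfd_{\diamond}^{\mu}(c,d) \to \Mod_{\diamond}$ as
\[
\Mfd_{\diamond}^{\mu}(c,d) \simeq \Mfd_{\diamond/\mu(c,d)} \to \Mfd_{\diamond} \to \Mod_{\diamond}
\]
and show that each map is a right fibration; right fibrations are closed under composition. By \Cref{rem:unwinding_Mfd_mu}, the pullback defining $\Mfd^{\mu}$ can be computed fiberwise, so $\Mfd_{\diamond}^{\mu}(c,d)$ is the pullback of the slice projection $(\cS_{/\bZ\times BO})_{/\mu(c,d)} \to \cS_{/\bZ\times BO}$ along $-I\colon \Mfd_{\diamond}\to \cS_{/\bZ\times BO}$, as in \Cref{ex:Mfd_/E}. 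Slice projections are right fibrations (\Cref{ex:slice_fibrations}), and right fibrations are stable under pullback. So the first map is a right fibration.

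The substantive step is that the forgetful functor $p\colon \Mfd_{\diamond}\to \Mod_{\diamond}$ is a right fibration. Both are ordinary $1$-categories, so this amounts to two facts: $p$ is a discrete fibration up to equivalence, meaning that for every $\bY=(Y,\psi,\cR,V)$ and every closed embedding $f\colon (\cP,U)\to(\cR,V)$ there is an essentially unique lift $F\colon \bX\to\bY$ with $p(F)=f$; and this lift is Cartesian.

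For existence we use \Cref{rem:unwinding_closed_embedding}. It gives a factorization $\cP\simeq \coprod_{\sigma\in\pi_0\cP}\del^{f(\sigma)}\cR$, which forces us to set
\[
X=\coprod_{\sigma\in\pi_0\cP}\del^{f(\sigma)}Y .
\]
We stratify $X$ by $\cP$ through $f$, and equip it with the restricted consistent normal framings $\del^{f(\sigma)}\psi$. The maps $F_\sigma$ are the identity diffeomorphisms, and the $\Mod$-data on $U$ is exactly the data of $f$. For uniqueness, suppose $F'\colon \bX'\to\bY$ is another lift of $f$. The diffeomorphisms $F'_\sigma\colon \del^\sigma X'\to \del^{f(\sigma)}Y$ intertwine framings and are compatible with the stratifications. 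Hence $\coprod F'_\sigma$ is a diffeomorphism $X'\cong X$ lying over $\mathrm{id}_{\cP}$. Such a map is the unique isomorphism in the fiber, so the fiber over $(\cP,U)$ is contractible.

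For the Cartesian property we must show that a morphism $G\colon\bZ\to\bY$ with $p(G)=f\circ h$ factors uniquely through $F$ over $h$. Since every $\del^\tau Z$ maps diffeomorphically onto $\del^{fh(\tau)}Y=\del^{f(h\tau)}Y$, which is a stratum of $X$, $G$ factors uniquely. The factorization preserves framings because the framing on $X$ was defined by restriction.

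We expect the main obstacle to be bookkeeping rather than any conceptual difficulty. Morphisms in $\Mfd$ are only specified on minimal strata, so we must check that the strata $\del^{h(\tau)}X$ are genuinely identified with the corresponding strata of $Y$, including the identification of the framing summands $\bR^{Q}$ coming from \Cref{lem:Q_f}. We also need the equivalences $\cP_{\sigma/}\simeq\cR_{f(\sigma)/}$ to match the canonical stratifications.
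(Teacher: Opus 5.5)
Your proposal is correct and follows essentially the same route as the paper: the same factorization $\Mfd_{\diamond}^{\mu}(c,d)\to\Mfd_{\diamond}\to\Mod_{\diamond}$, the same pullback-of-a-slice argument for the first factor, and the same stratum-restriction construction of Cartesian lifts for the second factor. The paper concludes via ``Cartesian fibration whose fibers are groupoids'', using that a morphism over an isomorphism of models is a diffeomorphism, which is equivalent to your ``essentially unique Cartesian lifts''. One wording fix: your uniqueness argument shows that the groupoid of lifts of $f$ with target $\bY$ is contractible, not the fiber of $\Mfd_{\diamond}\to\Mod_{\diamond}$ over $(\cP,U)$, which is the non-contractible groupoid of all framed $\cP$-stratified manifolds.
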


\begin{proof}
	We factor each such functor as
	\begin{align*}
		\Mfd_{\diamond}^{\mu}(c,d) \xrightarrow{F} \Mfd_{\diamond} \xrightarrow{H} \Mod_{\diamond}.
	\end{align*}
	The first factor is pulled back from the representable right fibration $(\cS_{/\bZ\times BO})_{/\mu(c,d)} \to \cS_{/\bZ\times BO}$, and is therefore also a right fibration.
  Given a stratified manifold $(\bX, \cR, V)$ and a morphism $(\cP,U)\to (\cR,V)$ such that $f\colon \cP \to \cR$ is a closed embedding, consider the $\cP$ stratified manifold
	\begin{align*}
		\del^{f} \bX = \bigcup_{\sigma \in \cP} \del^{f(\sigma)} \bX
	\end{align*}
	and the natural inclusion $\del^{f}\bX \to \bX$. This, together with the other data of the morphism $(\cP,U) \to (\cR, V)$ determine a cartesian lift. Moreover, if $f\colon \bX \to \bY$ is a morphism of stratified manifolds with corners covering an isomorphism $\cP_{X} \cong \cP_{Y}$, we must have that $f$ is a diffeomorphism, and hence invertible in $\Mfd_{\diamond}$.
	This shows that $H$ is a Cartesian fibration with all fibers groupoids, and hence a right fibration.
\end{proof}

\begin{cor}\label{cor:Alg(Mfd)->Alg(Mod)_is_right_fib}
	Composition with $\Mfd_{\diamond}^{\mu} \to \Mod_{\diamond}$ induces a right fibration
	\begin{align*}
		\Alg(\Mfd_{\diamond}^{\mu}) \to \Alg(\Mod_{\diamond}) \,.
	\end{align*}
\end{cor}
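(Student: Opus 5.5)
The plan is to deduce the corollary from \Cref{prop:algebra_right_fib}. That proposition says that postcomposition with a right fibration of double $\infty$-categories gives a right fibration on $\Alg(-)$. So the task reduces to showing that the double functor $\Mfd_{\diamond}^{\mu} \to \Mod_{\diamond}^{\otimes}$ is a right fibration of double $\infty$-categories in the sense of \Cref{def:double_right_fib}.

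For this we use \Cref{lem:double_right_fib_condition}, which requires three things.
\begin{enumerate}
\item The source and target have spaces of objects. The objects of $\Mfd_{\diamond}^{\mu}$ form the space $\ob(\mu)$, by \Cref{rem:unwinding_Mfd_mu} and \Cref{lem:Mon->Dbl->Cat}. The target is monoidal, so its space of objects is contractible.
\item The functor is a double functor. The map $\Mfd^{\mu}_{\diamond}\to \Mfd^{\otimes}_{\diamond}$ is a projection from a pullback in $\Dbl_{\infty}$. The map $\Mfd^{\otimes}_{\diamond}\to \Mod^{\otimes}_{\diamond}$ is strictly monoidal, since the monoidal structure on $\Mfd$ lifts the one on $\Mod$ and the subcategory $\Mfd_\diamond$ is pulled back from $\Mod_\diamond$.
\item On each pair $c,d \in \ob(\mu)$, the induced functor $\Mfd_{\diamond}^{\mu}(c,d) \to \Mod_{\diamond}$ is a right fibration. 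This is exactly \Cref{lem:fiberwise_right_fib}.
\end{enumerate}
With these in place, \Cref{lem:double_right_fib_condition} shows that $\Mfd^{\mu}_\diamond\to\Mod^{\otimes}_\diamond$ is a right fibration of double $\infty$-categories, and \Cref{prop:algebra_right_fib} gives the corollary.

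The main point to check is that the morphism $\infty$-category of $\Mod^{\otimes}_{\diamond}$, viewed as a double $\infty$-category with one object, is $\Mod_{\diamond}$. This holds because a monoidal $\infty$-category has $\cV_1=\cV$.

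A second point needs care. \Cref{lem:double_right_fib_condition} is stated for a double functor, while its proof works degreewise: over $[n]$ the fiber of the source is $\coprod$-indexed over tuples in $\ob(\mu)^{n+1}$ of products of the categories $\Mfd^{\mu}_\diamond(c_i,c_{i+1})$. Both the factorization through $\bfD_n\times_{\bfD_0^{n+1}}\bfC_0^{n+1}$ and the appeal to \Cref{thm:iterated_fibration} apply verbatim. The fiberwise products are right fibrations because products of right fibrations are right fibrations. Cartesian transport along maps of $\ob(\mu)^{n+1}$ is an equivalence, so the compatibility hypothesis in \Cref{thm:iterated_fibration} holds automatically.
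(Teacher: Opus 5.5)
Your proposal is correct and follows the paper's proof: the paper also combines \Cref{lem:fiberwise_right_fib} with \Cref{lem:double_right_fib_condition} to conclude that $\Mfd^{\mu}_{\diamond}\to\Mod^{\otimes}_{\diamond}$ is a right fibration of double $\infty$-categories, and then applies \Cref{prop:algebra_right_fib}. Your extra checks (spaces of objects, that the map is a double functor, and that the morphism category of $\Mod^{\otimes}_{\diamond}$ is $\Mod_{\diamond}$) are hypotheses the paper uses without stating them, so spelling them out is harmless.
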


\begin{proof}
	By \Cref{lem:fiberwise_right_fib,lem:double_right_fib_condition}, the double functor in question is a right fibration, so the result follows from \Cref{prop:algebra_right_fib}.
\end{proof}

\begin{definition}
	Let $\Mod^{[1]_{\circ}} \subset \Mod^{[1]}$ denote the full subcategory spanned by the open embeddings.
	Because this is closed under monoidal product, we get a monoidal subcategory $(\Mod^{\otimes})^{[1]_{\circ}} \subset (\Mod^{\otimes})^{[1]}$.
\end{definition}

\begin{definition}
	Let $(\Mod_{\diamond}^{\otimes})^{[1]_{\circ}}$ be defined by the pullback
	\begin{equation*}
		\begin{tikzcd}
			(\Mod^{\otimes}_{\diamond})^{[1]_{\circ}} \arrow[r] \arrow[d]      &   (\Mod^{\otimes})^{[1]_{\circ}}  \arrow[d, "ev_{0}\times ev_{1}"] \\
			\Mod^{\otimes}_{\diamond} \times_{\Simp^{\op}} \Mod^{\otimes}_{\diamond}  \arrow[r]               &   \Mod^{\otimes}\times_{\Simp^{\op}} \Mod^{\otimes}
		\end{tikzcd}
	\end{equation*}
\end{definition}
in $\Dbl_{\infty}$.
Pulling this constructions back along $\Mfd^{\mu}\to \Mod^{\otimes}$ gives a double functor $\left(\Mfd_{\diamond}^{\mu} \right)^{[1]_{\circ}} \to \left(\Mfd^{\mu}\right)^{[1]_{\circ}}$.

\begin{lem}\label{lem:relabeling_prelim}
	The square
	\begin{equation}\label{eq:relabeling_prelim}
		\begin{tikzcd}
			(\Mfd_{\diamond}^{\mu})^{[1]_{\circ}} \arrow[r] \arrow[d, "ev_{0}"]      &   (\Mod_{\diamond}^{\otimes})^{[1]_{\circ}}  \arrow[d, "ev_{0}"] \\
			\Mfd_{\diamond}^{\mu}  \arrow[r]               &   \Mod_{\diamond}^{\otimes}
		\end{tikzcd}
	\end{equation}
  is a pullback in $\Dbl_{\infty}$.
\end{lem}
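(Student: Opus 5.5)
By \Cref{lem:Mon->Dbl->Cat}, pullbacks in $\Dbl_{\infty}$ are computed in $\Cat_{\infty}$ and can be checked fiberwise over $\Simp^{\op}$. By the Segal condition it then suffices to work over $[0]$ and $[1]$, because the fibers over $[n]$ are iterated fiber products of these. Over $[0]$ every corner of \eqref{eq:relabeling_prelim} is either $\ob(\mu)$ or a point, and the maps between them are the evident ones, so the square is trivially a pullback there. The content is therefore the claim over $[1]$. There, after fixing objects $c,d\in\ob(\mu)$, the square becomes
\begin{align*}
	\Mfd^{\mu}_{\diamond}(c,d)^{[1]_{\circ}} \to \Mfd^{\mu}_{\diamond}(c,d)\times_{\Mod_{\diamond}} \Mod_{\diamond}^{[1]_{\circ}} \,,
\end{align*}
and we must show this comparison map is an equivalence. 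Here $\Mfd^{\mu}_{\diamond}(c,d)^{[1]_{\circ}}$ is the full subcategory of arrows in $\Mfd^{\mu}(c,d)$ whose endpoints lie in the diamond subcategory and whose underlying model map is an open embedding.

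I would prove this in two layers, following the factorization $\Mfd^{\mu}_{\diamond}(c,d)\to\Mfd_{\diamond}\to\Mod_{\diamond}$ from the proof of \Cref{lem:fiberwise_right_fib}.

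\textbf{First layer: the $\mu$-structure.} The projection $\Mfd^{\mu}(c,d)\to\Mfd$ is pulled back from the representable right fibration $(\cS_{/\bZ\times BO})_{/\mu(c,d)}\to\cS_{/\bZ\times BO}$. For a right fibration $p$, the square with $\Ar(-)$ and $ev_{1}$ is a pullback. The square with $ev_{0}$ is not in general, so this is the delicate point. The way out is that an arrow covering an open embedding induces a diffeomorphism $X\simeq Y$ with $\dim U=\dim V$ (\Cref{rem:open_embedding}), so $I$ sends it to an equivalence in $\cS_{/\bZ\times BO}$. For a right fibration, arrows lifting equivalences are determined by their source as well as their target. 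Hence restricting to $[1]_{\circ}$-arrows makes the $ev_{0}$-square a pullback: given a structure $\phi_0$ on the source and an open-embedding arrow $f$ downstairs, the structure on the target is $\phi_0\circ I(f)^{-1}$, uniquely up to contractible choice.

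\textbf{Second layer: manifolds over models.} We need the square with $\Mfd_{\diamond}^{[1]_\circ}\to\Mod_{\diamond}^{[1]_\circ}$ and $ev_{0}$ to be a pullback. Concretely, given a stratified manifold $\bX$ over $(\cP,U)$ and an open embedding $e\colon\cP\to\cR$ (with the accompanying vector space data, which is forced to be an isomorphism), there should be an essentially unique $\bY$ over $(\cR,V)$ together with a map $\bX\to\bY$ covering $e$. The construction is the relabeling of \Cref{rem:open_embedding}: take the same underlying manifold and normal framing, and compose the stratification $\cP^{\can}_{X}\to\cP$ with $e$. Because $e$ is fully faithful with left-ideal image, the composite still induces isomorphisms on overcategories, so $\bY$ is a legitimate stratified manifold. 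Strata of $\cR$ outside the image of $e$ are simply empty.

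For uniqueness, any other such $(\bY',\bX\to\bY')$ has underlying diffeomorphism $X\simeq Y'$ compatible with stratifications, and this yields a unique isomorphism to the relabeling. Since $\Mfd$ and $\Mod$ are $1$-categories, this amounts to checking that the functor on arrow categories is fully faithful and essentially surjective onto the fiber product. For full faithfulness, a commuting square of relabelings is determined by its source edge together with the model data, because the underlying manifold maps on the target side are forced by the diffeomorphisms $X\simeq Y$. The second layer is then a pullback.

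\textbf{Assembling, and the main obstacle.} Pasting the two pullback squares gives \eqref{eq:relabeling_prelim} over $[1]$, hence everywhere. I expect the main obstacle to be the full-faithfulness check in the second layer. There one must verify that closed embeddings in $\Mod_\diamond$ compose compatibly with relabelings, so that morphisms between relabelings in the diamond subcategory correspond exactly to morphisms between the sources together with compatible model squares. The essential input is the explicit decomposition $\bX\simeq\coprod_{\sigma}\del^{f(\sigma)}\bY$ of \Cref{rem:unwinding_closed_embedding}.
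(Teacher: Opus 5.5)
Your proposal is correct and follows essentially the paper's route. Like the paper, you reduce to mapping categories and factor through $\Mfd^{\mu}(c,d)\to\Mfd\to\Mod$. On the manifold layer, open-embedding arrows are relabelings with unique factorization; on the $\mu$-structure layer, they go to equivalences of index bundles and so lift uniquely. This is exactly the paper's coCartesian-lift argument, which you phrase as pasting two pullback squares.

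The paper first passes to the non-$\diamond$ squares by a pasting argument in a cube. That makes the diamond bookkeeping you flag as the main obstacle automatic: $\Mfd_{\diamond}$ is pulled back from $\Mod_{\diamond}$, so a morphism lies in it exactly when its model component is a closed embedding. You therefore do not need the decomposition of closed embeddings at all.
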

\begin{proof}
	This is immediately clear on spaces of objects, so it suffices to show on morphism $\infty$-categories for any pair of objects $c,d\in \ob(\mu)$.
	Consider the cube obtained by including every $\infty$-category in \eqref{eq:relabeling_prelim} into the version without $\diamond$.
	Using the pasting law in this cube, we see that it suffices to show that
	\begin{equation*}
		\begin{tikzcd}
			(\Mfd^{\mu}(c,d))^{[1]_{\circ}} \arrow[r] \arrow[d]      &   \Mod^{[1]_{\circ}}  \arrow[d] \\
			\Mfd^{\mu}(c,d)  \arrow[r]               &   \Mod
		\end{tikzcd}
	\end{equation*}
	is a pullback square. This amounts to showing that every arrow in $\Mfd^{\mu}(c,d)$ covering an open embedding is coCartesian.
  By the 2-out-of-3 property for coCartesian lifts, we can check this for each of the functors
	\begin{align*}
		\Mfd^{\mu}(c,d) \xrightarrow{F} \Mfd \xrightarrow{H} \Mod.
	\end{align*}
	Let $f\colon \bX \to \bY$ be a morphism in $\Mfd$ covering an open embedding $e\colon P\to \cR$. Consider any morphism $g\colon \bX \to \bW$ with a factorization of the underlying map of models as $r\circ e$ for some functor $r\colon \cR \to \cT$. Because $e$ is an open embedding, it preserves minimal strata, so $f$ induces diffeomorphisms $\del^{\sigma}\bX \to \del^{e(\sigma)}\bY$ between minimal strata.
  To lift $r$ to a map $\bY \to \bW$ we simply invert each such diffeomorphism and compose with~$g_{\sigma}$.
  This factorization is clearly unique, so $f$ is a coCartesian morphism.
  The functor~$F$ is pulled back from $(\cS_{/ (\bZ\times BO)})_{/\mu(c,d)} \to \cS_{/ \bZ\times BO}$, and since a morphism in $\Mfd^{\mu}(c,d)$ covering an open embedding gives rise to a diffeomorphism of underlying manifolds, its image in $(\cS_{/ (\bZ\times BO)})_{/\mu(c,d)}$ is an equivalence, which is necessarily coCartesian.
\end{proof}

\begin{cor}\label{cor:relabeling}
	A functor
	\begin{align*}
		F \colon \calD \to \Alg_{\Cat}\left( (\Mod_{\diamond}^{\otimes})^{[1]_{\circ}} \right)
	\end{align*}
	gives rise to a fully faithful map of right fibrations over $\calD$
	\begin{align*}
		\epsilon \colon F^{*}_{0}\Alg_{\Cat}(\Mfd^{\mu}_{\diamond}) \to F^{*}_{1}\Alg_{\Cat}(\Mfd^{\mu}_{\diamond}) \,.
	\end{align*}
	A $\Mfd^{\mu}_{\diamond}$-category $\bX$ lifting $F_{1}(D)$ belongs to the image if and only if for every $p,q \in \ob(F_{1}(D))$, the stratum $\del^{\sigma}\bX(p,q)$ is empty unless $\sigma$ is in the image of the open embedding $F_{0}(D)(p,q)\to F_{1}(D)(p,q)$.
\end{cor}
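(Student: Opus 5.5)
We will deduce the corollary from \Cref{lem:relabeling_prelim} by passing to algebras, much as \Cref{cor:Alg(Mfd)->Alg(Mod)_is_right_fib} was deduced from \Cref{lem:fiberwise_right_fib}. \Cref{lem:relabeling_prelim} gives a pullback square in $\Dbl_{\infty}$. The construction $\Alg_{\Cat}(-)$ is defined as a pullback of $\Alg(-)$ along $\Simp^{\op}_{(-)}$, and $\Alg_{\cM}(-)$ preserves limits. Hence applying $\Alg_{\Cat}$ gives a pullback square
\begin{equation*}
	\begin{tikzcd}
		\Alg_{\Cat}((\Mfd_{\diamond}^{\mu})^{[1]_{\circ}}) \arrow[r] \arrow[d, "ev_{0}"]      &   \Alg_{\Cat}((\Mod_{\diamond}^{\otimes})^{[1]_{\circ}})  \arrow[d, "ev_{0}"] \\
		\Alg_{\Cat}(\Mfd_{\diamond}^{\mu})  \arrow[r]               &   \Alg_{\Cat}(\Mod_{\diamond})\,.
	\end{tikzcd}
\end{equation*}
The bottom map is a right fibration by \Cref{cor:Alg(Mfd)->Alg(Mod)_is_right_fib}, so the top map is one as well. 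Pulling back along $F$ therefore identifies $F_{0}^{*}\Alg_{\Cat}(\Mfd^{\mu}_{\diamond})$ with $F^{*}\Alg_{\Cat}((\Mfd_{\diamond}^{\mu})^{[1]_{\circ}})$ as right fibrations over $\calD$. Composing with $ev_{1}$, which lands in $\Alg_{\Cat}(\Mfd^{\mu}_{\diamond})$ over $ev_{1}$ on the $\Mod$ side, gives the map $\epsilon$ over $\calD$. A map over $\calD$ between right fibrations is automatically a map of right fibrations.

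For full faithfulness we argue fiberwise. A map of right fibrations over $\calD$ is fully faithful exactly when it induces monomorphisms (inclusions of components) of spaces on the fibers over each $D$; this also uses the fact that Cartesian transport commutes with $\epsilon$. The fiber over $D$ is the map from lifts of $F_{0}(D)$ to lifts of $F_{1}(D)$ obtained by coCartesian pushforward along the open embeddings. The objects are the same on both sides, so the question reduces to the morphism $\infty$-categories $\Mfd^{\mu}_{\diamond}(c,d)$. In the proof of \Cref{lem:relabeling_prelim}, morphisms covering open embeddings were shown to be coCartesian. Moreover, pushforward along an open embedding $e\colon \cP\to\cR$ is the relabeling of \Cref{rem:open_embedding}: it leaves the manifold, the normal framing and the $\mu(c,d)$-structure unchanged and sets to empty the strata not in $\im(e)$. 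Since $e$ is fully faithful and the image is a left ideal, this pushforward is fully faithful onto the stratified manifolds whose strata outside $\im(e)$ are empty. The underlying $\cS_{/\bZ\times BO}$-data is an equivalence under this relabeling, so the mapping spaces of the structure data are unchanged. Assembling these observations across the Segal data of the algebras gives the monomorphism on fibers.

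The characterization of the image also follows from this description. A lift $\bX$ of $F_{1}(D)$ lies in the image exactly when each $\bX(p,q)$ is a relabeling from $F_{0}(D)(p,q)$, that is, when $\del^{\sigma}\bX(p,q)=\emptyset$ for $\sigma\notin\im$. In that case the inverse relabeling is compatible with composition. The point is that composition in $\Mod_{\diamond}$-algebras covers closed embeddings, and these preserve the emptiness condition because $F$ is a functor into algebras in the arrow category.

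We expect the main obstacle to be this coherence step: checking that the inverse relabelings assemble into an algebra and not merely into a collection of morphism objects. Our plan is to avoid constructing this algebra by hand. Instead we will identify the image with the full subcategory cut out by the emptiness condition, and show that $ev_{1}$ restricted to the coCartesian lifts is an equivalence onto it, using the pullback square above together with \Cref{lem:Mon->Dbl->Cat} to reduce to the fiberwise check.
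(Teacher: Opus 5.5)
Your proposal is correct and follows essentially the same route as the paper. You apply the pullback-preserving $\Alg_{\Cat}$ to the square of \Cref{lem:relabeling_prelim}, pull back along $F$ so that $ev_{0}$ becomes an equivalence, define $\epsilon$ via $ev_{1}$, and resolve the coherence issue exactly as your final paragraph proposes. That is, the paper checks that the $ev_{1}$-comparison double functor $(\Mfd^{\mu}_{\diamond})^{[1]_{\circ}} \to (\Mod_{\diamond}^{\otimes})^{[1]_{\circ}}\times_{\Mod_{\diamond}^{\otimes}}\Mfd^{\mu}_{\diamond}$ is a pullback on objects and fully faithful on morphism $\infty$-categories with the stated image, so full faithfulness and the image description pass formally to $\Alg_{\Cat}$ without building inverse relabelings by hand.
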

\begin{proof}
	We apply the pullback-preserving functor $\Alg_{\Cat}$ to the square in \Cref{lem:relabeling_prelim}.
  When we pull back along $F$, this gives a span
	\begin{align*}
		F_{0}^{*}\Alg_{\Cat}(\Mfd^{\mu}_{\diamond}) \xleftarrow[\simeq]{ev_{0}} F^{*}\Alg_{\Cat}( (\Mfd^{\mu}_{\diamond})^{[1]_{\circ}}  ) \xrightarrow{ev_{1}} F_{1}^{*}\Alg_{\Cat}(\Mfd^{\mu}_{\diamond})
	\end{align*}
	of right fibrations over $\calD$.
  Consider the square
	\begin{equation*}
		\begin{tikzcd}
			(\Mfd^{\mu}_{\diamond})^{[1]_{\circ}} \arrow[r] \arrow[d,"ev_{1}"]      &   \Mfd_{\diamond}^{\mu}  \arrow[d] \\
			(\Mod_{\diamond}^{\otimes})^{[1]_{\circ}}  \arrow[r, "ev_{1}"]               &  \Mod_{\diamond}^{\otimes}
		\end{tikzcd}
	\end{equation*}
  of double $\infty$-categories.
	This becomes a pullback on spaces of objects.
  For any pair of objects $c,d\in \ob(\mu)$, the induced functor
	\begin{align*}
		\Mfd^{\mu}_{\diamond}(c,d)^{[1]_{\circ}} \to \Mod_{\diamond}^{[1]_{\circ}}\times_{ \Mod_{\diamond}} \Mfd_{\diamond}^{\mu}
	\end{align*}
	is fully faithful, with image over some open embedding $e\colon \cP \to \cR$ spanned by the $\cR$-stratified manifolds with empty strata outside the image of $e$.
\end{proof}

\subsection{Categories of directed arcs}\label{subsec:categories_of_directed_arcs}
Flow categories are $\Mfd_{\diamond}^{\mu}$-categories whose underlying $\Mod_{\diamond}^{\otimes}$-category has the structure that we expect from Morse theory.
The goal of this subsection is to make this statement precise.
The definitions here are closely related to those of \cite[Section 3.1]{AB} with the monoid $\Gamma$ used \emph{loc.cit.} set to the trivial monoid.

Recall that a partially ordered set, or \emph{poset} for short, is a set $P$ with a binary relation~$\leq$ satisfying transitivity, reflexivity, and antisymmetry.
This is equivalently a complete category enriched in the full monoidal subcategory $[1]\subset \Set$ spanned by $\emptyset$ and $\ast$.
We will take our flow categories to come with an a priori partial order on their set of objects, which will satisfy that $\bX(p,q)$ is empty unless $p\leq q$ in the partial order.
The equivalence class of a flow category in $\Flow^{\mu}$ will not depend on the specific partial order chosen, and by \cite[Remark 1.4]{AB}, any flow category à la Abouzaid--Blumberg can be equipped with a minimal such partial order.
For $\Gamma = 0$, part of the Abouzaid--Blumberg definition of a flow category \cite[Definition 3.4]{AB} is that for all $p\in \ob(\bX)$, the union
\begin{align}\label{eq:compactness}
	\coprod_{q\in \ob(\bX)} \bX(p,q)
\end{align}
is compact. Since we are not working with derived manifolds in our setup, we can assume that each $\bX(p,q)$ is compact.
Then \eqref{eq:compactness} is compact if and only if for a fixed~$p$ there are at most finitely many $q$ such that $\bX(p,q)$ is non-empty.
We can therefore implement the compactness condition as a condition on the poset structure on $\ob(\bX)$.

\begin{definition}\label{def:filter}
	A subset $F\subset P$ of a poset is called a \emph{filter} if for any $p\in F$ and  $q\in P$ such that $p\leq q$ we have that $q\in F$.
	We say that $F$ is a \emph{finite filter} if its underlying set is finite.
\end{definition}

The most elementary example of a filter is the principal filter
\[
P \uparrow p = P_{p\leq} = P_{p/} = \left\{ q\in P \mid p\leq q\right\} \,.
\]
\begin{definition}\label{def:locally_finite}
	A poset $P$ is \emph{locally finite} if the principal filter $P_{p\leq}$ is finite for all $p\in P$.
\end{definition}
One way to see that local finiteness is a reasonable condition is that it will control generation in $\Flow^{\mu}$.
Write $\cR_{\Fin}(P)$ for the poset of finite right ideals of $P$ under inclusion.
Because a finite union of finite filters of $P$ is a finite filter, the category $\cR_{\Fin}(P)$ is filtered.
When $P$ is locally finite, any $p\in P$ belongs to a finite filter, so we can write
\begin{align*}
	P \simeq \colim_{I\in \cR_{\Fin}(P)} I \,.
\end{align*}
 This will be reflected in $\Flow^{\mu}$ by being able to write any flow category $\bX$ as a filtered colimit over finite subcategories, giving $\Flow^{\mu}$ the desirable property of \emph{accessibility}.

A \emph{directed arc} $\gamma$ is a rooted tree such that every vertex is adjacent to exactly two edges. We concatenate arcs $\gamma$ and $\gamma'$ by removing the root of $\gamma$ and the leaf of $\gamma'$ and then identifying the root edge of $\gamma$ with the leaf edge of $\gamma'$.

\begin{definition}\label{def:A_P}
	Given a poset $P$, define a strict 2-category $\sfA_{P}$ with set of objects $P^{\simeq}$ and morphism categories $\sfA_{P}(p,q)$ defined by:
	\begin{enumerate}
		\item Objects are connected directed $\gamma$ with edges labeled by elements of $P$ such that the incoming and outgoing edges are labeled by $p$ and $q$ respectively, and the edge labels appear in strictly increasing order.
		\item We have a morphism $\gamma \to \gamma'$ if the arc $\gamma$ can be obtained from $\gamma'$ by collapsing inner edges.
		\item If $p=q$, we allow the empty arc, which can be seen as an artificially added unit.
	\end{enumerate}
	The horizontal composition $\#\colon \sfA_P(p,r) \times \sfA_P(r,q) \to \sfA_P(p,q)$ is given by concatenating arcs.
\end{definition}
\begin{rem}
	If $P$ is totally ordered, the category $\sfA_{P}(p,q)$ is equivalent to the poset of subsets of the interval $(p,q)\subset P$, with a subset $S\subset P$ corresponding to the arc whose internal edges are labeled by the elements of $S$ in increasing order.
  This is the canonical stratifying category of $[0,\infty)^{ \left\{p+1,\dots, q-1 \right\} }$, with a subset $S$ corresponding to the stratum where the coordinates in $S$ vanish.
  The concatenation of arcs then corresponds to the stratum inclusion
	\begin{align*}
		[0,\infty)^{ \left\{p+1,\dots, r-1 \right\} } \times [0,\infty)^{ \left\{r+1,\dots, q-1 \right\} } \to [0,\infty)^{ \left\{p+1,\dots, q-1 \right\} }
	\end{align*}
	defined by $x_{r}=0$.
\end{rem}

Note first that $\sfA_{P}(p,q)$ is empty unless $p\leq q$.
Then observe that each category $\sfA_P(p,q)$ is a model for manifolds with corners: the overcategory of an arc $\gamma$ with sequence of labels $(p,p_1,\ldots ,p_{l},q)$ is equivalent to the poset of subsets of $\left\{ p_1,\ldots,p_{l}\right\}$.
We have a canonical identification
\begin{align*}
	Q_{\sfA_{P}(p,q)}(\gamma) = \left\{ p_1,\ldots p_{l} \right\} \,.
\end{align*}
For $p\leq q$, $\sfA_P(p,q)$ has exactly one minimal object, namely the arc $\gamma_{pq}$ with no internal edges.
Under the concatenation morphism $\#_{prq}\colon \sfA_{P}(p,r)\times \sfA_P(r,q) \to \sfA_P(q,r)$, the image of the minimal object $(\gamma_{pr}, \gamma_{rq})$ has
\begin{align*}
	Q_{\#_{prq}} = Q_{\sfA_{P}(p,q)}(\gamma_{pr} \# \gamma_{rq}) = \left\{ r\right\} \,.
\end{align*}
We decompose this as $Q_{\#_{prq}}^{-} = \left\{ r \right\}$, $Q_{\#_{prq}}^{+}=\emptyset$. We let $\sfU_P(p,q)$ denote the pair of vector spaces $(0,\bR^{ \left\{ q\right\} })$. We then lift the composition morphism $\#_{prq}$ to a map
\begin{align*}
	(\sfA_{P}(p,r), \sfU_{P}(r,q)) \otimes (\sfA_{P}(r,q), \sfU_{P}(r,q)) \to (\sfA_{P}(p,q), \sfU_{P}(p,q))
\end{align*}
in $\Mod_{\diamond}$ by using the equivalence
\begin{align}\label{eq:Flow_cat_U_iso}
	\sfU^{-}_P(p,r) \oplus \sfU^{-}_P(r,q)  &\simeq \sfU^{-}_{P}(p,q) \oplus \bR^{Q_{\#_{prq}}^{-}} \\
	\bR^{  \left\{ r\right\}} \oplus \bR^{ \left\{ q\right\}}  &\simeq \bR^{ \left\{ q\right\}} \oplus  \bR^{ \left\{ r \right\}}\,. \nonumber
\end{align}
These are clearly associative, defining a $\Mod_{\diamond}$-category $(\sfA_{P},\sfU_{P})$.

\begin{definition}\label{def:structured_flow_cat}
	A \emph{$\mu$-structured flow category} is given by a locally finite poset $P$, and a lift of $(\sfA_P,\sfU_P)$ to a $\Mfd^{\mu}_{\diamond}$-category.
	In other words, it is a diagram in $\DblLax$ of the form:
	\[
	\begin{tikzcd}
		& \Mfd_{\diamond}^{\mu} \arrow[d] \\
		\Simp_{P^{\simeq}}^{\op} \arrow[r, "(\sfA_{P} {,} \sfU_{P})"'] \arrow[ur, dotted, "\bX"] & \Mod_{\diamond}^{\otimes} \, .
	\end{tikzcd}
	\]
\end{definition}

\begin{rem}\label{rem:trivial_spaces_in_flow_cat}
	Let $\bX$ be a flow category with poset of objects $P$.
	For any $p<q$, we have that $\bX(q,p)$ is stratified by $\sfA_{P}(q,p) = \emptyset$, which is only possible if $\bX(q,p)$ is also empty.
	We have a double functor $P\to \Simp^{\op}_{P^{\simeq}}$ given by including the tuples $(p_{0},\dots, p_{n})$ such that $p_{j}\leq p_{j+1}$.
  By the above, a flow category is uniquely determined by its restriction to~$P$.
  Note that $P$ is complete, while $\Simp^{\op}_{P^{\simeq}}$ is not.
  The equivalences in the underlying Segal space of $h\Mfd^{\mu}_{\diamond}$ are the one-point manifolds with maps $\ast \to \mu(c,d)$ selecting equivalences in $\mu$.
  This implies that $\Mfd^{\mu}_{\diamond}$ is an $(\infty,2)$-category.
  The monoidal $\infty$-category $\Mod_{\diamond}$ is however not complete, because its Picard space is $\bZ\times BO$.
\end{rem}

\begin{rem}\label{rem:units_in_flow_cat}
	Let $\bX$ be a $\mu$-structured flow category, and consider the morphism object $\bX(p,p)\in \Mfd_{\diamond}^{\mu}(c_{p},c_{p})$.
  This is stratified by $\sfA_{P}(p,p) = [0]$, and the lax unital structure gives a morphism $I_{c_{p}} \to \bX(p,p)$ from the unit object in $\Mfd_{\diamond}^{\mu}(c_p,c_{p})$.
  This unit is the one point manifold structured by the map $id_{c_{p}}\colon \ast \to \mu(c_{p},c_{p})$, and the map $I_{c_{p}} \to \bX(p,p)$ must be a diffeomorphism over the unique stratum. In practice, we may therefore think of $\bX$ as being a non-unital category, with morphism objects defined only for $p<q$.
\end{rem}

\begin{rem}\label{rem:unwinding_flow_cat}
	Let us use \Cref{rem:unwinding_enriched_cat} to unpack the structure of a flow category so that the reader may compare our definition to~\cite[Definition 3.4 and Definition 3.8]{AB}.
  A $\mu$-structured flow category contains the following data:
	\begin{enumerate}
		\item For each $p\in P$, an object $c_p$ of $\mu$.
		\item For each $p,q\in P$, an object $\bX(p,q)$ of $\Mfd^{\mu}_{\diamond}(c_{p},c_{q})$.
    This is determined by a stratified manifold with corners $(X(p,q), \sfA_P(p,q) )$, and an object $\phi(p,q)$ of $(\cS_{/ \bZ\times BO})_{/\mu}$ of the form
		\begin{align*}
			\phi(p,q)\colon   -(TX(p,q) \oplus \bR^{ \left\{ q\right\}}) \to \mu(c_{p},c_{q}) \,.
		\end{align*}
		\item For each $p,r,q \in P$, a morphism $\bX(p,r)\odot \bX(r,q) \to \bX(p,q)$ in $\Mfd^{\mu}_{\diamond}(c_{p},c_{q})$.
    This amounts to the following data:
		\begin{enumerate}
			\item A map $ \#\colon X(p,r)\times X(r,q) \to X(p,q)$ of manifolds with corners covering $\#\colon \sfA(p,r)\times \sfA(r,q) \to \sfA(p,q)$ and inducing diffeomorphisms onto the stratum labeled by the arc with a single internal edge labeled by $r$.
			\item A map $\phi(p,r) \boxplus \phi(r,q) \to \phi(p,q)$ in $(\cS_{/\bZ\times BO})_{/\mu(c_{p},c_{q})}$, which is equivalently given by a commutative square of the following form in $\cS_{/\bZ\times BO}$:
			\begin{equation*}
				\begin{tikzcd}[column sep=huge]
					(\sfU_{P}(p,r) \ominus T\bX(p,r) ) \boxplus ( \sfU_{P}(r,q) \ominus T\bX(p,r) ) \arrow[r,"{\phi(p,r)\boxplus \phi(r,q)}"] \arrow[d, "\#"]      &   \mu(c_{p},c_{r}) \boxplus \mu(c_{r},c_{q})  \arrow[d, "\circ"] \\
					\sfU_{P}(p,q) \ominus T\bX(p,q)  \arrow[r, "\phi(p{,}q)"]               &   \mu(c_{p},c_{q}) \,.
				\end{tikzcd}
			\end{equation*}
		\end{enumerate}
	\end{enumerate}
    For longer tuples in $P$, one has to specify homotopies witnessing associativity of composition. As noted in \Cref{rem:units_in_flow_cat}, the unital structure is uniquely determined.
\end{rem}

\begin{rem}
	We define an unstructured flow category to be a lift of $\sfA_{P}$ to a $\Mfd_{\diamond}$-category.
  Any unstructured flow category $\bX$ determines a $(\cS_{/ \bZ\times BO})$-category $I\bX$.
  By definition, a lift of $\bX$ to $\Mfd_{\diamond}^{\mu}$ is then the same as a \emph{$\mu$-structure on $\bX$}, i.e. a map $-I\bX \to \mu$ of $\cS_{/ \bZ\times BO}$-categories.
  The $\cS_{/ \bZ\times BO}$-category $\mu$ is defined using the right adjoint of \Cref{lem:monoid_right_adjoint_1}.
  The $\cS$-category $\Pi\bX$ is complete, so by adjunction, a $\mu$-structure on $\bX$ is the same as a commutative diagram of $\infty$-categories
	\begin{equation*}
		\begin{tikzcd}
			\Pi\bX \arrow[r] \arrow[dr,"-I\bX"']      &   \cC  \arrow[d, "\mu"] \\
	               &  U/O\,.
		\end{tikzcd}
	\end{equation*}
\end{rem}

\begin{ex}\label{ex:empty_flow_cat}
	There is a unique $\mu$-structured flow category with empty poset of objects.
	We call this the empty flow category and denote it $\bZero$.
\end{ex}

\begin{ex}\label{ex:one_object_flow_cat}
	For the one-object poset $P=[0]$, the lax functor $\sfA_{[0]} \colon \Simp^{\op} \to \Mod_{\diamond}^{\otimes}$ selects the unit.
  A lift of this to $\Mfd_{\diamond}^{\mu}$ is uniquely determined by a choice of object $c\in \ob(\mu)$.
  We let $\bOne_{c}$ denote this lift, which we call the \emph{one-object flow category} at $c$.
\end{ex}

\begin{ex}[Morse Flow Category]\label{ex:morse_flow_cat}
	Let $f : X \to \bR$ be a Morse function on a closed smooth manifold.
  In \cite{cohen2020floer} it is shown that one can construct a framed flow category whose objects are the critical points of $f$, and whose morphism spaces are the moduli spaces $\bM_{f}(p,q)$ of broken gradient flow lines starting at $p$ and ending at $q$. We can give this flow category an \emph{$X$-structure}, i.e., a structure by the constant map at the unit $X\to U/O$ as follows.
  By \Cref{lem:monoid_right_adjoint_1}, the $\cS_{/\bZ\times BO}$-enriched category associated with $X$ is $X\otimes\sfE(\bZ\times BO)$.
  Its space of objects is $X\times \bZ\times BO$, and the morphism object $X\otimes\sfE(\bZ\times BO)( (x,V), (y,W)  )$ is the trivial bundle $W\ominus V$ on the space of paths $\Map_{X}(x,y)$.
  Since the tangential data is completely independent from the maps to $X$, we simply equip each critical point of $f$ with the negative eigenspace of the Hessian, and each $\bM_{f}$ with the framing constructed as in \cite{cohen2020floer}.
  Consider the $\Top$-enriched category $PX$ whose objects are points in $X$, and whose morphism spaces $PX(p,q)$ are the spaces of paths parametrized by some closed interval $[0,\ell]$ for $0\leq \ell$.
	This has a strictly associative composition given by concatenation and addition $\ell + \ell'$, and we can construct a continuous functor
	\begin{align*}
		\bM_{f} \to PX
	\end{align*}
	by assigning each flow line to the corresponding path in $X$ parametrized by the change in $f$.
	When we localize from topological categories to $\infty$-categories, we simply get a map $\Pi\bM_{f} \to X$, which is the data required to lift $\bM_{f}$ to a $\mu$-structured flow category.
\end{ex}

\begin{ex}\label{ex:tautological_structure}
	For an unstructured flow category $\bX$, pushforward by the index determines a $\cS_{/ (\bZ\times BO)}$-category.
  When we complete as $\cS$-categories, this corresponds to a map $I\bX\colon \Pi \bX \to U/O$. The $\sfE (\bZ\times BO)$-module associated to this map is $(I\bX) \otimes \sfE(\bZ\times BO)$, so the unit gives $\bX$ a tautological $I\bX$-structure.
	This type of structure retains a lot of information about $\bX$, actually so much that it is not invariant under the usual choices involved in Floer or Morse theory.
  Consider for example, the Morse flow category $\bM_{f}$ of \Cref{ex:morse_flow_cat}.
  By \cite{fourel2026}, the $\infty$-category $\Pi\bM_{f}$ agrees with the \emph{exit path category} $\mathrm{Exit}(M,f)$ of the stratification on $M$ by unstable manifolds of $f$.
  In particular, $\bM_{f}$ with its $I\bM_{f}$-structure contains information about the filtration of $\bM_{f}$ by critical values of $f$.
\end{ex}

\begin{ex}\label{ex:filtered_floer_theory}
	In a less extreme variant of \Cref{ex:tautological_structure}, let $A$ be a partially ordered set.
  The structure encoded by the zero map $A\to U/O$ is that of \emph{$A$-filtered framed flow categories}.
  For a framed flow category $\bX$ with poset of objects $P$, an $A$-filtration is simply given by an order-preserving map $P\to A$.
  This type of structure gives a context for filtered Floer homotopy theory, which might be of use to applications in quantitative symplectic topology.
\end{ex}

\section{The $\infty$-category of structured flow categories}\label{sec:the_infty-category_of_structured_flow_categories}

In this section, we build the $\infty$-category of $\mu$-structured flow categories and show that it is a presentable stable $\infty$-category.
Similarly to in \cite{AB}, this is done by first constructing a semi-simplicial space $\PreFlow^{\mu}$ of flow categories and flow simplices.
We then show that this admits inner horn fillers and terminal and initial degeneracies, making it a quasi-unital inner Kan space in the sense of \Cref{def:quasi-unital}.
This means that $\PreFlow^{\mu}$ presents a well-defined $\infty$-category $\Flow^{\mu}$.
We then show that $\PreFlow^{\mu}$ lifts to a semi-simplicial $\infty$-category $\bfPreFlow^{\mu\uparrow}$ which admits companions in the sense of \Cref{def:admits_companions}.
We use these companions and a construction of mapping cones to compute some basic (co)limits in $\Flow^{\mu}$.
The result of these computations is that $\Flow^{\mu}$ is stable and presentable.

\subsection{Flow categories as double functors}\label{subsec:flow_categories_as_double_functors}

For a manifold with corners $\bX$ stratified by~$\cP$, the strata $\del^{\sigma}\bX$ assemble to a diagram $\cP^{\op} \to \Mfd_{\diamond}$.
If $\bX$ is a flow category, we could perform this procedure for all $\bX(p,q)$, and we would expect the functors $\sfA_{P}(p,q)^{\op} \to \Mfd^{\diamond}$ to assemble to a double functor $\bfA_{P} \to \Mfd_{\diamond}$, where $\bfA_{P}$ is the unstraightening of~$\sfA_{P}^{\op}$.
This perspective is useful because certain constructions require us to understand a \emph{lax colimit} of enriched categories.
These are computed by first unstraightening, and then computing an ordinary colimit.
To adapt this technique to $\Mfd_{\diamond}^{\mu}$ requires a bit more work because all the $\Mfd^{\mu}_{\diamond}(c,d)$ are $\infty$-categories.

We have a monoidal functor
\begin{align*}
	\del \colon \Mod^{\otimes}_{\diamond} \to \Cat_{/\Mod_{\diamond}}^{\otimes}
\end{align*}
which takes a model $\cP$ to the diagram $\del_{\cP} \colon \cP^{\op} \to \Mod_{\diamond}$.
This is strictly monoidal because the product in $\Mod$ is just the product of categories, and because $\del^{(\sigma,\tau)}(\cP\times \cR ) \simeq \del^{\sigma}\cP \times \del^{\tau}\cR$.
When we restrict $\del$ to the subcategory $\Mod_{\diamond,\con}$ of models with a unique minimal object, we can factor through the monoidal subcategory $\Cat_{T/\Mod_{\diamond}}$ spanned by categories with a terminal object.
Colimits over categories with terminal object are given by evaluating at the terminal object.
Such colimits always exist and are preserved by any functor, giving a commutative square
\begin{equation}\label{eq:colim_double_functor}
	\begin{tikzcd}
		\Cat_{T/\Mfd_{\diamond,\con}^{\mu}} \arrow[r, "\colim"] \arrow[d]      &   \Mfd_{\diamond,\con}^{\mu}  \arrow[d] \\
		\Cat_{T/\Mod_{\diamond,\con}}  \arrow[r, "\colim"]               &      \Mod_{\diamond,\con}^{\otimes}
	\end{tikzcd}
\end{equation}
of double $\infty$-categories.
To make this precise, we work with \emph{lax arrow categories}.
The pointwise monoidal structure on the lax arrow category gives a monoidal coCartesian fibration
\begin{align*}
	ev_{1}\colon \Ar^{\Lax}(\Cat_{\infty})^{\otimes} \to \Cat_{\infty}^{\times} \,.
\end{align*}
We restrict to the full monoidal subcategory $\Ar^{\Lax}_{T}(\Cat_{\infty})$ spanned by arrows whose source is an ordinary category with a terminal object.
The restriction of $ev_{1}$ is still a coCartesian fibration because coCartesian transport does not affect the source.
Consider also the full monoidal subcategory $\Cat_{\infty}^{\ast} \subset \Ar^{\Lax}_{T}(\Cat_{\infty})$ spanned by arrows whose source is the trivial category. The map $\Cat_{\infty}^{\ast} \to \Cat_{\infty}$ is the \emph{universal coCartesian fibration}, i.e. the unstraightening of the identity. Its fiber at any $\cC$ is $\cC$ itself.
The inclusion $\Cat_{\infty}^{\ast} \to \Ar^{\Lax}_{T}(\Cat_{\infty})$ admits a left adjoint fiberwise given by restriction to the terminal object, which is therefore a global left adjoint.
This left adjoint gets a lax monoidal structure, which in this case is strict because the terminal object of $\cC\times \calD$ is $(\ast_{\cC},\ast_{\calD})$.
We now restrict the left adjoint to the wide subcategory $\Ar_{T}(\Cat_{\infty}) \subset \Ar^{\Lax}_{T}(\Cat_{\infty})$ spanned by morphisms whose lax cell is invertible.
On straightening, this corresponds to a natural transformation $\colim \colon \Cat_{T /(-)} \Rightarrow id$ between lax monoidal functors $\Cat_{\infty}^{\times} \to \CatHat_{\infty}^{\times}$.
Pushing forward the algebras in $\Cat_{\infty}^{\times}$ defining $\Mfd_{\diamond,\con}^{\mu}$ and $\Mod_{\diamond,\con}^{\otimes}$ by this natural transformation gives \eqref{eq:colim_double_functor}.

On spaces of objects, the horizontal maps in \eqref{eq:colim_double_functor} become equivalences. When we pass to mapping categories, the vertical maps are right fibrations, and the comparison map in the fiber over some functor $F\colon \cJ \to \Mod_{\diamond,\con}$ where $\cJ$ has a terminal object $\ast$ is given by the evaluation map
\begin{align*}
	\Map_{/\Mod_{\diamond,\con}}(\cJ, \Mfd^{\mu}_{\diamond,\con}(c,d)) \xrightarrow{ev_{\ast}} \Map_{/\Mod_{\diamond,\con}}(\ast, \Mfd_{\diamond,\con}^{\mu}(c,d)) \,.
\end{align*}
Because $\Mfd_{\diamond,\con}^{\mu}(c,d) \to \Mod_{\diamond,\con}$ is a right fibration, and the inclusion $\ast \to \cJ$ is cofinal, these comparison maps are equivalences, so the square is a pullback.
By noting that the composite along the bottom of the
diagram
\begin{equation*}
	\begin{tikzcd}
		\Mfd_{\diamond,\con}^{\mu} \arrow[r, dashed, "\cF"] \arrow[d]      &   \Cat_{T/\Mfd_{\diamond, \con}^{\mu} }  \arrow[d]  \arrow[r, "\colim"] & \Mfd_{\diamond,\con}^{\mu} \arrow[d] \\
		\Mod_{\diamond,\con}^{\otimes}  \arrow[r,"\del"]               &   \Cat_{T/\Mod_{\diamond,\con}}^{\otimes}  \arrow[r, "\colim"]            & \Mod_{\diamond,\con}^{\otimes}
	\end{tikzcd}
\end{equation*}
is equivalent to the identity, we get an induced dashed map on pullbacks, and by pasting, the left square is also a pullback.
By pasting, we therefore have a pullback square
\begin{equation*}\label{eq:canonical presheaf}
	\begin{tikzcd}
		\Mfd^{\mu}_{\diamond,\con} \arrow[r] \arrow[d]      &   \Cat_{\infty/\Mfd_{\diamond}^{\mu}}  \arrow[d] \\
		\Mod_{\diamond,\con}^{\otimes}  \arrow[r]               & \Cat_{\infty/\Mod_{\diamond}}^{\otimes} \,.
	\end{tikzcd}
\end{equation*}

\begin{rem}
	We expect that a slightly more refined argument would give a similar pullback square for $\Mfd^{\mu}_{\diamond}$, but because we shall only need models with a unique minimal object in this paper we stop here.
\end{rem}

\begin{cor}\label{cor:canonical_presheaf_pullback}
	The functor $\cF$ and unstraightening induces a pullback square
	\begin{equation*}
		\begin{tikzcd}
			\Alg_{\Cat}(\Mfd_{\diamond,\con}^{\mu} )\arrow[r] \arrow[d]      &   \Dbl_{\infty/\Mfd_{\diamond}^{\mu}}  \arrow[d] \\
			\Alg_{\Cat}(\Mod_{\diamond,\con} ) \arrow[r]                & \Dbl_{\infty/\Mod_{\diamond}^{\otimes}} \,.
		\end{tikzcd}
	\end{equation*}
\end{cor}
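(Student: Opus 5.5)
We apply the pullback-preserving construction $\Alg_{\Cat}(-)$ to the pullback square of double $\infty$-categories established just above, whose top arrow is $\Mfd^{\mu}_{\diamond,\con}\to \Cat_{\infty/\Mfd^{\mu}_{\diamond}}$ (through $\cF$) and whose bottom arrow is $\del\colon\Mod^{\otimes}_{\diamond,\con}\to \Cat^{\otimes}_{\infty/\Mod_{\diamond}}$. By \Cref{def:Alg_Cat}, $\Alg_{\Cat}(\bfM)$ is a pullback of $\Alg(\bfM)$ along $\Simp^{\op}_{(-)}\colon \cS\to\Opd$. The functor $\Alg_{\cM}(-)$ preserves limits of generalized non-symmetric $\infty$-operads, since it is cut out of $\Fun_{/\Simp^{\op}}(\cM,-)$ by a condition on inert coCartesian edges, and that condition is detected componentwise in a pullback. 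Hence $\Alg_{\Cat}(-)$ sends the square to a pullback
\[
\Alg_{\Cat}(\Mfd^{\mu}_{\diamond,\con}) \simeq \Alg_{\Cat}(\Mod_{\diamond,\con})\times_{\Alg_{\Cat}(\Cat^{\otimes}_{\infty/\Mod_{\diamond}})}\Alg_{\Cat}(\Cat_{\infty/\Mfd^{\mu}_{\diamond}}).
\]
We can make this rigorous fiberwise over $\cS$: the forgetful maps to $\cS$ are Cartesian fibrations, and pullbacks of such can be checked on fibers $\Alg_{\Simp^{\op}_X}(-)$ together with the fact that transport along $X\to Y$ is compatible.

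We then identify the right-hand column with the one in the statement. We use the slice monoidal structure: a $\Cat_{\infty/\cV}$-category, with $\cV$ a double $\infty$-category viewed through its Segal object, is an $\Alg_{\Cat}(\Cat_\infty^{\times})$-object $\sfC$ together with a lax functor to the slice data. By \Cref{cor:alg_in_slice} (applied to $\Cat_\infty^{\times}$ and the $\Cat_\infty$-category underlying $\Mfd^{\mu}_{\diamond}$ or $\Mod_{\diamond}^{\otimes}$) we get $\Alg_{\Cat}(\Cat^{\times}_{\infty/\bfM})\simeq \Alg_{\Cat}(\Cat_\infty^{\times})_{/\bfM}$. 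Applying unstraightening as in \Cref{ex:double_cat_from_algebra}, which is fully faithful onto double $\infty$-categories with a space of objects, turns this into $\Dbl_{\infty/\bfM}$ restricted to such sources, naturally in $\bfM$. Naturality makes the map induced by $\Mfd^{\mu}_{\diamond}\to\Mod^{\otimes}_{\diamond}$ correspond to postcomposition. The composite of the bottom row then sends $(\sfA,\sfU)$ to the unstraightening of $\sfA^{\op}$ over $\Mod^{\otimes}_{\diamond}$, as anticipated.

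The main obstacle is the identification step. \Cref{cor:alg_in_slice} is stated for slices over a $\bfC$-category inside a double category, but here we slice $\Cat_\infty^{\times}$ over a double $\infty$-category itself, regarded as a $\Cat_\infty$-algebra. So we must check that the monoidal structure on $\Cat_{\infty/\bfM}$ used in the preceding paragraph is exactly the slice double category $(\Cat_\infty^{\times})_{/\bfM}$ of \Cref{def:slice_double_cat}, where $\bfM$ is viewed as a $\Cat_\infty^{\times}$-category via \Cref{ex:double_cat_from_algebra}. We would first try to verify this directly from the lax arrow category description. In that description, pointwise products over $\cC\times\calD$ and the slice composition of \Cref{rem:unwinding_slice} agree on mapping $\infty$-categories. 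By \Cref{lem:Mon->Dbl->Cat}, this agreement together with agreement on spaces of objects suffices. The remaining compatibility of the pullback with unstraightening is formal, since unstraightening is an equivalence onto its image.
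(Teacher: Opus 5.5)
Your proposal follows the paper's proof: apply the pullback-preserving functor $\Alg_{\Cat}(-)$ to the pullback square of double $\infty$-categories built from $\cF$ and $\del$, commute out the slices with \Cref{cor:alg_in_slice}, then compose with the fully faithful unstraightening $\Alg_{\Cat}(\Cat_{\infty})\to\Dbl_{\infty}$ on slices and finish by pasting. The identification you flag as the main obstacle is formal, and it should not be settled by comparing mapping $\infty$-categories: that comparison, even combined with \Cref{lem:Mon->Dbl->Cat}, does not by itself produce a comparison double functor. Instead, $\Cat_{T/(-)}$ is constructed as the straightening of the monoidal coCartesian fibration $ev_{1}$ on (restricted) arrow categories. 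So, by naturality of straightening, pushing an algebra forward along it and unstraightening is exactly the pullback of $ev_{1}$ along that algebra, i.e.\ the slice double category of \Cref{def:slice_double_cat}.
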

\begin{proof}
	Apply the pullback-preserving functor $\Alg_{\Cat}(-)$ to \eqref{eq:canonical presheaf}, and use \Cref{cor:alg_in_slice} to commute out the slices. Then compose with the fully faithful unstraightening functor $\Alg_{\Cat}(\Cat_{\infty}) \to \Dbl_{\infty}$ on slices.
  A pasting argument now finishes the proof.
\end{proof}
By pasting, we immediately get the following corollary of \Cref{cor:canonical_presheaf_pullback}.
\begin{cor} \label{cor:lax_lifts_double_functors}
	Let $\sfA$ be a $\Mod_{\diamond,\con}$-category, and $\bfA$ the double category obtained by unstraightening $\sfA^{\op}$.
  Pushforward along $\cF$ induces an equivalence between the space of dotted lifts in the following two diagrams:
	\begin{equation*}
		\begin{tikzcd}
			& \Mfd_{\diamond}^{\mu} \arrow[d] \\
			\Simp^{\op}_{\ob(\sfA)} \arrow[r, "\sf{A}"'] \arrow[ur, dotted] & \Mod_{\diamond}^{\otimes}
		\end{tikzcd}
		\in \DblLax
		\quad \text{and} \quad
		\begin{tikzcd}
			  & \Mfd_{\diamond}^{\mu} \arrow[d] \\
			\mathbf{A} \arrow[ur,dotted] \arrow[r, "\del_{\sfA}"']& \Mod_{\diamond}^{\otimes}
		\end{tikzcd}
		\in \Dbl_{\infty} \,.
	\end{equation*}
\end{cor}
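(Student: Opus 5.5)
The plan is to obtain the statement by pasting pullback squares, using \Cref{cor:canonical_presheaf_pullback} as the main input.

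\textbf{Step 1: the first space of lifts as a fiber.} A dotted lift in the first diagram is a $\Mfd_{\diamond}^{\mu}$-category whose underlying $\Mod_{\diamond}$-category is $\sfA$. The space of such lifts is the fiber of
\begin{align*}
\Alg_{\Cat}(\Mfd_{\diamond}^{\mu}) \to \Alg_{\Cat}(\Mod_{\diamond})
\end{align*}
over $\sfA$. By \Cref{cor:Alg(Mfd)->Alg(Mod)_is_right_fib} this map is a right fibration, so the fiber is a space. Since $\sfA$ lands in $\Mod_{\diamond,\con}$, we may replace the whole situation by its base change along $\Alg_{\Cat}(\Mod_{\diamond,\con}) \to \Alg_{\Cat}(\Mod_{\diamond})$. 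This base change is $\Alg_{\Cat}(\Mfd^{\mu}_{\diamond,\con})$, because $\Mfd^{\mu}_{\diamond,\con}$ is by definition pulled back from $\Mod_{\diamond,\con}^{\otimes}$ and $\Alg_{\Cat}$ preserves pullbacks.

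\textbf{Step 2: the second space of lifts as a fiber.} A dotted lift in the second diagram is a point in the fiber of
\begin{align*}
\Dbl_{\infty/\Mfd_{\diamond}^{\mu}} \to \Dbl_{\infty/\Mod_{\diamond}^{\otimes}}
\end{align*}
over $\del_{\sfA}\colon \bfA \to \Mod_{\diamond}^{\otimes}$. This fiber is the space of double functors $\bfA \to \Mfd_{\diamond}^{\mu}$ over $\Mod_{\diamond}^{\otimes}$.

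\textbf{Step 3: identify the base points and paste.} By construction of \Cref{cor:canonical_presheaf_pullback}, its bottom horizontal map sends $\sfA$ to the unstraightening of $\del\circ\sfA$. Unwinding, this is exactly $\del_{\sfA}\colon \bfA \to \Mod^{\otimes}_{\diamond}$, where $\bfA$ is the unstraightening of $\sfA^{\op}$. The corollary says the square
\begin{equation*}
\begin{tikzcd}
\Alg_{\Cat}(\Mfd_{\diamond,\con}^{\mu}) \arrow[r] \arrow[d] & \Dbl_{\infty/\Mfd_{\diamond}^{\mu}} \arrow[d] \\
\Alg_{\Cat}(\Mod_{\diamond,\con}) \arrow[r] & \Dbl_{\infty/\Mod_{\diamond}^{\otimes}}
\end{tikzcd}
\end{equation*}
is a pullback. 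Pulling it back further along the point $\{\sfA\}\to \Alg_{\Cat}(\Mod_{\diamond,\con})$ and using the pasting law, the induced map from the fiber over $\sfA$ to the fiber over $\del_{\sfA}$ is an equivalence. That map is pushforward along $\cF$ followed by unstraightening, as the statement requires.

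\textbf{Main obstacle.} The only nonformal point is the base-point identification in Step 3. I would settle it by tracing through the definition of $\cF$ and the fully faithful unstraightening of \Cref{ex:double_cat_from_algebra}: pushing $\sfA$ forward along $\del\colon \Mod_{\diamond,\con}^{\otimes} \to \Cat_{T/\Mod_{\diamond}}^{\otimes}$ gives a $\Cat_{\infty/\Mod_{\diamond}}$-category with mapping objects $\sfA(p,q)^{\op}\to\Mod_{\diamond}$. Its unstraightening is therefore $\bfA$ with the tautological functor $\del_{\sfA}$.

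I also need to check that a lax functor out of $\Simp^{\op}_{\ob(\sfA)}$ matches an algebra in the sense of \Cref{def:Alg_Cat}. This holds by definition, while the second diagram lives in $\Dbl_\infty$ because the unstraightening lands in double functors.
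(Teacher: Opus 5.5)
Your proposal is correct and is essentially the paper's own argument: the paper obtains this corollary immediately by pasting, taking fibers of the pullback square in \Cref{cor:canonical_presheaf_pullback} over $\sfA$ and over $\del_{\sfA}\colon \bfA \to \Mod_{\diamond}^{\otimes}$, exactly as you do. The base-point identification you flag as the main obstacle holds by construction, since $\bfA$ and $\del_{\sfA}$ are defined as the image of $\sfA$ under the bottom map of that square.
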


\subsection{Flow simplices}\label{subsec:flow_simplices}

We will represent the $\infty$-category of $\mu$-structured flow categories by a semi-simplicial space. The 0-simplices will be $\mu$-structured flow categories in the sense of \Cref{def:structured_flow_cat}. In this section, we will define $n$-simplices for all $n$.

\begin{definition}\label{def:directed_arc_over_n}
	Let $i\colon P\to [n]$ be a map of posets.
	A \emph{directed arc labeled by} $i$ is a directed arc with the following extra data.
	\begin{enumerate}
		\item Each edge is labeled by an element of $P$ in such a way that the edge labels appear in strictly increasing order with respect to the poset structure of $P$.
		\item A vertex whose adjacent edges are labeled by $p<q$ is labeled by a subset $S\subset \{i(p)+1 , \dots, i(q)-1\}$, with the convention that this is empty whenever $i(q) \leq i(p)+1$.
	\end{enumerate}
	An \emph{edge collapse} of an arc $\gamma$ at an internal edge $e$ labeled by $r\in P$ with adjacent vertices $v_0$ and $ v_1$ labeled by $S_0$ and $S_1$, respectively, is the arc obtained by replacing the subgraph $v_0, e, v_1$ by a single vertex $v$ labeled by a subset of $S_0\cup S_1 \cup \left\{ i(r)\right\}$ with the convention that $i(r)$ is only included if it is legal.
\end{definition}

\begin{definition}\label{def:A_P_over_n}
	Let $i\colon P\to [n]$ be a map of posets.
	For a pair of elements $p,q\in P$, let $\sfA_P(p,q)$ be the poset determined by:
	\begin{itemize}
		\item Objects are directed arcs $\gamma$ labeled by $P$, such that the incoming edge is labeled by $p$ and the outgoing edge is labeled by $q$.
		\item  There is a morphism $\gamma' \to \gamma$ in $\sfA_{P}(p,q)$ if $\gamma'$ can be obtained from $\gamma$ by performing a sequence of edge collapses or if $\gamma'$ and $\gamma$ have the same edge labels and the vertex labels are inclusions.
	\end{itemize}
\end{definition}

\begin{rem}
	Note that the category $\sfA_{P}(p,q)$ depends on the map $P\to [n]$, not only on $P$. We suppress this from the notation.
\end{rem}

\begin{rem}\label{rem:A_P_comparison}
	Let us compare this to~\cite[Definition 4.2]{AB}.
	A sequence $\overrightarrow{P} = (P_0,\dots ,P_n)$ of posets can be viewed a poset over $[n]$ by simply setting
	\[
	P = P_0 \join \cdots \join P_n
	\]
	and declaring the map $i : P \to [n]$ as sending all the elements of $P_j$ to $j$.
	Our definition then almost agrees with the one of \cite{AB}, with the extra requirement that the $P_j$ are posets, as opposed to simply sets, and that the poset structure of the $P_j$s are respected in the labeling of the edges.
	We also allow the arc with no vertices, which should be seen as an artificially added unit.
	The value of a flow category at this identity arc will essentially be forced to be the one-point manifold, and so one can safely ignore it.
\end{rem}

\begin{ex}
	Let us consider a poset $i : P \to [4]$ over $[4]$ and the directed arc $\gamma$ labeled by $P$ that looks like
	\begin{center}
		\begin{tikzpicture}
			\coordinate (start) at (-3 ,0);
			\coordinate[label= above:$p$] (p) at (-2.5,0);
			\coordinate[label= above:$q_1$] (q1) at (-1.5,0);
			\coordinate[label= above:$q_2$] (q2) at (-0.5,0);
			\coordinate[label= above:$q_3$] (q3) at (0.5,0);
			\coordinate[label= above:$q_4$] (q4) at (1.5,0);
			\coordinate[label= above:$r$] (r) at (2.5,0);
			\coordinate (end) at (3,0);
			\draw[thick] (start) -- (end);
			\coordinate (v1) at (-2,0);
			\draw[fill, color=black] (v1) circle (0.05);
			\coordinate (v2) at (-1,0);
			\draw[fill, color=black] (v2) circle (0.05);
			\coordinate[label= below:$\{2\}$] (v3) at (0,0);
			\draw[fill, color=black] (v3) circle (0.05);
			\coordinate (v4) at (1,0);
			\draw[fill, color=black] (v4) circle (0.05);
			\coordinate (v5) at (2,0);
			\draw[fill, color=black] (v5) circle (0.05);
		\end{tikzpicture}
	\end{center}
	where $i(p) = 0$, $i(q_1) = i(q_2)=1$, and $i(q_3) =  i(q_4) = i(r) =4$.
	If we collapse the edge labeled by $q_2$, the new label can be any subset of $\{2\}$, since adding $i(q_2) = 1$ to the label would not produce a legal directed arc.
	However, if we subsequently also collapse the edge labeled by $q_1$, the new label can be any subset of $\{1,2\}$ since this is indeed a legal directed graph.
\end{ex}

As before, we can concatenate arcs, giving strictly associative composition functors
\begin{align*}
	\#\colon \sfA_P(p,r)\times \sfA_P(r,q) \to \sfA_P(p,q) \,.
\end{align*}
Each $\sfA_P(p,q)$ is a model for manifolds with corners.
Indeed, let $\gamma$ be an arc with edges labeled by the sequence $(p_{0},p_1,\ldots, p_s)$ where $p_0=p$ and  $p_s=q$, and we have that $i(p_{i})= k_i$.
Write $S_i$ for the subset labeling the vertex between $p_i$ and $p_{i+1}$.
Then the overcategory at $\gamma$ is equivalent to the poset of subsets of
\begin{align*}
	Q_{\sfA_P(p,q)}(\gamma) = \left\{ p_{1},\ldots p_{s-1} \right\} \cup  \left( \left\{ j+1,\ldots \ell -1 \right\} \setminus \bigcup_{i} (S_{i}\cup \left\{ k_i \right\})  \right).
\end{align*}
We decompose this by setting $Q^{-}_{\sfA_{P}(p,q)}(\gamma) = \left\{ p_1,\ldots , p_{s-1}\right\}$. The minimal element in $\sfA_P(p,q)$ is the arc $\gamma_{pq}$ with two edges labeled by $p$ and $q$ respectively, and the vertex between them labeled by $\left\{i(p)+1,\ldots i(q)-1 \right\}$.
Under the concatenation map, we have
\begin{align*}
	Q_{\sfA_P(p,q)} (\gamma_{pr} \# \gamma_{rq} ) = \left\{ r \right\} \,.
\end{align*}
To lift $\sfA_{P}$ to a $\Mod_{\diamond}$-category, we eqip $\sfA_{P}(p,q)$ with the pair of vector spaces $U_P(p,q) = \left(\bR^{ \left\{ i(p)+1, \ldots, i(q) \right\}}, \bR^{ \left\{ q\right\}} \right)$.
We lift the composition map by using the linear isomorphism
\begin{align}\label{eq:Flow_simplex_U_iso}
	\sfU^{-}_{P}(p,r)
	\oplus \sfU^{-}_{P}(r,q)
	&\simeq \sfU^{-}_{P}(p,q) \oplus \bR^{Q_{\#}^{-}} \\
	\bR^{ \left\{ r\right\}}
	\oplus \bR^{ \left\{ q\right\}}
	&\simeq  \bR^{ \left\{ q\right\}} \oplus \bR^{ \left\{ r\right\}}  \nonumber
\end{align}
on the negative part, and
\begin{align*}
	\sfU^{+}_{P}(p,r) \oplus \sfU^{+}_{P}(r,q) \oplus \bR^{Q_{\#}^{+}} &\simeq \sfU^{+}_{P}(p,q) \\
	\bR^{ \left\{ i(p)+1,\dots,i(r)\right\}} \oplus \bR^{ \left\{ i(r)+1, \dots, i(q)\right\}} \oplus 0 &\simeq \bR^{ \left\{ i(p)+1,\dots i(q) \right\}}
\end{align*}
on the positive part.
These composition maps are strictly associative, defining a $\Mod_{\diamond}$-category $(\sfA_{P},\sfU_{P})$.

\begin{definition}\label{def:structured_flow_simplex}
	A \emph{$\mu$-structured flow $n$-simplex} is given by an order-preserving map $i : P\to [n]$ from a locally finite poset $P$, and a lift $\bX$ of $\sfA_{P}$ to a $\Mfd_{\diamond}^{\mu}$-category.
	In other words, a $\mu$-structured flow $n$-simplex is a diagram in $\DblLax$ of the form:
	\begin{equation*}
		\begin{tikzcd}
			& \Mfd_{\diamond}^{\mu} \arrow[d] \\
			\Simp_{P^{\simeq}}^{\op} \arrow[ur, dotted, "\bX"] \arrow[r, "\sfA_{P}"'] & \Mod_{\diamond}^{\otimes} \,.
		\end{tikzcd}
	\end{equation*}
\end{definition}

\begin{rem}\label{rem:flow_simplex_as_double_functor}
	Because each $\sfA_{P}(p,q)$ has a unique minimal element, we can view~$\sfA_{P}$ as a $\Mod_{\diamond,\con}$-category.
	Let $\bfA_{P} \in \Dbl_{\infty}$ denote the unstraightening of the algebra $\sfA_{P}^{\op}$.
	Then by \Cref{cor:lax_lifts_double_functors}, a $\mu$-structured flow $n$-simplex with objects $P\to [n]$ is equivalently a diagram in $\Dbl_{\infty}$ of the form:
	\begin{equation*}
		\begin{tikzcd}
			 & \Mfd_{\diamond}^{\mu} \arrow[d] \\
		\bfA_{P} \arrow[ur, dotted, "\bX"] \arrow[r, "\del_{\bfA_{P}}"']	& \Mod_{\diamond}^{\otimes} \,.
		\end{tikzcd}
	\end{equation*}
\end{rem}

\begin{rem}\label{rem:zero_simplex_is_flow_cat}
	Note that a $\mu$-structured flow $0$-simplex is a $\mu$-structured flow category as in the previous sense.
\end{rem}
\begin{rem}
	Just as for flow categories, a simplex $\bX$ is uniquely determined by its restriction along $P\to \Simp^{\op}_{P^{\simeq}}$.
\end{rem}

\begin{ex}[$\mu$-structured flow bimodules]
	Let $\bX$ be a $\mu$-structured flow $1$-simplex over the order-preserving map $i\colon P\to [1]$.
	Let us write $P_j = i^{-1}(j)$ for $j=0,1$.
	Restricting $\bX$ to the objects in $P_{j}$ gives $\mu$-structured flow categories $\bX_{i}$ for $j=0,1$.
	Evaluating $\bX$ at a $(p,q)$ for $p\in P_0, q\in P_{1}$ gives a $\mu(c_{p},c_{q})$-structured manifold with corners $\bX_{01}(p,q)$ stratified by $\sfA_{P}(p,q)$. Note that $\bX(p,q)$ is empty whenever $p\in P_{1}, q\in P_{0}$. We have composition maps
	\begin{align*}
		\bX_{0}(p,p') \odot \bX_{01}(p',q) &\to \bX_{01}(p,q) \\
		\bX_{01}(p,q') \odot \bX_{1}(q',q) &\to \bX_{01}(p,q)
	\end{align*}
	in $\Mfd_{\diamond}^{\mu}(c_{p},c_{q})$ that up to equivalence are given by the inclusion of the strata corresponding to the arcs $(p,p',q)$ and $(p,q',q)$ in $\sfA_{P}(p,q)$.
	In particular, the inclusion of all such arcs exhaust the codimension 1 strata of $\bX(p,q)$, giving the usual master equation
	\begin{align*}
		\del \bX_{01}(p,q) = \bigcup_{p'} \bX_{0}(p,p') \odot \bX_{01}(p',q) \cup \bigcup_{q'} \bX_{01}(p,q')\odot \bX_{1}(q',q) \,.
	\end{align*}
	We will also call such data a $\mu$-\emph{structured flow bimodule}.
\end{ex}

\subsection{The semi-simplicial space}\label{subsec:the_semi-simplicial_space}

In this section, we construct a semi-simplicial space whose $n$-simplices are precisely the flow $n$-simplices.
Let $\Pos$ denote the large category of small posets and order-preserving maps.
There is a natural inclusion $\Simp \to \Pos$, and so we can form the directed pullback
\begin{equation*}
	\begin{tikzcd}
		\Pos / \Simp \arrow[r] \arrow[d]      &   \Ar(\Pos)  \arrow[d, "ev_{0}\times ev_{1}"] \\
		\Pos \times \Simp  \arrow[r]               &  \Pos\times \Pos
	\end{tikzcd}
\end{equation*}
whose objects are precisely order-preserving maps $P\to [n]$.
The projection $\Pos / \Simp \to \Simp$ is trivially a coCartesian fibration with transport over $\phi\colon [m] \to [n]$ given by pushforward $\phi_{*}\colon \Pos_{/[m]} \to \Pos_{/[n]}$.
Each of these functors admits an adjoint given by forming the pullback
\begin{align*}
	\phi^{*}P = P\times_{[n]} [m] \to [m] \,,
\end{align*}
so $\Pos / \Simp \to \Simp$ is also a Cartesian fibration.
\begin{definition}\label{def:cP}
	Let $\cP \subset (\Pos/\Simp)$ denote the subcategory spanned by the objects $P\to [n]$ such that $P$ is locally finite, and the morphisms
	\begin{equation*}
		\begin{tikzcd}\label{eq:cP}
			P \arrow[r, "f"] \arrow[d]      &   P'  \arrow[d] \\
			{[n]}  \arrow[r, "\phi"]               &  {[m]}
		\end{tikzcd}
	\end{equation*}
	where $\phi$ is injective, and the induced map $P \to \phi^{*}P'$ is the inclusion of a convex subposet.
\end{definition}

\begin{lem}\label{lem:sPSinginj_right_fib}
	The projection $\cP \to \Simp_{s}$ is a Cartesian fibration.
\end{lem}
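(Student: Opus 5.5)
The plan is to produce Cartesian lifts explicitly by pullback and then verify the Cartesian property in the ambient category $\Pos/\Simp$, restricted to the subcategory $\cP$. Let $\phi\colon [m]\to[n]$ be an injective map in $\Simp_{s}$ and $P'\to[n]$ an object of $\cP$. The candidate lift is the canonical square with top edge the projection $\phi^{*}P' = P'\times_{[n]}[m]\to P'$. We already know that this square is a Cartesian edge for $\Pos/\Simp\to\Simp$, since $\Pos/\Simp\to\Simp$ is a Cartesian fibration with transport $\phi^{*}$. Three things then remain to be checked.

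\textbf{(i) The lift lies in $\cP$.} Because $\phi$ is injective, $\phi^{*}P'$ is identified with the subposet $i'^{-1}(\phi([m]))\subset P'$. Principal filters of $\phi^{*}P'$ are subsets of principal filters of $P'$, so $\phi^{*}P'$ is locally finite. The induced map $\phi^{*}P'\to\phi^{*}P'$ is the identity, which is trivially a convex inclusion, so the square is a morphism of $\cP$.

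\textbf{(ii) The edge is Cartesian in $\cP$.} Take a morphism $(f,\psi)\colon (Q\to[k])\to(P'\to[n])$ in $\cP$ together with a factorization $\psi=\phi\circ\chi$ in $\Simp_{s}$. The Cartesian property in $\Pos/\Simp$ gives a unique factorization $g\colon Q\to\phi^{*}P'$ over $\chi$. We need $g$ to be a morphism of $\cP$, that is, the induced map $Q\to\chi^{*}\phi^{*}P'$ must be a convex inclusion. This follows from the identification $\chi^{*}\phi^{*}P'\cong\psi^{*}P'$, because pullbacks compose, so the induced map for $g$ is exactly the one for $f$, which is a convex inclusion by assumption. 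Since $\cP$ is a subcategory of a 1-category and the factorization is unique in the ambient category, uniqueness in $\cP$ is automatic. This is the required pullback-of-hom-sets condition.

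\textbf{(iii) Bookkeeping.} Everything above is 1-categorical, and mapping spaces are discrete, so no higher coherence is needed.

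The main obstacle is step (ii). One must be careful that the condition "the induced map is a convex inclusion" is really stated relative to $\phi^{*}P'$, and that convexity is preserved under the identification $\chi^{*}\phi^{*}P'\cong\psi^{*}P'$, with convexity taken in the correct subposet. I would check this directly. A subposet $S\subseteq T\subseteq P'$ that is convex in $\psi^{*}P'$ is convex in the same poset, so nothing changes under the identification.
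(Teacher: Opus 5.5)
Your proof is correct and takes essentially the paper's route. Pullback along an injective $\phi$ preserves local finiteness and yields a morphism of $\cP$, and an edge that is Cartesian in the ambient $\Pos/\Simp\to\Simp$ stays Cartesian in $\cP$. Your step (ii) is exactly the special case the paper needs of its general left-cancellation (``2-out-of-3'') property of $\cP$: if $g$ and $g\circ f$ lie in $\cP$, then so does $f$.
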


\begin{proof}
	The fibers of $\cP \to \Simp_{s}$ are the subcategories of $\Pos_{/[n]}$ spanned by locally finite posets and convex inclusions.
  Because the pullback of a locally finite poset is locally finite, and the pullback of a convex inclusion is a convex inclusion, the Cartesian transport along any $\phi\colon [n]\to [m]$ in $\Simp_{s}$ induces a factorization
	\begin{equation*}
		\begin{tikzcd}
			\cP_{m} \arrow[r] \arrow[d]      &   \cP_{n}  \arrow[d] \\
			\Pos_{/{[m]}}  \arrow[r, "\phi^{*}"]               &   \Pos_{/[n]}
		\end{tikzcd}
	\end{equation*}
	which is necessarily unique.
  Moreover, the subcategory $\cP$ satisfies the following 2-out-of-3 property: if for composable morphisms $P\xrightarrow{f} P' \xrightarrow{g} P''$, both $g$ and $g\circ f$ belong to~$\cP$, then so does $f$.
  This implies that morphisms in $\cP$ which are Cartesian with respect to $\Pos/\Simp \to \Simp$ are necessarily also Cartesian with respect to $\cP \to \Simp_{s}$.
\end{proof}

\begin{rem}\label{rem:P_S}
	For $\phi$ injective, the pullback $\phi^{*}P$ can be described as follows.
	Write $P_{S} \subset P$ for the preimage of a subset $S\subset [n]$.
	Then for $\phi\colon [n] \to [m]$ injective, we have $\phi^{*}P = P_{\im(\phi)}$ with the unique map to $[m]$ such that
	\begin{align*}
		(\phi^{*}P)_{k} = P_{\phi(k)} \,.
	\end{align*}
\end{rem}

\begin{ex}
	For the map $d_k\colon [n-1] \to [n]$, the poset $d_k^{*}P$ is given by omitting all $p\in P$ with $i(p)=k$.
\end{ex}
A morphism $(f,\phi)$ in $(\Pos/\Simp)$ induces a morphism of $\Cat$-categories
\begin{align*}
	f_{!} \colon \sfA_{P'} \to \sfA_{P}
\end{align*}
given on objects by $f^{\simeq} \colon P^{\simeq} \to (P')^{\simeq}$, and on categories by the functor
\begin{align}\label{eq:A_P_functoriality}
	f_{!}(p,q)\colon \sfA_{P}(p,q) \to \sfA_{P'}(f(p), f(q))
\end{align}
which pushes forward edge labels along $f$ and vertex labels along $\phi$, deleting any vertex labels that are illegal, and merging edges with repeated labels.

\begin{lem}\label{lem:A_P_closed_embedding}
	For a morphism $f$ in the subcategory $\cP\subset \Pos/\Simp$, the functors \eqref{eq:A_P_functoriality} are closed embeddings, and for the unique minimal element $\gamma_{pq} \in A_{P}(p,q)$, there is a canonical identification
	\begin{align*}
		Q_{f_!(p,q)}(\gamma_{p,q}) = \left\{k \in [n] \mid i(p)<k<i(q), k\notin \im(\phi) \right\} \,.
	\end{align*}
\end{lem}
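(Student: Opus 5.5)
Since the statement is purely combinatorial, I would prove it by direct inspection of arcs. First I would reduce to the minimal arc. By \Cref{def:closed_embedding} I must show that $f_{!}(p,q)$ is fully faithful and that its image is a right ideal (upward closed). Both $\sfA_{P}(p,q)$ and $\sfA_{P'}(f(p),f(q))$ are models, and $\sfA_{P}(p,q)$ has a unique minimal object $\gamma_{pq}$. By \Cref{rem:unwinding_closed_embedding} it therefore suffices to check one thing: $f_{!}$ induces an isomorphism of posets
\[
\sfA_{P}(p,q) = \sfA_{P}(p,q)_{\gamma_{pq}/} \longrightarrow \sfA_{P'}(f(p),f(q))_{f_!(\gamma_{pq})/}.
\]
Indeed, once we have this isomorphism, full faithfulness follows because every object lies under $\gamma_{pq}$ and both sides are posets. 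The right ideal property also follows, since anything above an image element is above $f_!(\gamma_{pq})$ and hence in the image.

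Next I would compute $f_!(\gamma_{pq})$. Factor the morphism as a Cartesian transport $\phi^{*}P'\to P'$ followed by a convex inclusion $P\hookrightarrow \phi^{*}P'$, and treat the two factors separately.
\begin{itemize}
\item \emph{Convex inclusion, $\phi=\mathrm{id}$.} Arcs from $p$ to $q$ in $P'$ only pass through labels $r$ with $p<r<q$. By convexity these all lie in $P$, so $f_!$ is an isomorphism of categories onto $\sfA_{P'}(p,q)$.
\item \emph{Case $P=\phi^{*}P'=P'_{\im\phi}$.} The edge labels are unchanged. A vertex label $S\subset\{i(p)+1,\dots,i(q)-1\}$ is sent to $\phi(S)$, which is legal since $\phi$ is injective and order preserving. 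In particular the vertex of $\gamma_{pq}$ is sent to $\phi(\{i(p)+1,\dots,i(q)-1\})$. The arc $f_!(\gamma_{pq})$ is therefore the two-edge arc whose vertex label is missing exactly the set
\[
M=\{k\in[m]\mid \phi i(p)<k<\phi i(q),\ k\notin\im\phi\}.
\]
\end{itemize}

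Under the standard identification of an over-/undercategory with the poset of subsets of $Q$, the codimension-one objects of $\sfA_{P'}$ over $f_!(\gamma_{pq})$ are precisely the one-element subsets of $M$, which gives the claimed description of $Q_{f_!}(\gamma_{pq})$. (Note that the statement's indexing really lives in $[m]$; I would index it via $\phi$.)

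The main obstacle is the undercategory isomorphism in the Cartesian case. Objects of $\sfA_{P'}$ under $f_!(\gamma_{pq})$ are arcs whose edge labels lie in $P'$ and whose collapse to a single vertex has label containing $\phi(\{i(p)+1,\dots,i(q)-1\})$. I must show that such an arc has no edge labelled by $r$ with $i'(r)\in M$, and that each vertex label is contained in the image of $\phi$.

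The argument goes as follows. Collapsing an edge $r$ adds $i'(r)$ to the label, and labels only grow under the relation. Hence in any arc above $f_!(\gamma_{pq})$, the complement of the total label set, together with the $i'$-values of the edges, must reproduce exactly the missing set $M$ in the opposite direction. Concretely, the morphism $f_!(\gamma_{pq})\to\gamma$ means $f_!(\gamma_{pq})$ is obtained from $\gamma$ by collapses and label enlargements. Elements of $M$ can never appear in the final label of $f_!(\gamma_{pq})$, so they can appear neither as edge values $i'(r)$ nor in vertex labels of $\gamma$. This gives $\gamma=f_!(\gamma')$ for a unique $\gamma'\in\sfA_{P}(p,q)$, obtained by pulling labels back along $\phi$.

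Order relations pull back in the same way, which gives the isomorphism of posets. I expect the fiddly point to be the convention about illegal labels $i(r)$ at boundary values, which I would handle by checking that it is compatible with $\phi$ being order preserving and injective.
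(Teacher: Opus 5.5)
Your argument is correct. The paper states this lemma without proof, so there is no competing route to compare against. Your steps are exactly the check the authors leave to the reader:
\begin{itemize}
\item reducing to the claim that $f_!$ identifies $\sfA_P(p,q)$ with the up-set of $f_!(\gamma_{pq})$;
\item splitting $f$ into the convex inclusion $P\hookrightarrow\phi^*P'$, which is an isomorphism on arc posets by convexity, followed by $\phi^*P'\to P'$;
\item reading $Q_{f_!}(\gamma_{pq})$ off the part $M$ missing from the vertex label of $f_!(\gamma_{pq})$.
\end{itemize}
Your correction of the indexing set to $\{k\in[m]\mid \phi i(p)<k<\phi i(q),\ k\notin\im\phi\}$ is also right.

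A few things to tighten in the key step.

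\textbf{Reversed inclusion.} The sentence describing objects under $f_!(\gamma_{pq})$ has the inclusion backwards. The relation $f_!(\gamma_{pq})\to\gamma$ means that the label obtained by fully collapsing $\gamma$ is \emph{contained in} $\phi(\{i(p)+1,\dots,i(q)-1\})$. Your ``Concretely'' sentence already uses the correct direction. The remark about reproducing $M$ ``in the opposite direction'' should be dropped: the missing set of $\gamma$ only has to contain $M$, not equal it.

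\textbf{Where the illegal-label convention matters.} The claim ``collapsing an edge $r$ adds $i'(r)$'' holds only when $i'(r)$ is legal. This, rather than any property of $\phi$, is where the convention bites. To exclude edges with $i'(r)\in M$, you need that every value $k$ with $\phi i(p)<k<\phi i(q)$ taken by an internal edge of $\gamma$ lies in the fully collapsed label. This holds for the following reason.
\begin{itemize}
\item The edges with value $k$ form a contiguous block that does not contain the end edges.
\item When the last edge of that block is collapsed, its neighbours have values $<k$ and $>k$, so $k$ is legal and is added.
\item Labels never lose elements afterwards, so $k$ persists.
\end{itemize}
Strict monotonicity of $\phi$ is what you need for the converse direction, namely order reflection and legality of the pulled-back labels.

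\textbf{State your convention.} ``Labels only grow'' presupposes that a collapse produces exactly $S_0\cup S_1\cup\{i'(r)\}$ (legal part only). The definition literally says ``a subset of''. Taken literally, $\sfA_{P'}(p,q)$ would not be a model: for $P'=\{a<r<b\}$ over $[2]$, the three-edge arc would have a three-element overcategory. So your reading is the right one, but you should state it explicitly as the convention you are using.
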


For a morphism $f$ in $\cP$ we use the decomposition $Q_{f_{!}}^{-}=\emptyset$. We then have equivalences
\begin{align*}
	\sfU^{+}_{P }(p,q)
	\oplus \bR^{Q_{f_{!}(p,q)}(\gamma_{pq})}
	& \simeq \sfU^{+}_{P'}(f(p), f(q))
\end{align*}
by identifying both sides with $\bR^{ \left\{i(f(p))+1,\dots i(f(q))\right\}}$. Now we use these stable equivalences, and \Cref{lem:A_P_closed_embedding}, to lift $f_!$ to an enriched functor between $\Mod_{\diamond}^{\otimes}$-categories
\begin{align*}
	f_{!} \colon (\sfA_{P}, \sfU_{P}) \to (\sfA_{P'}, \sfU_{P'}) \,.
\end{align*}
This construction is natural with respect to composition in the sense that it determines a functor
\begin{align*}
	\cA \colon \cP &\to \Alg(\Mod_{\diamond}).
\end{align*}
Consider the composite functor
\begin{align}\label{eq:Alg(Mfd)->Simp}
	\cA^{*}\Alg\left(\Mfd^{\mu}_{\diamond}\right)) \to \cP \to \Simp_{s} \,.
\end{align}
The first factor is a right fibration because it is pulled back from the right fibration in \Cref{cor:Alg(Mfd)->Alg(Mod)_is_right_fib}, and the second is a Cartesian fibration by \Cref{lem:sPSinginj_right_fib}.

\begin{definition}\label{def:PreFlow_mu}
	Let $\bfPreFlow^{\mu}$ denote the large semi-simplicial $\infty$-category classifying the cartesian fibration \eqref{eq:Alg(Mfd)->Simp}. Let $\PreFlow^{\mu}$ be the large semi-simplicial classifying the underlying right fibration of \eqref{eq:Alg(Mfd)->Simp}.
\end{definition}

\begin{rem}\label{rem:PreFlow_is_functorial}
	It should be clear from construction that $\PreFlow^{(-)}$ determines a functor
	\begin{align*}
		\Cat_{\infty /(U/O)} \to ss\widehat{\cS} \,.
	\end{align*}
	The map of semi-simplicial spaces $f_{!}\colon \PreFlow^{\mu} \to \PreFlow^{\nu}$ determined by a morphism $f\colon \mu\to \nu$ in $\Cat_{\infty/(U/O)}$ is given by pushing forward flow categories along the double functor $f_{*}\colon \Mfd^{\mu}_{\diamond} \to \Mfd^{\nu}_{\diamond}$ coming from the naturality of slices in $\cS_{\bZ\times BO}$.
\end{rem}

\begin{definition}\label{def:PreFlow}
	We similarly define a large semi-simplicial space $\PreFlow$ of unstructured flow categories by pulling back the right fibration $\Alg(\Mfd_{\diamond})\to \Alg(\Mod_{\diamond}^{\otimes})$.
  The forgetful functor induces a map of semi-simplicial spaces
	\begin{align*}
		\PreFlow^{\mu} \to \PreFlow.
	\end{align*}
\end{definition}

In view of \Cref{subsec:flow_categories_as_double_functors}, we could also define $\PreFlow^{\mu}$ in terms of double functors by noting that $\cA$ factors through $\Alg_{\Cat}(\Mod_{\diamond,\con})$.
Let $\bfA$ denote the composite
\begin{align*}
	\bfA\colon \cP \xrightarrow{\cA} \Alg_{\Cat}(\Mod_{\diamond,\con}) \xrightarrow{\del} \Dbl_{\infty/\Mod_{\diamond}^{\otimes}} \, .
\end{align*}
We then immediately get the following corollary of \Cref{cor:canonical_presheaf_pullback}.

\begin{cor}\label{cor:flow_cats_are_double_functors}
	The functor $\cF$ induces an equivalence between $\PreFlow^{\mu}$ and the straightening of the underlying right fibration of
	\begin{align*}
		\bfA^{*}\Dbl_{\infty/\Mfd_{\diamond}^{\mu}} \to \cP \to \Simp_{s} \,.
	\end{align*}
\end{cor}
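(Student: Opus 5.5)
The plan is to deduce the statement from \Cref{cor:canonical_presheaf_pullback} by pulling back along the functor $\cA\colon \cP \to \Alg_{\Cat}(\Mod_{\diamond,\con})$ and passing to underlying right fibrations.

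First, I check that $\cA$ lands in $\Alg_{\Cat}(\Mod_{\diamond,\con})$. Each $\sfA_{P}(p,q)$ has the unique minimal arc $\gamma_{pq}$, as noted before \Cref{def:structured_flow_simplex}. The transition functors $f_{!}$ are closed embeddings by \Cref{lem:A_P_closed_embedding}, so they stay in $\Mod_{\diamond}$, and the full subcategory $\Mod_{\diamond,\con}$ is closed under products. Since $\Alg(\Mfd_{\diamond}^{\mu})\to\Alg(\Mod_{\diamond})$ is pulled back along $\Alg_{\Cat}$, and the objects of $\cP$ only involve categorical algebras with space of objects $P^{\simeq}$, we get
\[
\cA^{*}\Alg(\Mfd^{\mu}_{\diamond}) \simeq \cA^{*}\Alg_{\Cat}(\Mfd^{\mu}_{\diamond,\con}).
\]
Here I use that a lift of a $\Mod_{\diamond,\con}$-valued algebra to $\Mfd_{\diamond}^{\mu}$ automatically lands in $\Mfd^{\mu}_{\diamond,\con}$, because that subcategory is defined by a condition on the underlying model.

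Next, I paste the pullback square of \Cref{cor:canonical_presheaf_pullback} with the square defining $\cA^{*}$. Since $\bfA = \del\circ\cA$ by definition, this gives an equivalence
\[
\cA^{*}\Alg_{\Cat}(\Mfd^{\mu}_{\diamond,\con}) \xrightarrow{\simeq} \bfA^{*}\Dbl_{\infty/\Mfd^{\mu}_{\diamond}}
\]
over $\cP$, induced by $\cF$. Composing further with $\cP\to\Simp_{s}$, this is an equivalence of $\infty$-categories over $\Simp_{s}$. The source is the Cartesian fibration of \eqref{eq:Alg(Mfd)->Simp}. An equivalence over the base preserves Cartesian edges, so the target is also a Cartesian fibration over $\Simp_{s}$ and the equivalence is a morphism in $\Cart(\Simp_{s})$.

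Finally, I apply the underlying right fibration functor $\cL\colon\Cart(\Simp_{s})\to\RFib(\Simp_{s})$, which preserves equivalences, and then straighten. By \Cref{def:PreFlow_mu}, the straightening of the left-hand side is $\PreFlow^{\mu}$, so this yields the claimed equivalence.

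The main obstacle is the first step: identifying the algebras in $\Mfd_{\diamond}^{\mu}$ over $\cA$ with those in $\Mfd_{\diamond,\con}^{\mu}$ in a way that is natural in $\cP$, including the size issues coming from the fact that everything here is large. I would handle this by checking that $\Alg_{\Cat}(\Mfd_{\diamond,\con}^{\mu})\to \Alg_{\Cat}(\Mfd^{\mu}_{\diamond})$ is the base change of the corresponding inclusion for $\Mod$. This reduces to the analogous statement for the morphism $\infty$-categories, where it holds because $\Mfd^{\mu}_{\diamond,\con}(c,d)$ is the full preimage of $\Mod_{\diamond,\con}$.
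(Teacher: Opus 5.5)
Your argument is correct and follows the route the paper intends when it calls this an immediate consequence of \Cref{cor:canonical_presheaf_pullback}: pull that square back along $\cA$, use $\bfA=\del\circ\cA$ and pasting, then pass to underlying right fibrations over $\Simp_{s}$ and straighten; your check that lifts land in $\Mfd^{\mu}_{\diamond,\con}$ just spells out what the paper leaves implicit. One caveat, shared with the paper: $\sfA_{P}(p,q)$ is empty for $p\not\leq q$, so ``unique minimal object'' must be read as ``at most one minimal object''. This is harmless, because over the empty model both sides have contractible fibres, or alternatively one can work over $P$ as in \Cref{cor:flow_cats_defined_on_P}.
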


We could also define $\PreFlow^{\mu}$ using flow simplices that are only defined over $P\subset \Simp^{\op}_{P^{\simeq}}$.
The inclusion of such categories determines a natural transformation of functors $\cP \to \Opd$, and since $\Alg(\Mod_{\diamond}) \to \Opd$ is a Cartesian fibration, we can lift this to a natural transformation of functors $\cP \to \Alg(\Mod_{\diamond})$ which we denote $\cA_{\leq} \to \cA$.
\begin{cor}\label{cor:flow_cats_defined_on_P}
	We have an equivalence
	\begin{align}\label{eq:flow_cats_defined_on_P}
		\cA^{*}\Alg(\Mfd^{\mu}_{\diamond}) \xrightarrow{\simeq}   \cA_{\leq}^{*}\Alg(\Mfd^{\mu}_{\diamond})
	\end{align}
	of right fibrations over $\cP$.
  In particular, $\PreFlow^{\mu}$ can be defined in terms of either.
\end{cor}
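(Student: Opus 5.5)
The plan is to reduce the statement to a fiberwise comparison over $\cP$, and from there to a statement about lifts against individual $\Mfd_{\diamond}^{\mu}(c,d)\to\Mod_{\diamond}$.

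\textbf{Step 1: the map.} The natural transformation $\cA_{\leq}\to\cA$ is obtained by precomposition with the double functor $P\to\Simp^{\op}_{P^{\simeq}}$ of \Cref{rem:trivial_spaces_in_flow_cat}. It therefore induces the restriction map \eqref{eq:flow_cats_defined_on_P}.

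By \Cref{cor:Alg(Mfd)->Alg(Mod)_is_right_fib}, both $\cA^{*}\Alg(\Mfd^{\mu}_{\diamond})$ and $\cA_{\leq}^{*}\Alg(\Mfd^{\mu}_{\diamond})$ are right fibrations over $\cP$, being pullbacks of that right fibration, and restriction is a map between them over $\cP$. A map of right fibrations is an equivalence as soon as it is an equivalence on each fiber. So it suffices, for a fixed $P\to[n]$, to show that restriction
\[
\{\sfA_{P}\}\times_{\Alg(\Mod_{\diamond})}\Alg(\Mfd^{\mu}_{\diamond}) \to \{\sfA_{P}|_{P}\}\times_{\Alg(\Mod_{\diamond})}\Alg(\Mfd^{\mu}_{\diamond})
\]
is an equivalence of spaces.

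\textbf{Step 2: reduce to lifts along a Kan extension.} Both sides are spaces of lifts along the right fibration of double $\infty$-categories $\Mfd^{\mu}_{\diamond}\to\Mod_{\diamond}^{\otimes}$, taken in $\DblLax$. The key observation is that $\sfA_{P}$ is, as an algebra, the operadic right Kan extension of its restriction to $P$. Every tuple $(p_0,\dots,p_k)$ in $\Simp^{\op}_{P^{\simeq}}$ that is not increasing contains an inert pair $(p_j,p_{j+1})$ with $p_j\not\leq p_{j+1}$. At such a pair $\sfA_{P}(p_j,p_{j+1})=\emptyset$, the initial object of $\Mod_{\diamond}$.

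The fiber of $\Mfd^{\mu}_{\diamond}(c,d)\to\Mod_{\diamond}$ over the empty model is contractible: only the empty manifold is stratified by the empty category. Hence any lift is forced to be empty on these pairs, and the entries over non-increasing tuples are determined by the Segal/inert conditions.

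The only further data on non-increasing tuples is active morphisms landing on increasing pairs, for instance $(p,q,p')\to(p,p')$ with $q$ out of order. Their source in $\Mfd^{\mu}_{\diamond}$ contains an empty tensor factor, so it is the empty manifold, which is initial in each $\Mfd^{\mu}_{\diamond}(c,d)$. The composition map out of it is therefore unique. The unit data on $(p)\to(p,p)$ lies in $P$ already, since $p\leq p$.

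\textbf{Step 3: conclude, and where the difficulty lies.} I would make Step 2 rigorous by checking that the pair of maps
\[
\Simp^{\op}_{P^{\simeq}}\supset P \quad\text{and}\quad \Mfd^{\mu}_{\diamond}\to\Mod^{\otimes}_{\diamond}
\]
satisfies a relative Kan extension criterion. Concretely, the restriction map on fibers of a right fibration of algebras is a pullback of restriction on $\Fun_{/\Simp^{\op}}$. One then shows the fiber of each non-increasing component is contractible: over the empty model, and on mapping spaces out of empty objects, this follows from initiality of $\emptyset$ together with the right fibration property of \Cref{lem:fiberwise_right_fib}.

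I expect the main obstacle to be this coherence bookkeeping. The issue is the higher simplices of $\Simp^{\op}_{P^{\simeq}}$ mixing increasing and non-increasing tuples. I would handle it by filtering $\Simp^{\op}_{P^{\simeq}}$ by the number of inversions and adding cells inductively, each time extending over a contractible space of choices.
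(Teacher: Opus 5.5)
Your proposal is correct and is essentially the paper's argument. The paper likewise uses that the restriction map is a map of right fibrations over $\cP$, so it suffices that its fibers are contractible (equivalently, that it is an equivalence on fibers over each $P\to[n]$), and that a flow simplex defined on $P$ extends uniquely over $\Simp^{\op}_{P^{\simeq}}$ by the empty manifold, which is exactly your Step~2. One cosmetic correction: the new values are the initial object $\emptyset$ and the new structure maps are the unique maps out of it, so $\sfA_{P}$ is the operadic \emph{left} (not right) Kan extension of its restriction to $P$; your contractibility argument does not depend on this label.
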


\begin{proof}
	The map of right fibrations is determined by pulling back along the natural transformation $\cA_{\leq} \to \cA$.
  Since a map of right fibrations is itself a right fibration, it suffices to show that \eqref{eq:flow_cats_defined_on_P} has contractible fibers.
  This follows because a flow simplex $\bX\colon P \to \Mfd^{\mu}_{\diamond}$ has a unique extension over $\Simp^{\op}_{P^{\simeq}}$ by the empty manifold.
\end{proof}

\subsection{Horn fillers}\label{subsec:horn_fillers}
The goal of this subsection is to prove that $\PreFlow^{\mu}$ is an inner Kan space in the sense of \Cref{def:inner_fibration}.
This involves a generalization of the arguments in \cite[Section 5]{AB} which allows us to extend $\mu$-structures over the filling.
The key point is that the fillers constructed in \cite{AB} present a homotopy colimit over the restriction to the faces in the horn, and so the $\mu$-structure extends uniquely.

\begin{definition}\label{def:del_A_p}
	For $P\to [n]$ a poset over $[n]$, let $\del \sfA_{P}(p,q) \subset \sfA_{P}(p,q)$ denote the full subcategory of arcs $\gamma$ which do not contain a segment of the form
	\begin{center}
		\begin{tikzpicture}
			\coordinate (start) at (-1.5 ,0);
			\coordinate[label= above:$r$] (r) at (-1,0);
			\coordinate[label= above:$s$] (s) at (1,0);
			\coordinate (end) at (1.5,0);
			\draw[dotted, thick] (start) -- (end);
			\draw[thick] (r) -- (s) ;
			\coordinate[label= below:{$\{1{,}\dots {,}n-1 \}$}  ] (S) at (0,0);
			\draw[fill, color=black] (S) circle (0.05);
		\end{tikzpicture}
	\end{center}
	for $r\in P_{0}, s\in P_{n}$. Then for $0<i<n$, define $\Horn_{i} \sfA_{P}(p,q) \subset \del \sfA_{P}(p,q)$ to be the full subcategory spanned by arcs $\gamma$ which also do not contain segments of the following form.
	\begin{center}
		\begin{tikzpicture}
			\coordinate (start) at (-1.5 ,0);
			\coordinate[label= above:$r$] (r) at (-1,0);
			\coordinate[label= above:$s$] (s) at (1,0);
			\coordinate (end) at (1.5,0);
			\draw[dotted, thick] (start) -- (end);
			\draw[thick] (r) -- (s) ;
			\coordinate[label= below:{$\{1{,}\dots{,}\widehat{i} {,}\dots {,}n-1 \}$}  ] (S) at (0,0);
			\draw[fill, color=black] (S) circle (0.05);
		\end{tikzpicture}
	\end{center}
	Both of these conditions are stable under concatenation, so they determine subcategories $\Horn_{i}\sfA_{P}\subset \del\sfA_{P} \subset\sfA_{P}$.
\end{definition}
\begin{rem}\label{rem:horn_double_functor}
	Note that while each $\Horn_{i}\sfA(p,q)$ is not a model for manifolds with corners, we still have a functor
	\begin{align*}
		\Horn_{i}\sfA_{P}(p,q)^{\op} \to \sfA_{P}(p,q)^{\op} \xrightarrow{\del} \Mod_{\diamond} \, ,
	\end{align*}
	which unstraightens to a double functor
	\begin{align*}
		\del_{\Horn_{i}} \colon  \Horn_{i}\bfA_{P} \to \bfA_{P} \to \Mod_{\diamond}^{\otimes}.
	\end{align*}
\end{rem}
\begin{lem}\label{lem:horn_A_as_colimit}
	Let $P \to [n] \in \cP$, and $0<i<n$. Under unstraightening, $P\to [n]$ corresponds uniquely to a map of Cartesian fibrations $F(\Delta^{n}_{s}) \to \cP$.
  The colimit of the composite
	\begin{align*}
		G\colon F(\Horn_{i,s}^{n}) \to F(\Delta^{n}_{s}) \to \cP \xrightarrow{\bfA} \Dbl_{\infty/\Mod_{\diamond}^{\otimes}}
	\end{align*}
	is equivalent to $\del_{\Horn_{i}}\colon \Horn_{i}\bfA_{P} \to \Mod_{\diamond}^{\otimes}$.
\end{lem}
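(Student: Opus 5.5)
Let me write $F(\Horn^{n}_{i,s})$ for the unstraightened category of simplices of the horn: its objects are the faces $[k]\hookrightarrow[n]$ lying in $\Horn^{n}_{i}$ (all faces except the top face and the $i$-th codimension-one face), with morphisms the face inclusions. The composite $G$ sends such a face $\phi\colon [k]\to[n]$ to the Cartesian pullback $\phi^{*}P = P_{\im(\phi)} \to [k]$ (\Cref{rem:P_S}), and then to the double functor $\del_{\bfA_{\phi^{*}P}}\colon \bfA_{\phi^{*}P}\to \Mod^{\otimes}_{\diamond}$. The transition maps are the closed embeddings $f_{!}$ of \Cref{lem:A_P_closed_embedding}. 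The plan is to write down a cocone from $G$ to $\Horn_{i}\bfA_{P}$ and check that it is a colimit.

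For the cocone, each face inclusion $P_{\im\phi}\subset P$ over $\phi$ gives an enriched functor $\sfA_{\phi^{*}P}\to \sfA_{P}$ and hence a double functor $\bfA_{\phi^{*}P}\to\bfA_{P}$ over $\Mod^{\otimes}_{\diamond}$. First I would verify that its image lies in $\Horn_{i}\bfA_{P}$. An arc pushed forward from a face avoiding some $k\neq i$ (or avoiding $i$ only through a lower face) can never contain a vertex labeled $\{1,\dots,n-1\}$ or $\{1,\dots,\widehat{i},\dots,n-1\}$ between $P_{0}$ and $P_{n}$, because vertex labels pushed forward along $\phi$ miss $[n]\setminus\im\phi$. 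Conversely, every arc in $\Horn_{i}\sfA_{P}(p,q)$ has each vertex label missing some index $k\neq i$ (or missing $0$ or $n$ through its endpoints), so the arc lies in the image of a face of the horn. With both inclusions established, the cocone factors through $\Horn_{i}\bfA_{P}$ and is jointly essentially surjective.

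The core step is the colimit computation itself. By \Cref{lem:Mon->Dbl->Cat}-type arguments, colimits in the slice $\Dbl_{\infty/\Mod^{\otimes}_{\diamond}}$ are computed in $\Dbl_{\infty}$. Since all the $\bfA$ have a space (in fact a set) of objects and are unstraightenings of strict $\Cat$-enriched categories, I would reduce to checking the statement levelwise. On objects, the colimit of the sets $P_{\im\phi}$ over the horn is $P$, because every element lies in some vertex. On mapping categories, for fixed $p,q$ the claim becomes that $\Horn_{i}\sfA_{P}(p,q)$ is the colimit in $\Cat_{\infty}$ of the posets $\sfA_{\phi^{*}P}(p,q)$ along closed embeddings.

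I expect this mapping-category step to be the main obstacle, for two reasons. First, colimits of $\infty$-categories need not be computed naively, and second, one must control the interaction with horizontal composition (Segal condition), because a colimit of Segal objects need not be Segal. To handle the first point, I would exhibit $\Horn_{i}\sfA_{P}(p,q)$ as a union of down-closed (right-ideal) subposets indexed by the faces, closed under intersections: the intersection of the images of two faces is the image of their common face. A colimit of posets along such sieve inclusions, indexed by the face poset, is a homotopy colimit, since sieve inclusions are exponentiable and one can induct over faces via pushouts of sieves. This is Reedy-type cofibrancy, and it makes the categorical colimit agree with the union. To handle the second point, I would observe that the Segal decomposition $\bfA_{n}\simeq \bfA_{1}\times_{\bfA_{0}}\cdots$ is compatible with the union description: a composable string in the union has each arc in some face. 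This is the place I would check most carefully, possibly by computing the colimit first in $\Fun(\Simp^{\op},\Cat_{\infty})$ and then verifying the Segal condition directly on the resulting union.
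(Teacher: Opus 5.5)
There is a genuine gap, and it sits at the step you flagged. Your converse inclusion is false: an arc in $\Horn_{i}\sfA_{P}(p,q)$ need not lie in the image of a single face, because its labels can jointly use every index of $[n]$ even though no single segment does. Take $n=2$, $i=1$ and $p<r<q$ with $p\in P_{0}$, $r\in P_{1}$, $q\in P_{2}$. The arc $\gamma_{pr}\#\gamma_{rq}$ (edge labels $p,r,q$, empty vertex labels) has no segment from $P_{0}$ to $P_{2}$, so it lies in $\Horn_{1}\sfA_{P}(p,q)$. However, no face of $\Horn^{2}_{1}$ contains both $0$ and $2$, so no face even has $p$ and $q$ as objects, and your ``levelwise'' colimit of the posets $\sfA_{\phi^{*}P}(p,q)$ is empty. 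For larger $n$ the same thing happens for arcs through some $r\in P_{j}$, $0<j<n$, with full vertex labels on both sides. These arcs are horizontal composites of $1$-morphisms coming from different faces, and they are exactly the strata (such as $\bY(p,r)\times\bY(r,q)$) that the horn filler needs. So the mapping categories of the colimit are not colimits of mapping categories. Your fallback does not repair this. The levelwise colimit in $\Fun(\Simp^{\op},\Cat_{\infty})$ is, in each degree, the union of the face images. Its degree-$2$ part contains only composable pairs lying in a common face, whereas the Segal fiber product of the degree-$1$ parts contains $(\gamma_{pr},\gamma_{rq})$. The union is therefore not Segal, and it is precisely the Segal localization that adds the composites.

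The missing idea is that the colimit is the sub-$2$-category of $\bfA_{P}$ \emph{generated under concatenation} by the face subcategories $\bfA_{P_{K}}$, which is $\Horn_{i}\bfA_{P}$, together with an argument that this strict colimit computes the $\infty$-categorical one. The paper works in strict $2$-categories. All the $G(K)$ are sub-$2$-categories of $\bfA_{P}$, and the restriction of the diagram to each cube $[1]^{K}$ is a limit diagram, since intersections of faces give intersections of subcategories. Hence the strict colimit is the sub-$2$-category generated under composition. Homotopy invariance then comes from cofibrancy of the diagram on the direct category $F(\Horn^{n}_{i,s})$. By the same reasoning, the latching object at $K$ is $\del\bfA_{P_{K}}$, and the latching map $\del\bfA_{P_{K}}\to\bfA_{P_{K}}$ is a subcategory inclusion, hence a cofibration in the model structure on $\Cat$-enriched categories. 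Your sieve-union argument is the right kind of cofibrancy argument, but it has to be run for $2$-categories and their composition-generated subcategories, not for individual hom-posets.
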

\begin{proof}
	It suffices to verify that $\Horn_{i}\bfA_{P}$ is the colimit of $G$ after projecting to $\Dbl_{\infty}$.
  By construction of $\bfA_{P}$, this factors through the category $\Cat_{2} \to \Dbl_{\infty}$ of strict 2-categories by a functor $G'$.
  The category $F(\Horn^{n}_{i,s})$ is the poset of subsets of $[n]$, excluding the two objects $[n]$ and $[n]\setminus \left\{ i\right\}$.
  The functor $G'$ is valued in (non-full) sub-2-categories of $\bfA_{P}$, and inclusions of such.
  For any $K\in F(\Horn^{n}_{i,s})$, the restriction of $G'$ to the cube $[1]^{K}$ is a limit diagram.
  It therefore follows that the colimit of $G'$ is the smallest subcategory of $\bfA_{P}$ generated by all the $\bfA_{d_{i}^{*}P}$ under composition, which is precisely $\Horn_{i}\bfA_{P}$.
  To verify that this colimit is preserved by the inclusion $\Cat_{2} \to \Dbl_{\infty}$, it suffices to argue that~$G$ is a cofibrant diagram.
  The cube with missing vertices is a direct category, and the latching morphism at some $K$ is the map
	\begin{align*}
		\colim_{J\subsetneq K} \bfA_{P_{J}} \to \bfA_{P_{K}} \,.
	\end{align*}
	By similar reasoning as above, this colimit is the subcategory of $\bfA_{P_{K}}$ generated by all the $\bfA_{d_{j}^{*}P_{K}}$ for $j\in K$, which is precisely $\del \bfA_{P_{K}}$.
  Hence the latching map is $\del \bfA_{P_{K}} \to \bfA_{P_{K}}$, which is a subcategory inclusion, and therefore a cofibration in the model structure on $\Cat$-enriched categories.
\end{proof}

\begin{lem}\label{lem:St(P)_inner_Kan}
	The straightening of the underlying right fibration $\cR(\cP) \to \Simp_{s}$ is an inner Kan space.
\end{lem}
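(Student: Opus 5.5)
We have to show that the semi-simplicial space $\St(\cR(\cP))$ admits fillers for all inner horns $\Horn^{n}_{i,s} \to \Delta^{n}_{s}$ with $0<i<n$. The $n$-simplices of $\St(\cR(\cP))$ form the groupoid core of the fiber $\cP_{n}$, i.e.\ the space of locally finite posets over $[n]$ up to isomorphism. Face maps are pullback along $d_{k}$, which removes the fiber $P_{k}$. The straightening is valued in (large) spaces which are in fact groupoids of discrete objects. So a map $\Horn^{n}_{i,s} \to \St(\cR(\cP))$ is the same as a compatible family of posets $P^{(J)} \to J$, indexed by the proper faces $J \subset [n]$ with $J \neq [n]\setminus\{i\}$, together with coherent isomorphisms between them. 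The plan is to glue these into a single poset $P \to [n]$ whose restrictions recover the given data.

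\textbf{Construction of the filler.} Since every face in the horn is determined by its vertices, and $n\geq 2$, each vertex $k\in[n]$ lies in some face of the horn. We therefore get well-defined fibers $P_{k}$, using the isomorphisms to identify the copies coming from different faces. We set $P = \coprod_{k} P_{k}$ as a set, with $i\colon P\to[n]$ the evident projection, and define $p\leq q$ for $p\in P_{k}$, $q\in P_{l}$ with $k\leq l$ if the relation holds in the poset over the edge $\{k,l\}$. Every edge is a face of the horn when $n\geq 2$, except when $n=2$ and $\{k,l\}=\{0,2\}$. That is a degenerate case to treat separately: for $n=2$, $i=1$, the relation between $P_{0}$ and $P_{2}$ is generated by composites through $P_{1}$, i.e.\ we take the transitive closure.

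\textbf{Key steps, in order.} Next we check transitivity and antisymmetry. Transitivity of relations through three fibers $k<l<m$ holds because the triangle $\{k,l,m\}$ lies in the horn except in the case just discussed; in general, we define $\leq$ as the transitive closure and check that this closure does not change the restrictions to faces of the horn. Antisymmetry follows because $i$ is order-preserving and each $P_{k}$ is a poset. We then verify local finiteness: the principal filter $P_{p\leq}$ is contained in $\bigcup_{l\geq k} (P_{\{k,l\}})_{p\leq}$, a finite union of finite sets, and composites through intermediate fibers only add elements already bounded by these filters. Finally, the restriction of $P$ along each face $\phi\colon J\to[n]$ in the horn is $P_{J}$ by \Cref{rem:P_S}, and this agrees with $P^{(J)}$ because the order on $P^{(J)}$ is determined by its two-element faces and transitivity inside $P^{(J)}$. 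This gives the lift up to the specified isomorphisms. Because the spaces involved are $1$-truncated with discrete automorphism data (order-isomorphisms of posets), solving the lifting problem on the nose up to these coherent isomorphisms suffices.

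\textbf{Main obstacle.} The delicate step is showing that the restriction of the glued order to every face of the horn agrees with the original order, and not with a strictly larger one produced by the transitive closure. The approach is to argue that any chain $p_{0}\leq p_{1}\leq\dots\leq p_{r}$ realizing a relation between fibers $k<l$ has indices $i(p_{j})$ that can be taken in $[k,l]$. Each step lies in some face of the horn, so transitivity inside the face containing $\{k,\dots,l\}$, which is in the horn whenever $\{k,l\}\neq\{0,n\}$, collapses the chain. For the missing relations, between $P_{0}$ and $P_{n}$ (and $[n]\setminus\{i\}$), no constraint from the horn applies, so the closure is free to define them. If this combinatorics becomes awkward, the fallback is to take instead the simplest filler: declare $p\leq q$ for $p\in P_0, q\in P_n$ only when forced by transitivity through some intermediate $P_{m}$. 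This is exactly the transitive closure, and the verification becomes the chain-shortening argument above.
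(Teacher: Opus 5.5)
Your construction is the same as the paper's: glue the fibers, then take the minimal order extending the orders on the faces $d_k^{*}P$, $k\neq i$. However, the step you flag as the main obstacle genuinely fails, and in fact the statement is false in dimension $3$.

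You correctly note that for $n\geq 3$ every edge of $\Delta^{n}$, including $\{0,n\}$, lies in $\Horn^{n}_{i,s}$. That contradicts your later claim that the relations between $P_0$ and $P_n$ are unconstrained. Likewise, ``the triangle $\{k,l,m\}$ lies in the horn'' fails for $n=3$. The missing face $d_i\Delta^{3}=[3]\setminus\{i\}$ is a triangle whose three edges all lie in the horn, but nothing in the horn enforces transitivity along it. So a chain $P_0\to P_m\to P_3$ with $\{0,m,3\}=[3]\setminus\{i\}$ cannot be shortened.

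Concretely, take $n=3$, $i=1$, with fibers $P_0=\{p\}$, $P_1=\emptyset$, $P_2=\{b\}$, $P_3=\{q\}$. Declare $p<b$ over $\{0,1,2\}$, $b<q$ over $\{1,2,3\}$, and $p,q$ incomparable over $\{0,1,3\}$. These agree on the overlaps $\{0,1\},\{1,2\},\{1,3\}$, so they define a horn $\Horn^{3}_{1,s}\to\St(\cR(\cP))$. But any poset over $[3]$ with these faces has $p<b<q$, hence $p<q$, contradicting the face over $\{0,1,3\}$. So your transitive closure alters the order on a face of the horn, and no filler exists. The paper's one-line proof has the same gap: a minimal order \emph{extending} the face orders need not \emph{restrict} to them.

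Part of your argument does survive.
For $n=2$, the closure only adds relations between $P_0$ and $P_2$, which the horn leaves free, so your filler works. For $n\geq 4$, every three-element subset of $[n]$ lies in some $d_m\Delta^{n}$ with $m\neq i$. The union of the edge relations is then already transitive, no closure is needed, and your restriction and local-finiteness checks go through.

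The failure also propagates. A filler in $\PreFlow^{\mu}$ maps to a filler in $\St(\cR(\cP))$, and the horn above lifts to $\PreFlow^{\mu}$ by taking all non-unit morphism manifolds empty. So the inner Kan property of $\PreFlow^{\mu}$ fails too. This has to be fixed in the setup, for instance by restricting the class of posets over $[n]$ allowed in $\cP$, not by a different proof of this lemma.
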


\begin{proof}
	Let $F\colon \Horn_{i,s}^{n} \to \St(\cR(\cP)),$ be an inner horn.
	Restricting to each vertex determines a sequence of sets $P_i$, and we pick the minimal partial order on $P = \cup_j P_i$ which extends the one given on $d_{k}^{*}P$ for all $k\neq i $.
\end{proof}
We will now argue as in \cite[Section 5]{AB} that $\PreFlow \to \St(\cR(\cP))$ is an inner fibration.
By \Cref{cor:canonical_presheaf_pullback}, the lifting problem
\begin{equation*}
	\begin{tikzcd}
		\Horn^{n}_{i,s} \arrow[r] \arrow[d]      &   \PreFlow  \arrow[d] \\
		\Delta^{n}_{s}  \arrow[r] \arrow[ur,dashed]              &  \St(\cP)
	\end{tikzcd}
\end{equation*}
is equivalent under unstraightening to
\begin{equation*}
	\begin{tikzcd}
		F(\Horn_{i,s}^{n}) \arrow[r] \arrow[d]      &   \Dbl_{\infty/\Mfd_{\diamond}^{\otimes}}  \arrow[d] \\
		F(\Delta^{n}_{s})  \arrow[r] \arrow[ur,dashed]         &  \Dbl_\infty{/\Mod_{\diamond}^{\otimes}}.
	\end{tikzcd}
\end{equation*}
Recall that a map into a slice $\infty$-category $\cI \to \cC_{/C}$ is equivalently a natural transformation from the underlying diagram in $\cC$, to the constant diagram $\cI \to \cC$ at $C$.
Natural transformations into a constant diagram are equivalent to maps out of a colimit, so because $F(\Delta^{n}_{i,s})$ has a terminal object, and by \Cref{lem:horn_A_as_colimit}, the above is equivalent to
\begin{equation*}
	\begin{tikzcd}
		\Horn_{i}\bfA_{P} \arrow[r, "\bY"] \arrow[d]      &   \Mfd_{\diamond}^{\otimes}  \arrow[d] \\
		\bfA_{P}  \arrow[r]\arrow[ur,dashed]           &  \Mod_{\diamond}^{\otimes} \, .
	\end{tikzcd}
\end{equation*}
Because $\bY$ is a double functor, it is compatible with concatenation, giving $\bY(\gamma \# \gamma') \simeq \bY(\gamma) \otimes \bY(\gamma')$. The component
\begin{align*}
	\bY \colon \Horn_{i}\sfA_{P}(p,q)^{\op} \to \Mfd_{\diamond}
\end{align*}
is therefore given by mapping an arc $\gamma$ with edges labeled by $p,r_{1},\dots ,r_{\ell}, q$ and vertices labeled by $S_0,\dots, S_{\ell}$ to
\begin{align*}
	\bY(\gamma) = \del^{S_0} \bY(q,r_1) \times \dots \times \del^{S_{\ell}}\bY(r_{\ell}, q) \,.
\end{align*}

We write $Y$ for the composite of $\bY$ with the forgetful functor $\Mfd_{\diamond}^{\otimes} \to \Top^{\times}$.
This has components
\begin{align*}
	Y \colon \Horn_{i}\sfA_{P}(p,q)^{\op} \to \Top.
\end{align*}
For each $p,q$, consider also the $L$-block functor
\begin{align*}
	L \colon \Horn_{i}\sfA_{P}(p,r) \to \Top
\end{align*}
of \cite[Equation 5.12]{AB}.
We define $\widetilde{Y}(p,r)$ as the weighted colimit
\begin{align*}
	\widetilde{Y}(p,r) = \left(\coprod_{\gamma'\to \gamma \in \Horn_{i}A_{P(p,r)}} L(\gamma') \times Y(\gamma) \right)/\sim
\end{align*}
in $\Top$, where the equivalence relation identifies the images of
\begin{align*}
	L(\gamma') \times Y(\gamma') \leftarrow L(\gamma') \times Y(\gamma) \rightarrow L(\gamma) \times Y(\gamma) \,.
\end{align*}
In \cite[Lemma 5.8]{AB} it is shown that $\widetilde{Y}(p,r)$ admits the structure of a smooth manifold with corners, and a stratification by $\sfA_{P}(p,r)$ such that $Y(\gamma) \simeq \del^{\gamma}\widetilde{Y}(p,r)$.
We equip $\widetilde{Y}(\gamma_{pr})$ with this stratification and the virtual vector space $U_{P}(p,r)$ to get an object $\widetilde{\bY}(p,r) \in \Mfd_{\diamond}$.
It is also shown in \cite[Corollary 5.10]{AB} that this construction is compatible with the bimodule structure over $\bY_{0}$ and $\bY_{n}$.
We therefore get an unstructured flow $n$-simplex with objects $i\colon P\to [n]$ by the formula
\begin{align*}
	\widetilde{\bY}(p,r) = \begin{cases}
		\widetilde{\bY}(p,r), & p\in P_0, r\in P_n \\
		\del^{i(p),\dots, i(r)} \bY(p,r), & \text{else.}
	\end{cases}
\end{align*}
\begin{lem}\label{lem:horn_filler_as_hocolim}
	In the above situation, the composite
	\begin{align}\label{eq:horn_filler_as_hocolim}
		F(\Horn^{n}_{i,s})^{\triangleright} \subset F(\Delta^{n}_{s}) \xrightarrow{\widetilde{\bY}} \Alg_{\Cat}(\Mfd_{\diamond}) \xrightarrow{-I} \Alg_{\Cat}(\cS_{/\bZ\times BO})
	\end{align}
	is a colimit cone.
\end{lem}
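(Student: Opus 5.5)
We reduce the colimit statement to a statement about the underlying $\cS_{/\bZ\times BO}$-enriched data on each pair of objects. Colimits in $\Alg_{\Cat}(\cS_{/\bZ\times BO})$ over a diagram with a fixed space of objects $P^{\simeq}$ are computed as colimits of algebras on $\Simp^{\op}_{P^{\simeq}}$. Using \Cref{cor:alg_in_slice}, we write $\Alg_{\Cat}(\cS_{/\bZ\times BO})\simeq \Alg_{\Cat}(\cS)_{/\sfB(\bZ\times BO)}$. The forgetful functor from the slice detects colimits, so it suffices to check the colimit condition after forgetting the maps to $\bZ\times BO$. Since the diagram $F(\Horn^{n}_{i,s})$ is a poset of subsets of $[n]$ with a direct (Reedy) structure, and all objects have the same object set up to subposet inclusion, the plan is to check the colimit levelwise on morphism objects. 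Concretely, for each pair $p\leq r$ we show that
\begin{align*}
	\colim_{K\in F(\Horn^{n}_{i,s})} \widetilde{Y}_{P_{K}}(p,r) \to \widetilde{Y}(p,r)
\end{align*}
is an equivalence in $\cS$. We also check compatibility of the index bundles, which is automatic once the spaces agree, because $-I$ is pulled back along stratum inclusions.

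The key step is the geometric one. By construction (following \cite[Lemma 5.8]{AB}), $\widetilde{Y}(p,r)$ is the weighted colimit of $Y$ over $\Horn_{i}\sfA_{P}(p,r)$, with weight the $L$-blocks. We use that each $L(\gamma')$ is contractible (they are cubes/collars). The weighted colimit is then a homotopy colimit of $Y$ over $\Horn_{i}\sfA_{P}(p,r)^{\op}$, because the diagram of $L$-blocks is a cofibrant replacement of the constant weight. \Cref{lem:horn_A_as_colimit} identifies $\Horn_{i}\bfA_{P}$ with the colimit of the $\bfA_{P_{K}}$, and the latching maps there are cofibrations. So the homotopy colimit over $\Horn_{i}\sfA_{P}(p,r)$ decomposes as the homotopy colimit over $K$ of the homotopy colimits over the $\sfA_{P_K}(p,r)$. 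Each of the latter has a terminal object, namely the minimal arc in the opposite category, which we evaluate to $\widetilde{\bY}_{P_K}(p,r)=\bY_{K}(p,r)$. For pairs with $p\notin P_0$ or $r\notin P_n$, the value is already a stratum of a face, and the colimit is trivially attained at a terminal object.

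The hard part will be to make the comparison between the point-set weighted colimit in $\Top$ and the $\infty$-categorical colimit rigorous. The first thing to try is to show that each inclusion $Y(\gamma)\to \widetilde Y$ is a closed cofibration, using the collar structure given by the normal framings, together with Reedy cofibrancy of $K\mapsto \bY_{K}(p,r)$. The latching maps are inclusions of unions of boundary strata into manifolds with corners, which are cofibrations. Given this, $\Pi$ carries the strict colimit to the homotopy colimit. Finally, since the index maps on $\widetilde{Y}$ restrict on each $\bY_K$ to the given ones, shifted by the $\bR^{Q}$ from the consistent normal framings, the cone is a colimit cone in the slice.
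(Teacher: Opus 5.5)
Your first reduction (forgetting the map to $\sfB(\bZ\times BO)$ via \Cref{cor:alg_in_slice}) matches the paper. The central step, however, fails: you propose to check the colimit ``levelwise on morphism objects''. Colimits of enriched categories, whether in $\Alg_{\Cat}(\cS)$ or in $\Cat_{\infty}$, are not computed hom-wise. They freely adjoin composites of morphisms coming from different members of the diagram. Already for a fixed object set, pushouts of monoids are free products, and here the object sets $P_{K}$ even vary.

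A horn is exactly where this matters. Take $n=2$, $i=1$. The objects of $F(\Horn^{2}_{1,s})$ are the subsets of $[2]$ other than $[2]$ and $\{0,2\}$, and none of them contains both $0$ and $2$. So for $p\in P_{0}$, $r\in P_{2}$, your $\colim_{K}\bY_{K}(p,r)$ is empty. By contrast, $\widetilde{Y}(p,r)$ is the composite bimodule, glued from the products $\bY(p,s)\times\bY(s,r)$ with $s\in P_{1}$. For the same reason, your decomposition of the homotopy colimit over $\Horn_{1}\sfA_{P}(p,r)$ into homotopy colimits over the $\sfA_{P_{K}}(p,r)$ is wrong. \Cref{lem:horn_A_as_colimit} says that $\Horn_{i}\bfA_{P}$ is generated by the $\bfA_{P_{K}}$ \emph{under concatenation}. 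In this example every arc of $\Horn_{1}\sfA_{P}(p,r)$ is a concatenation through $P_{1}$ and lies in no $\sfA_{P_{K}}(p,r)$.

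What is needed is to compute the colimit in a model for $\infty$-categories, as the paper does with the Dwyer--Kan model structure on topologically enriched categories. There the strict colimit $C(Y)$ of $K\mapsto \del^{K}Y$ has morphism spaces $\colim_{\gamma\in\Horn_{i}\sfA_{P}(p,q)^{\op}}\del^{\gamma}Y(p,q)$, which do contain the mixed composites. One must then show that the diagram is projectively (Reedy) cofibrant \emph{as a diagram of enriched categories}, i.e.\ that each latching map $L_{K}Y\to\del^{K}Y$ is a cofibration of enriched categories.

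Hom-wise closed cofibrations of spaces, which is what your last paragraph aims for, do not give this. An identity-on-objects functor that is injective on morphism spaces need not be a cofibration; think of the trivial monoid inside $\bZ/2$, viewed as one-object categories. The paper instead proceeds as follows:
\begin{itemize}
\item it uses that composition maps and face inclusions are closed embeddings;
\item it identifies $L_{K}Y$ with the closed subcategory of $\del^{K}Y$ generated under composition by the faces;
\item it solves the lifting problem against trivial fibrations one morphism space at a time.
\end{itemize}
Only then does contractibility of the $L$-blocks give the weak equivalence $C(Y)\to\widetilde{Y}$ that your second paragraph is after.
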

\begin{proof}
	Slice projections are conservative and preserve colimits, so by \Cref{cor:alg_in_slice} it suffices to show that \eqref{eq:horn_filler_as_hocolim} is a colimit after pushing forward to $\Alg(\cS)$.
	The underlying $\cS$-category of a flow simplex is complete, because the only internal equivalences are the identities.
  Therefore, it suffices to show that \eqref{eq:horn_filler_as_hocolim} pushes forward to a colimit cone in $\Cat_{\infty}$.

	Let $\Cat_{\Top}$ denote the category of small categories enriched in simplicial sets.
  This has a Dwyer--Kan model structure which presents $\Cat_{\infty}$.
  Since the $\cS$-category determined by a flow simplex has discrete space of objects, we can factor through this localization functor  by $\del^{(-)}\widetilde{Y}\colon F(\Delta^{n}_{s}) \to \Cat_{\Top}$.
  We will show that $\del^{(-)}Y$ is a projectively cofibrant diagram in this model category, and that the comparison map
	\begin{align*}
		\colim(\del^{(-)}Y) \to \widetilde{Y}
	\end{align*}
	is a weak equivalence.

	We first show that $\del^{(-)}Y$ is a cofibrant diagram.
  Since the category $F(\Horn_{i,s}^{n})$ is direct, it suffices to show that for all proper subsets $K\subsetneq [n]\setminus \left\{ i\right\}$, the latching morphism
	\begin{align*}
		L_{K} Y = \colim_{J \subsetneq K}(\del^{J} Y) \to \del^{K} Y
	\end{align*}
	is a cofibration.
  The topological category $\del^{K} Y$ has the property that any composition map
	\begin{align*}
		\del^{K}Y(p,r) \times \del^{L}Y(r,q) \to \del^{K}Y(p,q)
	\end{align*}
	is an embedding with closed image.
  Similarly, any inclusion map $\del^{J}Y(p,q) \to \del^{K}Y(p,q)$ is an embedding with closed image.
  Moreover, the restriction of $\Sing \del^{(-)}Y$ to the cube~$[1]^{K}$ is a limit diagram of subcategories of $ \del^{K}Y$.
  Each inclusion is an embedding with closed image.
  Using this one can show that the latching morphism $L_{K}Y\to \del^{K}Y$ is precisely the embedded, closed subcategory generated under composition by all the $\del^{d_{j}K}Y$.
  It now suffices to solve the lifting problem
	\begin{equation*}
		\begin{tikzcd}
			L_{K}Y \arrow[r] \arrow[d]      &   \cE  \arrow[d,"p"] \\
			\del^{K} Y  \arrow[r] \arrow[ur,dashed]              &  \calD
		\end{tikzcd}
	\end{equation*}
	where $p$ is essentially surjective on objects, and all $\cE(x,y) \to \calD(px,py)$ are trivial fibrations.
  Because the latching morphism is the identity on objects, the value of the dashed lift on sets of objects is already determined.
  On mapping spaces, each $L_{K}Y(p,q) \to \del^{K}Y(p,q)$ is a cofibration, so we can extend the lift over this mapping space.
  This determines the lift uniquely over the closed and embedded subcategory of $\del^{K}Y$ generated by $\del^{K}Y(p,q)$ and $L_{K}Y$.
  It is clear that we may keep extending one morphism space at the time, each time extending the lift over a new closed and embedded subcategory.

	By a similar reasoning, the colimit $C(Y)$ of the diagram $\del^{(-)}Y\colon F(\Horn_{i,s}^{n}) \to \Cat_{\Top}$ has objects $P$ and morphism spaces
	\begin{align*}
		\underset{\gamma \in \Horn_{i} \sfA_{P}(p,q)^{\op}}{\colim}( \del^{\gamma}Y(p,q)) \,.
	\end{align*}
	Since each $L$-block $L(\gamma')$ is a contractible topological space, the inclusion
	\begin{align*}
		C(Y)(p,q) \to \widetilde{Y}(p,q)
	\end{align*}
	is a homotopy equivalence when $p\in P_{0}, q\in P_{n}$, and a homeomorphism otherwise.
\end{proof}
\begin{cor}\label{cor:horn_filling}
	$\PreFlow^{\mu}\to \St(\cR(\cP))$ is an inner fibration.
\end{cor}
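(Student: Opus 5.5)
The plan is to reduce the inner horn lifting problem for $\PreFlow^{\mu}\to \St(\cR(\cP))$ to two separate extension problems. The first is the unstructured filler, which is already available. The second is the extension of the $\mu$-structure, which we control through the colimit property in \Cref{lem:horn_filler_as_hocolim}.

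First, I would unstraighten the horn, exactly as in the discussion preceding \Cref{lem:horn_filler_as_hocolim}. By \Cref{cor:canonical_presheaf_pullback} and \Cref{lem:horn_A_as_colimit}, a lifting problem
\[
\Horn^{n}_{i,s}\to \PreFlow^{\mu}, \qquad \Delta^{n}_{s}\to \St(\cR(\cP))
\]
is equivalent to a lifting problem of double functors. In it we are given $\bY\colon \Horn_{i}\bfA_{P}\to \Mfd^{\mu}_{\diamond}$ over $\del_{\Horn_i}$, and we want an extension over $\bfA_{P}\to \Mod_{\diamond}^{\otimes}$.

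Next, I would factor $\Mfd^{\mu}_{\diamond}\to \Mod^{\otimes}_{\diamond}$ through $\Mfd_{\diamond}^{\otimes}$. The map $\Mfd^{\mu}_{\diamond}\to\Mfd^{\otimes}_{\diamond}$ is pulled back, by \Cref{def:Mfd_mu}, from the slice $(\cS_{/\bZ\times BO})_{/\mu}\to \cS^{\otimes}_{/\bZ\times BO}$. It is therefore a right fibration of double $\infty$-categories, using \Cref{cor:alg_in_slice}. Via \Cref{cor:alg_in_slice} together with \Cref{prop:algebra_right_fib}, the space of lifts of a given unstructured $n$-simplex $\bX$ to a $\mu$-structured one is then the mapping space
\[
\Map_{\Alg_{\Cat}(\cS_{/\bZ\times BO})}(-I\bX,\mu).
\]
The same identification holds on each face. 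So the lifting problem splits into two steps:
\begin{enumerate}
\item produce the unstructured filler $\widetilde{\bY}$;
\item show that restriction
\[
\Map(-I\widetilde{\bY},\mu)\to \lim_{K\in F(\Horn^{n}_{i,s})}\Map(-I\bY_{K},\mu)
\]
is an equivalence, or at least surjective on $\pi_0$ in the fiber over the given horn data.
\end{enumerate}

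For step (1), I use the Abouzaid--Blumberg filler $\widetilde{\bY}$ constructed above. \cite[Lemma 5.8, Corollary 5.10]{AB} show that it is an unstructured flow $n$-simplex restricting to the given faces. For step (2), \Cref{lem:horn_filler_as_hocolim} says that $-I\widetilde{\bY}$ is the colimit of $-I\bY$ over $F(\Horn^n_{i,s})$ in $\Alg_{\Cat}(\cS_{/\bZ\times BO})$. Mapping out of a colimit turns it into a limit of mapping spaces, so the restriction map is an equivalence, and the compatible family of $\mu$-structures on the horn extends, uniquely up to contractible choice, to $\widetilde{\bY}$. Together these give the dashed lift, so $\PreFlow^{\mu}\to \St(\cR(\cP))$ is an inner fibration. (Composing with \Cref{lem:St(P)_inner_Kan} then gives that $\PreFlow^{\mu}$ is inner Kan.)

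The main obstacle is making step (2) precise. The colimit in \Cref{lem:horn_filler_as_hocolim} is taken in enriched categories over $\cS_{/\bZ\times BO}$ along the forgetful composite $-I\circ\widetilde{\bY}$. We must check two things:
\begin{enumerate}
\item The structured lifting problem really is a map out of that colimit. This means identifying the fiber of $\Alg(\Mfd^{\mu}_\diamond)\to\Alg(\Mfd_\diamond)$ at $\widetilde{\bY}$ with a slice mapping space, and checking that the given horn of $\mu$-structures forms a cocone over $-I\bY$ rather than merely pointwise data.
\item The compatibility with the underlying unstructured filler is strict enough that the two-step decomposition is legitimate.
\end{enumerate}
I would handle both points by working throughout with right fibrations over $F(\Delta^{n}_{s})$. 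Pulling back $\Alg(\Mfd^{\mu}_{\diamond})$ along $\widetilde{\bY}$ gives a right fibration whose straightening is $K\mapsto \Map(-I\widetilde{\bY}_K,\mu)$. By \Cref{lem:limts_as_lifting}, extending a section from $F(\Horn^n_{i,s})$ to the cone point is exactly the statement that this presheaf sends the colimit cone to a limit, which is what \Cref{lem:horn_filler_as_hocolim} provides.
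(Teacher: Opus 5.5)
Your proposal is correct and takes the same approach as the paper. You build the Abouzaid--Blumberg filler $\widetilde{\bY}$, use that $\Alg(\Mfd^{\mu}_{\diamond})$ is the pullback of $\Alg(\cS_{/\bZ\times BO})_{/\mu}\to\Alg(\cS_{/\bZ\times BO})$ along $-I$, and extend the $\mu$-structure over $F(\Horn^{n}_{i,s})^{\triangleright}$ via the colimit cone of \Cref{lem:horn_filler_as_hocolim}. The one step you leave implicit, which the paper states, is the final extension from $F(\Horn^{n}_{i,s})^{\triangleright}$ to all of $F(\Delta^{n}_{s})$ (i.e.\ over the missing $i$-th face); this is automatic because the inclusion is cofinal (both contain the terminal object $[n]$) and you are lifting against a right fibration.
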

\begin{proof}
	A lifting diagram
	\begin{equation*}
		\begin{tikzcd}
			\Horn_{i,s}^{n} \arrow[r, "\bY"] \arrow[d]      &   \PreFlow^{\mu}  \arrow[d] \\
			\Delta^{n}_{s}  \arrow[r]  \arrow[ur,dashed, "\widetilde{\bY}"]           &  \cR(\cP)
		\end{tikzcd}
	\end{equation*}
	is equivalent under unstraightening to
	\begin{equation*}
		\begin{tikzcd}
			F(\Horn^{n}_{i,s}) \arrow[rr] \arrow[d]      &&   \Alg(\Mfd_{\diamond}^{\mu})  \arrow[d] \\
			F(\Delta^{n}_{s}) \arrow[r] \arrow[urr, dashed] &\cP  \arrow[r]               &   \Alg(\Mod_{\diamond}) \,.
		\end{tikzcd}
	\end{equation*}
	Let $F\bY$ denote the underlying horn of $\bY$ in $\PreFlow$, and construct a filler $F\widetilde{\bY}$ as in \Cref{lem:horn_filler_as_hocolim}.
  By definition of $\Mfd_{\diamond}^{\mu}$ and \eqref{cor:alg_in_slice}, the right square of the following diagram is a pullback.
  It therefore suffices to solve the lifting problems
	\begin{equation*}
		\begin{tikzcd}
			F(\Horn_{i,s}^{n}) \arrow[r, "\bY"] \arrow[d]      & \Alg(\Mfd_{\diamond}^{\mu}) \arrow[r] &   \Alg(\cS_{/ (\bZ\times BO)})_{/\mu}  \arrow[dd] \\
			F(\Horn_{i,s}^{n})^{\triangleright} \arrow[d] \arrow[urr, dashed] && \\
			F(\Delta^{n}_{s})  \arrow[r, "F\widetilde{\bY}"']    \arrow[uurr, dotted]    & \Alg(\Mfd_{\diamond}) \arrow[r, "U\ominus T"].       &   \Alg(\cS_{/\bZ\times BO}) \,.
			\arrow[from=1-2, to=3-2, crossing over]
		\end{tikzcd}
	\end{equation*}
	The dashed lift exists by \Cref{lem:horn_filler_as_hocolim} and the universal property of colimits.
  The dotted lift exists because the latter left vertical is cofinal and the right vertical is a right fibration.
\end{proof}

\subsection{Degeneracies}\label{subsec:degeneracies}

In this subsection, we show that $\PreFlow^{\mu}$ is quasi-unital by using the criterion of \Cref{prop:outer_degeneracies}.
We start with giving a description of the free slice $\PreFlow^{\mu}_{\Delta^{n}_{s}/}$ in terms of a fibration.
Recall that $\Simp_{a}$ denotes the augmented simplicial category, i.e., the full subcategory of $\Pos$ spanned by the totally ordered sets.
Since the only map to the empty poset is the identity, the directed pullback $\Pos/\Simp_{a} \to \Simp_{a}$ has contractible fiber over $[-1]$.

Consider the pullbacks
\begin{equation*}
	\begin{tikzcd}
		(\Pos/\Simp_{a})_{\Delta^{n}/} \arrow[r]  \arrow[d] &
		j^{*}(\Pos/\Simp_{a}) \arrow[r] \arrow[d]      &
		(\Pos/\Simp_{a})  \arrow[d] \\
		\ast \times \Simp_{a} \arrow[r, "{ \left\{ [n]\right\}\times id  }"] &
		\Simp_{a}\times \Simp_{a}  \arrow[r, "\join"]               &
		\Simp_{a} \,.
	\end{tikzcd}
\end{equation*}
A map $\phi\colon [n] \to [m]$ in $\Simp_{a}$ induces a natural transformation  $\left\{ [n]\right\}\times id \to \left\{ [m]\right\}\times id$, which on pullbacks induces a map of Cartesian fibrations $\phi^{*}\colon (\Pos/\Simp)_{\Delta^{m}/} \to (\Pos/\Simp)_{\Delta^{n}/}$.
When we restrict to subcategories corresponding to $\cP$, we get morphisms $\cP_{\Delta^{n}_{s}/} \to \cP_{a}$ of Cartesian fibrations over $\Simp_{s,a}$. Note that
\begin{align*}
	\Simp_{s,a} \xrightarrow{[n]\join-} \Simp_{s,a}
\end{align*}
is an equivalence onto the subcategory of objects $[n+m]$ and injective morphisms which leave the terms $\left\{ 0,\dots, n\right\}$ fixed.
By pullback stability we can therefore think of $\cP_{\Delta^{n}_{s}/} \to \cP_{a}$ as the subcategory of objects $P\to [n+m]$, and morphisms $\phi^{*}P \to P$ where $\phi$ leaves the terms $\left\{ 0,\dots , n\right\}$ fixed.

We write $\cA_{\Delta^{n}_{s}/}$ for the composite
\begin{align*}
	\cP_{\Delta^{n}_{s}/} \to \cP_{a} \xrightarrow{\cA} \Alg(\Mod_{\diamond}) \,.
\end{align*}
The pasting law implies that the outer left rectangle of the diagram
\begin{equation*}
	\begin{tikzcd}
		\cA_{\Delta^{n}_{s}/}^{*}\Alg(\Mfd^{\mu}_{\diamond}) \arrow[r] \arrow[d]  &
		\cA^{*}\Alg(\Mfd^{\mu}_{\diamond})  \arrow[d] \arrow[r] &
		\Alg(\Mfd^{\mu}_{\diamond}) \arrow[d] \\
		\cP_{\Delta^{n}/} \arrow[r] \arrow[d] &
		\cP_{a} \arrow[r, "\cA"] \arrow[d] &
		\Alg(\Mod_{\diamond}) \\
		\Simp_{s,a} \arrow[r, "{[n]\join -}"] &
		\Simp_{s,a}
	\end{tikzcd}
\end{equation*}
is a pullback.
This identifies the free slice $\PreFlow^{\mu}_{\Delta^{n}_{s}/}$ with the straightening of the underlying right fibration of
\begin{align*}
	\cA_{\Delta^{n}_{s}/}^{*}\Alg(\Mfd^{\mu}_{\diamond}) \to \cP_{\Delta^{n}_{s}/} \to \Simp_{s,a} \,.
\end{align*}

To construct initial and terminal degeneracies for $\PreFlow$, we will use the construction of \emph{conic degenerations} from \cite[Section 6.1]{AB}.
We start by noting that the manifold with corners $[0,\infty)^{n}$ is naturally stratified by the poset $[1]^{n}$ of subsets of $\left\{ 1,\dots , n\right\}$ by letting $I\subset \left\{ 1,\dots,n\right\}$ correspond to the stratum where $t_{i} =0 $ for $i\in I$.
The following is a summary of Lemmas 6.2 and 6.3 of \cite{AB}.

\begin{lem}\label{lem:conic_degeneracies}
	For each natural number $n$, there exists a manifold with corners $D(n)$. For $n>0$ there exist smooth maps $D(n) \to [0,\infty)^{n-1}$ satisfying the following.
	\begin{enumerate}
		\item $D(0)$ is a point.
		\item The fiber of $D(n+1)$ over a point $(t_{1},\dots, t_{n})$ is a union of intervals, indexed by one more than the number of $i$ such that $t_{i}=0$.
		\item For a subset $I\subset \left\{ 0, \dots n\right\}$, the inclusion of the subset where $t_{i} >0 $ for $i\in I$ extends to a pullback square
		\begin{equation*}
			\begin{tikzcd}
				{D(n-\vert I \vert + 1) \times (0,\infty)^{I} } \arrow[r] \arrow[d]      &   D(n+1)  \arrow[d] \\
				{[0,\infty)^{n-\vert I \vert} \times (0,\infty)^{I}  } \arrow[r]               &   {[0,\infty)^{n} }
			\end{tikzcd}
		\end{equation*}
		which is natural in the sense that for a chain of subsets $J\subset I \subset \left\{1,\dots, n\right\}$, the obvious diagram commutes.
		\item The preimage in $D(n+1)$ of the stratum $\del^{ \left\{ i \right\}} [0,\infty)^{n}$ is the image of two codimension 1 strata of the form
		\begin{align*}
			D(i) \times [0,\infty)^{n-i} \to D(n+1) \leftarrow [0,\infty)^{i-1}\times D(n-i+1) \,.
		\end{align*}
		The codimension 1 boundary of $D(n+1)$ consists of strata of the above form, as well as two strata
		\begin{align*}
			D(0) \times [0,\infty)^{n} \to D(n+1) \leftarrow [0,\infty)^{n} \times D(0) \,.
		\end{align*}
		The inclusion of these are natural, in the sense that for $1\leq i \leq j \leq n$ we have a commutative diagram of stratum inclusions as follows:
		\begin{equation*}
			\begin{tikzcd}
				{[0,\infty)^{i-1} \times D(j-i) \times [0,\infty)^{n-j} } \arrow[r] \arrow[d]      &   {[0,\infty)^{i-1} \times D(n-i+1)}  \arrow[d] \\
				{ D(j) \times [0,\infty)^{n-j} } \arrow[r]               &   D(n+1) \,.
			\end{tikzcd}
		\end{equation*}
	\end{enumerate}
\end{lem}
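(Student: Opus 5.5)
We construct $D(n)$ by induction on $n$ as an explicit smooth manifold with corners, following the conic-degeneration construction of \cite[Section 6.1]{AB}, and read off properties (1)--(4) from the construction. Since the statement is a summary of \cite[Lemmas 6.2 and 6.3]{AB}, the plan is to recall that construction and check that the naturality claims in (3) and (4) hold on the nose, not just up to diffeomorphism. This matters later, because the degeneracies $s_{0}$ and $s_{\omega}$ must be strictly compatible with face maps before we pass to augmented semi-simplicial spaces.

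\textbf{Construction.} Set $D(0)=\ast$ and $D(1)=[0,1]$, an interval over the point $[0,\infty)^{0}$. For general $n$, view $D(n+1)$ as a subset of $[0,\infty)^{n}\times\bR$: over $t=(t_{1},\dots,t_{n})$ the fiber is $[0,1]$ minus small open gaps placed at the points $\tfrac{i}{n+1}$ for those $i$ with $t_{i}=0$. We smooth this by choosing a cutoff $\rho$ and taking the region
\begin{align*}
\bigl\{(t,s) : \textstyle\prod_{i} \bigl(t_{i} + \rho(s-\tfrac{i}{n+1})\bigr) > 0 \text{ or a suitable equivalent condition}\bigr\}.
\end{align*}
In words, the gap at position $i$ is the conic region $\{\lvert s - \tfrac{i}{n+1}\rvert < \epsilon,\ t_{i}\ \text{small}\}$ whose width shrinks to zero as $t_{i}\to 0$ is reversed. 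Equivalently, the slit opens only where $t_{i}=0$, so that a neighbourhood of the slit looks like a cone over a quadrant.

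\textbf{Verifying the properties.}
\begin{itemize}
\item Property (2) holds by construction.
\item Property (3) holds because the defining condition only involves $t_{i}$ near zero. Over $t_{i}>0$ for $i\in I$ the gaps indexed by $I$ are absent. After reparametrising $s$ by a piecewise-affine collapse of the intervals around the removed points, this identifies the piece with $D(n-\lvert I\rvert+1)\times(0,\infty)^{I}$. We choose the collapses compatibly for nested $J\subset I$, so the squares commute strictly.
\item Property (4) comes from the local model near a slit: there the two sides of the gap at $t_{i}=0$ become the boundary faces $D(i)\times[0,\infty)^{n-i}$ and $[0,\infty)^{i-1}\times D(n-i+1)$. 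The endpoints $s=0,1$ give the two faces $D(0)\times[0,\infty)^{n}$ and its mirror.
\end{itemize}

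\textbf{Main obstacle.} The hardest step is smoothness together with strict naturality in (4). We need the corner charts at the intersection of the slit faces with the coordinate hyperplanes to be genuine corner charts, and the iterated face inclusions to commute on the nose. We expect to handle this by choosing all cutoffs to be products of one-variable functions depending only on $(t_{i}, s)$ in disjoint $s$-windows, so that the construction is locally a product and naturality follows from the construction being functorial in restricting the index set. If strict commutativity fails, we fall back on \cite[Lemma 2.26]{AB}-type contractibility of choices, which suffices for the homotopical use below.
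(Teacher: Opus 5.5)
The paper does not prove this lemma itself. It records it as a summary of \cite[Lemmas 6.2 and 6.3]{AB}, so deferring to that construction is exactly what the paper does. The trouble is the explicit construction you sketch, which cannot satisfy (2)--(4).

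The case $i=j$ of the naturality square in (4) says that the right end face of $D(i)\times[0,\infty)^{n-i}$ and the left end face of $[0,\infty)^{i-1}\times D(n-i+1)$ are the same stratum $[0,\infty)^{i-1}\times D(0)\times[0,\infty)^{n-i}$ of $D(n+1)$. So over a point with $t_i=0$ the fiber is a chain of intervals meeting end to end, and the meeting point is a codimension-two corner of $D(n+1)$. Your fibers ``$[0,1]$ minus open gaps'' are disjoint intervals instead.

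More seriously, no subset of $[0,\infty)^n\times\bR$ can do the job if the projection is the structure map and the fiber over $t_i>0$ is the full interval $[0,1]$:
\begin{itemize}
\item Suppose the edge points of a gap of positive width belong to $D(n+1)$. Then the subset is not locally closed there, because points over $t_i>0$ inside the gap converge to missing points. Hence it is not a manifold with corners.
\item Suppose they do not belong to it, as with your strict inequality $\prod_i(\cdots)>0$, which cuts out an open subset of $[0,\infty)^n\times[0,1]$. Then the components of the fiber adjacent to the gap are half-open, and the end faces of $D(i)\times[0,\infty)^{n-i}$ demanded by (4) are missing.
\item Suppose the gap has width zero. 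Then both faces over $\{t_i=0\}$ lie in the hyperplane $\{t_i=0\}$, so for the induced smooth structure they cannot meet transversally. The meeting point is then an ordinary boundary point, which cannot be split into two strata.
\item The wedge reading, where the gap persists over small $t_i>0$, contradicts (2) and (3) instead.
\end{itemize}

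The missing idea is the correct local model at a break: the quadrant $[0,\infty)^2\ni(a,b)$ mapping to $[0,\infty)$ by $t_i=ab$. Its fiber over $t_i>0$ is a single arc, and its fiber over $t_i=0$ is the two axes joined at the corner. The preimage of $\{t_i=0\}$ is exactly the union of the two faces $\{a=0\}$ and $\{b=0\}$, which is the behaviour required by (2) and (4). Because this map is not a submersion at the corner, $D(n+1)$ has to be assembled from such charts (or taken from \cite{AB}) rather than cut out of $[0,\infty)^n\times\bR$.

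Two smaller points. First, the piecewise-affine collapses you use for (3) are not smooth. Second, your fallback to contractibility of choices would only give naturality up to coherent homotopy. The lemma asserts strictly commuting squares, and strict agreement on overlaps is what the later clutching construction of $D(\bX,P)$ uses.
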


\begin{rem}\label{rem:D(S)}
	A finite totally ordered set $S$ is canonically equivalent to $\left\{1, \dots , \vert S \vert \right\}$.
  We therefore write $D(S) \coloneq D(\vert S \vert )$ and $D(S+1) \to [0,\infty)^{S}$ for the map $D(\vert S \vert +1 ) \to [0,\infty)^{\vert S \vert}$.
\end{rem}

For a totally ordered set $S$, we write $\overline{S}$ for the totally ordered set obtained by adjoining a minimal object $-\infty$ and a maximal object $\infty$ to $S$.
For a set $Q$ and a subset $S\subset Q$ equipped with a total order, we define $D([1]^{Q},S)$ by the pullback
\begin{equation}\label{eq:D(1^Q,S)}
	\begin{tikzcd}
		{D([1]^{Q}, S)} \arrow[rr] \arrow[d]      &&   \sfA_{\overline{S}\times [1]}( (-\infty,0), (\infty, 1))  \arrow[d] \\
		{[1]^{Q}}  \arrow[r, "\cap S"]               &   {[1]^{S} } \arrow[r, "\simeq"] & \sfA_{\overline{S}}(-\infty,\infty) \,,
	\end{tikzcd}
\end{equation}
where the vertical map on the right pushes forward edge labels along the projection $\overline{S}\times [1] \to \overline{S}$, and merges edges with identical labels if necessary.
An object of $D([1]^{Q},S)$ is determined by a subset $K\subset Q$ and an arc $\alpha \in A_{\overline{S}\times [1]}( (-\infty ,0),( \infty,1 ))$ such that all elements of $K\cap S$ appear as edge labels in $\alpha$.
For a fixed $K$, $\alpha$ can be written uniquely as a concatenation (at a vertex) $\alpha_{0}\#\alpha_{1}$ where $\alpha_{i}$ has labels in $\overline{S}\times\left\{ i \right\}$.

\begin{lem}\label{lem:D(s+1)_stratification}
	The manifold with corners $D(S+1)$ admits a stratification by $D([1]^{S},S)$ such that the stratum corresponding to $\alpha = \alpha_{0} \# \alpha_{1}$ is
	\begin{align*}
		\del^{\alpha} D(S) \cong \del^{\alpha_{0}} [0,\infty)^{S} \times D( (\max (\alpha_{0}) ,\min(\alpha_{1}) ) \times \del^{\alpha_{1}} [0,\infty)^{S} \,.
	\end{align*}
\end{lem}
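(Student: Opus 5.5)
The plan is to build the stratification by induction on $\vert S\vert$, using the properties of $D(n)$ summarised in \Cref{lem:conic_degeneracies}. It suffices to produce a functor $s\colon \cP^{\can}_{D(S+1)} \to D([1]^{S},S)$ that preserves codimension and induces equivalences on overcategories; we must also check that $D([1]^{S},S)$ is a model in the sense of \Cref{def:manifold_with_corners}.

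First, describe $D([1]^{S},S)$ combinatorially. Here $Q=S$, so $K=K\cap S$ is forced to be the set of labels of $\alpha$ projected to $\overline{S}$. An object is therefore the same as an arc $\alpha=\alpha_{0}\#\alpha_{1}$. Here $\alpha_{0}$ runs from $(-\infty,0)$ to $(\max\alpha_{0},0)$, and $\alpha_{1}$ runs from $(\min\alpha_{1},1)$ to $(\infty,1)$. The labels of $\alpha_0$ and of $\alpha_1$ project to subsets of $\overline{S}$. The switching edge pair corresponds to the ``interval factor'' $D((\max\alpha_{0},\min\alpha_{1}))$.

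The overcategory of $\alpha$ in $\sfA_{\overline{S}\times[1]}$ is the poset of subsets of the internal edges of $\alpha$, by the description following \Cref{def:A_P}. Pulling back along $[1]^{S}\to[1]^{S}$, which is an isomorphism here, preserves this. So $D([1]^{S},S)$ is a model, with codimension equal to the number of internal edges, and the minimal object is the two-edge arc $((-\infty,0),(\infty,1))$.

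Next, define $s$ on strata. Use \Cref{lem:conic_degeneracies}(4): the codimension-one strata of $D(S+1)$ are the two ends $D(0)\times[0,\infty)^{S}$ and $[0,\infty)^{S}\times D(0)$, together with, for each $i\in S$, the two strata $D(i)\times[0,\infty)^{n-i}$ and $[0,\infty)^{i-1}\times D(n-i+1)$. Send these respectively to the arcs with a single internal edge labelled $(-\infty,1)$, $(\infty,0)$, $(i,1)$ and $(i,0)$. This is a bijection onto the codimension-one objects: an internal edge of a single-internal-edge arc is exactly one of these four types.

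A general stratum is an intersection of codimension-one strata. Using the naturality square in (4) and induction on $\vert S\vert$ (applying the lemma inductively to the factors $D(i)$, $D(n-i+1)$), each corner stratum is identified with a product
\[
\del^{\alpha_{0}}[0,\infty)^{S}\times D((\max\alpha_{0},\min\alpha_{1}))\times\del^{\alpha_{1}}[0,\infty)^{S}.
\]
This is the stated formula, with $\alpha$ recording which codimension-one strata are intersected. The fibre description (2) and pullback square (3) show that near a point with $t_i=0$ for $i\in I$ the local corner structure is the product of the $D$-factor with $[0,\infty)^{I}$. This gives that $s$ induces isomorphisms $\cP^{\can}_{/\sigma}\cong D([1]^{S},S)_{/s(\sigma)}$.

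The main obstacle is checking that intersections of codimension-one strata are connected (or map injectively), and that the labelling is compatible with all compositions, i.e.\ that the local picture is genuinely a product so that $\cP^{\can}$ is itself a model. I would handle this by the inductive product decomposition, reducing connectivity to that of $[0,\infty)^{k}$ faces and of the lower $D(m)$ strata. I would then verify compatibility against the commutative square in \Cref{lem:conic_degeneracies}(4), which encodes precisely the concatenation $\alpha_{0}\#\alpha_{1}$.
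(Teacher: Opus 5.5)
The paper gives no separate proof of this lemma. It is stated right after the summary of \cite[Lemmas 6.2 and 6.3]{AB} and is meant to be read off from Abouzaid--Blumberg's explicit model of the conic degenerations, where the breaking pattern of a point of $D(n+1)$ is directly visible. You instead derive the stratification from properties (1)--(4) of \Cref{lem:conic_degeneracies} alone, by induction on $|S|$ through the faces.

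Your set-up is right. For $Q=S$ the pullback defining $D([1]^{S},S)$ is just $\sfA_{\overline{S}\times[1]}((-\infty,0),(\infty,1))$, so it is a model. You label the $2|S|+2$ codimension-one faces by the single-edge arcs $(-\infty,1)$, $(\infty,0)$, $(i,1)$, $(i,0)$, where $(i,1)$ means ``to the left of the break at $i$''. This labelling is consistent with the square in (4), since $(i,0)<(j,1)$ exactly when $i\leq j$. The axiomatic route has the merit of depending only on the summarized properties. The cost is that it must extract incidence information the explicit model gives for free, and that is where your sketch is thin.

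Three things need fixing.

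\emph{The formula is off by one.} The formula you copy from the statement contains a typo. Your own identification of the $(i,1)$-face with $D(i)\times[0,\infty)^{n-i}$ shows the middle factor must be $D\bigl((\max\alpha_{0},\min\alpha_{1})+1\bigr)$ when $\max\alpha_{0}<\min\alpha_{1}$. When the two agree it is a point, so the stratum is a face of $[0,\infty)^{S}$. This is how the lemma is actually used in \Cref{lem:D(XP)_stratification}. The left-hand side should also read $\del^{\alpha}D(S+1)$.

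\emph{Properties (2) and (3) do not give the overcategory isomorphism.} Property (3) describes the open locus where the coordinates in $I$ are \emph{positive}, with a factor $(0,\infty)^{I}$. Near a point with vanishing set $Z$, it therefore only reduces you to $D(|Z|+1)$ near its fibre over the origin. Property (2) says nothing about the corner structure there. The isomorphism $\cP^{\can}_{/\sigma}\cong D([1]^{S},S)_{/s(\sigma)}$ has to come from the face induction instead. Since the faces in (4) are embedded, the $k$ local faces at a codimension-$k$ point lie in $k$ distinct global faces. You must then show that their labels form a chain in $\overline{S}\times[1]$. Concretely, you need:
\begin{itemize}
\item the $(j,1)$- and $(i,0)$-faces are disjoint for $j<i$;
\item the two end faces are disjoint;
\item inside the $(i,1)$-face, the coordinate faces $\{t_{j}=0\}$ for $j>i$ are its intersections with the $(j,1)$-face, not the $(j,0)$-face.
\end{itemize}
The square in (4) only records intersections of $(i,0)$- and $(j,1)$-faces with $i\leq j$. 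The other incidences must be extracted from the fibre count in (2), together with the compatibility of the face inclusions with the projections to $[0,\infty)^{S}$.

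\emph{Connectivity of intersections is not the real obstacle.} In \Cref{def:stratified_manifold_with_corners} a single label may cover several components. Embeddedness of the faces and the chain condition above are all that is needed.
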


Let $\cR$ be a model for manifolds with corners, and let $P \subset \ob(\cR)$ be a subset of codimension 1 objects.
We then define $Q_{P}(\sigma) \subset Q_{\cR}(\sigma)$ to be the preimage of $P$ under the map
\begin{align*}
	Q_{\cR}(\sigma) \subset \ob(\cR_{/\sigma}) \to \ob(\cR) \,.
\end{align*}
We additionally assume that $P$ is equipped with a partial order such that the induced order on each $Q_{P}(\sigma)$ is total.
We define a functor $D(\cR,P) \to \cR$ by the colimit
\begin{align*}
	D(\cR,P) \coloneq \colim_{\sigma\in \cR} D([1]^{Q_{\cR}(\sigma)}, Q_{P}(\sigma)) \to \colim_{\sigma \in \cR}[1]^{Q_{\cR}(\sigma)} \simeq \colim_{\sigma\in \cR} \cR_{/\sigma} \simeq \cR \,.
\end{align*}
This is again a model for manifolds with corners because overcategories can be computed by passing to a local model \eqref{eq:D(1^Q,S)}.
An object in $D(\cR, P)$ is uniquely determined by an object $\sigma \in \cR$ and an arc $\alpha = \alpha_{0}\#\alpha_{1}$  such that all elements of $Q_{P}(\sigma)$ appear in edge labels of $\alpha$.
Such an arc is uniquely determined by the elements $\max \alpha_{0}\leq \min \alpha_{1}\in Q_{P}(\sigma)$.
A morphism $(\sigma, \alpha) \to (\tau, \beta)$ is a morphism $a\colon \sigma \to \tau$ in $\cR$ such that
\begin{align*}
	a_{*} \max \alpha_{0} \leq \max \beta_{0} \leq \min \beta_{1} \leq a_{*} \min \alpha_{1} \,.
\end{align*}
For an object $\gamma =(\sigma, \alpha)$ as above, we define a collection $\del^{\gamma}P$ of codimension 1 objects of $\del^{\sigma} \cR$ by declaring $Q_{\del^{\gamma}P }(\tau)$ to be the preimage of the interval $(\max \alpha_{0},\min \alpha_{1})\subset Q_{P}(\tau)$ under the map
\begin{align*}
	Q_{\del^{\sigma}\cR}(\tau)\to Q_{\del^{\sigma}\cR}(\tau) \amalg Q_{\cR}(\sigma) \simeq  Q_{\cR}(\tau) \,.
\end{align*}
We then have canonical equivalences
\begin{align}\label{eq:D(RP)}
	\del^{\gamma} D(\cR, P) \simeq
	\begin{cases}
		D(\del^{\sigma}\cR, \del^{\gamma}P) & \max \alpha_{0}< \min \alpha_{1} \\
		\del^{\sigma}\cR  & \max \alpha_{0} = \min \alpha_{1}.
	\end{cases}
\end{align}

If $\bX$ is a manifold with corners stratified by $\cR$, and equipped with a compatible system of collars, we get a cover of $\bX$ by open sets of the form $\interior \del^{\sigma} \bX \times [0,\infty)^{Q_{\cR}(\sigma)}$, whose intersections are labeled by morphisms $\sigma \to \tau$ in $\cR$.
We write $Q_{\cR\setminus P}(\sigma) \subset Q_{\cR}(\sigma)$ for the complement of $Q_{P}(\sigma)$ and define a map of manifolds with corners $D(\bX, P) \to \bX$ by clutching together maps
\begin{align*}
	\interior \del^{\sigma} \bX \times D(Q_{P}(\sigma)+1) \times [0,\infty)^{Q_{\cR\setminus P}(\sigma)} &\to \interior \del^{\sigma} \bX \times [0,\infty)^{Q_{P}(\sigma)} \times [0,\infty)^{Q_{\cR\setminus P}(\sigma)} \\
	&\simeq  \interior \del^{\sigma} \bX \times [0,\infty)^{Q_{\cR}(\sigma)}
\end{align*}
over intersections which agree by \Cref{lem:conic_degeneracies}.

\begin{lem}\label{lem:D(XP)_stratification}
	The manifold with corners $D(\bX, P)$ admits a stratification by $D(\cR, P)$ such that for $\gamma = (\sigma, \alpha)$,
	\begin{align*}
		\del^{\gamma}D(\bX, P) \cong \begin{cases}
			D(\del^{\sigma} \bX, \del^{\gamma}P ), &  \max \alpha_{0} < \min \alpha_{1} \\
			\del^{\sigma}\bX, & \max \alpha_{0} = \min \alpha_{1}.
		\end{cases}
	\end{align*}
\end{lem}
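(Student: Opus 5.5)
The stratification will be defined locally and then glued. Since $D(\bX,P)$ is built by clutching the local pieces
\[
\interior \del^{\sigma}\bX \times D(Q_{P}(\sigma)+1)\times [0,\infty)^{Q_{\cR\setminus P}(\sigma)}
\]
over the cover of $\bX$ indexed by $\cR$, we first stratify each piece and then check that these stratifications agree on overlaps. Likewise, $D(\cR,P)$ is the colimit of the local models $D([1]^{Q_{\cR}(\sigma)},Q_{P}(\sigma))$, so a compatible family of local stratifying functors assembles into a functor from $\cP^{\can}_{D(\bX,P)}$ to $D(\cR,P)$. That this functor induces equivalences on overcategories, as \Cref{def:stratified_manifold_with_corners} requires, is a local condition, and so it reduces to the local check.

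\emph{Local step.} By \Cref{lem:D(s+1)_stratification} with $S=Q_{P}(\sigma)$, the factor $D(Q_{P}(\sigma)+1)$ is stratified by $D([1]^{S},S)$. The factor $[0,\infty)^{Q_{\cR\setminus P}(\sigma)}$ carries its standard stratification by $[1]^{Q_{\cR\setminus P}(\sigma)}$. Their product is stratified by
\[
D([1]^{S},S)\times [1]^{Q_{\cR\setminus P}(\sigma)}.
\]
From the pullback \eqref{eq:D(1^Q,S)}, together with $[1]^{Q}\simeq [1]^{S}\times [1]^{Q\setminus S}$, this product is canonically $D([1]^{Q_{\cR}(\sigma)},Q_{P}(\sigma))$.

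\emph{Compatibility.} For a morphism $\sigma\to\tau$, the two charts overlap along the subset where the coordinates in $Q_{\del^{\sigma}\cR}(\tau)$ are positive. On this overlap, property (3) of \Cref{lem:conic_degeneracies} identifies $D(Q_P(\tau)+1)$, restricted there, with $D(Q_{P}(\sigma)+1)\times(0,\infty)^{I}$. Under this identification, the local stratifying categories are related by exactly the transition functors of the colimit diagram defining $D(\cR,P)$, namely restriction along $[1]^{Q_{\cR}(\sigma)}\to [1]^{Q_{\cR}(\tau)}$. The naturality statements in (3) and (4) of \Cref{lem:conic_degeneracies} supply the higher coherences. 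So the local stratifications glue to a functor $\cP^{\can}_{D(\bX,P)}\to D(\cR,P)$ that preserves codimension and induces equivalences on overcategories.

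\emph{Identifying the strata.} Fix $\gamma=(\sigma,\alpha)$ and set $a=\max\alpha_0$ and $b=\min\alpha_1$. The stratum $\del^{\gamma}D(\bX,P)$ lies over $\del^{\sigma}\bX$. In each chart near $\del^{\sigma}\bX$, \Cref{lem:D(s+1)_stratification} describes the fiber over a point of $\del^{\sigma}\bX$ as
\[
\del^{\alpha_0}[0,\infty)^{S}\times D((a,b))\times \del^{\alpha_1}[0,\infty)^{S},
\]
and on $\del^{\sigma}\bX$ the outer factors are points. If $a=b$, then $D((a,b))=D(\emptyset)=D(0)$ is a point, and the stratum is $\del^{\sigma}\bX$. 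If $a<b$, the surviving factor $D((a,b))$ is precisely the local conic degeneration of $\del^{\sigma}\bX$ along the codimension 1 labels lying in $(a,b)$, which is by definition $\del^{\gamma}P$. Gluing these over the collar cover of $\del^{\sigma}\bX$ recovers $D(\del^{\sigma}\bX,\del^{\gamma}P)$, and this matches \eqref{eq:D(RP)} on the combinatorial side.

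The main obstacle is the compatibility step. One must show that the clutching maps of $D(\bX,P)$ respect the local stratifications coherently, including the reindexing of $Q_{P}(\tau)$ as $Q_{\del^{\sigma}\cR}(\tau)\amalg Q_{\cR}(\sigma)$, so that gluing these strata reproduces $D(\del^{\sigma}\bX,\del^{\gamma}P)$ with its own clutching. To handle this, I would first verify the case $\bX=[0,\infty)^{Q}$ with its standard stratification, where everything is explicit from \Cref{lem:conic_degeneracies}. The general case then follows because both sides are built by the same colimit over $\cR$ of these local models.
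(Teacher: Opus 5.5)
Your proposal is correct and follows essentially the same route as the paper. Both stratify each clutching chart via \Cref{lem:D(s+1)_stratification}, and both obtain $\del^{\gamma}D(\bX,P)$ by gluing the local strata $\del^{a_{*}\alpha}D(Q_{P}(\tau)+1)$ over the charts indexed by $\sigma\to\tau$ in $\del^{\sigma}\cR$; the surviving middle factor is the local model of $D(\del^{\sigma}\bX,\del^{\gamma}P)$, or a point when $\max\alpha_{0}=\min\alpha_{1}$. The paper defines the stratum inclusions directly by this clutching, while you also spell out the gluing of the stratifying functor into $D(\cR,P)$ and the local overcategory check, which the paper leaves implicit.
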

\begin{proof}
	Fix an object $\gamma = (\sigma, \alpha_{0}\#\alpha_{1})$ as above.
  A map $a_*\colon \sigma \to \tau$ induces a map
	\begin{align*}
		a_{*} \colon D([1]^{Q_{P}(\sigma)}, Q_{P}(\sigma)) \to D([1]^{Q_{P}(\tau)}, Q_{P}(\tau )) \,.
	\end{align*}
	The stratum of $D(\vert Q_{P}(\tau)\vert +1)$ corresponding to the arc $a_{*} \alpha$ in the stratification of \Cref{lem:D(s+1)_stratification} is then precisely
	\begin{align*}
		\del^{a_{*}\alpha} D(Q_{P}(\tau)+1) =
		\begin{cases}
			\del^{a_{*}\alpha_{0}} [0,\infty)^{Q_{P}(\tau)} \times D(Q_{\del^{\gamma}P}(\tau) +1) \times \del^{a_{*}\alpha_{1}} [0,\infty)^{Q_{P}(\tau)}, & \max \alpha_{0} < \min \alpha_{1} \\
			\del^{a_{*}\alpha} [0,\infty)^{Q_{P}(\tau)}, & \text{else.}
		\end{cases}
	\end{align*}
	We define the stratum inclusion $\del^{\gamma} D(\bX, P) \to D(\bX, P)$ by clutching together the stratum inclusions
	\begin{align*}
		\interior \del^{\tau} \bX \times \del^{a_{*}\alpha} D(Q_{P}(\sigma)+1) \times [0,\infty)^{Q_{\del^{\sigma} \cR\setminus P}(\tau)} \to \interior \del^{\tau} \bX \times D(Q_{P}(\tau)+1) \times [0,\infty)^{Q_{R_{1}}(\tau)},
	\end{align*}
	indexed over $\sigma \to \tau \in \del^{\sigma}\cR$.
\end{proof}
If $\cR$ and $\cR'$, are models and $\cR$ is equipped with a partially ordered set of codimension~1 objects $P$ as above, we equip $\pi_{0}\cR'$ with the discrete poset structure, and thereby get a poset $P\times \pi_{0}\cR'$ of codimension 1 objects of $\cR\times \cR'$.
We have a canonical identification $Q_{P\times \pi_{0}\cR'}(\sigma, \tau) \simeq Q_{P}(\sigma)$, so if the former is totally ordered, so is the latter.

\begin{lem}\label{lem:D(XP)_monoidal}
	In the above situation, if $\bX$ is stratified by $\cR$ and $\bY$ by $\cR'$, we have a diffeomorphism
	\begin{align*}
		D(\bX\times \bY, P\times \pi_{0}\cR') \simeq D(\bX, P) \times \bY \,.
	\end{align*}
\end{lem}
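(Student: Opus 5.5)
The construction of $D(\bX,P)$ is local: it is obtained by clutching together the pieces
\begin{align*}
\interior \del^{\sigma}\bX \times D(Q_{P}(\sigma)+1)\times [0,\infty)^{Q_{\cR\setminus P}(\sigma)}
\end{align*}
along the collar cover indexed by $\cR$. The plan is to compare these local pieces for $\bX\times\bY$ with the corresponding pieces for $\bX$, multiplied by the collar pieces of $\bY$. Then we check that the identifications agree on overlaps, and we glue.

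First, the strata and collars of the product are understood. The product $\bX\times\bY$ is stratified by $\cR\times\cR'$, and it carries the product system of collars. Its cover is therefore indexed by pairs $(\sigma,\tau)$, with open sets
\begin{align*}
\interior\del^{\sigma}\bX\times\interior\del^{\tau}\bY\times[0,\infty)^{Q_{\cR}(\sigma)}\times[0,\infty)^{Q_{\cR'}(\tau)}.
\end{align*}
By \Cref{lem:Q_f}, $Q_{\cR\times\cR'}(\sigma,\tau)=Q_{\cR}(\sigma)\amalg Q_{\cR'}(\tau)$. Under this identification, $Q_{P\times\pi_{0}\cR'}(\sigma,\tau)$ is the subset $Q_{P}(\sigma)$, with the same total order, since the $\pi_0\cR'$ factor only records which component of $\cR'$ we lie over. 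Consequently
\begin{align*}
Q_{(\cR\times\cR')\setminus (P\times\pi_0\cR')}(\sigma,\tau)=Q_{\cR\setminus P}(\sigma)\amalg Q_{\cR'}(\tau).
\end{align*}
The local chart of $D(\bX\times\bY,P\times\pi_0\cR')$ at $(\sigma,\tau)$ is thus
\begin{align*}
\interior\del^{\sigma}\bX\times\interior\del^{\tau}\bY\times D(Q_P(\sigma)+1)\times[0,\infty)^{Q_{\cR\setminus P}(\sigma)}\times[0,\infty)^{Q_{\cR'}(\tau)}.
\end{align*}
After permuting factors, this is exactly the product of the chart of $D(\bX,P)$ at $\sigma$ with the collar chart of $\bY$ at $\tau$.

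Second, the transition maps must be compatible. For a morphism $(a,b)\colon(\sigma,\tau)\to(\sigma',\tau')$, the clutching maps are built from the naturality squares of \Cref{lem:conic_degeneracies}(3). They involve only the coordinates indexed by $Q_P$, together with the identity on the remaining collar coordinates. On the product side, these clutching maps are therefore the product of the clutching map for $a$ in $D(\bX,P)$ with the collar transition map for $b$ in $\bY$. Hence the local diffeomorphisms assemble into a global diffeomorphism, which commutes with the projections to $\bX\times\bY$.

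The main obstacle is an implicit choice. The construction of $D(\bX,P)$ depends on a compatible system of collars, so the statement should be read with the product collar system on $\bX\times\bY$. One also needs that the gluing is functorial over $\cR\times\cR'$ and not just pairwise. I would handle this by writing each $D(-,P)$ as a colimit over its stratifying category of local models. The product of colimits indexed by $\cR$ and $\cR'$ is the colimit over $\cR\times\cR'$, because these are open-cover gluings in $\Top$ and products with a fixed space preserve such gluings.

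As a consistency check, the resulting diffeomorphism should match the stratifications. Combining \Cref{lem:D(XP)_stratification} with \eqref{eq:D(RP)} gives
\begin{align*}
D(\cR\times\cR',P\times\pi_0\cR')\simeq D(\cR,P)\times\cR'.
\end{align*}
This holds at the level of local models \eqref{eq:D(1^Q,S)}, because $D([1]^{Q\amalg Q'},S)\simeq D([1]^{Q},S)\times[1]^{Q'}$ when $S\subset Q$.
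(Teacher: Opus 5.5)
Your proposal is correct. The paper states this lemma without proof and treats it as immediate from the clutching construction; your chart-by-chart verification, using $Q_{P\times\pi_{0}\cR'}(\sigma,\tau)\cong Q_{P}(\sigma)$ so that both the local pieces $\interior\del^{\sigma}\bX\times\interior\del^{\tau}\bY\times D(Q_{P}(\sigma)+1)\times[0,\infty)^{Q_{\cR\setminus P}(\sigma)\amalg Q_{\cR'}(\tau)}$ and their transition maps are products, is exactly the argument implicit there. You are also right that $\bX\times\bY$ must carry the product collars, which is how the paper later uses the lemma when it equips $s_{0}\bX$ with collars.
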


Consider an object $P\to [0]\join [n]$ of $\cP$. We write $P_{0}(p,q)\subset \ob\sfA_{P}(p,q)$ for the codimension one objects corresponding to a single internal edge labeled by some $r\in P_{0}$.
This set is equivalent to the interval $(p,q)\subset P_{0}$, so we equip it with the partial order inherited from $P_{0}$.
For any $\gamma \in \sfA_{P}(p,q)$, the set $Q_{P_{0}(p,q)}(\gamma)$ is the set of internal edge labels in $\gamma$ that belong to  $P_{0}$, and this is totally ordered by the order of appearance in $\gamma$.
We write $\sfA_{P}^{D}$ for the $\Mod_{\diamond}$-category with objects $s_{0}^{*}P$ and morphism objects
\begin{align*}
	\sfA_{P}^{D}(p^{0},q) &= D(\sfA_{P}(p,q), P_{0}(p,q)), & p \in P_{0}, q\in P_{[n]} \\
	\sfA_{P}^{D}(p^{1},q) &= \sfA_{P}(p,q)  p\in P_{0},& q\in P_{[n]} \\
	\sfA_{P}^{D}(p,q) &= \sfA_{P}(p,q), &  p,q\in P_{[n]} \text{ or } p,q\in P_{0}.
\end{align*}
We equip these with the pairs of vector spaces $\sfU_{s_{0}^{*}P}$.
For the multiplication map $\# \sfA_{P}^{D}(p^{0},q) \times \sfA_{P}^{D}(q,r) \to \sfA_{P}^{D}(p^{0},r) $ we use \Cref{lem:D(XP)_monoidal} and consider the stratum inclusion
\begin{align*}
	&D\left(\sfA_{P}(p,q), P_{0}(p,q)  \right) \times \sfA_{P}(q,r) \\
	&\qquad \simeq D\left(\sfA_{P}(p,q)\times \sfA_{P}(q,r), P_{0}(p,r)\cap (-\infty,r) \right)  D\left(\sfA_{D}(p,r), P_{0}(p,r) \right) \,.
\end{align*}
We use the decomposition $Q_{\#}^{+} = \left\{ q \right\}$ and $Q_{\#}=\emptyset$, and the usual linear isomorphisms on $\sfU_{s_{0}^{*}P}$. For the composition map $\# \colon \sfA_{P}^{D}(p^{0},q^{0}) \times \sfA_{P}^{D}(q^{0},r)$ we take
\begin{align*}
	&\sfA_{P}(p,q) \times D\left(\sfA_{P}(q,r), P_{0}(q,r)\right) \\
	&\quad \simeq D\left(\sfA(p,q)\times \sfA_{P}(q,r), P_{0}(p,r)\cap (q,\infty)  \right) \to D\left(\sfA_{P}(p,r), P_{0}(p,r) \right)\,.
\end{align*}
For a map $\phi \colon [0]\join [m] \to [0]\join [n]$ leaving 0 fixed, we associate a map of algebras $\sfA^{D}_{\phi^{*}P} \to \sfA^{D}_{P}$ given by $s_{0}^{*}\phi_{!}\colon s_{0}^{*}\phi^{*}P \to s_{0}^{*}P$ on objects.
We note that $P_0 \simeq (\phi_{*}P)_{0}$, so whenever relevant we use the stratum inclusion
\begin{align*}
	D(\sfA_{\phi^{*}P }(p,q), P_{0}(p,q) ) \to D(\sfA_{P}(p,q), P_{0}(p,q) )
\end{align*}
covering $\phi_{!}\colon \sfA_{\phi^{*}P}(p,q) \to \sfA_{P}(p,q)$ given by noting that the stratum $\phi$ has $Q_{P_{0}(p,q)}(\phi)=\emptyset$.
For the morphism objects not of this form we use the usual $\phi_{!}$.
One can then check that $\sfA_{P}^{D}$ determines a functor
\begin{align*}
	\cA^{D} \colon \cP_{\Delta^{0}_{s}/} \to \Alg(\Mod_{\diamond})\,.
\end{align*}
We associate the minimal and maximal object of $\overline{P}_{0}(p,q)$ with $p$ and $q$ respectively.
For $p\in P_{0}, q\in P_{[n]}$ we have a pullback square
\begin{equation}\label{eq:A_D_2}
	\begin{tikzcd}
		A_{s_{0}^{*}P}( (p,0), q) \arrow[r] \arrow[d]      &   A_{s_{0}^{*}\overline{P_{0}}(p,q)}( (p,0), (q,1) )  \arrow[d] \\
		A_{P}(p,q)  \arrow[r]               &   A_{\overline{P}_{0}(p,q)}(p,q)
	\end{tikzcd}
\end{equation}
where the horizontal maps are defined by deleting all the interior edges labeled by points outside of $P_{0}$, and all vertex labels except possibly the label $\left\{ 1\right\}$ for the top map.
When we pass to slices in the bottom row of \eqref{eq:A_D_2} we have
\begin{equation*}
	\begin{tikzcd}
		{[1]^{Q_{\sfA_{P}(p,q)}(\gamma)} } \arrow[r, "\cap P_{0}"]  \arrow[d, "\simeq"] &  { [1]^{Q_{P_{0}(p,q)}(\gamma) } } \arrow[d, "\simeq"] \\
		\sfA_{P}(p,q)_{/\gamma} \arrow[r] \arrow[d]      &   \sfA_{\overline{P}_{0}(p,q)(\gamma)}  \arrow[d] \\
		\sfA_{P}(p,q)  \arrow[r]               &   A_{\overline{P}_{0}(p,q)}(p,q) \,.
	\end{tikzcd}
\end{equation*}
When we pull these slices back in \eqref{eq:A_D_2}, we recover the square \eqref{eq:D(1^Q,S)}, so we get an equivalence $D(\sfA_{P}(p,q), P_{0}(p,q)) \simeq \sfA_{s_{0}^{*}P}( (p,0),q )$.
Because the pullback diagrams \eqref{eq:A_D_2} are natural with respect to composition, this determines an equivalence of algebras $\sfA_{P}^{D}\simeq \sfA_{s_{0}^{*}P}$.
The diagrams \eqref{eq:A_D_2} are moreover natural with respect to restriction along some $\phi$ which leaves $0$ fixed, so we have an equivalence of functors $\cP_{\Delta^{0}_{s}/}  \to \Alg(\Mod_{\diamond})$ between $\cA_{\Delta^{1}_{s}/} \circ s_{0}^{*}$ and  $\cA_{\Delta^{0}_{s}/}$.

An appropriate domain for the construction $D(\bX,P)$ is the category of stratified manifolds with a choice of collars.
Because collars is a contractible choice, the corresponding semi-simplicial set $\PreFlow_{c}^{\mu}$ of $\mu$-structured flow categories with collars will have a forgetful map $\PreFlow^{\mu}_{c}\to \Flow^{\mu}$ which is a trivial fibration.
Quasi-unitality is preserved under trivial fibrations, so it suffices to construct the degeneracies on $\PreFlow^{\mu}_{c}$.

\begin{definition}\label{def:PreMfd_c}
	Let $\Mfd_{c}^{\otimes}$ denote the monoidal category where
	\begin{enumerate}
		\item  An object is an object $\bX=(X,\cP,U\psi)$ of $\Mfd_{\diamond}$ together with a system of collars $\lambda^{\sigma} \colon \del^{\sigma}X \times [0,\infty)^{Q_{\cR}(\sigma)} \to X$ compatible with the normal framing $\psi$.
		\item  A morphism $(\bX,\lambda) \to (\bY, \kappa)$ is a morphism $\bX \to \bY$ in $\Mfd_{\diamond}$ such that for each minimal $\sigma \in \cP_{\bX}$, the diffeomorphism $\del^{\sigma}\bX \to \del^{f(\sigma)}\bX$ intertwines the collars $\lambda$ and $\kappa$.
		\item Monoidal product is given by the usual product of stratified manifolds, equipped with the product collars.
	\end{enumerate}
\end{definition}

We can inductively construct a system of collars on the $D(n+1)$ such that the stratum inclusions in \Cref{lem:D(s+1)_stratification} are compatible with collars.
Then for an object $(\bX,\bR)$ of $\Mfd_{c}$ and a poset $P$ of codimension 1 objects of $\bR$ we can construct a system of collars on $D(\bX,P)$ by using the collars on $D(n+1)$ in the local model.
This has the property that the stratum inclusions in \Cref{lem:D(XP)_stratification} are compatible with collars, and that the under the identification in \Cref{lem:D(XP)_monoidal}, the collar on $D(\bX\times \bY, P\times \pi_{0}\bR')$ agrees with the product collars.

We now define a functor
\begin{align*}
	s_{0}\colon \cA_{\Delta^{0}_{s}/}^{*}\Alg(\Mfd_{c}) \to \cA_{\Delta^{1}_{s}/}(\Mfd_{c})
\end{align*}
by assigning a simplex $\bX$ with objects $P\to [0]\join [n]$ to the simplex $s_{0}\bX$ with objects $s_{0}^{*}P\to [1]\join [n]$ and morphism objects
\begin{align*}
	s_{0}\bX( (p,0) ,q) &= D(\bX, P_{0}(p,q)), & p\in P_{0}, q\in (s_{0}^{*}P)_{[n]}\\
	s_{0}\bX( (p,1), q ) &= \bX(p,q), & p\in P_{0}, q\in (s_{0}^{*}P)_{[n]} \\
	s_{0}\bX(p,q) &= \bX(p,q), & p,q \in (s_{0}^{*}P)_{0} \text{ or } p,q \in d_{0}^{*}s_{0}^{*}P \,,
\end{align*}
each stratified by $\sfA^{D}_{P}(p,q)$, and equipped with the system of collars induced by the fixed choice of collars on the $D(n+1)$.
We use \Cref{lem:D(XP)_monoidal} and the equivalence of algebras  $\sfA^{D}_{P}\simeq \sfA_{s_{0}^{*}P}$ to define multiplication maps.
Because the construction of $D(\bX,P)$ is natural in diffeomorphisms which intertwine systems of collars, $s_{0}$ is natural in isomorphisms of algebras in $\Mod_{c}$.
Note that $d_{0}s_{0}\bX$ is naturally equivalent to $\bX$ because we remove all the mapping spaces where degeneration happens.
Under the equivalence $\sfA^{D}_{P} \simeq \sfA_{s_{0}^{*}P}$, the inclusion
\begin{align*}
	d_{1} \colon \sfA_{P}(p,q) \to \sfA_{s_{0}P}( (p,0),q )
\end{align*}
has $Q_{P_{0}}(d_{1})=\emptyset$, and corresponds to the minimal stratum under the map $\sfA_{s_{0}P} \to \sfA_{P}$, and so by \Cref{lem:D(XP)_stratification}, the corresponding stratum of $D(\bX(p,q), P_{0}(p,q))$ is equivalent to $\bX$.
This can be made natural, showing that the composites
\begin{align*}
	\cA_{\Delta^{0}_{s}/}^{*}\Alg(\Mfd_{c}) \xrightarrow{s_{0}} \cA_{\Delta^{1}_{s}/}(\Mfd_{c}) \xrightarrow{d_{i}^{*}} \cA_{\Delta^{0}_{s}/}^{*}\Alg(\Mfd_{c})
\end{align*}
are equivalent to the identity. To upgrade this to $\mu$-structures we will show that $I(s_{0}\bX)$ is equivalent to $s_{0}^{*}I\bX$ as $\cS_{/ \bZ\times BO}$-algebras.

Let $\cF \colon \Pos/\Simp \to \Set \to \Opd$ denote the functor which takes $P\to [n]$ to the operad $\Simp^{\op}_{P^{\simeq}}$.
We denote the restriction of $\cF$ to $\cP$ by $s\cF$.
As for $\cA$, we precompose this with $\cP_{\Delta^{n}/} \to \cP$ to obtain $s\cF_{\Delta^{n}_{s}/}$.
Forgetting the map to $\Alg(\Mod)$ and composing with $I\colon \Mfd \to \bVect(\CW)$ determines functors
\begin{align*}
	\cA^{*}_{\Delta^{n}_{s}/}\Alg(\Mfd) \to s\cF^{*}_{\Delta^{n}_{s}/}\Alg(\bVect(\CW))
\end{align*}
that are natural with respect to the functoriality of semi-simplicial slices.
Because $\cF$ extends to $\Pos/\Simp$, we have a natural transformation $ s\cF_{\Delta^{0}_{s}/}\circ s_{0}^{*}  \to s\cF_{\Delta^{0}_{s}/}$, and taking cartesian lifts of this gives a functor
\begin{align*}
	s_{0}^{*} \colon s\cF^{*}_{\Delta^{0}_{s}/}\Alg(\bVect(\CW)) \to s\cF^{*}_{\Delta^{1}_{s}/}\Alg(\bVect(\CW))
\end{align*}
which takes an algebra $\sfA$ over the objects $P\to [0]\join [n]$ to the algebra
\begin{align*}
	\Simp^{\op}_{s_{0}^{*}P} \xrightarrow{s_{0}} \Simp^{\op}_{P} \xrightarrow{\sfA} \bVect(\CW) \,.
\end{align*}
\begin{lem}\label{lem:D_is_htpy_eq}
	There is a natural transformation
	\begin{equation*}
		\begin{tikzcd}
			\cA_{\Delta^{0}_{s}/}^{*}\Alg(\Mfd_{c}) \arrow[r,"s_{0}"] \arrow[d, "{I\oplus (\bR,\bR)}"']      &   \cA_{\Delta_{s}^{1}/}^{*}\Alg(\Mfd_{c}) \arrow[dl, "\pi", Rightarrow, shorten=2em]  \arrow[d, "I"] \\
			s\cF_{\Delta^{0}_{s}/}^{*}\Alg(\bVect(\CW))  \arrow[r, "s_{0}^{*}"']               &  s\cF_{\Delta^{1}_{s}/}^{*}\Alg(\bVect(\CW)) \, ,
		\end{tikzcd}
	\end{equation*}
	which becomes a natural equivalence after composing with $\bVect(\CW) \to \cS_{/\bZ\times BO}$.
\end{lem}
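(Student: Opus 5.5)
The natural transformation will be built entry by entry on morphism objects, and then shown to be compatible with composition and with restriction along maps in $\cP_{\Delta^{0}_{s}/}$.

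For a simplex $\bX$ with objects $P\to[0]\join[n]$, the algebra $I(s_{0}\bX)$ has objects $s_{0}^{*}P$. By construction of $s_{0}\bX$, its morphism objects fall into two cases.
\begin{enumerate}
\item For every pair other than $((p,0),q)$ with $p\in P_{0}$, $q\in P_{[n]}$, the morphism object is literally $\bX(p,q)$. The pair of vector spaces $\sfU_{s_{0}^{*}P}$ differs from $\sfU_{P}$ by the extra coordinate indexed by the new vertex $1\in[1]\join[n]$.
\item For the pairs $((p,0),q)$, the morphism object is $D(\bX(p,q),P_{0}(p,q))$.
\end{enumerate}
In case (1) I would take $\pi$ to be the identity of $T\bX(p,q)$, together with the linear identification $\sfU^{+}_{s_{0}^{*}P}((p,\epsilon),q)\simeq \sfU^{+}_{P}(p,q)\oplus\bR$. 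The summand $(\bR,\bR)$ added on the left-hand side is exactly what absorbs this extra coordinate. On pairs where no coordinate is added, the extra $(\bR,\bR)$ is cancelled using the morphism $U_{!}$ in $\bVect(\CW)$ covering $\bR$, as in \Cref{rem:bVect}.

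In case (2) I would use the projection $D(\bX,P_{0})\to\bX$ built from the maps $D(S+1)\to[0,\infty)^{S}$ of \Cref{lem:conic_degeneracies}. Since $\dim D(S+1)=|S|+1$, the derivative of this projection, together with a choice of splitting of the one-dimensional kernel (the fiber direction, a union of intervals by \Cref{lem:conic_degeneracies}(2)), gives a bundle isomorphism
\begin{align*}
TD(\bX,P_{0})\simeq \pi^{*}T\bX\oplus\bR.
\end{align*}
The unit vector field along the fiber intervals provides this splitting globally, compatibly with the clutching over the collar charts. This gives the component of $\pi$ over the map of bases $D(\bX,P_{0})\to\bX$.

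Next I would check naturality. The components must intertwine:
\begin{itemize}
\item the multiplication maps, via \Cref{lem:D(XP)_monoidal}: the identification $D(\bX\times\bY,P\times\pi_{0})\simeq D(\bX,P)\times\bY$ is compatible with the projection and the fiber vector field;
\item the stratum inclusions of \Cref{lem:D(XP)_stratification}, so that $\pi$ is a morphism of algebras over the equivalence $\sfA^{D}_{P}\simeq\sfA_{s_{0}^{*}P}$;
\item restriction along maps $\phi$ fixing $0$.
\end{itemize}
Since $\bVect(\CW)$ is a $(2,1)$-category, these compatibilities reduce to strict equalities of bundle maps, plus $2$-cells given by permutations of trivial summands. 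Those $2$-cells can be written by hand.

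Finally, to see that $\pi$ is an equivalence after passing to $\cS_{/\bZ\times BO}$, it suffices to check it on each morphism object, since equivalences of algebras are detected underlying. In case (1) the component is an isomorphism. In case (2) the underlying map of spaces $D(\bX,P_{0})\to\bX$ is a homotopy equivalence: it is locally modelled on $D(S+1)\to[0,\infty)^{S}$, whose fibers are nonempty finite unions of intervals glued along the strata, so the map is a proper map with contractible fibers. The bundle map is fiberwise an isomorphism, so the component is an equivalence in the slice.

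I expect the main obstacle to be the contractible-fiber and homotopy-equivalence claim for $D(S+1)\to[0,\infty)^{S}$, together with making the fiberwise splitting coherent across the clutching. The fiber over a point with $k$ vanishing coordinates is described as $k+1$ intervals, which is not connected as a union unless these intervals are glued end to end into a single interval. I would first re-examine the construction of \cite{AB} to confirm that these intervals meet at the points of the boundary strata $D(i)\times D(n-i+1)$, so that each fiber is a single broken interval. If this fails, I would instead prove the homotopy equivalence by induction on the stratification using the pullback squares of \Cref{lem:conic_degeneracies}(3).
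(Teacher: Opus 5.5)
Your skeleton agrees with the paper's: coCartesian morphisms $\bR_{!}$ of $\bVect(\CW)$ on the pairs where $s_0\bX$ is just $\bX$, a bundle isomorphism covering $\pi\colon D(\bX,P_0)\to\bX$ on the degenerate pairs, and the final equivalence from contractible fibres plus conservativity of $\cS_{/\bZ\times BO}\to\cS$. The gap is in how you build that bundle isomorphism, namely from $d\pi$ plus a splitting of its kernel.

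The map $\pi$ is not a submersion. Take a point $z$ of the codimension-two corner where the faces $D(i)\times[0,\infty)^{n-i}$ and $[0,\infty)^{i-1}\times D(n-i+1)$ of \Cref{lem:conic_degeneracies}(4) meet. Both faces map into the hyperplane $\{t_i=0\}$, and their tangent spaces span $T_zD(n+1)$. Hence $d\pi_z$ lands in $T\{t_i=0\}$ and $\ker d\pi_z$ has dimension at least two; locally the prototype is $(x,y)\mapsto xy$ on $[0,1]^2$.

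Your worry about connectivity resolves in your favour: the fibre intervals are concatenated exactly at these corners. For the same reason, though, there is no continuous unit fibre-tangent field there. These corners are the images of the threefold composites $s_0\bX((p,0),(r,0))\odot s_0\bX((r,0),(r,1))\odot s_0\bX((r,1),q)$. They are therefore precisely where compatibility of $\pi_{\bX}$ with the multiplication has to be verified.

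Even away from the corners, $d\pi$ does not carry the normal framing of a face of $D(\bX,P_0)$ to that of the face of $\bX$ beneath it; in the prototype, $d\pi(\partial_x)=y\,\partial_t$ on $\{x=0\}$. So the required squares would commute at best up to homotopy. That is not enough in the $(2,1)$-category $\bVect(\CW)$, whose only $2$-cells are isomorphisms of the trivial summands.

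What is needed is an isomorphism $TD(\bX,P)\oplus\bR^{\{q\}}\simeq\pi^{*}T\bX\oplus\bR^{\{q\}}\oplus\bR$ that is strictly compatible, on every stratum, with the consistent normal framings of $D(\bX,P)$ and $\bX$ under the identifications of \Cref{lem:D(s+1)_stratification}. This is the real content of the lemma. The paper takes such isomorphisms $\phi_n$ on the local models $D(n+1)\to[0,\infty)^n$ from \cite[Lemma 6.11]{AB} and pulls them back through the collar charts. Because everything comes from fixed local models, naturality in $\bX$ then comes for free.

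A smaller point concerns the pairs $((p,0),(p',1))$ with $p<p'$ in $P_0$. These are also degenerate: $s_0\bX((p,0),(p',1))$ is the identity-bimodule object $D(\bX(p,p'),P_0(p,p'))$, not $\bX(p,p')$. They belong in your case (2), and this is where the extra coordinate in $\sfU^{+}_{s_0^*P}((p,0),q)$ actually appears. In the genuine case (1) no extra coordinate occurs, and the component is just $\bR_{!}$.
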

\begin{proof}
		By \cite[Lemma 6.11]{AB}, there exists consistent normal framings on each $[0,\infty)^{n}$ and $D(n+1)$ which are compatible with the identifications \Cref{lem:D(s+1)_stratification}.
    Moreover, there exists isomorphisms $\phi_{n}\colon TD(n+1) \oplus \bR^{ \left\{ q\right\}} \simeq \pi^{*}T[0,\infty)^{n} \oplus \bR^{ \left\{ q\right\}} \oplus \bR^{ \left\{ 1\right\} }$ such that
	\begin{equation*}
		\begin{tikzcd}
			T \del^{\gamma} D(n+1) \oplus \bR^{Q_{D(\cR,P)}(\sigma)} \oplus \bR^{ \left\{ q\right\}} \arrow[r] \arrow[d]      &   T D(n+1) \oplus \bR^{ \left\{ q\right\}} \arrow[d, "\phi_{n}"] \\
			T \del^{\alpha} [0,\infty)^{n} \oplus \bR^{Q_{\cR}(\alpha)} \oplus \bR^{ \left\{ q\right\}} \oplus \bR  \arrow[r]               &   T [0,\infty)^{n} \oplus \bR^{ \left\{ q\right\}} \oplus \bR
		\end{tikzcd}
	\end{equation*}
	commutes, where the horizontal maps are induced by the consistent normal framing, and the left vertical is induced by some other $\phi_{m}$ under the identification in \Cref{lem:D(s+1)_stratification}.
	Because the maps $\pi\colon D(\bX, P) \to \bX$ are pulled back from local models of the form $D(n+1) \to [0,\infty)^{n}$, we can use the above squares to construct equivalences
	\begin{align}\label{eq:phi_X}
		\phi_{\bX}\colon TD(\bX, P) \oplus \bR^{ \left\{ q\right\}} \simeq \pi^{*}T\bX \oplus \bR^{ \left\{ q\right\}}\oplus \bR^{ \left\{ 1\right\}}
	\end{align}
	such that the following diagram commutes:
	\begin{equation}
		\begin{tikzcd}\label{eq:degeneration_is_htpy_eq}
			T \del^{\gamma} D(\bX,P) \oplus \bR^{Q_{D(\cR,P)}(\gamma)} \oplus \bR^{ \left\{ q\right\}} \arrow[r] \arrow[d]      &   T D(\bX, P) \oplus \bR^{ \left\{ q\right\}} \arrow[d] \\
			\pi^{*}T\del^{\sigma}\bX \oplus \bR^{Q_{\cR}(\sigma)} \oplus \bR^{ \left\{ q\right\}} \oplus \bR^{ \left\{ 1\right\}} \arrow[r]               & \pi^{*} T\bX \oplus \bR^{ \left\{ q\right\}} \oplus \bR^{ \left\{ 1\right\}} \,.
		\end{tikzcd}
	\end{equation}
	For a simplex $\bX$ with objects $i\colon P\to [0]\join[n]$, we construct a $\bVect(\CW)$-enriched functor
	\begin{align*}
		\pi_{\bX} \colon I(s_{0}\bX) \to s_{0}^{*}I\bX \oplus (\bR^{ \left\{ 1\right\}},\bR^{ \left\{ 1\right\}})
	\end{align*}
	covering the identity on objects.
  Whenever both $p,q$ are in  $(s_{0}^{*}P)_{0}$ or in $d_{0}^{*}(s_{0}^{*}P)$, we have $s_{0}\bX(p,q)=\bX(p,q)$ as objects of $\Mfd_{c}$, so we let $\pi_{\bX}$ be given by the cartesian lift $I\bX(p,q) \to I\bX(p,q) \oplus (\bR^{ \left\{ 1\right\}},\bR^{ \left\{ 1\right\}})$ over $\bR^{ \left\{ 1\right\}}$.

	At some $s_{0}\bX((p,0),q)$, we have equivalences
	\begin{align}\label{eq:D_is_htpy_eq}
		\sfU_{s_{0}^{*}P}( (p,0), q ) & \simeq \sfU_{P}(p,q) \oplus (\bR^{ \left\{ 1\right\} }, 0) \\
		(\bR^{ \left\{ 1,\dots i(q)+1  \right\}}, \bR^{q}  ) &\simeq (\bR^{ \left\{ 1,\dots i(q) \right\}}, \bR^{ \left\{ q\right\}} ) \oplus (\bR^{ \left\{ 1\right\}}, 0)  \nonumber
	\end{align}
	given by using the map $d_{0} \colon \left\{ 1,\dots , i(q)\right\} \to \left\{ 1,\dots, i(q)+1\right\}$ which misses the element~$1$.
  We therefore let the component of $\pi_{\bX}$ at $( (p,0),q)$ cover $\bR^{ \left\{ 1\right\}}$ in $\sfB\Vect$.
  On the negative part we use the equivalence \eqref{eq:D_is_htpy_eq}.
  On the positive part we use an equivalence of the form \eqref{eq:phi_X}.
  Because the maps $\phi_{\bX}$ are pulled back from a fixed local model, the construction of $\pi_{\bX}$ is natural with respect to isomorphisms of $\Mod_{c}$-categories.
  The diagram \eqref{eq:degeneration_is_htpy_eq} takes care of the naturality with respect to composition and face maps.

	Because the fibers of $D(\bX,P) \to \bX$ are contractible, the transformation $\pi_{\bX}$ becomes an equivalence when we push forward to $\cS$. Since $\cS_{/ (\bZ\times BO)}\to \cS$ is conservative, $\pi_{\bX}$ becomes an equivalence already in $\cS_{/\bZ\times BO}$.
\end{proof}
Note that when we push forward to $\cS_{/\bZ\times BO}$, the $(\bR,\bR)$ factor becomes trivial.
We therefore have a commutative square
	\begin{equation}\label{eq:degeneration_is_htpy_eq_2}
	\begin{tikzcd}
		\cA_{\Delta^{0}_{s}/}^{*}\Alg(\Mfd_{c}) \arrow[r,"s_{0}"] \arrow[d, "{I}"']      &   \cA_{\Delta_{s}^{1}/}^{*}\Alg(\Mfd_{c}) \arrow[dl, "\pi", Rightarrow, shorten=1em]  \arrow[d, "I"] \\
		s\cF_{\Delta^{0}_{s}/}^{*}\Alg(\cS_{/ \bZ\times BO})  \arrow[r, "s_{0}^{*}"']               &  s\cF_{\Delta^{1}_{s}/}^{*}\Alg(\cS_{/ (\bZ\times BO)}) \, .
	\end{tikzcd}
\end{equation}
We define $\Mfd^{\mu}_{c}$ as the pullback
\begin{align*}
	\Mfd^{\mu}_{c} = \Mfd_{\diamond}^{\mu}\times_{ \Mfd_{\diamond}^{\otimes} } \Mfd_{c}^{\otimes}
\end{align*}
of double $\infty$-categories.
Then by a pasting argument and because $\Alg$ preserves pullbacks, we have pullback squares
\begin{equation*}
	\begin{tikzcd}
		\cA_{\Delta^{n}_{s}/}^{*}\Alg(\Mfd_{c}^{\mu}) \arrow[r] \arrow[d]      &   s\cF_{\Delta^{n}_{s}/}^{*}\Alg( (\cS_{/\bZ\times BO})_{/\mu}  )  \arrow[d] \\
		\cA_{\Delta^{n}_{s}/}^{*}\Alg(\Mfd_{c})  \arrow[r,"I"]               &  s\cF_{\Delta^{n}_{s}/}^{*}\Alg(\cS_{/ \bZ\times BO}) \,.
	\end{tikzcd}
\end{equation*}
We can then define the initial degeneracy $s_{0}^{\mu}\colon \PreFlow^{\mu}_{\Delta^{0}_{s}/} \to \PreFlow^{\mu}_{\Delta^{1}_{s}/}$ by straightening the induced map on pullbacks in the cube
\begin{equation*}
	\begin{tikzcd}[row sep=1em, column sep=1em]
		\cA_{\Delta^{0}_{s}/}^{*}\Alg(\Mfd_{c}^{\mu}) \arrow[rr] \arrow[dd] \arrow[dr] & & \cA_{\Delta^{1}_{s}/}^{*}\Alg(\Mfd_{c}^{\mu}) \arrow[dd] \arrow[dr] \\
		&  s\cF_{\Delta^{0}_{s}/}^{*}\Alg( (\cS_{/\bZ\times BO})_{/\mu}  ) \arrow[rr, "s_{0}^{*}"{near start}, crossing over]  & &  s\cF_{\Delta^{1}_{s}/}^{*}\Alg( (\cS_{/\bZ\times BO})_{/\mu}  ) \arrow[dd] \\
		\cA_{\Delta^{0}_{s}/}^{*}\Alg(\Mfd_{c}) \arrow[rr, "s_{0}"{near end}] \arrow[dr] & & \cA_{\Delta^{1}_{s}/}^{*}\Alg(\Mfd_{c}) \arrow[dr] \\
		& s\cF_{\Delta^{0}_{s}/}^{*}\Alg(\cS_{/ \bZ\times BO}) \arrow[rr, "s_{0}^{*}"] & & s\cF_{\Delta^{1}_{s}/}^{*}\Alg(\cS_{/ \bZ\times BO}) \, ,
		\arrow[from=2-2, to= 4-2, crossing over]
	\end{tikzcd}
\end{equation*}
where the bottom square is \eqref{eq:degeneration_is_htpy_eq_2}.
To show that $d_{i}\circ s_{0}^{\mu}= id$ for $i=0,1$, it remains to show that the whiskering of $d_{i}$ with the natural transformation $\pi$ becomes the identity natural transformation once we push forward to $\cS_{\bZ\times BO}$.
For $d_{0}$ this is immediate because on the mapping spaces in question, each $\pi_{\bX}$ is just given by the cartesian transport along $(\bR^{ \left\{ 1\right\}}, \bR^{ \left\{ 1\right\}})$. For $d_{1}$, it follows by construction of the $\phi_{\bX}$ in \cite{AB} that the restriction of $\phi_{\bX}$ to the $d_{1}$ stratum is given a map
\begin{align*}
	T\bX(p,q) \oplus \bR^{Q_{d_{1}}} \oplus \bR^{ \left\{ q\right\}} \simeq \pi^{*}T\bX \oplus \bR^{ \left\{ q\right\}} \oplus \bR^{ \left\{ 1\right\}}
\end{align*}
which splits as the identity on $T\bX(p,q)$, the identity on $\bR^{ \left\{ q\right\}}$, and an equivalence identifying the inward normal direction $Q_{d_{1}}$ with the positive generator $\left\{ 1\right\}$.
This shows that the whiskering of $d_{1}$ and $\pi$ also agrees with the cartesian transport over $(\bR, \bR)$, which becomes homotopic to the identity when we pass to $\cS_{/\bZ\times BO}$.

The construction of $s_{\omega}$ is completely symmetric, and it is clear that when we restrict to $\PreFlow_{0}$, that both are given by mapping a flow category $\bX$ with objects $P$ to the identity bimodule $s\bX \colon \bX \to \bX$ whose morphism spaces are $D(\bX(p,q), P)$.

\begin{cor}\label{cor:flow_is_qu}
	The semi-simplicial space $\PreFlow^{\mu}$ is quasi-unital.
\end{cor}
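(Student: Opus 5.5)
The plan is to apply \Cref{prop:outer_degeneracies} to $\PreFlow^{\mu}$. By \Cref{cor:horn_filling}, the map $\PreFlow^{\mu}\to \St(\cR(\cP))$ is an inner fibration, and by \Cref{lem:St(P)_inner_Kan} its target is an inner Kan space. Composing these, $\PreFlow^{\mu}$ is an inner Kan space. It then remains to exhibit initial and terminal degeneracies in the sense of \Cref{def:outer_degeneracies}.

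For the trivial fibration $\pi\colon \widetilde{X}\to X$ required there, I would take $\widetilde{X}=\PreFlow^{\mu}_{c}$, the semi-simplicial space of $\mu$-structured flow simplices with a compatible system of collars. This is defined by the same right-fibration construction as $\PreFlow^{\mu}$, using $\Mfd^{\mu}_{c}$ in place of $\Mfd^{\mu}_{\diamond}$. To see that the forgetful map is a trivial fibration, one must solve the lifting problem against $\del\Delta^{n}_{s}\to\Delta^{n}_{s}$: given collars on the boundary faces of a flow simplex, extend them over the whole simplex. Collars compatible with a consistent normal framing form a contractible space, and they extend relative to any closed union of strata by the usual inductive construction. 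Hence the relevant fibers of $\PreFlow^{\mu}_{c}\to\PreFlow^{\mu}$ are contractible, and degreewise, after passing to free slices, this gives the lifting property.

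The degeneracy $s_{0}^{\mu}\colon(\PreFlow^{\mu}_{c})_{\Delta^{0}_{s}/}\to(\PreFlow^{\mu}_{c})_{\Delta^{1}_{s}/}$ is the one constructed above by straightening the map on pullbacks in the cube. Its top face comes from the conic degeneration $s_{0}$ on $\cA^{*}_{\Delta^{0}_{s}/}\Alg(\Mfd_{c})$. The bottom face is \eqref{eq:degeneration_is_htpy_eq_2}, which is filled by the natural equivalence $\pi$ of \Cref{lem:D_is_htpy_eq}. Because the square defining $\Mfd^{\mu}_{c}$ is a pullback, this produces a map of right fibrations over $\Simp_{s,a}$, hence a map of augmented semi-simplicial spaces. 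The required triangles $d_{i}\circ s_{0}^{\mu}\simeq\pi$ for $i=0,1$ follow from the discussion preceding the corollary:
\begin{enumerate}
\item on underlying unstructured simplices, $d_{0}s_{0}\bX\simeq\bX$ and $d_{1}s_{0}\bX\simeq\bX$ via the minimal stratum of $D(\bX,P)$;
\item on $\mu$-structures, the whiskering of $d_{i}$ with $\pi$ reduces to cartesian transport along $(\bR,\bR)$, which is trivial in $\cS_{/\bZ\times BO}$.
\end{enumerate}
The terminal degeneracy $s_{\omega}$ is built symmetrically, degenerating at the last vertex using $P_{n}$ in place of $P_{0}$, and its identities are checked the same way. \Cref{prop:outer_degeneracies} then gives that every vertex carries an idempotent equivalence, namely the identity bimodule $s\bX$, so $\PreFlow^{\mu}$ is quasi-unital.

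I expect the main obstacle to be coherence rather than existence. The identities $d_{i}s_{0}\simeq\pi$ must hold as commutative diagrams of augmented semi-simplicial spaces, natural in the whole free slice, and not just objectwise. This needs the local models $D(n+1)\to[0,\infty)^{n}$, with their collars and the framings $\phi_{n}$, to be fixed once and for all and to be compatible with every stratum inclusion of \Cref{lem:D(s+1)_stratification}. Checking that the transformation $\pi$ whiskers to the identity coherently, including the higher face maps $d_{i}$ with $i\ge 2$, is where care is needed; the first thing I would try is to reduce it to naturality of the pullback squares \eqref{eq:A_D_2} in $\phi$.
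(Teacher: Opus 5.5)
Your proposal is correct and follows the paper's argument. It builds conic degenerations on the collared version $\PreFlow^{\mu}_{c}$, checks them against the faces via \Cref{lem:D_is_htpy_eq}, and lets \Cref{prop:outer_degeneracies} supply quasi-unitality. The only difference is packaging: you feed the trivial fibration $\PreFlow^{\mu}_{c}\to\PreFlow^{\mu}$ directly into \Cref{def:outer_degeneracies} and apply the proposition to $\PreFlow^{\mu}$ itself, checking the inner Kan hypothesis explicitly, whereas the paper phrases it as quasi-unitality of $\PreFlow^{\mu}_{c}$ transferred along that trivial fibration.
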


\begin{proof}
	We have a trivial fibration $\PreFlow^{\mu}_{c} \to \PreFlow^{\mu}$ from a semi-simplicial space which admits outer degeneracies and is therefore quasi-unital by \Cref{prop:outer_degeneracies}.
\end{proof}

\begin{cor}\label{cor:PreFlow_lands_in_IK_qu}
	The functor $\PreFlow^{(-)}$ factors through the full subcategory $\widehat{\IK}_{qu}$.
\end{cor}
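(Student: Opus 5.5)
To prove \Cref{cor:PreFlow_lands_in_IK_qu}, two things have to be checked. First, for each object $\mu$ of $\Cat_{\infty/(U/O)}$ the semi-simplicial space $\PreFlow^{\mu}$ must be a (large) quasi-unital inner Kan space. Second, for each morphism $f\colon \mu\to\nu$, the induced map $f_{!}\colon \PreFlow^{\mu}\to\PreFlow^{\nu}$ of \Cref{rem:PreFlow_is_functorial} must be quasi-unital, i.e. it must carry idempotent equivalences to idempotent equivalences. Once these hold, the functor $\PreFlow^{(-)}\colon \Cat_{\infty/(U/O)}\to ss\widehat{\cS}$ factors through the subcategory $\widehat{\IK}_{qu}$, because that subcategory is defined by a condition on objects together with a condition on morphisms.

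For the objects, I would combine three earlier results.
\begin{enumerate}
\item \Cref{lem:St(P)_inner_Kan} shows that $\St(\cR(\cP))$ is an inner Kan space.
\item \Cref{cor:horn_filling} shows that $\PreFlow^{\mu}\to \St(\cR(\cP))$ is an inner fibration.
\item Inner fibrations compose, so the composite $\PreFlow^{\mu}\to T$ is an inner fibration, and hence $\PreFlow^{\mu}$ is inner Kan.
\end{enumerate}
\Cref{cor:flow_is_qu} then gives quasi-unitality. Together these show that $\PreFlow^{\mu}$ is an object of $\widehat{\IK}_{qu}$.

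For the morphisms, I use a characterization from the proof of \Cref{cor:flow_is_qu}, via \Cref{prop:outer_degeneracies} and the trivial fibration $\PreFlow^{\mu}_{c}\to\PreFlow^{\mu}$. Every vertex $\bX$ has the idempotent equivalence $s\bX$: the identity bimodule whose morphism spaces are the conic degenerations $D(\bX(p,q),P)$. These are idempotent because $s_0$ restricted to $\PreFlow_0$ agrees with $s_\omega$. In an inner Kan space, the idempotent equivalences at a vertex are unique up to homotopy; this is part of the quasi-unital theory of \cite{oldervoll2026quasi}, which I take as input. So it suffices to check that $f_{!}(s\bX)\simeq s(f_{!}\bX)$ for each $\bX$.

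This compatibility is where I expect the main work. The degeneracy $s_0^{\mu}$ is built as the map induced on pullbacks in the cube whose back face is the unstructured degeneration $s_0$ on $\Alg(\Mfd_c)$, and whose front face is $s_0^{*}$ on $\Alg((\cS_{/\bZ\times BO})_{/\mu})$. The map $f_{!}$ is postcomposition with $f_{*}\colon (\cS_{/\bZ\times BO})_{/\mu}\to(\cS_{/\bZ\times BO})_{/\nu}$ and does not touch the manifolds. Both $s_0^{*}$, which is restriction along $s_0\colon\Simp^{\op}_{s_0^{*}P}\to\Simp^{\op}_{P}$, and the natural transformation $\pi$ of \Cref{lem:D_is_htpy_eq}, which lives entirely on the $\Mfd_c$ side, commute with postcomposition by $f_{*}$. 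The cube for $\nu$ is therefore obtained from the cube for $\mu$ by pasting with $f_{*}$, and naturality of induced maps on pullbacks gives $f_{!}\circ s_0^{\mu}\simeq s_0^{\nu}\circ f_{!}$. Restricting to degree $0$ shows that $f_{!}$ preserves the chosen idempotent equivalences. The same argument applies to $s_\omega$, and compatibility with the trivial fibrations from $\PreFlow_c$ is likewise clear since $f_{!}$ does not change collars.
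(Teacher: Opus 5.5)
Your proposal is correct and matches the paper's proof. The paper likewise argues that the degeneracy construction is natural in $f$, so that $f_{!}(s_{0}\bX)\simeq s_{0}(f_{!}\bX)$ and $f_{!}$ carries the chosen idempotent equivalence at each vertex to one; it leaves implicit both the object-level input (\Cref{cor:horn_filling}, \Cref{cor:flow_is_qu}) and the reduction from one quasi-unit per vertex to all idempotent equivalences, which you spell out.

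One small correction that does not affect the argument: idempotence of $s_{0}\bX$ comes from the degeneracy identities for $s_{0}$ alone, not from $s_{0}$ and $s_{\omega}$ agreeing on vertices. The identities $d_{0}s_{0}=d_{1}s_{0}=\mathrm{id}$ and $d_{2}s_{0}=s_{0}d_{1}$ make all three faces of $s_{0}s_{0}\bX$ equal to $s_{0}\bX$.
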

\begin{proof}
	By construction of the degeneracy map for $\mu$, pushforward along $f\colon \mu \to \nu$ gives rise to a commutative square
	\begin{equation*}
		\begin{tikzcd}
			\PreFlow^{\mu}_{\Delta^{0}_{s}/} \arrow[r, "s_{0}"] \arrow[d]      &   \PreFlow^{\mu}_{\Delta^{1}_{s}/}  \arrow[d] \\
			\PreFlow^{\nu}_{\Delta^{0}/}  \arrow[r, "s_{0}"]               &   \PreFlow^{\nu}_{\Delta^{1}_{s}/}\,.
		\end{tikzcd}
	\end{equation*}
	This means that for each $\mu$-structured flow category $\bX$, we have a canonical equivalence (as algebras in $\Mfd^{\nu}$)
	\begin{align*}
		f_{!}(s_{0}\bX) \simeq s_{0}(f_{!}\bX) \,.
	\end{align*}
	In particular, $f_{!}$ maps an idempotent equivalence at each vertex of $\PreFlow^{\mu}$ to an equivalence in $\PreFlow^{\nu}$, and so $f_{!}$ is quasi-unital.
\end{proof}
\begin{definition}\label{def:PreFlow_functor}
	We write $\Flow^{(-)} \colon \Cat_{\infty /(U/O)} \to \CatHat$ for the composite of $\PreFlow^{(-)}$ with the localization $\cT$ of \Cref{prop:semi-simplicial_results}.
\end{definition}

\subsection{Generalized companions}\label{subsec:companions}
In this subsection, we will study generalized companions (in the sense of \Cref{def:admits_companions}) in the semi-simplicial $\infty$-category $\bfPreFlow^{\mu}$.
We will restrict to a certain subcategory which admits all companions, and develop some theory for when the corresponding companion functor produces colimits in $\Flow^{\mu}$.

The intuition for the existence of companions is as follows.
Assume that $\bX$ is a flow category with poset of objects $R$, and that $I\subset R$ is a filter.
This convex inclusion is a morphism in $\cP$, and its Cartesian lift at $\bX$ exhibits the restriction of $\bX$ to the objects~$I$ as a flow category $\bX_{I}$.
Now let $\bH$ be an $n$-simplex in the slice $\PreFlow^{\mu}_{\bX/}$.
This means that~$\bH$ is a flow $n+1$-simplex whose $0$-th vertex is $\bX$.
In particular, the poset of objects $P\to [n+1]$ has $P_{0}=R$.
We then let $P_{I} \subset P$ denote the full subposet spanned by objects in $d_{0}^{*}P \subset P$, and objects in $I\subset P_{0} \subset P$.
Because $I$ was a filter, the inclusion $P_{I}\subset P$ is convex, so by restricting $\bH$, we get a flow $n+1$-simplex $\bH_{I}$.
In what follows, we will show that this restriction procedure is sufficiently natural that it defines a semi-simplicial map $\PreFlow^{\mu}_{\bX/} \to \PreFlow^{\mu}_{\bX_{I}/}$, which by the Yoneda lemma determines a unique morphism in $\Flow^{\mu}$.

\begin{definition}\label{def:P_uparrow}
	Let $\cP^{\uparrow}\subset \cP$ denote the wide subcategory spanned by morphisms $f\colon P \to P'$ covering $\phi \colon [n] \to [m]$ such that $P \to \phi^{*}P'$ is the inclusion of a filter.
\end{definition}

\begin{lem}\label{lem:P_has_companions}
	The semi-simplicial category $\cP^{\uparrow}$ admits companions.
  Let $\cI$ be a category and $Q$ an object in $\cP^{\uparrow}_{n}$.
  Any Cartesian lifting problem
	\begin{equation*}
		\begin{tikzcd}
			\cI \arrow[r, "G'"] \arrow[d]      &   \cP^{\uparrow}_{n+1}  \arrow[d] \\
			\cI^{\triangleright}  \arrow[r, "{(F, \left\{ P\right\})}"']  \arrow[ur, dashed, "G"]             &   \cP^{\uparrow}_{0} \times \cP^{\uparrow}_{n}
		\end{tikzcd}
	\end{equation*}
	where $F$ becomes a colimit diagram in $\Pos$ admits a unique solution.
\end{lem}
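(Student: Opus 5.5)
The plan is to work entirely at the level of posets, since $\cP^{\uparrow}$ is a $1$-category of posets over simplices. Everything reduces to checking universal properties of restriction along filters.

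\emph{Step 1: the Cartesian fibrations.} First I unwind $\cP^{\uparrow}_{n+1}$ through the join description of $\cP_{\Delta^{0}_{s}/}$. An object is $P\to[0]\join[n]$. The functor $v_{0}^{*}$ sends it to $P_{0}$, and $d_{0}^{*}$ sends it to $d_{0}^{*}P$. Given a filter inclusion $I\subset P_{0}$, i.e.\ a morphism $I\to P_{0}$ in $\cP^{\uparrow}_{0}$, I take as lift the full subposet
\[
P_{I}=I\cup d_{0}^{*}P\subset P .
\]
Since $I$ is a filter in $P_{0}$ and everything in $d_{0}^{*}P$ lies above $P_{0}$ in $[0]\join[n]$, the set $P_{I}$ is a filter in $P$ over the identity of $[n+1]$. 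So $P_{I}\to P$ is a morphism of $\cP^{\uparrow}_{n+1}$, and it maps to $(I\subset P_{0},\mathrm{id})$. I then check that it is Cartesian for $(v_{0}^{*},d_{0}^{*})$. Take a morphism $R\to P$ in $\cP^{\uparrow}_{n+1}$ whose image factors through $(I\subset P_{0})\times(\text{some map to } d_{0}^{*}P)$. Its image lies in $P_{I}$. Filters are closed under restriction, so the factorization $R\to P_{I}$ is again a filter inclusion, and it is unique because these are subposet inclusions. The same argument gives Cartesian lifts for $v_{0}^{*}$ alone, so both functors are Cartesian fibrations and $(v_{0}^{*},d_{0}^{*})$ preserves Cartesian edges. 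For $(\mathrm{id}\join\phi)^{*}$, the point is that restricting along $\phi$ commutes with forming $P_{I}$: $(\mathrm{id}\join\phi)^{*}(P_{I})=((\mathrm{id}\join\phi)^{*}P)_{I}$. This follows from \Cref{rem:P_S}, since both sides are preimages. The remaining hypothesis of \Cref{def:admits_companions} is that $h\cP^{\uparrow}$ be a quasi-unital inner Kan space. I expect this to follow as in \Cref{lem:St(P)_inner_Kan}, using the minimal-order horn filler, with degeneracies given by duplicating the vertex $P_{0}$.

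\emph{Step 2: the lifting problem.} Let $F\colon\cI^{\triangleright}\to\cP^{\uparrow}_{0}$ be a colimit diagram in $\Pos$ of filter inclusions with cone point $P_{\infty}$. The given $G'$ is a Cartesian lift of $(F,Q)$, so it has the form $i\mapsto P_{F(i)}\subset\widetilde P$ for the objects $\widetilde P$ it determines. These are filters of one another and agree on $d_{0}^{*}$, which equals $Q$. I define $G(\infty)$ as the union $F(\infty)\cup Q$, with order generated by the orders on the $G'(i)$. Since $F(\infty)=\colim F(i)$ in $\Pos$, the union $\bigcup G'(i)$ computed in $\Pos$ is the colimit of $G'$, and it is the poset of objects of $G(\infty)$. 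This gives existence. Uniqueness comes from Cartesianness: the terminal-vertex component of any Cartesian solution must be $P_{F(\infty)}$ in a fixed ambient poset, which is determined by its restrictions.

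\emph{Main obstacle.} The hard part is verifying that the colimit poset is locally finite and that the maps $G'(i)\to G(\infty)$ are again filter (convex) inclusions. This needs every relation in $G(\infty)$ between an element of $F(\infty)$ and an element of $Q$ to already be witnessed in some single $G'(i)$. I would argue it from the fact that each $F(i)\to F(\infty)$ is a filter inclusion. Because of that, for any $p\in F(\infty)$ the up-set $F(\infty)_{p\leq}$ lies in any $F(i)$ containing $p$, so the principal filter of $p$ in $G(\infty)$ equals the one computed in $G'(i)$. That gives local finiteness and upward closure.
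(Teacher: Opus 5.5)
Step 1 of your proposal is the paper's argument. The Cartesian lift of a filter $I\subset P_{0}$ is the full subposet on $I\cup d_{0}^{*}P$; it stays in $\cP^{\uparrow}$ and commutes with $(\mathrm{id}\join\phi)^{*}$.

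Step 2 is where you differ. The paper encodes $P\to[n+1]$ as the triple $(P_{0},d_{0}^{*}P,c_{P})$, where $c_{P}\colon P_{0}^{\op}\times d_{0}^{*}P\to[1]$ records the relations $p<q$. Then $(v_{0}^{*},d_{0}^{*})$ is the Cartesian fibration classified by $(P_{0},Q)\mapsto\Fun(P_{0}^{\op},\Fun_{c}(Q,[1]))$, and local finiteness means taking values in $\Fun_{c}$. Since the target is a poset, this functor sends colimits in $\Pos$ to limits, and \Cref{lem:limts_as_lifting} finishes. Your hand-gluing of $G(\infty)$ makes the role of the filter condition explicit, but it needs two repairs that this packaging provides for free.

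First, the colimit of $G'$ in $\Pos$ is in general not $G(\infty)$. For discrete $\cI=S$ it contains one copy of $Q$ for each $s\in S$, and for $\cI=\emptyset$ it is $\emptyset$ rather than $\emptyset\join Q$. Drop that claim and simply define $G(\infty)=F(\infty)\sqcup Q$, which is what you actually use.

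Second, your up-set argument gives local finiteness but not consistency. If $p\in F(\infty)$ lies in the images of both $F(i)$ and $F(j)$, you need $p<q$ in $G'(i)$ if and only if $p<q$ in $G'(j)$. Otherwise the generated order adds relations, and $G'(i)\to G(\infty)$ is not full, hence not a filter inclusion. The clean fix is the universal property of $F(\infty)$: Cartesianness of $G'$ makes $p\mapsto c_{G'(i)}(p,-)$ a cocone $F(i)\to\Fun_{c}(Q,[1])^{\op}$, which factors uniquely through $F(\infty)$. This one step defines the relations of $G(\infty)$ and gives transitivity, local finiteness and uniqueness. Uniqueness follows because morphisms of $\cP^{\uparrow}_{n+1}$ are determined by their images under $(v_{0}^{*},d_{0}^{*})$.

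In your favour, you address the requirement that $h\cP^{\uparrow}$ be quasi-unital inner Kan, which the paper's proof leaves implicit. Since isomorphisms are filter inclusions, $h\cP^{\uparrow}=h\cP$, so this reduces to \Cref{lem:St(P)_inner_Kan} plus a quasi-unitality check.
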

\begin{proof}
	A map of posets $P\to [n+1]$ is uniquely determined by the poset $P_{0}$, the map $d_{0}^{*}P\to [n]$, and a map of posets $c_{P}\colon P_{0}^{\op}\times d_{0}^{*}P \to [1]$ given by
	\begin{align*}
		c_{P}(p,q) = \begin{cases}
			1, & p<q \in P \\
			0, & \text{else.}
		\end{cases}
	\end{align*}
	A map of posets $f\colon P \to P'$ over $[n+1]$ is similarly determined by the maps of posets~$f_{0}$ and $d_{0}^{*}f$ together with a natural transformation
	\begin{align*}
		c_{P} \to (f_{0} \times d_{0}^{*}f)^{*}c_{P'} \,.
	\end{align*}
	In other words, the functor $(v_{0}^{*},d_{0}^{*}) \colon \Pos_{/[n+1]} \to \Pos \times \Pos_{/[n]}$ is the Cartesian fibration classified by
	\begin{align}\label{eq:cP_as_profunctors}
		\Pos^{\op} \times (\Pos_{/[n]})^{\op}  &\to \Cat \\
		(P\to [n],Q\to [m])&\mapsto \Fun(P^{\op}\times Q, [1]) \,. \nonumber
	\end{align}

	If we assume that $P_{0}$ and $d_{0}^{*}P$ are both locally finite, the local finiteness of $P$ is equivalent to each $p\in P_{0}$ having $p<q$ for only finitely many $q\in d_{0}^{*}P$.
  We write $\Fun_{c}(-,[1])$ for the subposet of morphisms that take the value $0$ on all but finitely many inputs.
  Then $P$ is locally finite if and only if $P_{0}$ and $d_{0}^{*}P$ are, and the adjunct of $c_{P}$ belongs to
	\begin{align*}
		\Fun(P_{0}^{\op}, \Fun_{c}(d_{0}P, [1])) \subset \Fun(P_{0}^{\op}, \Fun(d_{0}^{*}P, [1])).
	\end{align*}

	A map of posets $f\colon P \to P'$ over $[n+1]$ such that $f_{0}$ is a filter and $d_{0}^{*}f$ an equivalence is necessarily convex.
  Therefore, the Cartesian lift of a morphism $g\colon R \to P_{0}$ at some locally finite $P\to [n+1]$ belongs to the subcategory $\cP^{\uparrow}$.
  Moreover, if $Q\to P$ is a morphism in $\cP^{\uparrow}_{n+1}$ and we have a factorization $Q_{0} \to R \to P_{0}$ through $g$, we get a unique factorization
	\begin{align*}
		Q \to g^{*}P \to P
	\end{align*}
	in $\Pos_{/[n+1]}$.
  Because the composite $Q\to P$ and the Cartesian edge $g^{*}P\to P$ are filter inclusions, so is the map $Q\to g^{*}P$.
  This implies that the Cartesian lift $g^{*}P$ is also Cartesian with respect to $\cP^{\uparrow}_{n+1} \to \cP^{\uparrow}_{n}$, and so in particular, $\cP^{\uparrow}_{n+1}\to \cP^{\uparrow}_{0}\times \cP^{\uparrow}_{n}$ is a morhism of Cartesian fibrations.

	The functor
	\begin{align*}
		(id\join \phi)^{*}\colon \Pos_{/[n+1]} \to \Pos_{/[m+1]}
	\end{align*}
	can be obtained by straightening the natural transformation
	\begin{equation*}
		\begin{tikzcd}
			\Pos\times \Pos_{/[n]} \arrow[r] \arrow[d, "id \times \phi^{*}"']      &   \Pos \times \Pos  \\
			\Pos \times \Pos_{/[m]}  \arrow[ur, ""{name=S}]
			\arrow[Rightarrow, from=S, to=1-1]
		\end{tikzcd}
	\end{equation*}
	and is therefore a morphism of Cartesian fibrations.
  Because an edge in $\cP^{\uparrow}_{n+1}$ is Cartesian if and only if it is Cartesian in $\Pos_{/[n+1]}$, the corresponding map $\cP_{n+1}^{\uparrow} \to \cP_{m+1}^{\uparrow}$ is also a morphism of Cartesian fibrations over $\cP^{\uparrow}_{0}$.

	For a fixed $Q$, the classifying functor \eqref{eq:cP_as_profunctors} takes colimits in $\Pos$ to limits of categories, so the uniqueness of Cartesian lifts follows from \Cref{lem:limts_as_lifting}.
\end{proof}
\begin{definition}\label{def:Flow_uparrow}
	Let $\bfPreFlow^{\mu\uparrow}$ be the semi-simplicial $\infty$-category defined by the pullback
	\begin{equation*}
		\begin{tikzcd}
			\bfPreFlow^{\mu \uparrow} \arrow[r] \arrow[d]      &   \bfPreFlow^{\mu}  \arrow[d] \\
			\cP^{\uparrow}  \arrow[r]               &   \cP \,.
		\end{tikzcd}
	\end{equation*}
\end{definition}
\begin{lem}\label{lem:Flow_has_companions}
	The semi-simplicial $\infty$-category $\bfPreFlow^{\mu \uparrow}$ admits companions.
\end{lem}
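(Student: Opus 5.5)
We need to verify the two conditions of \Cref{def:admits_companions} for $\bfPreFlow^{\mu\uparrow}$. First, $h\bfPreFlow^{\mu\uparrow}$ must be a quasi-unital inner Kan space. Its underlying semi-simplicial space is the straightening of the underlying right fibration of $\cA^{*}\Alg(\Mfd^{\mu}_{\diamond})|_{\cP^{\uparrow}}\to \Simp_{s}$. Since $\cP^{\uparrow}\subset\cP$ is wide and $\cR(\cP)\to\Simp_s$ only sees the Cartesian edges, this agrees with $\PreFlow^{\mu}$. We therefore want to check that Cartesian edges of $\cP\to\Simp_s$ lie in $\cP^{\uparrow}$. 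They are the maps $\phi^{*}P'\to P'$, and the induced map $\phi^{*}P'\to\phi^*P'$ is the identity, which is a filter inclusion. Quasi-unitality and inner horn filling then follow from \Cref{cor:horn_filling}, \Cref{lem:St(P)_inner_Kan} and \Cref{cor:flow_is_qu}.

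Second, for each injective $\phi$ the functors $(v_0^*,d_0^*)\colon\bfPreFlow^{\mu\uparrow}_{n+1}\to\bfPreFlow^{\mu\uparrow}_0\times\bfPreFlow^{\mu\uparrow}_n$ and $(id\join\phi)^*$ must be morphisms of Cartesian fibrations over $\bfPreFlow^{\mu\uparrow}_0$. The plan is to lift the corresponding statement from \Cref{lem:P_has_companions} using the right fibration $q\colon\bfPreFlow^{\mu\uparrow}_k\to\cP^{\uparrow}_k$. This $q$ is the fiber of the right fibration $\cA^{*}\Alg(\Mfd^\mu_\diamond)\to\cP$ and is pulled back along $\cP^{\uparrow}\to\cP$. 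The key tool is \Cref{thm:iterated_fibration} and the 2-out-of-3 property for Cartesian edges: over a right fibration, every edge is Cartesian. So given a flow $(n+1)$-simplex $\bH$ with $v_0^*\bH=\bX$ and a morphism $\bX'\to\bX$ in $\bfPreFlow^{\mu\uparrow}_0$ covering a filter inclusion $g\colon R\to P_0$, we proceed in three steps. We first take the Cartesian lift $g^*P\to P$ in $\cP^{\uparrow}_{n+1}$ provided by \Cref{lem:P_has_companions}. We then lift it uniquely along $q$ to an edge $\bH_R\to\bH$ with $\bH_R$ the restriction of $\bH$. Finally we check that this edge is Cartesian for $(v_0^*,d_0^*)$, where the target is itself a right fibration over $\cP^{\uparrow}_0\times\cP^{\uparrow}_n$.

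Concretely, $\bfPreFlow^{\mu\uparrow}_{n+1}\to\bfPreFlow^{\mu\uparrow}_0\times\bfPreFlow^{\mu\uparrow}_n$ sits over $\cP^{\uparrow}_{n+1}\to\cP^{\uparrow}_0\times\cP^{\uparrow}_n$, with both vertical maps right fibrations. By the pasting law for Cartesian squares, an edge upstairs is Cartesian exactly when its image in $\cP^{\uparrow}_{n+1}$ is Cartesian over $\cP^{\uparrow}_0\times\cP^{\uparrow}_n$. Existence of lifts at a given $(\bX',\bH)$ then reduces to matching $v_0^*$ of the restricted simplex with the given $\bX'$. This matching holds because $\bX'\to\bX$ is itself the unique $q$-lift over $g$, and restriction along $v_0$ is compatible with the Cartesian transport defining $\PreFlow^\mu$.

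For $(id\join\phi)^*$ the argument is the same: it is the pullback of $(id\join\phi)^*$ on $\cP^{\uparrow}$ along the natural right fibrations, and \Cref{lem:P_has_companions} already shows that it preserves Cartesian edges downstairs.

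The main obstacle is making the "right fibration over" argument precise at the level of $\Simp_s$-diagrams. We need $\bfPreFlow^{\mu\uparrow}_{n+1}\to\bfPreFlow^{\mu\uparrow}_0\times\bfPreFlow^{\mu\uparrow}_n$ to be, fiberwise over $\cP^{\uparrow}$, a right fibration compatible with the face functors. This requires identifying the fiber of $\cA^*\Alg(\Mfd^\mu_\diamond)$ over $P\to[n+1]$ with $\Mfd^\mu_\diamond$-lifts of $\sfA_P$. It also requires noting that $v_0^*$ and $d_0^*$ are induced by the Cartesian transports $\cA(v_0),\cA(d_0)$, which are closed embeddings by \Cref{lem:A_P_closed_embedding}, so that the right fibration of \Cref{cor:Alg(Mfd)->Alg(Mod)_is_right_fib} pulls back compatibly. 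If this becomes awkward, I would first apply \Cref{thm:iterated_fibration} to the map of Cartesian fibrations over $\cP^{\uparrow}_0\times\cP^{\uparrow}_n$ and then compose.
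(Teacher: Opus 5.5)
Your proposal is correct and follows the paper's proof. You lift the Cartesian edges supplied by \Cref{lem:P_has_companions} along the right fibrations $\bfPreFlow^{\mu\uparrow}_k\to\cP^{\uparrow}_k$, use uniqueness of lifts in the right fibration $\bfPreFlow^{\mu\uparrow}_0\times\bfPreFlow^{\mu\uparrow}_n\to\cP^{\uparrow}_0\times\cP^{\uparrow}_n$ to see that the lift lies over $(f,\mathrm{id})$, conclude by 2-out-of-3 for Cartesian edges, and run the same chase for $(\mathrm{id}\join\phi)^{*}$. Your first paragraph, identifying $h\bfPreFlow^{\mu\uparrow}$ with $\PreFlow^{\mu}$, spells out a check the paper leaves implicit. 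The only imprecision is calling the square for $(\mathrm{id}\join\phi)^{*}$ a pullback: restriction of flow simplices is not an equivalence on fibers. The argument does not need this, only that the functor lies over its counterpart on $\cP^{\uparrow}$.
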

\begin{proof}
	The morphism of semi-simplicial categories $\bfPreFlow^{\mu\uparrow} \to \cP$ gives rise to a diagram
	\begin{equation*}
		\begin{tikzcd}
			\bfPreFlow^{\mu\uparrow}_{n+1}
			\arrow[r]
			\arrow[d]     &
			\cP^{\uparrow}_{n+1}
			\arrow[d] \\
			\bfPreFlow^{\mu\uparrow}_{0} \times \bfPreFlow^{\mu\uparrow}_{n}
			\arrow[r]     &
			\cP^{\uparrow}_{0}\times \cP^{\uparrow}_{n}
		\end{tikzcd}
	\end{equation*}
	where the horizontal maps are right fibrations.
  We want to show that the left vertical is a morphism of Cartesian fibrations over $\bfPreFlow^{\mu\uparrow}_{0}$.
  By the 2-out-of-3 property for Cartesian lifts, it suffices to show that any morphism of the form $(f,id)$ in $\bfPreFlow^{\mu\uparrow}_{n} \times \bfPreFlow^{\mu\uparrow}_{0}$ admits a Cartesian lift.

	Because the lower horizontal is a right fibration, $(f,id)$ is a Cartesian lift of its image $(f',id)$ in $\cP^{\uparrow}_{0}\times \cP_{0}^{\uparrow}$.
  By \Cref{lem:P_has_companions}, the edge $f'$ admits a Cartesian lift $g'$ in $\cP^{\uparrow}_{n+1}$ which is also a Cartesian lift of $(f',id)$. Then because the top horizontal is a right fibration, any Cartesian lift $g$ of $g'$ to $\bfPreFlow^{\mu\uparrow}$ is necessarily mapped to $(f,id)$, and so by 2-out-of-3 for Cartesian edges, $g$ is a Cartesian lift of $(f,id)$.

	For a map $\phi \colon [m] \to [n]$ in $\Simp_{s}$, we similarly get a diagram
	\begin{equation*}
		\begin{tikzcd}
			\bfPreFlow^{\mu\uparrow}_{n+1}
			\arrow[r]
			\arrow[d]    &
			\bfPreFlow^{\mu\uparrow}_{m+1}
			\arrow[d]
			\arrow[r]    &
			\bfPreFlow^{\mu\uparrow}_{0}
			\arrow[d]    \\
			\cP^{\uparrow}_{n+1}
			\arrow[r]    &
			\cP^{\uparrow}_{m+1}
			\arrow[r]    &
			\cP^{\uparrow}_{0}
		\end{tikzcd}
	\end{equation*}
	where the vertical maps are right fibrations, and the bottom row exhibits a morphism of Cartesian fibrations over $\cP_{0}^{\uparrow}$ by \Cref{lem:P_has_companions}.
  A diagram chase using the 2-out-of-3 property for Cartesian lifts as above shows that the top row also exhibits a morphism of Cartesian fibrations over $\bfPreFlow^{\mu\uparrow}_{0}$.
\end{proof}

\begin{definition}\label{def:flow_companion}
	By \Cref{lem:Flow_has_companions}, the semi-simplicial $\infty$-category $\bfFlow^{\mu \uparrow}$ admits a companion functor which we denote
	\begin{align*}
		\cU\colon \bfFlow^{\mu\uparrow}_{0} \to \Flow^{\mu}.
	\end{align*}
	The natural dual to \Cref{lem:Flow_has_companions} also holds; the subcategory $\bfFlow^{\mu\downarrow}$ spanned by morphisms covering inclusions of cofilters admits conjoints in the sense of \Cref{rem:conjoints}.
  We write
	\begin{align*}
		\cU' \colon (\bfFlow^{\mu\downarrow}_{0})^{\op} \to \Flow^{\mu}
	\end{align*}
	for the conjoint functor.
\end{definition}

\subsection{Stability}\label{subsec:stability}

We now proceed to compute some colimits in $\Flow^{\mu}$. We begin by showing that $\Flow^{\mu}$ is pointed, i.e., that there exists an object which is both terminal and initial.
Recall from \Cref{ex:empty_flow_cat} that the empty flow category $\bZero$ in $\Flow^{\mu}$ is the unique flow category with objects given by the empty poset.
As the notation suggests, this is indeed a zero object in $\Flow^{\mu}$, intuitively because a bimodule between $\bX$ and $\bZero$ contains no manifolds not already specified by $\bX$.

\begin{lem}\label{lem:flow_is_pointed}
	$\Flow^{\mu}$ is pointed by the empty flow category.
\end{lem}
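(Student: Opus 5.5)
We plan to compute the mapping spaces into and out of $\bZero$ with the right-hom model of \Cref{prop:semi-simplicial_results}(4). By \Cref{def:PreFlow_functor}, $\Flow^{\mu}=\cT(\PreFlow^{\mu})$. So for a flow category $\bX$ it suffices to show that the Kan semi-simplicial spaces $\Hom^{\rR}(\bX,\bZero)$ and $\Hom^{\rR}(\bZero,\bX)$ have contractible realization. We will in fact show they are contractible in each degree, using only that everything is determined by a poset of objects together with morphism manifolds.

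First we treat maps out of $\bZero$. An $(n+1)$-simplex of $\PreFlow^{\mu}$ is a right-fibration datum over $\cP$: a locally finite $P\to[n+1]$ together with a lift of $\sfA_P$ to $\Mfd^{\mu}_{\diamond}$. We fix the first $n+1$ vertices to be the iterated degeneracy $s_n\bZero$ and the last vertex to be $\bX$. Then $P=\emptyset\join\cdots\join\emptyset\join R$, where $R$ is the object poset of $\bX$. Hence $\sfA_P=\sfA_R$ sits over $[n+1]$ via the constant map to $n+1$, and by \Cref{rem:P_S} every face except the last vertex is empty. So the fiber $\{s_n\bZero\}\times_{X_n}X_{n+1}\times_{X_0}\{\bX\}$ is the space of lifts of $\sfA_R$ to $\Mfd^{\mu}_{\diamond}$ that restrict to the given $\bX$. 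Since $\cA^*\Alg(\Mfd^\mu_\diamond)\to\cP$ is a right fibration, this space is contractible: the only data is $\bX$ itself, pulled back along the Cartesian (equivalence) morphism $P\simeq R$. A subtlety is that the unique map $\emptyset\to$ anything realizes $s_n\bZero$ compatibly. We expect this to follow from the fact that all the $\sfU_P$ and the $D(\cdot)$ constructions are empty over empty posets, so $\bZero$ with its degeneracies is the unique point of the relevant fibers.

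The case $\Hom^{\rR}(\bX,\bZero)$ is symmetric: $P=R\join\emptyset\join\cdots$, with $R$ sitting over $0$. The essential observation is that every morphism object $\bX(p,q)$ with $p,q$ in different vertices needs both endpoints, and none exist. So every simplex is determined by its restriction to the single nonempty vertex. We make this precise via \Cref{cor:flow_cats_defined_on_P}: a simplex defined over $P$ is the same as a simplex over $\Simp^{\op}_{P^{\simeq}}$ extended by empty manifolds. The fibers over the fixed endpoints are then contractible.

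The step we expect to be the main obstacle is making rigorous that the chosen extension $\{s_n x\}$ in \Cref{prop:semi-simplicial_results}(4), built from the degeneracies $s_0,s_\omega$ of \Cref{subsec:degeneracies} on $\PreFlow^{\mu}_c$, really is the empty simplex. Since $\PreFlow^\mu_c\to\PreFlow^\mu$ is a trivial fibration, we would first check that the conic degeneration $D(\bZero(p,q),\cdot)$ is vacuous. From there, initiality and terminality of $\bZero$ follow, so $\Flow^\mu$ is pointed.
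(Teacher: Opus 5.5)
Your route differs from the paper's. The paper never computes mapping spaces. It uses \Cref{prop:semi-simplicial_results}(2) to identify $\Flow^{\mu}_{\bZero/}\to\Flow^{\mu}$ with $\cT$ of the semi-simplicial slice projection $\PreFlow^{\mu}_{\bZero/}\to\PreFlow^{\mu}$, and shows that this projection is a levelwise equivalence. Over $\cR(\cP)$, posets over $[n+1]$ with $P_{0}=\emptyset$ are the same as posets over $[n]$. On fibers, $d_{0}$ is Cartesian transport along the isomorphism $(d_{0})_{!}\colon\sfA_{P}\to\sfA_{\emptyset\join P}$. Terminality is the mirror argument for $\PreFlow^{\mu}_{/\bZero}$ and $d_{n+1}$.

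Your computation of $\Hom^{\rR}(\bZero,\bX)_{n}$ uses the same mechanism and is fine, with one correction. The relevant point is not that $\cA^{*}\Alg(\Mfd^{\mu}_{\diamond})\to\cP$ is a right fibration. It is that $\cA$ sends the non-invertible morphism $R\to P$ over $v_{n+1}$ to an isomorphism $\sfA_{R}\cong\sfA_{P}$, and that the groupoid of posets $P$ over $[n+1]$ with $d_{n+1}^{*}P=\emptyset$ and $P_{n+1}\cong R$ is contractible.

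The mapping-space packaging makes you depend on the extension $\{s_{n}x\}$, which the slice formulation avoids. This is not a real obstacle, though. Every $n$-simplex whose vertices are all $\bZero$ lies over the empty poset, whose algebra has a contractible space of lifts. So any map $T\to\PreFlow^{\mu}$ through $\bZero$ is the empty section, and no analysis of conic degenerations is needed.

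The terminal case, however, is wrong as written. Write $X=\PreFlow^{\mu}$. In $\Hom^{\rR}(\bX,\bZero)_{n}=\{s_{n}\bX\}\times_{X_{n}}X_{n+1}\times_{X_{0}}\{\bZero\}$ the constrained face is $d_{n+1}$, and it equals $s_{n}\bX$, which is not empty. All of the vertices $0,\dots,n$ carry $\bX$, and only $P_{n+1}=\emptyset$.

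The configuration you describe, with $R$ over $0$ and $\emptyset$ elsewhere, is the \emph{left} mapping space $\{\bX\}\times_{X_{0}}X_{n+1}\times_{X_{n}}\{s_{n}\bZero\}$. \Cref{prop:semi-simplicial_results}(4) does not supply this space. Using it would require applying (4) to the reversed semi-simplicial space together with $\cT(X^{\mathrm{rev}})\simeq\cT(X)^{\op}$, and the word ``symmetric'' does not establish that.

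The direct fix is short. If $P_{n+1}=\emptyset$, then $(d_{n+1})_{!}\colon\sfA_{d_{n+1}^{*}P}\to\sfA_{P}$ is an isomorphism. Since $i(q)\le n$, the vertex-label sets and $\sfU_{P}(p,q)$ are unchanged, and $Q_{(d_{n+1})_{!}}(\gamma_{pq})=\emptyset$ by \Cref{lem:A_P_closed_embedding}. Hence $d_{n+1}$ restricts to an equivalence $X_{n+1}\times_{X_{0}}\{\bZero\}\to X_{n}$. Its fiber over $s_{n}\bX$, and indeed over any $n$-simplex, is contractible regardless of the chosen extension. This is precisely the paper's ``symmetric argument''.

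There is also a smaller slip in the initial case. It is not true that every face except the last vertex is empty: the faces $d_{i}$ for $i\le n$ still contain $R$. Only $d_{n+1}$ and the vertices $0,\dots,n$ are empty, which is all you actually use.
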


\begin{proof}
	Let us show that $\bZero$ is initial. It suffices to show that the forgetful map
	\begin{align*}
		\Flow^{\mu}_{\bZero/} \to \Flow^{\mu}
	\end{align*}
	is an equivalence.
	By \Cref{prop:semi-simplicial_results}, this functor is represented by the map of semi-simplicial spaces
	\begin{align*}
		\PreFlow^{\mu}_{\bZero/} \to \PreFlow^{\mu}.
	\end{align*}
	We show this is an equivalence levelwise over $\Simp_{s}$, where we have diagrams of spaces
	\begin{equation}\label{eq:flow_is_pointed}
		\begin{tikzcd}
			(\PreFlow^{\mu}_{\bZero/})_{n} \arrow[r] \arrow[d]      &   \PreFlow^{\mu}_{n} \arrow[d] \\
			(\cR(\cP)_{\emptyset/})_{n}  \arrow[r]               &   \cR(\cP)_{n} \,.
		\end{tikzcd}
	\end{equation}
	The bottom map is an equivalence because by the proof of \Cref{lem:P_has_companions}, its fiber at some $P\to [n]$ is the space of maps $\emptyset^{\op}\times P \to [1]$, which is contractible.
  The comparison map in the vertical fibers over some $P$ is the Cartesian transport of $\Alg(\Mfd^{\mu}_{\diamond}) \to \Alg(\Mod_{\diamond})$ along the equivalence
	\begin{align*}
		(d_{0})_{!}\colon \sfA_{P} \to \sfA_{\emptyset\join P} \,.
	\end{align*}
	This shows that \eqref{eq:flow_is_pointed} is a pullback, so its top vertical is also an equivalence.
  A symmetric argument shows that $\bZero$ is also terminal.
\end{proof}

We now show that $\Flow^{\mu}$ admits coproducts by leveraging the companion functor.

\begin{lem}\label{lem:disjoint_unions_exist}
	Let $S$ be a set, and let $F\colon S^{\triangleright} \to \cP^{\uparrow}$ be a diagram which becomes a colimit diagram in $\Pos$.
  Then the lifting problem
	\begin{equation*}
		\begin{tikzcd}
			S \arrow[r] \arrow[d]      &   \bfPreFlow^{\mu\uparrow}_{0}  \arrow[d] \\
			S^{\triangleright}  \arrow[r, "F"]  \arrow[ur, dashed]             &   \cP^{\uparrow}_{0}
		\end{tikzcd}
	\end{equation*}
	admits a unique solution.
\end{lem}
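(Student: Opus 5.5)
The plan is to reduce the lifting problem to a statement about mapping spaces of algebras, and then to use that a flow category on a disjoint union of posets is determined by its restrictions. Since $F$ is a colimit diagram in $\Pos$ indexed by the discrete set $S$ with cone point $\infty$, the cone point $P\coloneq F(\infty)$ is the coproduct $\coprod_{s\in S}P_s$ with $P_s=F(s)$, and each $F(s)\to F(\infty)$ is a filter inclusion. Here the $P_s$ are pairwise incomparable in $P$, and every $P_s$ is locally finite, so $P$ is locally finite. Because $\bfPreFlow^{\mu\uparrow}_{0}\to\cP^{\uparrow}_{0}$ is the pullback of the right fibration $\cA^{*}\Alg(\Mfd^{\mu}_{\diamond})\to\cP$, it is a right fibration. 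So I rephrase the problem: given flow categories $\bX_s$ over $P_s$, the space of $\bX$ over $P$ whose restrictions (Cartesian transports along $P_s\subset P$) are the $\bX_s$ should be contractible. By \Cref{lem:limts_as_lifting}, applied to a right fibration where every lift is Cartesian, it suffices to show that the straightened presheaf takes $F^{\op}$ to a limit diagram. Concretely, I will show that restriction
\begin{align*}
	\Map_{/\sfA_{P}}(\Simp^{\op}_{P^{\simeq}},\Mfd^{\mu}_{\diamond}) \to \prod_{s\in S}\Map_{/\sfA_{P_s}}(\Simp^{\op}_{P_s^{\simeq}},\Mfd^{\mu}_{\diamond})
\end{align*}
is an equivalence, where the spaces on each side are spaces of lifts along $(\sfA,\sfU)$.

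To identify this map, I will use \Cref{cor:flow_cats_defined_on_P}, which replaces $\Simp^{\op}_{P^{\simeq}}$ by the smaller operad $P$: the tuples $(p_0\le\dots\le p_n)$ of the poset. For $P=\coprod P_s$ with incomparable summands, any such tuple lies in a single $P_s$. Hence, as a double $\infty$-category (equivalently, as a generalized non-symmetric $\infty$-operad), $P$ is the coproduct $\coprod_s P_s$ taken over $\Simp^{\op}$. Since $\sfA_P$ restricted to $P_s$ is $\sfA_{P_s}$, and likewise $\sfU$, the algebra $\cA_\le(P)$ is the coproduct of the $\cA_\le(P_s)$. Algebras out of a coproduct of operads over $\Simp^{\op}$ form the product of the algebra categories. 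The reason is that $\Fun_{/\Simp^{\op}}(\coprod_s P_s,-)\simeq\prod_s\Fun_{/\Simp^{\op}}(P_s,-)$, and the inert-preserving condition is checked componentwise. Taking fibers over the fixed algebra in $\Mod_{\diamond}$ then gives the product decomposition above.

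The main obstacle is justifying that this operadic coproduct is the correct one. The issue is that $P\subset\Simp^{\op}_{P^{\simeq}}$ only contains the nondecreasing tuples, and the colimit has to be computed in $\Opd$ rather than in $\Cat_{\infty/\Simp^{\op}}$. I will check that $\coprod_s P_s\to\Simp^{\op}$, formed in $\Cat_{\infty/\Simp^{\op}}$, already satisfies the conditions of \Cref{def:gen_nonsym_operad}. Inert lifts exist componentwise. The Segal condition holds because fiber products over $(\coprod P_s)_0$ split into a disjoint union indexed by $s$, since composable strings cannot change summand. Condition (3) follows from the same splitting. With this in hand, the algebra map is an equivalence, so the lift $G$ exists and is unique. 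The cone point of $G$ is the disjoint union flow category $\coprod_s\bX_s$, whose mapping objects between different summands are empty.
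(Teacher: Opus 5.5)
Your argument is correct. Its skeleton matches the paper's: you reduce, via \Cref{lem:limts_as_lifting}, to showing that the space of flow categories on $\coprod_s P_s$ is the product of the spaces on the $P_s$. You then obtain this from a coproduct decomposition of the indexing object, so that maps out of it split as a product. The difference lies in the model in which that coproduct becomes visible.

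The paper uses the double-functor description (\Cref{cor:canonical_presheaf_pullback}, \Cref{cor:flow_cats_are_double_functors}). It observes that $\coprod_s \bfA_{P_s}\simeq\bfA_{\coprod_s P_s}$ in $\Dbl_{\infty/\Mod_{\diamond}}$, because $\sfA_P(p,q)=\emptyset$ for incomparable $p,q$. The lifting problem then becomes one against the representable right fibration $\Dbl_{\infty/\Mfd^{\mu}_{\diamond}}\to\Dbl_{\infty/\Mod_{\diamond}}$, whose straightening turns colimits into limits.

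You instead use \Cref{cor:flow_cats_defined_on_P} to replace $\Simp^{\op}_{P^{\simeq}}$ by the poset operad $P$. The former does not split, since it contains tuples mixing components; the latter does, because $N(\coprod_s P_s)\simeq\coprod_s N(P_s)$. You then conclude with $\Alg_{\coprod_s P_s}(-)\simeq\prod_s\Alg_{P_s}(-)$.

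Your route is more elementary. It needs only the extension-by-empty-manifolds statement, not the lax-arrow-category construction of \Cref{subsec:flow_categories_as_double_functors}. The paper's framing, however, is the one it reuses in \Cref{lem:coproducts_in_flow}. There the relevant colimits are pushouts of the $\bfA$'s over $[n+1]$, computed in $\Cat_{2}$.

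One small remark: the ``main obstacle'' you identify is not really an obstacle. You only need the total category of $P\to\Simp^{\op}$ to be the coproduct of the $P_s$ in $\Cat_{\infty/\Simp^{\op}}$. After that, $\Fun_{/\Simp^{\op}}$ splits and the inert-preservation condition is checked componentwise. So verifying the operad axioms for the coproduct separately is redundant: they hold automatically, since the coproduct is equivalent to $P$.
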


\begin{proof}
	Because we have
	\begin{align*}
		\coprod_{i\in S} \bfA_{P_{i}} \simeq \bfA_{\coprod_{i\in S} P_{i}},
	\end{align*}
	the composite $\bfA \circ F$ is a colimit diagram in $\Dbl_{\infty/\Mod_{\diamond}}$. By the universal property of pullbacks, the lifting problem in question is equivalent to
	\begin{equation*}
		\begin{tikzcd}
			S \arrow[r] \arrow[d]      &   \Dbl_{\infty/\Mfd_{\diamond}^{\mu}}  \arrow[d] \\
			S^{\triangleright}  \arrow[r, "\bfA\circ F"']  \arrow[ur, dashed]             &   \Dbl_{\infty/\Mod_{\diamond}}.
		\end{tikzcd}
	\end{equation*}
	The straightening of a representable right fibration takes colimits to limits, so this admits a unique solution by \Cref{lem:limts_as_lifting}
\end{proof}

For a collection $\bX_{i}, i\in S$ of flow categories, we write $\coprod_{i\in S} \bX_{i}$ for the unique Cartesian lift obtained from \Cref{lem:disjoint_unions_exist}.
We can think of this as the flow category with poset of objects $\coprod_{i} P_{i}$ and
\begin{align*}
	\coprod_{i} \bX_{i}(p,q) = \begin{cases}
		\bX_{i}(p,q), & p,q\in P_{i} \\
		\emptyset, & \text{else,}
	\end{cases}
\end{align*}
with composition lifting that of each $\bX_{i}$.
\begin{lem}\label{lem:coproducts_in_flow}
	Let $S$ be a set, and $F\colon S^{\triangleright} \to \bfPreFlow^{\mu\uparrow}_{0}$ a diagram which becomes a colimit diagram in $\Pos$. Then the companion functor $\cU^{\mu\uparrow}$ maps $F$ to a colimit diagram in $\Flow^{\mu}$.
\end{lem}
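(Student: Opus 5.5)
The plan is to apply \Cref{lem:companion_colim} in case (2) to the companion-admitting semi-simplicial $\infty$-category $\bfPreFlow^{\mu\uparrow}$ (\Cref{lem:Flow_has_companions}). The statement is about a diagram indexed by $S^{\triangleright}$, while case (2) of that lemma concerns diagrams indexed by $(S^{\triangleleft})^{\triangleright}$. So the first step is to extend $F$ by the empty flow category $\bZero$ at the cone point $-\infty$. The inclusion $\emptyset\subset P_{i}$ is a filter inclusion, hence a morphism of $\cP^{\uparrow}$. By \Cref{lem:disjoint_unions_exist} applied to the empty set, or directly by uniqueness of lifts over $\emptyset$, this gives an extended diagram $\widetilde F\colon (S^{\triangleleft})^{\triangleright}\to \bfPreFlow^{\mu\uparrow}_{0}$. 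Because $\bZero$ is a zero object of $\Flow^{\mu}$ (\Cref{lem:flow_is_pointed}), a colimit of $\cU\circ\widetilde F$ over $S^{\triangleleft}$ is the same as a coproduct of the $\cU(F(s))$. It therefore suffices to show that $\cU\circ \widetilde F$ is a colimit diagram.

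Next I verify the hypothesis on Cartesian lifting problems in \Cref{lem:companion_colim}. Fix $x\in \bfPreFlow^{\mu\uparrow}_{n}$ and a Cartesian lifting problem into $\bfPreFlow^{\mu\uparrow}_{n+1}\to \bfPreFlow^{\mu\uparrow}_{0}\times\bfPreFlow^{\mu\uparrow}_{n}$ over $(\widetilde F,\{x\})$. I project to $\cP^{\uparrow}$. The underlying diagram of posets of $\widetilde F$ is still a colimit in $\Pos$: it is the coproduct $\coprod P_{s}$, with the initial cone point $\emptyset$ added. Hence \Cref{lem:P_has_companions} gives a unique solution downstairs in $\cP^{\uparrow}_{n+1}$. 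The remaining step is to lift this solution uniquely along the right fibration $\bfPreFlow^{\mu\uparrow}_{n+1}\to \cP^{\uparrow}_{n+1}$. For this I imitate the proof of \Cref{lem:disjoint_unions_exist}. Through \Cref{cor:flow_cats_are_double_functors} the problem becomes a lifting problem in $\Dbl_{\infty/\Mfd^{\mu}_{\diamond}}\to\Dbl_{\infty/\Mod_{\diamond}}$. The required input is that $\bfA$ sends the relevant diagram of posets over $[n+1]$ to a colimit of double $\infty$-categories. Write $P^{(s)}$ for the glued poset with $P^{(s)}_{0}=P_{s}$ and $d_{0}^{*}P^{(s)}=d_{0}^{*}Q$. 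The double category $\bfA_{P}$ for $P=\coprod_{s} P_{s}\join_{c} d_{0}^{*}Q$ should be the pushout of the $\bfA_{P^{(s)}}$ over $\bfA_{d_{0}^{*}Q}$. This holds because an arc from $p\in P_{s}$ only passes through labels in $P_{s}$ followed by labels in $d_{0}^{*}Q$. \Cref{lem:limts_as_lifting}, applied to the representable right fibration, then gives uniqueness.

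Finally I check the fibration condition of case (2): the image under $h\circ\cU_{1}$ of each edge $-\infty\to s$, namely the restriction $h\bfC_{F(s)/}\to h\bfC_{\bZero/}$, must be an inner fibration. By \Cref{lem:flow_is_pointed}, $h\bfC_{\bZero/}\simeq \PreFlow^{\mu}$, so the map in question is the slice projection $\PreFlow^{\mu}_{F(s)/}\to\PreFlow^{\mu}$. This is a left fibration of semi-simplicial spaces, hence an inner fibration, because $\PreFlow^{\mu}$ is inner Kan (\Cref{cor:horn_filling}). Case (2) of \Cref{lem:companion_colim} then concludes.

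I expect the main obstacle to be the colimit identification of $\bfA_{P}$ in the second step. That amounts to showing that the arc categories $\sfA_{P}(p,q)$ decompose as unions, by an argument of the same kind as in \Cref{lem:horn_A_as_colimit}. This step also needs the cofibrancy point that this colimit of strict 2-categories is preserved in $\Dbl_{\infty}$, which should again follow from the inclusions being subcategory inclusions.
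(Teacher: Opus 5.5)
Your proposal is correct and follows the paper's proof essentially step for step. You extend $F$ by $\bZero$ at $-\infty$ and apply case (2) of \Cref{lem:companion_colim}, with the slice projection $\PreFlow^{\mu}_{F(s)/}\to\PreFlow^{\mu}$ as the required inner fibration; you then solve the Cartesian lifting problem first in $\cP^{\uparrow}$ via \Cref{lem:P_has_companions}, and then along the right fibration via the pushout description of $\bfA_{H(\infty)}$ (a homotopy pushout of sub-$2$-category inclusions) together with \Cref{lem:limts_as_lifting}, exactly as the paper does. Like the paper, you apply case (2) to a possibly infinite $S$ although that case is stated for finite $S$; this is harmless, since by \Cref{prop:semi-simplicial_results} the functor $\cT$ also preserves arbitrary products of inner Kan spaces.
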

\begin{proof}
	Coproducts may equivalently be computed as pushouts over the initial object, so we extend $F$ to $\cI = S^{\triangleleft}$ by setting $F(-\infty) = \bZero$.
  It then suffices to show that this extended~$F$ is mapped to a colimit diagram by the companion functor.
  Because restriction along the unique morphism $\bZero \to \bX$ agrees with the slice projection $\PreFlow^{\mu}_{\bX/} \to \PreFlow^{\mu}_{/\bZero } \simeq \PreFlow^{\mu}$, which is always an inner fibration, it suffices by \Cref{lem:limts_as_lifting} to show that for any flow $n$-simplex $\bH$, the dashed Cartesian lifting problem
	\begin{equation*}
		\begin{tikzcd}[row sep=large]
			\cI \arrow[r]
			\arrow[d]    &
			\bfPreFlow^{\mu\uparrow}_{n+1}
			\arrow[d]
			\arrow[r]   &
			\cP^{\uparrow}_{n+1}
			\arrow[d] \\
			\cI^{\triangleright}
			\arrow[r, "{(F, \underline{\bH}}"']         \arrow[ur, "G",dashed]
			\arrow[urr, "H"{near end}, ,dotted]    &
			\bfPreFlow^{\mu\uparrow}_{0} \times \bfPreFlow^{\mu\uparrow}_{n}
			\arrow[r]    &
			\cP^{\uparrow}_{0} \times \cP^{\uparrow}_{n}
		\end{tikzcd}
	\end{equation*}
	admits a unique solution.
  The space of dotted Cartesian lifts is contractible by \Cref{lem:P_has_companions}, so by 2-out-of-3 for Cartesian lifts, it suffices to show that $H$ admits a unique lift $G$.
  If we write $\ob(\bH)=R$, it follows from the proof of \Cref{lem:P_has_companions} that $H(\infty)$ is the poset over $[n+1]$ corresponding to the functor
	\begin{align*}
		\coprod_{i\in S} c_{P_{i}}\colon \left(\coprod_{i\in S} P_{i}^{\op} \right) \times R \to [1] \,.
	\end{align*}
	This is precisely the pushout
	\begin{equation*}
		\begin{tikzcd}
			\coprod_{i\in S} \emptyset \join R \arrow[r] \arrow[d]      &   \coprod_{i\in S} H(i)  \arrow[d] \\
			\emptyset \join R  \arrow[r]               &   H(\infty) \,,
		\end{tikzcd}
	\end{equation*}
	so in other words, $H$ becomes a colimit diagram in $\Pos_{/[n+1]}$.
  Now because $\bfA$ preserves coproducts in $\Pos$, and takes morphisms in $\cP^{\uparrow}$ to fully faithful double functors, the square
	\begin{equation*}
		\begin{tikzcd}
			\coprod_{i\in S} \bfA_{\emptyset \join R} \arrow[r] \arrow[d]      &   \coprod_{i\in S} \bfA_{H(i)}  \arrow[d] \\
			\bfA_{\emptyset \join R}  \arrow[r]               &   \bfA_{H(\infty)}
		\end{tikzcd}
	\end{equation*}
	in $\Dbl_{\infty/\Mod_{\diamond}}$ has projection to $\Dbl_{\infty}$ which is the image of a homotopy pushout in $\Cat_{2}$.
  This implies that the colimit $H(\infty)$ is preserved by $\bfA$, so $\bfA \circ H$ is a colimit diagram in $\Dbl_{\infty/\Mod_{\diamond}^{\otimes}}$.
  By \Cref{cor:canonical_presheaf_pullback}, the map $\bfFlow^{\mu\uparrow}_{n+1} \to \cP^{\uparrow}_{n+1}$ is the pullback along $\bfA$ of a representable right fibration.
  It therefore follows from \Cref{lem:limts_as_lifting} that there exists a unique lift $G$ of $H$.
\end{proof}

\begin{cor}\label{cor:coproducts_in_flow}
	$\Flow^{\mu}$ admits small coproducts.
\end{cor}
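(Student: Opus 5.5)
Given a set $S$ and flow categories $\bX_i$, $i\in S$, with posets of objects $P_i$, the plan is to exhibit $\coprod_{i\in S}\bX_i$ as a coproduct of the $\bX_i$ in $\Flow^{\mu}$, using the companion functor $\cU$ of \Cref{def:flow_companion}. Every object of $\Flow^{\mu}=\cT(\PreFlow^{\mu})$ is represented by a vertex of $\PreFlow^{\mu}$, i.e.\ by a $\mu$-structured flow category, so it suffices to build coproducts of such representatives.

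First take the disjoint union $P=\coprod_{i} P_i$ in $\Pos$. It is locally finite, because each principal filter of $P$ lies in a single summand. Each summand inclusion $P_i\to P$ is the inclusion of a filter (indeed a connected component), so the cocone $S^{\triangleright}\to\cP^{\uparrow}_0$ with tip $P$ is a colimit diagram in $\Pos$ lying in $\cP^{\uparrow}_0$. By \Cref{lem:disjoint_unions_exist}, the partial diagram $S\to\bfPreFlow^{\mu\uparrow}_0$ given by the $\bX_i$ has a unique Cartesian extension
\[
F\colon S^{\triangleright}\to\bfPreFlow^{\mu\uparrow}_0, \qquad F(\infty)=\coprod_{i}\bX_i,
\]
covering this cocone.

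Next apply \Cref{lem:coproducts_in_flow}: since $F$ maps to a colimit diagram in $\Pos$, the composite $\cU\circ F$ is a colimit diagram in $\Flow^{\mu}$. By \Cref{lem:companion_functor}, $\cU$ agrees on cores with the map induced by the counit of $\cT$, so $\cU\circ F$ restricted to $S$ is the given family $(\bX_i)_{i\in S}$ in $\Flow^{\mu}$. Hence $\cU(F(\infty))=\coprod_i\bX_i$, with the companion morphisms of the filter inclusions as structure maps, is a coproduct of the $\bX_i$. The empty case $S=\emptyset$ is covered by \Cref{lem:flow_is_pointed}, though the general argument also handles it.

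The only step needing care is a size issue: $S$ should be small, so that $P$ and each $\bX(p,q)$ stay in the small universe. No real obstacle remains, since all the substantive work was done in \Cref{lem:coproducts_in_flow}.
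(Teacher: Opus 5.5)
Your proposal is correct and is the paper's proof. You take the unique Cartesian lift over the disjoint-union cocone from \Cref{lem:disjoint_unions_exist} and apply the companion functor, which by \Cref{lem:coproducts_in_flow} yields a colimit diagram; you also spell out the local finiteness of $\coprod_i P_i$ and the identification of the legs via \Cref{lem:companion_functor}, which the paper leaves implicit. One small slip that does not affect the argument: each $P_i$ is a union of connected components of $P$, not necessarily a single one.
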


\begin{proof}
	Let $\bX_{s}, s\in S$ be a small collection of flow categories.
  By \Cref{lem:coproducts_in_flow}, the unique Cartesian lift in \Cref{lem:disjoint_unions_exist} is mapped by the companion functor to a colimit diagram in $\Flow^{\mu}$ which exhibits $\coprod_{s\in S} \bX_{s}$ as the coproduct.
\end{proof}

Cofibers in $\Flow^{\mu}$ can not be handled by the companion functor, because there are no maps to $\bZero$ in $\bfPreFlow^{\mu\uparrow}_{0}$.
Instead, we will use relabeling and a relative grading shift to construct cofibers in a manner analogous to mapping cones of chain complexes.

For a map of posets $P\to [1]$, we write $s_{*}P \to [0]$ for the composite $P\to [1] \to [0]$.
The starting point for constructing cofibers is to note that we have equivalences
\begin{align}\label{eq:A_sP=A_P}
	\sfA_{s_*P}(p,q) \simeq \sfA_{P}(p,q)
\end{align}
of models which is moreover compatible with composition.
This means that we can almost directly treat a bimodule $\bB$ with objects $P$ as a flow category $\tilde{c}(\bB)$ with objects $s_{*}P$.
Note however that there is a difference between $\sfU_{s_{*}P}$ and $\sfU_{P}$; the positive part of $\sfU_{P}(p,q)$ contains an extra factor $\bR^{ \left\{ 1\right\}}$ whenever $p\in P_{0},q\in P_{1}$.
There is a $\Vect$-action on $\Mod_{\diamond}$ induced by the monoidal functor
\begin{align*}
	T^{-}\colon \sfB \Vect \to \Mod_{\diamond}^{\otimes} \\
	V\mapsto (\ast, (0,V) ) \,.
\end{align*}
We think of a vector space $V$ as acting by subtracting, and therefore write the action as
\begin{align*}
	(U^{+},U^{-}) \ominus V = (U^{+},U^{-}\oplus V) \,.
\end{align*}
Up to stabilization, we can write $\sfU_{P} = \sfU_{s_{*}P } \ominus \Delta_{*}\bR_{P_{1}}$, where $\Delta_{*}\bR_{P_{1}}$ is the algebra
\begin{align*}
	\Delta_{*}\bR_{P_{1}} \colon P &\to \sfB \Vect \\
	\Delta_{*}\bR_{P_{1}}(p,q) &=\begin{cases}
		\bR, & p\in P_0, q\in P_1 \\
		0, & \text{else}.
	\end{cases}
\end{align*}
As notation suggests, this lifts along $\Delta\colon \sfE \Vect \to \sfB \Vect$ by the algebra
\begin{align*}
	\bR_{P_{1}} \colon P & \to \sfE \Vect \\
	\bR_{P_{1}}(p) &= \begin{cases}
		0, &  p\in P_{0} \\
		\bR, & p\in P_{1}.
	\end{cases}
\end{align*}
\begin{rem}
	For the remainder of this section we will use \Cref{cor:flow_cats_defined_on_P} and work with flow categories as lax functors $\bX \colon P \to \Mfd_{\diamond}^{\mu}$ rather than over $\Simp_{P^{\simeq}}^{\op}$.
  This is necessary because there will typically never be any interesting algebras $\Simp^{\op}_{P^{\simeq}} \to \sfE \Vect$.
\end{rem}

We will now show that the $\sfE(\bZ\times BO)$-module structure on $\mu$ induces a module structure on $\Mfd^{\mu}_{\diamond}$, and that by acting with a certain $\sfE(\bZ\times BO)$-algebra we can modify $\tilde{c}(\bB)$ to get a flow category.

For any symmetric monoidal $\infty$-category $\cV^{\otimes}$, we have a morphism
\begin{equation*}
	\begin{tikzcd}
		(\cV^{\otimes})^{[1]} \arrow[r,"{(ev_{0},ev_{1})}"] \arrow[dr, "ev_{1}"']      &   \cV^{\otimes}\times_{\Simp^{\op}} \cV^{\otimes}  \arrow[d, "pr_{1}"] \\
              & \cV^{\otimes}
	\end{tikzcd}
\end{equation*}
of symmetric monoidal cocartesian fibrations.
Under straightening, this exhibits the slice projection functor as a lax symmetric monoidal functor
\begin{align*}
	\cV^{\otimes} &\to (\Cat_{\infty}^{\times})^{[1]} \\
	C &\mapsto \cV_{/C} \to \cV.
\end{align*}
This induces a monoidal functor on algebras
\begin{align*}
	\Alg_{\Cat}(\cV)^{\otimes} \to \Alg_{\Cat}(\Cat_{\infty}^{[1]}  )^{\otimes} \,.
\end{align*}
When we apply this to the $\sfE(\bZ\times BO)$-module $\mu$ in $\Alg_{\Cat}(\cS_{/ (\bZ\times BO)})$ and unstraighten, we get a $(\cS_{/ (\bZ\times BO)})_{/\sfE(\bZ \times BO)} \to \cS_{/ (\bZ\times BO)}^{\otimes}$-module structure on $(\cS_{/ (\bZ\times BO)})_{/\mu} \to \cS_{/ (\bZ\times BO)}^{\otimes}$.
Inclusion of subcategories over contractible spaces gives a map
\begin{equation*}
	\begin{tikzcd}
		\sfE(\bZ\times BO) \arrow[r] \arrow[d, "\Delta"]      &   (\cS_{/ \bZ\times BO})_{/\sfE(\bZ\times BO)}  \arrow[d] \\
		\sfB(\bZ\times BO)  \arrow[r]               &   \cS_{/ \bZ\times BO}^{\otimes}
	\end{tikzcd}
\end{equation*}
of algebras in $\Dbl_{\infty}^{[1]}$.
We restrict along this map of algebras, and obtain a module structure given by maps
\begin{equation*}
	\begin{tikzcd}
		\sfE(\bZ\times BO)\times_{\Simp^{\op}} (\cS_{/ \bZ\times BO})_{/\mu} \arrow[r] \arrow[d]      &   (\cS_{/\bZ\times BO})_{/\mu}  \arrow[d] \\
		\sfB(\bZ\times BO) \times_{\Simp^{\op}} \cS_{/ \bZ\times BO}^{\otimes}  \arrow[r]               &  \cS_{/ \bZ\times BO}^{\otimes} \,.
	\end{tikzcd}
\end{equation*}
The map $T^{-}$ lifts to a monoidal functor into $\Mfd_{\diamond}$ taking values in the one-point manifolds.
The composite
\begin{align}\label{eq:T-}
	\sfB \Vect \to \Mfd_{\diamond}^{\otimes} \xrightarrow{I} \bVect(\CW)^{\boxplus} \to \cS_{/ (\bZ\times BO)}^{\otimes}
\end{align}
factors through the full monoidal subcategory $\sfB(\bZ\times BO)$, and by construction of the map $\bVect(\CW) \to \cS_{/ (\bZ\times BO)}$, it agrees with the map $\Vect \to \bZ\times BO$ which takes $V$ to $-V$.
We can model $\sfE \Vect$ as the Segal space given by the bar construction of the action of $\Vect$ on itself.
The map of $\bE_{\infty}$-monoids $T^{-}\colon \Vect\to \bZ\times BO$ then gives a commutative square
\begin{equation}\label{eq:EVect->EZxBO}
	\begin{tikzcd}
		\sfE \Vect \arrow[r, "\sfE T^{-}"] \arrow[d, "\Delta"]      &   \sfE \bZ\times BO  \arrow[d, "\Delta"] \\
		\sfB \Vect  \arrow[r,"\sfB T^{-}"]               &   \sfB(\bZ\times BO)
	\end{tikzcd}
\end{equation}
of algebras in $\Seg \subset \Dbl_{\infty}$. Because the composite \eqref{eq:T-} agrees with $T^{-}$, we can restrict the action on $(\cS_{/ (\bZ\times BO)})_{/\mu}$ along \eqref{eq:EVect->EZxBO} and obtain a $\sfE\Vect \to \sfB(\Vect)$-action on $\Mfd^{\mu}_{\diamond}\to \Mfd_{\diamond}$ whose action map $\Sigma^{-}$ is the induced map to the pullback in the cube
\begin{equation*}
	\begin{tikzcd}[row sep=1em, column sep=1em]
		\sfE \Vect \times_{\Simp^{\op}} \Mfd^{\mu}_{\diamond} \arrow[rr, "\Sigma^{-}"] \arrow[dd] \arrow[dr] & & \Mfd_{\diamond}^{\mu} \arrow[dd] \arrow[dr] \\
		& \sfE \Vect \times_{\Simp^{\op}} (\cS_{/ \bZ\times BO})_{/\mu} \arrow[rr, crossing over]  & & (\cS_{/ \bZ\times BO})_{/\mu} \arrow[dd] \\
		\sfB\Vect \times_{\Simp^{\op}} \Mfd_{\diamond}^{\otimes} \arrow[rr] \arrow[dr] & & \Mfd_{\diamond}^{\otimes} \arrow[dr] \\
		& \sfB \Vect \times_{\Simp^{\op}} \cS_{/ \bZ\times BO}^{\otimes}  \arrow[rr] & & \cS_{/\bZ \times BO}^{\otimes} \,.
		\arrow[from=2-2, to= 4-2, crossing over]
	\end{tikzcd}
\end{equation*}
\begin{rem}\label{rem:unwinding_EZxBO_action}
	We can explicitly describe the action of $\sfE(\Vect)$ on $\Mfd^{\mu}_{\diamond}$ as follows.
	Write $\Sigma\colon  \mu \otimes \sfE(\bZ \times BO) \to \mu$ for the action map in $\Alg_{\Cat}(\cS_{/\bZ\times BO})$.
	In particular, if $V\in \ob(\sfE(\bZ\times BO)) = \bZ\times BO$, and $c\in \ob(\mu)$, we write $\Sigma^{V}c \coloneq \Sigma(c,V)$.

	The mapping object between $(c,V)$ and $(d,W)$ in $\mu \otimes \sfE(\bZ\times BO)$ is $\mu(c,d)\oplus W\ominus V$.
  Because $\sfE(\bZ\times BO)$ is an idempotent algebra in $\Cat_{\infty}^{\cS_{/\bZ\times BO}}$, the action map is fully faithful and essentially surjective, so we have equivalences
	\begin{align*}
		\mu(c,d) \oplus W \ominus V \xrightarrow{\sim} \mu(\Sigma^{V}c, \Sigma^{W}d)
	\end{align*}
	in $\cS_{/\bZ\times BO}$.
  Now $\Sigma^{-}$ uses the map $\sfE T^{-}$, so on objects it acts by $V\cdot c =\Sigma^{-V}c$.

	A morphism in $\sfE(V,W)$ is a vector space $T$ and an equivalence $V\oplus T \simeq W$.
  The result of acting by on an object $(\bX, U, \phi)$ of $\Mfd^{\mu}(c,d)$ is then $(\bX, U\ominus T, \phi')$ where $\phi'$ is the composite
	\begin{align*}
		\phi' \colon U\ominus T  \ominus TX \xrightarrow{\phi } \mu(c,d)  \oplus V\ominus W \xrightarrow{\simeq} \mu(\Sigma^{-V}c, \Sigma^{-W}d) \,.
	\end{align*}
\end{rem}

\begin{lem}\label{lem:EVect_act_inv}
	The action of $\sfE \Vect$ on $\Mfd_{\diamond}^{\mu}$ induces a pullback square
	\begin{equation}\label{eq:EVect_acts_invertably}
		\begin{tikzcd}
			\sfE \Vect \times_{\Simp^{\op}} \Mfd_{\diamond}^{\mu} \arrow[r, "\Sigma^{-}"] \arrow[d]      &   \Mfd^{\mu}_{\diamond}  \arrow[d] \\
			\sfE\Vect \times_{\Simp^{\op}}\Mod_{\diamond}^{\otimes}  \arrow[r, "(-)\ominus \Delta"]               & \Mod^{\otimes}_{\diamond} \,.
		\end{tikzcd}
	\end{equation}
\end{lem}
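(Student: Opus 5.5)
The plan is to verify the square \eqref{eq:EVect_acts_invertably} fiberwise, much as in \Cref{lem:relabeling_prelim}. All four corners are double $\infty$-categories and all maps are double functors, so by \Cref{lem:Mon->Dbl->Cat} the pullback can be computed in $\Cat_{\infty}$ and checked fiberwise over $\Simp^{\op}$. By the Segal condition it then suffices to check two things: that the square is a pullback on spaces of objects, and that it is a pullback on morphism $\infty$-categories over every pair of objects.

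On objects, $\Mod_{\diamond}^{\otimes}$ has a contractible space of objects. The top map on objects is the action $\ob(\sfE\Vect)\times\ob(\mu)\to\ob(\mu)$, $(V,c)\mapsto\Sigma^{-V}c$. The question is whether $\ob(\sfE\Vect)\times \ob(\Mfd^\mu_\diamond)\to \ob(\Mfd^\mu_\diamond)\times_{\ast}\ob(\sfE\Vect)$ is an equivalence. This is the shearing map $(V,c)\mapsto(\Sigma^{-V}c,V)$, which has inverse $(c,V)\mapsto(\Sigma^{V}c,V)$, so it is an equivalence.

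On morphisms, fix objects $(V,c)$ and $(W,d)$. The morphism $\infty$-category of $\sfE\Vect$ is contractible, so the left vertical becomes $\Mfd^\mu_\diamond(c,d)\to\Mod_\diamond$ and the bottom map becomes the equivalence $\Mod_\diamond\to\Mod_\diamond$, $U\mapsto U\ominus T$, with $T$ the essentially unique complement $V\oplus T\simeq W$. The top map is $\Mfd^\mu_\diamond(c,d)\to\Mfd^\mu_\diamond(\Sigma^{-V}c,\Sigma^{-W}d)$. It suffices to show this top map is an equivalence, since a square whose horizontal maps are both equivalences is a pullback.

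To prove that, I would use the definition of $\Mfd^\mu$ as a pullback and paste. Both $\Mfd^\mu_\diamond(c,d)$ and its target are pulled back from the slices $(\cS_{/\bZ\times BO})_{/\mu(c,d)}$ and $(\cS_{/\bZ\times BO})_{/\mu(\Sigma^{-V}c,\Sigma^{-W}d)}$ along $-I$ composed with the projection to $\Mfd_\diamond$. The action is induced by the slice map along the equivalence $\mu(c,d)\oplus V\ominus W\simeq\mu(\Sigma^{-V}c,\Sigma^{-W}d)$ of \Cref{rem:unwinding_EZxBO_action}, covering $-I(\bX,U)\mapsto -I(\bX,U\ominus T)=-I(\bX,U)\oplus T$. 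This uses \eqref{eq:T-} and the monoidality of $I$. Since $\oplus T$ is an autoequivalence of $\cS_{/\bZ\times BO}$ (its inverse is $\ominus T$), the induced map on slices is an equivalence over the equivalence $\Mod_\diamond\to\Mod_\diamond$. Pasting pullbacks then gives the claim.

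The main obstacle is making the identification in the last step coherent. One has to check that the cube defining $\Sigma^{-}$ restricts on mapping categories to exactly the slice-transport map. In particular, the identification $I((\bX,U)\ominus T)\simeq I(\bX,U)\oplus(-T)$ must be compatible with the identification of $\sfB T^{-}$ with $V\mapsto -V$. I would handle this by noting that all the relevant maps are pulled back from the action on $(\cS_{/\bZ\times BO})_{/\mu}$, where the equivalence follows from idempotence of $\sfE(\bZ\times BO)$. The statement then reduces to checking the corresponding square one level down, with $\Mfd_\diamond$ replaced by $\cS_{/\bZ\times BO}$.
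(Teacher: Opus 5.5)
Your reduction to spaces of objects and morphism $\infty$-categories matches the paper, and the shearing argument on objects is fine. The gap is in the morphism step. It rests on the claim that both horizontal maps are equivalences, and they are not.

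First, $\sfE\Vect(V,W)$ is not contractible. $\Vect$ is not grouplike, and $\sfE\Vect(V,W)$ is the space of pairs $(T, V\oplus T\simeq W)$. This space is empty when $\dim V>\dim W$, and otherwise it is a Stiefel-type space; for $V=W$ it is the space of automorphisms of $V$. So there is no ``essentially unique complement''.

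Second, even for a fixed $T\neq 0$, the functor $(\cP,(U^{+},U^{-}))\mapsto(\cP,(U^{+},U^{-}\oplus T))$ on $\Mod_{\diamond}$ is not an equivalence. It misses every object with $\dim U^{-}<\dim T$. It is also not full, since an isomorphism $U_{0}^{-}\oplus T\simeq U_{1}^{-}\oplus\bR^{Q^{-}}\oplus T$ need not be of the form $\phi\oplus id_{T}$. The same holds for $(-)\ominus T$ on $\Mfd_{\diamond}$. Consequently the top map $\Mfd^{\mu}_{\diamond}(c,d)\to\Mfd^{\mu}_{\diamond}(\Sigma^{-V}c,\Sigma^{-W}d)$ is not an equivalence in general, and the criterion ``horizontal maps are equivalences'' is unavailable.

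What actually makes the square a pullback is that the vertical maps are right fibrations (by \Cref{lem:fiberwise_right_fib}, via the factorization through $\Mfd_{\diamond}$). It therefore suffices to show that the induced maps on fibers over each point $(T,\cP,U)$ of $\sfE\Vect(V,W)\times\Mod_{\diamond}$ are equivalences. This treats $\sfE\Vect(V,W)$ as a genuine parameter space rather than assuming it away.

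The paper then splits the square in two:
\begin{itemize}
\item Over $\Mod_{\diamond}$: the fiber of $\Mfd_{\diamond}$ at $(\cP,U)$ depends only on $\cP$, so $(-)\ominus T$ induces an equivalence on fibers. Your proposal never addresses this step.
\item Over $\Mfd_{\diamond}$: the fiber at $\bX$ is $\Map(-I\bX,\mu(c,d))$. The comparison map to $\Map(-I\bX\ominus T,\mu(c,d)\oplus V\ominus W)$ is an equivalence by cancelling $V\oplus T\simeq W$.
\end{itemize}

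Your slice-and-pasting argument is essentially the second step. Read correctly, it shows that the square over $(-)\ominus T\colon\Mfd_{\diamond}\to\Mfd_{\diamond}$ is a pullback, because $\ominus T$ is an autoequivalence of $\cS_{/\bZ\times BO}$ inducing an equivalence of slices. It does not show that the top map is an equivalence.

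Also note the sign: $-I(\bX\ominus T)\simeq -I\bX\ominus T$, not $\oplus T$. The $\ominus T$ version is what you need to land in $\mu(c,d)\ominus T\simeq\mu(c,d)\oplus V\ominus W$.
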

\begin{proof}
	When we pass to spaces of objects, the square in question becomes
	\begin{equation*}
		\begin{tikzcd}
			\Vect \times \ob(\mu) \arrow[r] \arrow[d]      &   \ob(\mu)  \arrow[d] \\
			\Vect  \arrow[r]               &   \ast
		\end{tikzcd}
	\end{equation*}
	which is a pullback.
  When we pass to morphism $\infty$-categories at a pair $c,d\in \ob(\mu)$ and $V,W\in \Vect$, we get
	\begin{equation*}
		\begin{tikzcd}
			\sfE\Vect(V,W) \times \Mfd_{\diamond}^{\mu}(c,d) \arrow[r] \arrow[d]      &   \Mfd_{\diamond}^{\mu}(\Sigma^{-V}c, \Sigma^{-W}d)   \arrow[d] \\
			\sfE\Vect(V,W)\times \Mfd_{\diamond}  \arrow[r, "(-)\ominus \Delta"]  \arrow[d]             &  \Mfd_{\diamond} \arrow[d] \\
			\sfE\Vect(V,W) \times \Mod_{\diamond} \arrow[r, "(-)\ominus \Delta"] & \Mod_{\diamond} \,.
		\end{tikzcd}
	\end{equation*}
	All the vertical maps are right fibrations.
  The bottom square is a pullback because the fiber over $(\cP,U)$ depends only on the stratifying category $\cP$, not on the pair $U$.
  It therefore remains to show that the top square is a pullback.
  For any object $V\oplus T \simeq W$ of $\sfE\Vect(V,W)$ and any $\bX\in \Mfd_{\diamond}$, the fiber of the left hand map is the space of maps $-I\bX \to \mu(c,d)$ in $\cS_{/ \bZ\times BO}$.
  Computing the action, we get the object $\bX\ominus T$, whose index bundle is
	\begin{align*}
		I(\bX \ominus T) = (T\bX \oplus U^{-} \oplus T, U^{+})
	\end{align*}
	as an object of $\bVect$. In $\cS_{/ \bZ\times BO}$ we therefore have $-I(\bX \ominus T) \simeq -I\bX \ominus T$. We have
	\begin{align*}
		\mu(\Sigma^{-V}c,\Sigma^{-W}d) \simeq \mu(c,d) \oplus V \ominus W.
	\end{align*}
	So a point in the fiber over $\bX \ominus T$ is a map
	\begin{align*}
		-I\bX \ominus T \to \mu(c,d) \oplus V \ominus W.
	\end{align*}
	The comparison map on fibers is given by canceling $V\oplus T \simeq W$, which is indeed an equivalence.
\end{proof}
\begin{rem}
	There is also an action $\Sigma^{+}$ constructed using $T^{+}\colon \Vect \to \Mfd_{\diamond}$ taking $V$ to $(\ast, (V,0))$.
  We then replace the lower map of \eqref{eq:EVect_acts_invertably} by $(-)\oplus \Delta$.
\end{rem}
\begin{rem}
	Because all the module structures involved are over $\bE_{\infty}$-algebras, the two actions $\Sigma^{+}$ and $\Sigma^{-}$ commute.
  Moreover, we note that the fibers of $\Mfd_{\diamond}^{\mu}$ over $(\cP,U)$ and $(\cP,U\oplus (V,V))$ are always equivalent by this procedure.
  This means, roughly speaking, that the fiber only depends on $U$ only as a virtual vector space.
  We write
	\begin{align*}
		\Stab^{V} = \Sigma^{-V}\Sigma^{+V}\simeq \Sigma^{+V}\Sigma^{-V} \,.
	\end{align*}
	Note that $\Stab^{V}$ is homotopic to the identity on $(\cS_{/ (\bZ\times BO)})_{/\mu}$.
  This functor is necessary because of the specific strict model for virtual vector spaces baked into $\Mfd_{\diamond}$, but morally does not change the class represented by an object $\bX$.
\end{rem}

\begin{cor}\label{cor:U_compensation}
	Given a functor
	\begin{align*}
		(\cA, \cV) \colon \calD \to \Alg(\Mod \times_{\Simp^{\op}} \sfE \Vect) \,,
	\end{align*}
	the projection and action maps $pr, \Sigma^{\pm} \colon \Mfd_{\diamond}^{\mu} \times_{\Simp^{\op}} \sfE(\bZ\times BO) \to \Mfd_{\diamond}^{\mu}$ determine equivalences
	\begin{align*}
		\Sigma^{+\cV} \colon \cA^{*}\Alg(\Mfd_{\diamond}^{\mu}) &\to  (\cA\oplus \Delta\circ \cV)^{*}\Alg(\Mfd_{\diamond}^{\mu}) \\
		\Sigma^{-\cV} \colon \cA^{*}\Alg(\Mfd_{\diamond}^{\mu}) & \to  (\cA\ominus \Delta\circ \cV)^{*}\Alg(\Mfd_{\diamond}^{\mu})
	\end{align*}
	of right fibrations over $\calD$.
\end{cor}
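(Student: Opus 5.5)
We reduce the statement to \Cref{lem:EVect_act_inv} by pullback pasting. Precomposing the functor $(\cA,\cV)$ with the forgetful functor $\Alg(\Mod\times_{\Simp^{\op}}\sfE\Vect)\to \Alg(\Mod)$ gives $\cA$, and composing with the action map $(-)\ominus\Delta$ (resp.\ $(-)\oplus\Delta$) gives $\cA\ominus\Delta\circ\cV$ (resp.\ $\cA\oplus\Delta\circ\cV$). Since $\Alg(-)$ preserves pullbacks, applying it to the pullback square \eqref{eq:EVect_acts_invertably} gives a pullback square of $\infty$-categories
\begin{equation*}
	\begin{tikzcd}
		\Alg(\sfE\Vect\times_{\Simp^{\op}}\Mfd^{\mu}_{\diamond}) \arrow[r, "\Sigma^{-}_{*}"] \arrow[d]      &   \Alg(\Mfd^{\mu}_{\diamond})  \arrow[d] \\
		\Alg(\sfE\Vect\times_{\Simp^{\op}}\Mod^{\otimes}_{\diamond})  \arrow[r]               & \Alg(\Mod^{\otimes}_{\diamond}) \,.
	\end{tikzcd}
\end{equation*}
The analogous square for $\Sigma^{+}$ follows from the remark that replaces the lower map by $(-)\oplus\Delta$; its proof is the same, with $-I(\bX\oplus T)\simeq -I\bX\oplus T$ in place of the corresponding computation.

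Next we compare the left vertical with $\Alg(\Mfd^{\mu}_{\diamond})\to\Alg(\Mod^{\otimes}_{\diamond})$ via the projection $pr$. Since $\Alg$ preserves pullbacks, the square with horizontal maps induced by $pr$ and vertical maps the forgetful functors is a pullback. We then pull both squares back along $(\cA,\cV)\colon\calD\to \Alg(\Mod\times_{\Simp^{\op}}\sfE\Vect)$. By pasting, the space (indeed the right fibration) $(\cA,\cV)^{*}\Alg(\sfE\Vect\times_{\Simp^{\op}}\Mfd^{\mu}_{\diamond})$ is identified both with $\cA^{*}\Alg(\Mfd^{\mu}_{\diamond})$ via $pr$ and with $(\cA\ominus\Delta\circ\cV)^{*}\Alg(\Mfd^{\mu}_{\diamond})$ via $\Sigma^{-}$. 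The equivalence $\Sigma^{-\cV}$ is then the composite of the inverse of the first identification with the second. All maps lie over $\calD$, and they are maps of right fibrations by \Cref{cor:Alg(Mfd)->Alg(Mod)_is_right_fib}, since a map between right fibrations over $\calD$ that is an equivalence is an equivalence of right fibrations.

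The main obstacle is the $pr$ square: a pullback of algebras requires identifying $\Alg(\sfE\Vect\times_{\Simp^{\op}}\Mfd^{\mu}_{\diamond})$ with $\Alg(\sfE\Vect)\times_{\Alg(\Opd)}\dots$ correctly. Concretely, an algebra in the fibre product over $\Simp^{\op}$ should be a pair of algebras over the same operad. We will check this directly from \Cref{def:algebra}, using that limits of generalized operads computed in $\Cat_{\infty/\Simp^{\op}}$ preserve inert coCartesian lifts, so the $\cM$-algebra functor preserves these fibre products. The $pr$ square is then literally a base change of $\Mfd^{\mu}_{\diamond}\to\Mod^{\otimes}_{\diamond}$ along $pr$, and hence a pullback.
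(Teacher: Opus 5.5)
Your proposal is correct and is essentially the paper's proof: apply the limit-preserving functor $\Alg$ to the pullback square of \Cref{lem:EVect_act_inv} and to the $pr$ base-change square, pull back along $(\cA,\cV)$, and read off the zigzag of equivalences $\cA^{*}\Alg(\Mfd^{\mu}_{\diamond}) \xleftarrow{pr_{*}} (\cA,\cV)^{*}\Alg(\sfE\Vect\times_{\Simp^{\op}}\Mfd^{\mu}_{\diamond}) \xrightarrow{\Sigma^{-}_{*}} (\cA\ominus\Delta\circ\cV)^{*}\Alg(\Mfd^{\mu}_{\diamond})$, and likewise for $\Sigma^{+}$. Your extra check that $\Alg$ commutes with these fibre products is a detail the paper simply asserts; note that the fibre product there should be taken over $\Opd$, not over $\Alg(\Opd)$.
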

\begin{proof}
	Apply the limit-preserving functor $\Alg$ to \eqref{eq:EVect_acts_invertably}, and to the following pullback square:
	\begin{equation*}
		\begin{tikzcd}
			\Mfd^{\mu}_{\diamond} \times_{\Simp^{\op}} \sfE\Vect \arrow[r, "pr"] \arrow[d]      &   \Mfd_{\diamond}^{\mu}  \arrow[d] \\
			\Mod_{\diamond} \times_{\Simp^{\op}} \sfE \Vect \arrow[r, "pr"]               & \Mod_{\diamond}   \,.
		\end{tikzcd}
	\end{equation*}
	This gives a zigzag
	\begin{align*}
		\cA^{*}\Alg(\Mfd_{\diamond}^{\mu}) \xleftarrow{pr_{*}} (\cA, \cV)^{*} \Alg(\Mfd^{\mu}_{\diamond} \times_{\Simp^{\op}} \sfE(\bZ\times BO)) \xrightarrow{\Sigma_{*}^{+}} (\cA \oplus \Delta\circ \cV)^{*}\Alg(\Mfd^{\mu}_{\diamond}) \,,
	\end{align*}
	where both maps are equivalences.
  The case for $\Sigma^{-}$ is similar.
\end{proof}

\begin{rem}
	For a poset $P$, a filter $F\subset P$, and a vector space $V$, there exists an algebra
	\begin{align*}
		P \to \sfE\Vect
	\end{align*}
	which on objects takes the value 0 whenever $p\notin F$ and $V$ whenever $p\in F$.
  Any morphism $p\leq q$ from $p\notin F$ to $q\in F$ is mapped to the morphism $V\colon 0\to V$.
  We write~$V_{F}$ for this algebra, and $\Sigma_{F}^{\pm V}\bX$ for the actions of $V_{F}$ on some algebra $\bX\colon P \to \Mfd_{\diamond}^{\mu}$.
  Again the actions commute, and we write
	\begin{align*}
		\Stab^{V}_{F}\bX = \Sigma^{+V}_{F}\Sigma^{-V}_{F}\bX  \simeq \Sigma^{-V}_{F}\Sigma^{+V}_{F} \bX \,.
	\end{align*}
	At least when arguing in sufficiently small contexts, we can argue as if $\Stab^{V}$ is the identity.
  Then $\Sigma^{+V}$ and $\Sigma^{-V}$ become inverse equivalences, and we can make sense of~$\Sigma^{V}$ for a virtual vector space $V$.
  Using various combinations of $\Sigma_{F}^{\pm V}$, we can construct $\Sigma^{V}_{S}\bX$ for any subset $S$ of $P$.
\end{rem}

Consider the equivalence of algebras
\begin{align*}
	\sfU_{s_{*}P} \oplus (\Delta_{*}\bR_{P_{1}}, \Delta_{*}\bR_{P_{1}} ) \simeq \sfU_{P} \oplus (0, \Delta_{*} \bR_{P_{1}})
\end{align*}
given by the obvious identification in the negative part, and by identifying the summand $\bR^{ \left\{ 1\right\}} $ of $\sfU^{+}_{P}(p,q)$ with $\Delta_{*}\bR_{P_{1}}$ whenever $p\in P_0, q\in P_1$.
By \Cref{cor:U_compensation}, a flow 1-simplex with objects $P\to [1]$ therefore corresponds to a unique flow category $c(\bB)$ such that up to relabeling along $\sfA_{P}\to \sfA_{s_{*}P}$, we have
\begin{align*}
	\Sigma_{P}^{-\bR}\Stab_{P_{1}}^{\bR} c(\bB) \simeq \Sigma^{-\bR}_{P_{1}} \bB \,.
\end{align*}
Up to stabilization, we can think of $c(\bB)$ as obtained from relabeling $\Sigma^{\bR}_{P_{0}} \bB$.

\begin{lem}\label{lem:cofibers_in_flow}
	$c(\bB)$ is a cofiber of $\bB$ in $\Flow^{\mu}$.
\end{lem}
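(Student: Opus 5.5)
To show $c(\bB)$ is a cofiber of $\bB\colon \bX_0\to \bX_1$ (where $\bX_j$ is the restriction of $\bB$ to $P_j$), I will exhibit a cofiber square
\[
\begin{tikzcd}
\bX_0 \arrow[r,"\bB"] \arrow[d] & \bX_1 \arrow[d] \\
\bZero \arrow[r] & c(\bB)
\end{tikzcd}
\]
in $\Flow^{\mu}$. Since $\bZero$ is a zero object by \Cref{lem:flow_is_pointed}, it suffices to construct the map $\iota\colon\bX_1\to c(\bB)$ together with a null-homotopy of $\iota\circ\bB$. I will then check that this cocone is initial using the slice description of \Cref{prop:semi-simplicial_results}.

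\textbf{Step 1: the map $\iota$.} The objects $P_1$ of $c(\bB)$ form a filter of $s_*P$, and the restriction of $c(\bB)$ to $P_1$ is $\bX_1$, up to the relabeling \eqref{eq:A_sP=A_P} and the shift $\Sigma$. The inclusion $P_1\subset s_*P$ is therefore a morphism in $\cP^{\uparrow}$. The companion functor $\cU$ of \Cref{def:flow_companion} sends its Cartesian lift to a morphism $\iota=\cU(\bX_1\to c(\bB))$.

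\textbf{Step 2: the null-homotopy.} I will build a flow $2$-simplex $\bH$ over $P\to[2]$, with $P_0$ sent to $0$, $P_1$ sent to $1$, and nothing sent to $2$. Equivalently, I build a $2$-simplex whose edges are $\bB$, a representative of $\iota$, and the composite to $c(\bB)$, the latter factoring through $\bZero$. Concretely, I take the objects to be $P\amalg s_*P$, with vertices $\bX_0$, $\bX_1$, $c(\bB)$. The mapping manifolds are given by $\bB$, by the companion bimodule $D(\bX_1(p,q),\dots)$, and by a conic degeneration of $\bB(p,q)$ for $p\in P_0$, $q\in s_*P$.

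This degeneration would use \Cref{lem:D(XP)_stratification}, with the extra vertex label $\{1\}$ absorbed by the shift $\Sigma^{\bR}_{P_0}$. The boundary strata then realize the faces $d_0=\iota$, $d_2=\bB$, and $d_1$. The key observation is that the face $d_1\colon\bX_0\to c(\bB)$ has all mapping manifolds empty except those factoring through the shifted copy, which is a relabeling of the zero bimodule. By \Cref{cor:relabeling}, this face is identified with the composite through $\bZero$. The $\mu$-structures extend because the index computation of \Cref{lem:D_is_htpy_eq} shows the degenerate spaces are homotopy equivalent to the originals, so \Cref{cor:U_compensation} transports the structures.

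\textbf{Step 3: universality.} I must show that for every flow category $\bY$ the induced square of mapping spaces
\[
\Map(c(\bB),\bY)\to \Map(\bX_1,\bY)\rightrightarrows\Map(\bX_0,\bY)
\]
is a fiber sequence. Using \Cref{prop:semi-simplicial_results}(4), a bimodule $c(\bB)\to\bY$ consists of manifolds $\mathbb{M}(p,y)$ for $p\in P_0\amalg P_1$. Its restriction to $P_1$ is a bimodule $\bX_1\to\bY$. Its restriction to $P_0$, after the shift by $\Sigma^{\bR}$, is precisely a bimodule $\bX_0\to\bY$ whose boundary contains $\bB\odot(\text{the }P_1\text{ part})$ as an extra codimension-one face, that is, a null-bordism of the composite.

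This "chain-level" identification of $\Hom^{\rR}$ semi-simplicial spaces should be an equivalence degreewise, using the pullback/right-fibration description of $\cA^*\Alg(\Mfd^\mu_\diamond)$ over $\cP$ together with the equivalence $\sfA_{s_*P}\simeq\sfA_P$ and \Cref{cor:U_compensation}.

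\textbf{Main obstacle.} The hardest part is Step 3, specifically making the identification of higher simplices coherent: matching $n$-simplices in $\PreFlow^\mu_{c(\bB)/}$ with pairs consisting of a simplex of $\PreFlow^\mu_{\bX_1/}$ and a compatible null-homotopy. I would first try to phrase both sides as pullbacks of right fibrations over $\cP$ along explicit poset maps, namely $s_*$ applied to the vertex $0$. The comparison then reduces to an isomorphism of $\Mod_\diamond$-categories $\sfA$ after shifting, which is immediate from \eqref{eq:A_sP=A_P}.
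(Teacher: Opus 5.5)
\textbf{There is a genuine gap in Step 3, which is where the whole lemma lives.}

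Your overall plan is legitimate in principle: build the cocone by hand, then check universality against every test object $\bW$. Your degree-zero analysis in Step 3 is also correct. A bimodule out of $c(\bB)$ restricts on the filter $P_1$ to a bimodule out of $\bX_1$. Its $P_0$-part, after undoing the shift, is a $2$-simplex on $\bX_0,\bX_1,\bW$ extending $\bB$. But the reduction you propose for the higher-degree comparison is wrong.

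The equivalence \eqref{eq:A_sP=A_P} is special to posets over $[1]$. For a simplex with objects $R\to[n+1]$ in the slice, the comparison $Z_R\colon\sfA_{s_*R}\to\sfA_R$ (push vertex labels along $d_0$, add the label $1$ wherever legal) is only an open embedding when $p\in R_0$ and $q\notin R_0$. Its complement is exactly the image of $(d_1)_!\colon\sfA_{d_1^*R}\to\sfA_R$, i.e.\ the $d_1$-face strata. An isomorphism there could never produce the ``compatible null-homotopy'' you want on the other side. It would identify $\PreFlow^\mu_{c(\bB)/}$ with all of $\PreFlow^\mu_{\bB/}$, hence $\Flow^\mu_{c(\bB)/}$ with $\Flow^\mu_{\bX_1/}$, which is false in general.

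The missing idea is how $\bZero$ enters:
\begin{enumerate}
\item Relabel along the open embeddings $Z_R$, combined with the shifts of \Cref{cor:U_compensation}. By \Cref{cor:relabeling} this is a fully faithful map of right fibrations over $\cP_{\Delta^1_s/}$. Its image is the simplices whose $d_1$-face from $R_0$ to $R_k$, $k>1$, is empty.
\item Recognize this image as $B=\PreFlow^\mu_{\bB/}\times_{\PreFlow^\mu_{\bX_0/}}X$. Here $X\subset\PreFlow^\mu_{\bX_0/}$ is the sub-semi-simplicial space of simplices with empty manifolds out of $P_0$; it represents the zero section $\Flow^\mu\to\Flow^\mu_{\bX_0/}$.
\item Check via \Cref{prop:semi-simplicial_results} that $\cT$ preserves this pullback, since one leg is an inner fibration. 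Then $B$ models $\Flow^\mu_{S\bB/}$ for the span $S\bB=(\bZero\leftarrow\bX_0\to\bX_1)$.
\end{enumerate}

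Once $\PreFlow^\mu_{c(\bB)/}\simeq B$ is established (this is the paper's proof), your Steps 1 and 2 and the fiberwise bookkeeping become unnecessary. The degenerate bimodule $s\,c(\bB)$ is initial in $\Flow^\mu_{c(\bB)/}$, so its relabeling is an initial cocone on $S\bB$, i.e.\ a pushout square. The identification of the leg $\bX_1\to c(\bB)$ as a companion is read off afterwards, as in \Cref{lem:cofiber_of_a_filter}.

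Doing it your way, via $\Hom^{\rR}$ spaces over each $\bW$, has two further problems you do not resolve:
\begin{itemize}
\item Precomposition with $\bB$ is not directly modeled by a map of right mapping semi-simplicial spaces; you would need the slices anyway.
\item You would have to show that the null-homotopy built by hand in Step 2 agrees with the one induced by the chain-level comparison.
\end{itemize}

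Smaller slips:
\begin{itemize}
\item In the correct construction the $d_1$-face $\bX_0\to c(\bB)$ of the cocone is literally empty, not ``empty except through the shifted copy''.
\item It is the $P_0$-part of $c(\bB)$ that carries the shift $\Sigma^{\bR}$, not the $P_1$-part.
\item Your $2$-simplex has $s_*P$ over the vertex $2$, not ``nothing''.
\end{itemize}
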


\begin{proof}
	Let $\bB\colon \bX\to \bY$ be a flow bimodule with poset of objects $P \to [1]$. Write $S\bB\colon \Span \to \Flow^{\mu}$ for the span
	\begin{align*}
		\bZero \xleftarrow{\bZero_{\bX}} \bX \xrightarrow{\bB} \bY \,.
	\end{align*}
	A cofiber of $\bB$ is a colimit of $S\bB$, which is equivalently an initial object of the slice $\Flow^{\mu}_{S\bB/}$.
	By decomposing $S\bB$ as a colimit, we have
	\begin{align}\label{eq:cofibers_Flow_pullback}
		\Flow^{\mu}_{S\bB/}\simeq \Flow^{\mu}_{\bB /}\times_{\Flow^{\mu}_{\bX /} } \Flow^{\mu}_{\bZero_{\bX}/} \,.
	\end{align}
	Because $\bZero$ is an initial object of $\Flow^{\mu}$, the forgetful functors determine equivalences
	\begin{align*}
		\Flow^{\mu}_{\bZero_{\bX}/} \to \Flow^{\mu}_{\bZero/} \to \Flow^{\mu} \,.
	\end{align*}
	Under these equivalences, the induced section $z\colon \Flow^{\mu} \to \Flow^{\mu}_{\bX/}$ is the one that at each object $\bZ$ selects the zero morphism $\bX\to \bZ$.
	We will now represent $z$ in terms of a monomorphism of semi-simplicial spaces.
	Consider the subspace of $(\PreFlow^{\mu}_{\bX/})_{n}$ spanned by the flow ($n+1$)-simplices $\bH$ with objects $R$ such that $\bH(p,q)$ is the empty manifold whenever $p\in R_{0}=P_{0}, q\in d_{0}^{*}P$.
	This is indeed a subspace at each simplicial level because the empty manifold has contractible space of diffeomorphisms.
	These subspaces are preserved under face maps in $\Delta^{n}_{s}$, so taken for all $n$, they determine an monomorphism
	\begin{align}\label{eq:cofibre_slice_0_inclusion}
		X \subset \PreFlow^{\mu}_{\bX/}.
	\end{align}
	It is now easy to see that the composite
	\begin{align*}
		X \to \PreFlow^{\mu}_{\bX/} \to \PreFlow^{\mu}
	\end{align*}
	is a levelwise equivalence, and that the functor $z$ may be represented by inverting this equivalence and applying the inclusion \eqref{eq:cofibre_slice_0_inclusion}.
	Now consider the pullback
	\begin{align*}
		B = \PreFlow^{\mu}_{\bB/} \times_{\PreFlow^{\mu}_{\bX /}} X \,.
	\end{align*}
	The forgetful map is an inner fibration, so $\cT$ preserves this pullback by Proposition~\ref{prop:semi-simplicial_results}.
	Also by Proposition~\ref{prop:semi-simplicial_results}, the semi-simplicial slices are mapped by $\cT$ to the categorical slices, so $B $ is a representative for $\Flow^{\mu}_{S\bB /}$.

	For a map of posets $R\to [n+1]$, write $s_{*}R$ for the composite $R \to [n+1] \xrightarrow{s_{0}} [n]$. Consider the functor
	\begin{align*}
		Z_{R}(p,q)\colon \sfA_{s_*R}(p,q) \to \sfA_{R}(p,q)
	\end{align*}
	defined on arcs by leaving edge labels the same, but pushing forward vertex labels according to the map
	\begin{align*}
		d_0 \colon [n] \to [n+1]
	\end{align*}
	and adding a $1$ to any vertex label where it is possible.
	This functor is an equivalence whenever $p\notin R_{0}$ or $q\in R_{0}$.
	Otherwise, it is an open embedding whose complement is precisely the image of
	\begin{align*}
		(d_{1})_{!}(p,q) \colon \sfA_{d_{1}^{*}R}(p,q) \to \sfA_{R}(p,q) \,.
	\end{align*}
	The functors $Z_{R}$ respect concatenation of arcs, and therefore give a morphism of algebras $Z_{R}\colon \sfA_{s_{*}R} \to \sfA_{R}$.
  For an injective morphism $\psi \colon [m+1] \to [n+1]$ which leaves~$0$ an~ $1$ fixed, there is a unique injective morphism $\phi\colon [m] \to [n]$ leaving $0$ fixed and making the commutative diagram
	\begin{equation*}
		\begin{tikzcd}
			{[m+1]} \arrow[r, "\psi"] \arrow[d, "s_{0}"]      &   {[n+1]}  \arrow[d, "s_{0}"] \\
			{[m]}  \arrow[r, "\phi"]               &   {[n]}
		\end{tikzcd}
	\end{equation*}
  into a pullback square.
	For any $R\to [n+1]$, we therefore have by pasting $s_{*}(\psi^{*}R) \simeq \phi^{*}(s_{*}R)$.
  This shows that the construction $R\mapsto s_{*}R$ determines a functor $\cP_{\Delta^{1}_{s}/} \to \cP_{\Delta^{0}_{s}/}$.
	Moreover, we have a commutative square
	\begin{equation*}
		\begin{tikzcd}
			\sfA_{\phi^{*}s_{*}R} \simeq \sfA_{s_{*}\psi^{*}R}\arrow[r, "Z_{\psi^{*}R}"] \arrow[d, "\phi_{!}"]      &   \sfA_{\psi^{*}R}  \arrow[d, "\psi_{!}"] \\
			\sfA_{s_{*}R}  \arrow[r, "Z_{R}"]               &   \sfA_{R}
		\end{tikzcd}
	\end{equation*}
	which is natural with respect to composition in $\psi$.
  For every $R\to [n+1]$ we have equivalences
	\begin{align*}
		\sfU_{s_{*}P} \oplus (\Delta_{*}\bR_{d_{0}R}, \Delta_{*}\bR_{d_{0}R} ) \simeq \sfU_{P} \oplus (0, \Delta_{*}\bR_{d_{0}R})
	\end{align*}
	given by the obvious identification of the negative parts.
  On the positive part, we use the equivalence
	\begin{align*}
		\Delta_{*}\bR_{R_{[n]}} \oplus \sfU^{+}_{s_{*}P} \simeq \sfU^{+}_{P}
	\end{align*}
	which is given by shifting labels up according to $d_{0}\colon [n]\to [n+1]$, and identifying $\Delta_{*}\bR_{d_{0}R}$ with $\bR^{ \left\{ 1\right\}}$ whenever $p\in R_{0}$, $q\in d_{0}R$.
  We write
	\begin{align*}
		\cV \colon \cP_{\Delta^{0}_{s}/} \to \Alg(\sfE\Vect )
	\end{align*}
	for the functor which takes $R\to [n+1]$ to $\bR_{d_{0}R}$ which is evidently natural in face maps which leave $0$ fixed.
  We have constructed a natural transformation
	\begin{equation*}
	\begin{tikzcd}[row sep=large]
		\cP_{\Delta^{1}_{s}/} \arrow[r, "{\cA_{\Delta^{1}_{s}/} \oplus (0, \Delta_{*}\cV )}"] \arrow[d, "s_{*}"']      &   \Alg(\Mod) \\
		\cP_{\Delta^{0}_{s}/}  \arrow[ur, "{\cA_{\Delta^{0}_{s}/} \oplus (\Delta_{*}\cV, \Delta_{*}\cV)}"' , ""{name=S} ]
		\arrow[from=S, to=1-1, Rightarrow, "Z"]
	\end{tikzcd}
    \end{equation*}
	whose source and target factor through the subcategory $\Alg(\Mod_{\diamond})$, and with all components open embeddings.
  The transformation $Z$ becomes a natural equivalence in $\Opd$, so by \Cref{lem:alg_in_exponential} it corresponds to a functor into $\Alg(\Mod^{[1]})$, which by construction factors through the subcategory $\Alg(\Mod_{\diamond}^{[1]_{\circ}})$.
  Let $\cW$ denote the functor
	\begin{align*}
		\cW \colon \cP_{\Delta^{1}_{s}/} &\to \Alg(\sfE\Vect) \\
		& R \mapsto \bR_{R} \,.
	\end{align*}

	Combining \Cref{cor:relabeling,cor:U_compensation} we therefore have maps
	\begin{align*}
		(\cA_{\Delta^{0}_{s}/} )^{*}\Alg(\Mfd^{\mu}_{\diamond}) &\xrightarrow[\simeq]{\Stab_{\cV}} (\cA_{\Delta^{0}_{s}/} \oplus (\Delta_{*}\cV,\Delta_{*}\cV  ))^{*}\Alg(\Mfd^{\mu}_{\diamond}) \\
		&\xhookrightarrow{Z_{*}} (\cA_{\Delta^{1}_{s}/}\oplus (0,\Delta_{*}\cV)  )^{*}\Alg(\Mfd_{\diamond}^{\mu}) \\
		&\xrightarrow[\simeq]{\Sigma^{-\cW} (\Sigma^{-\cV})^{-1}} \cA_{\Delta^{1}_{s}/}^{*}\Alg(\Mfd_{\diamond}^{\mu})
	\end{align*}
  of right fibrations over $\cP_{\Delta^{1}_{s}/}$.
	The composite $F$ is fully faithful with image over $R\to [n+1]$ precisely those flow $(n+1)$-simplices $\bH$ such that $d_{1}\bH(p,q) = \emptyset$ for $p\in R_{0}, q\in, R_{k} k>1$.

	When we project away from the slices, the components of $Z_{R}$ become the identity, and the algebras $\cV$ and $\cW$ agree.
  The map $F$ therefore commutes with projection to $\cA^{*}\Alg(\Mfd_{\diamond}^{\mu})$.

	In the fiber over $[-1] \in \Simp_{s,a}$, $F$ gives an equivalence inverse to the cone construction, so in particular $F(c(\bB)) \simeq \bB$.
  This means that we can straighten $F$ over $\Simp_{s,a}$ and pull back to the slice under $c(\bB)$ to get an equivalence $\PreFlow^{\mu}_{c(\bB)/} \simeq B$ over $\PreFlow$.
  When we apply the localization $\cT$, we therefore get the equivalence
	\begin{align*}
		\Flow^{\mu}_{c(\bB)/} \simeq \Flow^{\mu}_{S\bB/} \,,
	\end{align*}
	over $\PreFlow$.
  The diagonal bimodule on $c(\bB)$ gives an extension of $S\bB$ to a pushout square exhibiting $c(\bB)$ as the cofiber of $\bB$.
\end{proof}

\begin{cor}\label{cor:flow_is_stable}
	$\Flow^{\mu}$ is a stable $\infty$-category.
\end{cor}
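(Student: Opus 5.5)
By \Cref{lem:flow_is_pointed} and \Cref{lem:cofibers_in_flow}, $\Flow^{\mu}$ is pointed and every morphism has a cofiber. Every morphism in $\Flow^{\mu}$ is represented by a bimodule, because the unit $\PreFlow^{\mu}\to N\cT(\PreFlow^{\mu})$ of \Cref{prop:semi-simplicial_results} is a trivial fibration. So it suffices to show that the suspension functor $\Sigma=\mathrm{cofib}(X\to \bZero)$ is an equivalence. A pointed $\infty$-category with cofibers in which $\Sigma$ is an equivalence is stable; this is the standard criterion in \cite{HA}, once we also know fibers exist. Fibers can be built dually, as below, or obtained as $\Sigma^{-1}\mathrm{cofib}$ shifted appropriately.

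We compute $\Sigma$ explicitly. The zero bimodule $\bX\to\bZero$ is a flow $1$-simplex with poset $P\to[1]$ and $P_{1}=\emptyset$. By construction, $c(\bB)$ has the same objects $P_{0}$ and, after relabeling along $\sfA_{P}\simeq \sfA_{s_{*}P}$ (which is the identity here), its underlying unstructured flow category is $\bX$. Its structure is changed only by the action $\Sigma^{\bR}_{P_0}$ of the algebra $\bR_{P}$ via \Cref{cor:U_compensation}. Since every object lies in the filter $P_0=P$, this shifts each $c_p$ to $\Sigma^{-\bR}c_p$ and leaves the morphism structures unchanged up to the canonical cancellation $\bR\ominus\bR$. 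Hence $\Sigma\bX$ is the global action $\Sigma^{\pm\bR}_{P}\bX$ (with the sign dictated by the conventions of \Cref{rem:unwinding_EZxBO_action}).

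Next we show that this global action is invertible on $\Flow^{\mu}$. The $\sfE\Vect$-actions $\Sigma^{\pm}$ are equivalences of right fibrations by \Cref{cor:U_compensation}, they commute with each other, and $\Stab^{\bR}=\Sigma^{+\bR}\Sigma^{-\bR}$ is homotopic to the identity after passing to $(\cS_{/\bZ\times BO})_{/\mu}$. Because the constant algebra $\bR_{R}$ is natural in all $R\to[n]$ in $\cP$, this action assembles into an automorphism of the right fibration $\cA^{*}\Alg(\Mfd^{\mu}_{\diamond})\to\cP$ covering the identity. It therefore induces a semi-simplicial automorphism of $\PreFlow^{\mu}$; its inverse is the opposite action composed with $\Stab$. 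This automorphism preserves the degeneracies (the construction $D(\bX,P)$ commutes with shifting $U$), so it is quasi-unital and descends under $\cT$ to an autoequivalence of $\Flow^{\mu}$.

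The main obstacle is identifying this autoequivalence with the functor $\Sigma$, not merely agreeing with it on objects. To handle this, we run the naturality argument from the proof of \Cref{lem:cofibers_in_flow} with $\bX$ varying over $\Flow^{\mu}$. The equivalence $\Flow^{\mu}_{c(\bB)/}\simeq\Flow^{\mu}_{S\bB/}$ is built from the maps $Z$, $\Stab_{\cV}$ and $\Sigma^{-\cW}$, all of which are natural in the slice variable. Applying it to the family of zero bimodules identifies the corepresented functors $\Map(\Sigma\bX,-)\simeq\Map(\Sigma^{\bR}_{P}\bX,-)$ naturally in $\bX$, and Yoneda then gives $\Sigma\simeq\Sigma^{\bR}$. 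As an alternative if this coherence proves hard to arrange, it suffices to show $\Sigma$ is fully faithful and essentially surjective. Essential surjectivity holds because every flow category is $\Sigma$ of its opposite shift. For full faithfulness, we check on $\pi_0$ and higher homotopy of mapping spaces using the pullback description of slices from \Cref{prop:semi-simplicial_results}, which the shift visibly preserves. Finally, the dual construction (conjoints, and a cone placed in $P_{1}$) produces fibers, completing stability.
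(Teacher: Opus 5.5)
There is a genuine gap at the step you yourself flag as the main obstacle. You know that $\Sigma\bX\simeq\Sigma^{\bR}_{P}\bX$ for each $\bX$, and that the global shift $\sigma$ is an autoequivalence. That does not show the categorical suspension $\Sigma$ is an equivalence. For that you need $\Sigma\simeq\sigma$ as functors, or at least that $\Sigma$ is fully faithful.

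Your first route does not supply this. In the proof of \Cref{lem:cofibers_in_flow}, the equivalence $\Flow^{\mu}_{c(\bB)/}\simeq\Flow^{\mu}_{S\bB/}$ is built as a map of right fibrations over $\cP_{\Delta^{1}_{s}/}$. It is natural only for face maps that fix the first two vertices. So it is natural in the cocone (target) direction, and at level $[-1]$ it varies continuously with $\bX$ as a point of the space of objects. It says nothing about bimodules $\bX\to\bX'$ in the source variable. Applied to zero bimodules, it gives $\Map(\Sigma\bX,-)\simeq\Map(\Sigma^{\bR}_{P}\bX,-)$ naturally in the second variable only. Yoneda then returns only the objectwise equivalence you already had.

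Your fallback has the same defect. $\Sigma$ acts on mapping spaces through the functoriality of cofibers. Saying the shift ``visibly preserves'' the slice description therefore presupposes that $\Sigma$ acts on bimodules as the shift does.

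The missing idea is to use the adjunction $\Sigma\dashv\Omega$, which only requires objectwise statements. For each $\bX$, the paper shows that the suspension square (corners $\bX,\bZero,\bZero,\Sigma\bX$) from \Cref{lem:cofibers_in_flow} is also a pullback. The argument runs as follows:
\begin{enumerate}
\item The order-reversed cofiber construction (relabel $\Sigma^{-\bR}_{R_{1}}\bB$) produces fibers.
\item It identifies the fiber of $\bZero\to\Sigma\bX$ with $\Sigma^{-\bR}_{P}\Sigma^{\bR}_{P}\bX\simeq\bX$.
\item It carries the degenerate bimodule on $\bX$, which is terminal in the slice over $\bX$, to that same square.
\end{enumerate}
The input that makes the last step work is \Cref{lem:D_is_htpy_eq}, i.e.\ compatibility of conic degeneracies with the $\sfE\Vect$-action. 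It identifies the degeneracy $s(\Sigma^{\bR}_{P}\bX)$, up to the shifts involved, with a relabeling of $\Sigma^{\bR}_{P\times\{1\}}s\bX$. Hence every unit $\bX\to\Omega\Sigma\bX$ is an equivalence, so $\Sigma$ is fully faithful. Essential surjectivity follows from $\Sigma\Omega\bX\simeq\Sigma^{\bR}_{P}\Sigma^{-\bR}_{P}\bX\simeq\bX$.

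One smaller correction: the criterion \cite[Corollary 1.4.2.27]{HA} asks for finite colimits, so you must also invoke \Cref{cor:coproducts_in_flow}. Fibers are not part of that criterion; in this argument they enter only through $\Omega$.
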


\begin{proof}
	Let $\bX$ be a flow category with poset of objects $P$.
  By \Cref{lem:cofibers_in_flow}, the cofiber of the zero morphism $\bZero_{\bX}\colon \bX \to \bZero$ is equivalent to $\Sigma_{P}^{\bR}\bX$.
  This is also the categorical suspension of $\bX$ in $\Flow^{\mu}$, so we denote it $\Sigma\bX$.
	By the proof of \Cref{lem:cofibers_in_flow}, the identity-bimodule on $\Sigma\bX$, which is an initial object of $\Flow^{\mu}_{\Sigma\bX/}$, corresponds to the square
	\begin{equation} \label{eq:diagram_pullback_pushout}
		\begin{tikzcd}[row sep=tiny, column sep= tiny]
			\bX \arrow[rrr]\arrow[ddd] \arrow[dddrrr] &&& \bZero \arrow[ddd] \\
			&&F(s\Sigma\bX) & \\
			&\bZero&& \\
			\bZero \arrow[rrr] &&& \Sigma \bX \, ,
		\end{tikzcd}
	\end{equation}
	which is therefore a pushout in $\Flow^{\mu}$.
  Up to stabilization, $F(s\Sigma \bX)$ is obtained from relabeling $\Sigma^{\bR}_{P\times \left\{ 1\right\}} s\bX$.
	By \Cref{lem:D_is_htpy_eq}, the construction of conic degeneracies is compatible with the $\sfE(\bZ\times BO)$-action. We therefore get
	\begin{align*}
		\Sigma_{P\times \left\{ 0\right\}}^{-\bR} s (\Sigma_{P}^{\bR} \bX) \simeq \Sigma^{-\bR}_{P\times \left\{ 0\right\}} \Sigma_{P\times [1]}^{\bR} s\bX \simeq \Sigma_{P\times \left\{ 1\right\}}^{\bR} s\bX \,.
	\end{align*}
	Under the order reversal symmetry on $\Simp_{s}$, the proof of \Cref{lem:cofibers_in_flow} gives rise to a dual statement about fibers.
  If $\bB$ is a bimodule with objects $R\to [1]$, it shows that the flow category $f(\bB)$ obtained by relabeling $\Sigma^{-\bR}_{R_{1}}$ along the equivalence $\sfA_{P} \simeq \sfA_{s_*P}$ is a fiber of $\bB$.
  In particular, the fiber of the morphism $\bZero_{\Sigma \bX} \colon \bZero \to \Sigma_{P}^{\bR} \bX$ is equivalent to $\Sigma^{-\bR}_{P}\Sigma^{\bR}_{P}\bX \simeq \bX$.
  Moreover, relabeling and degree shifting determines a monomorphism
	\begin{align*}
		(\PreFlow^{\mu}_{/\bX}) \to \PreFlow^{\mu}_{/\bZero_{\Sigma \bX}}
	\end{align*}
	which takes the terminal object $s(\Sigma_{P}^{\bR})\bX$ to the relabeling of $\Sigma^{-\bR}_{P\times \left\{ 1\right\}} s(\Sigma_{P}^{\bR}\bX) \simeq \Sigma_{P\times \left\{ 1\right\}}^{\bR} s\bX$.
  Hence, the square \eqref{eq:diagram_pullback_pushout} is also a pullback.
  This implies that for every $\bX$, the adjunction unit $\bX \to \Omega \Sigma \bX$ is an equivalence, which implies that the suspension functor on $\Flow^{\mu}$ is fully faithful.
  From the above, we also have
	\begin{align*}
		\Sigma \Omega \bX \simeq \Sigma^{\bR}_{P} \Sigma^{-\bR}_{P} \bX \simeq \bX \,,
	\end{align*}
	which shows that $\Sigma$ is essentially surjective.
  Combining this with \Cref{lem:flow_is_pointed,cor:coproducts_in_flow}, $\Flow^{\mu}$ satisfies the criteria of \cite*[Corollary 1.4.2.27]{HA}, and is therefore stable.
\end{proof}

\subsection{Generation} \label{sec:presentability}

In this subsection, we show that $\Flow^{\mu}$ is compactly generated and presentable.
It follows from \Cref{lem:coproducts_in_flow,lem:cofibers_in_flow} that $\Flow^{\mu}$ is cocomplete.
It therefore remains to show that $\Flow^{\mu}$ has a small set of compact generators.
This set will be the set of one-object flow categories $\bOne_{b}$ as in \Cref{ex:one_object_flow_cat}.

\begin{lem}\label{lem:cofiber_of_a_filter}
	Let $\bX$ be a flow category with poset of objects $P$, and let $i \colon I\to P$ be the inclusion of a filter.
  Then the complement $j\colon I^{c} \to \cP$ of $I$ is a cofilter in $P$.
  The Cartesian lifts $i_{!}\colon i^{*}\bX \to \bX$ and $j_{!}\colon j^{*}\bX \to \bX$ of $i$ and $j$ at $\bX$ determine morphisms in $\bfPreFlow^{\mu\uparrow}_{0}$ and $\bfPreFlow^{\mu\downarrow}_{0}$, respectively.
  The companion and conjoint of these, respectively, fit in a cofiber sequence
	\begin{align*}
		i^{*}\bX \xrightarrow{\cU(i_{!})} \bX \xrightarrow{\cU'(j_{!})} j^{*}\bX \,.
	\end{align*}
\end{lem}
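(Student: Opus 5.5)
The plan is to reduce the statement to \Cref{lem:cofibers_in_flow}. The easy points come first. Since $I$ is upward closed, its complement $I^{c}$ is downward closed, hence a cofilter. The Cartesian lifts $i_{!}$ and $j_{!}$ exist because the inclusions are convex, and by \Cref{def:P_uparrow} and its dual they lie in $\bfPreFlow^{\mu\uparrow}_{0}$ and $\bfPreFlow^{\mu\downarrow}_{0}$ respectively. So $\cU(i_{!})$ and $\cU'(j_{!})$ are defined.

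Next I would find explicit bimodule representatives. By \Cref{lem:companion_functor}, $\cU(i_{!})$ is represented by the Cartesian transport of the identity bimodule $s\bX$ along $i_{!}$. Concretely, this is the restriction of $s\bX$ to the subposet of $P\times[1]$ spanned by $I\times\{0\}$ and $P\times\{1\}$: a bimodule $\bB_{i}\colon i^{*}\bX\to \bX$ with morphism objects $D(\bX(p,q),\dots)$ for $p\in I$. Dually, by \Cref{rem:conjoints}, $\cU'(j_{!})$ is represented by the restriction of $s\bX$ to $P\times\{0\}\cup I^{c}\times\{1\}$.

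The key step is to compute the cofiber $c(\bB_{i})$ and show it is equivalent to $j^{*}\bX$, compatibly with the map from $\bX$. The object poset of $c(\bB_{i})$ is $I\amalg P$ over $[0]$, suitably shifted. Rather than compare directly, I would produce a $2$-simplex in $\PreFlow^{\mu}$ with vertices $i^{*}\bX$, $\bX$ and $j^{*}\bX$. For this, take the object poset $I\star P\star I^{c}$, made into a poset over $[2]$ via the evident order, and restrict the degeneracy $s_{0}s_{0}\bX$, or equivalently a conic degeneration along $P\times[2]$. This simplex exhibits a nullhomotopy of the composite, since the $(0,2)$-face relates $I$ to $I^{c}$ and all its morphism objects are empty: no $p\in I$ lies below any $q\in I^{c}$. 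The cofiber sequence then amounts to showing that the induced map $c(\bB_{i})\to j^{*}\bX$ is an equivalence. To do this I would write $c(\bB_{i})$ as an extension and use the filter decomposition $P=I\cup I^{c}$. The subcategory of $c(\bB_{i})$ on the objects $I\times\{0\}\cup I\times\{1\}$ is the cone of $s(i^{*}\bX)$, the identity bimodule. By \Cref{lem:cofibers_in_flow} applied to the identity, together with the Morse-type contractibility of conic degenerations in \Cref{lem:D_is_htpy_eq}, this cone is zero. The remaining objects form exactly $j^{*}\bX$.

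I expect the main obstacle to be making this last step rigorous: showing that discarding an acyclic filter (the cone of an identity) from a flow category is an equivalence. The approach I would try first is to apply the present lemma's companion machinery recursively, in a form where the filter is the $I$-part of $c(\bB_{i})$ and the companion of that inclusion is known to be an equivalence. Alternatively, I would use \Cref{lem:cofibers_in_flow} twice together with the octahedral axiom in the stable category from \Cref{cor:flow_is_stable}, reducing everything to the statement that the cone of $s\bY$ vanishes. That statement follows from the fact that $s\bY$ represents the identity morphism, so its cofiber is $\bZero$.
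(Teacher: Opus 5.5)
Your representatives for $\cU(i_{!})$ and $\cU'(j_{!})$ as restrictions of $s\bX$ are right. So is your decomposition of $c(\bB_{i})$ into the filter $I\times[1]$, whose restriction is $c(s(i^{*}\bX))\simeq\bZero$, and the cofilter $I^{c}\times\{1\}$, whose restriction is $j^{*}\bX$. The gap is the step you flag yourself.

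Passing from this decomposition to $c(\bB_{i})\simeq j^{*}\bX$ needs the following fact: for a flow category $\bY$ with a filter $F$, the sequence $\bY_{F}\to\bY\to\bY_{F^{c}}$ is a cofiber sequence. Only then does $\bY_{F}\simeq\bZero$ force $\bY\to\bY_{F^{c}}$ to be an equivalence. That fact is the lemma you are proving, applied to $\bY=c(\bB_{i})$ and $F=I\times[1]$ instead of to $\bX$ and $I$. Nothing decreases, so the proposed recursion is circular.

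The octahedral route does not escape this. \Cref{lem:cofibers_in_flow} only produces sequences $\bY'\to\bZ'\to c(\bB')$ for a bimodule $\bB'\colon\bY'\to\bZ'$, and the octahedral axiom only rearranges sequences you already have. To bring $j^{*}\bX$ into such a sequence you must write some flow category as the cone of a bimodule involving $j^{*}\bX$. Your proposal never supplies this, and it is the missing idea.

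The paper's route is to apply \Cref{lem:cofibers_in_flow} to $\bX$ itself.

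\begin{itemize}
\item Map $P\to[1]$ by $I^{c}\mapsto 0$ and $I\mapsto 1$. This is order preserving because $I$ is a filter.
\item Because $\sfA_{P}\simeq\sfA_{s_{*}P}$, relabeling $\Sigma^{-\bR}_{I^{c}}\bX$ along this equivalence turns $\bX$ into a bimodule $\bB\colon\Omega j^{*}\bX\to i^{*}\bX$. This is the attaching map, and it satisfies $c(\bB)\simeq\bX$.
\item \Cref{lem:cofibers_in_flow} then gives the cofiber sequence $\Omega j^{*}\bX\to i^{*}\bX\to\bX$ directly. Rotating it in the stable category gives the one you want.
\end{itemize}

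What remains is to identify the maps. The structure map $i^{*}\bX\to c(\bB)$ is the image of the diagonal bimodule $s\bX$ under the relabeling functor $F$ from the proof of \Cref{lem:cofibers_in_flow}. Unwinding this, it is the restriction of $s\bX$ to $I\times\{0\}\cup P\times\{1\}$, which is your companion representative. Dually, the fiber version of that lemma identifies $\bX\to j^{*}\bX$ with the conjoint.

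This route needs neither your auxiliary $2$-simplex nor the vanishing of $c(s\bY)$. In any case, that vanishing holds simply because $s\bY$ represents the identity; \Cref{lem:D_is_htpy_eq} is not needed for it.
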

\begin{proof}
	Define a map of posets $P\to [1]$ by mapping any $p\in I$ to $1$, and any $p\in I^{c}$ to $0$. This is order-preserving because $I$ is a filter and $I^{c}$ is a cofilter.
  As in the construction of cofibers, we have an equivalence $\sfA_{P} \simeq \sfA_{s_{*}P}$, and we define a bimodule $\bB\colon \Omega \bX_{I^{c}} \to \bX_{I}$ by relabeling $\Sigma^{-\bR}_{I^{c}}\bX = \Sigma^{-\bR}_{P}(\Sigma_{I}^{-\bR})^{-1}\bX $ along this equivalence.
  In other words, we apply the relabeling functor $F$ of \Cref{lem:cofibers_in_flow} to the object $\bX$ with objects $s_{*}P$.
  By construction this is now a bimodule whose cone $c(\bB)$ is precisely $\bX$.
  The structure map $i^{*}\bX \to c(\bB)$ is the image of the diagonal bimodule on $c(\bB) \simeq \bX$ under the relabeling map $F$.
  Unwinding this construction, we see that $s\bX$ is mapped to the restriction of $s\bX$ to the subposet of $P\times [1]$ spanned by $I\times \left\{ 0\right\}$ and $P\times \left\{ 1\right\}$.
  This restriction is precisely the Cartesian lift of the morphism $(i_{!},id)$ at $s\bX$, and is therefore the companion of $i_{!}$.
  A similar argument using the dual of \Cref{lem:cofibers_in_flow} shows that the structure map $\bX \to j^{*}\bX$ is precisely the restriction of $s\bX$ to the subposet of $P\times [1]$ spanned by $P\times \left\{ 0\right\}$ and $I^{c} \times \left\{ 1\right\}$, which is precisely the Cartesian lift of $(id, j_{!})$ at $s\bX$, i.e. the conjoint of $j_{!}$.
\end{proof}

We say that a flow category $\bX$ is finite if its poset of objects $P$ is finite, and write $\Flow^{\mu}_{\Fin}$ for the full subcategory of $\Flow^{\mu}$ spanned by finite flow categories.
This subcategory contains the zero object and is closed under finite coproducts and cofibers, and is therefore a stable subcategory.

\begin{cor}\label{cor:flow_fin_generation}
	$\Flow^{\mu}_{\Fin}$ is generated by the objects $\bOne_{b}$ under finite colimits.
\end{cor}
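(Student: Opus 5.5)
We argue by induction on the cardinality of the (finite) poset of objects $P$ of a finite flow category $\bX$, showing that $\bX$ lies in the smallest full subcategory $\langle \bOne_{b}\rangle$ of $\Flow^{\mu}$ containing all $\bOne_{b}$ and closed under finite colimits. Since $\Flow^{\mu}_{\Fin}$ is a stable subcategory containing the $\bOne_{b}$, we get $\langle \bOne_{b}\rangle \subset \Flow^{\mu}_{\Fin}$, so only the reverse inclusion needs proof. Because $\Flow^{\mu}$ is stable (\Cref{cor:flow_is_stable}), closure under finite colimits includes closure under cofibers and suspensions. It also includes desuspensions: $\Omega \bY$ is the fiber of $\bZero \to \bY$, and this fiber can be written as a shifted cofiber. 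Concretely, $\langle \bOne_{b}\rangle$ is closed under $\Sigma^{\pm 1}$ because $\Omega\bY \simeq \Sigma^{-1}\bY$, and in any stable category this is the cofiber of $\bY \to \bZero$ rotated. If one prefers to avoid this, the induction can be phrased with thick subcategories, which is harmless for the intended use.

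For the base case, if $P=\emptyset$ then $\bX \simeq \bZero$, the empty colimit. If $P=\{p\}$, then $\bX(p,p)$ is forced to be the one-point manifold by \Cref{rem:units_in_flow_cat}, and the only datum is the object $c_{p}\in \ob(\mu)$. Hence $\bX\simeq \bOne_{c_p}$ as in \Cref{ex:one_object_flow_cat}.

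For the inductive step, let $|P|\ge 2$ and choose a maximal element $m\in P$. Then $I=\{m\}$ is a filter and $I^{c}=P\setminus\{m\}$ is a cofilter. \Cref{lem:cofiber_of_a_filter} gives a cofiber sequence
\begin{align*}
	i^{*}\bX \xrightarrow{\cU(i_{!})} \bX \xrightarrow{\cU'(j_{!})} j^{*}\bX
\end{align*}
in $\Flow^{\mu}$. The restrictions $i^{*}\bX$ and $j^{*}\bX$ have posets of objects $I$ and $I^{c}$, both strictly smaller than $P$, so by induction both lie in $\langle \bOne_{b}\rangle$. Rotating the cofiber sequence, $\bX$ is the cofiber of the connecting map $\Omega j^{*}\bX \to i^{*}\bX$. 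Since the subcategory is closed under $\Omega$ and cofibers, $\bX\in\langle\bOne_{b}\rangle$. Alternatively, the proof of \Cref{lem:cofiber_of_a_filter} directly exhibits $\bX\simeq c(\bB)$ for a bimodule $\bB\colon \Omega\bX_{I^{c}}\to \bX_{I}$, which gives the same conclusion via \Cref{lem:cofibers_in_flow}.

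The main obstacle is bookkeeping rather than substance. First, one must check that the restricted objects $i^{*}\bX$ and $j^{*}\bX$ really are finite flow categories with the stated posets; this follows since Cartesian lifts along convex inclusions restrict the poset. Second, one must ensure the closure property under desuspension. For the latter I would note explicitly that $\Sigma^{-1}\bOne_{b}\simeq \bOne_{\Sigma^{\bR}b}$ up to the $\sfE(\bZ\times BO)$-action of \Cref{rem:unwinding_EZxBO_action}, since $\Sigma_{P}^{\pm\bR}$ on a one-object flow category just shifts the labelling object. Thus shifts of generators are again generators, and closure under finite colimits alone suffices.
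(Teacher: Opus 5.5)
Your argument is essentially the paper's. Both peel off one object at a time along filter inclusions using \Cref{lem:cofiber_of_a_filter}. The paper packages this as a single filtration $\bZero\to\bX_{[0,0]}\to\dots\to\bX$, built from a linear extension of $P$, whose associated graded pieces are one-object flow categories. You run the same step as an induction on $|P|$ by splitting off a maximal element.

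One claim in your first paragraph is false and should be dropped. In a stable $\infty$-category, the closure of a set of objects under finite colimits is not automatically closed under $\Omega$. The cofiber of $\bY\to\bZero$ is $\Sigma\bY$, not $\Omega\bY$, and fibers are limits. For instance, $\bS$ generates only the connective finite spectra under finite colimits in $\Sp$. So writing $\bX$ as the cofiber of $\Omega j^{*}\bX\to i^{*}\bX$ needs a separate reason why $\Omega j^{*}\bX$ lies in $\langle\bOne_{b}\rangle$.

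Your last paragraph points at the right reason, but you should spell it out rather than just saying ``closure under finite colimits alone suffices''. The set of $\bOne_{b}$ is closed under $\Sigma^{\pm1}$; the paper uses $\Sigma^{n}\bOne_{b}\simeq\bOne_{\Sigma^{n}b}$ in \Cref{lem:1_is_compact}. Consider the full subcategory of those $\bY$ with $\Omega\bY\in\langle\bOne_{b}\rangle$. It contains every $\bOne_{b}$, and it is closed under finite colimits because $\Omega$ is an exact autoequivalence. Hence it contains $\langle\bOne_{b}\rangle$.

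There is also a more direct fix inside your induction. By the proof of \Cref{cor:flow_is_stable}, $\Omega\bY$ is represented by a relabeling of $\Sigma^{-\bR}_{P}\bY$, which has the same poset of objects as $\bY$. So your induction hypothesis applies to $\Omega j^{*}\bX$ as it stands.

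Finally, the thick-subcategory fallback you mention would prove a weaker statement than the one claimed, since retracts are not finite colimits. It would still suffice for compact generation in \Cref{cor:flow_is_cg_pres}.
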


\begin{proof}
	Take a finite flow category $\bX$ and embed its poset of objects $P$ into a finite total order $T\simeq [n]$.
  Then have an increasing sequence of filter inclusions
	\begin{align*}
		\emptyset \to P_{\leq 0 } \to P_{\leq 1} \to \dots P_{\leq n} = P
	\end{align*}
	where the complement of $P_{\leq k} \to P_{\leq k+1}$ consists of a single point.
  We take a Cartesian lift of this diagram at $\bX$ and apply the companion functor to obtain a finite filtration
	\begin{align*}
		\bZero  \to \bX_{[0,0]} \to \dots \bX_{[0,1]} \to \dots \to \bX_{[0,n]} \simeq \bX
	\end{align*}
  of $\bX$.
	By \Cref{lem:cofiber_of_a_filter}, each part of the associated graded is a flow category with a single object.
\end{proof}

\begin{lem}\label{lem:restriction_is_fibration}
	For a morphism $i^{*}\bX \to \bX$ in $\bfPreFlow^{\mu\uparrow}_{0}$ covering a filter inclusion $I\to P$, the Cartesian transport map induces a left fibration of semi-simplicial spaces
	\begin{align*}
		\PreFlow^{\mu}_{\bX/} \to \PreFlow^{\mu}_{i^{*}\bX/} \,.
	\end{align*}
\end{lem}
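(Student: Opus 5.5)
The natural approach is to work levelwise over $\Simp_{s}$, using the fibration description of free slices. By the identification of $\PreFlow^{\mu}_{\Delta^{n}_{s}/}$ with the straightening of the underlying right fibration of $\cA_{\Delta^{n}_{s}/}^{*}\Alg(\Mfd^{\mu}_{\diamond}) \to \Simp_{s,a}$, the slice $\PreFlow^{\mu}_{\bX/}$ is the fiber over $\bX$ of $v_{0}^{*}\colon \PreFlow^{\mu}_{\Delta^{0}_{s}/}\to \PreFlow^{\mu}_{0}$. Its $n$-simplices are flow $(n+1)$-simplices $\bH$ whose zeroth vertex is $\bX$. The map in question is the restriction $\bH\mapsto \bH_{I}$ to the convex subposet $P_{I}$ described at the start of \Cref{subsec:companions}. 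Concretely, this is the Cartesian transport in $\bfPreFlow^{\mu\uparrow}_{n+1}\to \bfPreFlow^{\mu\uparrow}_{0}\times\bfPreFlow^{\mu\uparrow}_{n}$ along $(i_{!},id)$, which exists and is functorial in $n$ by \Cref{lem:Flow_has_companions} (this is $\cU_{1}(i_{!})$ from \Cref{lem:companion_functor}). To show it is a left fibration, I would verify the lifting condition against outer-left horns $\Horn^{n}_{k,s}\to\Delta^{n}_{s}$ for $0\le k<n$. Equivalently, for every $n\geq 1$ the square with horizontal arrows $d_{n}$ and vertical arrows restriction should be a pullback of spaces
\begin{equation*}
\begin{tikzcd}
(\PreFlow^{\mu}_{\bX/})_{n} \arrow[r] \arrow[d] & (\PreFlow^{\mu}_{\bX/})_{n-1}\times_{(\PreFlow^{\mu}_{i^*\bX/})_{n-1}}(\PreFlow^{\mu}_{i^*\bX/})_{n} \arrow[d] \\
\ast \arrow[r] & \ast
\end{tikzcd}
\end{equation*}
I would phrase this via the horn-type criterion rather than the last-face form, and take whichever form the semi-simplicial formalism of \cite{oldervoll2026quasi} uses for left fibrations.

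The key steps are as follows. First, reduce to the poset level: by \Cref{lem:P_has_companions} and the description of $\cP^{\uparrow}_{n+1}$ as a Cartesian fibration classified by $(P_{0},Q)\mapsto \Fun_{c}(P_{0}^{\op}\times Q,[1])$, show that on underlying right fibrations $\cR(\cP)_{\Delta^{0}_{s}/}$ the restriction along a filter $I\subset P_{0}$ is a left fibration. Given a horn of posets over $[0]\join[n]$ with zeroth vertex $P_{0}$, and a filler of its restriction to $I$, the relation $c_{R}$ on $P_{0}^{\op}\times R$ is determined by its values on $I$ together with those on $I^{c}$. The latter are fixed by the horn whenever the missing data only involves the last vertex and the interior face, because a relation $p<q$ with $p\in I^c$ is recorded in a face containing both vertices. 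Second, at the level of flow data, use \Cref{cor:canonical_presheaf_pullback}: lifts are maps $\bfA_{R}\to \Mfd^{\mu}_{\diamond}$ over $\Mod_{\diamond}^{\otimes}$, i.e.\ sections of a right fibration. The filler must be determined on the sub-double-category generated by $\bfA_{R_{I}}$ together with the horn faces, and this sub-double-category should be all of $\bfA_{R}$. Exactly as in \Cref{lem:horn_A_as_colimit}, I would exhibit $\bfA_{R}$ as a pushout of $\bfA_{R_{I}}$ and the colimit of the horn diagram over their intersection in $\Cat_{2}$, check cofibrancy via subcategory inclusions, and conclude by \Cref{lem:limts_as_lifting} that the lift exists and is unique.

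The main obstacle is this second step, the pushout claim. Every arc in $\sfA_{R}(p,q)$ with $p\in I^{c}$ must be shown to lie in a face of the horn, which requires an arc from $I^{c}$ to reach the top vertex. The arcs that cover all vertices of $[n+1]$ through the missing face are exactly where a non-outer horn would fail. Here the filter condition is essential: such an arc starting in $P_{0}$ either starts in $I$, and so lies in $\bfA_{R_{I}}$, or starts in $I^{c}$ and must then be handled by the horn faces. I would check this carefully in the case $k=0$, where the missing face is $d_{0}$, which does not contain the vertex $P_{0}$ at all and hence is untouched by restriction. I expect that case to be the cleanest.
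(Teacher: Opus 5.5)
There is a genuine gap, and it sits exactly at the step you flag as the main obstacle.

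\textbf{The condition you aim for is false.} As with inner fibrations in \Cref{def:inner_fibration}, a left fibration of semi-simplicial spaces here only asks for the \emph{existence} of lifts against $\Horn^{n}_{k,s}\subset\Delta^{n}_{s}$, $0\le k<n$. It is not equivalent to your requirement that $(\PreFlow^{\mu}_{\bX/})_{n}\to(\PreFlow^{\mu}_{\bX/})_{n-1}\times_{(\PreFlow^{\mu}_{i^{*}\bX/})_{n-1}}(\PreFlow^{\mu}_{i^{*}\bX/})_{n}$ be an equivalence. Take $I=\emptyset$. Then $i^{*}\bX=\bZero$ and $\PreFlow^{\mu}_{\bZero/}\simeq\PreFlow^{\mu}$ (\Cref{lem:flow_is_pointed}), so the map is the slice projection. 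Your condition at $n=1$ is then the Segal condition for $\PreFlow^{\mu}$ at $\bX$. But $\PreFlow^{\mu}$ is inner Kan and not Segal: with $d_{2}\bH$ and $d_{0}\bH$ fixed, adding to $\bH$ an interval with both endpoints on the $d_{1}$-stratum replaces $d_{1}\bH$ by a non-isomorphic bimodule.

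\textbf{The combinatorial claim behind uniqueness also fails.} You want a unique lift from \Cref{lem:limts_as_lifting}. However, a left horn $\Horn^{n}_{k,s}$ in the slice, together with an $n$-simplex of $\PreFlow^{\mu}_{i^{*}\bX/}$, is an \emph{inner} horn $\Horn^{n+1}_{k+1,s}$ in $\PreFlow^{\mu}$ whose restriction to $P_{I}$ is already filled. In particular the missing face always contains the cone vertex $\bX$, also for $k=0$. Now take $p\in I^{c}$ and $q$ in the last vertex. The minimal arc $\gamma_{pq}$ has vertex label $\{1,\dots,n\}$ and no internal edges. So it is not a concatenation, it lies in no face of the horn, and it is not in $\bfA_{R_{I}}$. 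Hence the sub-double-category generated by $\bfA_{R_{I}}$ and the horn faces is not all of $\bfA_{R}$. The same holds for the minimal arc of the missing face $d_{k+1}$, whose vertex label is $\{1,\dots,n\}\setminus\{k+1\}$.

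So the manifolds $\bH(p,q)$ for these pairs are not determined by the data. They would have to be constructed by a relative version of the Abouzaid--Blumberg filler of \Cref{lem:horn_filler_as_hocolim}. That relative filler must match the arbitrary prescribed $\bH(p',q)$, $p'\in I$, which occur in the boundary of $\bH(p,q)$. Your proposal contains no such construction. Even the poset of the filler is not determined when $n=1$.

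\textbf{The missing idea.} The paper avoids new geometry by splitting the vertex $\bX$ in two. By \Cref{lem:cofiber_of_a_filter}, $\bX\simeq c(\bB)$ for the attaching bimodule $\bB\colon\Omega j^{*}\bX\to i^{*}\bX$. The proof of \Cref{lem:cofibers_in_flow} identifies $\PreFlow^{\mu}_{\bX/}$ with $B=\PreFlow^{\mu}_{\bB/}\times_{\PreFlow^{\mu}_{\Omega j^{*}\bX/}}X$, where $X\subset\PreFlow^{\mu}_{\Omega j^{*}\bX/}$ is the subobject representing zero maps. Pasting pullbacks then exhibits the restriction map $\PreFlow^{\mu}_{\bX/}\to\PreFlow^{\mu}_{i^{*}\bX/}$ as a base change of $\PreFlow^{\mu}_{\bB/}\to\PreFlow^{\mu}_{\del\bB/}$. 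That map is restriction of free slices along the monomorphism $\del\Delta^{1}_{s}\subset\Delta^{1}_{s}$. It is a left fibration because lifting left horns there only requires filling inner horns of $\PreFlow^{\mu}$, which \Cref{cor:horn_filling} already provides.
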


\begin{proof}
	Let $j\colon I^{c} \to P$ denote the complement of $I$, and let $\bB \colon \Omega j^{*}\bX \to i^{*}\bX$ denote the attaching bimodule as in the proof of \Cref{lem:cofiber_of_a_filter}.
  We write $\del \bB\colon \del \Delta^{1}_{s} \to \PreFlow^{\mu}$ for the restriction of $\bB$ to the boundary of $\Delta^{1}$.
  Consider the solid part of the diagram
	\begin{equation*}
		\begin{tikzcd}
			B \arrow[r, "f", dashed] \arrow[d] \arrow[rr, bend left]      &   \PreFlow^{\mu}_{i^{*}\bX/}  \arrow[d, "z'", dashed]  \arrow[r] & \PreFlow^{\mu} \arrow[d, "z"] \\
			\PreFlow^{\mu}_{\bB/}  \arrow[r, "d"]               &   \PreFlow^{\mu}_{\partial \bB /}  \arrow[r] \arrow[d]           & \PreFlow^{\mu}_{\Omega j^{*}\bX/}\arrow[d] \\
			&
			\PreFlow^{\mu}_{i^{*}\bX/} \arrow[r] & \PreFlow^{\mu} \, ,
		\end{tikzcd}
	\end{equation*}
	where we use the notation $B$ as in the proof of \Cref{lem:cofibers_in_flow}, and the map $z$ is the composite
	\begin{align*}
		\PreFlow^{\mu} \simeq X \subset \PreFlow^{\mu}_{\Omega j^{*}\bX/} \,.
	\end{align*}
	The lower square is a pullback because the slice functor $\PreFlow^{\mu}_{(-)/}$ takes pushouts to pullbacks.
  The dashed map $z'$ can then be defined by the universal property.
  The outer right rectangle is a pullback because both vertical maps are equivalences, so the top right square is a pullback by pasting.
  The dashed map $f$ then exists by the universal property, and the outer top rectangle is a pullback by definition of $B$.
  The pasting law now implies that the upper left square is a pullback.
  The map $d$ is given by restricting along a monomorphism of semi-simplicial sets and is therefore a left fibration, so by pullback stability, so is $f$.
  By commutativity of the diagram, we see that $f$ must agree with the composite
	\begin{align*}
		B \subset \PreFlow^{\mu}_{\bB/} \xrightarrow{d_{0}^{*}} \PreFlow^{\mu}_{i^{*}\bX/} \,.
	\end{align*}
	Composing this further with the levelwise equivalence constructed in the proof of \Cref{lem:cofibers_in_flow}, we see that the composite
	\begin{align*}
		\PreFlow^{\mu}_{\bX/} \simeq B \xrightarrow{d_{0}^{*}} \PreFlow^{\mu}_{i^{*}\bX/}
	\end{align*}
	is precisely the map given by restricting a flow $(n+1)$-simplex $\bH$ with objects $R$ such that $v_0^{*}\bH =\bX$ along the convex map $i^{*}R \to R$, which is precisely the functor induced by Cartesian transport along $i^{*}\bX \to \bX$.
\end{proof}
For a poset $P$, the slice category $(\cP^{\uparrow}_{0})_{/P}$ is equivalently the poset of all filters $I\subset P$.
We let $\cJ_{P} \subset (\cP^{\uparrow}_{0})_{/P}$ denote the subposet spanned by the finite filters.
Because the union of finitely many finite filters is a finite filter, $\cJ_{P}$ is a filtered category.
This means that the natural comparison map
\begin{align*}
	\colim_{I\in \cJ_{P}} I \to P
\end{align*}
in $\Pos$ is the inclusion of a full subposet.
Since $P$ is locally finite, each object $p\in P$ belongs to the finite filter $P_{p\leq}$, so this map is actually an equivalence.
Note in particular that the colimit diagram for the above colimit factors through the inclusion $\cP^{\uparrow}_{0}\to \Pos$ by a functor $\cJ_{P}^{\triangleright} \to \cP^{\uparrow}_{0}$.

\begin{lem}\label{lem:filtered_colim_lifting_condition}
	Let $P$ be a locally finite poset. Any lift of $\cJ_{P}^{\triangleright} \to \cP^{\uparrow}_{0}$  to $\bfPreFlow^{\mu\uparrow}_{0}$ is taken by $\cU$ to a colimit diagram in $\Flow^{\mu}$.
\end{lem}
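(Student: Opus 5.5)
The plan is to apply \Cref{lem:companion_colim} with $\cI = \cJ_{P}$ and $F$ the given lift $\cJ_{P}^{\triangleright} \to \bfPreFlow^{\mu\uparrow}_{0}$. Two things must be checked. First, the Cartesian lifting hypothesis of that lemma: for every flow $n$-simplex $\bH$, every Cartesian lifting problem against $\bfPreFlow^{\mu\uparrow}_{n+1} \to \bfPreFlow^{\mu\uparrow}_{0}\times \bfPreFlow^{\mu\uparrow}_{n}$ over $(F,\{\bH\})$ has a unique solution. Second, the Reedy fibrancy condition (1), since $\cJ_{P}$ is filtered and hence weakly contractible. For (1) I would like $\cJ_{P}$ to be direct. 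It is a poset, but it may have infinite descending chains. So I would first replace it by a cofinal direct subposet, for instance the finite filters generated by finitely many principal filters, graded by the number of generators. Alternatively I would use that any filtered poset receives a cofinal map from a directed poset of finite subsets, which is direct.

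For the lifting condition, I would copy the proof of \Cref{lem:coproducts_in_flow}. By 2-out-of-3 for Cartesian lifts, and because $\bfPreFlow^{\mu\uparrow}_{n+1} \to \cP^{\uparrow}_{n+1}$ is a right fibration, it suffices to do two things:
\begin{enumerate}
\item solve the problem uniquely in $\cP^{\uparrow}$, which is \Cref{lem:P_has_companions} since $\colim_{I\in\cJ_{P}} I \simeq P$ in $\Pos$;
\item show that the resulting diagram $H\colon \cJ_{P}^{\triangleright}\to \cP^{\uparrow}_{n+1}$ is sent by $\bfA$ to a colimit in $\Dbl_{\infty/\Mod_{\diamond}^{\otimes}}$.
\end{enumerate}
Then \Cref{cor:canonical_presheaf_pullback} exhibits the fibration as pulled back from a representable right fibration, and \Cref{lem:limts_as_lifting} gives the unique lift.

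For step (2), $H(I)$ is the poset $I\join_{c} R$ over $[n+1]$ classified by the restriction of $c_{P}$. These form a filtered union of full convex subposets with union $H(\infty)$. Every arc in $\sfA_{H(\infty)}(p,q)$ involves only finitely many labels, all lying in $H(\infty)_{p\leq}$, which is finite by local finiteness. So $\bfA_{H(\infty)}$ is the filtered union of the full sub-2-categories $\bfA_{H(I)}$. A filtered colimit of strict 2-categories along fully faithful inclusions is a homotopy colimit, since filtered colimits in $\Cat_{2}$ are homotopical. This step is routine.

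The main obstacle is condition (1): that $h\circ\cU_{1}\circ F$, the diagram of slices $I\mapsto \PreFlow^{\mu}_{F(I)/}$, is Reedy fibrant in $ss\cS$. Its matching maps are the maps from $\PreFlow^{\mu}_{F(I)/}$ to the limit of the slices over strictly smaller filters. I would use \Cref{lem:restriction_is_fibration}, which makes each individual transition a left fibration and in particular an inner fibration. For the matching maps, I would view the limit over smaller filters as the slice under the union $J$ of those filters. This uses the argument just given: the union of filters is computed by the lifting property, so the limit of slices is the slice under $F(J)$, with $J$ again a filter. The matching map then becomes the single restriction $\PreFlow^{\mu}_{F(I)/}\to\PreFlow^{\mu}_{i^{*}F(I)/}$ for the filter $J\subseteq I$, which is a left fibration by \Cref{lem:restriction_is_fibration}. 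If that identification fails at non-finite stages, the fallback is to filter $\cJ_{P}$ by adjoining one element at a time and reduce to case (2) of \Cref{lem:companion_colim} iteratively. \Cref{lem:companion_colim} then gives that $\cU\circ F$ is a colimit diagram in $\Flow^{\mu}$.
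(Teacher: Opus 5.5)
Your proposal follows the paper's proof. The paper verifies the Cartesian lifting condition of \Cref{lem:companion_colim} in the same way: via \Cref{lem:P_has_companions}, the observation that $\bfA_{H(\infty)}$ is the filtered union of the full sub-2-categories $\bfA_{H(I)}$, \Cref{cor:canonical_presheaf_pullback}, and \Cref{lem:limts_as_lifting}. It then gets Reedy fibrancy exactly as you do. The matching map at $I$ is identified, by the same lifting argument, with restriction along the inclusion of the union $K$ of the proper subfilters of $I$, and \Cref{lem:restriction_is_fibration} makes this a left, hence inner, fibration.

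The one thing to correct is your worry about directness, which is unfounded. Every object of $\cJ_{P}$ is a \emph{finite} filter, so a strictly descending chain of them must terminate. Indeed $I\mapsto |I|$ is a functor $\cJ_{P}\to\bN$ sending non-identity inclusions to strict inequalities, and this is exactly how the paper sees that $\cJ_{P}$ is direct. For the same reason there are no non-finite stages below the cone point, so your fallback is also unnecessary.

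You should drop the replacement step, because both versions you suggest would hurt. Grading by the number of generators is not monotone: if $a<b$ and $a<c$ with $b,c$ incomparable, then $\{b,c\}\subsetneq P_{a\leq}$ goes from two minimal generators to one. Re-indexing over a cofinal poset of finite subsets $S\subset P$ is worse. Distinct subsets can generate the same filter, or nested ones, so the matching object is no longer the slice under a single sub-flow-category $\bX_{K}$ of $\bX_{\langle S\rangle}$. The matching maps would then stop being restrictions along a filter inclusion, which is precisely what \Cref{lem:restriction_is_fibration} needs. With $\cJ_{P}$ kept as is, your argument goes through.
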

\begin{proof}
	Because $\bfPreFlow^{\mu\uparrow}_{0}\to \cP_{0}^{\uparrow}$ is a right fibration and $\cJ_{P}^{\triangleright}$ has a terminal object, such a lift is uniquely determined by a flow category $\bX$ with objects $P$.
  For a finite filter $I\subset P$, we write $\bX_{I} \to \bX$ for the Cartesian lift of $I\to P$ at $\bX$.
  We begin by showing that for any flow $n$-simplex $\bH$, the dashed Cartesian lifting problems
	\begin{equation*}
		\begin{tikzcd}[row sep=6ex]
			\cJ_{P}
			\arrow[r]
			\arrow[d, "\iota"]    &
			\bfPreFlow^{\mu\uparrow}_{n+1}
			\arrow[r]     &
			\cP^{\uparrow}_{n+1}
			\arrow[d]     \\
			\cJ_{P}^{\triangleright}
			\arrow[r, "{\left(F,\underline{\bH} \right)}"']
			\arrow[ur, dashed, "G"]
			\arrow[urr, dotted, "H",pos=0.4]   &
			\bfPreFlow_{0}^{\mu\uparrow}\times\bfPreFlow_{n}^{\mu\uparrow}
			\arrow[r]    &
			\cP^{\uparrow}_{n}\times \cP^{\uparrow}_{n}
			\arrow[from=1-2,to=2-2, crossing over]
		\end{tikzcd}
	\end{equation*}
	admit unique solutions.
  There exists a unique dotted lift $H$ by \Cref{lem:P_has_companions}.
  This lift is a filtered diagram of subposets of $H(\infty)$, and because each $p\in H(\infty)$ either belongs to $R$ or to a finite filter $I\subset P$, each $p$ belongs to some $H(i)$, and so $H$ becomes a colimit diagram in $\Pos_{/[n+1]}$.
  Now $\bfA\circ H\circ \iota$ is similarly a filtered diagram of full subcategories of $\bfA_{H(\infty)}$, so since every object belongs to some $\bfA_{ H(i)}$, $\bfA\circ H$ is a colimit diagram in $\Dbl_{\infty/\Mod_{\diamond}^{\otimes}}$.
  By \Cref{cor:canonical_presheaf_pullback}, the functor $\bfFlow^{\mu\uparrow}_{n+1}\to \cP^{\uparrow}_{n+1}$ is the pullback along $\bfA$ of a representable right fibration, so by \Cref{lem:limts_as_lifting} there exists a unique lift $G$ of $H$.
  By \Cref{lem:companion_colim}, we get a limit diagram $\cU_{1}\circ H$ witnessing
	\begin{align*}
		\lim_{I \in \cJ_{P}^{\op}} \PreFlow^{\mu}_{\bX_{I}/} \simeq \PreFlow^{\mu}_{\bX/} \,.
	\end{align*}
	Mapping a finite filter $I\subset P$ to its cardinality determines a functor $\cJ_{P} \to \bN$ such that each non-identity morphism is mapped to a non-identity morphism.
  This means that $\cJ_{P}^{\triangleright}$ is a direct category, so by \Cref{lem:companion_colim} it suffices to show that the diagram is Reedy fibrant.
  For each finite filter $I\subset P$, the matching morphism at $I$ is
	\begin{align*}
		\PreFlow^{\mu}_{\bX_{I}/} \to \lim_{J\subsetneq I} \PreFlow^{\mu}_{\bX_{J}/} \,.
	\end{align*}
	By an argument similar to the first part of this proof, this morphism is given by composing with the companion of the inclusion $\bX_{K}\to \bX_{I}$, where $K$ is the union of all proper filters $J\subsetneq I$.
  This is an inner fibration by \Cref{lem:restriction_is_fibration}.
\end{proof}

\begin{lem}\label{lem:1_is_compact}
	For any $b\in \ob(\mu)$, the one-object flow category $\bOne_{b}$ is a compact object of $\Flow^{\mu}$.
\end{lem}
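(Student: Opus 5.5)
Since $\Flow^{\mu}$ is stable (\Cref{cor:flow_is_stable}) and has all small coproducts (\Cref{cor:coproducts_in_flow}), it suffices to show that $\Map_{\Flow^{\mu}}(\bOne_{b},-)$ commutes with filtered colimits. We will reduce this to the special filtered colimits produced by \Cref{lem:filtered_colim_lifting_condition}, and then to a finiteness property of bimodules out of $\bOne_{b}$.

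The first step is to compute the mapping space. By \Cref{prop:semi-simplicial_results}(4), for flow categories $\bOne_{b}$ and $\bX$ the space $\Map(\bOne_{b},\bX)$ is the realization of the semi-simplicial space $\Hom^{\rR}(\bOne_{b},\bX)$. Its $n$-simplices are flow $(n+1)$-simplices with the vertex $\bOne_b$ first and the degenerate simplex $s_{n}\bX$ last. Such a simplex has object poset $R\to[n+1]$ with $R_{0}=\{\ast\}$. By local finiteness of $R$, the principal filter $R_{\ast\leq}$ is finite, so all its manifolds $\bH(\ast,q)$ are nonempty only for $q$ in a finite subset of $R$. Pulling this back to $P=\ob(\bX)$, every $n$-simplex of $\Hom^{\rR}(\bOne_{b},\bX)$ is supported on a finite filter $I\subset P$.

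The second step uses this support to identify the mapping space as a colimit over finite filters. Using \Cref{lem:restriction_is_fibration} and the cofiber sequence of \Cref{lem:cofiber_of_a_filter}, we show that the companion maps $\cU(\bX_{I}\to\bX)$ induce an equivalence
\[
\colim_{I\in\cJ_{P}}\Map(\bOne_{b},\bX_{I})\xrightarrow{\simeq}\Map(\bOne_{b},\bX).
\]
Concretely, a bimodule $\bOne_b\to\bX$ supported in $I$ is the pushforward along the companion of $\bX_I\to\bX$ of a bimodule $\bOne_b\to\bX_I$, and the same holds for all simplices. Because $\cJ_{P}$ is filtered and realization commutes with filtered colimits of semi-simplicial spaces, it is enough to check this levelwise, where it is a union of open-and-closed subspaces. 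So $\bOne_{b}$ is compact with respect to the canonical filtered diagrams $I\mapsto\bX_{I}$, which are colimit diagrams by \Cref{lem:filtered_colim_lifting_condition}.

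The third step handles arbitrary filtered colimits; I expect it to be the main obstacle. Let $\Flow^{\mu}_{\Fin}$ be the stable subcategory of finite flow categories. By \Cref{cor:flow_fin_generation}, each of its objects is built from the objects $\bOne_{c}$ by finitely many cofibers and coproducts. The second step shows that every object of $\Flow^{\mu}$ is a filtered colimit of finite flow categories and that the induced functor $\Ind(\Flow^{\mu}_{\Fin})\to\Flow^{\mu}$ is essentially surjective. We would then prove, by induction on the finite filtration from \Cref{cor:flow_fin_generation}, that $\Map(\bOne_{b},\bY)\to\Map(\bOne_{b},\bX)$ is computed compatibly on finite pieces, and use stability (fibers commute with filtered colimits of spectra) to conclude that $\map(\bOne_{b},-)$ commutes with filtered colimits. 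The subtle point is to show that an arbitrary filtered diagram in $\Flow^{\mu}$ can be replaced by one of canonical restriction form. If this proves too hard, the fallback is to show only that the finite flow categories detect equivalences and to identify $\Flow^{\mu}\simeq\Ind(\Flow^{\mu}_{\Fin})$ via \Cref{lem:filtered_colim_lifting_condition}. Compactness of $\bOne_b$ would then follow formally, since $\bOne_b$ lies in $\Flow^{\mu}_{\Fin}$.
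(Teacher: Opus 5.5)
Your Steps 1 and 2 identify the right geometric input: local finiteness forces every bimodule out of $\bOne_{b}$, and every simplex of the right mapping space, to be supported on finitely many objects. But Step 3 is a genuine gap, and your fallback is circular. Compactness of $\bOne_{b}$ with respect to the canonical diagrams $I\mapsto \bX_{I}$ of \Cref{lem:filtered_colim_lifting_condition} says nothing about an arbitrary filtered diagram $\cK\to\Flow^{\mu}$. Nothing in the paper lets you replace such a diagram by one of restriction form. Nor can you pass through $\Ind(\Flow^{\mu}_{\Fin})$. The extension $\Ind(\Flow^{\mu}_{\Fin})\to\Flow^{\mu}$ is fully faithful exactly when finite flow categories are compact in $\Flow^{\mu}$. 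By \Cref{cor:flow_fin_generation} that is equivalent to compactness of the $\bOne_{c}$, which is what you are proving. Essential surjectivity together with ``finite flow categories detect equivalences'' does not substitute for this.

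The missing idea is the stable reduction that your first sentence gestures at but never uses; as written, that sentence just restates the definition of compactness. Because $\Flow^{\mu}$ is stable, $\map(\bOne_{b},-)\colon\Flow^{\mu}\to\Sp$ is exact. An exact functor between stable $\infty$-categories that preserves small coproducts preserves all small colimits, filtered ones included. So it suffices to show that $\bigoplus_{s}\map(\bOne_{b},\bX_{s})\to\map(\bOne_{b},\coprod_{s}\bX_{s})$ is an equivalence. Since $\Sigma^{n}\bOne_{b}\simeq\bOne_{\Sigma^{n}b}$, it is enough to check this on $\pi_{0}$ for every $b$.

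This is what the paper does, and your finite-support observation then finishes the proof directly. $\pi_{0}$ of the realized right mapping space is the set of bimodules up to equivalence, modulo the relation given by 2-simplices with one edge the idempotent equivalence at $\bOne_{b}$. A bimodule $\bB\colon\bOne_{b}\to\coprod_{s}\bX_{s}$ has $\bB(\ast,p)$ nonempty for only finitely many $p$, so it comes from a finite sub-coproduct; this gives surjectivity. The same finiteness applied to the relating 2-simplices gives injectivity.

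This route avoids filter restrictions, companions, and $\Ind$ entirely. In particular, you no longer need to verify your unproved Step 2 claim that extending a bimodule $\bOne_{b}\to\bX_{I}$ by empty manifolds represents post-composition with the companion $\cU(\bX_{I}\to\bX)$. That claim is not immediate, since the companion functor is built from restriction on the source side.
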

\begin{proof}
	To show that $\bOne_{b}$ is compact, it suffices to show that the mapping spectrum functor
	\begin{align*}
		\map(\bOne_{b}, -) \colon \Flow^{\mu} \to \Sp
	\end{align*}
	preserves small coproducts, and therefore all small colimits.
  For any small set $S$ and any collection of flow categories $\bX_{s}, s\in S$, consider the comparison map
	\begin{align*}
		\coprod_{s\in S} \map(\bOne_{b}, \bX_{s}) \to \map(\bOne_{b}, \coprod_{s\in S} \bX_{s}) \,.
	\end{align*}
	It suffices to show that this is an equivalence on homotopy groups.
  Because $\Flow^{\mu}$ is stable, and because $\Sigma^{n} \bOne_{b} \simeq \bOne_{\Sigma^{n}b}$ for $n\in \bZ$, it suffices to show that the comparison map is an equivalence on $\pi_{0}$ for all $b$.
  By \Cref{prop:semi-simplicial_results}, the set of homotopy classes of maps $[\bX, \bY]$ between flow categories $\bX$ and $\bY$ can be described as $\pi_{0}$ of the realization of the right mapping space $\Hom^{\rR}(\bX, \bY)$ for any choice of idempotent equivalence at $\bY$.
  The right mapping space satisfies the Kan condition, so $\pi_{0}$ of its realization is the set of flow bimodules $\bB\colon \bX \to \bY$ up to equivalence, modulo the relation $\bB \sim \bB'$ if there exists a flow-2-simplex $\bH$ of the form
	\begin{equation*}
		\begin{tikzcd}
			\bX \arrow[r, "s\bX"] \arrow[dr, "\bB"']      &   \bX  \arrow[d, "\bB'"] \\
			&   \bY \,.
		\end{tikzcd}
	\end{equation*}
	Any bimodule $\bB\colon \bOne_{b} \to \coprod_{s} \bX_{s}$  has a locally finite poset of objects $P\to [1]$.
  This means that the manifolds $\bB(\ast, p)$ have to be empty for all but finitely many $p$, and so in particular $\bB$ is in the image of $\coprod_{s}[\bOne_{b}, \bX_{s}]$.
  Similarly, any 2-simplex $\bH$ of the form
	\begin{equation*}
		\begin{tikzcd}
			\bOne_{b} \arrow[r, "s\bOne_{b}"] \arrow[dr, "\bB"']      &   \bOne_{b}  \arrow[d, "\bB'"] \\
			&   \underset{s\in S}{\coprod} \bX_{s}
		\end{tikzcd}
	\end{equation*}
	has $\bH(\ast, p)= \emptyset$ for all but finitely many $p$, and so in particular also defines a relation between the corresponding objects in $\coprod_{s} [\bOne_{b}, \bX_{s}]$.
\end{proof}
\begin{definition}
	Let $\PrLSt$ denote the $\infty$-category of stable presentable $\infty$-categories and colimit-preserving functors. Let $\Pr^{\rL}_{\St,\omega}\subset \PrLSt$ denote the subcategory of $\infty$-categories which are also compactly generated, and functors which preserve compact objects.
\end{definition}
\begin{cor}\label{cor:flow_is_cg_pres}
	The $\infty$-category $\Flow^{\mu}$ is stable, presentable, and compactly generated.
	Any $f_!\colon \Flow^{\mu}\to \Flow^{\nu}$ preserves colimits and compact objects, so $\Flow^{(-)}$ determines a functor
	\begin{align*}
		\Cat_{\infty/ (U/O)} \to \Pr^{\rL}_{\St,\omega} \,.
	\end{align*}
\end{cor}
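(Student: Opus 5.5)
We assemble the statement from the results already established in this section, and then handle functoriality. Stability is \Cref{cor:flow_is_stable}. For presentability and compact generation we use the characterization that a stable $\infty$-category is compactly generated and presentable if it is cocomplete, admits a small set of compact objects, and this set detects zero objects (equivalently, every object is a colimit of compact ones). Cocompleteness follows from \Cref{cor:coproducts_in_flow} together with \Cref{lem:cofibers_in_flow}, since a stable $\infty$-category with small coproducts has all small colimits. The candidate generating set is $\{\bOne_{b}\}_{b\in\ob(\mu)}$, which is essentially small: $\ob(\mu)$ is a small space. By \Cref{lem:1_is_compact} each $\bOne_{b}$ is compact.

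To show these objects generate, we take an arbitrary flow category $\bX$ with poset of objects $P$. By \Cref{lem:filtered_colim_lifting_condition}, $\bX\simeq\colim_{I\in\cJ_{P}}\bX_{I}$ is a filtered colimit of finite flow categories. By \Cref{cor:flow_fin_generation}, each $\bX_{I}$ lies in the thick closure of the $\bOne_{b}$ under finite colimits, and hence is compact. So every object is a filtered colimit of compact objects built from the $\bOne_{b}$. It follows that the localizing subcategory generated by the $\bOne_{b}$ is all of $\Flow^{\mu}$. Applying \cite[Theorem 5.5.1.1]{HTT} together with the stable variant of Lurie's criterion then gives $\Flow^{\mu}\simeq\Ind(\Flow^{\mu}_{\Fin})$, and in particular presentability. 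The point requiring care is that $\Flow^{\mu}$ is defined as a large $\infty$-category. We handle this by noting that $\Flow^{\mu}_{\Fin}$ is essentially small: its objects are determined up to equivalence by finite posets with $\mu$-structured compact manifolds, of which there is only a set up to diffeomorphism.

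For functoriality, let $f\colon\mu\to\nu$ be a morphism. We claim that $f_{!}$ preserves the zero object, coproducts and cofibers. For the zero object and coproducts this holds because pushforward commutes on the nose with the constructions of \Cref{lem:disjoint_unions_exist} and with the empty flow category. For cofibers we check that $f_{!}c(\bB)\simeq c(f_{!}\bB)$; this holds because the relabeling of \Cref{cor:relabeling} and the $\sfE\Vect$-action are natural in the slice double category $(\cS_{/\bZ\times BO})_{/\mu}$. A functor between stable categories preserving finite colimits and coproducts preserves all small colimits. Moreover $f_{!}\bOne_{b}=\bOne_{f(b)}$ is compact, and exactness then gives preservation of all compact objects, which are retracts of finite colimits of the $\bOne_{b}$.

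Finally, the functor $\Flow^{(-)}$ is obtained from \Cref{cor:PreFlow_lands_in_IK_qu} by composing with $\cT$, and the above shows it lands in the non-full subcategory $\Pr^{\rL}_{\St,\omega}$. The main obstacle is verifying cofiber preservation coherently. This means showing that the equivalence $\Flow^{\mu}_{c(\bB)/}\simeq\Flow^{\mu}_{S\bB/}$ of \Cref{lem:cofibers_in_flow} is compatible with $f_{!}$. We would attempt this by checking that the map $F$ constructed there is a natural transformation of right fibrations in the variable $\mu$.
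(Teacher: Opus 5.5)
Your proposal is correct and follows essentially the same route as the paper. Stability comes from \Cref{cor:flow_is_stable}, and compact generation by the $\bOne_{b}$ comes from \Cref{lem:filtered_colim_lifting_condition}, \Cref{cor:flow_fin_generation} and \Cref{lem:1_is_compact}. Functoriality follows by checking that $f_{!}$ commutes with the $\sfE\Vect$-shifts and with relabeling, so it preserves coproducts and cofibers, and that it sends $\bOne_{b}$ to $\bOne_{f(b)}$.

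Your extra care about size and the $\Ind$-description is something the paper leaves implicit. Note, though, that essential smallness of $\Flow^{\mu}_{\Fin}$ also needs small mapping spaces, not only a set of objects up to equivalence. This follows from the description in the proof of \Cref{lem:1_is_compact} of homotopy classes of maps as a set of bimodules modulo a relation, combined with stability.
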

\begin{proof}
	By \Cref{lem:filtered_colim_lifting_condition}, each $\bX$ in $\Flow^{\mu}$ can be written as a filtered colimit of finite flow categories.
  By \Cref{cor:flow_fin_generation}, each finite flow category belongs to the category generated under finite colimits by the objects $\bOne_{b}$, so by \Cref{lem:1_is_compact}, these objects are compact generators for all of $\Flow^{\mu}$.
	A map $f$ in $\Cat_{\infty/(U/O)}$ determines a map of $\sfE(\bZ\times BO)$-modules in $\Cat_{\infty}^{\cS_{/\bZ\times BO}}$.
  We can therefore lift the double functor
	\begin{align*}
		f_{!} \colon \Mfd^{\mu}_{\diamond} \to \Mfd^{\nu}_{\diamond}
	\end{align*}
	to a map of $\sfE(\bZ\times BO)$ modules in $\Dbl_{\infty}$.
  In particular, we get equivalences $f_{!}( \Sigma^{\pm V}_{F} \bX) \simeq \Sigma^{V}_{F} f_{!}\bX$ for any algebra $\bX \colon P \to \Mfd^{\mu}_{\diamond}$, any filter $F\subset P$ and any virtual vector space~$V$.
  It is also clear that pushing forward the framing by $f_{!}$ is compatible with relabeling along open embeddings.
  The functor $f_{!}\colon \Flow^{\mu}\to \Flow^{\nu}$ therefore preserves coproducts and cofibers, and therefore all colimits.
  Furthermore, $f_{!}\bOne_{b} \simeq \bOne_{f_{!}b}$, and since the compact objects of $\Flow^{\mu}$ are generated under retracts and finite colimits by the~$\bOne_{b}$, $f_{!}$ preserves all compact objects.
\end{proof}

\begin{rem}\label{rem:strongly_cont}
	A functor $f_{!}\colon \cC \to \calD$ in $\Pr^{\rL}_{\St,\omega}$ has a right adjoint $f^{*}$.
  This right adjoint preserves limits, so since $\cC$ and $\calD$ are stable, $f^{*}$ is exact and therefore preserves finite colimits.
  Since $f_{!}$ also preserves compact objects, one can verify that $f^{*}$ preserves filtered colimits, and therefore all colimits.
  By presentability $f^{*}$ therefore admits a further right adjoint $f_{*}$.
  By this argument, we can identify $\Pr^{\rL}_{\St, \omega} \subset \PrLSt$ with the subcategory of compactly generated categories and internal left adjoints, i.e., functors whose right adjoint also belongs to $\PrLSt$.
\end{rem}

\section{Twisted Presheaves}\label{sec:twisted_presheaves}
In this section, we will introduce stable $\infty$-categories of \emph{twisted presheaves}.
These are a common generalization of the module $\infty$-categories $\Mod_{\Th(\xi)}$ and the presheaf $\infty$-categories $\Fun(\cC^{\op},\Sp)$.
We will see how these $\infty$-categories can be defined either in terms of an oplax (co)limit, or as \emph{enriched presheaves} on a small $\Sp$-category whose mapping spectra are Thom spectra.
The enriched presheaves perspective is crucial for the comparison with flow categories because it allows us to identify the mapping spectra of the compact generators.

\subsection{Twisted presheaves as an oplax colimit}\label{subsec:twisted_presheaves_as_an_oplax}
In this section, we will define $\TwShv$ as a certain oplax colimit in an $(\infty,2)$-category.
By \Cref{rem:2-cats_as_enriched}, we can think of an $(\infty,2)$-category either as a double $\infty$-category $\bfC$ such that $\bfC_{0}$ is a space and such that~$h\bfC$ is a complete Segal space, or as a complete $\Cat_{\infty}$-category.
We say that an $(\infty,2)$-category is \emph{locally} of some size if its morphism $\infty$-categories have that size.
\begin{ex}
	The large $\infty$-category $\Cat_{\infty}$ of small $\infty$-categories admits a closed monoidal structure, and can therefore be enriched in itself by \Cref{ex:enriched_from_closed}.
  This enrichment defines a large, locally small $(\infty,2)$-category $\bfCat_{\infty}$ whose mapping objects are $\Fun(\cC, \calD)$.
  Similarly, we can enrich $\CatHat_{\infty}$ in itself to obtain a very large, locally large $(\infty,2)$-category $\widehat{\bfCat}_{\infty}$.
\end{ex}
\begin{ex}
	Let $\PrLSt \subset \CatHat_{\infty}$ denote the subcategory spanned by stable presentable $\infty$-categories and colimit-preserving functors.
  We equip this with the Lurie tensor product, which gives $\PrLSt$ a closed monoidal structure.
  The mapping objects are then the $\infty$-categories $\Fun^{\rL}(\cC,\calD)$ of colimit-preserving functors.
  By \Cref{ex:enriched_from_closed}, we can therefore enrich $\PrLSt$ in itself, and by pushing this enrichment forward along the lax monoidal inclusion $\PrLSt \to \CatHat_{\infty}$, we obtain a very large, locally large $(\infty,2)$-category $\bfPr^{\rL}_{\St}$.
\end{ex}
By viewing $(\infty,2)$-categories as double $\infty$-categories, it makes sense to talk about lax functors between $(\infty,2)$-categories.
Dually, one could unstraighten a $\Cat_{\infty}$-category to a Cartesian fibration over $\Simp$, and define \emph{oplax functors} as functors over $\Simp$ which preserve Cartesian lifts of inerts.

(Op)Lax limits were first introduced in the $\infty$-categorical setting in~\cite{GHN}.
In the $(\infty,2)$-category $\bfCat_{\infty}$, we can define the lax limit of a functor $F\colon \cJ \to \bfCat_{\infty}$ to be the $\infty$-category of sections of the corresponding coCartesian fibration $\cE \to \cJ$, and the oplax limit of $F$ to be the $\infty$-category of sections of the corresponding Cartesian fibration $\cE'\to \cJ^{\op}$.
In a general $(\infty,2)$-category $\bfC$, we can define the (op)lax limit of a diagram $F\colon \cJ \to \bfC$ to be an object $X$ together with a universal \emph{(op)lax cone} on $F$ with vertex $X$, i.e. an object
\begin{align*}
	C\in (\op)\laxlim_{j\in \cJ} \bfC(X, F(j))
\end{align*}
such that for any other object $Y$, the functor
\begin{align*}
	\bfC(Y,X) \to (\op)\laxlim_{j\in \cJ} \bfC(Y,F(j))
\end{align*}
induced over each $j$ by composing with the morphism $C_{j}\in \bfC(X,F(j))$ is an equivalence. (Op)lax colimits may be defined dually by taking cocones.

Consider the locally large $(\infty,2)$-category $\bfPr^{\rL}_{\St}$.
By \cite[Corollary 4.15, Example 5.7]{CDW}, this admits (op)lax (co)limits indexed by small $\infty$-categories.
Moreover, $\PrLSt$ is \emph{(op)lax additive}, meaning that for any functor $F\colon \cJ \to \bfPr^{\rL}_{\St}$ the comparison functor
\begin{align*}
	(\op)\laxcolim_{\cJ} F \to (\op)\laxlim_{\cJ} F
\end{align*}
is an equivalence.
Furthermore, (op)lax limits in $\bfPr^{\rL}_{\St}$ are preserved by the inclusion to $\widehat{\bfCat}_{\infty}$, so these may be computed by sections of the corresponding Cartesian fibration.

Recall that the Picard space $\Pic(\bS)$ acts invertibly on $\Sp$ by the tensor product, and each action $X\otimes - \colon \Sp \to \Sp$ preserves colimits.
This action is classified by a functor $\tau \colon B\Pic(\bS) \to \PrLSt$, which is equivalently the inclusion of the component of $\Sp$ in $(\PrLSt)^{\simeq}$.

\begin{definition}\label{def:twisted_presheaves}
	For a small $\infty$-category $\cC$ and a functor $\mu \colon \cC \to B \Pic(\bS)$, we define the $\infty$-category $\TwShv^{\mu}$ of \emph{$\mu$-twisted presheaves on $\cC$}  as
	\[
	\TwShv^\mu = \oplaxcolim(\cC \xrightarrow{\mu} B\Pic(\bS) \xrightarrow{\tau} \bfPr^{\rL}_{\St}) \,.
	\]
\end{definition}

\begin{rem}\label{rem:lax_additivity}
	This definition is justified as $\bfPr^{\rL}_{\St}$ admits small oplax colimits.
  By lax additivity, we could equivalently define $\TwShv^{\mu}$ as the oplax limit of the same diagram.
  This oplax limit can be computed in $\widehat{\bfCat}_{\infty}$ as the sections of the corresponding Cartesian fibration $\cE \to \cC^{\op}$.
  Note that the fibers of $\cE$ are all equivalent to $\Sp$, and the Cartesian transports are given by tensoring with some invertible spectrum.
  This shows, in particular, that for a null-homotopic map $\cC \to B\Pic(\bS)$, the $\infty$-category of twisted presheaves is equivalent to $\Fun(\cC^{\op},\Sp)$.
\end{rem}

Twisted presheaves are a generalization of \emph{twisted spectra} as introduced in Douglas' PhD thesis~\cite{Dou}, and later put into the $\infty$-categorical framework by the first author together with Moulinos~\cite{HM}.
In particular, if we restrict to the case where the $\infty$-category $\cC$ is actually a space, the oplax limit reduces to a limit, and so we recover the definition
\begin{align*}
	\TwSp(\mu) = \colim(\cC\xrightarrow{\mu} B\Pic(\bS) \to \PrLSt)
\end{align*}
of \cite{HM}.

A functor $f\colon \cC \to \calD$ gives rise to a restriction functor $f^{*}\colon \Fun(\calD^{\op},\Sp) \to \Fun(\cC^{\op},\Sp)$ which has left and right adjoints $f_{!}$ and $f_{*}$ given by left and right Kan extension, respectively.
The $\infty$-categories $\TwShv$ enjoy similar functoriality in $\Cat_{\infty /B\Pic(\bS)}$, which can be constructed using oplax additivity.
\begin{prop}\label{prop:adjoint_under_lax_add}
	Suppose we are given a functor $f : \cC \to \mathcal{D}$ of $\infty$-categories and a functor $\mu : \mathcal{D} \to \PrLSt$.
	The following holds:
	\begin{enumerate}
		\item There is a functor
		\[
		f^* : \oplaxlim_{\calD} \mu \to \oplaxlim_{\cC} f^* \mu
		\]
		which is a left adjoint.
		\item There is a functor
		\[
		f_! : \oplaxcolim_{\cC} f^* \mu \to \oplaxcolim_{\calD} \mu
		\]
		which is a left adjoint. Moreover, under oplax additivity its right adjoint can be identified with the functor $f^*$ above.
	\end{enumerate}
\end{prop}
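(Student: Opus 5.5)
The plan is to model both oplax (co)limits by fibrations and obtain the functors by restriction and left Kan extension along $f$. By \Cref{rem:lax_additivity}, $\oplaxlim_{\calD}\mu$ is the $\infty$-category of sections $\Gamma(\calD^{\op},\cE)$ of the Cartesian fibration $\cE\to\calD^{\op}$ classified by $\mu$ (composed with $\PrLSt\to\CatHat_\infty$). The oplax limit of $f^{*}\mu$ is then the sections of the base change $f^{*}\cE\to\cC^{\op}$. In this model the functor in (1) is precomposition with $f^{\op}$:
\[
f^{*}\colon \Gamma(\calD^{\op},\cE)\to\Gamma(\cC^{\op},f^{*}\cE).
\]
Both sides are presentable, since they are oplax limits in $\bfPr^{\rL}_{\St}$ computed in $\widehat{\bfCat}_\infty$. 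Limits and colimits of sections are computed fiberwise because all transport functors preserve colimits and limits. Hence $f^{*}$ preserves all small limits and colimits, and by the adjoint functor theorem it is a left adjoint (and also a right adjoint). This proves (1). Alternatively, one can construct the right adjoint directly as relative right Kan extension along $f^{\op}$; this exists because the fibers have all limits and transports preserve them (\cite{HTT}, relative Kan extensions).

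For (2), I will use the universal property of the oplax colimit. An oplax cocone on $f^{*}\mu$ with vertex $\oplaxcolim_{\calD}\mu$ is obtained by restricting the universal oplax cocone on $\mu$ along $f$, that is, by whiskering with $f$. By the universal property in $\bfPr^{\rL}_{\St}$, this cocone induces a colimit-preserving functor
\[
f_{!}\colon \oplaxcolim_{\cC}f^{*}\mu\to\oplaxcolim_{\calD}\mu.
\]
Being a morphism in $\PrLSt$, $f_{!}$ is automatically a left adjoint.

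The remaining claim is to identify its right adjoint with $f^{*}$ under the oplax additivity equivalences $\oplaxcolim\simeq\oplaxlim$ of \cite{CDW}. For this I will use the description in \cite{CDW} of the additivity equivalence: the structure maps $\iota_c\colon\mu(c)\to\oplaxcolim$ of the universal cocone have right adjoints, and these agree with the projections $\mathrm{ev}_c\colon\oplaxlim\to\mu(c)$. Passing to right adjoints, it suffices to check that $f^{*}$ makes the evaluation maps commute, $\mathrm{ev}_c\circ f^{*}\simeq \mathrm{ev}_{f(c)}$, coherently over $\cC$ together with the oplax 2-cells. This holds by construction of $f^{*}$ as precomposition. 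Since a functor into the oplax limit is determined by its oplax cone of evaluations, the right adjoint of $f_{!}$, which has evaluations $\iota_{f(c)}^{R}=\mathrm{ev}_{f(c)}$, is identified with $f^{*}$.

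The main obstacle is this last coherence step. One must check that the 2-cells of the oplax cone obtained by taking mates of the cocone 2-cells match the Cartesian transport data used for sections. This is where I would invoke the precise naturality statement of \cite{CDW}: their lax additivity equivalence is natural in the indexing diagram under restriction along $f$. If that statement is not available in the needed form, the fallback is to compare Yoneda-wise. Using the universal properties, one computes $\Fun^{\rL}(\oplaxcolim_{\cC}f^{*}\mu,\cX)\simeq\oplaxlim_{c}\Fun^{\rL}(\mu(fc),\cX)$ and checks that the adjunction equivalence reduces to pointwise mates.
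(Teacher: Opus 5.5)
The paper's own proof is only a sketch. It gets $f^*$ and $f_!$ from functoriality of oplax (co)limits, and the adjunction ``as in'' \cite[Proposition 1.5.6]{HM}, which concerns diagrams indexed by a space, where every transport is invertible. Your treatment of (1) and of the existence of $f_!$ is fine in substance, but it rests on a false claim: the transports $\mu(\alpha)$ preserve colimits, not limits. For sections of the Cartesian fibration $\cE\to\calD^{\op}$ this is harmless for colimits, since fiberwise colimits in a Cartesian fibration are automatically relative colimits. So $f^*$ preserves colimits and is a left adjoint by the adjoint functor theorem. Limits of sections, however, are not fiberwise in general. Your parenthetical ``and also a right adjoint'' and the fiberwise relative right Kan extension are therefore unjustified.

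This is exactly where your argument for (2) breaks. Identifying the right adjoint of $f_!$ with $f^*$ forces $f^*$ to preserve limits, and in this model it need not. Take $\calD=[1]$ and $\mu=(F\colon\cA\to\cB)$. Sections are triples $(a,b,\beta\colon b\to Fa)$, and a product of $(a_n,b_n,\beta_n)$ has second component $\prod_n b_n\times_{\prod_n Fa_n}F(\prod_n a_n)$. For $f\colon\{1\}\to[1]$ one has $f_!=\iota_1$ and $f^*=\mathrm{ev}_1$. Now take $\cA=\cB=\Sp$, $F=H\mathbb{Q}\otimes-$, $a_n=\bS$ and $b_n=0$. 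Then $\mathrm{ev}_1$ of the product is $\fib\bigl(H\mathbb{Q}\otimes\prod_n\bS\to\prod_n H\mathbb{Q}\bigr)$, which is nonzero because $\mathbb{Q}\otimes\prod\mathbb{Z}\to\prod\mathbb{Q}$ is not onto. By contrast, $\prod_n\mathrm{ev}_1(a_n,b_n,\beta_n)=0$. So $\mathrm{ev}_1$ is not a right adjoint, and neither a naturality statement from \cite{CDW} nor a Yoneda/mates argument can give $\iota_c^{\rR}\simeq\mathrm{ev}_c$. With non-invertible transports the identification fails in the Cartesian-fibration model.

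The argument is sound when all transports of $\mu$ are equivalences, which is the only case used later ($\mu$ factoring through $B\Pic(\bS)$). There you can bypass additivity entirely by passing to right adjoints ($\PrL\simeq(\Pr^{\rR})^{\op}$, with 2-cells replaced by mates). This identifies $\oplaxcolim_{\calD}\mu$ with sections of the coCartesian fibration over $\calD^{\op}$ classified by $\mu^{\rR}$. In that model the $\iota_d$ are left adjoint to evaluation, and $f_!^{\rR}$ is restriction of sections. When $\mu^{\rR}\simeq\mu^{-1}$, this fibration is the Cartesian fibration classified by $\mu$, so $f_!^{\rR}\simeq f^*$ coherently.
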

\begin{proof}
	The functors $f^{*}$ and $f_{!}$ exist by the functoriality of oplax (co)limits. We can obtain the adjunction $f_{!}\dashv f^{*}$ in a manner similar to the proof of~\cite[Proposition 1.5.6]{HM}.
\end{proof}
\begin{ex}
	Any object $c\in \cC$ gives rise to an adjunction
	\begin{align*}
		c_{!} : \Sp \rightleftarrows \TwShv^{\mu} : c^{*}.
	\end{align*}
	When we compute $\TwShv$ as an oplax limit in $\widehat{\bfCat}_{\infty}$, the right adjoint $c^{*}$ corresponds to evaluation of sections at $c$.
  We should therefore think of $c_{!}\bS$ as a twisted generalization of the Yoneda presheaf at $c$ in $\Fun(\cC^{\op},\Sp)$.
\end{ex}
\begin{definition}\label{def:PrLSt_omega}
	Let $\bfPr^{\rL}_{\St,\omega} \subset \bfPr^{\rL}_{\St}$ denote the sub-2-category spanned by compactly generated stable $\infty$-categories and left adjoints which preserve compact objects (or equivalently left adjoints whose right adjoint admits a further right adjoint).
\end{definition}
\begin{lem}\label{lem:oplax_colim_in_PrLSt_omega}
	The $(\infty,2)$-category $\bfPr^{\rL}_{\St,\omega}$ admits small oplax colimits, and the inclusion $\bfPr^{\rL}_{\St,\omega} \to \bfPr^{\rL}_{\St}$ preserves these.
\end{lem}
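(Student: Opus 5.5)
We construct the oplax colimit in $\bfPr^{\rL}_{\St}$ and then check that it lies in $\bfPr^{\rL}_{\St,\omega}$ with the expected universal property. Let $F\colon \cJ\to \bfPr^{\rL}_{\St,\omega}$ be a functor with $\cJ$ small, and let $\cX=\oplaxcolim_{\cJ}F$ be its oplax colimit in $\bfPr^{\rL}_{\St}$, which exists by \cite{CDW}. It comes with structure maps $\iota_{j}\colon F(j)\to \cX$. By oplax additivity, $\cX$ is also the oplax limit, computed as sections of the Cartesian fibration $\cE\to\cJ^{\op}$ classified by the right adjoints. In this description each right adjoint $\iota_{j}^{*}$ is evaluation of sections at $j$.

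The first step is to show that $\cX$ is compactly generated and that each $\iota_{j}$ preserves compact objects. Since each $F(j)$ is compactly generated, choose a set of compact generators $G_{j}$. The candidate generating set for $\cX$ is $\bigcup_{j}\iota_{j}(G_{j})$. It generates: if $\Map(\iota_{j}g,s)=\Map(g,\iota_{j}^{*}s)=0$ for all $j$ and all $g\in G_{j}$, then every evaluation $s(j)$ vanishes, so the section $s$ is zero. Compactness of $\iota_{j}g$ is equivalent to $\iota_{j}^{*}=\mathrm{ev}_{j}$ preserving filtered colimits. This is where the hypothesis is used: all transition functors $F(\alpha)$ preserve compact objects, so their right adjoints (the Cartesian transports) preserve colimits. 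Hence colimits of sections are computed pointwise, and $\mathrm{ev}_{j}$ preserves all colimits.

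The main obstacle is exactly this pointwise-colimit claim. Sections of a Cartesian fibration whose transports are colimit-preserving functors between cocomplete categories form a category in which colimits are computed fiberwise; I would cite the corresponding statement in \cite{HTT} on limits and colimits in categories of sections, or deduce it from \cite{GHN}. Once this is known, each $\iota_{j}$ is an internal left adjoint, because its right adjoint $\mathrm{ev}_{j}$ preserves colimits and therefore admits a further right adjoint. So the cocone lies in $\bfPr^{\rL}_{\St,\omega}$ (cf.\ \Cref{rem:strongly_cont}).

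Finally we verify the universal property inside $\bfPr^{\rL}_{\St,\omega}$. Take $\cY$ in $\bfPr^{\rL}_{\St,\omega}$. The mapping category $\bfPr^{\rL}_{\St,\omega}(\cX,\cY)$ is the full subcategory of $\Fun^{\rL}(\cX,\cY)$ on the compact-preserving functors; I take it full on 2-cells, which should match \Cref{def:PrLSt_omega}. So it suffices to show that under the equivalence $\Fun^{\rL}(\cX,\cY)\simeq \oplaxlim_{j}\Fun^{\rL}(F(j),\cY)$, a functor $L$ preserves compact objects if and only if every $L\iota_{j}$ does. The forward direction holds because each $\iota_{j}$ preserves compacts. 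For the converse, $L$ sends the generators $\iota_{j}g$ to compact objects. Every compact object of $\cX$ is a retract of a finite colimit of such generators, and $L$ preserves finite colimits and retracts, so $L$ preserves all compact objects. This shows that the inclusion preserves the oplax colimit.
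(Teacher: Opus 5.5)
Your proposal is correct and follows essentially the same route as the paper. Both arguments identify the right adjoint of each structure map $j_!\colon F(j)\to\oplaxcolim F$ with evaluation of sections, conclude that the $j_!$ preserve compact objects and that the images of the compact generators of the $F(j)$ compactly generate, and then check the universal property by noting that an induced left adjoint preserves compacts as soon as it does so on these generators.

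The only difference is where colimit-preservation of $j^*$ comes from. The paper takes it directly from \Cref{prop:adjoint_under_lax_add}, which does not use that the transition functors preserve compact objects. Colimits of arbitrary, not necessarily Cartesian, sections are computed pointwise regardless, so the hypothesis you invoke at that step is harmless but not needed there.
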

\begin{proof}
	We will show that the oplax colimit of any diagram
	\begin{align*}
		F\colon \cJ \to \bfPr^{\rL}_{\St,\omega} \to \bfPr^{\rL}_{\omega}
	\end{align*}
	is also an oplax colimit in $\bfPr^{\rL}_{\St,\omega}$.
  First note that by \Cref{prop:adjoint_under_lax_add}, the induced map $j_{!} \colon F(j) \to \oplaxcolim(F)$ has a right adjoint which itself is a left adjoint.
  This means that $j_{!}$ preserves compact objects.
  By computing $\oplaxlim(F)$ in $\widehat{\bfCat}_{\infty}$, we can identify the right adjoint $j^{*}$ of $j_{!}$ with evaluation of sections at $j\in \cJ^{\op}$.
  Since sections can be distinguished by their values on each equivalence class of objects in $\cJ$, the set
	\begin{align}\label{eq:oplax_colimi_in_PrLSt_omega}
		\bigcup_{j\in \pi_{0}\cJ^{\simeq}} \left\{j_{!} C^{j}_{\alpha}  \right\}_{\alpha \in A_{j}}
	\end{align}
	is a small set of compact generators for $\oplaxlim(F)$.
  This shows that the oplax colimit cone on $F$ factors through $\bfPr^{\rL}_{\St,\omega}$.
  For any other oplax cone $G$ on $F$ which factors through $\bfPr^{\rL}_{\St,\omega}$, the induced left adjoint $L\colon \oplaxcolim F \to G(\infty)$ takes the set \eqref{eq:oplax_colimi_in_PrLSt_omega} of compact generators to compact objects, and since any compact object is a retract of a finite colimit of such, $L$ preserves all compact objects.
\end{proof}

\subsection{Twisted presheaves are enriched presheaves}\label{subsec:twisted_presheaves_as_enriched_presheaves}

In this subsection, we will show that $\TwShv^{\mu}$ is naturally equivalent to the $\infty$-category of \emph{enriched presheaves} on a certain $\Sp$-category.
A good theory of enriched presheaves requires a discussion of \emph{weighted colimits}, and it is not yet clear how to handle these in the framework for enriched categories presented in \cite{GH} and that we have been using so far.
However, there are other equivalent definitions of enriched categories where this can be handled \cite{heine2025equivalence,heine2024higher,reutter2025enriched}.
We will not explore these definitions here, but rather use the construction of enriched presheaves as a black box.
We note that (op)lax colimits are a certain instance of $\Cat_{\infty}$-weighted colimits.

\begin{theorem}\label{thm:enriched_presheaves}
	There exists a symmetric monoidal functor $\cP_{\Sp}\colon (\Cat^{\Sp}_{\infty})^{\otimes} \to (\Pr^{\rL}_{\St, \omega})^{\otimes}$ with right adjoint $\chi$ such that:
	\begin{enumerate}
		\item For $\cC\in \Cat^{\Sp}_{\infty}$, the unit $\yo: \cC \to \chi(\cP_{\Sp}(\cC))$ is fully faithful.
		\item For $\calD\in \Pr^{\rL}_{\St, \omega}$, the underlying $\infty$-category $\Omega^{\infty}\chi(\calD)$ naturally equivalent to $\calD^{\omega}$.
		\item The functor $\Omega^{\infty}\yo \colon \Omega^{\infty}\cC \to (\cP_{\Sp}(\cC))$ takes values in compact objects, and generates $\cP_{\Sp}(\cC)$ under shifts and colimits.
		\item The morphism objects in $\chi(\calD)$ are mapping spectra in $\calD$.
	\end{enumerate}
\end{theorem}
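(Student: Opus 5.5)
The plan is to build $\cP_{\Sp}$ as the enriched Yoneda/free cocompletion and obtain everything else from the universal property of enriched presheaves, which the paper is allowed to treat as a black box. For a small $\Sp$-category $\cC$, let $\cP_{\Sp}(\cC)$ be the $\infty$-category of $\Sp$-enriched functors $\cC^{\op}\to\overline{\Sp}$, where $\overline{\Sp}$ is the self-enrichment of \Cref{ex:enriched_from_closed}. One could work in the weak-module framework \cite{hinich2018yoneda,heine2025equivalence}, in which weighted colimits are available. In that framework $\cP_{\Sp}(\cC)$ is presentable and stable, because colimits and the stable structure are computed pointwise in $\Sp$. It comes with an enriched Yoneda embedding $\yo\colon\cC\to\overline{\cP_{\Sp}(\cC)}$ that is fully faithful by the enriched Yoneda lemma. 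Moreover $\cP_{\Sp}(\cC)$ is the free $\Sp$-tensored cocompletion of $\cC$: restriction along $\yo$ gives an equivalence $\Fun^{\rL}(\cP_{\Sp}(\cC),\calD)\simeq\Fun^{\Sp}(\cC,\chi(\calD))$ for stable presentable $\calD$.

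I would then restrict this adjunction to $\Pr^{\rL}_{\St,\omega}$. First I would show that each $\yo(c)$ is compact. The functor $\map(\yo c,-)$ is evaluation at $c$ by Yoneda, and evaluation preserves colimits because colimits are pointwise. The objects $\yo(c)$ also generate: if $\map(\yo c,F)=F(c)\simeq 0$ for all $c$, then $F\simeq 0$. This gives item (3), and it shows $\cP_{\Sp}(\cC)$ is compactly generated.

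Next I would define $\chi(\calD)$ as the full $\Sp$-subcategory of $\overline{\calD}$ on the compact objects, which is small because $\calD$ is compactly generated. Its morphism objects are mapping spectra, which is item (4). Its underlying $\infty$-category is $\calD^{\omega}$, since $\Map=\Omega^{\infty}\map$; this is item (2). The adjunction then follows from the universal property together with one observation: a left adjoint out of $\cP_{\Sp}(\cC)$ preserves compacts exactly when it sends the generators $\yo(c)$ to compacts, i.e.\ exactly when the corresponding enriched functor lands in $\chi(\calD)$. The unit $\cC\to\chi(\cP_{\Sp}(\cC))$ is $\yo$ corestricted to compacts, so it is fully faithful; this is item (1). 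For the right adjoint, the counit $\cP_{\Sp}(\chi\calD)\to\calD$ is the left Kan extension of the inclusion.

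Symmetric monoidality is where I expect the main obstacle. I would first check the monoidal structure on generators: $\cP_{\Sp}(\cC\otimes\calD)\simeq\cP_{\Sp}(\cC)\otimes\cP_{\Sp}(\calD)$ via the Lurie tensor product. Both sides corepresent $\Sp$-bilinear enriched functors out of $\cC\times\calD$, and $\yo(c,d)\mapsto\yo c\otimes\yo d$; the tensor unit $\cP_{\Sp}(\bOne)=\Sp$. The real difficulty is assembling these identifications coherently, i.e.\ upgrading the left adjoint to a symmetric monoidal functor with compatible structure maps. I would try first to deduce this from the free-cocompletion universal property, using that $\Pr^{\rL}_{\St,\omega}$ is closed under the Lurie tensor product (compact generators are given by tensors of generators), so that a symmetric monoidal left adjoint is determined by its lax-monoidal right adjoint $\chi$. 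Failing that, I would cite the monoidal enriched-Yoneda results of \cite{heine2024higher,reutter2025enriched}.
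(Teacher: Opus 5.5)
Your proposal is correct, but it takes a more hands-on route than the paper. The paper's proof is purely citational. Existence and symmetric monoidality of $\cP_{\Sp}\colon \Cat^{\Sp}_{\infty}\to\PrLSt$ are taken from \cite[Prop.~1.7]{heine2024higher}. The factorization through $\Pr^{\rL}_{\St,\omega}$ and the description of the right adjoint are taken from \cite[Theorem~B]{ben-moshe}, after observing that for $\Sp$-enrichment atomic objects are exactly compact objects, and internal left adjoints in $\PrLSt$ are exactly the compact-preserving functors.

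You instead take only the free-cocompletion universal property as the black box and derive the restricted adjunction and items (1)--(4) yourself. Representables are compact generators because $\map(\yo c,-)\simeq\mathrm{ev}_{c}$. The compact objects are the thick closure of the $\yo(c)$. Hence a left adjoint preserves compacts iff it sends each $\yo(c)$ to a compact object, i.e.\ iff the corresponding enriched functor into $\overline{\calD}$ lands in the full subcategory on $\calD^{\omega}$. This is essentially an unpacking of Ben-Moshe's theorem in the case $\cV=\Sp$.

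Your version is more transparent about why the right adjoint must pass to compact objects. The full self-enrichment $\overline{\calD}$, which is what your $\chi(\calD)$ in the first paragraph has to mean, is large and does not lie in $\Cat^{\Sp}_{\infty}$. The citation, by contrast, gives naturality of the restricted adjunction in $\calD$ without further checks and works for general $\cV$.

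For monoidality you end up where the paper does. Your first idea has the logic reversed, though. A strong monoidal left adjoint induces a lax structure on its right adjoint. A lax structure on $\chi$ would only make $\cP_{\Sp}$ oplax, and you would still have to build that lax structure coherently, which is no easier than the original problem. So the appeal to \cite{heine2024higher} is genuinely needed, together with the fact you mention that $\Pr^{\rL}_{\St,\omega}\subset\PrLSt$ is a symmetric monoidal subcategory.
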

\begin{proof}
	The existence and monoidality of the functor $\cP_{\Sp}\colon \Cat_{\infty}^{\Sp} \to \PrLSt$ follows from \cite*[Prop 1.7]{heine2024higher}.
  The factorization through $\Pr^{\rL}_{\St,\omega}$, and the description of the right adjoint follows from \cite*[Theorem B]{ben-moshe} by observing that internal left adjoints in $\PrLSt$ are precisely those functors that preserve compact objects, and atomic objects are precisely compact objects.
\end{proof}
We shall also use the following refinement of \Cref{ex:enriched_from_closed}.
\begin{theorem}\label{thm:2-cat_from_module}
	The forgetful functor $\PrL_{\Cat_{\infty}}\to \CatHat_{\infty}$ factors through a functor
	\begin{align*}
		\chi\colon \Pr^{\rL}_{\Cat_{\infty}} \to \CatHat_{(\infty,2)}
	\end{align*}
	which is an equivalence onto the subcategory of locally small $(\infty,2)$-categories whose underlying $\infty$-category is presentable and which admit small $\Cat_{\infty}$-weighted colimits, and 2-functors which preserve small $\Cat_{\infty}$-weighted colimits.
\end{theorem}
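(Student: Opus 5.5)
This statement is the $\Cat_{\infty}$-enriched analogue of \Cref{thm:enriched_presheaves}. I would derive it from the general theory of modules over presentably monoidal $\infty$-categories combined with the enrichment construction of \Cref{ex:enriched_from_closed}, and I would treat the weighted-colimit machinery as a black box, as for $\cP_{\Sp}$.

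\textbf{Constructing $\chi$.} Take an object $\cM\in\Pr^{\rL}_{\Cat_{\infty}}$, i.e. a presentable $\infty$-category tensored over $\Cat_{\infty}$ with a colimit-preserving action. For each $X$, the functor $(-)\otimes X\colon \Cat_{\infty}\to\cM$ preserves colimits, so it has a right adjoint $\overline{\map}(X,-)$. By \Cref{ex:enriched_from_closed}, in its weak-module form, this gives a $\Cat_{\infty}$-category $\chi(\cM)$ with objects $\cM^{\simeq}$ and morphism $\infty$-categories $\overline{\map}(X,Y)$. To make $\chi$ functorial, I would use the equivalence between weakly enriched categories and $\cV$-categories \cite{heine2025equivalence}; it is natural in left adjoint module functors. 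Completeness of $\chi(\cM)$ holds because equivalences in the underlying $\infty$-category $\Map(\ast,\overline{\map}(X,Y))\simeq\Map_{\cM}(X,Y)$ are exactly the equivalences of $\cM$. So $\chi$ lands in $\CatHat_{(\infty,2)}$, and composing with $U_*$ recovers the forgetful functor.

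\textbf{The image.} $\chi(\cM)$ is locally small and has presentable underlying $\infty$-category. Tensors $C\otimes X$ give $\Cat_{\infty}$-weighted colimits over the unit. General weighted colimits are then coends of tensors with ordinary colimits, so $\chi(\cM)$ admits all small $\Cat_{\infty}$-weighted colimits. Conversely, given a locally small $(\infty,2)$-category $\bfC$ with presentable underlying $\infty$-category and all small weighted colimits, tensors exist. These define a $\Cat_{\infty}$-action on $U_*\bfC$ that preserves colimits in each variable, and hence an object of $\Pr^{\rL}_{\Cat_{\infty}}$. The two constructions should be mutually inverse on objects by the uniqueness of enrichment from tensors \cite{heine2025equivalence}.

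\textbf{Morphisms.} A $\Cat_{\infty}$-linear left adjoint preserves tensors and colimits, and therefore preserves weighted colimits. Conversely, a 2-functor preserving weighted colimits commutes with tensors, which makes it a colimit-preserving module functor. Full faithfulness on mapping spaces then reduces to the statement that, between tensored categories, enriched functors correspond to lax module functors, and tensor preservation corresponds to strong ones. I expect this last step to be the main obstacle: it requires the comparison between \cite{GH}-style enriched categories and weak modules to be natural at the level of functors. I would try to cite \cite{heine2024higher} or \cite{heine2025equivalence} for this rather than reprove it.
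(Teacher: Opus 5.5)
Your proposal is correct and is essentially the paper's argument. The paper proves this in one line by applying \cite[Theorem 1.2]{heine2025equivalence} with $\cV=\Cat_{\infty}$, which is the comparison you unpack between enriched categories with tensors and categories with a closed $\Cat_{\infty}$-action, and then defer to that same source. The only difference is on the morphism side: the paper invokes the enriched adjoint functor theorem \cite[Theorem 3.73]{heine2024higher}, whereas you argue directly that weighted colimits are generated by tensors and conical colimits (the reduction the paper itself uses elsewhere via \cite[Proposition 3.64]{heine2024higher}); either route suffices.
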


\begin{proof}
	This is \cite[Theorem 1.2]{heine2025equivalence} applied to $\cV = \Cat_{\infty}$ together with the enriched adjoint functor theorem \cite[Theorem 3.73]{heine2024higher}.
\end{proof}

\begin{cor}\label{cor:presheaves_on_SigmaC}
	Let $\cC$ be an $\infty$-category.
  We have a natural equivalence
	\begin{align*}
		\cP_{\Sp}(\Sigma^{\infty}_{+}\cC) \simeq \Fun(\cC^{\op},\Sp) \,.
	\end{align*}
\end{cor}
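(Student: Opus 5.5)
We will produce a comparison functor $\Fun(\cC^{\op},\Sp)\to\cP_{\Sp}(\Sigma^{\infty}_{+}\cC)$ using the universal property of the adjunction $\cP_{\Sp}\dashv\chi$ from \Cref{thm:enriched_presheaves}, and then check that it is an equivalence by comparing compact generators and their mapping spectra. Here $\Sigma^{\infty}_{+}\cC$ denotes the pushforward of the $\cS$-enrichment of $\cC$ along the symmetric monoidal left adjoint $\Sigma^{\infty}_{+}\colon\cS\to\Sp$. By \Cref{prop:CatV_adjunctions} this pushforward is left adjoint to $\Omega^{\infty}_{*}\colon\Cat^{\Sp}_{\infty}\to\Cat_{\infty}$.

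The first step is to produce the enriched functor. The target $\calD=\Fun(\cC^{\op},\Sp)$ is compactly generated stable, with compact generators the representables $\Sigma^{\infty}_{+}\Map_{\cC}(-,c)$. The Yoneda embedding factors through compact objects, so it gives a functor $\cC\to\calD^{\omega}\simeq\Omega^{\infty}\chi(\calD)$ by \Cref{thm:enriched_presheaves}(2). By the adjunction $\Sigma^{\infty}_{+}\dashv\Omega^{\infty}$ on enriched categories, this is adjoint to an $\Sp$-functor $\Sigma^{\infty}_{+}\cC\to\chi(\calD)$. By the adjunction $\cP_{\Sp}\dashv\chi$, that in turn is adjoint to a morphism $\Phi\colon\cP_{\Sp}(\Sigma^{\infty}_{+}\cC)\to\calD$ in $\Pr^{\rL}_{\St,\omega}$. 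All of these constructions are natural in $\cC$, which is what makes the resulting equivalence natural.

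The second step is to show that $\Phi$ is fully faithful on the generators. By \Cref{thm:enriched_presheaves}(3), the objects $\yo(c)$ generate the source under shifts and colimits and are compact. $\Phi$ sends $\yo(c)$ to the representable at $c$. On mapping spectra, \Cref{thm:enriched_presheaves}(1) and (4) identify $\map(\yo c,\yo d)$ with $(\Sigma^{\infty}_{+}\cC)(c,d)=\Sigma^{\infty}_{+}\Map_{\cC}(c,d)$. The spectral Yoneda lemma in $\calD$ gives $\map(h_c,h_d)\simeq\Sigma^{\infty}_{+}\Map_{\cC}(c,d)$ as well. We then need to check that the map induced by $\Phi$ is exactly this identification. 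This is the main obstacle, since it requires unwinding the adjunction units. We plan to handle it by observing that both sides are $\Sigma^{\infty}_{+}$-adjoints of the same map of spaces $\Map_{\cC}(c,d)\to\Map_{\calD}(h_c,h_d)$. That map is the Yoneda equivalence, and it is a map of spaces, so its $\Sp$-linear extension is an equivalence.

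The third step concludes. $\Phi$ preserves colimits and is fully faithful on a set of compact generators, and it carries these to compact objects. Hence it is fully faithful on the thick subcategory they generate, and by passing to filtered colimits (compactness on both sides) it is fully faithful everywhere. Its essential image is a localizing subcategory containing the representables, which generate $\calD$, so $\Phi$ is essentially surjective. Therefore $\Phi$ is an equivalence.
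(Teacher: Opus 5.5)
Your resolution of the step you flag as the main obstacle is wrong as written. The map of spaces $\Map_{\cC}(c,d)\to\Map_{\calD}(h_c,h_d)$ is \emph{not} an equivalence. By the spectral Yoneda lemma, $\map_{\calD}(h_c,h_d)\simeq h_d(c)=\Sigma^{\infty}_{+}\Map_{\cC}(c,d)$, so $\Map_{\calD}(h_c,h_d)\simeq\Omega^{\infty}\Sigma^{\infty}_{+}\Map_{\cC}(c,d)$. Under this identification the map is the unit $\eta$ of $\Sigma^{\infty}_{+}\dashv\Omega^{\infty}$; already for $\cC=[0]$ it is $\ast\to\Omega^{\infty}\bS$.

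The inference is also invalid on its own terms. An equivalence of spaces $A\xrightarrow{\simeq}\Omega^{\infty}E$ need not have an equivalence as its adjoint $\Sigma^{\infty}_{+}A\to E$: take $A=\Omega^{\infty}E$, whose adjoint is the counit.

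What does work is the triangle identity. Write $A=\Map_{\cC}(c,d)$. The adjoint of $\eta_A$ is $\epsilon_{\Sigma^{\infty}_{+}A}\circ\Sigma^{\infty}_{+}\eta_A=\mathrm{id}$. Hence the map $\Sigma^{\infty}_{+}A\to\map_{\calD}(h_c,h_d)$ induced by $\Phi$ is inverse to the Yoneda identification, and so is an equivalence. Here you use that $\chi(\Phi)\circ\yo$ recovers your $\Sp$-functor $\Sigma^{\infty}_{+}\cC\to\chi(\calD)$, which holds by construction of the adjunction. With this fix, your steps 2 and 3 go through.

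Once repaired, your argument is a genuinely different route from the paper's, which never compares generators. The paper reads off from \Cref{thm:enriched_presheaves}(2) that the right adjoint $\Omega^{\infty}_{*}\circ\chi$ of $\cP_{\Sp}\circ\Sigma^{\infty}_{+}$ is $(-)^{\omega}$. It also uses, implicitly, that $\Fun((-)^{\op},\Sp)\colon\Cat_{\infty}\to\Pr^{\rL}_{\St,\omega}$ is left adjoint to $(-)^{\omega}$: colimit-preserving functors $\Fun(\cC^{\op},\Sp)\to\calD$ are functors $\cC\to\calD$, and they preserve compact objects exactly when they land in $\calD^{\omega}$. Uniqueness of adjoints then gives the equivalence, with naturality for free.

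Your $\Phi$ is precisely the comparison map that uniqueness produces, but you prove it invertible by hand. You use parts (1), (3), (4) of the theorem and the spectral Yoneda lemma in place of the universal property of $\Fun(\cC^{\op},\Sp)$. This is more explicit, at the cost of the unit/counit bookkeeping above. Your naturality claim still needs the stable Yoneda map $\cC\to\Fun(\cC^{\op},\Sp)^{\omega}$ to be natural in $\cC$, and that is essentially the unit of the adjunction the paper uses.
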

\begin{proof}
	By \Cref{thm:enriched_presheaves}, the composite
	\begin{align*}
		\Pr^{\rL}_{\St,\omega} \xrightarrow{\chi} \Cat_{\infty}^{\Sp} \xrightarrow{\Omega^{\infty}_{*}} \Cat_{\infty}
	\end{align*}
	agrees with $(-)^{\omega}$, so the result follows from uniqueness of adjoints.
\end{proof}

\begin{rem}\label{rem:PrLSt_omega_from_module}
	By \Cref{lem:oplax_colim_in_PrLSt_omega}, $\bfPr^{\rL}_{\St,\omega}$ admits all (op)lax colimits.
  In particular, it admits \emph{tensors} over $\Cat_{\infty}$.
  The tensor $\cC\otimes \calD$ is just the oplax colimit of the constant diagram $\cC \to \bfPr^{\rL}_{\St,\omega}$ with value $\calD$, which we know is $\Fun(\cC^{\op},\calD)$.
  This tensoring is compatible with colimits in $\Pr^{\rL}_{\St,\omega}$ because these can be computed as limits in $\widehat{\Cat}_{\infty}$.
  By \cite[Proposition 3.64]{heine2024higher}, this is enough to guarantee that $\bfPr^{\rL}_{\St,\omega}$ admits all $\Cat_{\infty}$-weighted colimits and therefore is in the image of the functor $\chi$ in \Cref{thm:2-cat_from_module}.
  Moreover, we can recover the corresponding $\Cat_{\infty}$-module structure on $\Pr^{\rL}_{\St,\omega}$ as the one given by forming $\Cat_{\infty}$-tensors in $\bfPr^{\rL}_{\St}$.
  This module structure is equivalent to the one induced by the monoidal left adjoint
	\begin{align}\label{eq:Pr^L_omega_tensoring}
		\Fun( (-)^{\op},\Sp ) \colon \Cat_{\infty}^{\times} \to (\Pr^{\rL}_{\St,\omega})^{\otimes} \,.
	\end{align}
\end{rem}
\begin{rem}\label{rem:cat_/BPic-2-category}
	The monoidal left adjoint
	\begin{align*}
		e_{*}\colon \Cat_{\infty}^{\times} \to \Cat_{\infty/B\Pic(\bS)}^{\otimes}
	\end{align*}
	gives rise to a $\Cat_{\infty}$-module structure on $\Cat_{\infty /B(\Pic(\bS))}$ given by
	\begin{align*}
	(\cC \xrightarrow{\mu} B\Pic(\bS) ) \otimes (\calD \xrightarrow{\underline{e}} B\Pic(\bS) ) \simeq (\cC \times \calD  \to \cC \xrightarrow{\mu} B\Pic(\bS)  ) \,.
	\end{align*}
	Under straightening, this corresponds to the pointwise $\Cat_{\infty}$-module structure, which is equivalently the free $\Cat_{\infty}$-module
	\begin{align*}
		\Fun(B\Pic(\bS)^{\op}, \Cat_{\infty} ) \simeq B\Pic(\bS)\otimes \Cat_{\infty} \,.
	\end{align*}
	We write $\bfCat_{\infty/B\Pic(\bS)}$ for the corresponding $(\infty,2)$-category.
\end{rem}
\begin{cor}\label{cor:TwShv_as_kan_ext}
	The functor $\TwShv$ is the underlying functor of the unique oplax colimit preserving 2-functor
	\begin{align*}
		\bfCat_{\infty/B\Pic(\bS)} \to \bfPr^{\rL}_{\St,\omega}
	\end{align*}
	extending $\tau \colon B\Pic(\bS) \to \bfPr^{\rL}_{\St,\omega}$.
\end{cor}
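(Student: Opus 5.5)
The plan is to use the universal property of $\bfCat_{\infty/B\Pic(\bS)}$ as a free $\Cat_{\infty}$-module, as described in \Cref{rem:cat_/BPic-2-category}, together with \Cref{thm:2-cat_from_module}. Both $\bfCat_{\infty/B\Pic(\bS)}$ and $\bfPr^{\rL}_{\St,\omega}$ lie in the image of $\chi\colon \Pr^{\rL}_{\Cat_{\infty}} \to \CatHat_{(\infty,2)}$. For $\bfPr^{\rL}_{\St,\omega}$ this is \Cref{rem:PrLSt_omega_from_module}. For $\bfCat_{\infty/B\Pic(\bS)}$ it holds because the straightening $\Cat_{\infty/B\Pic(\bS)} \simeq \Fun(B\Pic(\bS)^{\op},\Cat_{\infty}) \simeq B\Pic(\bS)\otimes \Cat_{\infty}$ is the free presentable $\Cat_{\infty}$-module on the space $B\Pic(\bS)$. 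Consequently, 2-functors between them preserving $\Cat_{\infty}$-weighted colimits, and in particular oplax colimits, correspond to maps of $\Cat_{\infty}$-modules in $\Pr^{\rL}$. Freeness then says that such a map out of $B\Pic(\bS)\otimes\Cat_{\infty}$ is the same as a functor of spaces $B\Pic(\bS) \to (\Pr^{\rL}_{\St,\omega})^{\simeq}$. We take this functor to be $\tau$, whose values $\Sp$ and $X\otimes-$ are compactly generated and preserve compact objects. This gives existence and uniqueness of the extension $\widetilde{\tau}$.

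Next we compute $\widetilde{\tau}$ on an object $\mu\colon\cC\to B\Pic(\bS)$. In $\bfCat_{\infty/B\Pic(\bS)}$, the object $\mu$ is the oplax colimit of the composite $\cC\xrightarrow{\mu}B\Pic(\bS)\to\bfCat_{\infty/B\Pic(\bS)}$, where points of $B\Pic(\bS)$ go to $\ast\to B\Pic(\bS)$. One checks this through the $\Cat_{\infty}$-tensoring: under straightening, $\mu$ corresponds to $\Map_{B\Pic(\bS)}(-,\mu(-))$ weighted by the unstraightening of $\cC$, and the oplax colimit of points is the lax/oplax Grothendieck construction, which recovers $\cC\to B\Pic(\bS)$ itself. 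This is the canonical oplax colimit decomposition mentioned in the introduction. Since $\widetilde{\tau}$ preserves oplax colimits and restricts to $\tau$ on $B\Pic(\bS)$, we obtain
\[
\widetilde{\tau}(\mu)\simeq \oplaxcolim(\cC\xrightarrow{\mu}B\Pic(\bS)\xrightarrow{\tau}\bfPr^{\rL}_{\St,\omega}).
\]
By \Cref{lem:oplax_colim_in_PrLSt_omega} this agrees with the oplax colimit computed in $\bfPr^{\rL}_{\St}$, which is $\TwShv^{\mu}$ by \Cref{def:twisted_presheaves}. On morphisms $f$, the induced maps are the functoriality of oplax colimits, which is the $f_!$ of \Cref{prop:adjoint_under_lax_add}. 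So $\TwShv$ as a functor is the underlying functor of $\widetilde{\tau}$.

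The main obstacle is the identification of $\mu$ as an oplax colimit of points, with the correct variance (oplax rather than lax), inside the $\Cat_{\infty}$-module $B\Pic(\bS)\otimes\Cat_{\infty}$. I would argue this as follows. Weighted colimits in a free module $X\otimes\Cat_{\infty}$ are computed pointwise in $\Fun(X^{\op},\Cat_{\infty})$. An oplax colimit of $\cC\to\Cat_{\infty}$ is the cocartesian unstraightening. For the diagram $c\mapsto \Map_{B\Pic(\bS)}(-,\mu(c))$ this gives, at $x$, the $\infty$-category $\cC\times_{B\Pic(\bS)}B\Pic(\bS)_{/x}$, which is exactly the fiber of the straightening of $\mu$. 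The remaining point is to check the variance convention against \Cref{def:twisted_presheaves} (sections of the Cartesian fibration over $\cC^{\op}$), and I would do that on the case $\cC=[1]$.
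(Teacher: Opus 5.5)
Your proposal is correct and follows essentially the same route as the paper. You extend $\tau$ uniquely via the free $\Cat_{\infty}$-module structure (\Cref{rem:cat_/BPic-2-category} together with \Cref{thm:2-cat_from_module}), write $\mu$ as the oplax colimit of points by a fiberwise computation over $B\Pic(\bS)$, and conclude with \Cref{lem:oplax_colim_in_PrLSt_omega}; your attention to the lax/oplax variance and to the action on morphisms only spells out details the paper leaves implicit.
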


\begin{proof}
	By \Cref{rem:cat_/BPic-2-category}, the functor $\tau$ extends to a unique morphism in $\Pr^{\rL}_{\Cat_{\infty}}$, which by \Cref{thm:2-cat_from_module} gives rise to an oplax colimit-preserving 2-functor
	\begin{align*}
		\tau_{!} \colon \bfCat_{\infty / B\Pic(\bS)} \to \bfPr^{\rL}_{\St, \omega}.
	\end{align*}
	Under straightening, oplax colimits in $\bfCat_{\infty/B\Pic(\bS)}$ are computed pointwise over $B\Pic(\bS)$.
  For an object $\mu \colon \cC \to B\Pic(\bS)$, each fiber $\cC_{x}$ is equivalent to the oplax colimit of the constant functor from $\cC_{x}$ to $\bfCat_{\infty}$ with value $[0]$, so $\mu$ can be written as the oplax colimit
	\begin{align*}
		\mu\simeq \oplaxcolim(\cC\xrightarrow{ \mu} B\Pic(\bS) \to \bfCat_{\infty /B\Pic(\bS)}) \,.
	\end{align*}
    By \Cref{lem:oplax_colim_in_PrLSt_omega} and oplax additivity, we have
	\begin{align*}
		\tau_{!}(\mu) \simeq \oplaxcolim(\cC \xrightarrow{\mu} B\Pic(\bS) \xrightarrow{\tau} \bfPr^{\rL}_{\St, \omega}  ) \simeq \TwShv^{\mu}.
	\end{align*}
\end{proof}
By monoidal Kan extension, the inclusion $\Pic{\bS} \to \Sp$ gives rise to a symmetric monoidal left adjoint
\begin{align*}
	\Th\colon \cS_{/\Pic(\bS)}^{\otimes} \to \Sp^{\otimes}.
\end{align*}
This takes an invertible local system $\xi\colon X\to \Pic(\bS)$ to its \emph{Thom spectrum}, i.e. the colimit
\begin{align*}
	\Th(\xi) = \colim(X\xrightarrow{\xi} \Pic(\bS) \to \Sp).
\end{align*}
\begin{rem}
	The definition of $\TwShv^{\mu}$ is structurally similar to that of $\Th(\xi)$.
  In fact, we  can think of $\TwShv^{\mu}$ as a categorified Thom spectrum.
  We have replaced a space $B$ with an $\infty$-category $\cC$, an $\bS$-line bundle $\xi$ with a $\Sp$-line bundle $\mu$, the stable $\infty$-category of spectra $\Sp$ with the 2-stable $(\infty,2)$-category $\Pr^{\rL}_{\St,\omega}$, and a colimit with an oplax colimit.
\end{rem}
\begin{ex}
	If $\xi$ is nullhomotopic, a choice of nullhomotopy gives an equivalence $\Th(\xi) \simeq \Sigma^{\infty}_{+} X$.
\end{ex}
Consider the composite
\begin{align*}
	\Cat_{\infty}^{\cS_{/\Pic(\bS)}} \xrightarrow{\Th_{*}} \Cat_{\infty}^{\Sp} \xrightarrow{\cP_{\Sp}} \PrLSt \,.
\end{align*}
We have symmetric monoidal structures on each of these functors, so we get an algebra structure on the image $\cP_{\Sp}(\Th_{*} \sfE \Pic(\bS) )$ of the idempotent algebra $\sfE \Pic(\bS)$ considered in \Cref{subsec:monoids_and_delooping}.

\begin{lem}\label{lem:presheaves_on_th(pic(s))}
	The unit map $\Sp \to \cP_{\Sp}(\Th_{*}\sfE \Pic(\bS))$ is an equivalence.
\end{lem}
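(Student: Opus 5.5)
We would compute $\cP_{\Sp}(\Th_{*}\sfE\Pic(\bS))$ directly by identifying the spectral category $\Th_{*}\sfE\Pic(\bS)$ up to Dwyer--Kan equivalence. Recall from \Cref{subsec:monoids_and_delooping} that $\sfE\Pic(\bS)$ has space of objects $\Pic(\bS)$. Its morphism objects $\sfE\Pic(\bS)(x,y)$ are the contractible spaces selecting the point $y-x$, i.e.\ the map $\ast \to \Pic(\bS)$ classifying the invertible spectrum $y\otimes x^{-1}$. Applying $\Th$, we get a spectral category $\sfT$ with objects $\Pic(\bS)$ and morphism spectra $\sfT(x,y)\simeq y\otimes x^{-1}$. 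The composition maps are the canonical equivalences $(y\otimes x^{-1})\otimes(z\otimes y^{-1})\simeq z\otimes x^{-1}$. In particular every object is equivalent to the unit object $0$ via the invertible morphism classified by the unit of $x\otimes x^{-1}$. So the inclusion of the full subcategory on the single object $0$, which is the unit $\Sp$-category $\bS$ with endomorphism spectrum $\bS$, is fully faithful and essentially surjective.

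The key steps are as follows. First, we check that the unit map $\bS \to \Th_{*}\sfE\Pic(\bS)$ of the idempotent algebra is exactly this inclusion of the object $0$. Since $\Th$ is symmetric monoidal, $\Th_{*}$ sends the unit $\cS_{/\Pic(\bS)}$-category $e_{*}\ast$ to the unit $\Sp$-category. The unit of $\sfE\Pic(\bS)$ is the morphism $e_{*}\ast\to\sfE\Pic(\bS)$ picking the basepoint, which lies over $\Simp^{\op}\to\sfB\Pic(\bS)$. Second, we verify full faithfulness: $\sfT(0,0)=\Th(\ast\xrightarrow{0}\Pic(\bS))=\bS$. Third, we verify essential surjectivity: in the underlying $\infty$-category $\Omega^{\infty}_{*}\sfT$, a morphism $0\to x$ is a point of $\Omega^{\infty}(x)$. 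We need an equivalence, which we expect to obtain as the image under $\Th_{*}$ of the morphism of $\sfE\Pic(\bS)$. The underlying space of morphisms $\ast\to\sfE\Pic(\bS)(0,x)$ is contractible and all morphisms in $\sfE\Pic(\bS)$ are invertible, and pushforward along the lax monoidal functor $\Th$ preserves invertibility of morphisms, because functors of enriched categories preserve equivalences in the sense of \Cref{def:equivalence_in_enriched_cat}. By \Cref{prop:CatV_localization} the unit map is therefore an equivalence in $\Cat_{\infty}^{\Sp}$.

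Finally, applying $\cP_{\Sp}$ from \Cref{thm:enriched_presheaves}, which is symmetric monoidal and hence sends the unit $\bS$ to the unit $\Sp$ of $\Pr^{\rL}_{\St,\omega}$, the unit map $\Sp\to\cP_{\Sp}(\Th_{*}\sfE\Pic(\bS))$ is the image of an equivalence and so is an equivalence.

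The main obstacle is the bookkeeping in the first step. We must justify that the algebra unit of $\sfE\Pic(\bS)$ really corresponds to the basepoint inclusion, and that $\Th_{*}$ of the completion is computed before localization. The latter holds because $\Th_{*}$ preserves fully faithful essentially surjective functors by \Cref{prop:CatV_adjunctions}, so we may work in $\Alg_{\Cat}$ and only localize at the end. If the direct identification of the unit is awkward, a fallback is to use \Cref{thm:enriched_presheaves}(3): $\cP_{\Sp}(\sfT)$ is generated by the representables at $x\in\Pic(\bS)$, all of which are equivalent to shifts or twists of the representable at $0$, with endomorphisms $\bS$. That makes $\cP_{\Sp}(\sfT)\simeq\Mod_{\bS}=\Sp$, and the unit map sends $\bS$ to that generator.
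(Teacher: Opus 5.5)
\textbf{Gap: the unit map is not a Dwyer--Kan equivalence.} Your main line fails at the essential surjectivity step. Consequently the claim that $\bS \to \Th_{*}\sfE\Pic(\bS)$ is an equivalence in $\Cat^{\Sp}_{\infty}$ is false. Your identification of the morphism spectra is fine, and so is your description of the unit as the fully faithful inclusion of the object $0$.

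The problem is what counts as a morphism. In the sense of \Cref{def:morphism_in_enriched_cat}, a morphism $0 \to x$ is a map $I_{\cV} \to \sfE\Pic(\bS)(0,x)$ in $\cV = \cS_{/\Pic(\bS)}$. That is a path from $0$ to $x$ in $\Pic(\bS)$. It is not a point of the (contractible) underlying space of the morphism object. This space of morphisms is empty unless $x$ lies in the unit component. Since $\pi_{0}\Pic(\bS)\cong\bZ$, already $\sfE\Pic(\bS)$ is far from the unit: its underlying $\infty$-category is the space $\Pic(\bS)$, and it is a nontrivial idempotent algebra.

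The obstruction survives $\Th_{*}$. Write $\mathsf{T}=\Th_{*}\sfE\Pic(\bS)$. A morphism $\bS \to \Sigma^{n}\bS$ in $\mathsf{T}$ is a class in $\pi_{-n}\bS$, and a morphism back is a class in $\pi_{n}\bS$. For $n\neq 0$ one of these groups vanishes, so no composite $\bS\to\Sigma^{n}\bS\to\bS$ is the identity. A point of $\Omega^{\infty}(x\otimes x^{-1})$ is not a pair of morphisms $0\to x$ and $x\to 0$. So the unit map is fully faithful but only reaches the component of $\bS$. It is a Morita equivalence, not a Dwyer--Kan equivalence, and you cannot obtain the lemma by applying $\cP_{\Sp}$ to an equivalence.

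\textbf{The fallback is the right route.} Your fallback is essentially the paper's proof, but it asserts without argument the only nontrivial step: that $\yo(x)\simeq\Sigma^{n}\yo(\bS)$ when $x\simeq\Sigma^{n}\bS$. The paper proves this as follows.
\begin{enumerate}
\item Using full faithfulness of the spectral Yoneda embedding and exactness, it computes $\map(\Sigma^{n}\yo(\bS),\yo(\Sigma^{n}\bS))\simeq\Sigma^{-n}\Sigma^{n}\bS\simeq\bS$ and $\map(\yo(\Sigma^{n}\bS),\Sigma^{n}\yo(\bS))\simeq\Sigma^{n}\Sigma^{-n}\bS\simeq\bS$.
\item It checks that the classes coming from the unit $S^{0}\to\Omega^{\infty}\bS$ are mutually inverse. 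Equivalently, $\Sigma^{n}\bS$ is a cotensor of $\bS$ in $\mathsf{T}$, and the enriched Yoneda embedding preserves cotensors.
\end{enumerate}
With that in hand, \Cref{thm:enriched_presheaves}(3) shows that $\yo(\bS)$ is a single compact generator with endomorphism spectrum $\bS$. The unit functor $\Sp\to\cP_{\Sp}(\mathsf{T})$ sends $\bS$ to $\yo(\bS)$, so Schwede--Shipley gives the equivalence. Make this the main argument and supply that identification.
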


\begin{proof}
	The restriction of $\Th$ to the full subcategory $\Pic(\bS) \subset \cS_{/\Pic(\bS)}$ is given by the inclusion $\Pic(\bS)\to \Sp$.
  The enriched category $\Th_{*}\sfE\Pic(\bS)$ is therefore equivalent to the full subcategory $\overline{\Pic}(\bS) \subset \chi(\Sp)$ spanned by the tensor invertible objects.
  The algebra structure on $\sfE\Pic(\bS)$ induces an algebra structure on $\overline{\Pic}(\bS)$.
  The unit
	\begin{align*}
		\Sigma^{\infty}_{+}\Simp^{\op} \to \overline{\Pic}(\bS)
	\end{align*}
	for this algebra structure is given by including the object $\Th(0)=\bS$.
  The $\infty$-category of enriched presheaves on $\overline{\Pic}(\bS)$ has a fully faithful enriched Yoneda embedding
	\begin{align*}
		\yo \colon \overline{\Pic}(\bS) \to \chi(\cP_{\Sp}( \overline{\Pic}(\bS)) \,,
	\end{align*}
	and the unit map
	\begin{align*}
		i_{!}\colon \Sp \to \cP_{\Sp}( \overline{\Pic}(\bS) )
	\end{align*}
	for the induced algebra structure is determined by mapping $\bS$ to $\yo(\bS)$.
  By \Cref{thm:enriched_presheaves}, $\cP_{\Sp}(\overline{\Pic}(\bS))$ is generated under colimits by the objects $\yo(\Sigma^{n}\bS)$ for $n\in \bZ$.
  Because $\yo$ is fully faithful, we have the following equivalences of mapping spectra in $\chi(\cP_{\Sp}( \overline{\Pic}(\bS))$:
	\begin{align*}
		\map(\Sigma^{n}\yo(\bS), \yo( \Sigma^{n} \bS)  ) \simeq \Sigma^{-n} \map(\yo(\bS), \yo(\Sigma^{n}\bS) ) \simeq\Sigma^{-n}\Sigma^{n}\bS \simeq \bS \\
		\map(\yo(\Sigma^{n}\bS), \Sigma^{n}\bS) \simeq \Sigma^{n} \map(\yo(\Sigma^{n}\bS), \yo(\bS)) \simeq \Sigma^{n}\Sigma^{-n}\bS \simeq \bS.
	\end{align*}
	With these identifications, one can check that the unit $S^{0}\to \Omega^{\infty}\bS$ gives rise to inverse equivalences $\Sigma^{n}\yo(\bS) \simeq \yo(\Sigma^{n}\bS)$ in $\cP_{\Sp}(\overline{\Pic}(\bS))$.
  This implies that $\cP_{\Sp}$ is generated by the single compact object $\yo(\bS)$, whose mapping spectrum is $\bS$.
  It therefore follows from the Schwede--Shipley Theorem \cite[Theorem 7.1.2.1]{HA} that $i_{!}$ is an equivalence.
\end{proof}
\begin{rem}
	The generation result in \Cref{lem:presheaves_on_th(pic(s))} is an instance of the enriched Yoneda embedding preserving \emph{weighted limits}.
  Cotensors, which in $\chi(\Sp)$ are given by mapping spectra, are an example of weighted limits.
  The subcategory $\overline{\Pic}(\bS)$ is closed under cotensoring with invertible spectra, which can equivalently be seen as tensoring with the inverse.
  By \cite[Corollary 3.67]{heine2024higher}, weighted limits, and in particular cotensors, are preserved by the enriched Yoneda embedding $\yo$.
  This gives the equivalence $\yo(\Sigma^{n}\bS) \simeq \Sigma^{n}\yo(\bS)$ used in the proof of \Cref{lem:presheaves_on_th(pic(s))}.
\end{rem}

\begin{cor}\label{cor:TwSh_is_enriched_presheaves}
	We have a natural equivalence $\TwShv(-) \simeq \cP_{\Sp}(\Th(-))$ of functors $\Cat_{\infty /B\Pic(\bS)} \to \PrLSt$.
\end{cor}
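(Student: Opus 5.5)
The plan is to exhibit $\cP_{\Sp}(\Th_{*}(-))$ as an oplax colimit preserving 2-functor $\bfCat_{\infty/B\Pic(\bS)} \to \bfPr^{\rL}_{\St,\omega}$ extending $\tau$. \Cref{cor:TwShv_as_kan_ext} then identifies it with $\TwShv$, by uniqueness of such extensions.

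First, make sense of the composite on $\Cat_{\infty/B\Pic(\bS)}$. By \Cref{cor:monoid_equivalence} (applied to the grouplike $\bE_{\infty}$-space $X=\Pic(\bS)$), $\Cat_{\infty/B\Pic(\bS)}$ is symmetric monoidally equivalent to $\Mod_{\sfE\Pic(\bS)}(\Cat_{\infty}^{\cS_{/\Pic(\bS)}})$. The composite
\[
F\colon \Cat_{\infty}^{\cS_{/\Pic(\bS)}} \xrightarrow{\Th_{*}} \Cat_{\infty}^{\Sp} \xrightarrow{\cP_{\Sp}} \Pr^{\rL}_{\St,\omega}
\]
is a symmetric monoidal left adjoint: \Cref{prop:CatV_adjunctions} handles $\Th_{*}$, since $\Th$ is a strong monoidal left adjoint, and \Cref{thm:enriched_presheaves} handles $\cP_{\Sp}$. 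It therefore sends $\sfE\Pic(\bS)$-modules to $F(\sfE\Pic(\bS))$-modules. By \Cref{lem:presheaves_on_th(pic(s))}, $F(\sfE\Pic(\bS))\simeq \Sp$ is the unit, so $F$ induces a functor $\Cat_{\infty/B\Pic(\bS)} \to \Mod_{\Sp}(\Pr^{\rL}_{\St,\omega})\simeq \Pr^{\rL}_{\St,\omega}$. This functor is naturally the composite $\mu\mapsto \cP_{\Sp}(\Th_{*}\mu)$, because the relative tensor product over the unit is trivial.

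Next, upgrade this to a 2-functor and check that it preserves oplax colimits. Using \Cref{thm:2-cat_from_module}, it suffices to show that the functor is a morphism in $\Pr^{\rL}_{\Cat_{\infty}}$, i.e.\ a colimit-preserving $\Cat_{\infty}$-linear functor. The $\Cat_{\infty}$-module structures come from the monoidal left adjoints $e_{*}\colon \Cat_{\infty}\to \Cat_{\infty/B\Pic(\bS)}$ (\Cref{rem:cat_/BPic-2-category}) and $\Fun((-)^{\op},\Sp)$ (\Cref{rem:PrLSt_omega_from_module}). Since the functor is symmetric monoidal, $\Cat_{\infty}$-linearity reduces to a natural identification of it precomposed with $e_{*}$ with $\Fun((-)^{\op},\Sp)$. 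For $\cC\in\Cat_{\infty}$, the enriched category $e_{*}\cC$ corresponds to $\cC\otimes \sfE\Pic(\bS)$. Its image is therefore $\cP_{\Sp}(\Sigma^{\infty}_{+}\cC)\otimes \Sp \simeq \Fun(\cC^{\op},\Sp)$ by \Cref{cor:presheaves_on_SigmaC}. Colimit preservation holds because every functor in the chain is a left adjoint; for $\cR$ this is \Cref{lem:monoid_right_adjoint_2} together with the module equivalence.

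Finally, check the value on $B\Pic(\bS)$. The object $B\Pic(\bS)\xrightarrow{\mathrm{id}}B\Pic(\bS)$ corresponds to $\sfE\Pic(\bS)$-modules over points. Restricted to the full subcategory on the identity, the functor recovers $\tau$: each point goes to $\Sp$, and a loop $L\in\Pic(\bS)$ acts by $L\otimes-$, because $\Th$ restricted to $\Pic(\bS)$ is the inclusion. This last identification is the main obstacle. It requires matching the $\Pic(\bS)$-action on $\cP_{\Sp}(\Th_{*}\sfE\Pic(\bS))\simeq\Sp$ with the tautological one. I would obtain it from the proof of \Cref{lem:presheaves_on_th(pic(s))}, where the action of $L$ on objects is shown to send $\yo(\bS)$ to $\yo(L)\simeq L\otimes\yo(\bS)$ via the enriched Yoneda embedding's compatibility with cotensors. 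Granting this, \Cref{cor:TwShv_as_kan_ext} gives $\TwShv\simeq \cP_{\Sp}\circ\Th_{*}$ naturally, and composing with $\Pr^{\rL}_{\St,\omega}\subset\PrLSt$ finishes the proof.
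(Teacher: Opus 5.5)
Your proposal is correct and follows the paper's proof essentially step for step. You identify $\Cat_{\infty/B\Pic(\bS)}$ with $\Mod_{\sfE\Pic(\bS)}(\Cat_{\infty}^{\cS_{/\Pic(\bS)}})$, push forward along the symmetric monoidal left adjoint $\cP_{\Sp}\circ\Th_{*}$ into $\Mod_{\Sp}(\Pr^{\rL}_{\St,\omega})\simeq\Pr^{\rL}_{\St,\omega}$ using \Cref{lem:presheaves_on_th(pic(s))}, promote this to an oplax colimit preserving 2-functor via \Cref{thm:2-cat_from_module}, and conclude with \Cref{cor:TwShv_as_kan_ext} after checking that the restriction to $B\Pic(\bS)$ is $\tau$. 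The only difference is that you spell out the $\Cat_{\infty}$-linearity check (via \Cref{cor:presheaves_on_SigmaC}), which the paper compresses into ``pulling back module structures.''
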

\begin{proof}
	By \Cref{lem:presheaves_on_th(pic(s))}, we have left adjoint functors
	\begin{align*}
		\Cat_{\infty} \xrightarrow{e_{*}}
		\Cat_{\infty/B\Pic(\bS)} \simeq
		\Mod_{\sfE \Pic(\bS)}(\Cat_{\infty}^{\cS_{/\Pic(\bS)}}) \xrightarrow{\cP_{\Sp} \circ \Th}
		\Mod_{\Sp}(\Pr^{\rL}_{\St,\omega}) \simeq
		\Pr^{\rL}_{\St,\omega} \,,,
	\end{align*}
	all of which lift to monoidal functors.
  By pulling back module structures, this gives rise to a map
	\begin{align*}
	    \cP_{\Sp}(\Th(-))\colon \Cat_{\infty/B\Pic(\bS)}\to	\Pr^{\rL}_{\St,\omega}
	\end{align*}
	in $\Pr^{\rL}_{\Cat_{\infty}}$, which by \Cref{thm:2-cat_from_module}, \Cref{rem:PrLSt_omega_from_module}, and \Cref{rem:cat_/BPic-2-category} gives rise to an oplax colimit-preserving 2-functor
	\begin{align*}
		\cP_{\Sp}(\Th(-))\colon \bfCat_{\infty /B\Pic(\bS)} \to \bfPr^{\rL}_{\St, \omega} \,.
	\end{align*}
	By \Cref{cor:TwShv_as_kan_ext}, it now suffices to show that the restriction of $\cP_{\Sp}(\Th(-))$ to $B\Pic(\bS)$ agrees with $\tau$.
  By monoidality of $\cP_{\Sp}(\Th(-))$, this restriction classifies the $\Pic(\bS)$-action on $\cP_{\Sp}(\overline{\Pic}(\bS)) \simeq \Sp$ determined by the Yoneda embedding and tensor products of spectra, which is precisely the definition of $\tau$.
\end{proof}

\begin{ex}\label{ex:modules_over_th}
	Let $X$ be a pointed connected space with a pointed map $\mu\colon X\to B\Pic(\bS)$.
  The basepoint $x$ gives an object of the $\cS_{/\Pic(\bS)}$-category corresponding to $\mu$, whose endomorphism object is
	\begin{align*}
		\mu(x,x) =  \Omega X \xrightarrow{\Omega\mu} \Pic(\bS) \,.
	\end{align*}
	When we push the enrichment forward along $\Th$, we get $\Th(\mu)(x,x) \simeq \Th(\Omega \mu)$.
  This object is now a compact generator of $\cP_{\Sp}(\Th(\mu)) \simeq \TwSp^{\mu}$, so because $\yo$ is fully faithful as a  spectral functor, the Schwede--Shipley theorem gives an equivalence
	\begin{align*}
		\TwSp^{\mu} \simeq \Mod_{\Th(\Omega \mu)} \,.
	\end{align*}
	This recovers \cite[Theorem D]{CCRY}.
  Given a map of pointed connected spaces $f\colon Y\to X$, we get an $\bE_{1}$-map of Thom spectra $\Th(\Omega f^{*}\mu) \to \Th(\Omega\mu)$.
  By \cite[Theorem 3.7.6]{HM}, we can identify the functors $f_{!},f^{*},f_{*}$ on twisted spectra with extension, restriction and coextension of scalars along $f$, respectively.
\end{ex}

\subsection{The $J$-homomorphism}\label{subsec:the_j-homomorphism}
The $J$-homomorphism has a long tradition in homotopy theory.
It manifests as a map $O\to G$ from the infinite orthogonal group to the group of automorphisms of the sphere spectrum.
In this section, we fix a model for a (non-connected) delooping of the $J$-homomorphism as a map $\bZ\times BO \to \Pic(\bS)$ defined by taking a virtual vector bundle to its Thom spectrum.

Let $\cR(\overline{\CW})$ denote the topologically enriched category of fibrant-cofibrant \emph{retractive spaces} in the sense of \cite{may2006parametrized}.
The objects are diagrams
\begin{align*}
	B \xrightarrow{s} E \xrightarrow{p} B
\end{align*}
such that $p\circ s =id_{B}$, and where $p$ is a (Serre-)fibration and $s$ a cofibration.
These are the fibrant-cofibrant objects of a topological model category representing the total space of the Cartesian fibration $\cR(\cS) \to \cS$ classified by $\Fun(-, \cS_{*})$.
The stabilization of $\cR(\cS)$ is then the Cartesian fibration $\Loc(\cS)\to \cS$ classified by $\Fun(-,\Sp)$, with stabilization functor $\cR(\cS) \to \Loc(\cS)$ given by pushforward along $\Sigma^{\infty}$ .

Fiberwise one-point compactification determines a continuous functor
\begin{align*}
	\Vect(\overline{\CW}) \to \cR(\overline{\CW}) \,.
\end{align*}
This presents a functor between $\infty$-categories, which we further compose with stabilization to obtain
\begin{align*}
	\cJ\colon \Vect(\cS) \to \cR(\cS) \to \Loc(\cS) \,.
\end{align*}
Note that for any vector bundle $E\to B$, the corresponding local system over $B$ is fiberwise a sphere, and therefore fiberwise a tensor invertible object of $\Sp$.
Moreover, morphisms in $\Vect(\cS)$ correspond to fiberwise equivalences, so $\cJ$ factors through the subcategory inclusion $\cS_{/\Pic(\bS)} \to \Loc(\cS)$ determined by pushforward along  $\Pic(\bS) \to \Sp$.
By \Cref{lem:Vect(S)_terminal} and the Yoneda lemma, the factorization
\begin{align*}
	\cJ\colon \Vect(\cS) \to \cS_{/\Pic(\bS)}
\end{align*}
is uniquely determined by a map of spaces $\coprod_{k} BO(k) \to \Pic(\bS)$.

We equip $\Vect(\overline{\CW})$ with the exterior direct sum monoidal structure $\boxplus$, retractive spaces with the pointwise monoidal structure
\begin{align*}
	(E\to X) \boxtimes (F\to Y) \simeq E\times F \to X\times Y \,,
\end{align*}
and $\cR(\cS)$ and $\Loc(\cS)$ with the exterior product monoidal structure $\boxtimes$.
The functor $\cJ$ then lifts to a symmetric monoidal functor, giving a map of $\bE_{\infty}$-monoids $\coprod_{k} BO(k) \to \Pic(\bS)$, whose target is grouplike.
We define the $J$-homomorphism to be the unique factorization through the group completion
\begin{align*}
	\coprod_{k} BO(k) \to \bZ \times BO \xrightarrow{J} \Pic(\bS) \,.
\end{align*}

Recall that $\cR(\CW) \to \CW$ is a Cartesian fibration with transport along $X\to Y$ given by forming pullbacks.
Cartesian transport along $X\to \ast$ admits a left adjoint given by quotienting out the section.
The fiber over the terminal object is the category $\CW_{\ast}$ of cofibrantly pointed CW-complexes.
The functor
\begin{align*}
	\pi_{!}\colon \cR(\CW) \to \CW_{*} \\
	E \rightleftarrows X \mapsto E/X
\end{align*}
lifts to a continuous symmetric monoidal functor, which presents the symmetric monoidal functor
\begin{align*}
	\cR(\cS)^{\boxtimes} \to \cS_{*}^{\otimes}
\end{align*}
given by forming the colimit in $\cS_{*}$.

\begin{lem}\label{lem:J_hom}
	There exists a commutative diagram
	\begin{equation*}
		\begin{tikzcd}
			\Vect(\CW)^{\boxplus} \arrow[r, "(-)_{+}"] \arrow[d]      &  \cR(\CW)^{\boxtimes}  \arrow[r] & \CW_{\ast}^{\otimes} \arrow[d, "\Sigma^{\infty}"] \\
			\cS_{/ \bZ\times BO}^{\otimes}  \arrow[r, "J_{*}"]               &   \cS_{/\Pic(\bS)}^{\otimes}  \arrow[r, "\Th"]            & \Sp^{\otimes}
		\end{tikzcd}
	\end{equation*}
  of symmetric monoidal $\infty$-categories.
\end{lem}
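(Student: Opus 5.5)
We construct the diagram by comparing both composites as symmetric monoidal functors out of $\Vect(\CW)^{\boxplus}$ and using that the lower row is determined by a universal property. First, the bottom composite $\Th\circ J_{*}$ is a symmetric monoidal left adjoint $\cS_{/\bZ\times BO}^{\otimes}\to \Sp^{\otimes}$. Both $\Th$ and $J_{*}$ are left Kan extended from the points of their bases, so the composite is the colimit-preserving functor whose restriction to the full subcategory $\bZ\times BO\subset \cS_{/\bZ\times BO}$ is the composite $\bZ\times BO\xrightarrow{J}\Pic(\bS)\subset \Sp$. In the same way, the left vertical map $\Vect(\CW)\to\cS_{/\coprod_k BO(k)}\to \cS_{/\bZ\times BO}$ factors through the localization $\Vect(\CW)\to \Vect(\cS)\simeq \cS_{/\coprod_k BO(k)}$ of \Cref{lem:Vect(S)_terminal}.

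The top row also factors through $\Vect(\cS)$. Fiberwise one-point compactification and the quotient $\pi_{!}$ are continuous symmetric monoidal functors, and $\Sigma^{\infty}$ is symmetric monoidal. Since $\Sigma^{\infty}$ inverts homotopy equivalences, the top composite sends fiberwise isomorphisms covering homotopy equivalences to equivalences. Pushing forward the enrichment along $\Pi$ therefore gives a symmetric monoidal functor $\Vect(\cS)^{\boxplus}\to\Sp^{\otimes}$. It agrees with the composite
\[
\Vect(\cS)\xrightarrow{\cJ}\cS_{/\Pic(\bS)}\to\Loc(\cS)\xrightarrow{\text{colim}}\Sp,
\]
because stabilization commutes with the relative colimit $\pi_{!}$: the statement preceding the lemma identifies $\pi_{!}$ with forming the colimit in $\cS_*$, and $\Sigma^{\infty}$ preserves colimits. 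The composite $\cS_{/\Pic(\bS)}\to\Loc(\cS)\to\Sp$ given by global colimit is exactly $\Th$, and this identification is symmetric monoidal since it is computed for exterior products.

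It remains to identify $\Th\circ\cJ$ on $\Vect(\cS)\simeq\cS_{/\coprod_k BO(k)}$ with $\Th\circ J_{*}\circ(\text{group completion})_{*}$. Both are symmetric monoidal colimit-preserving functors out of $\cS_{/\coprod BO(k)}$. By the symmetric monoidal universal property of slices (monoidal Kan extension), each is determined by its restriction to $\coprod BO(k)$, viewed as an $\bE_\infty$-map into $\Pic(\bS)$. By definition, $J$ is the factorization of $\cJ$'s classifying map through the group completion, so the two restrictions agree as $\bE_\infty$-maps. This gives the commutative square, with its 2-cell.

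The main obstacle is the middle step. We must check that $\cJ$, defined as a functor valued in $\Loc(\cS)$, really takes values in $\cS_{/\Pic(\bS)}$ compatibly with $\boxtimes$, and that the point-set quotient $E^{+}/X$ presents the colimit symmetric monoidally on the $\infty$-level. I would handle this by using May--Sigurdsson's fibrant-cofibrant replacement: one-point compactification of a vector bundle over a CW complex is already fibrant and cofibrant (a sphere bundle with a section), and products of such are again fibrant and cofibrant. The point-set functors are then homotopical and strictly monoidal, so they present symmetric monoidal functors of $\infty$-categories.
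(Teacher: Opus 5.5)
Your proposal is correct and is essentially the paper's argument. Like the paper, you factor the left column through $\cS_{/\coprod_k BO(k)}$ and the top row through $\cR(\cS)$ and $\cS_{*}$, and you get the right-hand square from the fact that $\Sigma^{\infty}$ commutes with the left Kan extension $\pi_{!}$ along $X\to\ast$. The paper gets the left-hand square directly from the definition of $J$ as the group-completion factorization of the map classifying $\cJ$: by Yoneda, $\cJ$ already is pushforward along that map. So your detour through the universal property of $\cS_{/A}$ after applying $\Th$ is unnecessary, though harmless.

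One small correction: \Cref{lem:Vect(S)_terminal} does not show that $\Vect(\CW)\to\Vect(\cS)$ is a localization. You never need this, since pushing forward the enrichment along $\Pi$ already produces all the functors in the top two rows.
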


\begin{proof}
	Consider the diagram
	\begin{equation*}
		\begin{tikzcd}
			\Vect(\CW)^{\boxplus} \arrow[r] \arrow[d]      &   \cR(\CW)^{\boxtimes}  \arrow[d]  \arrow[r] & \CW_{\ast}^{\otimes} \arrow[d] \\
			\cS_{/\coprod_{k} BO(k)}^{\otimes}  \arrow[r]  \arrow[d]             &  \cR(\cS)^{\boxtimes} \arrow[r]  \arrow[d ,"\Sigma^{\infty}"]          &  \cS_{*}^{\otimes} \arrow[d,"\Sigma^{\infty}"] \\
			\cS_{/\bZ\times BO}^{\otimes} \arrow[r, "J_{*}"] & \Loc(\cS)^{\boxtimes} \arrow[r, "\pi_{!}"] & \Sp^{\otimes}
		\end{tikzcd}
	\end{equation*}
  where the two top rows are obtained by including the categories on the top row into their topologically enriched counterparts, and pushing forward the enrichment along $\CW \to \cS$.
	The bottom right square commutes because $\Sigma^{\infty}$ preserves colimits and therefore commutes with left Kan extension along $X\to \ast$.
  The bottom left square commutes by construction of $J$.
\end{proof}

\begin{rem}\label{rem:thom_spectra}
	For a vector bundle $E\to X$, we define the Thom spectrum $\Th(E)$ in one of two ways.
  We can first form the \emph{Thom space} by the quotient $D(E)/S(E)$ of the disc bundle by the sphere bundle, or equivalently by one-point compactifying the fibers, and then quotienting out the section at $\infty$.
  The Thom spectrum can then be defined as the infinite suspension of the Thom space:
	\begin{align*}
		\Th(E) = \Sigma^{\infty} D(E)/S(E) \,.
	\end{align*}
	Alternatively, we take $E$ to its stable classifying map $X\to \bZ\times BO$, push this forward along the $J$-homomorphism to obtain the \emph{Thom local system} $\thom(E)\colon X\to \Pic(\bS)$, and then define the Thom spectrum of $E$ to be the total homology
	\begin{align*}
		\Th(E) = \colim(X\xrightarrow{\thom(E)} \Pic(\bS)\to \Sp ) \,.
	\end{align*}
	\Cref{lem:J_hom} implies that these two perspectives are equivalent, even as symmetric monoidal functors $\Vect(\CW)^{\boxplus} \to \Sp^{\otimes}$.
\end{rem}

Since $J$ is a map of grouplike $\bE_{\infty}$-spaces, we may deloop to get a map $BJ\colon U/O \to B\Pic(\bS)$. Pushforward along $BJ$ determines a functor
\begin{align*}
	BJ_{!}\colon \Cat_{\infty/ (U/O) } \to \Cat_{\infty/B\Pic(\bS)} \,.
\end{align*}
By \Cref{cor:TwSh_is_enriched_presheaves}, we can compute $\TwShv^{\mu}$ either as
\begin{align*}
	\TwShv^{\mu} = \oplaxcolim(\cC \xrightarrow{\mu} U/O \xrightarrow{BJ} \Pic(\bS) \to \bfPr^{\rL}_{\St,\omega} )
\end{align*}
or as enriched presheaves on $\Th(\mu)$.
\begin{ex}
	Let $f\colon G \to O$ be a map of $\bE_{2}$-spaces, for example coming from a continuous map of abelian topological groups.
  The Thom spectrum $MG$ of the vector bundle classified by $Bf\colon BG\to BO$ gives rise to the multiplicative homology theory of $G$-oriented bordism.
  Delooping yet again, we get an object $B^{2}f$ of $\Cat_{\infty / (U/O)}$.
  As in \Cref{ex:modules_over_th}, we get an equivalence
	\begin{align*}
		\TwShv^{B^{2}f} \simeq \Mod_{MG} \,.
	\end{align*}
\end{ex}

\iffalse
Suppose now that we are given a closed $n$-manifold.
Via the Whitney embedding theorem we find an embedding $e : M \to \bR^{n+k}$ and we let $\nu_M$ denote the normal bundle of this embedding.
By the tubular neighborhood theorem we find a tubular neighborhood of $M$, i.e an embedding $D\nu_M \to \bR^{n+k}$.
We define the Thom--Pontryagin collapse map
\[
\eta : S^{n+k} \to \Th(\nu_M)
\]
by essentially the identify on the tubular neighborhood and sending everything else to the basepoint.
The homotopy class of $\eta$ turns out to be independent of the choice of embedding and tubular neighborhood, and hence depends solely on the manifold $M$ itself.

\ALICE{Can also phrase this more abstractly as the collapse $r: M \to \ast$ induces functors $r_!, r_* : \Sp^M \to \Sp$ and $r^* : \Sp \to \Sp^M$. The PT collapse is precisely the unit $\bS \to r_* r^* \bS \simeq r_! \nu_M = \Th(\nu_M)$ (equivalence is norm map).}

\begin{theorem}(Atiyah duality)
	If $M$ is a closed manifold, then the Spanier--Whitehead dual of its suspension spectrum is the Thom spectrum of its (stable) normal bundle-
\end{theorem}

\ALICE{different structure groups $BG \to BO$.}

Let us return to twisted spectra.
Note that given a twist $\mu : B \to B\Pic(\bS)$, we can look at the loop $\Omega_b B \to \Pic(\bS)$ at the point $b \in B$.
\fi

\section{Structured Flow Categories are Twisted Presheaves}\label{sec:structured_flow_categories_and_twisted_presheaves}

The goal of this section is to finish the proof of \Cref{thm:Flow=TwPsh}, giving a natural equivalence between the stable $\infty$-categories $\Flow^{\mu}$ and $\TwShv^{\mu}$.
We saw in the previous section that the $\infty$-categories $\TwShv^{\mu}$ arise as enriched presheaves on the small $\Sp$-categories $\Th(\mu)$.
In particular, there is a full subcategory inclusion
\begin{align*}
	\Omega^{\infty}_{*}\Th(\mu) \to \TwShv^{\mu},
\end{align*}
whose image gives compact generators of $\TwShv^{\mu}$.
We have similarly identified compact generators for $\Flow^{\mu}$, namely the one-object flow categories $\bOne_{b}$.
We will consider the full subcategory $\Bord^{\mu}$ spanned by these objects.
The notation reflects the fact that the mapping space $\Bord^{\mu}(\bOne_{c},\bOne_{d})$ is a classifying space for $\mu(c,d)$-framed bordism.
We will construct a sufficiently functorial version of the Pontrjagin--Thom construction to give an equivalence $\Bord^{\mu} \simeq \Omega^{\infty}\Th(\mu)$.
We will then argue that this equivalence actually extends to the desired equivalence $\Flow^{\mu}\simeq\TwShv^{\mu}$.

Let $E\to B$ be a vector bundle over a space, or more generally an object in $\cS_{/\bZ\times BO}$.
The classical Pontrjagin--Thom construction gives an equivalence
\begin{align*}
	\Omega^{E}_{*} \overset{\simeq}\to \pi_*(\Th(E))
\end{align*}
between the graded abelian group of cobordism classes of manifolds with $E$-structure and the homotopy groups of $\Th(E)$.
Let us sketch the construction of this map.
Start with a closed manifold $M$ with an $E$ structure $-TM \to E$.
Pushing this map forward along $\Th$ gives a map
\begin{align*}
	R(M) \colon \Th(-TM) \to \Th(E)
\end{align*}
called the \emph{record} map.
Now embed $M$ in a euclidean space $V$ of high dimension, pick a tubular neighborhood $N$, pass to one-point compactifications, and form the \emph{scanning map}
\begin{align*}
	V_{+} &\to N_{+} \\
	x&\mapsto \begin{cases}
		x, & x\in N \\
		\infty, & \text{else.}
	\end{cases}
\end{align*}
The desuspension $\Sigma^{-V}\Sigma^{\infty}N_{+}$ is equivalent to the Thom spectrum of the negative tangent bundle of $M$, giving the collapse map
\begin{align*}
	C(M)\colon \bS \to \Th(-TM) \,.
\end{align*}
Then the composite
\begin{align*}
	R(M) \circ C(M) \colon \bS \to \Th(E)
\end{align*}
represents the class of $[M]$ in $\pi_{0}(\Th(E))\simeq \Omega^{E}_{0}$.
The record map is easily captured by $\Mfd_{\diamond}^{\mu}$ as follows.
By naturality of slices, the symmetric monoidal functor $\Th$ induces a double functor
\begin{align*}
	\Th \colon \left(\cS^{\otimes}_{\bZ\times BO} \right)_{/\mu} \to \Sp_{/\Th(\mu)} \,.
\end{align*}
We can then let $R_{\mu}$ denote the composite along the top row of the following diagram.
\begin{equation*}
	\begin{tikzcd}
		\Mfd_{\diamond}^{\mu} \arrow[r] \arrow[d] &  \left(\cS_{/\bZ\times BO}^{\otimes} \right)_{/\mu} \arrow[r, "\Th"] \arrow[d] & \Sp_{/\Th(\mu)} \arrow[d] \\
		\Mfd_{\diamond}^{\otimes} \arrow[r, "U\ominus T"] & \cS_{/\bZ\times BO}^{\otimes} \arrow[r, "\Th"] & \Sp^{\otimes} \,.
	\end{tikzcd}
\end{equation*}
The result is a double functor
\begin{align*}
	R_{\mu} \colon \Mfd_{\diamond}^{\mu} \to \Sp_{/\Th(\mu)} \,,
\end{align*}
which lifts $\Th(U\ominus T)$.
We call this the \emph{double categorical record map}.
In particular, on an object $\bX\in \Mfd_{\diamond}^{\mu}(c,d)$ with $U_{\bX}\simeq 0$, and $\cP_{\bX} = \Delta^{0}$, this recovers the record map of $-TX \to \mu(c,d)$.
Constructing the collapse map is a bit more involved, as it involves embeddings which are not part of the data of $\Mfd_{\diamond}$.

\subsection{The $\mu$-structured bordism category}\label{subsec:the_structured_bordism_category}

We begin by giving a model for the full subcategory of $\Flow^{\mu}$ spanned by the objects $\bOne_{b}$ in terms of a semi-simplicial space.
Let $\iota\colon \Simp_{s} \to \cP$ denote the section determined by mapping $[n]$ to the identity $[n]\to [n]$.

\begin{definition}\label{def:PreBord}
	Let $\PreBord^{\mu}$ be the semi-simplicial space defined by unstraightening the right fibration
	\begin{align*}
		\iota^{*}\cA^{*}\Alg(\Mfd_{\diamond}^{\mu})\to \Simp_{s} \,.
	\end{align*}
\end{definition}

\begin{lem}\label{lem:Bord->Flow}
	The semi-simplicial space $\PreBord^{\mu}$ is a quasi-unital inner Kan space. The natural inclusion $\PreBord^{\mu}\to \PreFlow^{\mu}$ is quasi-unital and presents a fully faithful functor.
\end{lem}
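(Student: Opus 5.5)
The plan is to identify $\PreBord^{\mu}$ as a sub-semi-simplicial space of $\PreFlow^{\mu}$ that is \emph{full}: an $n$-simplex of $\PreFlow^{\mu}$ lies in $\PreBord^{\mu}$ exactly when its poset of objects is the identity $[n]\to[n]$, that is, when each vertex is a one-object flow category $\bOne_{b}$. First I will check that the section $\iota\colon\Simp_{s}\to\cP$ is a map of Cartesian fibrations over $\Simp_{s}$. For $\phi$ injective one has $\phi^{*}[m]=[n]$, so the Cartesian lifts in $\cP$ are preserved. Pulling back the right fibration $\cA^{*}\Alg(\Mfd^{\mu}_{\diamond})\to\cP$ along $\iota$ then gives a monomorphism-type map $\PreBord^{\mu}\to\PreFlow^{\mu}$. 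Levelwise, this is the inclusion of those components of $\PreFlow^{\mu}_{n}$ lying over the component of $\cR(\cP)_{n}$ containing $\mathrm{id}_{[n]}$. The key point is that a simplex of $\PreFlow^{\mu}$ lies in $\PreBord^{\mu}$ if and only if all of its vertices do, since the poset $P\to[n]$ is the disjoint union of its fibers $P_{k}$, and these are recovered from the vertices.

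Given this fullness, inner horn filling is immediate. An inner horn $\Horn^{n}_{i,s}\to\PreBord^{\mu}$ with $n\ge2$ contains every vertex, so by \Cref{cor:horn_filling} together with \Cref{lem:St(P)_inner_Kan} it has a filler in $\PreFlow^{\mu}$. That filler has one-element fibers and therefore lies in $\PreBord^{\mu}$. For quasi-unitality, I note that the degeneracy $s_{0}$ of \Cref{subsec:degeneracies} sends a simplex over $[0]\join[n]$ with $P_{0}$ a singleton to one over $[1]\join[n]$ with $s_{0}^{*}P$, whose fibers are again singletons. So $s_{0}$ and $s_{\omega}$ restrict to $\PreBord^{\mu}_{c}$, and \Cref{prop:outer_degeneracies} applies via the trivial fibration $\PreBord^{\mu}_{c}\to\PreBord^{\mu}$, exactly as in \Cref{cor:flow_is_qu}. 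The same computation shows that the inclusion sends the idempotent equivalence $s\bOne_{b}$ to the idempotent equivalence $s\bOne_{b}$ of $\PreFlow^{\mu}$, so the inclusion is quasi-unital. Here I would also observe that every idempotent equivalence in $\PreBord^{\mu}$ is equivalent to this one, because idempotent equivalences at a vertex are essentially unique by \cite{oldervoll2026quasi}.

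For full faithfulness I plan to use \Cref{prop:semi-simplicial_results}(4). For vertices $\bOne_{b},\bOne_{c}$, the mapping space in $\cT$ is the realization of $\Hom^{\rR}$, built from the pieces $\{s_{n}x\}\times_{X_{n}}X_{n+1}\times_{X_{0}}\{y\}$. By fullness these pieces coincide for $\PreBord^{\mu}$ and $\PreFlow^{\mu}$, since every vertex of such an $(n+1)$-simplex is $\bOne_{b}$ or $\bOne_{c}$. The step I expect to be the main obstacle is making sure the chosen extension $s_{n}x\colon T\to X$ can be taken inside $\PreBord^{\mu}$ and is compatible on both sides. I intend to handle this by building $s_{n}x$ from the iterated degeneracies $s_{0}$ on $\PreBord^{\mu}_{c}$. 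This produces the same map into both semi-simplicial spaces, and the resulting $\Hom^{\rR}$ spaces become levelwise equivalent, hence the mapping spaces agree.
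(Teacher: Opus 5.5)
Your overall route matches the paper's. You restrict the horn fillers and the degeneracies $s_0,s_\omega$, then compare mapping spaces via $\Hom^{\rR}$ from \Cref{prop:semi-simplicial_results}. Your levelwise description of the inclusion (components lying over the component of $\mathrm{id}_{[n]}$) is also right. But the ``fullness'' claim that your horn-filling and full-faithfulness steps rest on is false.

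The poset of a flow simplex is not determined by its vertices. The order relations between \emph{different} fibres $P_j,P_k$ are extra data; this is exactly the datum $c_P$ in the proof of \Cref{lem:P_has_companions}. Take the discrete two-element poset $\{a,b\}$ over $[1]$. It gives a $1$-simplex of $\PreFlow^{\mu}$ from $\bOne_b$ to $\bOne_c$, whose morphism manifold is forced to be empty since $\sfA_P(a,b)=\emptyset$. Both of its vertices lie in $\PreBord^{\mu}$. It is nevertheless not in $\PreBord^{\mu}$, because $P$ is not isomorphic over $[1]$ to $\mathrm{id}_{[1]}$.

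This breaks your full-faithfulness step. Choose $s_n x$ inside $\PreBord^{\mu}$. An element of $\Hom^{\rR}_{\PreFlow^{\mu}}(\bOne_b,\bOne_c)_n$ then lives over a poset $[n]\cup\{*\}$ in which $k<*$ holds exactly for $k\le m$, where $-1\le m\le n$ is arbitrary. Only the case $m=n$ lies in $\PreBord^{\mu}$, so the two $\Hom^{\rR}$ semi-simplicial spaces are \emph{not} levelwise equal.

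Already in degree $0$ there is an extra point, the empty bimodule on the discrete poset. It is identified with the empty bimodule over $[1]$ only through a $1$-simplex of a new type. That simplex lives over the poset with $0<1$, $0<*$, $1\not<*$, and its component $\sigma(0,*)$ is a null-bordism of its $d_1$-face.

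What you actually have to prove is that these additional simplices, which encode forced-empty pieces and null-bordisms, do not change the homotopy type of $\lvert\Hom^{\rR}\rvert$. One way is to analyse how the simplices with $m<n$ are attached and show they contribute only contractible pieces. Another is to compare with a sub-object of $\PreFlow^{\mu}$ in which $\PreBord^{\mu}$ is genuinely full on the $\bOne_b$, such as simplices whose posets are joins $P_0\star\cdots\star P_n$. The paper only says full faithfulness follows easily from \Cref{prop:semi-simplicial_results}. Still, the levelwise identification you assert is not correct, and the real content of the claim lies exactly here.

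The horn-filling conclusion survives, but with a different justification. ``One-element fibres'' alone does not put a filler in $\PreBord^{\mu}$. Instead, since $\iota$ is a map of Cartesian fibrations, $\PreBord^{\mu}\simeq T\times_{\St(\cR(\cP))}\PreFlow^{\mu}$ is the pullback along the point classifying $\iota$. The inner fibration of \Cref{cor:horn_filling} therefore pulls back. Equivalently, choose the filler downstairs to be $\mathrm{id}_{[n]}$, which is what the minimal order of \Cref{lem:St(P)_inner_Kan} produces in this case. Your treatment of the degeneracies and of the quasi-unitality of the inclusion is fine.
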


\begin{proof}
	The construction of horn fillers and degeneracies for $\PreFlow^{\mu}$ restricts to horn fillers and degeneracies for $\PreBord^{\mu}$.
  Using the description of mapping spaces in \Cref{prop:semi-simplicial_results} one easily sees that the induced functor $\Bord^{\mu}\to \Flow^{\mu}$ is fully faithful.
\end{proof}
The natural transformation
\begin{align*}
	\iota^{*}\cA^{*}\Alg(\Mfd^{(-)}_{\diamond}) \to \cA^{*}\Alg(\Mfd^{(-)}_{\diamond})
\end{align*}
of functors $\Cat_{\infty / (U/O)} \to \Cart(\Simp_{s})$ gives rise to a natural transformation $\PreBord^{(-)} \to \PreFlow^{(-)}$, which, as in the proof of \Cref{cor:PreFlow_lands_in_IK_qu} takes values in quasi-unital maps, and therefore gives a natural transformation $\Bord^{(-)}\to \Flow^{(-)}$.

\subsection{Embedded manifolds with corners}\label{subsec:embedded_manifolds}
In this section, we construct a monoidal $\infty$-category $\Mfd_{e}^{\otimes}$, which is a version of $\Mfd_{\diamond}^{\otimes}$ where all manifolds with corners are embedded in some high dimensional vector space.
We will show that this yields an equivalent theory of one point flow categories to $\Mfd_{\diamond}$, and construct a monoidal collapse map $\Mfd_{e}^{\otimes} \to \Sp_{\bS/}^{\otimes}$ lifting $\Th(U\ominus T)$.

Let $I=[0,1]$ denote the unit interval.
For each $n$, the manifold $I^{n}$ with its canonical stratification is a stratified manifold with corners.
Moreover, we can canonically identify the tangent bundle of $I^{n}$ with the trivial $\bR^{n}$ bundle.
In particular, any inclusion $I^{k}\to I^{n}$ of a corner stratum determines a canonical morphism $(I^{k},\bR^{k}) \to (I^{n},\bR^{n})$ in $\Mfd_{\diamond}$.

\begin{definition}
	For each $n$, let $\sfI_{[n]} \colon [n] \to \Mfd_{\diamond}$ be the lax functor determined by
	\begin{align*}
		\sfI_{[n]}(i,j) = (I^{ \left\{i+1,\dots j-1 \right\}}, (\bR^{ \left\{ i+1,\dots j-1\right\}},0 ) \,,
	\end{align*}
	with composition $\sfI_{[n]}(i,j) \times \sfI_{[n]}(j,k)\to \sfI_{[n]}(i,k)$ determined by the inclusion
	\begin{align*}
		I^{ \left\{ i+1,\dots , j-1 \right\}  } \times I^{ \left\{ j+1, \dots , k-1 \right\}} \to I^{ \left\{ i+1,\dots, k-1\right\} }
	\end{align*}
	which sets the missing coordinate to $0$.
  For a map $\phi \colon [m] \to [n]$ in $\Simp_{s}$, let the natural transformation $\phi_{!}\colon \sfI_{[m]} \to \phi^{*}\sfI_{[n]}$ be determined on each $(i,j)$ by the map which sets the missing coordinates to $1$.
  Let $\cI\colon \Simp_{s}\to \Alg(\Mfd_{\diamond})$ denote the functor determined by this construction.
\end{definition}

\begin{lem}\label{lem:A=I}
	There is an equivalence of cartesian fibrations
	\begin{align*}
		\cI^{*}\Alg(\Mfd_{\diamond}^{\mu}) \simeq (\cA\circ \iota)^{*}\Alg(\Mfd_{\diamond}^{\mu}) \,.
	\end{align*}
\end{lem}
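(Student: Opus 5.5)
The plan is to build an equivalence of functors $\Simp_{s}\to \Alg(\Mod_{\diamond})$ between the $\Mod_{\diamond}$-categories underlying $\cI$ and $\cA\circ\iota$. Since both Cartesian fibrations in the statement arise by pulling back the right fibration $\Alg(\Mfd^{\mu}_{\diamond})\to\Alg(\Mod_{\diamond})$ of \Cref{cor:Alg(Mfd)->Alg(Mod)_is_right_fib}, such an equivalence pulls back to the claimed equivalence. The comparison has to go through $\Mod_{\diamond}$ rather than $\Mfd_{\diamond}$, because $\cI$ is valued in $\Alg(\Mfd_{\diamond})$. First, post-compose $\cI$ with $\Mfd_{\diamond}\to\Mod_{\diamond}$, which remembers only the stratifying model and the pair of vector spaces. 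Call the result $\cI'$.

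\textbf{Step 1 (objectwise comparison).} Fix $[n]$ and $i<j$. The object $\iota[n]$ is $\mathrm{id}\colon[n]\to[n]$, so $\sfA_{[n]}(i,j)$ is described as in \Cref{def:A_P_over_n}. Its objects are arcs whose edge labels are an increasing chain $i<p_1<\dots<p_{s-1}<j$ in $[n]$, together with vertex labels. Every intermediate integer $k$ with $i<k<j$ therefore appears either as an edge label or in some vertex label, or it does not appear at all. The overcategory computation after \Cref{def:A_P_over_n} identifies $\sfA_{[n]}(i,j)$ with the poset $[2]^{\{i+1,\dots,j-1\}}$, where each $k$ is in one of three states.

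This poset is exactly the canonical stratifying category of the cube $I^{\{i+1,\dots,j-1\}}$: each coordinate is $0$, $1$, or interior. The edge labels $Q^{-}$ correspond to the faces $x_k=0$, and the vertex-label removals correspond to the faces $x_k=1$. I will write down this isomorphism of models explicitly. I will then check that it carries concatenation at $j$ to the face inclusion setting the $j$-th coordinate to $0$, and carries the face maps $\phi_!$ of \Cref{lem:A_P_closed_embedding} to the inclusions setting the missing coordinates to $1$. Both are direct combinatorial checks.

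\textbf{Step 2 (vector space data).} We have $\sfU_{[n]}(i,j)=(\bR^{\{i+1,\dots,j\}},\bR^{\{j\}})$, while $\cI'$ assigns $(\bR^{\{i+1,\dots,j-1\}},0)$. These agree after stabilizing by $(\bR^{\{j\}},\bR^{\{j\}})$. Concretely, I will construct a natural transformation of algebras using the stabilization action $\Stab$ along the filter-independent algebra $j\mapsto\bR$. Then \Cref{cor:U_compensation} shows that pulling back $\Alg(\Mfd^{\mu}_{\diamond})$ along $\cA\circ\iota$ is equivalent to pulling it back along the stabilized version. The decompositions $Q^{\pm}$ must also match: the $Q^{-}$ for concatenation is $\{r\}$ in $\sfA$, but in $\cI$ the face coordinate is absorbed into $U^{+}$. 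I would handle this mismatch with the same $\Sigma^{\pm}$ compensation as in \Cref{cor:U_compensation}.

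\textbf{Step 3 (naturality and assembly).} The comparisons are defined by explicit formulas on strict $1$-categorical data, namely posets and chosen coordinate identifications. So naturality in $\Simp_{s}$ holds strictly, giving a natural equivalence $\cI'\simeq\cA\circ\iota$ up to $\Stab$. Pulling back then yields an equivalence of right fibrations over $\Simp_{s}$, and therefore of the Cartesian fibrations in the statement.

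The main obstacle is Step 2. The concatenation maps in $\cA$ use the swap isomorphism $\bR^{\{r\}}\oplus\bR^{\{q\}}\simeq\bR^{\{q\}}\oplus\bR^{\{r\}}$ of \eqref{eq:Flow_simplex_U_iso}, and one must check that the stabilization identifications are coherently compatible with these swaps and with the face maps. I would first try to verify this on the strict $(2,1)$-categorical model $\bVect(\CW)$, where the coherence is a finite check of linear isomorphisms, and then push forward to $\cS_{/\bZ\times BO}$.
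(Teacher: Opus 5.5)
Your overall route is the paper's: match the codimension-one strata of $\sfA_{[n]}(i,j)$ with the faces $x_k=0$ (edge label $k$) and $x_k=1$ (vertex label missing $k$) of $I^{\{i+1,\dots,j-1\}}$, check concatenation and face maps, then cancel $\bR^{\{j\}}$. Step 1 is fine apart from one slip. Per coordinate the poset is $0\leftarrow\mathrm{int}\rightarrow 1$, with the interior initial and two incomparable faces, not $[2]$; a product of copies of $[2]$ does not have Boolean overcategories, so it is not a model.

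\textbf{The gap is in Steps 2--3.} No equivalence between $\cI'$ and $\cA\circ\iota$ exists in $\Fun(\Simp_{s},\Alg(\Mod_{\diamond}))$, even after applying $\Sigma^{\pm\cV}$ or $\Stab^{\cV}$ for any $\cV\colon[n]\to\sfE\Vect$.
\begin{itemize}
\item Along a composition, $\cI'$ uses $Q^{+}=\{k\}$, so $\dim U^{+}$ jumps by one; $\cA\circ\iota$ uses $Q^{-}=\{k\}$, so $\dim U^{+}$ is additive.
\item Isomorphisms in $\Mod_{\diamond}$ have $Q=\emptyset$ and preserve $\dim U^{\pm}$. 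The actions of \Cref{cor:U_compensation} add amounts $\Delta\cV(i,j)$ that are additive, $\Delta\cV(i,k)\oplus\Delta\cV(k,j)\simeq\Delta\cV(i,j)$, and tensor with morphisms having $Q=\emptyset$. So they preserve the $\pm$-split, and the naturality square for composition can never commute.
\item In particular the correction $(\bR^{\{j\}},\bR^{\{j\}})$ on $(i,j)$ is not of the form $\Delta\circ\cV$, since $\bR^{\{k\}}\oplus\bR^{\{j\}}\not\simeq\bR^{\{j\}}$. The algebra $V_{F}$ with $F=[n]$ has $\Delta=0$, so stabilizing by it does nothing.
\end{itemize}
The non-additive $\bR^{\{j\}}$ is exactly what gets absorbed by moving $k$ from $Q^{+}$ to $Q^{-}$. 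The comparison therefore has to happen after the $\pm$-split is forgotten, not in $\Alg(\Mod_{\diamond})$. The paper's proof states this step in one sentence citing \Cref{cor:U_compensation}; your more explicit version of it is what fails.

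\textbf{A repair with the paper's tools.}
\begin{itemize}
\item $\Mfd_{\diamond}\to\Mod_{\diamond}$ is pulled back from its $U$-free version (framed stratified manifolds over bare models and closed embeddings). So Step 1, read as an equivalence of functors into algebras of bare models, already gives $\cI'^{*}\Alg(\Mfd_{\diamond})\simeq(\cA\circ\iota)^{*}\Alg(\Mfd_{\diamond})$.
\item Since $\Mfd^{\mu}_{\diamond}$ is pulled back along $-I$, it remains to identify the two index algebras naturally in $\Alg(\cS_{/\bZ\times BO})$ over this identification.
\item Build this in $\bVect(\CW)$ as a transformation whose component at $(i,j)$ is the coCartesian morphism covering $\bR^{\{j\}}$ in $\sfB\Vect$. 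Around a composition square both composites cover $\bR^{\{k\}}\oplus\bR^{\{j\}}$: one through $Q^{+}$ of $\cI'$, the other through the isomorphisms \eqref{eq:Flow_simplex_U_iso} of $\cA$. The underlying bundle maps agree, and face maps are handled by relabelling.
\item Pushing forward to $\cS_{/\bZ\times BO}$ turns the coCartesian components into equivalences, exactly as in \Cref{lem:D_is_htpy_eq}.
\end{itemize}
Your last paragraph points in this direction, but it replaces, rather than checks, the claimed equivalence in $\Alg(\Mod_{\diamond})$.
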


\begin{proof}
	We identify the codimension one strata of $\sfA_{[n]}(i,j)$ and $I^{ \left\{ i+1,\dots , j-1\right\}}$ as follows:
	\begin{enumerate}
		\item The codimension 1 arc with a single internal edge labeled by $k$ corresponds to the stratum $\left\{ x_k=0 \right\}$.
		\item The codimension 1 arc with a single vertex labeled by $\left\{i+1,\dots \hat{k},\dots  j-1\right\}$ corresponds to the stratum $\left\{ x_k = 1 \right\}$.
	\end{enumerate}
	We then see that since composition $\sfA_{[n]}(i,k)\times \sfA_{[n]}(k,j)\to \sfA_{[n]}(i,j)$ sends the minimal object to a codimension one object of type (1), our identification is compatible with composition in $\sfI_{[n]}$.
	For $i<k<j$, the pushforward functor $(d_{k})_{!}\colon \sfA_{[n-1]}(i,j) \to \sfA_{[n]}(i,j+1)$ is given by relabeling along $d_{k}$, and this sends the minimal object to a codimension one object of type (2), so this is again compatible with the pushforward on $\sfI_{[n]}$.
  For the virtual vector spaces, note that we have a canonical identification
	\begin{align*}
		U_{[n]}(i,j) = \left(\bR^{ \left\{ i+1, \dots , j\right\}}, \bR^{ \left\{ j\right\}} \right) \simeq \bR^{ \left\{ i+1,\dots j-1 \right\} }
	\end{align*}
	given by canceling the summand $\bR^{ \left\{ j\right\}}$.
  In particular, the stabilization functor of \Cref{cor:U_compensation} induces the desired equivalence.
\end{proof}

\begin{definition}\label{def:Mfd_e}
	Let $\PreMfd_{e}$ denote the category where:
	\begin{enumerate}
		\item An object is given by a map $p \colon M\to I^{k}$ of manifolds with corners, a vector bundle $q\colon N\to M$, a vector space $V$ and smooth map $e\colon N\to V\times I^{k}$.
    We require that $p$ is transverse to all the strata of $I^{k}$, and that the product $(e,pq)\colon N \to V\times I^{k}$ is a codimension-zero embedding.
		\item A morphism is given by a pullback square
		\begin{equation*}
			\begin{tikzcd}
				M_{0} \arrow[r] \arrow[d, "p_{0}"]      &   M_{1}  \arrow[d,"p_{1}"] \\
				I^{k_{0}}  \arrow[r, "f"]               &   I^{k_{1}}
			\end{tikzcd}
		\end{equation*}
		where the bottom map is the inclusion of a stratum, and a linear isomorphism $\phi \colon V_0 \to V_1$ such that
		\begin{equation*}
			\begin{tikzcd}
				N_{0} \arrow[r, "{(e_{0},p_{0}q_{0})}"] \arrow[d]      &   V_{0} \times I^{k_{0}}  \arrow[d] \\
				N_{1}  \arrow[r, "{(e_{1}, p_{1}q_{1})}"]               &   V_{1} \times I^{k_{1}}
			\end{tikzcd}
		\end{equation*}
		commutes.
	\end{enumerate}

\end{definition}
We give $\PreMfd_{e}$ the monoidal structure determined by exterior product of vector bundles, and Cartesian product of embeddings.
The formation of scanning maps gives rise to a monoidal functor
\begin{align*}
	\PreMfd_{e}^{\otimes} &\to (\CW_{\ast}^{\otimes})^{[1]} \\
	M\leftarrow N \rightarrow V\times I^{k} &\mapsto (V\times I_{k})_{+} \to N_{+} \,.
\end{align*}
We push this forward along the monoidal functor $\Sigma^{\infty} \colon \CW_{\ast} \to \Sp$, and note that since $I^{k}$ is contractible, the result factors through the monoidal subcategory $(\Pic(\bS)/\Sp )^{\otimes} \subset (\Sp^{\otimes})^{[1]}$ of arrows where the source is a tensor invertible object, and morphisms where the map on the source is an equivalence.

By pulling back the section $s\colon \Sp^{\otimes} \to (\Sp^{\otimes})^{[1]}$ selecting identity morphisms, we get a section $s\colon \Pic(\bS)^{\otimes} \to (\Pic(\bS)/\Sp)^{\otimes}$, which is equivalent to the inclusion of the full subcategory spanned by arrows which are equivalences in $\Sp$.
Using the symmetric monoidal structure on $\Sp$, we can then form the composite
\begin{align*}
	(\Pic(\bS)/\Sp)^{\otimes} \xrightarrow{(id, s\circ (-)^{-1}\circ ev_0 )} (\Pic(\bS)/\Sp)^{\otimes}\times_{\Simp^{\op}} (\Pic(\bS)/\Sp)^{\otimes} \xrightarrow{\otimes} \Sp_{\bS/}^{\otimes} \, .
\end{align*}
The full monoidal subcategory of $\PreMfd_{e}^{\otimes}$ spanned by objects where $M$ is a point, $k$ is zero, and $e\colon N \to V$ is a linear isomorphism is equivalent to $\Vect^{\oplus}$.
The scanning map on an object in this subcategory is clearly an equivalence, so we have a unique factorization
\begin{equation}\label{eq:vect_factorization}
	\begin{tikzcd}
		\Vect^{\oplus} \arrow[r] \arrow[d]      &   \Pic(\bS)^{\otimes}  \arrow[d, "s"]  \arrow[r] & \Simp^{\op}  \arrow[d, "\bS"] \\
		\PreMfd_{e}^{\otimes}  \arrow[r]               &   (\Pic(\bS)/\Sp )^{\otimes}  \arrow[r]            & \Sp_{\bS/} \,.
	\end{tikzcd}
\end{equation}

The monoidal $\Vect$-action on $\PreMfd_{e}$ corresponds to a functor $F \colon \sfB \Vect \to \Mon$ where $\sfB\Vect$ is the delooping $\infty$-category of $\Vect^{\oplus}$, i.e., the one with a single object, whose monoid of endomorphisms is $\Vect^{\oplus}$.
The factorization \eqref{eq:vect_factorization} means that we can make the functor $\PreMfd_{e}^{\otimes} \to \Sp_{\bS/}^{\otimes}$ invariant with respect to the $\Vect$-action, meaning that it descends to a functor
\begin{align*}
	C\colon \Mfd_{e}^{\otimes} \coloneq \colim F \to \Sp_{\bS/}^{\otimes} \,.
\end{align*}
We have a functor $\PreMfd_{e} \to \Mfd_{\diamond}$ defined on $(M\leftarrow N \to V\times I^{k})$ by equipping $M$ with the stratification induced by the map $M\to I^{k}$, and the virtual vector space $\bR^{k}$.
This functor is also invariant under tensoring with an object in $\Vect$, and hence defines a monoidal functor $\pi\colon \Mfd_{e}^{\otimes} \to \Mfd_{\diamond}^{\otimes}$.

\begin{rem}
	When working with the category $\Mfd_{e}$ it is crucial that we treat flow categories as defined over the poset $[n]$ rather than over all of $\Simp^{\op}_{[n]}$: the stratifying category $\emptyset$ is not in the image of $\pi$, so we could not get any flow simplices over $\Simp^{\op}_{[n]^{\simeq}}$.
  This could alternatively be solved by artificially adding an initial object to $\Mfd_{e}$.
\end{rem}

\begin{lem}\label{lem:Mfd_e->Mfd_is_right_fib}
	Pushforward by $\pi$ determines a right fibration
	\begin{align*}
		(\cA_{\leq}\circ \iota)^{*}\Alg(\Mfd_{e}) \to (\cA_{\leq}\circ \iota)^{*}\Alg(\Mfd_{\diamond})\,.
	\end{align*}
\end{lem}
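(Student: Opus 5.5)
The plan is to reduce the statement to a morphism-category-level claim, just as \Cref{cor:Alg(Mfd)->Alg(Mod)_is_right_fib} was reduced to \Cref{lem:fiberwise_right_fib}. I would first check that $\pi\colon \Mfd_{e}^{\otimes}\to \Mfd_{\diamond}^{\otimes}$ is a right fibration of monoidal (hence double) $\infty$-categories. By \Cref{lem:double_right_fib_condition}, applied with one-point spaces of objects, it suffices to show that the underlying functor $\pi\colon \Mfd_{e}\to \Mfd_{\diamond}$ is a right fibration. \Cref{prop:algebra_right_fib} then makes $\Alg(\Mfd_{e})\to \Alg(\Mfd_{\diamond})$ a right fibration. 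Pulling back along the composite $\cA_{\leq}\circ\iota$, via its image in $\Alg(\Mfd_{\diamond})$, preserves right fibrations, so this gives the claim.

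For the right fibration property of $\Mfd_{e}\to\Mfd_{\diamond}$, I would argue as in the proof of \Cref{lem:fiberwise_right_fib}: show that $\pi$ is a Cartesian fibration whose fibers are $\infty$-groupoids.

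Cartesian lifts come from restriction. Take an embedded object $(M\xrightarrow{p} I^{k}, N\to M, e\colon N\to V\times I^{k})$ and a morphism $F\colon \bY\to \pi(M)$ in $\Mfd_{\diamond}$ covering a closed embedding. By \Cref{rem:unwinding_closed_embedding}, $F$ identifies $\bY$ with a union of strata of $M$. Since $p$ is transverse to the strata of $I^{k}$, the stratification of $M$ is pulled back from $I^{k}$, so these strata are $p^{-1}$ of corner strata $I^{k'}\subset I^{k}$. Restricting $N$ and $e$ gives a pullback square of the required form, and hence a morphism in $\PreMfd_{e}$ lifting $F$. Its Cartesian property should follow because morphisms in $\PreMfd_{e}$ are determined by pullback squares together with a linear isomorphism of $V$'s, which is unique given the compatibility with $e$. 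One subtlety needs attention here: $\pi(M)$ carries a general model $\cP$ mapped to from $\cP^{\can}_{M}$. I would restrict to the relevant image, namely the strata over $\sfI_{[n]}$, via \Cref{lem:A=I}, so that strata of $M$ really are pulled back from cube strata.

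The step I expect to be the main obstacle is that the fibers are groupoids after passing to the quotient $\Mfd_{e}=\colim F$ by the $\Vect$-action. A morphism covering an identity in $\Mfd_{\diamond}$ has $f=\mathrm{id}$ on cubes, so the underlying map $M_{0}\to M_{1}$ is a diffeomorphism and $\phi$ is an isomorphism; such morphisms are therefore invertible in $\PreMfd_{e}$. However, a fiber of $\pi$ is not the space of embeddings. It is the space of tubular embedding data $(N,V,e)$ modulo stabilization $V\mapsto V\oplus W$, and this must be shown to be an $\infty$-groupoid (in fact contractible, by Whitney and tubular neighbourhood theorems, though only groupoidality is needed). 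I would identify the colimit over $\sfB\Vect$ fiberwise as a filtered colimit of groupoids of embeddings into $V\times I^{k}$. Groupoids are closed under colimits, and the fiber of a colimit of right fibrations over a fixed base is the colimit of the fibers, so the fibers stay groupoids. Finally, I would verify that the Cartesian lifts above are compatible with the $\Vect$-action, since restriction commutes with stabilizing $V$. This lets them descend to the colimit, and \Cref{thm:iterated_fibration} then completes the argument.
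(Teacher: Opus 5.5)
You follow the paper's route: reduce via \Cref{lem:double_right_fib_condition} and \Cref{prop:algebra_right_fib} to the underlying functor, get Cartesian lifts on $\PreMfd_{e}$ by restricting to a stratum, then pass to the $\sfB\Vect$-colimit. But the intermediate claim you set out to prove is false: $\pi\colon\Mfd_{e}\to\Mfd_{\diamond}$ is not a right fibration, so those two results cannot be applied with target $\Mfd_{\diamond}$.

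The empty manifold with the empty model is an object of $\Mfd_{\diamond}$. It is exactly what the morphism objects stratified by $\sfA_{P}(q,p)=\emptyset$, $p<q$, are. Moreover, $\emptyset\to\cR$ is vacuously a closed embedding, so there is a morphism from this object to every $\pi(M)$. Every object in the image of $\pi$ has model $[1]^{k}$, which is never empty, so this morphism has no lift. The paper points this out in the remark just before the lemma.

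Your lifting step breaks exactly here. Remark~\ref{rem:unwinding_closed_embedding} writes the source $\bY$ as a union of strata of $M$, but for the empty union there is no cube stratum $I^{k'}\subset I^{k}$ to restrict to. The subtlety you do flag is not the problem: $\pi$ by definition equips $M$ with the model $[1]^{k}$ pulled back from $I^{k}$. The difficulty is on the source side.

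The missing idea is to shrink the target before invoking \Cref{lem:double_right_fib_condition}. Replace $\Mfd_{\diamond}$ by the full monoidal subcategory $\Mfd_{\con}$ of objects whose model has a unique minimal object. This is where the $\leq$ in the statement matters. The composite $\cA_{\leq}\circ\iota$ only involves the $\sfA_{[n]}(i,j)$ with $i\leq j$, each of which has a unique minimal object. So it factors through $\Alg(\Mod_{\con})$, and the pullbacks of $\Alg(\Mfd_{\diamond})$ and $\Alg(\Mfd_{\con})$ along it agree. Working over $\Simp^{\op}_{[n]^{\simeq}}$ (i.e.\ with $\cA$) would reintroduce the empty morphism objects. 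With this restriction your plan goes through as in the paper.

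For the $\Vect$-quotient, the paper's formulation is cleaner than descending Cartesian lifts by hand and appealing to \Cref{thm:iterated_fibration}. The colimit defining $\Mfd_{e}$ is a sifted bar-construction colimit, which $\Mon\to\Cat_{\infty}$ preserves. Hence $\Mfd_{e}\to\Mfd_{\con}$ is a sifted colimit of the right fibrations $\Vect^{n}\times\PreMfd_{e}\to\Mfd_{\con}$, and $\RFib(\Mfd_{\con})\to\Cat_{\infty}$ preserves and detects sifted colimits.
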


\begin{proof}
	Let $\Mod_{\con}\subset \Mod_{\diamond}$ denote the full subcategory of models with a unique minimal object.
  Since $\cA_{\leq}\circ \iota$ factors through $\Alg(\Mod_{\con})$ and $\pi$ factors through the corresponding subcategory $\Mfd_{\con}\subset \Mfd_{\diamond}$, it suffices by \Cref{lem:double_right_fib_condition,prop:algebra_right_fib} to show that the underlying functor $\Mfd_{e}\to \Mfd_{\con}$ is a right fibration.
  Start by noting that $\PreMfd_{e} \to \Mfd_{\con}$ is a right fibration with Cartesian lifts given by restriction to a stratum.
  The colimit over $\sfB \Vect$ can be computed by a bar complex, which is a sifted diagram.
  Sifted colimits are preserved by the forgetful functor $\Mon \to \Cat_{\infty}$, so the underlying functor is a colimit of objects $\Vect^{n}\times \Mfd_{e} \to \Mfd_{\con}$.
  Since $\Vect$ is a space, each such is a right fibration.
  The forgetful functor $\RFib(\Mfd_{\con})\to \Cat_{\infty}$ preserves and detects sifted colimits.
\end{proof}

\begin{lem}\label{lem:Bord_e->Bord_is_trivial}
	The map $\PreBord_{e} \to \PreBord$ of semi-simplicial spaces obtained from unstraightening the right fibration in \Cref{lem:Mfd_e->Mfd_is_right_fib} is a trivial fibration.
\end{lem}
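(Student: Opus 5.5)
We have to show that $\PreBord_{e}\to\PreBord$ has the right lifting property against every boundary inclusion $\del\Delta^{n}_{s}\to\Delta^{n}_{s}$. Both semi-simplicial spaces are straightenings of right fibrations over $\Simp_{s}$. By \Cref{cor:flow_cats_defined_on_P} and \Cref{lem:A=I}, we may work with algebras over $\cA_{\leq}\circ\iota$, equivalently over the cube-valued functor $\cI$. By \Cref{lem:Mfd_e->Mfd_is_right_fib} the map itself comes from a right fibration.

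Under unstraightening, a lifting problem becomes the following data. We are given an $n$-simplex $\bX\colon[n]\to\Mfd_{\diamond}$ of unstructured bordisms; $\mu$-structures play no role, since $\PreMfd_{e}$ only adds data over $\Mfd_{\diamond}$. We are also given embedding data on all proper faces, compatible under the face maps. The task is to extend this embedding data to $\bX$ itself. Because a right fibration of semi-simplicial spaces is levelwise a map of spaces, this is equivalent to showing that the fiber of $(\PreBord_{e})_{n}\to(\PreBord)_{n}\times_{(\del\text{-matching})}(\cdots)$ is contractible. Concretely, we show that for each $\bX$ the relative space of embedding data extending given boundary data is contractible.

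To identify this fiber, we unwind what an object of $\Mfd_{e}$ over a fixed $\bX(i,j)$ is. It is a choice of map $p\colon X(i,j)\to I^{\{i+1,\dots,j-1\}}$ inducing the given stratification (transverse to the strata), together with a normal bundle $N$ and a codimension-zero embedding $N\to V\times I^{k}$, taken modulo the colimit over $\sfB\Vect$. That colimit stabilizes $V$, so it amounts to allowing $V$ arbitrarily large. Composition in a simplex is given by products of such data, using the monoidal structure, and face maps by restriction to strata.

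We would then run an induction over the pairs $(i,j)$ ordered by $j-i$. For the pair $(0,n)$, all the boundary strata of $X(0,n)$ already carry data: products of lower pieces on the $x_{k}=0$ faces, and face restrictions on the $x_{k}=1$ faces. This data is compatible on corners precisely because the input is a map from $\del\Delta^{n}_{s}$. The remaining step is a relative Whitney embedding and tubular neighbourhood argument in the collared setting. The space of maps $p$ extending a given map on the boundary, compatibly with collars, is contractible, since maps to a cube with prescribed boundary behaviour form a convex-type space. Likewise, by Whitney's theorem with relative boundary conditions in the colimit over $V\to V\oplus\bR$, the space of embeddings into $\colim_{V}V\times I^{k}$ extending a given embedding on the boundary is contractible. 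Finally, the space of tubular neighbourhoods compatible with prescribed boundary neighbourhoods is contractible.

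The main obstacle is making this contractibility precise as a statement about spaces rather than about $\pi_{0}$. One must check that the relevant mapping spaces in the $\infty$-category $\Mfd_{e}$, obtained after localization and the colimit over $\sfB\Vect$, really model these topological spaces of embeddings, and that the colimit over $\Vect$ is computed by the stabilization $V\mapsto V\oplus\bR$. For this I would first use the bar-construction description from the proof of \Cref{lem:Mfd_e->Mfd_is_right_fib} to write the fiber as a sifted colimit of fibers at fixed $V$. Each of these is highly connected by general position, with connectivity growing with $\dim V$, so the colimit is contractible. Once contractibility of all fibers is established, the lifting against $\del\Delta^{n}_{s}\to\Delta^{n}_{s}$ follows, and hence so does the trivial fibration.
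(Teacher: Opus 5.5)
\textbf{There is a genuine gap: you have set yourself the wrong target, and the stronger statement you aim for is false in this model.} Your geometric input is exactly what the paper uses: extending the map to the cube via collar coordinates, a relative Whitney embedding after enlarging $V$, and a relative tubular neighbourhood.

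A trivial fibration in the sense of \Cref{def:inner_fibration} only asks that every square against $\del\Delta^{n}_{s}\to\Delta^{n}_{s}$ admit a lift. Equivalently, the fibres of
\[
(\PreBord_{e})_{n}\to(\PreBord)_{n}\times_{\Map(\del\Delta^{n}_{s},\PreBord)}\Map(\del\Delta^{n}_{s},\PreBord_{e})
\]
must be \emph{non-empty}. This is not equivalent to their being contractible.

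Contractibility also fails here. $\PreMfd_{e}$ and $\Mfd_{\diamond}$ are ordinary categories with no topology on objects or morphisms. So the fibre over a given bordism is a groupoid of embedding data $(p,N,e)$, taken modulo linear isomorphisms of $V$ and stabilization, and it has many components. Already for $n=1$, two embeddings of a closed manifold that are not related by a linear isomorphism of (a stabilization of) $V$ lie in different components. If all these fibres were contractible, a Reedy induction would make $\PreBord_{e}\to\PreBord$ a levelwise equivalence, which it is not. Consequently the step you flag as the main obstacle cannot be carried out. The fibres at fixed $V$ are discrete sets, not highly connected spaces of embeddings, so neither the identification of mapping spaces of $\Mfd_{e}$ with topological embedding spaces nor the general-position connectivity estimate applies.

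\textbf{The fix is to drop every contractibility claim and prove only existence.} For that you also need a reduction of the colimit over $\sfB\Vect$ to a fixed $V$, rather than a fibrewise connectivity argument. The paper does this in four steps.
\begin{enumerate}
\item It unstraightens the action to a coCartesian fibration $\widetilde{\Mfd}_{e}\to\sfB\Vect$ and writes $\Mfd_{e}$ as the sequential colimit of $\widetilde{\Mfd}_{e}$ along $-\oplus\bR$.
\item It uses finiteness of $\del\sfA_{[n]}(0,n)$ to factor the boundary data through a finite stage.
\item It shows $\widetilde{\Mfd}_{e}\to\sfE\Vect$ is a coCartesian fibration, and extends the $\sfE\Vect$-component over the cone by embedding everything in one large inner product space.
\item It transports to the fibre over the terminal object, leaving a lifting problem into $\PreMfd_{e}$ at a single fixed $V$.
\end{enumerate}
At that point your geometric extension argument applies directly, and only existence is needed. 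Your induction over pairs $(i,j)$ is also unnecessary, since only the pair $(0,n)$ is new in the boundary lifting problem.
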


\begin{proof}
	We need to solve the lifting problem
	\begin{equation*}
		\begin{tikzcd}
			\del \Delta^{n}_{s} \arrow[r] \arrow[d]      &    \PreBord_{e} \arrow[d] \\
			\Delta^{n}_{s}  \arrow[r]     \arrow[ur,dashed]           &     \PreBord \,.
		\end{tikzcd}
	\end{equation*}
	For $n=0$, this is trivial.
  For $n>0$, it suffices as in the proof of horn filling to extend the lift over the minimal object of $\sfI_{[n]}(0,n)$, i.e., to solve the lifting problem
	\begin{equation*}
		\begin{tikzcd}
			\del \sfA_{[n]}(0,n)^{\op} \arrow[r] \arrow[d]      &   \Mfd_{e}  \arrow[d] \\
			\sfA_{[n]}(0,n)^{\op}  \arrow[r]   \arrow[ur,dashed]             &    \Mfd \,.
		\end{tikzcd}
	\end{equation*}
	As in the proof of \Cref{lem:Mfd_e->Mfd_is_right_fib}, the forgetful functor $\Mon \to \Cat_{\infty}$ preserves sifted colimits, so the colimit $\Mfd_{e}$ of $F$ can be computed in $\Cat_{\infty}$ as follows.
  First unstraighten $F$ to a coCartesian fibration $\widetilde{\Mfd}_{e} \to \sfB\Vect$.
  A morphism $N_{0} \to N_{1}$ in $\widetilde{\Mfd}_{e}$ covering the morphism $V$ in $\sfB\Vect$ is precisely a map $N_{0}\oplus V \to N_{1}$ in $\PreMfd_{e}$.
  Then we can obtain the colimit $\Mfd_{e}$ by localizing $\widetilde{\Mfd}_{e}$ at coCartesian edges.
  The coCartesian transport of any $V\in \Vect$ is given by acting with $V$, and since any $V$ is equivalent to some $\bR^{n}$, we have
	\begin{align*}
		\Mfd_{e} \simeq \colim\left(\widetilde{\Mfd}_{e} \xrightarrow{\oplus \bR} \widetilde{\Mfd}_{e} \xrightarrow{\oplus \bR} \dots \right).
	\end{align*}
	Since $\del \sfA_{[n]}(0,n)$ is a finite category, any map to a filtered colimit factors through some finite step.
  The inclusion $\Vect \to \PreMfd_{e}$ is split by the functor which remembers only the vector space $V$.
  We let $\sfE\Vect \to \sfB\Vect$ denote the unstraightening of the action of $\Vect$ on itself.
  We then have an iterated pullback diagram
	\begin{equation*}
		\begin{tikzcd}
			\PreMfd_{e} \arrow[r] \arrow[d]      &   \Vect  \arrow[d]  \arrow[r] &    \ast \arrow[d] \\
			\widetilde{\Mfd}_{e}  \arrow[r]               &   \sfE\Vect  \arrow[r]            &  \sfB \Vect \, ,
		\end{tikzcd}
	\end{equation*}
	so by pasting, the left-hand square is a pullback.
  The functor $\widetilde{\Mfd}_{e}\to \sfE \Vect$ is a morphism of coCartesian fibrations over $\sfB \Vect$, which in the fiber over a point in $\sfB\Vect$ becomes $\PreMfd_{e}\to \Vect$ whose target is a space and therefore a coCartesian fibration.
  Moreover, coCartesian transport along a morphism $V$ in $\sfB\Vect$ induces the square
	\begin{equation*}
		\begin{tikzcd}
			\PreMfd_{e} \arrow[r, "-\oplus V"] \arrow[d]      &   \PreMfd_{e}  \arrow[d] \\
			\Vect  \arrow[r, "-\oplus V"]               &   \Vect
		\end{tikzcd}
	\end{equation*}
	which is a morphism of coCartesian fibrations.
  It then follows from \Cref{thm:iterated_fibration} that $\widetilde{\Mfd}_{e}\to \sfE\Vect$ is a coCartesian fibration.
  Any functor $F\colon \del \sfA_{[n]}(0,n)^{\op} \to \sfE \Vect$ extends to the cone $\sfA_{[n]}(0,n)$ by compatibly embedding all the $F(\sigma)$ in some large inner product space $V$, and using direct sum with the orthogonal complement to construct morphisms.
  Note that we are free to change this lift by including $V$ in some larger $V'$.
  After picking such a lift, it suffices to solve the lifting problem
	\begin{equation*}
		\begin{tikzcd}
			\del \sfA_{[n]}(0,n)^{\op} \arrow[r] \arrow[d]      &   \widetilde{\Mfd}_{e} \arrow[d] \\
			\sfA_{[n]}(0,n)^{\op}  \arrow[r]  \arrow[ur,dashed]              &   \sfE \Vect \times \Mfd_{\diamond} \,.
		\end{tikzcd}
	\end{equation*}
	Since the projection to $\Mfd_{\diamond}$ is invariant under coCartesian transport along morphisms in $\sfE \Vect$, we can use coCartesian transport to the fiber over the terminal object to reduce to the lifting problem
	\begin{equation*}
		\begin{tikzcd}
			\del \sfA_{[n]}^{\op} \arrow[r] \arrow[d]      &   \PreMfd_{e,V}  \arrow[d] \\
			\sfA_{[n]}^{\op}  \arrow[r] \arrow[ur,dashed]              &   \Mfd_{\diamond}
		\end{tikzcd}
	\end{equation*}
	where $\PreMfd_{e,V}$ denotes the fiber of $\PreMfd_{e}\to \Vect$ over $V$.
  This data corresponds to a manifold $M$ stratified by $\sfA_{[n]}(0,n)$, a compatible system of vector bundles $N_{\sigma} \to \del^{\sigma}M$ and maps $N_{\sigma} \to V$, and a compatible system of maps $\del^{\sigma}M\to \del^{\sigma}I^{n}$ where we equip $I^{n}$ with the $\sfA_{[n]}(0,n)$-stratification exhibited by \Cref{lem:A=I}.

	The maps to $\del^{\sigma}I^{n}$ induce a coherent normal framing on the boundary of $M$, which can be extended to all of $M$.
  Using the collar coordinates defined by this coherent normal framing, we can extend the map $\del M \to \del I^{n}$ to a smooth map $M\to I^{n}$ transversely inducing the stratification of $M$.
  After stabilizing to a sufficiently large $V$, we can further extend this to an embedding $M\to I^{n}\times V$.
  The tubular neighborhood $N_{\sigma}$ can similarly be extended to all of $M$ by using collar coordinates near the boundary.
\end{proof}

\begin{lem}\label{lem:collapse_commutes}
	There exists a commutative square in $\Mon$ of the following form:
	\begin{equation*}
		\begin{tikzcd}[column sep= huge]
			\Mfd_{e}^{\otimes} \arrow[r, "C" ] \arrow[d, "F"] & \Sp_{\bS/} \arrow[d] \\
			\Mfd_{\diamond}^{\otimes} \arrow[r, "\Th(U\ominus T)"] & \Sp^{\otimes} \,.
		\end{tikzcd}
	\end{equation*}
\end{lem}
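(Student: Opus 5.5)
The plan is to check the square first at the level of the point-set monoidal category $\PreMfd_{e}^{\otimes}$, and then to descend along the colimit over $\sfB\Vect$ that defines $\Mfd_{e}^{\otimes}$. Both composites out of $\PreMfd_{e}^{\otimes}$ land in $\Sp^{\otimes}$.

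The top composite takes an object $(M\xleftarrow{q} N\xrightarrow{e} V\times I^{k})$ to the target of the normalized scanning map. This is $\Sigma^{\infty}N_{+}\otimes(\Sigma^{\infty}V_{+})^{-1}$, where $N_{+}$ is the one-point compactification of the total space. Since $N$ is the total space of a vector bundle over $M$, this is the Thom spectrum $\Th(N)\otimes\Th(V)^{-1}$.

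The bottom composite is $\Th$ of the virtual bundle $U\ominus TM$ with $U=(\bR^{k},0)$. The embedding $(e,pq)\colon N\to V\times I^{k}$ is codimension zero, so it supplies a canonical isomorphism
\begin{align*}
	N\oplus TM\cong \underline{V}\oplus p^{*}TI^{k}\cong \underline{V}\oplus\underline{\bR}^{k} \,.
\end{align*}
This gives a stable equivalence $N\ominus V\simeq \bR^{k}\ominus TM$ in $\bVect(\CW)$, functorial for stratum inclusions: a pullback square along a face $I^{k_{0}}\to I^{k_{1}}$ restricts normal data compatibly with the splitting $TI^{k_{1}}|=TI^{k_{0}}\oplus\bR^{Q}$ used in defining $I$. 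The first step is therefore to build a monoidal natural transformation, in fact an equivalence,
\begin{align*}
	\PreMfd_{e}^{\otimes}\to\bVect(\CW)^{\boxplus},\qquad N\ominus V \Rightarrow I(\pi(-))^{-1}\ \text{i.e.}\ U\ominus T \,.
\end{align*}
This is possible because $\bVect(\CW)$ is a $(2,1)$-category and the data is explicit. It is monoidal because products of embeddings give direct sums of these isomorphisms.

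The second step applies Lemma~\ref{lem:J_hom}, which identifies $\Th\circ J_{*}$ on vector bundles with $\Sigma^{\infty}$ of Thom spaces. Under this identification, the target of $C$ restricted to $\PreMfd_{e}$ agrees with $\Th(U\ominus T)\circ\pi$, as symmetric monoidal functors into $\Sp^{\otimes}$. Here one uses that inverting $\Sigma^{\infty}V_{+}$ corresponds to subtracting $V$ in $\cS_{/\bZ\times BO}$. This yields the square for $\PreMfd_{e}^{\otimes}$.

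The final step descends the square to $\Mfd_{e}^{\otimes}=\colim F$. Both composites are $\Vect$-invariant: for $C$ this is the factorization \eqref{eq:vect_factorization}, and for $\pi$ it holds by construction. So it suffices to check that the homotopy from the first step is compatible with the $\Vect$-action, meaning that adding $W$ to both $N$ and $V$ changes the stable equivalence by the canonical cancellation of $W$. That compatibility is immediate from the formula above, so the equivalence extends over the bar construction computing the colimit in $\Mon$.

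I expect the main obstacle to be this coherence: producing the natural transformation as an honest monoidal, $\sfB\Vect$-equivariant transformation rather than objectwise. I would handle it by doing everything in the strict model $\bVect(\CW)^{\boxplus}$, where only $2$-cells need checking, and only then pushing forward to $\cS_{/\bZ\times BO}^{\otimes}$ and $\Sp^{\otimes}$.
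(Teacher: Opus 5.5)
Your skeleton is the paper's. You identify the upper composite with $\Sigma^{\infty}N_{+}\otimes(\Sigma^{\infty}V_{+})^{-1}$, hence via Lemma~\ref{lem:J_hom} with $\Th(N\ominus V)$. You then compare $N\ominus V$ with $U\ominus T=\bR^{k}\ominus TM$ using the isomorphism $TM\oplus N\cong V\oplus\bR^{k}$ supplied by the codimension-zero embedding.

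The gap is in how you make this comparison coherent. Doing it strictly in $\bVect(\CW)^{\boxplus}$, where ``only $2$-cells need checking'', does not work. A minor point first: $U\ominus T$ is not an object of $\bVect(\CW)$, which only lets you subtract vector spaces, not the bundle $TM$.

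The serious problem is that the naturality squares for stratum inclusions do not commute on the nose. On the zero section, the derivative of $(e,pq)$ is $\Phi(v,n)=(de(v)+de(n),dp(v))$. Take a morphism $X_{0}\to X_{1}$ over $I^{k_{0}}\subset I^{k_{1}}$ with new normal directions $Q$, and let $u_{j}$ be the basis vectors of $\bR^{Q}$. The functor $I\circ\pi$ uses the normal framing $\psi$ of $\pi(X_{1})$. So compatibility of your components $\Phi_{X_{0}}$ and $\Phi_{X_{1}}$ requires $\Phi_{X_{1}}(\psi(u_{j}),0)=(0,u_{j})$, i.e.\ $de_{1}(\psi(u_{j}))=0$. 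But $\psi(u_{j})$ must lie in $TM_{1}$, and $de_{1}$ is typically injective on $TM_{1}$ (whenever $e_{1}$ is an immersion along the zero section). So no choice of $\psi$ achieves this.

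Concretely, the two composites differ by the bundle automorphism $(w,t)\mapsto(w+A(t),t)$ of $V_{1}\oplus\bR^{k_{1}}$ over $M_{0}$. Here $A$ vanishes on $\bR^{k_{0}}$ and sends $u_{j}$ to $de_{1}(\psi(u_{j}))$. This $A$ varies from point to point, so it is not a $2$-cell of $\bVect(\CW)$. The only $2$-cells there are constant linear isomorphisms coming from $\sfB\Vect$.

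The defect is null-homotopic (scale $A$ to $0$), but only once bundle maps are topologized, i.e.\ after passing to $\cS_{/\bZ\times BO}$. These homotopies then have to be chosen compatibly across all corners, and with composition and products. That is the ingredient your plan is missing, and it is exactly how the paper argues. The relevant choices (normal framings compatible with $M\to I^{k}$) form a contractible space. Hence the equivalence $N\ominus V\simeq U\ominus T$ of monoidal functors $\PreMfd_{e}^{\otimes}\to\cS_{/\bZ\times BO}^{\otimes}$ can be built inductively over the nerve of $\PreMfd_{e}^{\otimes}$.

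Alternatively, you could rescue the strict approach by adding auxiliary data. Choose complements $H\subset TM$ to $\ker dp$, compatible across strata, and take $\psi$ to be the $H$-lift. Then replace $\Phi$ by $(v,n)\mapsto(de(v-v_{H})+de(n),dp(v))$; with this, naturality holds strictly. You would still have to argue that this extra data is harmless, as the paper does for collars.

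Your descent step along $\sfB\Vect$, which the paper leaves implicit, is fine once the transformation is built this way. Adding $W$ to $N$ and $V$ simply replaces $\Phi$ by $\Phi\oplus\mathrm{id}_{W}$.
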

\begin{proof}
	The composite
	\begin{align*}
		\PreMfd_{e} \xrightarrow{C} \Sp_{\bS/} \to \Sp^{\otimes}
	\end{align*}
	can be computed by projecting away from arrow $\infty$-categories in the definition of $C$, giving
	\begin{align*}
		\PreMfd_{e}^{\otimes} \to \CW_{\ast}^{\otimes}\times_{\Simp^{\op}} \Vect^{\oplus} \xrightarrow{\Sigma^{\infty} \times J^{-1}} \Sp^{\otimes} \times_{\Simp^{\op}} \Pic(\bS) \xrightarrow{\otimes} \Sp^{\otimes} \,.
	\end{align*}
	By \Cref{lem:J_hom}, the spectrum $\Sigma^{-V}\Sigma^{\infty}N_{+}$ can also be computed as the Thom spectrum of the virtual vector bundle $N\ominus V$ on $M$, so it now suffices to show that $N\ominus V$ and $U\ominus T$ agree as monoidal functors
	\begin{align*}
		\PreMfd_{e}^{\otimes} \to \cS_{/\bZ\times BO}^{\otimes} \,.
	\end{align*}
	For an object $(M\leftarrow N \xrightarrow{e} V\times I^{k})$, the embedding $e$ gives a stable isomorphism of virtual vector bundles over $M$
	\begin{align*}
		V \ominus N \simeq \bR^{k} \ominus TM \,.
	\end{align*}
	These are clearly natural with respect to isomorphisms in $\Mfd_{e}$ and monoidal products.
  They are also natural with respect to restriction to a stratum as long as we use the coherent normal framing on $M$ induced by the map $M\to I^{k}$.
  Because the space of such normal framings is contractible, we can inductively construct the required natural transformation over the nerve of $\PreMfd_{e}^{\otimes}$.
\end{proof}

\subsection{The Pontrjagin--Thom equivalence}\label{subsec:pontrjagin-thom}
We now combine our constructions of collapse and record to a Pontrjagin--Thom equivalence.
Define $\Mfd_{e}^{\mu}$ by the following pullback of double $\infty$-categories:
\begin{equation*}
	\begin{tikzcd}
		\Mfd_{e}^{\mu} \arrow[r] \arrow[d] & \left(\cS_{\bZ\times BO}^{\otimes} \right)_{\mu/}  \arrow[d] \\
		\Mfd_{e} \arrow[r, "T\ominus U"]  & \cS_{/\bZ\times BO}^{\otimes} \, .
	\end{tikzcd}
\end{equation*}
Since the right vertical is an internal right fibration, so is the left vertical.
When we pull back along $\cA\circ \iota$ we therefore have right fibrations
\begin{align*}
	(\cA\circ\iota)^{*}\Alg(\Mfd_{e}^{\mu}) \to (\cA\circ \iota)^{*}\Alg(\Mfd_{\diamond}) \to \Simp_{s} \,,
\end{align*}
and we define $\PreBord_{e}^{\mu}$ to be the unstraightening of this composite.
Now we have a pullback of semi-simplicial spaces
\begin{equation*}
	\begin{tikzcd}
		\PreBord_{e}^{\mu} \arrow[r] \arrow[d] & \PreBord_{e} \arrow[d] \\
		\PreBord^{\mu} \arrow[r] & \PreBord
	\end{tikzcd}
\end{equation*}
where the right vertical is a trivial Kan fibration by \Cref{lem:Bord_e->Bord_is_trivial}.
Therefore, the left vertical is also a trivial Kan fibration, so $\PreBord_{e}^{\mu}$ is a quasi-unital inner Kan space, and applying $\cT$ gives an equivalence of $\infty$-categories
\begin{align*}
	\Bord_{e}^{\mu} \xrightarrow{\simeq} \Bord^{\mu} \,.
\end{align*}
\begin{theorem}
	There is an equivalence
	\begin{align*}
		\Bord^{\mu}_{e} \xrightarrow{\simeq} \Omega^{\infty}\Th(\mu)
	\end{align*}
	of $\infty$-categories, natural in $\Cat_{\infty /(U/O)}$.
\end{theorem}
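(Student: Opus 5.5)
We construct the Pontrjagin--Thom functor from the collapse map $C$ of \Cref{lem:collapse_commutes} and the double categorical record map $R_{\mu}$, and then check that it is an equivalence on mapping spaces. It is automatically essentially surjective. Since $\Mfd_{e}^{\mu}$ is defined as a pullback over $(\cS_{/\bZ\times BO})_{/\mu}$ along $T\ominus U$ (equivalently $U\ominus T$ up to the sign convention), pushing forward along $\Th$ gives a double functor $\Mfd_{e}^{\mu}\to \Sp_{/\Th(\mu)}$ lifting $\Th(U\ominus T)\circ F$. The collapse $C$ gives a double functor $\Mfd_{e}\to \Sp_{\bS/}$ lifting the same functor, by \Cref{lem:collapse_commutes}. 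Combining the two via \Cref{lem:double_slice_pullback}, we obtain a double functor
\begin{align*}
PT\colon \Mfd_{e}^{\mu}\to \Sp_{\bS//\Th(\mu)} \xrightarrow{\Comp} \Map_{\Sp}(\bS,\Th(\mu)) \simeq U_{*}\Th(\mu),
\end{align*}
using \Cref{lem:double_slices} and \Cref{ex:underlying_from_slice}. Here $\Th(\mu)$ is regarded as an $\Sp$-category, $\bS$ is the invertible unit algebra, and $\Map_{\Sp}(\bS,\Th(\mu))$ is a left fibration over $\Simp^{\op}$, i.e. a Segal space.

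Next we evaluate on flow simplices. An $n$-simplex of $\PreBord_{e}^{\mu}$ is a lift of $\cA_{\leq}\circ\iota([n])$ (equivalently, by \Cref{lem:A=I}, of $\sfI_{[n]}$) to $\Mfd_{e}^{\mu}$ along $[n]$. Composing with $PT$ gives a lax functor $[n]\to \Map_{\Sp}(\bS,\Th(\mu))$, that is, an $n$-simplex of the Segal space $U_{*}\Th(\mu)$, which is the nerve of $\Omega^{\infty}\Th(\mu)$ after completion. By naturality in $[n]$, this assembles to a map of semi-simplicial spaces $\PreBord_{e}^{\mu}\to N(\Omega^{\infty}\Th(\mu))$. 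It is quasi-unital, because degenerate simplices $D(\bX,P)$ collapse to identities via \Cref{lem:D_is_htpy_eq}. Applying $\cT$ and using \Cref{prop:semi-simplicial_results}(1) yields the functor $\Bord_{e}^{\mu}\to\Omega^{\infty}\Th(\mu)$. Naturality in $\mu$ follows because every ingredient is pulled back along $\mu$ functorially.

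On objects the functor is the identity on $\ob(\mu)$, since the equivalences in $\mu$ are reached via $\Sigma$-shifts, so essential surjectivity is immediate. For full faithfulness we use \Cref{prop:semi-simplicial_results}(4): $\Map(\bOne_{c},\bOne_{d})$ is the realization of the semi-simplicial space whose $k$-simplices are flow $(k+1)$-simplices with fixed endpoints. These are $I^{k}$-parametrized families of embedded $\mu(c,d)$-structured manifolds, with faces at $x_i=0,1$ given by the prescribed restrictions. This is a cubical or simplicial model of the classical bordism space of embedded $\mu(c,d)$-manifolds. The map to $\Omega^{\infty}\Th(\mu(c,d))$ is then the classical scanning/Pontrjagin--Thom map, so the classical Pontrjagin--Thom theorem, in its parametrized space-level form (e.g. as in Galatius--Madsen--Tillmann--Weiss style arguments), gives the equivalence.

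The main obstacle is this last step. The realization must be identified with the classical space of embedded manifolds in $\bR^{\infty}$ with $\mu(c,d)$-structure, and the map with the standard scanning map into $\Omega^{\infty}\Sigma^{-V}\Th$, coherently in the simplicial direction. I would first reduce to $\pi_{*}$: by the shifts $\Sigma^{n}\bOne_{b}\simeq\bOne_{\Sigma^n b}$ it suffices to treat $\pi_0$ for all $b$. The $\pi_0$ statement is the classical bijection between $\mu(c,d)$-structured bordism classes and $\pi_0\Th(\mu(c,d))$, since bordisms in this model are exactly flow $2$-simplices against a degeneracy (as in the proof of \Cref{lem:1_is_compact}). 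Higher homotopy groups are then handled by the shifted objects $\bOne_{\Sigma^{n}b}$, using \Cref{cor:flow_is_stable}.
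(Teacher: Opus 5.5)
Your construction of $\PT^{\mu}$ matches the paper's: collapse $C$ and record $R_{\mu}$ glued through $\Sp_{\bS//\Th(\mu)}$ and $\Comp$, evaluated on simplices to get a semi-simplicial map into the nerve, then localized by $\cT$. So does your essential surjectivity argument.

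The gap is in full faithfulness. You leave the space-level scanning identification as an open obstacle. Your fallback reduces higher homotopy groups to $\pi_0$ via $\Sigma^n\bOne_b\simeq\bOne_{\Sigma^n b}$ and stability of $\Flow^{\mu}$, and this rests on a compatibility you never establish. The identification $\pi_n\Map_{\Flow^{\mu}}(\bOne_b,\bOne_c)\cong\pi_0\Map_{\Flow^{\mu}}(\bOne_{\Sigma^n b},\bOne_c)$ is produced by the stable structure of $\Flow^{\mu}$: loops are formed through $\bZero$, and $\Sigma\bOne_b\simeq\bOne_{\Sigma b}$ comes from the cone construction $c(\bB)$. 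The analogous identification of $\pi_n\Th(\mu(b,c))$ with $\pi_0\Th(\mu(\Sigma^n b,c))$ comes instead from the $\sfE(\bZ\times BO)$-module structure of $\mu$. But $\PT^{\mu}$ is only a functor of underlying $\infty$-categories defined on $\Bord^{\mu}$, which does not contain $\bZero$. It is not known to be exact or spectrally enriched, so nothing forces the square relating the two identifications to commute. Without that square, a bijection on $\pi_0$ of all mapping spaces says nothing about the higher homotopy groups.

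The paper avoids this by computing every $\pi_n$ directly in the semi-simplicial model. By \Cref{prop:semi-simplicial_results}, the mapping space is the realization of a Kan semi-simplicial space. Using the description of homotopy groups of such realizations from \cite{lurie2011ltheory}, a class in $\pi_n\Bord^{\mu}(\bOne_b,\bOne_c)$ is an $(n+1)$-simplex with $0$-th vertex $\bOne_b$, $d_0$-face the degenerate $n$-simplex at $\bOne_c$, and all other faces empty. Such a simplex is exactly a closed manifold $M$ with a structure $\bR^n\ominus TM\to\mu(b,c)$. The homotopy relation, given by $(n+2)$-simplices of the analogous shape, is exactly $\mu(b,c)$-structured bordism. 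Hence $\pi_n\Bord^{\mu}(\bOne_b,\bOne_c)\cong\Omega_n^{\mu(b,c)}$ for all $n$ simultaneously. Applied to these representatives, $\PT^{\mu}$ is by construction the classical collapse followed by record, so the classical Pontrjagin--Thom isomorphism gives the result in every degree with no need for compatibility with shifts. Your remark that bordisms are $2$-simplices against a degeneracy is precisely the $n=0$ case; extending that description to all $n$ is what closes the gap.
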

\begin{proof}
	By \Cref{lem:collapse_commutes}, we have a commutative diagram
	\begin{equation*}
		\begin{tikzcd}
			\Mfd_{e}^{\mu} \arrow[r,"C_{\mu}"] \arrow[d, "R_{\mu}"] & \Sp_{\bS/} \arrow[d]\\
			\Sp_{/\Th(\mu)} \arrow[r] & \Sp
		\end{tikzcd}
	\end{equation*}
  of double $\infty$-categories.
	By \Cref{lem:double_slices} and \Cref{lem:double_slices}, we therefore get a double functor
	\begin{align*}
		\PT^{\mu}\colon \Mfd_{e}^{\mu} \to \Sp_{\bS//\Th(\mu)} \to \Omega^{\infty} \Th(\mu) \,.
	\end{align*}
	Since $\Omega^{\infty} \Th(\mu)$ is an $\infty$-category, a lax functor into it is just a regular functor.
  Pushing an $n$-simplex $\bX \colon [n] \to \Mfd_{e}^{\mu}$ forward along $\PT^{\mu}$ therefore determines a map
	\begin{align*}
		\PT^{\mu} \colon \PreBord^{\mu}_{e} \to N_{s}(\Omega^{\infty}\Th(\mu))
	\end{align*}
	to the semi-simplicial nerve.
  Quasi-units in $\Bord^{\mu}$ are given by the bimodules $\bOne_{b} \to \bOne_{b}$ corresponding to the morphism $id_{b}\colon \ast \to \mu(b,b)$, and when we apply the collapse and record construction to this, we obtain precisely the unit in $\Omega^{\infty}\Th(\mu)$. We can therefore apply $\cT$ to get a functor $\PT^{\mu}\colon \Bord^{\mu} \to \Omega^{\infty}\Th(\mu)$.
  This is essentially surjective because any object $b$ of $\mu$ is the image of the 0-simplex $\bOne_{b}$ in $\Bord^{\mu}$.

	By \Cref{prop:semi-simplicial_results}, the mapping spaces in $\Bord^{\mu}_{e}$ can be computed as the realization of right mapping spaces in $\PreBord_{e}^{\mu}$.
  We use the presentation of homotopy groups of the realization of a Kan semi-simplicial space given in \cite{lurie2011ltheory}.
  An element in the homotopy group $\pi_{n} \Bord^{\mu}(\bOne_{b}, \bOne_{c})$ can then be represented by an $(n+1)$-simplex whose $d_{0}$-face is the degenerate $n$-simplex at $\bOne_{c}$, whose 0-th vertex is $\bOne_{b}$ and where all other faces are empty.
  This data corresponds precisely to a closed manifold $M$ structured by a map $\bR^{n}\ominus TM \to \mu(b,c)$.
  We quotient this out by the homotopy relation, where a homotopy between $M$ and $N$ is given by an $(n+2)$-simplex whose $d_0$ face is the degenerate $(n+1)$-simplex at $\bOne_{c}$, whose 0-th vertex is $\bOne_{b}$, whose $d_{1}$-face is $M$, whose $d_{2}$-face is $N$ and where all other faces are empty.
  This data corresponds precisely to a $\mu(b,c)$-structured bordism between $M$ and $N$.
  In other words, we have equivalences
	\begin{align*}
		\pi_{n} \Bord^{\mu}(\bOne_{b}, \bOne_{c}) \simeq \Omega_{n}^{\mu(b,c)} \,.
	\end{align*}
	By applying the map $\PT^{\mu}$ on such representatives we obtain the classical Pontrjagin--Thom collapse
	\begin{align*}
		\Omega_{n}^{\mu(b,c)} \xrightarrow{\simeq} \pi_{n} \Omega^{\infty} \Th(\mu)(b,c) \,.
	\end{align*}
	This shows that $\PT^{\mu}$ is also fully faithful, and therefore an equivalence.
\end{proof}

\subsection{Extending the equivalence}\label{subsec:rigidity}

In this subsection, we apply some general theory of $\infty$-categories to extend the equivalence $\Bord^{\mu} \simeq \Omega^{\infty}\Th(\mu)$ to an equivalence $\TwSp^{\mu} \simeq \Flow^{\mu}$.

\begin{lem}\label{lem:zero_is_unique}
	Let $\cC_0 \subset \cC$ and $\calD_{0} \subset \calD$ be full subcategories of pointed $\infty$-categories~$\cC$ and~$\calD$, such that the complement is spanned by the zero object.
	Then any equivalence $\cC_{0} \simeq \calD_{0}$ extends uniquely to an equivalence $\cC \simeq \calD$.
\end{lem}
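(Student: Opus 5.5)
The plan is to build the extension by hand from the universal property of zero objects and then check that it is fully faithful and essentially surjective. Write $0_{\cC}$ and $0_{\calD}$ for the zero objects. The hypothesis says that every object of $\cC$ is either in $\cC_0$ or equivalent to $0_{\cC}$, and similarly for $\calD$.

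First I reduce to the case where the zero object is not in $\cC_0$. If some object of $\cC_0$ is a zero object, then $\cC_0 = \cC$ up to equivalence. An equivalence $F_0\colon \cC_0\simeq\calD_0$ preserves initial and terminal objects, so $F_0(0_{\cC})$ is a zero object of $\calD_0$. Hence $\calD_0 = \calD$ as well, and the statement is trivial. The same argument applied to $F_0^{-1}$ shows that $\cC_0$ contains a zero object exactly when $\calD_0$ does. So I may assume $\cC_0$ and $\calD_0$ contain no zero objects. Then $\cC$ is the full subcategory on $\ob(\cC_0)\amalg\{0_{\cC}\}$, and likewise for $\calD$.

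Next I describe $\cC$ as a gluing. Since $0_{\cC}$ is both initial and terminal, $\Map_{\cC}(c,0)$ and $\Map_{\cC}(0,c)$ are contractible for every $c$, and $\Map_{\cC}(0,0)\simeq\ast$. Composition through $0$ is forced. I would express this by showing that $\cC$ is obtained from $\cC_0$ by freely adjoining a zero object. Concretely, I would identify $\cC$ with the full subcategory of $\Psh(\cC_0)$-like data via the restricted Yoneda functor $\cC\to\Fun(\cC_0^{\op},\cS_*)$. More cleanly, a pointed $\infty$-category is the same as a category enriched in $\cS_*$, and $\cC\subset\Fun(\cC_0^{\op},\cS_*)$ via $c\mapsto\Map_{\cC}(-,c)$ pointed at the zero map. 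This functor is fully faithful on $\cC_0$ by Yoneda. It sends $0_{\cC}$ to the zero presheaf, and the mapping spaces to and from the zero presheaf are contractible, matching $\cC$. Hence $\cC$ is equivalent to the full subcategory of $\Fun(\cC_0^{\op},\cS_*)$ on the representables together with the zero presheaf.

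The step I expect to be the main obstacle is making this description correct. The pointing of $\Map_{\cC}(c,d)$ by the zero morphism is canonical in a pointed category, so the functor into $\cS_*$-valued presheaves exists. Fully faithfulness on $\cC_0$ then holds because forgetting the basepoint is conservative on mapping spaces.

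Given this description, $F_0$ induces an equivalence $\Fun(\cC_0^{\op},\cS_*)\simeq\Fun(\calD_0^{\op},\cS_*)$ by precomposition with $F_0^{-1}$. This equivalence carries representables to representables and the zero presheaf to the zero presheaf, so it restricts to $\cC\simeq\calD$ extending $F_0$.

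For uniqueness, any extension $F$ is an equivalence, hence preserves zero objects. Its composite with the Yoneda-type embedding is then determined by $F_0$, since $\Map_{\calD}(F_0(-),F(c))\simeq\Map_{\cC}(-,c)$. Equivalently, the space of extensions is the space of choices of $F(0_{\cC})$ among zero objects of $\calD$, together with contractible data on maps, and this space is contractible because zero objects form a contractible space.
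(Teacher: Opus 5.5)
\textbf{There is a genuine gap, at exactly the step you flagged as the main obstacle.}

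For $c,d\in\cC_0$, the mapping space in $\Fun(\cC_0^{\op},\cS_*)\simeq\Fun(\cC_0^{\op},\cS)_{\ast/}$ between your pointed restricted representables $h(c)$ and $h(d)$ is the fiber, over the zero cone, of
\[
\Map_{\cC}(c,d)\simeq \Map_{\Fun(\cC_0^{\op},\cS)}(h(c),h(d)) \longrightarrow \Map_{\Fun(\cC_0^{\op},\cS)}(\ast,h(d))\simeq \lim_{e\in\cC_0^{\op}}\Map_{\cC}(e,d)\,.
\]
The Yoneda lemma only identifies the source of this map. Conservativity of forgetting basepoints says nothing about the fiber. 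Also, $\Fun(\cC_0^{\op},\cS_*)$ is not the $\cS_*$-enriched presheaf category, so no enriched Yoneda lemma applies either. Full faithfulness on $\cC_0$ therefore requires $\lim_{e\in\cC_0^{\op}}\Map_{\cC}(e,d)\simeq\ast$.

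You use the same contractibility silently in two other places:
\begin{itemize}
\item When you assert that precomposition with $F_0$ carries pointed representables to pointed representables. The basepoints are the zero maps of $\cC$ resp.\ $\calD$, which are defined by factoring through objects outside $\cC_0$ resp.\ $\calD_0$. A priori nothing forces $F_0$ to match them coherently; that zero maps are intrinsic to $\cC_0$ is really the content of the lemma.
\item In your uniqueness paragraph, which presupposes that the embedding of $\calD$ is fully faithful.
\end{itemize}

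\textbf{This input is not formal; it is exactly the key step of the paper's proof.} The paper left Kan extends $\cC_0\simeq\calD_0\subset\calD$ along $\cC_0\subset\cC$, and therefore needs $0_{\calD}\simeq\colim(\calD_0\hookrightarrow\calD)$. It proves this by showing $\calD_0\hookrightarrow\calD$ is cofinal:
\begin{itemize}
\item For $D\in\calD_0$, the slice $(\calD_0)_{D/}$ has an initial object.
\item The slice $(\calD_0)_{0/}\simeq\calD_0$ is weakly contractible (for $\calD_0\neq\emptyset$). Every small diagram $\cK\to\calD_0$ has a cone in $\calD_0$: take the cone with vertex $0$ and pull it back along any map $C\to 0$ with $C\in\calD_0$.
\end{itemize}
Applied to $\cC_0\subset\cC$, cofinality gives
\[
\lim_{e\in\cC_0^{\op}}\Map_{\cC}(e,d)\simeq\lim_{e\in\cC^{\op}}\Map_{\cC}(e,d)\simeq\Map_{\cC}(0,d)\simeq\ast\,.
\]
Once you add this, your presheaf argument goes through. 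The pointing then becomes redundant: you can work directly in $\Fun(\cC_0^{\op},\cS)$, where the zero object goes to the terminal presheaf.
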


\begin{proof}
	We will show that given an equivalence $F\colon \cC_{0} \to \calD_{0}$, the composite $\cC_{0} \to \calD$ left Kan extends along $\cC_{0} \to \cC$ to an equivalence.
	To show that such a Kan extension exists and provides the desired equivalence,
	it suffices to show that the zero object of~$\calD$ is a colimit of the composite
	\begin{align*}
		(\cC_{0})_{/\ast} \to \cC_{0} \xrightarrow{F} \calD_{0} \to \calD \,.
	\end{align*}
	Because $\ast$ is a zero object, the first morphism is an equivalence, and $F$ is an equivalence by assumption.
	If $\calD_{0} \simeq \emptyset$, the result is trivial, so we can exclude this case from now on.
	We will show that $\calD_{0}\to \calD$ is cofinal. For $D\in \calD_{0}$, the slice $(\calD_{0})_{D/}$ has an initial object and is therefore weakly contractible.
	For $D=\ast$, the slice projection is an equivalence, so we are reduced to showing that $\calD_{0}$ is weakly contractible.
	To that end, consider any small diagram
	\begin{align*}
		k\colon \cK\to \calD_{0} \,.
	\end{align*}
	We can extend to a cone $k'$ as follows
	\begin{equation*}
		\begin{tikzcd}
			\cK \arrow[r, "k"] \arrow[d] & (\calD_{0})_{/D} \arrow[d] \\
			\cK^{\triangleleft} \arrow[r, "k'"] & \calD_{/D}
		\end{tikzcd}
	\end{equation*}
	with $k'(-\infty) = \ast$.
	Now there exists an object $C\in \calD_{0}$, which has a zero morphism $C\to \ast$.
	Composing $k'$ along this morphism, we get a new cone $k''$ extending $k$ and with $k''(-\infty) ) =C$.
	This cone now lands in the full subcategory~$\calD_{0}$.
  This shows that~$\calD_{0}$ is weakly contractible, so by cofinality, we have
	\begin{align*}
		\colim(\calD_{0}\to \calD ) = \colim(\calD \xrightarrow{id} \calD) \,.
	\end{align*}
	By the dual of \cite[Proposition 2.1.1]{nguyen2020adjoint} the latter colimit is the terminal object.
\end{proof}

\begin{rem}\label{rem:zero_is_unique}
	The previous result says that if an $\infty$-category admits zero morphisms, these are uniquely determined by their behavior under composition.
	This is easy to show in classical algebra, where a zero object $z$ of a discrete monoid $M$ satisfying $z\cdot m = z = m\cdot z$ is unique, because if $z'$ is any other such object,
	\begin{align*}
		z' = z\cdot z' = z \,.
	\end{align*}
	A similar argument works for ordinary categories with zero morphisms, which can be interpreted as a result about categories enriched in the monoidal category of pointed sets with smash product.
	For comparison, \Cref{lem:zero_is_unique} can be seen as a statement about categories enriched in $\cS_{*}$.
	An $\cS_{*}$-category $\cC_{0}$ embeds fully faithfully into an $\infty$-category of enriched presheaves
	\begin{align*}
		\yo \colon \cC_{0} \hookrightarrow \cP_{\cS_{*}}(\cC_{0}) \,.
	\end{align*}
	The latter $\infty$-category is a module over $\cS_{*}$ in $\PrL$, which by \cite{HA} means that it has a zero object.
  Then we can consider the full subcategory $U\cC$ of the underlying $\infty$-category $U\cP_{\cS_{*}}(\cC_{0})$ spanned by the image of Yoneda and the zero object.

	Given two $\cS_{*}$-categories $\cC_{0}$ and $\calD_{0}$ and an equivalence of underlying categories $U\cC_{0} \simeq U\calD_{0}$, \Cref{lem:zero_is_unique} implies that we can extend to an equivalence $U\cC \simeq U\calD$, from which one can recover an equivalence $\cP_{\cS_{*}}(\cC_{0}) \simeq \cP_{\cS_{*}}(\calD_{0})$ and therefore an equivalence of enriched categories $\cC_{0} \simeq \calD_{0}$.
\end{rem}

\begin{definition}\label{def:Psh_sigma}
	Let $\cC_{0} \subset \cC$ be a small full subcategory of a stable $\infty$-category $\cC$, such that $\cC_{0}$ contains the zero object, and is closed under suspension. We write
	\begin{align*}
		\Psh^{\Sigma}(\cC_{0}) \subset \Psh(\cC_{0})
	\end{align*}
	for the full subcategory of presheaves $F$ such that
	\begin{enumerate}
		\item $F(0) \simeq \ast$.
		\item For any object $X\in \cC_{0}$, the presheaf $X$ maps any pushout square of the form
		\begin{equation}
			\begin{tikzcd}
				X \arrow[r] \arrow[d] & 0 \arrow[d] \\
				0 \arrow[r] & \Sigma X
			\end{tikzcd}
		\end{equation}
		to a pullback in $\cS$.
	\end{enumerate}
\end{definition}

\begin{lem}\label{lem:Psh_sigma_is_stable}
	If $\cC$ is a stable presentable $\infty$-category and $\cC_{0}$ a small full subcategory containing zero and closed under shifts, then $\Psh^{\Sigma}(\cC_{0})$ is also stable and presentable.
\end{lem}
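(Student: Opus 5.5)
The plan is to show that $\Psh^{\Sigma}(\cC_{0})$ is an accessible localization of $\Psh(\cC_{0})$, which gives presentability, and then to identify it with a category of spectrum objects, which gives stability.

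\textbf{Presentability.} The conditions in \Cref{def:Psh_sigma} are locality conditions with respect to a small set of maps in $\Psh(\cC_{0})$.
\begin{enumerate}
	\item The condition $F(0)\simeq \ast$ says that $F$ is local with respect to $\emptyset \to \yo(0)$.
	\item The condition that the square for $X$ goes to a pullback says that $F$ is local with respect to the comparison map
	\[
		\yo(X)\amalg_{\yo(0)}\yo(0) \to \yo(\Sigma X)\,,
	\]
	computed in $\Psh(\cC_{0})$ from the square in $\cC_{0}$.
\end{enumerate}
Since $\cC_{0}$ is small, these form a small set $S$ of morphisms. So $\Psh^{\Sigma}(\cC_{0})$ is the full subcategory of $S$-local objects, hence an accessible localization of a presheaf category and therefore presentable by \cite{HTT}.

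\textbf{Stability.} By the presentability step, $\Psh^{\Sigma}(\cC_{0})$ has all limits and colimits. Its terminal object is the constant presheaf $\ast$, which lies in $\Psh^{\Sigma}(\cC_{0})$. To show it is also initial, I would use that $0$ is a zero object of $\cC_{0}$. For $F$ in $\Psh^{\Sigma}(\cC_{0})$, every $X$ has a map $X\to 0$, which gives a natural map $\ast \simeq F(0)\to F(X)$. This makes $F$ canonically pointed, so that $\Psh^{\Sigma}(\cC_{0})\simeq \Psh^{\Sigma}_{*}(\cC_{0})$ and the constant presheaf $\ast$ is a zero object. Making this identification precise is a small check: the map $\ast \to F$ is unique because the space of natural transformations $\ast\to F$ is $\lim F \simeq F(0)\simeq \ast$, since $0$ is terminal in $\cC_{0}^{\op}$.

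For the suspension/loop equivalence I would argue as follows. Because $\cC_{0}$ is stable-closed under $\Sigma$, and hence under $\Omega=\Sigma^{-1}$ when $\cC_{0}$ is closed under shifts, condition (2) gives $F(X)\simeq \Omega F(\Sigma X)$, where the loops are taken at the canonical basepoint. Define $\Omega$ on $\Psh^{\Sigma}(\cC_{0})$ pointwise, and define $\Sigma'$ by $(\Sigma' F)(X)=F(\Sigma^{-1}X)$, which is precomposition with the automorphism $\Sigma^{-1}$ of $\cC_{0}$. Limits in a localization are computed pointwise, so the categorical loop functor is pointwise $\Omega$. Condition (2) then identifies $(\Omega F)(X)\simeq F(\Sigma^{-1}X)$, so categorical $\Omega$ agrees with precomposition with $\Sigma^{-1}$. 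That functor is an autoequivalence of $\Psh^{\Sigma}(\cC_{0})$, with inverse precomposition with $\Sigma$. Since a pointed presentable $\infty$-category with finite limits whose loop functor is an equivalence is stable \cite[Corollary 1.4.2.27]{HA}, this gives stability.

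\textbf{Main obstacle.} I expect the hardest step to be the naturality of the identification $(\Omega F)(X)\simeq F(\Sigma^{-1}X)$ in both $F$ and $X$, given that the pushout squares in condition (2) are not canonical. I would handle this by choosing the squares functorially. Stability of $\cC$ gives a functor $\cC\to \Fun(\square,\cC)$ sending $X$ to its suspension square, and this restricts to $\cC_{0}$ by the closure assumptions. Precomposing presheaves with this functor makes the comparison natural.
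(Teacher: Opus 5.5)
Your route is the paper's: exhibit $\Psh^{\Sigma}(\cC_{0})$ as a localization of $\Psh(\cC_{0})$ at a small set of maps, show it is pointed at the constant presheaf $\ast$, identify the loop functor with precomposition by a shift of $\cC_{0}$, and conclude with \cite[Corollary 1.4.2.27]{HA}. Your functorial choice of suspension squares, which makes $F\circ\Sigma\simeq\Omega F$ natural, is a reasonable detail that the paper leaves implicit. However, two steps are written incorrectly and must be fixed.

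\textbf{The localizing map.} The map for condition (2) should be $y(0)\amalg_{y(X)} y(0)\to y(\Sigma X)$. This is because
$\Map(y(0)\amalg_{y(X)}y(0),F)\simeq F(0)\times_{F(X)}F(0)$.
The object you wrote, $y(X)\amalg_{y(0)}y(0)$, is just $y(X)$. Locality with respect to the resulting (null) map $y(X)\to y(\Sigma X)$ is the wrong condition, and even representable presheaves generally fail it.

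\textbf{The variance.} You have the variance backwards. Since $F$ is contravariant, the image of the suspension square is a pullback with $F(\Sigma X)$ in the corner. This gives $F(\Sigma X)\simeq F(0)\times_{F(X)}F(0)\simeq\Omega F(X)$, not $F(X)\simeq\Omega F(\Sigma X)$; you can check this on $F=\Map_{\cC}(-,Y)$. So the categorical loop functor on $\Psh^{\Sigma}(\cC_{0})$ is precomposition with $\Sigma$, as the paper states, and suspension is precomposition with $\Sigma^{-1}$. Your formula $(\Omega F)(X)\simeq F(\Sigma^{-1}X)$ is false.

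The conclusion survives the correction. $\Sigma$ is an autoequivalence of $\cC_{0}$ that fixes $0$ and preserves suspension squares, so precomposition with it preserves $\Psh^{\Sigma}(\cC_{0})$ and is an equivalence.

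\textbf{A smaller slip.} In the pointedness step, $\lim_{\cC_{0}^{\op}}F\simeq F(0)$ holds because $0$ is \emph{initial} in $\cC_{0}^{\op}$, not because it is terminal there. Since $0$ is a zero object it is both, so the argument still goes through.
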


\begin{proof}
	$\Psh^{\Sigma}(\cC_{0})$ is a left-localization of the presentable $\infty$-category $\Psh(\cC_{0})$ at the collection of morphisms:
	\begin{enumerate}
		\item $\emptyset \to y(0)$.
		\item $y(0) \underset{y(X)}{\coprod} y(0) \to y(\Sigma X)$ for all $X$.
	\end{enumerate}
	This takes care of presentability.
	Furthermore,  $\Psh^{\Sigma}(\cC_{0})$ is pointed at the constant functor with value $\ast$, and the loop functor
	\begin{align*}
		\Omega \colon \Psh^{\Sigma}(\cC_{0}) \to \Psh^{\Sigma}(\cC_{0})
	\end{align*}
	is equivalent to precomposition by the equivalence $\Sigma\colon \cC_{0} \to \cC_{0}$.
	Therefore $\Psh^{\Sigma}(\cC_{0})$ is stable by \cite{HA}.
\end{proof}

Let $\cC_{0}\subset \cC$ be as in \Cref{lem:Psh_sigma_is_stable}.
Any representable presheaf on $\cC_{0}$ takes colimits to limits, and so in particular satisfies the conditions of \Cref{def:Psh_sigma}.
The Yoneda embedding therefore factors through a fully faithful functor
\begin{align*}
	y'\colon \cC_{0} \to \Psh^{\Sigma}(\cC_{0}) \,.
\end{align*}
Because $\cC$ is cocomplete, the inclusion $i\colon \cC_{0} \to \cC$ extends to a unique colimit preserving functor $\Psh(\cC_{0}) \to \cC$.
The right adjoint of this is given by the Yoneda embedding for $\cC$, followed by restriction to $\cC_{0}$.
Any presheaf on $\cC_{0}$ of the form $\Map_{\cC}(i(-), X)$ for $X\in \cC$ satisfies the conditions of \Cref{def:Psh_sigma}, so we get a restricted adjunction
\begin{align}\label{eq:Psh_sigma_adjunction}
	L : \Psh^{\Sigma}(\cC_{0}) \rightleftarrows \cC: R \,.
\end{align}
\begin{prop}\label{prop:generation_rigidity}
	Let $\cC_{0} \subset \cC$ be a small full subcategory of a stable presentable $\infty$-category such that
	\begin{enumerate}
		\item $\cC_{0}$ contains zero and is closed under shifts.
		\item The objects of $\cC_{0}$ are compact in $\cC$.
		\item The objects of $\cC_{0}$ generate $\cC$.
	\end{enumerate}
	Then the adjoint functors \eqref{eq:Psh_sigma_adjunction} are inverse equivalences.
\end{prop}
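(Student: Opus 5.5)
We show that $L$ is fully faithful on a set of compact generators and that $R$ is conservative; the equivalence then follows by the standard argument.

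First, $R$ preserves colimits. By \Cref{lem:Psh_sigma_is_stable}, $\Psh^{\Sigma}(\cC_{0})$ is stable and presentable, and $R$ is exact because it is a right adjoint between stable $\infty$-categories. It remains to check filtered colimits. For $X\in \cC_{0}$, evaluation $\Psh^{\Sigma}(\cC_{0})\to \cS$ at $X$ is represented by $y'(X)$. Filtered colimits in $\Psh^{\Sigma}(\cC_{0})$ are computed pointwise, since the conditions of \Cref{def:Psh_sigma} are finite limit conditions and hence stable under filtered colimits in $\cS$. Since each $X$ is compact in $\cC$ by (2), $\Map_{\cC}(X,-)$ commutes with filtered colimits. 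Therefore $R$ commutes with filtered colimits, and hence with all colimits.

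Second, $R$ is conservative. If $R(f)$ is an equivalence, then $\Map_{\cC}(X,f)$ is an equivalence for all $X\in\cC_{0}$. Since $\cC_{0}$ is closed under shifts, this says that $\map(X,\mathrm{cofib}(f))\simeq 0$ for every generator $X$. By (3), $\mathrm{cofib}(f)\simeq 0$, so $f$ is an equivalence.

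Third, we compute the unit $F\to RLF$. On representables, $L y'(X)\simeq X$, because $L$ extends the inclusion $i$. Hence $R L y'(X)=\Map_{\cC}(i(-),X)\simeq y'(X)$ by full faithfulness of $\cC_{0}\subset\cC$, and the unit is an equivalence on the objects $y'(X)$. Both $\mathrm{id}$ and $RL$ preserve colimits. Let $\cE$ be the full subcategory of $\Psh^{\Sigma}(\cC_{0})$ on which the unit is an equivalence; it is closed under colimits and contains all $y'(X)$.

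The main obstacle is showing that the $y'(X)$ generate $\Psh^{\Sigma}(\cC_{0})$ under colimits. Every presheaf is a colimit of representables in $\Psh(\cC_{0})$, and localization $\Psh(\cC_{0})\to\Psh^{\Sigma}(\cC_{0})$ preserves colimits and fixes the representables, since these already lie in $\Psh^{\Sigma}(\cC_{0})$. So every $F=\mathrm{Loc}(F)$ is a colimit in $\Psh^{\Sigma}(\cC_{0})$ of objects $y'(X)$. Hence $\cE$ is everything and $\mathrm{id}\simeq RL$. Combined with conservativity of $R$, the triangle identity $R(\epsilon)\circ\eta_{R}=\mathrm{id}$ shows that $R(\epsilon)$ is an equivalence, hence $\epsilon$ is an equivalence. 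Therefore $L$ and $R$ are inverse equivalences.
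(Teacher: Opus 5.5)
Your proof is correct and follows essentially the same route as the paper: conservativity and colimit-preservation of $R$ (exactness plus compactness), the unit being an equivalence on the representables $y'(X)$, extension of this to all of $\Psh^{\Sigma}(\cC_{0})$ by colimits, and then conservativity to conclude. Your version is in fact more careful at two points the paper leaves implicit. The paper's ``using generation again'' really requires that the $y'(X)$ generate $\Psh^{\Sigma}(\cC_{0})$ under colimits, which you derive from the reflective localization. You also spell out the final triangle-identity argument for the counit.
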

\begin{proof}
	The right adjoint $R$ is conservative because $\cC_{0}$ generates $\cC$.
  Because the objects of $\cC_{0}$ are compact, the right adjoint $R$ preserves filtered colimits, and therefore all colimits by exactness.
  Because the restriction of $L$ to the full subcategory $\cC_{0}$ is fully faithful, we have equivalences
	\begin{equation*}
		\begin{tikzcd}
			\Map_{\cC}(-, C) \arrow[r, "L"] \arrow[dr, "(\eta_{C})_{*}", "\simeq"']      &   \Map_{\cC}(L(-), L(C))  \arrow[d, "\simeq"] \\
			&   \Map_{\cC}(-, RL(C))
		\end{tikzcd}
	\end{equation*}
	of presheaves on $\cC_{0}$.
  Since $\cC_{0}$ generates $\cC$ under colimits, this implies that the unit $\eta_{C}$ is an equivalence at all $C\in \cC_{0}$.
  Since the right adjoint $R$ also preserves colimits and using generation again, this implies that the unit is an equivalence everywhere, so $L$ is fully faithful.
\end{proof}

\begin{cor}\label{cor:Flow=TwShv}
	The composite
	\begin{align*}
		\Bord^{\mu} \xrightarrow{\simeq} \Omega^{\infty}\Th(\mu) \hookrightarrow \TwShv^{\mu}
	\end{align*}
	left Kan extends to an equivalence $\Flow^{\mu} \to \TwShv^{\mu}$.
\end{cor}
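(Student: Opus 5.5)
The plan is to apply \Cref{prop:generation_rigidity} on both sides and match the resulting $\Psh^{\Sigma}$-categories through \Cref{lem:zero_is_unique}. Let $\cC_{0}\subset \Flow^{\mu}$ be the full subcategory spanned by $\Bord^{\mu}$ together with the zero object $\bZero$. Let $\calD_{0}\subset \TwShv^{\mu}$ be the full subcategory spanned by the image of $\Omega^{\infty}\Th(\mu)$ under the Yoneda embedding of \Cref{thm:enriched_presheaves}, under the identification $\TwShv^{\mu}\simeq \cP_{\Sp}(\Th(\mu))$ of \Cref{cor:TwSh_is_enriched_presheaves}, together with the zero object.

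First we check the hypotheses of \Cref{prop:generation_rigidity}.

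For $\cC_{0}$:
\begin{enumerate}
\item It is closed under shifts, since $\Sigma^{\pm1}\bOne_{b}\simeq \bOne_{\Sigma^{\pm 1}b}$; this follows from \Cref{cor:flow_is_stable} and the description of suspension as the action $\Sigma^{\bR}_{P}$.
\item Its objects are compact by \Cref{lem:1_is_compact}.
\item Its objects generate $\Flow^{\mu}$ by \Cref{cor:flow_is_cg_pres}.
\end{enumerate}

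For $\calD_{0}$, the same three properties come from \Cref{thm:enriched_presheaves}(3). Closure under shifts needs one extra remark: $\mu$ is an $\sfE(\bZ\times BO)$-module, so $\Th(\mu)(-,\Sigma^{V}c)\simeq \Th(\mu)(-,c)\otimes \bS^{V}$ up to sign conventions. The enriched Yoneda embedding preserves cotensors, as in the remark after \Cref{lem:presheaves_on_th(pic(s))}, so $\Sigma\yo(c)\simeq \yo(\Sigma^{\bR}c)$.

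\Cref{prop:generation_rigidity} then gives equivalences $\Flow^{\mu}\simeq \Psh^{\Sigma}(\cC_{0})$ and $\TwShv^{\mu}\simeq \Psh^{\Sigma}(\calD_{0})$. We also have $\Bord^{\mu}\simeq \Omega^{\infty}\Th(\mu)$, obtained by composing the Pontrjagin--Thom theorem of the previous subsection with $\Bord^{\mu}_{e}\simeq \Bord^{\mu}$. Both ambient categories are pointed, so \Cref{lem:zero_is_unique} extends this to an equivalence $\cC_{0}\simeq \calD_{0}$, provided we also know that $\Omega^{\infty}\Th(\mu)\to \calD_{0}$ is fully faithful. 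That follows from \Cref{thm:enriched_presheaves}(1),(2): mapping spaces in $\TwShv^{\mu}$ between Yoneda images are $\Omega^{\infty}$ of the morphism spectra of $\Th(\mu)$.

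Next we show that the equivalence $\cC_{0}\simeq\calD_{0}$ carries the defining conditions of $\Psh^{\Sigma}$ into each other. Condition (1) is automatic. For condition (2) we need the equivalence to match the suspension squares, meaning it must commute with $\Sigma$ on objects and on the distinguished null squares. The null squares are unique by \Cref{lem:zero_is_unique}, and $\Sigma X$ is characterized as the pushout, which a fully faithful functor preserving zero detects via the pullback of mapping spaces. So it suffices that the equivalence sends $\bOne_{\Sigma b}$ to $\yo(\Sigma b)$, which holds on objects by construction of $\PT^{\mu}$. Restriction along the equivalence therefore gives $\Psh^{\Sigma}(\cC_{0})\simeq \Psh^{\Sigma}(\calD_{0})$. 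Composing, we obtain $\Flow^{\mu}\simeq\TwShv^{\mu}$.

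Finally we identify this equivalence with the left Kan extension. Under $L$ the equivalence restricts on $\Bord^{\mu}$ to the given composite. It preserves colimits and $\Flow^{\mu}\simeq \Psh^{\Sigma}(\cC_{0})$ is freely generated in the appropriate sense, so it is the left Kan extension. Naturality in $\Cat_{\infty/(U/O)}$ follows because every ingredient is natural: $\Bord^{(-)}\to\Flow^{(-)}$, $\PT^{\mu}$, and $\cP_{\Sp}\circ\Th$.

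The main obstacle is the shift-compatibility step. The Pontrjagin--Thom equivalence is built from the collapse and record maps, and we must check that it intertwines the $\sfE\Vect$-action $\Sigma^{\pm}$ on flow categories with tensoring by spheres on Thom spectra. I would first try to handle this by observing that \Cref{lem:collapse_commutes} is monoidal and that the action on $\Mfd^{\mu}_{e}$ is restricted along \eqref{eq:EVect->EZxBO}, so $C_{\mu}$ and $R_{\mu}$ are equivariant by construction.
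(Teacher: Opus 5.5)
This is essentially the paper's argument: extend $\PT^{\mu}$ over the zero object using \Cref{lem:zero_is_unique}, then apply \Cref{prop:generation_rigidity} to $\Bord^{\mu}_{+}\subset\Flow^{\mu}$ and to $\Omega^{\infty}\Th(\mu)_{+}\subset\TwShv^{\mu}$, and identify the two $\Psh^{\Sigma}$-categories along the resulting equivalence of small categories. The shift-compatibility you single out as the main obstacle is in fact automatic and needs no equivariance of the collapse and record maps: whether a square $X\to 0\to Y$ in $\cC_{0}$ is a pullback (equivalently, a pushout) in the ambient stable category is detected by the spaces $\Map(C,-)$ for $C\in\cC_{0}$, since $\cC_{0}$ is closed under shifts and generates, so any equivalence $\cC_{0}\simeq\calD_{0}$ automatically matches the squares defining $\Psh^{\Sigma}$.
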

\begin{proof}
	By \Cref{lem:zero_is_unique}, we can left Kan extend $\PT^{\mu}$ to an equivalence $\Bord^{\mu}_{+} \to \Omega^{\infty}\Th(\mu)_{+}$.
	The subcategories
	$\Bord^{\mu}_{+} \subset \Flow^{\mu}$ and $\Omega^{\infty}\Th(\mu)_{+}\subset \TwShv^{\mu}$ satisfy the conditions of \Cref{prop:generation_rigidity}, so we have equivalences
	\begin{align*}
		\Flow^{\mu} \simeq  \Psh^{\Sigma}( \Bord^{\mu}_{+} ) \simeq \Psh^{\Sigma}(\Omega^{\infty}\Th(\mu)_{+})\simeq \TwSp^{\mu} \,.
	\end{align*}
\end{proof}

\section*{Appendix A. Structures for Floer flow categories} \label{sec:appendix}
	Let $(X^{n},\omega)$ be a Liouville domain and $L,K\subset X$ a pair of Lagrangians which are either compact or cylindrical at infinity.
  For appropriate perturbations, \cite{large2021spectral} constructs an unstructured flow category $\bF_{LK}$ whose objects are Hamiltonian chords, and whose morphism spaces are compactified spaces of Floer strips with boundary on $L$ and $K$.
  Following \cite{bonciocat2025floer,porcelli2024spectral}, we sketch how the flow category $\bF_{LK}$ can be equipped with a natural structure arising from the index theory of the Floer equation.

	\begin{definition}
			A \emph{bundle pair} $(E,F)$ on a manifold with boundary $X$ is a complex vector bundle $E\to X$ with a totally real subbundle $F\subset E\vert_{\del X}$. A \emph{Cauchy--Riemann (CR) datum} on the strip consists of:
		\begin{enumerate}
			\item A length $T >0 $.
			\item A bundle pair $(E,F)$ on $[0,1]\times \bR$ and bundle pairs $(E_{\pm \infty}, F_{\pm \infty})$ on $[0,1]$, with equivalences
			\begin{align*}
				(E,F)\vert_{(-\infty, -T)} &\simeq (E_{-\infty}, F_{-\infty}) \times (-\infty, T) \\
				(E,F)\vert_{(T,\infty)} &\simeq (E_{\infty},F_{\infty})\times (T,\infty) \,.
			\end{align*}
			\item A $1$-form $Y \in \Omega^{(0,1)}(E)$ supported on $[-T,T] \subset \bR$.
		\end{enumerate}
	\end{definition}
	To a CR datum on the strip, we can associate a linear CR operator
	\begin{align}\label{eq:cr-operator}
		\bar{\del}_{J} + Y \colon \Gamma(E,F) \to \Omega^{(0,1)}(E) \,.
	\end{align}
	We fix a constant $\kappa$, and consider the extension of $\bar{\del}_{J} + Y$ to $W^{2,\kappa}$-Sobolev regular sections.
  This extension is a Fredholm operator.
	\begin{definition}
		A \emph{perturbation datum} for a CR datum $(T,E,F,Y)$ on a strip is a vector space $V$ and a linear map $f\colon V \to \Omega_{W^{2,\kappa}}^{(0,1)}(E)$ such that $f(v)$ is supported on $[-T,T]$ and such that the Fredholm operator $\overline{\del}_{J}+Y+f$ is surjective.
	\end{definition}
	\begin{definition}
			A \emph{CR datum on a broken strip} is a finite sequence of data on strips $(T^{j},E^{j},F^{j}, Y^{j})_{j=0}^{\ell}$ such that $(E^{j}_{\infty},F^{j}_{\infty})=(E^{j+1}_{-\infty}, F^{j+1}_{-\infty})$.
      A perturbation datum for this consists of perturbation data for each strip.
	\end{definition}

	As in \cite{porcelli2024spectral}, one can construct topological spaces $\bU^{V}(E,E')$ of CR data on broken strips with $E^{0}_{-\infty}=E$ and $E^{\ell}_{\infty}=E'$ and perturbation domains adding up to $V$.
  These are realizations of simplicial sets where the $k$-simplices are continuous families of CR data over $\Delta^{k}$ with breaking potentially occurring near the boundary.
  Each $\bU^{V}(E,E')$ carries a vector bundle $\bV^{V}(E,E')$ given by the kernel of the associated (surjective) Fredholm operator $\overline{\del}_{J}+Y+f$.
  For any isometric embedding $V\to V'$ of inner product spaces, there are stabilization maps $\bU^{V}(E,E') \to \bU^{V'}(E,E')$ which adds $V^{\perp}$ to the perturbation domain and 0 to the perturbation function.

	There are associative concatenation maps $\bU^{V}(E,E')\times \bU^{V'}(E',E'') \to \bU^{V\oplus V'}(E,E'')$, which are compatible with stabilization in the obvious way.
  The vector bundles $\bV^{V}(E,E')$ are also compatible with concatenation, in the sense that we have maps
	\begin{align*}
		\bV^{V}(E,E')\boxplus \bV^{V'}(E',E'') \to \bV^{V\oplus V'}(E,E'')
	\end{align*}
	which are also associative and compatible with stabilization.

	To unpack which categorical structure this gives, let $\IP^{\oplus}$ denote the monoidal category of finite dimensional inner product spaces and isometric embeddings.
  We have a monoidal functor
	\begin{align*}
		\perp \colon \IP \to \sfB \Vect
	\end{align*}
	defined by mapping an isometric embedding $V\to W$ to its orthogonal complement $V^{\perp}$.
  For each $n$, let $X_{n}$ denote the set of rank $n$ bundle pairs on the interval.
  The coherence exhibited in  \cite[Lemma 3.18]{porcelli2024spectral} gives a diagram of lax functors
	\begin{equation*}
		\begin{tikzcd}
			\IP^{\oplus}\times_{\Simp^{\op}} \Simp^{\op}_{X_{n}} \arrow[r, "\bV_{n}"] \arrow[d]      &   \bVect(\CW)^{\boxplus}  \arrow[d] \\
			\IP^{\oplus}  \arrow[r,"\perp"]               &  \sfB \Vect^{\oplus} \,.
		\end{tikzcd}
	\end{equation*}
	The functor $\bV_{n}$ maps a tuple $(V_{01},\dots ,V_{n-1, n})$ of inner product spaces and bundle pairs $(E_{0},\dots, E_{n})$ to the object
	\begin{align*}
		(\bV^{V_{01}}(E_{0},E_{1})\boxplus \dots \boxplus \bV^{V_{n-1n}}(E_{n-1},E_{n}), V_{01}\oplus \dots V_{n-1n})
	\end{align*}
	of $\bVect(\CW)$.
  The morphism given by $(E_{0},\dots ,E_{n}) \to (E_{0},E_{n})$ and an isometric embedding $V_{01}\oplus \dots V_{n-1n} \to V_{0n}$ with orthogonal complement $W$ is mapped to the concatenation and stabilization map
	\begin{align*}
		\bV^{V_{01}}(E_{0},E_{1})\boxplus \dots \boxplus \bV^{V_{n-1n}}(E_{n-1},E_{n}) \oplus W \to \bV^{V_{0n}}(E_{0},E_{n})\,.
	\end{align*}
	Now when we push forward along $\bVect(\CW)^{\boxplus} \to \cS_{/ \bZ\times BO}^{\otimes}$, this corresponds under adjunction to a lax functor
	\begin{align*}
		\Simp^{\op}_{X_{n}} \to \Fun(\IP,\cS_{/ (\bZ\times BO)})^{\otimes}
	\end{align*}
	where the target has the Day convolution monoidal structure.
  By further composing with the colimit functor, we obtain a $\cS_{/ (\bZ\times BO)}$-category $\Ind(\bU_{n})$.
  By \cite[Lemma 3.25]{porcelli2024spectral}, we can identify the completion of the underlying $\cS$-category $\bU_{n}$ with the homotopy pullback $BO(n)\times_{BU(n)} BO(n)$.

	 We may stabilize CR-data by adding $\bC$ to $E$, $\bR$ to $F$, and $0$ to $Y$.
   Stabilization does not change the index, and by \cite[Proposition 3.21]{porcelli2024spectral}, stabilization determines a natural transformation $\bV_{n}\to S_{n}^{*}\bV_{n+1}$ where $S_{n}\colon X_{n}\to X_{n+1}$ is the stabilization map on the set of bundle pairs.
   We let $\bV$ denote the colimit of $\bV_{n}$ as $n\to \infty$, and the associated $\cS_{/ \bZ\times BO}$-category by $\Ind(\bU)$.
   This has an underlying $\cS$-category $\bU$ which completes to $BO\times_{BU}BO$.

	\begin{definition}
		An \emph{$L,K$-boundary datum} for a CR-datum $(T,E,F,Y)$ is a map $u\colon [0,1]\times \bR \to X$ constant outside $[-T,T]$ and with boundary on $L,K$, together with an equivalence $(E,F)\simeq u^{*}(TX, TL\amalg TK)$ wich is constant in the $\bR$-direction outside $[-T,T]$.
	\end{definition}

	As in \cite{porcelli2024spectral}, there are spaces $\bU^{V}_{LK}(x,y)$ of such data on broken strips, with compatible concatenation maps and stabilization maps.
  We have forgetful maps
	\begin{align*}
		\bU^{V}_{LK}(x,y) \to \bU^{V}_{n}(x^{*}TM, y^{*}TM)
	\end{align*}
	and we pull back the vector bundles $\bV^{V}(E,E')$ along these to define vector bundles $\bV^{V}_{LK}(x,y) \to \bU^{V}_{LK}(x,y)$.
  These are compatible with stabilization and concatenation, and as for $\bV_{n}$, there exists a diagram of lax functors
	\begin{equation*}
		\begin{tikzcd}
			\IP^{\oplus}\times_{\Simp^{\op}} \Simp^{\op}_{P_{LK}} \arrow[r, "\bV_{LK}"] \arrow[d]      &   \bVect(\CW)^{\boxplus}  \arrow[d] \\
			\IP^{\oplus}  \arrow[r,"\perp"]               &  \sfB \Vect^{\oplus}\,,
		\end{tikzcd}
	\end{equation*}
	where $P_{LK}$ is the set of paths from $L$ to $K$ in $M$.
  By adjunction, this determines a $\cS_{/ \bZ\times BO}$-category
	\begin{align*}
		\Ind(\bU_{LK})\colon \Simp^{\op}_{X_{LK}} \to \Fun(\IP, \cS_{/ \bZ\times BO})^{\otimes} \xrightarrow{\colim} \cS_{/ \bZ\times BO}^{\otimes}\,.
	\end{align*}
	Again the underlying $\cS$-category completes to the homotopy pullback $L\times_{M}K$.
  Forgetting boundary data determines natural transformations $\bV_{LK}\to \bV_{n} \to \bV$, which gives maps of $\cS_{/ \bZ\times BO}$-categories $\Ind(\bU_{LK}) \to \Ind(\bU_{n}) \to \Ind(\bU)$, which on completion corresponds to maps of spaces
	\begin{align*}
		\mu \colon L\times_{M}K \xrightarrow{TL\times_{TM}TK} BO(n)\times_{BU(n)} BO(n) \to BO\times_{BU} BO \xrightarrow{B\Ind} B(\bZ\times BO) \,.
	\end{align*}
	Arguing as in \cite{bonciocat2025floer}, the map $\Omega\Ind\colon \Omega (BO\times_{BU}BO) \to\bZ\times BO$ factors through the Lagrangian difference map as
	\begin{align*}
		\Omega(BO\times_{BU} BO ) \to \Omega(U/O) \xrightarrow{\Ind} \bZ \times BO \,,
	\end{align*}
	where the last map is an equivalence.
  Note that it is still unknown whether this equivalence agrees with other models for real Bott periodicity.

	By appropriately modifying \cite[Corollary 6.15]{porcelli2024spectral}, the Floer flow category $\bF_{LK}$ admits an $LK$-orientation in the apropriately modified sense of \cite[Definition 5.1]{porcelli2024spectral}.
  In our language, this unpacks to the following data:
	\begin{enumerate}
		\item  A map of sets $\ob(\bF_{LK}) \to P_{LK}$.
		\item  A lax functor $V\colon \Simp^{\op}_P \to \IP^{\oplus}$.
		\item  A natural transformation
		\begin{equation*}
			\begin{tikzcd}
				\Simp^{\op}_{\ob(\bF_{LK})} \arrow[r, "\bF_{LK}"] \arrow[d,"{(V,f)}"]      &   \Mfd_{\diamond}^{\otimes}  \arrow[d, "I"] \arrow[dl,Rightarrow,shorten=2em] \\
				\IP^{\oplus}\times_{\Simp^{\op}} \Simp^{\op}_{P_{LK}}  \arrow[r, "\bV_{LK}"]               &   \bVect(\CW)^{\boxplus}\,.
			\end{tikzcd}
		\end{equation*}

	\end{enumerate}
	In particular, the natural transformation gives maps
	\begin{align*}
		I\bF_{LK}(p,q) \oplus V(p,q) \to \bV^{V}_{LK}(f(p),f(q))
	\end{align*}
	which encode the framing.
  After pushing forward to $\cS_{/ \bZ\times BO}$, inclusion into the colimit then determines a $\cS_{/ \bZ\times BO}$-enriched functor $I\bF_{LK} \to \Ind(\bU_{LK})$.
  This map gives $\bF_{LK}$ a $-\mu$-structure.

\printbibliography

\end{document}